\def\blfootnote{\gdef\@thefnmark{}\@footnotetext}
\renewcommand{\theequation}{\arabic{section}.\arabic{equation}} 
\numberwithin{equation}{section} 
\theoremstyle{definition}
\newtheorem{definition}{Definition}[section]
\newtheorem{setting}[definition]{Setting}
\newtheorem{remark}[definition]{Remark}
\newtheorem{example}[definition]{Example}
\theoremstyle{plain}
\newtheorem{theorem}[definition]{Theorem}
\newtheorem{mtheorem}{Theorem}
\newtheorem{proposition}[definition]{Proposition}
\newtheorem{lemma}[definition]{Lemma}
\newtheorem{corollary}[definition]{Corollary}
\newcommand{\sign}{\sigma} 
\newcommand{\super}[1]{^{(#1 )}}
\newcommand{\calB}{\mathcal{B}}
\newcommand{\calC}{\mathcal{C}}
\newcommand{\CC}{\mathbb{C}}
\newcommand{\rd}{\mathrm{d}}
\newcommand{\DD}{\mathcal{D}}
\newcommand{\ee}{\boldsymbol{\epsilon}}
\newcommand{\E}{\mathbf{e}}
\newcommand{\Ea}{\mathbf{e}^{\mathfrak a}}
\newcommand{\EE}{\mathbb{E}}
\newcommand{\FF}{\mathbb{F}}
\newcommand{\HH}{\mathscr{H}}
\newcommand{\calL}{\mathcal{L}}
\newcommand{\LL}{\mathbb{L}}
\newcommand{\calM}{\mathcal{M}}
\newcommand{\calQ}{\mathcal{Q}}
\newcommand{\calT}{\mathcal{T}}
\newcommand{\NN}{\mathbb{N}}
\newcommand{\Nrm}{\mathbf{N}}
\newcommand{\Nelm}{N_{K/\mathbb Q}}
\newcommand{\scrN}{\mathscr{N}}
\newcommand{\scrQ}{\mathscr{Q}}
\newcommand{\OO}{\mathcal{O}}
\newcommand{\Or}{\mathcal O} 
\newcommand{\rmI}{\mathrm{I}}
\newcommand{\rmII}{\mathrm{II}}
\newcommand{\conductor}{f}
\newcommand{\cond}{\conductor}
\newcommand{\If}{I_K^\cond }
\newcommand{\OK}{\mathcal{O}_K}
\newcommand{\OKnz}{\mathcal{O}_K\setminus \{0\}}
\newcommand{\OKt}{\mathcal{O}_K^{\times}}
\newcommand{\OKbar}{\overline{\mathcal{O}_K^{\times}}}
\newcommand{\PP}{\mathcal{P}}
\newcommand{\PI}{\mathrm{PI}}
\newcommand{\Pavs}{\PP_{\ideala;\tau,s}}
\newcommand{\QQ}{\mathbb{Q}}
\newcommand{\calR}{\mathcal{R}}
\newcommand{\RR}{\mathbb{R}}
\newcommand{\eS}{\EuScript{S}}
\newcommand{\eT}{\EuScript{T}}
\newcommand{\ZZ}{\mathbb{Z}}
\newcommand{\calZ}{\mathcal{Z}}
\newcommand{\vph}{\varphi}
\newcommand{\bv}{\boldsymbol{v}}
\newcommand{\bw}{\boldsymbol{w}}
\newcommand{\bz}{\boldsymbol{z}}
\newcommand{\bflog}{\mathbf{log}}
\newcommand{\omom}{\boldsymbol{\omega}}
\newcommand{\Ideals}{\mathrm{Ideals}}
\newcommand{\Cl}{\mathrm{Cl}} 
\newcommand{\pr}{\mathrm{pr}}
\newcommand{\lmugen}{\ell_{\infty}}
\newcommand{\Mobius}{M{\"o}bius\ }
\newcommand{\Sz}{Szemer{\' e}di}
\newcommand{\RMST}{the relative multidimensional Szemer{\'e}di theorem}
\newcommand{\RCL}{relative counting lemma}
\newcommand{\Cheb}{Chebotarev}
\newcommand{\chebden}{the \Cheb\ density theorem}
\newcommand{\counting}{count}
\newcommand{\havethat}{have}
\newcommand{\obtainthat}{obtain}
\newcommand{\naturaldensityversionofthe}{}
\newcommand{\Kfp}{K_{\cond +}^{\times}}
\newcommand{\Clf}{\mathrm{Cl}_K^f}
\newcommand{\hcond}{h_\cond }
\newcommand{\SpecOK}{|\Spec \OK |}
\newcommand{\OKf}{\mathcal O_{K,\cond +}^{\times}}
\newcommand{\ideala}{\mathfrak a}
\newcommand{\idealb}{\mathfrak b}
\newcommand{\idealc}{\mathfrak c}
\newcommand{\ideald}{\mathfrak d}
\newcommand{\idealp}{\mathfrak p}
\newcommand{\pideala}{\alpha}
\newcommand{\pidealb}{\beta}
\newcommand{\pidealc}{\gamma}
\newcommand{\pidealp}{\mathfrak{p}}
\newcommand{\ppart}{^{(p)} }
\newcommand{\lpart}{^{(\ell)}}
\newcommand{\kyo}{\sqrt{-1}}
\newcommand{\chihat}{\widehat{\chi}}
\newcommand{\uxi}{\underline{\xi}}
\newcommand{\ueta}{\underline{\eta}}
\newcommand{\Fourier}{\mathcal{F}^{\ast}}
\renewcommand{\Re}{\mathrm{Re}}
\renewcommand{\Im}{\mathrm{Im}}
\renewcommand{\ker}{\mathrm{ker}}
\newcommand{\ichi}{\mathbf 1}
\newcommand{\tide}{\tilde{\delta}}
\newcommand{\idealae}{\idealc \OK }
\newcommand{\signF}{\epsilon _{\idealc }} 
\newcommand{\thering}{\mathcal O}
\newcommand{\idealq}{\mathfrak q}
\newcommand{\ClOr}{\Cl ^+(\Or)}
\newcommand{\signs}{\{ \pm 1 \}}
\newcommand{\aE}{\tilde{E}}
\newcommand{\uvarsigma}{u} 
\newcommand{\etaUpsilon}{\eta} 
\newcommand{\RRCC}{\RR ^{r_1}\times \CC ^{r_2}}
\DeclareMathOperator{\Aff}{Aff}
\DeclareMathOperator{\coker}{coker}
\DeclareMathOperator{\Hom}{Hom}
\DeclareMathOperator{\Spec}{Spec}
\DeclareMathOperator{\Li}{Li}
\DeclareMathOperator{\sgn}{sgn}
\DeclareMathOperator{\rank}{rank}
\DeclareMathOperator{\supp}{supp}
\DeclareMathOperator{\lebesgue}{Leb}
\newcommand{\relmiddle}[1]{\mathrel{}\middle#1\mathrel{}}
\newcommand{\compati}{$(\rho,M,\bv,S)$-condition}
\newcommand{\compatiW}{$(\rho,W,M,\bv,S)$-condition}
\newcommand{\compatia}{$(\rho,\uvarsigma,M,\bv,S,a)$-condition}
\newcommand{\compatiaW}{$(\rho,W,\uvarsigma,M,\bv,S,a)$-condition}
\newcommand{\logpseu}{\mathrm{S}\Psi_{\log}(\ideala)}
\newcommand{\logpseua}{\mathrm{S}\Psi_{\log}^{\mathrm{SI}}(\ideala)}
\newcommand{\logpseuOK}{\mathrm{S}\Psi_{\log}(\OK)}
\newcommand{\logpseuaOK}{\mathrm{S}\Psi_{\log}^{\mathrm{SI}}(\OK)}
\title{Constellations in prime elements of number fields} 
\author{Wataru Kai, Masato Mimura, Akihiro Munemasa, \\ Shin-ichiro Seki, Kiyoto Yoshino}
\date{}
\begin{document}
\maketitle
%logo
\begin{center}
\includegraphics{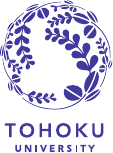} % for arXiv
\end{center}
%%%%%%%%%%%%%%%%%%%%%%%%%%%%%%%%%%%%%%%%%%%%%%%%%%
%abstract
%%%%%%%%%%%%%%%%%%%%%%%%%%%%%%%%%%%%%%%%%%%%%%%%%%
\begin{abstract}
Given \emph{any} number field, we prove that there exist arbitrarily shaped constellations consisting of pairwise non-associate prime elements of the ring of integers.
This result extends the celebrated Green--Tao theorem on arithmetic progressions of rational primes and Tao's theorem on constellations of Gaussian primes.
Furthermore, we prove a constellation theorem on prime representations of binary quadratic forms with integer coefficients.
More precisely, for a non-degenerate primitive binary quadratic form $F$ which is not negative definite, there exist arbitrarily shaped constellations consisting of pairs of integers $(x,y)$ for which $F(x,y)$ is a rational prime.
The latter theorem is obtained by extending the framework from the ring of integers to the pair of an order and its invertible fractional ideal.
\end{abstract}
\blfootnote{2020 {\it Mathematics Subject Classification.} Primary 11B30; Secondary 11B25, 11H55, 11N05, 11R04, 05C55.}
\blfootnote{{\it Key words and phrases.}
the Green--Tao theorem; the relative Szemer\'{e}di theorem; binary quadratic forms}
%%%%%%%%%%%%%%%%%%%%%%%%%%%%%%%%%%%%%%%%%%%%%%%%%%
%contents
%%%%%%%%%%%%%%%%%%%%%%%%%%%%%%%%%%%%%%%%%%%%%%%%%% 
\setcounter{tocdepth}{3}
\section{Introduction}\label{section=introduction}
The following theorem %
of Green and Tao is a monumental work %
in additive number theory.
\begin{theorem}[{The Green--Tao theorem \cite{Green-Tao08}}]\label{theorem=Green-Tao}
There exist arithmetic progressions of primes of arbitrary length.
\end{theorem}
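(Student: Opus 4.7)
The plan is to use a transference principle that reduces the theorem to an application of \Sz's theorem. Since the primes have density zero in the integers, \Sz's theorem cannot be applied directly to them; the idea is to construct a pseudorandom majorant of the primes and prove a version of \Sz's theorem relative to such majorants, thereby promoting a positive-\emph{relative}-density statement into an unconditional counting statement for arithmetic progressions.

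First, I would normalize the problem. Fix the intended progression length $k$ and a large scale $N$. After a ``$W$-trick'' that restricts attention to primes lying in a single reduced residue class modulo the product of small primes (removing local obstructions and boosting the relative density to a positive constant), I would construct a majorant $\nu \colon \ZZ/N\ZZ \to \RR_{\geq 0}$ of the normalised prime indicator using a truncated divisor sum $\Lambda_R$ coming from the Goldston--Yıldırım sieve. The crux of this step is to verify that $\nu$ satisfies two pseudorandomness hypotheses --- the \emph{linear forms condition} and the \emph{correlation condition} --- which assert that certain multilinear averages of $\nu$ are asymptotically the same as for the uniform measure on $\ZZ/N\ZZ$.

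Second, I would prove a relative \Sz\ theorem: if $f \colon \ZZ/N\ZZ \to \RR_{\geq 0}$ satisfies $f \leq \nu$ and $\EE[f] \geq \delta > 0$, then the $f$-weighted count of $k$-term arithmetic progressions in $\ZZ/N\ZZ$ is at least some $c(k,\delta) > 0$. The argument decomposes $f = f_1 + f_2$ with $f_1$ bounded and of density at least $\sim \delta$ (so that classical \Sz\ applies to $f_1$) and $f_2$ small in the Gowers $U^{k-1}$-norm; the pseudorandomness of $\nu$ ensures via a generalized von Neumann theorem that the contribution of $f_2$ to the progression count is negligible. Combined with the first step, this yields the existence of long progressions within the primes once $N$ is large enough.

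The main obstacle is the verification of the linear forms and correlation conditions for the Goldston--Yıldırım majorant: these reduce to precise asymptotic evaluations of singular series that must be sharp enough to beat the error terms produced by the transference. Essentially all of the genuinely analytic-number-theoretic content of the proof is concentrated in this step. A secondary obstacle is organizing the relative \Sz\ theorem with pseudorandomness hypotheses weak enough to be verifiable yet strong enough to drive the decomposition; a streamlined formulation via \RHRL\ was found subsequently and is, in fact, the route that the present paper adapts in order to pass from $\ZZ$ to the ring of integers of an arbitrary number field.
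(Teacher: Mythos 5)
Your outline is a correct proof of the statement, but it is the \emph{original} Green--Tao argument rather than the route this paper takes when it derives the theorem as the $K=\QQ$ case of its general machinery (the paper explicitly does not cite Theorem~\ref{theorem=Green-Tao} as an input; it recovers it from Theorem~\ref{theorem=primeconstellationsfinite}). The two differences are exactly the ones you gesture at in your last sentence. First, the paper's notion of pseudorandomness (Definition~\ref{definition=S-linearform}) consists of the linear forms condition alone: by invoking the Conlon--Fox--Zhao relative hypergraph removal lemma (Theorem~\ref{thm:RHRL}) as a black box, the correlation condition is never needed, so the Goldston--Y{\i}ld{\i}r{\i}m analysis only has to produce the asymptotics in Theorem~\ref{Th:Goldston_Yildirim} and not the additional correlation estimates your plan requires. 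Second, the paper never transfers the problem to $\ZZ/N\ZZ$; following Solymosi, it builds an $r$-uniform hypergraph system directly from hyperplanes in $\ZZ^r$ (Subsection~\ref{subsec:psuedo-random-measure}) and deduces the relative multidimensional Szemer\'edi theorem (Theorem~\ref{thm:RMST}) in $\ZZ^n$ itself, at the cost of verifying the linear forms condition over a larger box of variables and of adding an explicit ``smallness'' hypothesis to rule out trivial progressions --- both absorbed by the strength of the Goldston--Y{\i}ld{\i}r{\i}m formula. Your decomposition $f=f_1+f_2$ via Gowers norms and the generalized von Neumann theorem is therefore replaced wholesale by the removal-lemma argument; what your route buys is historical self-containedness and quantitative control via Gowers norms, while the paper's route buys the elimination of the correlation condition and of the finite-group transfer, which is precisely what makes the generalization to $\OK$ with infinite unit group and class number greater than one tractable.
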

In order to consider multidimensional generalizations of this result, we introduce a terminology of \emph{constellations}.
For a finite subset $S$ of a $\ZZ$-module $\calZ$ (we will consider only a free module of finite rank), we call a set of the form $\alpha+kS\coloneqq\{\alpha+ks : s\in S\}$ a \emph{constellation with the shape $S$}.
Here, $\alpha$ is an element of $\calZ$ and $k$ is a positive integer.
In this paper, we abbreviate it as an \emph{$S$-constellation}; it is also known as a \emph{homothetic copy} of $S$.
In the literature, some %
authors allow $k$ to be a negative integer.
When a subset $A$ of $\calZ$ contains an $S$-constellation for any finite subset $S$ of $\calZ$, we say that ``there exist constellations of arbitrary shape in $\calZ$ consisting of elements of $A$'' or ``the constellation theorem holds for $A$.''
Note that for a subset $A\subseteq\ZZ$,  the existence of arithmetic progressions of arbitrary length is equivalent to that of constellations of arbitrary shape.

As the Gaussian counterpart of the Green--Tao theorem, Tao established the following.
\begin{theorem}[Constellation theorem in the Gaussian primes \cite{Tao06Gaussian}]\label{theorem=Gaussian-primes}
There exist constellations of arbitrary shape in the ring of Gaussian integers $\ZZ[\sqrt{-1}]$ consisting of Gaussian primes.
\end{theorem}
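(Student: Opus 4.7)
The plan is to follow the Green--Tao transference strategy adapted to the two-dimensional lattice $\ZZ[\sqrt{-1}]$. First I would reduce the statement to a density assertion: for every finite $S\subset\ZZ[\sqrt{-1}]$ and every $\delta>0$, any subset of the ``Gaussian box'' $B_N\coloneqq\{a+b\sqrt{-1}:|a|,|b|\le N\}$ having density at least $\delta$ with respect to a suitable pseudorandom majorant $\nu\colon B_N\to\RR_{\ge0}$ must contain an $S$-constellation, provided $N$ is large in terms of $S$ and $\delta$. This density statement is precisely the content of \RMST\ once one has in hand a $\nu$ whose pseudorandomness is strong enough; existence of an $S$-constellation in the Gaussian primes then follows by taking the density-$\delta$ set to be (a $W$-tricked normalization of) the indicator function of the Gaussian primes inside $B_N$.

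Second, to produce such a $\nu$, I would run the $W$-trick in $\ZZ[\sqrt{-1}]$. Fix a slowly growing parameter $w$, form the ``modulus'' $W\coloneqq\prod_{\Nrm(\idealp)\le w}\idealp$ running over prime ideals of $\ZZ[\sqrt{-1}]$, pick a residue class $b$ with $(b,W)=1$, and restrict attention to Gaussian integers of the form $Wn+b$ with $n\in B_{N/|W|}$. This removes the biases caused by small prime ideals and makes the primes equidistributed in the residues they still occupy. Then I would define a Goldston--Pintz--Y{\i}ld{\i}r{\i}m--type majorant
\[
\nu(n)\;\propto\;\Bigl(\sum_{\ideald\mid (Wn+b),\,\Nrm(\ideald)\le R}\mu(\ideald)\,\log\tfrac{R}{\Nrm(\ideald)}\Bigr)^{2},
\]
where $\mu$ is the \Mobius function on Gaussian ideals and $R$ is a small power of $N$.

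Third, I would verify that $\nu$ satisfies the linear forms condition and the correlation condition required by \RMST. Expanding the square and swapping summations, both conditions reduce, by standard Goldston--Pintz--Y{\i}ld{\i}r{\i}m manipulations, to asymptotic estimates for divisor sums weighted by singular series over $\ZZ[\sqrt{-1}]$. Each local factor at a prime ideal $\idealp$ is controlled using the fact that $\ZZ[\sqrt{-1}]$ is a principal ideal domain with the explicit norm form $a^{2}+b^{2}$, so the local densities of the relevant affine-linear systems are computable and uniformly bounded; summing the local data against $\log(R/\Nrm(\ideald))$ gives the required main term with the correct shape.

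Finally, applying \RMST\ to the (normalized, $W$-tricked) set of Gaussian primes inside $B_N$, which has positive density relative to $\nu$ by the Gaussian analogue of the prime number theorem, produces an $S$-constellation inside that set, which pulls back to an $S$-constellation of pairwise non-associate Gaussian primes in $\ZZ[\sqrt{-1}]$. The main obstacle I expect is the third step: carrying out the sieve-theoretic verification of the linear forms and correlation conditions two-dimensionally over $\ZZ[\sqrt{-1}]$ demands a careful treatment of prime ideal counts in sublattices, and the error terms from the \Mobius cutoff must be shown to survive after the $O(1)$-fold expansion forced by the shape~$S$.
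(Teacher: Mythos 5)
Your plan is a correct route to the theorem: it is essentially the transference argument of the cited reference \cite{Tao06Gaussian}, and for $\ZZ[\sqrt{-1}]$ (class number $1$, finite unit group) every step you describe can be carried out. The present paper, however, obtains this statement only as the special case $K=\QQ(\sqrt{-1})$ of its general number-field theorem, and the two routes differ in a few substantive ways. First, you propose to verify both the linear forms condition \emph{and} the correlation condition; the paper deliberately dispenses with the latter by invoking the Conlon--Fox--Zhao relative hypergraph removal lemma, which needs only the linear forms condition (Definition~\ref{definition=S-linearform}), so the sieve work in your third step is heavier than necessary. Second, the paper formulates the Goldston--Y{\i}ld{\i}r{\i}m divisor sum entirely on \emph{ideals} (Theorem~\ref{Th:Goldston_Yildirim}) precisely so that the argument survives when the class number exceeds $1$ or the unit group is infinite; your reliance on $\ZZ[\sqrt{-1}]$ being a PID with norm form $a^2+b^2$ is harmless here but is the step that would not generalize. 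Third, rather than transferring to a finite additive group, the paper runs Solymosi's hypergraph construction directly on $\ZZ^r$ via the map $\phi_S$; this is a technical simplification you do not need to replicate but which changes the bookkeeping in the smallness/weighted-density estimates. Two small cautions on your write-up: the residue $b$ must be chosen by pigeonhole so that the primes in the class $b \bmod W$ retain a positive proportion (not merely any $b$ coprime to $W$), and your closing claim that the constellation consists of pairwise \emph{non-associate} primes is stronger than the statement and requires an extra step (restricting to a fundamental sector for the four units, or a pigeonhole over them), which the paper handles via NL-compatible fundamental domains.
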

In the same paper, Tao \cite[12 Discussion]{Tao06Gaussian} conjectured that Theorem~\ref{theorem=Green-Tao} could be extended in the following two ways. 
\begin{enumerate}[(1)]
\item Extension to other number fields (= the constellation theorem in the prime elements of an arbitrary number field).\label{conj=MAIN}
\item Extension to a relatively dense subset $A$ of the direct product $\PP^n$ of the set of primes (= the multidimensional Szemer\'edi theorem holds in the primes).
More precisely, there exist constellations of arbitrary shape in $\ZZ^n$ consisting of elements of $A$.\label{conj=PMST}
\end{enumerate}
Recently, the second conjecture has been settled independently by three research groups, Tao--Ziegler \cite{Tao-Ziegler15}, Fox--Zhao \cite{Fox-Zhao15} and Cook--Magyar--Titichetrakun \cite{Cook-Magyar-Titichetrakun18}.

In this paper, we resolve the first conjecture in the affirmative.
The main theorem in this paper is stated in its simplest form as follows; various refined statements will be introduced in Section~\ref{section=organization}.
We denote by $\OK$ the ring of integers of a number field $K$.

\begin{theorem}[Constellation theorem in the prime elements of a number field]\label{theorem=primeconstellationssimple}
Let $K$ be a number field.
Then there exist constellations of arbitrary shape in $\OK$ consisting of prime elements of $\OK$.
\end{theorem}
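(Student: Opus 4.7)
The plan is to adapt the transference strategy of Green--Tao \cite{Green-Tao08} and Tao \cite{Tao06Gaussian} to an arbitrary number field $K$. Fix a finite shape $S\subseteq\OK$ and write $n=[K:\QQ]$. Via an integral basis we identify $\OK$ with $\ZZ^n$, so the task is to produce $\alpha\in\OK$ and $k\in\ZZ_{>0}$ such that each $\alpha+ks$ ($s\in S$) is a prime element of $\OK$. Because $\OKt$ may be infinite (Dirichlet's unit theorem), prime elements are not literally dense in $\OK$; one must restrict to a fundamental domain $\OKbar$ for the $\OKt$-action on $\OKnz$, cut out by the logarithmic embedding. Inside a large box $B$, Landau's prime ideal theorem (refined via \chebden) shows that pairwise non-associate prime elements lying in $B\cap\OKbar$ have density of order $1/\log|B|$. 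I would then perform the $W$-trick: let $W$ be the product of small rational primes up to a slowly growing parameter $w=w(|B|)$, select a residue class $r\pmod{W\OK}$ whose elements are coprime to $W$, and rescale so that the $W$-tricked primes have positive relative density on a rescaled box.

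Next I would build a pseudorandom majorant $\nu$ on that rescaled box, following the Goldston--Yıldırım template used by Green--Tao. Up to normalisation, $\nu(x)$ is the square of a smoothly truncated divisor sum $\sum_{\ideala\mid(x)}\mu_K(\ideala)\,\chi(\log\Nrm\ideala/\log R)$ indexed by integral ideals of $\OK$; here $\mu_K$ is the ideal-theoretic M\"obius function, $\Nrm\ideala$ is the absolute norm, $\chi$ is a fixed smooth cutoff, and $R$ is a small power of $|B|$. The hypotheses needed to apply \RHRL\ (equivalently \RMST) are the \emph{linear forms condition} and the \emph{correlation condition} for the finite system of linear forms in $\ZZ^n$ coming from $S$. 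Verifying these reduces, by completing the truncated divisor sum, to Euler-product estimates whose local factors at a rational prime $p$ depend only on the splitting type of $p$ in $K$; the local factors can be evaluated uniformly using \chebden\ with an effective error term, mirroring the classical case $K=\QQ$.

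Granted the pseudorandomness of $\nu$, \RHRL\ produces an $S$-constellation inside the $W$-tricked primes in $B\cap\OKbar$, which by construction consists of pairwise non-associate prime elements of $\OK$; undoing the $W$-trick and the rescaling then yields the sought $\alpha+kS\subseteq\PP$. The hard part will be the second step: verifying the linear forms and correlation conditions uniformly over the shape $S$. Two features are specific to the number-field setting. First, the fundamental domain $\OKbar$ must be chosen compatibly with all translates $\alpha+ks$, which requires a geometric argument in the logarithmic embedding controlling how $S$ interacts with the unit cone; at a minimum one must shrink the box so that no unit-boundary crossing occurs for any of the $|S|$ shifts. Second, the local factors of $\nu$ vary with the splitting behaviour of each rational prime, so the main-term computation must integrate uniformly over \Cheb\ classes rather than collapsing to a single Euler product as in the rational case. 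Once these two points are handled, Theorem~\ref{theorem=primeconstellationssimple} should follow by essentially the same transference argument as for $\QQ$ and $\QQ(\sqrt{-1})$.
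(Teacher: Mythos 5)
Your outline is essentially the paper's strategy: pass from elements to ideals for the divisor sum, restrict to a fundamental domain for the unit action built from the logarithmic embedding, run the $W$-trick, verify pseudorandomness via a Goldston--Y\i ld\i r\i m computation, and invoke a relative Szemer\'edi theorem. Three points where your plan deviates from, or is less sharp than, what the paper does are worth flagging. First, you propose to verify both the linear forms condition \emph{and} the correlation condition; the paper instead uses the Conlon--Fox--Zhao form of the relative hypergraph removal lemma, which requires only the linear forms condition, so the correlation condition can be dropped entirely. Second, your worry that ``the main-term computation must integrate uniformly over Chebotarev classes rather than collapsing to a single Euler product'' is a misconception that your own choice of majorant already resolves: because the truncated divisor sum runs over \emph{all} integral ideals dividing $(x)$, the Euler product collapses to quotients of the Dedekind zeta function $\zeta_K$, and the class number formula supplies the residue $\kappa$ at $s=1$; Chebotarev enters only later, in the separate step of bounding from below the number of \emph{principal} prime ideals (had you summed only over principal divisors, you would have met a partial Euler product behaving like $(s-1)^{-1/h}$, which is genuinely problematic). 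Third, the operative condition on the fundamental domain is not that the box avoid ``unit-boundary crossings'' for the shifts of $S$ --- the constellation is found by the removal lemma inside a dense subset of the domain, so no such avoidance is needed --- but rather what the paper calls NL-compatibility: the inequality $C\|\alpha\|_{\infty,\omom}^n\leq\Nrm(\alpha)$ on the domain, equivalently boundedness of its projection to the trace-zero hyperplane under the logarithmic embedding. This is needed to convert the Chebotarev count (by norm) into a density statement (by $\lmugen$-length), and it is not automatic: the paper exhibits $\OKt$-fundamental domains containing no $S$-constellation whatsoever. Your proposed domain, cut out by the unit lattice in the logarithmic embedding, is NL-compatible, so your construction would work once this property is isolated and proved.
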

In the above theorem, the statement for the case of $K=\QQ$ is equivalent to the Green--Tao theorem (Theorem~\ref{theorem=Green-Tao}); that for the case of $K=\QQ(\kyo)$ is exactly the constellation theorem in the Gaussian primes (Theorem~\ref{theorem=Gaussian-primes}).
The definitions and facts on number fields that appear in this section are summarized in Section~\ref{section=preliminarynumbertheory}.

Tao remarks in \cite[12 Discussion]{Tao06Gaussian} that his method of proving Theorem~\ref{theorem=Gaussian-primes} is likely to extend to $K$ at least if the class number of $K$ is $1$ and the unit group $\OKt$ is finite.
There exist only nine such number fields other than $\QQ$ and all of them are imaginary quadratic by Dirichlet's unit theorem and the Baker--Heegner--Stark theorem; see for instance, \cite{Stark67}.
For a general number field, the class number may be strictly greater than $1$ or the unit group may be infinite.
Both of these two cause problems for formulating an appropriate statement of generalizations of Theorem~\ref{theorem=Gaussian-primes}.

If the class number is greater than $1$, then not all irreducible elements are prime elements.
As already mentioned in Theorem~\ref{theorem=primeconstellationssimple}, the prime elements suffice to guarantee the existence of constellations of arbitrary shape. 
The unit group acts on the set of prime elements by multiplication.
In Corollary~\ref{corollary=primeconstellationssimple}, we strengthen Theorem~\ref{theorem=primeconstellationssimple} by showing the existence of constellations consisting of primes from distinct orbits.
The original method of Tao \cite{Tao06Gaussian} does not extend to the proof of Theorem~\ref{theorem=primeconstellationssimple}%
in a straightforward manner due to the above two obstacles.

We will describe the difficulties in the latter part of this introduction.

In this paper, for a number field $K$, we denote by $\PP_K$ the set of all prime elements of the ring of integers $\OK$.
We employ some concepts in order to extend conjecture~\eqref{conj=MAIN} to a `Szemer\'edi-type' theorem and to refine the statement in the case that the unit group is infinite.
For an integral basis $\omom$ of $K$, we denote by $\|\cdot\|_{\infty,\omom}\colon \OK\to \ZZ_{\geq 0}$ the \emph{$\lmugen$-length} with respect to the basis $\omom$; see Definition~\ref{definition=lmugenlength}.
For a non-empty set $X\subseteq \OK$ and its subset $A\subseteq X$, we define the \emph{relative upper asymptotic density measured by the $\lmugen$-length $\|\cdot\|_{\infty,\omom}$} of $A$ in $X$ by
\[
 \overline{d}_{X,\omom}(A)\coloneqq\limsup_{M\to\infty}\frac{\#(A\cap \OO_K(\omom,M))}{\#(X \cap \OO_K(\omom,M))},
\]
where $\OO_K(\omom,M)\coloneqq\{\alpha\in \OK : \|\alpha\|_{\infty,\omom}\leq M\}$.
We say that two elements of $\OKnz$ are \emph{associate} if they lie in the same orbit for the action $\OKt \curvearrowright \OKnz$ by multiplication.
We call a two-point subset $\{\alpha,\beta\}$ of $\OKnz$ an \emph{associate pair} if $\alpha$ and $\beta$ are associate.
The following theorem is a strengthening of Theorem~\ref{theorem=primeconstellationssimple}.
\begin{theorem}[Szemer\'edi-type theorem in the prime elements of a number field]\label{theorem=primeconstellationsdensesemiprecise}
Let $K$ be a number field and $\omom$ an integral basis of $K$.
Assume that a subset $A$ of $\PP_K$ has a positive relative upper asymptotic density measured by $\|\cdot\|_{\infty,\omom}$ in $\PP_K$, namely, $ \overline{d}_{\PP_K,\omom}(A)>0$.
Then there exist constellations of arbitrary shape in $\OK$ consisting only of elements of $A$ without associate pairs.
\end{theorem}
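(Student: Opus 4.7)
The plan is to reduce the statement to a density theorem on $\ZZ^d$ with $d = [K:\QQ]$, and then apply \RMST\ via a pseudorandom majorant tailored to the prime elements of $\OK$. Under the identification $\OK \cong \ZZ^d$ furnished by $\omom$, the $\ell_\infty$-length $\|\cdot\|_{\infty,\omom}$ becomes the usual sup-norm on $\ZZ^d$, the box $\OO_K(\omom,M)$ becomes the cube $[-M,M]^d$, and an $S$-constellation $\alpha + kS$ in $\OK$ becomes a homothetic copy of a finite configuration in $\ZZ^d$. The hypothesis then says that $A$ occupies a positive proportion of $\PP_K \cap [-M,M]^d$ along an unbounded sequence of $M$, so it suffices to show that any positive-relative-density subset of $\PP_K$ contains a homothetic copy of every finite shape in $\ZZ^d$, with no associate pair.

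Following the Green--Tao template now over the Dedekind domain $\OK$, I would construct a pseudorandom majorant $\nu : \ZZ^d \to \RR_{\geq 0}$ as a Goldston--Yildirim type truncated divisor sum indexed by integral ideals of $\OK$,
\[
\nu(\alpha) \;\propto\; \Bigl(\sum_{\ideald \mid (\alpha),\ \Nrm(\ideald) \leq R} \mu(\ideald)\,\log\!\tfrac{R}{\Nrm(\ideald)}\Bigr)^{\!2},
\]
after the usual $W$-trick: restricting to a fixed invertible residue class modulo a product $W$ of small prime ideals of $\OK$, so that small-prime obstructions are removed. Verifying the linear-forms and correlation conditions reduces to sieve-theoretic counts over linear configurations of ideals in $\OK$, which split into residue classes modulo small ideals and are estimated by Landau's prime ideal theorem and \Chebden\ in place of the classical prime number theorem in arithmetic progressions. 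Once these conditions are verified, \RMST\ applies to the pair $(\nu,\, \ichi_A/\nu)$ and produces $S$-constellations in $A$ in quantitatively positive abundance (relative to $\nu$) among $(\alpha,k)$.

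It then remains to exclude those $S$-constellations containing an associate pair $\{\alpha+ks_1,\alpha+ks_2\}$. Such a pair forces $u(\alpha+ks_1) = \alpha+ks_2$ for some $u \in \OKt$, which for each fixed $u$ is an affine-linear constraint cutting out a low-dimensional subset of $(\alpha,k)$-space, and whose $\nu$-mass is negligible compared to the \RMST\ main term. When $\OKt$ is finite a union bound over $u$ finishes the job. When $\OKt$ is infinite, only units $u$ whose image under the logarithmic embedding is polynomially bounded in $M$ can map a cube element to another cube element; by Dirichlet's unit theorem there are at most $O((\log M)^{r_1+r_2-1})$ such $u$, so the union bound still renders the contribution negligible.

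The principal obstacle I anticipate is the sieve-theoretic verification of pseudorandomness for $\nu$ when $\OKt$ is infinite: the $W$-trick and the Goldston--Yildirim type correlation sums must be carried out uniformly with respect to the integral basis $\omom$ and insensitive to the unit action, requiring effective number-field analogues of Siegel--Walfisz that are uniform over the linear forms arising from an arbitrary finite shape $S$. I expect the bulk of the work to reside in this analytic-number-theoretic input, with the combinatorial transference step inherited from the existing \RMST\ machinery (and, in turn, from \RHRL).
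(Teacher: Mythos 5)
Your overall strategy coincides with the paper's in its main lines: identify $\OK$ with $\ZZ^n$ via $\omom$, build a Goldston--Y\i ld\i r\i m majorant as a truncated divisor sum over \emph{ideals} of $\OK$ (precisely the paper's device for avoiding the divergent element-indexed sums when $\OKt$ is infinite), apply the $W$-trick, and transfer via \RMST. Where you genuinely diverge is the treatment of associate pairs. The paper proves a reduction theorem (Theorem~\ref{theorem=no_DD_to_with_DD}, in the form of Theorem~\ref{theorem=fundamental_Omega}): using the orbit-counting Lemma~\ref{lemma=OKt_orbit} it extracts, \emph{before} transference, a subset $A_0\subseteq A$ of still-positive density lying inside a single $\OKt$-fundamental domain, so every constellation found there is automatically associate-pair-free. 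You instead run \RMST\ first and delete bad constellations afterwards by a union bound over units; this is viable, but only because (a) you invoke the Varnavides-type \emph{counting} form of \RMST\ (Theorem~\ref{theorem=weighted-counting}), which gives weighted count $\gtrsim\gamma N^{n+1}$, and (b) as you correctly observe, only $O((\log M)^{r_1+r_2-1})$ units can carry one nonzero element of $\OK(\omom,M)$ to another, and for each such $u\neq 1$ and each pair $s_1\neq s_2$ the relation $u(\alpha+ks_1)=\alpha+ks_2$ determines $\alpha$ from $k$, so the bad set has weighted mass $O\bigl(N(\log M)^{O(1)}\bigr)=o(N^{n+1})$. This is a legitimate and, for this particular theorem, shorter route; the paper's fundamental-domain machinery buys more, since it also drives the norm-versus-length density comparison and the short-interval and quadratic-form results. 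Two smaller corrections: to turn the hypothesis $\overline{d}_{\PP_K,\omom}(A)>0$ into the absolute bound $\#A\gtrsim M^n/\log M$ needed for the weighted-density step, you must lower-bound $\#(\PP_K\cap\OK(\omom,M))$ by $cM^n/\log M$, and that is where \Chebden\ and the NL-compatibility/orbit-counting genuinely enter (Proposition~\ref{proposition=PK_kouri}) --- not in the correlation sums, where the simple pole of the Dedekind zeta function at $s=1$ suffices and no Siegel--Walfisz-type uniformity is needed; and the correlation condition can be dropped altogether by using the Conlon--Fox--Zhao form of \RHRL, as the paper does.
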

In Subsection~\ref{subsection=mainpreciseversion}, we state Theorem~\ref{mtheorem=primeconstellationsfinite}, which may be seen as a version of Theorem~\ref{theorem=primeconstellationsdensesemiprecise} in a finitary setting.
Theorem~\ref{mtheorem=primeconstellationsfinite} is the first main theorem of the present paper.
As a corollary to Theorem~\ref{theorem=primeconstellationsdensesemiprecise}, we obtain the following.
\begin{corollary}\label{corollary=primeconstellationssimple}
In the statement of Theorem~$\ref{theorem=primeconstellationssimple}$, we can take constellations that do not admit associate pairs.
\end{corollary}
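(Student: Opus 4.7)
The plan is to deduce Corollary~\ref{corollary=primeconstellationssimple} as a direct consequence of Theorem~\ref{theorem=primeconstellationsdensesemiprecise}. The key observation is that the latter theorem strictly strengthens Theorem~\ref{theorem=primeconstellationssimple} in two independent ways: it weakens the hypothesis ``the constellation is drawn from all of $\PP_K$'' to ``drawn from a subset $A \subseteq \PP_K$ of positive relative upper density'', and it strengthens the conclusion by requiring the constellation to contain no associate pairs. To obtain the corollary, I only need to undo the first weakening while retaining the second.

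Concretely, I would fix any integral basis $\omom$ of $K$ (which exists because $\OK$ is a free $\ZZ$-module of rank $[K:\QQ]$) and apply Theorem~\ref{theorem=primeconstellationsdensesemiprecise} to the set $A = \PP_K$ itself. The required hypothesis $\overline{d}_{\PP_K,\omom}(\PP_K) > 0$ is then tautological: with $A = X = \PP_K$, each quotient appearing in the defining limit superior equals $1$, so the density equals $1$. Theorem~\ref{theorem=primeconstellationsdensesemiprecise} therefore produces, for every finite subset $S$ of $\OK$, an $S$-constellation in $\OK$ all of whose elements lie in $\PP_K$ and no two of which are associate, which is exactly the assertion of Corollary~\ref{corollary=primeconstellationssimple}.

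Since the reduction is purely formal, there is no genuine obstacle in this deduction; the entire mathematical content has been absorbed into the statement of Theorem~\ref{theorem=primeconstellationsdensesemiprecise}, whose proof is the substantive work carried out elsewhere in the paper.
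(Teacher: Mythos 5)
Your deduction is logically valid, and the paper itself concedes as much in the introduction (``Although Corollary~\ref{corollary=primeconstellationssimple} is derived immediately from Theorem~\ref{theorem=primeconstellationsdensesemiprecise}\,\dots''). Taking $A=\PP_K$ makes the density hypothesis tautologically satisfied (the quotient is $1$ for all large $M$, since $\PP_K$ is infinite), and the conclusion of Theorem~\ref{theorem=primeconstellationsdensesemiprecise} is exactly what the corollary asserts. However, this is not the route the paper takes, and the difference is worth understanding. The paper proves the corollary in Subsection~\ref{subsection=proofofmaintheorem}, \emph{before} Theorem~\ref{theorem=primeconstellationsdensesemiprecise} is available: it first constructs an NL-compatible $\OKt$-fundamental domain $\DD$ (Proposition~\ref{proposition=normrespectingfundamentaldomain}), then applies the fundamental-domain version of the constellation theorem (Corollary~\ref{corollary=primeconstellationsupperdense}) to $A=\PP_K\cap\DD$, and observes that a fundamental domain by definition contains no associate pairs. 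What the paper's route buys is economy of machinery: the full Theorem~\ref{theorem=primeconstellationsdensesemiprecise} requires the reduction theorem of Section~\ref{section=maintheoremfull} (Theorem~\ref{theorem=fundamental_Omega} and its infinitary variants, which locate a positive-density part of an \emph{arbitrary} dense subset of $\PP_K$ inside some fundamental domain via counting arguments and the geometry of numbers), whereas for $A=\PP_K$ itself one may simply \emph{choose} the fundamental domain in advance and skip that reduction entirely. Your route buys brevity, at the cost of invoking the hardest theorem of the paper as a black box; it is correct, but it inverts the logical order in which the paper develops the results.
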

If the unit group is finite, then we see that Theorem~\ref{theorem=primeconstellationssimple} and Corollary~\ref{corollary=primeconstellationssimple} are equivalent in a simple argument using the pigeonhole principle.
On the other hand, if the unit group is infinite, then Corollary~\ref{corollary=primeconstellationssimple} seems stronger than Theorem~\ref{theorem=primeconstellationssimple}.
Although Corollary~\ref{corollary=primeconstellationssimple} is derived immediately from Theorem~\ref{theorem=primeconstellationsdensesemiprecise}, we prove it prior to Theorem~\ref{theorem=primeconstellationsdensesemiprecise}.
More precisely, we prove Corollary~\ref{corollary=primeconstellationssimple} by using Theorem~\ref{theorem=primeconstellationsfinite} and the existence of a `good' fundamental domain (Section~\ref{section=NLC}); see Subsection~\ref{subsection=proofofmaintheorem}.

Next, we briefly discuss the technical problems of the proofs in the case of general number fields.
Recall that the class number of $K$ can be greater than $1$, in which case prime element factorization in $\OK $ fails.
From this viewpoint, it may be said that prime elements are `few.'
One of the key steps to the proofs of Theorem~\ref{theorem=Green-Tao} and Theorem~\ref{theorem=Gaussian-primes} is to prove \emph{Goldston--Y\i ld\i r\i m type asymptotic formulas}; these are used to confirm the hypotheses of a relative version of the multidimensional Szemer\'edi theorem.
Since the proofs of Goldston--Y\i ld\i r\i m type asymptotic formulas involve the existence and uniqueness of factorizations, it is a non-trivial problem to extend the proof to the case where the class number of $K$ is not $1$.
In the work of Green--Tao and Tao, they consider some variants of the von Mangoldt function to obtain Goldston--Y\i ld\i r\i m type asymptotic formulas. 
However, if the unit group $\OKt$ is infinite, naive generalizations of their variants do \emph{not} make any sense; in their summations, an element would be summed for infinitely many times.

To address the two difficulties above, we switch the framework from that of elements in $\OK$ to that of \emph{ideals}.
This is the standard approach in algebraic number theory since Dedekind.
It also enables us to treat our problems of all number fields \emph{in a unified manner}.

The role of the prime number theorem in the case of $\ZZ $ is played by the \emph{Chebotarev density theorem}; it asserts that principal prime ideals account for a certain proportion in prime ideals.
From this viewpoint, the prime elements are `not too few.'
If we take a fundamental domain for $\OKt\curvearrowright\OK\setminus\{0\}$,
then each prime element in this domain exactly corresponds to each (non-zero) principal prime ideal.
We need to count prime elements with respect to $\lmugen$-length, while prime ideals are counted with respect to (ideal) norms.
To connect these two scales, we introduce the notion of %
\emph{norm-length compatibility} (\emph{NL-compatibility} for short) of fundamental domains;
We will have
a desired estimate of numbers of prime elements measured by $\lmugen$-length in an NL-compatible fundamental domain. Then the \emph{relative multidimensional \Sz \ theorem} applies, and we establish our constellation theorem for this domain.
Next, we prove Theorem~\ref{theorem=primeconstellationsdensesemiprecise}, whose statement does not involve fundamental domains.
For the proof, we will establish
a certain \emph{reduction theorem} of this case to the case with a fundamental domain; see Theorem~\ref{theorem=no_DD_to_with_DD}.
The reduction theorem is proved with the aid of the geometry of numbers.

On the full resolution of the conjecture \eqref{conj=MAIN} in Tao's paper \cite{Tao06Gaussian}, the main novel points are summarized as follows.
\begin{itemize}
\item{[Pseudorandom part]} We formulate the Goldston--Y\i ld\i r\i m type asymptotic formula (Theorem~\ref{Th:Goldston_Yildirim}) by focusing on ideals of $\OK$ instead of elements of $\OK$.
\item{[Counting part]} We employ an NL-compatible fundamental domain for counting of prime elements.
Then we reduce a general case to this setting.
\end{itemize}
In this manner, we can treat the case where the class number is greater than $1$ or the unit group is infinite.

In the last part of this section, we describe an application to binary quadratic forms with integer coefficients, 
which is obtained as a corollary to refinements of our theorems for quadratic fields.
We say that $F\colon\ZZ^2\to\ZZ$ is a \emph{primitive $($binary$)$ quadratic form} if it is of the form $F(x,y)=ax^2+bxy+cy^2$, where $a,b,c$ are integers with $\mathrm{gcd}(a,b,c)=1$.
A fundamental problem in number theory asks which primes, or $-1$ multiples of them, are represented by $F$.
In this paper, motivated by this problem, we obtain a combinatorial theorem for pairs $(x,y)\in \ZZ^2$ satisfying $F(x,y)\in\PP_{\QQ}$.
The detailed statement is presented as Theorem~\ref{mtheorem=quadraticform} in Subsection~\ref{subsection=mainpreciseversion}; the following theorem is a simplified version of it.
The discriminant $D_F$ of $F$ is defined by $D_F\coloneqq b^2-4ac$.
\begin{theorem}[Constellation theorem on prime representations of binary quadratic forms]\label{theorem=quadraticform}
Let $F\colon \ZZ^2\to \ZZ$ be a primitive quadratic form.
Assume that its discriminant $D_F$ is not a perfect square and that $F$ is not negative definite.
Then, for a given finite set $S\subseteq \ZZ^2$, there exists an $S$-constellation $\eS$ in $\ZZ^2$ such that the function $F(x,y)$ takes distinct prime values on $\eS$.
\end{theorem}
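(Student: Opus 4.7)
The plan is to reduce Theorem~\ref{theorem=quadraticform} to the \emph{order-theoretic} version of the main constellation theorem (the extension to pairs $(\OO, \idealc)$ of an order $\OO$ and one of its invertible fractional ideals $\idealc$, developed in this paper and presented in Subsection~\ref{subsection=mainpreciseversion}). The bridge is the classical dictionary of Gauss and Dedekind identifying primitive binary quadratic forms of discriminant $D_F$ with invertible fractional ideals of the unique order of discriminant $D_F$ in the quadratic field $K \coloneqq \QQ(\sqrt{D_F})$.

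First, I would set $K \coloneqq \QQ(\sqrt{D_F})$, which is genuinely quadratic since $D_F$ is not a perfect square, and let $\OO \subseteq \OK$ be the unique order of discriminant $D_F$. The dictionary furnishes an invertible fractional $\OO$-ideal $\idealc$ together with a $\ZZ$-basis $(\omega_1, \omega_2)$ such that
\[
 F(x,y) \;=\; \sigma_F \cdot \frac{N_{K/\QQ}(x\omega_1+y\omega_2)}{N(\idealc)}
\]
for some $\sigma_F \in \{\pm 1\}$; the hypothesis that $F$ is not negative definite allows us to arrange $\sigma_F = +1$ after a possible reorientation of $(\omega_1, \omega_2)$ (equivalently, replacing $\idealc$ by a suitable narrow-class representative when $D_F > 0$). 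Under this correspondence, an integer pair $(x,y)$ makes $F(x,y)$ a positive rational prime precisely when $\alpha \coloneqq x\omega_1 + y\omega_2 \in \idealc$ is a prime element of $\OO$ such that the invertible ideal $\alpha \idealc^{-1}$ of $\OO$ is prime and lies above a rational prime.

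Second, I would translate the shape: an $S$-constellation $(x_0,y_0) + kS$ in $\ZZ^2$ corresponds bijectively, via $(x,y) \mapsto x\omega_1 + y\omega_2$, to an $S^{\ast}$-constellation in $\idealc$ with $S^{\ast} \coloneqq \{s^{(1)}\omega_1 + s^{(2)}\omega_2 : (s^{(1)},s^{(2)}) \in S\}$. Apply the order-theoretic analogue of Theorem~\ref{theorem=primeconstellationsdensesemiprecise} to the pair $(\OO, \idealc)$, but with a carefully chosen positive-density subset $A$ of the prime elements of $\OO$ lying in $\idealc$, engineered so that the rational primes below the selected prime elements are forced to be pairwise distinct. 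Concretely, by \chebden, for each split rational prime $p$ whose two prime divisors in $\OO$ both fall in the ideal class of $\idealc$, the two primes contribute two non-associate classes of prime elements of $\idealc$; include in $A$ only one representative per such pair, selected by a fixed archimedean criterion on $K \otimes_{\QQ} \RR$. This discards at most half the relevant primes and leaves $A$ with positive relative upper asymptotic density; the non-associate constellation in $A$ produced by the theorem then pulls back to an $S$-constellation in $\ZZ^2$ on which $F$ assumes distinct positive rational prime values.

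The main obstacle, and the real content, lies not in this reduction but in verifying that the full chain of machinery — the Goldston--Y\i ld\i r\i m type estimates (Theorem~\ref{Th:Goldston_Yildirim}), the relative multidimensional \Sz\ theorem, and the NL-compatible fundamental-domain argument of Section~\ref{section=NLC} — extends from $\OK$ to a possibly non-maximal order $\OO$ together with a specific invertible ideal class. The paper's systematic reformulation in terms of ideals rather than elements, with a careful treatment of non-invertible ideals of $\OO$, is precisely what makes this extension viable; completing this verification in the order setting is where the bulk of the technical work behind Theorem~\ref{theorem=quadraticform} sits.
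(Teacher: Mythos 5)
Your overall architecture is the same as the paper's: the Gauss--Dirichlet--Dedekind dictionary (the paper's Theorem~\ref{th:quad-classical}, used in the form of Theorem~\ref{th:quadratic-forms-ideals}) converts $F$ into a norm form on an invertible fractional ideal $\idealc$ of the order $\OO$ of discriminant $D_F$, the $\ZZ$-linear isomorphism $\ZZ^2\simeq\idealc$ transports shapes, the order/ideal version of the constellation theorem (Theorem~\ref{th:constellations-in-ideals}) is applied, and distinctness of the prime values is forced by thinning to one representative per rational prime value (the paper's set $\tilde A'$ in the proof of Theorem~\ref{theorem=normform}, with density loss at most a factor $1/n$ by Lemma~\ref{lemma=squarefree}). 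Two points, however, deserve correction. First, your translation of the condition ``$F(x,y)$ is a rational prime'' is off: you ask that $\alpha\idealc^{-1}$ be ``prime and lie above a rational prime,'' but every nonzero prime ideal lies above a rational prime; the correct condition is that the ideal have \emph{prime norm}, i.e.\ residue degree $1$ (otherwise $F(x,y)$ could equal $p^2$ for an inert $p$). The paper isolates this as $\PP_\idealc^{1,\epsilon}$ (Definition~\ref{definition=deg1}, Lemma~\ref{lemma=degree1sig}) and then shows in Lemma~\ref{lemma=density_degree1} that the degree-$\geq 2$ primes have density zero, so the counting is unaffected; this step is genuinely needed and is missing from your sketch. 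Relatedly, ``prime element of $\OO$'' is not the notion the paper uses (and is delicate in a non-maximal, non-Dedekind order); the definition is in terms of prime ideals of $\OK$.

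Second, you locate the main technical burden in ``extending the full chain of machinery from $\OK$ to a possibly non-maximal order $\OO$, with a careful treatment of non-invertible ideals of $\OO$.'' The paper deliberately avoids doing any analytic number theory in $\OO$ itself: the reduction theorem (Theorem~\ref{theorem=reduction_order}, built on Corollary~\ref{cor:coprime} and Proposition~\ref{prop:preservation-norms}) replaces $\idealc$ by $\xi\idealc$ and passes to the integral $\OK$-ideal $\ideala=\xi\idealc\OK$ coprime to the conductor, after which the Goldston--Y{\i}ld{\i}r{\i}m formula for ideals (Theorem~\ref{theorem=GYfordieals}), the pseudorandomness, the NL-compatible fundamental domains, and the signature-refined Chebotarev count (Theorem~\ref{theorem=Chebotarev_narrow}, Theorem~\ref{theorem=counting_PPc}) all take place in the Dedekind domain $\OK$; the order only reappears through the Cartesian diagram of Lemmas~\ref{lem:cartesian} and \ref{lem:tomoni-bijective}. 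Your route as stated would attempt the harder (and unnecessary) task of working directly in $\OO$. Finally, your treatment of the sign (``arrange $\sigma_F=+1$ by reorientation'') glosses over the case distinction the paper makes: for $D_F<0$ positivity of $a$ forces $\signF=+1$, while for $D_F>0$ both signs are reached via the signature component of the narrow class group and Lemma~\ref{lem:existence-sign-modulo}.
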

The above condition on $D_F$ is necessary.
Indeed, if $D_F$ is a perfect square, then $F(x,y)$ is not irreducible over $\ZZ$.
If $F$ is indefinite, the above theorem also implies the existence of an $S$-constellation on which $F(x,y)$ takes distinct negative prime values.
In order to prove Theorem~\ref{theorem=quadraticform} for general coefficients $(a,b,c)$, we extend the framework of our constellation theorem.
More precisely, we consider a pair $(\OO, \idealc)$, where $\OO$ is an order in $K$ and $\idealc$ is an invertible fractional  ideal of $\OO$.
The original case is where $\OO$ and $\idealc$ both equal $\OK$.
\subsection*{Acknowledgments}
The authors are grateful to Seiichi Azuma, Toshiki Matsusaka, Kota Saito, Keiju Sono and Yuta Suzuki for discussions. 
Wataru Kai is supported in part by JSPS KAKENHI grant number JP18K13382.
Masato Mimura is supported in part by JSPS KAKENHI grant number JP17H04822 and 21K03241.
Akihiro Munemasa is supported in part by JSPS KAKENHI grant number JP20K03527.
Shin-ichiro Seki is supported in part by JSPS KAKENHI grant number JP18J00151 and JP21K13762.
Kiyoto Yoshino is supported by JSPS KAKENHI grant number JP21J14427.
\tableofcontents
\section{Precise statements of main theorems and the outline of the proofs}\label{section=organization}
In this section, we state three main theorems in the present paper.
Theorem~\ref{mtheorem=primeconstellationsfinite} is a finitary version of Theorem~\ref{theorem=primeconstellationsdensesemiprecise}.
Theorem~\ref{mtheorem=TaoZieglergeneral} is a short interval version of Theorem~\ref{mtheorem=primeconstellationsfinite}.
Theorem~\ref{mtheorem=quadraticform} is a precise version of Theorem~\ref{theorem=quadraticform}.

In Subsection~\ref{subsection=ideasofproof}, we give 
an overview of the proofs of our constellation theorems.
This mainly follows the ingenious method of Green--Tao and Tao; we simplify 
some detail, which includes the use of a recent result of Conlon--Fox--Zhao.
In our main argument, we construct a fundamental diagram among `five worlds'; see Subsection~\ref{subsection=ideasofproof} for details.
\subsection{Main theorems}\label{subsection=mainpreciseversion}
In this subsection, we present the statements of our main theorems, Theorems~\ref{mtheorem=primeconstellationsfinite}, \ref{mtheorem=TaoZieglergeneral} and \ref{mtheorem=quadraticform}.
First, we define the (\emph{ideal}) \emph{norm} of a non-zero element $\alpha$ of $\OK$ for a number field $K$; see also Remark~\ref{remark=idealnorm}.
As mentioned in Introduction, the \emph{$\lmugen$-length} of $\alpha\in\OK$ is defined for a fixed integral basis $\omom$ of $K$.
Here we state the exact definitions of them.
These are two distinct scales on $\OK$ used in this paper. 
\begin{definition}[Norm]\label{definition=norm}
Let $K$ be a number field of degree $n$.
For $\alpha\in\OKnz$, we define the \emph{norm} $\Nrm(\alpha)$ by
\[
\Nrm(\alpha)\coloneqq\#\left(\OO_K/\alpha \OK\right)\quad (<+\infty).
\]
For a non-negative real number $L$, we set
\[
\OK(L)\coloneqq\{\alpha\in \OKnz : \Nrm (\alpha)\leq L\}.
\]
\end{definition}
\begin{definition}[$\lmugen$-length]\label{definition=lmugenlength}
Let $\calZ$ be a free $\ZZ$-module of rank $n\in\NN$.
Let $\bv=(v_1,v_2,\ldots,v_n)$ be a $\ZZ$-basis of $\calZ$.
Then, we define the {$\lmugen$-length} of $\alpha\in \calZ$ for $\bv$ by
\[
\|\alpha\|_{\infty,\bv}\coloneqq\max_{1\leq i\leq n}|a_i|,
\]
where $\alpha=\sum\limits_{1\leq i\leq n}a_iv_i$.
For a non-negative real number $M$, we set
\[
\calZ(\bv,M)\coloneqq\{\alpha\in \calZ : \|\alpha\|_{\infty,\bv}\leq M\}.
\]
\end{definition}
As is well known, the ring of integers $\OK$ of a number field $K$ of degree $n$ is a free $\ZZ$-module of rank $n$.
We use the symbol $\omom$ for its integral basis in this paper.
In particular, $\lmugen$-length $\|\cdot\|_{\infty,\omom}$ on $\OK$ and the set $\OK(\omom,M)\subseteq\OK$ are defined by Definition~\ref{definition=lmugenlength}.
Furthermore, Definition~\ref{definition=lmugenlength} applies to the case where $\calZ$ is a non-zero ideal $\ideala$ of $\OK$.
For $\alpha\in\OKnz$, its norm $\Nrm(\alpha)$ and its $\lmugen$-length $\|\alpha\|_{\infty,\omom}$ are both positive integers.
Nevertheless, we allow the parameters $L,M$ to be non-negative real numbers in the definitions of $\OO_K(L)$ and $\OO_K(\omom,M)$ above.
This is for avoiding inessential issues of integrality.

We introduce the notion of standard shapes; this is useful for estimates of the number of constellations in our main theorems.

\begin{definition}[Standard shape, the number of $S$-constellations]\label{definition=standardshape}
Let $\calZ$ be a $\ZZ$-module and $S\subseteq \calZ$ a finite set. 
\begin{enumerate}[$(1)$]
\item\label{en:standard_shape} The set $S$ is called a \emph{standard shape} if the following hold: $0\in S$, $S=-S$, and $S$ generates $\calZ$ as a $\ZZ$-module. 
\item\label{en:kosuu} Assume that $S\ne\varnothing$.
For a finite subset $X\subseteq\calZ$, $\scrN_S(X)$ denotes the number of distinct $S$-constellations in $X$.
\end{enumerate}
\end{definition}
Let $\ideala$ be a non-zero ideal of $\OK$.
Then, for $\alpha,\beta \in \ideala \setminus \{0\}$ we say that they are \emph{associate} if these elements lie in the same orbit of the action $\OKt\curvearrowright \ideala\setminus \{0\}$ by multiplication.
For $A\subseteq \ideala \setminus \{0\}$, we define an \emph{associate pair} in $A$ to be a two-point subset $\{\alpha,\beta\}$ of $A$ consisting of associate elements.
We have already defined this concept in Introduction in the case where $\ideala=\OK$.
In this paper, we study the existence of constellations \emph{without associate pairs}.
Furthermore, if the shape $S$ is standard, then we evaluate the number of $S$-constellations without associate pairs.
\begin{definition}\label{definition=hidohan_seiza}
Let $K$ be a number field and $\ideala$ a non-zero ideal of $\OK$.
Let $S$ be a non-empty finite subset of $\OK$.
For a finite set $X\subseteq\ideala$, $\scrN_S^{\sharp}(X)$ denotes the number of $S$-constellations in $X$  without associate pairs.
\end{definition}
Now we exhibit our first main theorem.
This is a finitary version of Theorem~\ref{theorem=primeconstellationsdensesemiprecise} in Introduction.
\begin{mtheorem}[Szemer\'edi-type theorem in the prime elements of a number field: finitary version]\label{mtheorem=primeconstellationsfinite}
Let $K$ be a number field and $\omom$ an integral basis of $K$.
Let $\delta$ be a positive real number and $S$ a finite subset of $\OK$.
Then there exists a positive integer $M_0$ depending on $\omom,\delta$ and $S$ such that the following holds true.
\begin{enumerate}[$(1)$]
\item\label{en:TheoremA_seiza}
If $M\geq M_0$ and a subset $A$ of $\PP_K\cap\OK(\omom,M)$ satisfies
\begin{equation}\label{eq:condition_for_A_thmA}
\#A\geq\delta\cdot\#(\PP_K \cap\OK(\omom,M)),
\end{equation}
then there exists an $S$-constellation in $A$ without associate pairs. 
\item \label{en:TheoremA_seiza_kosuu}
If $S$ is a standard shape, then there exists a constant $\gamma>0$ depending on $\omom,\delta$ and $S$ such that in \eqref{en:TheoremA_seiza},
\[
	\scrN_{S}^{\sharp}(A)\geq\gamma\cdot \frac{M^{n+1}}{(\log M)^{\#S}}
\]
holds true. Here, $n$ is the degree of $K$.
\end{enumerate}
\end{mtheorem}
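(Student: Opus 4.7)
The plan is to follow the relative Szemer\'edi strategy of Green--Tao, but to replace everywhere their reliance on the integers $\ZZ$ by the arithmetic of \emph{ideals} of $\OK$, and to return to the element-level statement via an NL-compatible fundamental domain for the unit group action. The switch to ideals bypasses the two structural obstructions in a general number field (class number greater than $1$ and infinite unit group $\OKt$) because integral ideals factor uniquely and carry a canonical norm regardless of either feature. In broad strokes I proceed in two steps: first I construct a pseudorandom majorant $\nu$ on the lattice $\OK \cong \ZZ^n$ (via the basis $\omom$) that dominates, up to a positive constant, the indicator function of primes of bounded norm, so that the relative multidimensional Szemer\'edi theorem produces constellations inside any dense subset; second, I use an NL-compatible fundamental domain $\DD$ for $\OKt \curvearrowright \OKnz$ (Section~\ref{section=NLC}) to ensure that the primes in my constellation can be chosen pairwise non-associate.

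For the pseudorandom majorant, the proposal is the natural analog of the Green--Tao truncated Selberg sieve, but written as a sum over integral \emph{ideals} of $\OK$ rather than over elements. The verification of the linear-forms and correlation conditions (in the streamlined dual-function form due to Conlon--Fox--Zhao) reduces, by standard manipulations, to evaluating singular series that are products of local Euler factors attached to prime ideals of $\OK$. The residue of the Dedekind zeta function $\zeta_K(s)$ at $s = 1$ provides the main term, while the Chebotarev density theorem ensures that the proportion of principal prime ideals among all prime ideals of norm up to $L$ is $1/h_K + o(1)$; together these give the correct normalization and furnish the Goldston--Y\i ld\i r\i m-type asymptotic formula flagged as Theorem~\ref{Th:Goldston_Yildirim}. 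This verification is the main technical hurdle I anticipate: the conditions must be checked uniformly in the shape $S$ and with care about local factors at primes of small norm and at ramified primes.

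With $\nu$ in hand, the relative multidimensional Szemer\'edi theorem immediately delivers an $S$-constellation in the given dense subset $A$, but one must still guarantee that its vertices are pairwise non-associate. Here the NL-compatible fundamental domain $\DD$ enters: by partitioning $\PP_K \cap \OK(\omom, M)$ into $\OKt$-orbits (each meeting $\DD$ in a unique point) and applying a density/pigeonhole argument, I may restrict attention to a subset of $A$ whose elements lie in $\DD$ (or a finite union of $\DD$-translates), with density relative to $\PP_K \cap \DD \cap \OK(\omom, CM)$ bounded below by a positive constant depending only on $\delta$, $\omom$, and $S$; since distinct elements of $\DD$ lie in distinct $\OKt$-orbits, any constellation found inside this subset is automatically free of associate pairs, proving part~(1). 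For part~(2), the quantitative form of the relative counting lemma upgrades the existence statement to
\[
  \scrN_S^{\sharp}(A) \;\geq\; \gamma \cdot M^{n+1}(\log M)^{-(\#S + o(1))},
\]
where the $M^{n+1}$ reflects the $n$ parameters in the base point and the single dilation parameter, and each of the $\#S$ factors of $1/\log M$ records the typical density of primes at a vertex of the constellation; a residual subtraction handles constellations containing an associate pair, whose contribution is bounded by Dirichlet's unit theorem and absorbed into the $o(1)$.
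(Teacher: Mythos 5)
Your overall architecture --- a truncated divisor sum over integral ideals normalized by the residue of $\zeta_K$, verification of pseudorandomness via a Goldston--Y\i ld\i r\i m computation whose singular series is an Euler product over prime ideals, the Conlon--Fox--Zhao relative removal lemma, and NL-compatible fundamental domains to pass back from ideals to elements --- is the same as the paper's. (One small correction: the point of Conlon--Fox--Zhao is that the correlation condition is \emph{not} needed; only the linear forms condition is verified. Also, Chebotarev enters the paper through the counting of principal prime ideals, not through the normalization of the majorant, which comes from $\kappa$ and the $W$-trick.) The genuine gap is in your treatment of associate pairs. You propose to partition $\PP_K\cap\OK(\omom,M)$ into $\OKt$-orbits and pigeonhole $A$ into a fixed NL-compatible fundamental domain $\DD$ or a finite union of its translates. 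When $\OKt$ is infinite this fails: by Lemma~\ref{lemma=OKt_orbit}, a single orbit $\OKt\cdot\alpha$ can meet $\OK(\omom,M)$ in as many as $\asymp(\log M)^{r_1+r_2-1}$ points when $\Nrm(\alpha)$ is small compared with $M^n$, so the number of translates $\eta\DD$ meeting $\OK(\omom,M)$ grows with $M$, and the pigeonhole only yields a subset of relative density $\gtrsim\delta/(\log M)^{r_1+r_2-1}$, which is useless as input to the relative multidimensional Szemer\'edi theorem (that theorem needs a density bounded below uniformly in $M$). Moreover, a union of several translates of $\DD$ does contain associate pairs, so restricting to such a union does not by itself yield part~(1); and for a \emph{prescribed} $\DD$ the set $A\cap\DD$ could even be empty.

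The missing step is the paper's reduction theorem (Theorem~\ref{theorem=fundamental_Omega}, resting on Proposition~\ref{proposition=get_Omega} and Lemma~\ref{lemma=Omega}): one first discards from $A$ all elements of norm at most $\Omega M^n$, where $\Omega$ is chosen --- using Landau's prime ideal theorem together with an Abel-summation estimate against the orbit-multiplicity bound --- so that these account for at most half of $A$; on the surviving elements the norm is comparable to $M^n$, whence by the logarithmic-embedding count each $\OKt$-orbit meets $\OK(\omom,M)$ in only $O_{\Omega}(1)$ points, and a system of representatives $A_0$ of the associate classes retains a positive proportion of $A$. The fundamental domain is then built \emph{around} $A_0$ rather than fixed in advance, and the relative Szemer\'edi machinery is applied to $A_0$, so that $\scrN_S(A_0)=\scrN_S^{\sharp}(A_0)\leq\scrN_S^{\sharp}(A)$ with no subtraction needed. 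Your alternative ``residual subtraction'' for part~(2) could perhaps be made to work, but it would still require the orbit-multiplicity bound of Lemma~\ref{lemma=OKt_orbit} to control the number of constellations containing an associate pair, and it does not by itself deliver part~(1); as written, the proposal supplies neither estimate.
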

On the estimate of the number of $S$-constellations in the main theorem, Theorem~\ref{mtheorem=primeconstellationsfinite}~\eqref{en:TheoremA_seiza_kosuu}, it seems natural that the negative power of $\log$ appears 
in view of 
the Chebotarev density theorem (Theorem~\ref{theorem=Chebotarev}~\eqref{Chebotarev}).
For a given finite set $S\subseteq\OK$, we can construct a standard shape by inflating $S$ as follows:
add some elements of a basis of $\OK$ to $S$ if $S$ does not generate $\OK$.
Let $\eS$ be the resulting set, and consider $\eS\cup (-\eS) \cup \{0\}$. 
Note that for some $S$, the inflating process above may be done in a better manner.
For this reason, the assumption on $S$ in Theorem~\ref{mtheorem=primeconstellationsfinite}~\eqref{en:TheoremA_seiza_kosuu} does not lose its generality.
In general, for a finite set $X\subseteq \OKnz$, the inequality $\scrN_{S}^{\sharp}(X)\leq \scrN_{S}(X)$ holds.
Hence, we have also a lower bound of $\scrN_{S}(A)$ by Theorem~\ref{mtheorem=primeconstellationsfinite}~\eqref{en:TheoremA_seiza_kosuu}.

We use the terms `finitary versions' and `infinitary versions' in the following standard manner:
a statement of the existence of constellations in a certain subset of the set of the form $\calZ(\bv,M)$ for sufficiently large $M$ is called a `finitary' statement.
A statement on the existence of constellations in a subset $A$ of a certain subset $X$ of $\calZ$ where $A$ has a positive relative upper asymptotic density in $X$ is called
an `infinitary' one. 
Theorem~\ref{mtheorem=primeconstellationsfinite} is an example of the former; Theorem~\ref{theorem=primeconstellationsdensesemiprecise} is one of the latter.

Our second main theorem may be seen as a `short interval version' of Theorem~\ref{mtheorem=primeconstellationsfinite}.
\begin{definition}[$\lmugen$-interval]\label{definition=shortinterval}
Let $\calZ$ be a free $\ZZ$-module of finite rank and $\bv$ its $\ZZ$-basis.
For $x\in\calZ$ and a positive real number $M$, the \emph{$\lmugen$-interval $\calZ(\bv,x,M)$} is defined to be
\[
\calZ(\bv,x,M)\coloneqq\{\alpha\in\calZ : \|\alpha-x\|_{\infty,\bv}\leq M\}.
\]
\end{definition}
\begin{mtheorem}[A short interval version of the Szemer\'edi-type theorem in the prime elements of a number field: finitary version = Theorem~\ref{theorem=shortinterval}]\label{mtheorem=TaoZieglergeneral}
Let $K$ be a number field and $\omom$ an integral basis of $K$.
Let $\delta$ be a positive real number and $S$ a finite subset of $\OK$.
Take a real number $a$ with $0<a<1$. 
Then there exist a positive integer $M_0$ depending on $\omom,\delta,S$ and $a$, and a positive real number $\etaUpsilon >0$ depending only on $\omom$ and $\delta$ such that the following hold true.
\begin{enumerate}[$(1)$]
\item\label{en:tan_seiza_thmB}  If $M\geq M_0$ and a subset $A$ of $\PP_K\cap\OO_K(\omom,M)$ satisfies
\begin{equation}\label{eq:thmB_cond_for_A}
\#A\geq\delta\cdot\#(\PP_K \cap\OO_K(\omom,M)),
\end{equation}
then there exists $x\in A$ with
\begin{equation}\label{eq=condition_for_x_in_short_interval_thmB}
	 \etaUpsilon M \leq \|x\|_{\infty,\omom} \leq M
\end{equation}
such that $A\cap\OK(\omom,x,\|x\|_{\infty,\omom}^a)$
contains an $S$-constellation without associate pairs.
\item\label{en:tan_seiza_kosuu_thmB} If $S$ is a standard shape, then there exists a constant $\gamma>0$ depending on $\omom,\delta,S$ and $a$ such that the following holds: in \eqref{en:tan_seiza_thmB}, we can take $x$ in such a way that
\[
\mathscr{N}_S^{\sharp}(A \cap \OO_K(\omom,x,\|x\|_{\infty,\omom}^a))\geq \gamma \cdot  \frac{M^{a(n+1)}}{(\log M)^{\#S}}
\]
holds. Here, $n$ is the degree of $K$.
\end{enumerate}
\end{mtheorem}
An infinitary version of Theorem~\ref{mtheorem=TaoZieglergeneral} will be stated as Corollary~\ref{corollary=shortinterval} in Section~\ref{section=slidetrick}.
The case $K=\QQ$ of Corollary~\ref{corollary=shortinterval} is written in \cite[Remark~2.4]{Tao-Ziegler08} in more general `polynomial progression' setting; see Remark~\ref{remark=TaoZiegler}.

In the last part of this subsection, we state a precise version of Theorem~\ref{theorem=quadraticform} as an application to binary quadratic forms with integer coefficients.
Let $F(x,y)=ax^2+bxy+cy^2\in \ZZ[x,y]$ be  a quadratic form and $D_F=b^2-4ac$ its discriminant.
Assume that $D_F$ is not a perfect square.
By multiplying $-1$ if necessary, we may assume that $a>0$.
If $D_F<0$, then $F$ is positive definite and if $D_F>0$, then $F$ is indefinite.
In this paper, $\PP=\{2,3,5,7,11,\ldots\}$ denotes the set of \emph{positive} rational prime numbers. We consider the set $F^{-1}(\PP)$ $($respectively, $F^{-1}(-\PP)$$)$ of elements $(x,y)$ at which the value of $F$ $($respectively, $-F$$)$ is a prime number:
\[
F^{-1}(\PP)= \{(x,y)\in \ZZ^2 : F(x,y)\in \PP\},\quad\textrm{and} \quad  F^{-1}(-\PP)=\{(x,y)\in \ZZ^2 : -F(x,y)\in \PP\}.
\]

The following is our third main theorem.
\begin{mtheorem}[Szemer\'edi-type  theorem on prime representations of binary quadratic forms
]\label{mtheorem=quadraticform}
Let $F(x,y)\coloneqq ax^2+bxy+cy^2 \in \ZZ[x,y]$ be a  primitive quadratic form whose discriminant $D_F$ is not a perfect square.
Assume that $a>0$.
Let $\boldsymbol{u}$ be the standard basis of $\ZZ^2$.
\begin{enumerate}[$(1)$]
\item\label{en:positiveprime} Let $A\subseteq F^{-1}(\PP)$ be a set which has a positive relative upper asymptotic density measured by $\|\cdot\|_{\infty,\boldsymbol{u}}$ in $F^{-1}(\PP)$, that means
\[
\overline{d}_{F^{-1}(\PP),\boldsymbol{u}}(A)\coloneqq \limsup_{M\to \infty}\frac{\#(A\cap \ZZ^2(\boldsymbol{u},M))}{\# (F^{-1}(\PP)\cap \ZZ^2(\boldsymbol{u},M))}>0.
\]
Then, for every finite set $S\subseteq \ZZ^2$, there exists an $S$-constellation $\eS$ in $A$.
\item\label{en:negativeprime}
Assume that $D_F>0$.
Let $A\subseteq F^{-1}(-\PP)$ be a set which has a positive relative upper asymptotic density measured by $\|\cdot\|_{\infty,\boldsymbol{u}}$ in $F^{-1}(-\PP)$, that means
\[
\overline{d}_{F^{-1}(-\PP),\boldsymbol{u}}(A)\coloneqq \limsup_{M\to \infty}\frac{\#(A\cap \ZZ^2(\boldsymbol{u},M))}{\# (F^{-1}(-\PP)\cap \ZZ^2(\boldsymbol{u},M))}>0.
\]
Then, for every finite set $S\subseteq \ZZ^2$, there exists an $S$-constellation $\eS$ in $A$.
\end{enumerate}
In both \eqref{en:positiveprime} and  \eqref{en:negativeprime},
we can furthermore take $\eS$ in such a way that
the function $F(x,y)$ takes distinct prime values on $\eS$.
\end{mtheorem}

Theorem~\ref{mtheorem=quadraticform}~\eqref{en:positiveprime} says the constellation theorem holds for $F^{-1}(\PP)$; %
Theorem~\ref{mtheorem=quadraticform}~\eqref{en:negativeprime} says if moreover $D_F>0$, then the constellation theorem also holds for $F^{-1}(-\PP)$. %
See Theorem~\ref{theorem=normform} for a general statement on norm forms. 
\subsection{Constellations in prime elements inside a fundamental domain}\label{subsection=domain}
In this subsection, we state Theorem~\ref{theorem=primeconstellationsfinite} (a finitary version) and Corollary~\ref{corollary=primeconstellationsupperdense} (an infinitary version), 
restricted forms of Theorem~\ref{mtheorem=primeconstellationsfinite} and Theorem~\ref{theorem=primeconstellationsdensesemiprecise}.
Proving these theorems is the \emph{first goal of this paper}, and their proofs contain most of the new ideas
of this paper.

The difference between their settings is
whether we consider sets inside a fundamental domain for the action $\OKt\curvearrowright \OKnz$ by multiplication.
If we take a fundamental domain $\DD$, then the correspondence $\alpha\mapsto \alpha\OK$ gives a bijection from $\DD$ to the set of non-zero principal ideals and hence counting of elements is reduced to that of ideals.

We call a fundamental domain for the action $\OKt\curvearrowright \OKnz$ an \emph{$\OKt$-fundamental domain} in this paper.
We will use this terminology without referring to the action any further.
\begin{definition}[$\OKt$-fundamental domain]\label{definition=fundamentaldomain}
A set $\DD\subseteq\OKnz$ is called an \emph{$\OKt$-fundamental domain}, if $\OKnz$ is decomposed as the following disjoint union:
\[
\OKnz=\bigsqcup_{\eta\in \OKt} \eta \DD.
\]
\end{definition}
We formulate a prime element constellation theorem inside an $\OKt$-fundamental domain $\DD$ as follows: we consider $\PP_K \cap \DD$ instead of $\PP_K$, and take a subset $A$ of it.
Here we warn that if $\#(\OKt)=\infty$, then there exists an $\OKt$-fundamental domain $\DD$ such that the constellation theorem for $\PP_K\cap \DD$ does \emph{not} hold.
This fact will be proved as Proposition~\ref{proposition=badchoicedomain}.
Thus, the following question arises: ``for which $\OKt$-fundamental domain can we ensure that a constellation theorem holds?''
We answer this question by introducing the notion of \emph{NL-compatible} fundamental domains.
The NL-compatibility is defined for a subset of $\OKnz$ as follows.
\begin{definition}[NL-compatibility]\label{definition=normrespecting}
Let $K$ be a number field of degree $n$.
A set $X\subseteq\OKnz$ is \emph{NL-compatible} (\emph{norm-length compatible}) if the following condition is satisfied: there exist an integral basis $\omom$ of $K$ and a constant $C=C(\omom,X)>0$ such that, for every $\alpha\in X$,
\begin{equation}\label{eq:NL-comp}
C \|\alpha\|_{\infty,\omom}^n\leq\Nrm(\alpha)
\end{equation}
holds.
\end{definition}

The existence of a constant $C$ above is independent of the choice of an integral basis $\omom$; the exact value of $C$ depends.
Note that the opposite inequality always holds:
there exists $C'=C'(\omom)>0$ such that for all $\alpha\in \OKnz$, we have 
$ %
\Nrm (\alpha)\leq C' \|\alpha\|_{\infty,\omom}^n .
$ %
The NL-compatibility has a basis-free characterization %
in terms of the geometry of numbers. %
If the unit group $\OKt$ is infinite, then not all $\OKt$-fundamental domains are NL-compatible.
Nevertheless,
NL-compatible $\OKt$-fundamental domains always exist.
These results are shown in Section \ref{section=NLC}.
\begin{theorem}[Theorem~\ref{mtheorem=primeconstellationsfinite} restricted to an NL-compatible fundamental domain = Theorem~\ref{theorem=primeconstellationsfiniteagain}]\label{theorem=primeconstellationsfinite}
Let $K$ be a number field, $\omom$ an integral basis of $K$ and $\DD$ an NL-compatible $\OKt$-fundamental domain $($which exists by Proposition~$\ref{proposition=normrespectingfundamentaldomain})$.
Let $\delta$ be a positive number and $S$ a finite subset of $\OK$.
Then there exists a positive integer $M_0$ depending on $\omom,\DD,\delta$ and $S$ such that the following hold:
if $M\geq M_0$ and a subset $A$ of  $\PP_K\cap\DD\cap\OK(\omom,M)$ satisfies
\begin{equation}\label{eq:condition_for_A_fundamentaldomain_thm2.8}
\#A\geq\delta\cdot\#(\PP_K\cap \DD \cap\OO_K(\omom,M)),
\end{equation}
then there exists an $S$-constellation in $A$. 
Furthermore, if $S$ is a standard shape, then there exists a constant $\gamma>0$ depending only on $\omom,\DD,\delta$ and $S$ such that in the setting above,
\[
\scrN_{S}(A)\geq\gamma\cdot \frac{M^{n+1}}{(\log M)^{\#S}}
\]
holds true.
Here $n$ is the degree of $K$.

\end{theorem}
Note that, for a finite subset $A$ of an $\OKt$-fundamental domain, we have $\scrN_{S}(A)=\scrN_{S}^{\sharp}(A)$ because constellations in $A$ never admit associate pairs.

The proof of Theorem~\ref{theorem=primeconstellationsfinite} is completed in Section~\ref{section=positiveweighteddensity}.
It is used to prove Corollary~\ref{corollary=primeconstellationssimple} in Subsections~\ref{subsection=proofofmaintheorem}.
Let us deduce an infinitary consequence of Theorem~\ref{theorem=primeconstellationsfinite}.
In order to state it, we define the relative upper asymptotic density for both the norm scale and the $\lmugen$-length scale.
The latter is defined also in Introduction.
\begin{definition}\label{definition=relativedensity}
Let $X$ be a non-empty subset of $\OKnz$ and $A$ a subset of $X$.
\begin{enumerate}[$(1)$]
\item\label{en:normdensity} Assume that, for every non-negative real number $L$,  $X$ satisfies $\#(X\cap\OK(L))<\infty$.
Then the \emph{relative upper asymptotic density of $A$ measured by norm} in $X$ is defined by
\[
\overline{d}_{X}(A)\coloneqq\limsup_{L\to\infty}\frac{\#(A\cap \OO_K(L))}{\#(X \cap \OO_K(L))}.
\]
\item\label{en:lenghtdensity} Let $\omom$ be an integral basis of $K$.
Then the \emph{relative upper asymptotic density of $A$ measured by $\lmugen$-length} in $X$ is defined by
\[
\overline{d}_{X,\omom}(A)\coloneqq\limsup_{M\to\infty}\frac{\#(A\cap \OO_K(\omom,M))}{\#(X \cap \OO_K(\omom,M))}.
\]
\end{enumerate}
\end{definition}
For an $\OKt$-fundamental domain $\DD$, the set $X=\PP_K \cap \DD$ satisfies the assumption in \eqref{en:normdensity}, while $X=\PP_K$ does not if $\#(\OKt)=\infty$.
The norm scale naturally appears in algebraic number theory.
However, to prove constellation theorems, we need to convert this setting to that of the $\lmugen$-length scale.
The NL-compatibility enables this conversion. %
\begin{corollary}[Corollary~\ref{corollary=primeconstellationssimple} restricted to an NL-compatible fundamental domain]\label{corollary=primeconstellationsupperdense}
Let $K$ be a number field and $\DD$ an NL-compatible $\OKt$-fundamental domain.
Assume that a set $A\subseteq\PP_K\cap\DD$ satisfies either $\overline{d}_{\PP_K\cap \DD}(A)>0$ or $\overline{d}_{\PP_K\cap \DD,\omom}(A)>0$ for an integral basis $\omom$ of $K$.
Then there exist constellations of an arbitrary shape in $\OK$ consisting of elements of $A$.
\end{corollary}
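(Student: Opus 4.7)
The plan is to deduce the corollary from the finitary Theorem~\ref{theorem=primeconstellationsfinite} by a standard infinitary-to-finitary transference. Fix an arbitrary finite set $S\subseteq\OK$; to exhibit an $S$-constellation inside $A$, it suffices to produce a single (large) scale at which a quantitative density inequality of the form \eqref{eq:condition_for_A_fundamentaldomain_thm2.8} holds along a subset of $A$.

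First I handle the $\lmugen$-length density case, which is essentially immediate. If $\overline{d}_{\PP_K\cap\DD,\omom}(A)=:2\delta>0$, the definition of $\limsup$ furnishes a sequence $M_k\to\infty$ with
$$\#(A\cap\OO_K(\omom,M_k))\ \geq\ \delta\cdot\#(\PP_K\cap\DD\cap\OO_K(\omom,M_k)).$$
For every $k$ with $M_k\geq M_0(\omom,\DD,\delta,S)$, Theorem~\ref{theorem=primeconstellationsfinite}(1) applied to the subset $A\cap\OO_K(\omom,M_k)$ of $\PP_K\cap\DD\cap\OO_K(\omom,M_k)$ produces an $S$-constellation, which is contained in $A$.

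Next I reduce the norm density case to the $\lmugen$-length case. Fix any integral basis $\omom$, together with the NL-compatibility constant $C=C(\omom,\DD)>0$ of Definition~\ref{definition=normrespecting} and the universal constant $C'=C'(\omom)$ of Lemma~\ref{lemma=NLCreversed}. The two-sided bound $C\|\alpha\|_{\infty,\omom}^n\leq \Nrm(\alpha)\leq C'\|\alpha\|_{\infty,\omom}^n$ for $\alpha\in\DD$ gives, with the choice $M=(L/C)^{1/n}$, the sandwich
$$\OO_K(L)\cap\DD\ \subseteq\ \OO_K(\omom,M)\cap\DD\ \subseteq\ \OO_K\bigl((C'/C)L\bigr)\cap\DD.$$
Starting from a sequence $L_k\to\infty$ witnessing $\overline{d}_{\PP_K\cap\DD}(A)\geq\delta>0$ and setting $M_k:=(L_k/C)^{1/n}$, the sandwich combined with $A\subseteq\DD$ yields
$$\frac{\#(A\cap\OO_K(\omom,M_k))}{\#(\PP_K\cap\DD\cap\OO_K(\omom,M_k))}\ \geq\ \delta\cdot\frac{\#(\PP_K\cap\DD\cap\OO_K(L_k))}{\#(\PP_K\cap\DD\cap\OO_K((C'/C)L_k))}.$$
Since prime elements inside $\DD$ are in bijection with non-zero principal prime ideals via $\alpha\mapsto\alpha\OK$, the asymptotic $\#(\PP_K\cap\DD\cap\OO_K(L))\sim\kappa\,L/\log L$ (of Landau/Chebotarev type for principal prime ideals of bounded norm, available from the number-theoretic tools of the paper) forces the right-hand ratio to tend to $C/C'$. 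Hence $\overline{d}_{\PP_K\cap\DD,\omom}(A)\geq\delta\cdot C/C'>0$, reducing this case to the previous one.

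The delicate step is the density-scale conversion in the second paragraph: one must know that $\Nrm$ and $\|\cdot\|_{\infty,\omom}^n$ are two-sidedly comparable on $\DD$ (this is exactly the NL-compatibility hypothesis, together with the automatic reverse bound) \emph{and} that the prime-counting function $\#(\PP_K\cap\DD\cap\OO_K(L))$ grows regularly, so that shifting $L$ by a bounded multiplicative factor perturbs it only by a bounded multiplicative factor. Both ingredients are precisely what NL-compatibility is designed to enable, so once they are invoked, the transference is automatic and the corollary reduces to the finitary Theorem~\ref{theorem=primeconstellationsfinite}.
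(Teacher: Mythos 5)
Your proposal is correct and follows essentially the same route as the paper: the paper's proof invokes Proposition~\ref{proposition=densityequivalence} (proved in Subsection~7.3 from exactly the two ingredients you isolate, namely Lemma~\ref{lemma=NLC-incl} and the Chebotarev-type count \eqref{eq:weak-Chebotarev}) to pass between the two density notions, and then applies Theorem~\ref{theorem=primeconstellationsfiniteagain} at a scale $M_m$ witnessing the positive upper density. The only cosmetic slip is that the constant in the principal-prime count is $1/h$ rather than $\kappa$, which does not affect the argument since only the regular variation of $L/\log L$ is used.
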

Corollary~\ref{corollary=primeconstellationsupperdense} immediately follows from Theorem~\ref{theorem=primeconstellationsfinite}
at least if we assume the second condition $\overline{d}_{\PP_K\cap \DD,\omom}(A)>0$; the deduction is written down in full in Subsection \ref{subsection=proofofmaintheorem}.
In fact, the conditions $\overline{d}_{\PP_K\cap \DD}(A)>0$ and $\overline{d}_{\PP_K\cap \DD,\omom}(A)>0$ are equivalent; see Subsection~\ref{subsection=normlengthcomparison}.
\subsection{The idea of proofs and the organization of this paper}\label{subsection=ideasofproof}
We give an overview of the proof of our first major goal, Theorem~\ref{theorem=primeconstellationsfinite}.
We use the relative hypergraph removal lemma of Conlon--Fox--Zhao (\cite[Theorem~2.12]{Conlon-Fox-Zhao15}, Theorem~\ref{thm:RHRL}) as a black box.
Contrastingly, we do not appeal to the existing constellation theorems, Theorem~\ref{theorem=Green-Tao} or Theorem~\ref{theorem=Gaussian-primes}, in the proof.
Our proofs yield these theorems as special cases.

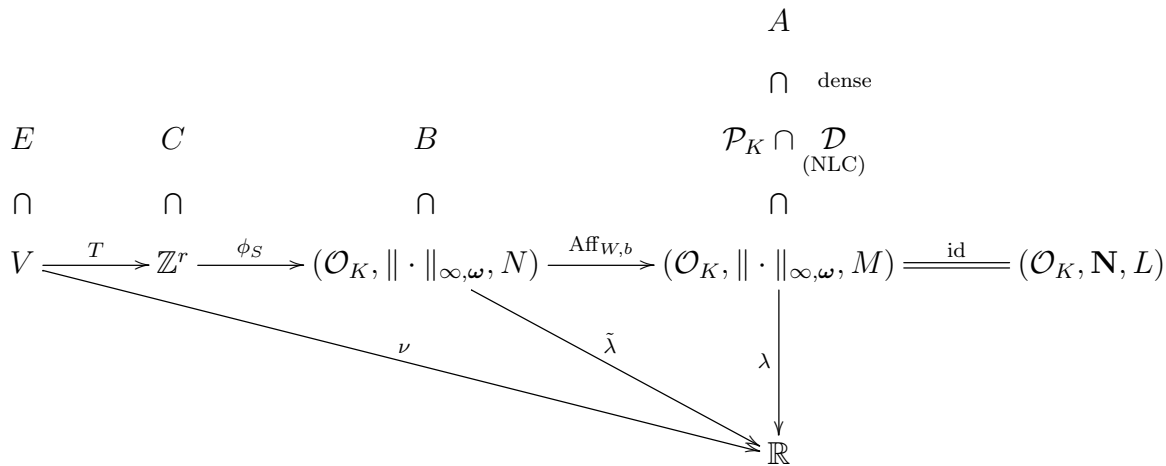
\begin{figure} 
	\xymatrix@C=40pt@R=25pt{
		& & & A \ar@{}[d]^-{\quad \text{dense}}|{\bigcap} &
		\\
		E \ar@{}[d]|{\bigcap} & C \ar@{}[d]|{\bigcap} & B \ar@{}[d]|{\bigcap} & \quad\PP_K\cap\underset{(\text{NLC})}{\DD} \ar@{}[d]|{\bigcap}  & 
		\\
		V \ar[r]^-T\ar[ddrrr]^{\nu} & \ZZ^r \ar[r]^-{\phi_S} & (\OK,\|\cdot\|_{\infty,\omom},N)\ar[r]^-{\Aff_{W,b}}\ar[ddr]^{\tilde{\lambda}} & (\OK,\|\cdot\|_{\infty,\omom},M) \ar@{=}[r]^-{\mathrm{id}}\ar[dd]_{\lambda} & (\OK, \Nrm,L) 
		\\
		{}
		\\
		& & & \RR &
	}
\caption{The fundamental diagram}\label{figure:the_fundamental_diagram}
\end{figure}

In what follows, we fix a number field $K$, its integral basis $\omom$ and a standard shape $S\subseteq\OK$.
Our goal is to find an $S$-constellation in the given set $A\subseteq \PP _K\cap \DD $. 
We will do so by going back and forth among the `five worlds' 
in the `fundamental diagram'; see Figure~\ref{figure:the_fundamental_diagram}.
Let us describe these worlds in the left-to-right order.
\begin{itemize}
\item\emph{The world of hypergraphs} $V$: $V$ is an ($r$-uniform) hypergraph system which is constructed in Subsection~\ref{subsec:psuedo-random-measure} 
following Solymosi's idea.
Each vertex in $V$ represents a hyperplane in $\ZZ^r$.
The mapping $T$, which connects two worlds $V$ and $\ZZ^r$, maps a hyperedge in $V$ to the intersection of the corresponding $r$ hyperplanes.\footnote{Actually we do not construct a single mapping $T$, but a family of mappings $T_j$ ($1\leq j\leq r+1$). We use a similar convention for $E$ and $\nu$. See Subsection~\ref{subsec:psuedo-random-measure} for details.}
\item\emph{The world of higher dimension} $\ZZ^r$: this is a free $\ZZ$-module of rank higher than (or equal to) that of $\OK\simeq\ZZ^n$.
The rank $r$ is determined by the relation $r+1=\#S$. %
Via a homomorphism $\phi _S\colon \ZZ ^r \to \OK $ associated to $S$,
the standard basis and the origin of $\ZZ ^r$ provide a canonical lift of the shape $S$.
A constellation in $\ZZ ^r$ with this shape is called a \emph{corner}.
\item\emph{The $N$-world} $(\OO_K,\|\cdot\|_{\infty,\omom},N)$: the following three worlds are all $\OK$ as sets.
In this world, we construct a pseudorandom measure $\tilde{\lambda}$ by `$W$-trick.'
We use the $\lmugen$-length scale $\|\cdot\|_{\infty,\omom}$ and the parameter $N$ to limit the scale.
\item\emph{The $M$-world} $(\OO_K,\|\cdot\|_{\infty,\omom},M)$: this is the world where the given set $A$ lives, and it is connected to the $N$-world by an affine transformation.
We use the $\lmugen$-length scale $\|\cdot\|_{\infty,\omom}$ and the parameter $M$, which will be larger than $N$ by a relatively small factor.
\item\emph{The $L$-world} $(\OK, \Nrm,L)$: this world is connected to the $M$-world by the identity map.
The difference between these two worlds is that we take the (ideal) norm scale $\Nrm (\cdot )$ in the $L$-world.
We use the parameter $L$, which will differ from $M^n$ by a constant factor.
\end{itemize}
The sketch of the proofs of Theorem~\ref{theorem=primeconstellationsfinite} and Corollary~\ref{corollary=primeconstellationsupperdense} goes as follows.

\

\noindent\textbf{Step~1}: We start the proof from the `$M$-world.'
Take an NL-compatible $\OKt$-fundamental domain $\DD$ and a relatively dense subset $A$ of $\PP_K\cap\DD$.
More precisely, the relative density of $A$ measured by $\lmugen$-length $\|\cdot\|_{\infty,\omom}$ in $\PP_K\cap\DD$ is greater than a certain positive constant.
Our goal is to prove that if the parameter $M$ is large enough, then there exists an $S$-constellation in $A\cap \OK(\omom, M)$.
Since $\PP_K$ is sparse in $\OK$, we cannot apply the classical multidimensional Szemer\'edi theorem (Theorem~\ref{theorem=multiSzemeredi}) directly to $A \cap \OK(\omom, M)$.
Instead, we aim to show that the `weighted density' of $A$ with respect to a certain weight function $\lambda$ is not small.
We define the weight function $\lambda$ by using a variant of the von Mangoldt function.
Since the norm scale $\Nrm(\cdot)$ is easier to measure the desired density than the $\lmugen$-length scale $\|\cdot\|_{\infty,\omom}$, we switch to the `$L$-world.'

\

\noindent\textbf{Step~2}:
Apply a version of the Chebotarev density theorem (Theorem~\ref{theorem=Chebotarev}~\eqref{Chebotarev}) in the `$L$-world,' and deduce the following: if $A$ is relatively dense measured by the norm $\Nrm$ in $\PP_K\cap\DD$, then the weighted density of $A$ with respect to $\lambda$ 
is greater than a certain constant.

\

\noindent\textbf{Step~3}: Then we return to the `$M$-world.'
The NL-compatibility of $\DD$ relates the density by norm to that by $\lmugen$-length.
Hence, we conclude that the weighted density of $A$ with respect to $\lambda$ measured by $\lmugen$-length is greater than a certain positive constant if the parameter $M$ is large enough.

To the best of our knowledge, in order to apply known Szemer\'edi-type theorems to this $A$, we need to confirm an extra condition on the measure $\lambda$.
More precisely, we aim to employ the relative multidimensional Szemer\'edi theorem, originating from Green--Tao~\cite{Green-Tao08}; $\lambda$ is required to be a \emph{pesudorandom measure}.
Since it is difficult to show the pseudorandomness of $\lambda$, we precompose a certain affine transformation to $\lambda$ and investigate this modified function instead of $\lambda$.
Via this affine transformation, we switch from the `$M$-world' to the `$N$-world.'

\

\noindent\textbf{Step~4}: Choose an appropriate parameter $w$, and define $W$ to be the product $W=\prod _{p\le w} p $ of prime numbers not exceeding $w$.
We perform the following `$W$-trick':
pick $b\in \OK$ according to $A$, $w$ and $M$. 
We define $B\subseteq \OK $ as the inverse image of $A$ under the affine transformation $\Aff_{W,b}; \beta\mapsto W\beta +b$.
We consider $\lambda \circ \Aff_{W,b}$ and define $\tilde{\lambda}$ 
by multiplying a normalizing factor.
Then we can prove the pseudorandomness of $\tilde{\lambda}$ by using the \emph{Goldston--Y\i ld\i r\i m type asymptotic formula} (Theorem~\ref{Th:Goldston_Yildirim}).
In its proof, the $W$-trick helps to eliminate the bias caused by small prime numbers.
In the `$N$-world,' we can show that the weighted density of $B$ with respect to the weight $\tilde{\lambda}$ is still greater than a constant.
Thus, it is possible to apply the \emph{relative multidimensional Szemer\'edi theorem} (Theorem~\ref{thm:RMST}) and obtain an $S$-constellation in $B$.
In what follows, we also describe how this latter theorem is proved. %
For this purpose, the `world of higher dimension $\ZZ^r$' shows up.

\

\noindent\textbf{Step~5}: Let $C\subseteq \ZZ ^r$ be the inverse image of $B$ under $\phi_S$ in the `world of higher dimension $\ZZ^r$.'
Then the weighted density of $C$ with respect to the weight $\tilde{\lambda}\circ\phi_S$ is still greater than a constant.

\

\noindent\textbf{Step~6}: A weighted hypergraph $\nu$ on $V$ is constructed from $\tilde{\lambda}\circ\phi_S$, and the pseudorandomness of $\nu$ follows from that of $\tilde{\lambda}$.
Let $E$ be the inverse image of $C$ under $T$.
The elements of $E$ are hyperedges. 
Since the weighted density of $C$ is greater than a certain constant, removing hyperedges from $E$ with small weighted density with respect to the weight $\nu$ does not completely eliminate isomorphic copies of $K_{r+1}^{(r)}$.
Here $K_{r+1}^{(r)}$ denotes the $(r+1)$-vertex complete $r$-graph.
By the \emph{relative hypergraph removal lemma} (Theorem~\ref{thm:RHRL}), this implies that
there exists an isomorphic copy of $K_{r+1}^{(r)}$ whose $r+1$ hyperedges are sent to distinct $r+1$ points by $T$, provided that $M$ is large enough.

\

\noindent\textbf{Step~7}: By sending such a copy of $K_{r+1}^{(r)}$ in $E$ under $T$, we obtain a corner consisting of elements of $C$ in $\ZZ^r$.
The image of the resulting corner in $C$ under $\phi_S$ is an $S$-constellation in $B$.
This completes the rough description of the proof of our relative multidimensional Szemeredi theorem.

Finally, the image of this $S$-constellation under $\Aff_{W,b}$ is a desired $S$-constellation in $A$!

It should be noted %
that this paper is not actually written in the order described in Steps~1--7.
At the beginning of Sections~\ref{section=relativeSzemeredi}--\ref{section=positiveweighteddensity}, we indicate the corresponding steps in this overview. 

After achieving the first goal, we prove that Theorem~\ref{mtheorem=primeconstellationsfinite} 
and Theorem~\ref{theorem=primeconstellationsfinite} 
are equivalent to each other (and so are Corollary~\ref{corollary=primeconstellationsupperdense} and Theorem~\ref{theorem=primeconstellationsdensesemiprecise}); see Remark~\ref{remark=toruno_toranaino}.
For this proof, we need a further counting argument, which is based on the geometry of numbers:
\begin{theorem}[= Corollary~\ref{corollary=no_DD_to_with_DD_re}]\label{theorem=no_DD_to_with_DD}
Let $K$ be a number field, $\omom$ its integral basis.
Assume that $A\subseteq \PP_K$ satisfies $\overline{d}_{\PP_K,\omom}(A)>0$.
Then, there exists an NL-compatible $\OKt$-fundamental domain $\DD=\DD(A,\omom)$ such that
\[
\overline{d}_{\PP_K\cap \DD,\omom}(A\cap \DD)>0
\]
holds.
\end{theorem}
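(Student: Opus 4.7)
The plan is to parameterize NL-compatible $\OKt$-fundamental domains through the logarithmic Minkowski embedding $L\colon \OKnz \to \RR^{r_1+r_2}$ and run an averaging-pigeonhole argument to locate a single good domain $\DD$ adapted to $A$. Let $H \subseteq \RR^{r_1+r_2}$ be the trace-zero hyperplane, let $\Lambda = L(\OKt)$ be the unit lattice inside $H$, and fix a fundamental parallelepiped $P_0 \subseteq H$ for $\Lambda$ associated to $\DD_K(\ee,\sigma)$. For $t \in H$ set $P_t \coloneqq P_0 + t$ and let $\DD_t \subseteq \OKnz$ be the corresponding NL-compatible fundamental domain, consisting of those $\alpha$ whose trace-zero log-image $\pi(L(\alpha))$ lies in $P_t$ (with the usual treatment of torsion). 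The proof will proceed in two steps: first, a Fubini-type averaging locates a parameter $t_M$ giving positive relative density at scale $M$; second, we stabilize the parameter modulo $\Lambda$ by compactness, together with a boundary estimate.

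For the averaging step, the key observation is that $\{t \in H : \alpha \in \DD_t\} = \pi(L(\alpha)) - P_0$ has Lebesgue measure $\lebesgue(P_0)$ for each $\alpha \in \OKnz$. Choose a compact region $Q_M \subseteq H$ of diameter $O(\log M)$ containing $\pi(L(\alpha)) - P_0$ for every $\alpha \in \OK(\omom, M)$; Fubini then yields
\[
\int_{Q_M} \#(A \cap \DD_t \cap \OK(\omom, M)) \, dt = \lebesgue(P_0) \cdot \#(A \cap \OK(\omom, M)),
\]
and the same identity holds with $\PP_K$ in place of $A$. Along a subsequence $(M_j)$ realizing $\delta\coloneqq\overline{d}_{\PP_K,\omom}(A) > 0$, dividing the two identities and pigeonholing in $t$ produces parameters $t_{M_j} \in Q_{M_j}$ with
\[
\#(A \cap \DD_{t_{M_j}} \cap \OK(\omom, M_j)) \geq (\delta/2) \cdot \#(\PP_K \cap \DD_{t_{M_j}} \cap \OK(\omom, M_j)).
\]

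To stabilize $t_{M_j}$, decompose $t_{M_j} = \tilde{t}_{M_j} + \lambda_{M_j}$ with $\tilde{t}_{M_j} \in P_0$ and $\lambda_{M_j} \in \Lambda$; compactness of $\overline{P_0}$ then lets us pass to a further subsequence with $\tilde{t}_{M_j} \to t^* \in \overline{P_0}$. The $\Lambda$-equivariance $\DD_{t+\lambda} = \eta \DD_t$ (for $L(\eta) = \lambda$) combined with the $\OKt$-invariance of $\PP_K$ transfers the density inequality to $\DD_{\tilde{t}_{M_j}}$, at the cost of replacing $\OK(\omom, M_j)$ by the unit-twisted box $\eta_{M_j}^{-1}\OK(\omom, M_j)$ and $A$ by $\eta_{M_j}^{-1} A$. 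Handling this twist is the main obstacle: one must establish a uniform Chebotarev/Landau-type equidistribution, exploiting NL-compatibility, that controls the relative density on $\DD_{\tilde{t}_{M_j}}$ against a unit-tilted box in terms of the density against a genuine $\lmugen$-ball, with only a bounded multiplicative loss. Combining this comparison with a boundary lattice-point estimate on $\#(\PP_K \cap (\DD_t \triangle \DD_{t'}) \cap \OK(\omom, M))$ for small $|t - t'|$, and then pigeonholing over an $\epsilon$-net of $\overline{P_0}$, one obtains $\DD \coloneqq \DD_{t^*}$ satisfying $\overline{d}_{\PP_K \cap \DD, \omom}(A \cap \DD) \geq \delta/4 > 0$, as required.
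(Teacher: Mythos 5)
Your per-scale step is sound: the Fubini identity over the translate parameter $t$, together with the observation that $A\subseteq\PP_K$ forces $f(t)\le g(t)$, does produce for each $M_j$ in the witnessing subsequence a parameter $t_{M_j}$ with $\#(A\cap\DD_{t_{M_j}}\cap\OK(\omom,M_j))\ge(\delta/2)\,\#(\PP_K\cap\DD_{t_{M_j}}\cap\OK(\omom,M_j))$. The genuine gap is the stabilization step, and I do not believe it can be repaired within your framework. Reducing $t_{M_j}$ modulo $\Lambda=\overline{\calL}(\OKbar)$ replaces $\DD_{t_{M_j}}$ by $\eta_{M_j}\DD_{\tilde t_{M_j}}$ for a unit $\eta_{M_j}$ whose logarithm can have size comparable to $\log M_j$; unwinding the inequality then concerns the set $\eta_{M_j}^{-1}A$ intersected with the tilted box $\eta_{M_j}^{-1}\OK(\omom,M_j)$, not $A$ intersected with a genuine $\lmugen$-ball. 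No Chebotarev/Landau-type equidistribution can close this, because such results speak about $\PP_K$, whereas $A$ is an arbitrary relatively dense subset: $A$ may be chosen so that on the $j$-th norm range it occupies a drifting translate $\DD_{t_j}$ with $t_j\to\infty$ in $\HH$, in which case no single member of your translate family $\{\DD_t\}_{t\in\HH}$ has positive relative density for $A$, and the passage to $\HH/\Lambda$ loses exactly the information that distinguishes these domains as subsets of $\OKnz$. The same concentration phenomenon defeats the proposed boundary estimate on $\#(A\cap(\DD_t\triangle\DD_{t'})\cap\OK(\omom,M))$, since $A$ may sit entirely in a thin neighborhood of $\partial P_{t^*}$.

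The paper avoids this by not restricting to a fixed translate family at all: it builds $\DD$ adaptively from $A$. Using the upper bound on the number of ideals $\alpha\OK$ with $\alpha\in A\cap\OK(L)$ (Landau's prime ideal theorem) together with the orbit-multiplicity bound of Lemma~\ref{lemma=OKt_orbit}, one shows (Proposition~\ref{proposition=get_Omega}) that the elements of $A\cap\OK(\omom,M_l)$ of norm at most $\Omega'M_l^n$ are negligible; the remaining elements have boundedly many associates in the ball, so one may select a positive proportion $A_0^{(l)}$ consisting of pairwise non-associate elements with $\Nrm(\alpha)\in(\Omega'M_l^n,C'M_l^n]$. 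Choosing $M_{l+1}\ge(C'/\Omega')^{1/n}M_l$ makes these norm ranges disjoint, so $A_0=\bigsqcup_l A_0^{(l)}$ still has no associate pairs and is NL-compatible for free (norm bounded below by a constant times $\|\cdot\|_{\infty,\omom}^n$); completing $A_0$ to a fundamental domain using a fixed NL-compatible domain off the orbits of $A_0$ gives $\DD$. You would need to replace your translate-family parameterization by some such adaptive construction, or else prove a genuinely new equidistribution statement for the arbitrary set $A$, which is not available.
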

See Theorem~\ref{theorem=fundamental_Omega} in the case of the finitary version and Theorem~\ref{theorem=fundamental_Omega_infinite} in the case of the infinitary version in full generality.
In Section~\ref{section=maintheoremfull} where the above theorem is presented, we  focus on the deduction `counting condition + pseudorandomness $\Longrightarrow$ constellation theorem' in our setting and axiomatize it.
In Section~\ref{section=slidetrick}, we refine the axiomatization formulated in Section~\ref{section=maintheoremfull}, and prove Theorem~\ref{mtheorem=TaoZieglergeneral}, a short interval version of Theorem~\ref{mtheorem=primeconstellationsfinite}.
There, a `slide trick,' a form of the pigeonhole principle, is in addition employed in order to take an appropriate $b$ in Step 4; see Lemma~\ref{lemma=slidetrick}.

These axiomatizations can be further extended to the case where the total space is a non-zero ideal $\ideala$ of $\OK$.
This leads to a constellation theorem for a pair of an order in $K$ and its invertible fractional ideal %
(Theorem~\ref{th:constellations-in-ideals}).
This theorem derives Theorem~\ref{mtheorem=quadraticform}, via the correspondence between binary quadratic forms and pairs of quadratic orders and their invertible fractional ideals (Theorem~\ref{th:quad-classical}).
We present a proof of the correspondence in the appendix for the reader's convenience.
We remark that to define the counterpart of prime elements is non-trivial in our constellation theorem; see Definitions~\ref{def:of-set-P}. 
For instance, if $\ideala$ is not principal, then non-principal prime ideals show up in the definitions, in contrast to the case of Theorem~\ref{theorem=primeconstellationsdensesemiprecise}.

\

We briefly summarize here the differences between the previous work \cite{Green-Tao08,Tao06Gaussian} and this paper.
We have already discussed novelty and ideas for Goldston--Y\i ld\i r\i m type asymptotic formulas, the NL-compatibility and the setting without a fundamental domain in Introduction.
The simplification of the proof of Goldston--Y\i ld\i r\i m type asymptotic formulas by using a smooth function $\chi$ was introduced in an unpublished note by Tao and subsequently used in 
\cite{Tao06Gaussian}
and this method is also used in this paper.

There exist several different formulations of the `relative multidimensional Szemer\'{e}di theorem' (RMST) in the literature.
In this paper, we establish Theorem~\ref{thm:RMST}; seemingly, the present paper may be the first place where the RMST of this form is explicitly stated.
In the work of Green--Tao~\cite{Green-Tao08} and Tao~\cite{Tao06Gaussian}, a condition called the \emph{correlation condition} was imposed on their definitions of pseudorandomness in addition to the \emph{linear forms condition}.
Conlon--Fox--Zhao~\cite{Conlon-Fox-Zhao15} succeeded in removing this correlation condition from their formulation of the RMST.
To do this, they proved the relative hypergraph removal lemma (RHRL) which only requires the linear forms condition.
Note that the RMST in \cite[Theorem~3.1]{Conlon-Fox-Zhao15} is stated in terms of finite additive groups, as is \cite[Theorem~3.5]{Green-Tao08} and  \cite[Theorem~2.18]{Tao06Gaussian}.
For this reason, some additional arguments were needed in their work to switch from $\ZZ^n$ to finite additive groups, and to go back. %
In this paper, although we appeal to the RHRL of Conlon--Fox--Zhao, we do \emph{not} transfer our setting to that of finite additive groups.
Instead, we follow the argument of Solymosi; see Steps 5--7 in the above overview.
Moreover, our argument of estimating a weighted density seems more straightforward than that in the previous work.

Due to the simplifications mentioned above, the complete proofs of Theorem~\ref{mtheorem=primeconstellationsfinite} (the finitary version) and Theorem~\ref{mtheorem=TaoZieglergeneral} (the short interval version) require \emph{no} technical complication beyond that of Theorem~\ref{theorem=primeconstellationssimple}.
In some earlier work on constellation theorems for $K=\QQ$ or $\QQ (\sqrt{-1})$, detailed proofs of the corresponding theorems were omitted.

\ 

\noindent\textbf{Plan.}
This paper is organized as follows:
in Section~\ref{section=preliminarynumbertheory}, we briefly summarize some facts in algebraic number theory needed in this paper.

In Section~\ref{section=NLC}, we study the NL-compatibility by using the geometry of numbers, and give a geometric characterization of it (Theorem~\ref{theorem=NLcompatible}).
We also construct an NL-compatible $\OKt$-fundamental domain $\DD_K(\ee,\sigma)$ from a fixed system of fundamental units $\ee$ of $K$ and a field embedding $\sigma\colon K\hookrightarrow \CC$.
We also prove some facts on the relation between the counts of prime principal ideals and of prime elements.

In Section~\ref{section=relativeSzemeredi}, we formulate and prove our relative multidimensional Szemer\'{e}di theorem.

In Section~\ref{section=GoldstonYildirim}, we prove the Goldston--Y\i ld\i r\i m type asymptotic formula %
in the number field setting.
To formulate it, we need algebraic backgrounds from Section \ref{section=preliminarynumbertheory}.

In Section~\ref{section=positiveweighteddensity}, we present the full proof of Theorem~\ref{theorem=primeconstellationsfinite}.
For this proof, we construct a pseudorandom measure with the aid of the Goldston--Y\i ld\i r\i m type asymptotic formula.
Then we make an estimate of the weighted density of a well-chosen set and apply the relative multidimensional Szemer\'edi theorem.

In Section~\ref{section=maintheoremfull}, we prove Theorem~\ref{mtheorem=primeconstellationsfinite}.
In the proof, we axiomatize the argument in the proof of Theorem~\ref{theorem=primeconstellationsfinite}.
By employing Lemma~\ref{lemma=OKt_orbit}, we reduce the general setting of Theorem~\ref{mtheorem=primeconstellationsfinite} to that with a fundamental domain; see Theorem~\ref{theorem=fundamental_Omega} for details. 
Theorem~\ref{theorem=primeconstellationsdensesemiprecise} is also verified.

In Section~\ref{section=slidetrick}, we demonstrate Theorem~\ref{mtheorem=TaoZieglergeneral}. 
The proof uses an additional argument, the `slide trick,' to that of Theorem~\ref{mtheorem=primeconstellationsfinite}.

In Section~\ref{section=quadraticform}, we formulate and prove our prime element constellation theorem with respect to the pair of an order and its invertible fractional ideal (Theorem~\ref{th:constellations-in-ideals}). It derives a constellation theorem for a norm form (Theorem~\ref{theorem=normform}). 
By combining this with the classical correspondence between binary quadratic forms and the pairs where the orders are quadratic  (Theorem~\ref{th:quad-classical}), we establish Theorem~\ref{mtheorem=quadraticform}.

In Appendix, we present a proof of the correspondence above.

\ 

\noindent\textbf{Notation.}
Let $\NN=\{1,2,3,\ldots\}$ denote the set of positive integers and $\PP=\{2,3,5,\dots\}$ the set of rational prime numbers.
A subset of $\PP$ truncated by %
a real number $x$ 
is expressed by %
a subscript. 
For example,
\[
\PP_{\leq x}=\{p\in \PP : p\leq x\},\quad \textrm{and} \quad \PP_{>x}=\{p\in \PP : p>x\}.
\]
For $m\in\NN$, set $[m]\coloneqq\{1,2,3,\ldots,m\}$.
For $m_1,m_2\in\ZZ$ satisfying $m_1\leq m_2$, set $[m_1,m_2]\coloneqq\{l\in \ZZ : m_1\leq l\leq m_2\}$.
When we use $[a_1,a_2]$ in the sense of a real closed interval, we write $[a_1,a_2]_{\RR}$ to distinguish it.
Similarly in the case of (half-)open intervals. %
For a finite set $A$, $\#A$ denotes the cardinality of $A$.
We write $\#A=\infty$ to mean that the set $A$ is infinite.
For a non-empty finite set $J$ and a positive integer $r$, $\binom J r$ denotes the set $\{e\in2^J : \#e=r\}$ of subsets with cardinality $r$.
For a mapping $f$, $\Im(f)$ denotes the image of $f$.
For a set $A$, $\ichi_A$ denotes the indicator function of $A$.
For a function $f\colon X\to\RR$ on a set $X$ and a non-empty finite subset $A\subseteq X$, we use the expectation symbol to denote the average of $f$ over $A$:
\[
\EE(f\mid A)=\EE(f(a)\mid a\in A)\coloneqq\frac{1}{\#A}\sum_{a\in A}f(a).
\]
For functions $f,g\colon X\to\RR$, if $f(x)\leq g(x)$ holds for all $x\in X$, then we write $f\leq g$.
For a $\ZZ$-module $\calZ$, $W\in\ZZ$ and $b\in\calZ$, the affine transformation $\Aff_{W,b}$ is defined by
\[
\Aff_{W,b}\colon \calZ\to \calZ;\quad \beta\mapsto W\beta+b.
\]

We use big-$O$ and little-$o$ notation in the following sense for statements that take into account some parameters that are not necessarily `numbers.'
Let $x$ be a (natural, real or complex) numerical parameter and $t_1,\dots, t_k$ a part of parameters under consideration.
Let $a\in\CC$.
Let $f$ and $g$ be functions with parameters under consideration, where $g$ is non-negative.
If there exists a positive-valued function $C_{t_1,\dots,t_k}$ depending only on $t_1,\dots, t_k$ such that $|f|\leq C_{t_1,\dots,t_k}\cdot g$, then we write $f=O_{t_1,\dots,t_k}(g)$.
If the inequality is valid only on a certain neighborhood of $a$, then we write $O_{x\to a; t_1,\dots,t_k}(g)$.
If there exists a positive-valued function $c_{t_1,\dots,t_k}(x)$ depending only on $x, t_1,\dots, t_k$ and satisfying $\lim\limits_{x\to a}c_{t_1,\dots,t_k}(x)=0$ such that $|f|\leq c_{t_1,\dots,t_k}\cdot g$ on a certain neighborhood of $a$, then we write $f=o_{x\to a; t_1,\dots,t_k}(g)$.
The convergence of $c$ need not be uniform for $t_1,\dots,t_k$.
We use similar expressions for $a=+\infty$; in this case we only use a natural or positive real numerical parameter, and the symbol $+\infty$ is simply written as $\infty$ in this paper.
\section{Preliminaries on algebraic number theory}\label{section=preliminarynumbertheory}
In this section, we summarize necessary materials from algebraic number theory.
All results in this section are known; see \cite{Neukirch,Hecke,Hardy-Wright}
for details.
\begin{setting}\label{setting=section3}
Throughout this section, with the exception of 
Subsection~\ref{subsection=rationalprimes}, $K$ will denote a \emph{number
field} of \emph{degree} $n$, that is, a finite extension of the rational number field $\QQ$
with $n=[K:\QQ]$.
\end{setting}
\subsection{The ring of integers and its ideals}\label{subsection=OK}
The subset of $K$ consisting of elements which are integral over $\ZZ$ forms
a subring called the \emph{ring of integers} of $K$, and we denote it by $\OK$.
By \cite[Chapter~I, Proposition~2.10]{Neukirch},
$\OK$ is a free $\ZZ$-module of rank $n$; a $\ZZ$-basis of $\OK$ is called
an \emph{integral basis} of $K$.
We also denote by 
$\Ideals_K$ the set of non-zero ideals, and $|\Spec(\OO_K)|$ the set of non-zero prime ideals, of $\OO_K$.
\begin{theorem}[Prime ideal decomposition, 
see {\cite[Chapter~I, Theorem 3.3]{Neukirch}}]
\label{theorem=primeideals}
The mapping
\begin{equation}\label{Eq:prime-decomp}
\bigoplus_{|\Spec(\OK)|}\ZZ_{\geq 0}\to\Ideals_K;\quad (e_{\idealp})_{\idealp}\mapsto\prod_{\idealp\in |\Spec (\OK)|}\idealp^{e_{\idealp}}
\end{equation}
is an isomorphism of commutative monoids.
\end{theorem}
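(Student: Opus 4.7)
The plan is to establish that $\OK$ is a Dedekind domain and then derive the factorization as the standard consequence.

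First I would verify the three defining properties. $\OK$ is Noetherian because it is a finitely generated $\ZZ$-module of rank $n$, so all its ideals, being $\ZZ$-submodules, are finitely generated as well. It is integrally closed in $K$ by its very definition as the integral closure of $\ZZ$ in $K$. It has Krull dimension one: for any nonzero prime ideal $\idealp$, choosing $0 \ne \alpha \in \idealp$ gives a surjection $\OK/\alpha\OK \twoheadrightarrow \OK/\idealp$, and $\OK/\alpha\OK$ is finite (the quotient of a rank-$n$ free $\ZZ$-module by a full-rank sublattice), so $\OK/\idealp$ is a finite integral domain, hence a field, which forces $\idealp$ to be maximal. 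That the map in \eqref{Eq:prime-decomp} is a monoid homomorphism is immediate from $\prod \idealp^{e_\idealp + f_\idealp} = \prod \idealp^{e_\idealp} \cdot \prod \idealp^{f_\idealp}$, with the empty product equal to $\OK$, so the content reduces to surjectivity (existence of a factorization) and injectivity (uniqueness).

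The main technical step is the invertibility of each maximal ideal $\mathfrak m$: the fractional ideal $\mathfrak m^{-1} \coloneqq \{x \in K : x\mathfrak m \subseteq \OK\}$ satisfies $\mathfrak m \cdot \mathfrak m^{-1} = \OK$. I would establish this in two sub-steps. By Noetherian induction, every nonzero ideal $I$ contains a product of nonzero primes: a maximal counterexample would fail to be prime, so $ab \in I$ with $a, b \notin I$, whence the strictly larger ideals $I+(a)$ and $I+(b)$ each contain such a product, and so does $I \supseteq (I+(a))(I+(b))$. Now pick $0 \ne a \in \mathfrak m$ and primes $\idealp_1 \cdots \idealp_k \subseteq (a)$ with $k$ minimal; since $\mathfrak m$ contains this product, $\mathfrak m \supseteq \idealp_i$ for some $i$ and maximality forces $\mathfrak m = \idealp_i$, while minimality produces $b \in \idealp_1 \cdots \widehat{\idealp_i} \cdots \idealp_k$ with $b \notin (a)$, so that $b/a \in \mathfrak m^{-1} \setminus \OK$. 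Finally, $\mathfrak m \cdot \mathfrak m^{-1}$ is an $\OK$-ideal with $\mathfrak m \subseteq \mathfrak m \cdot \mathfrak m^{-1} \subseteq \OK$; if equality $\mathfrak m \cdot \mathfrak m^{-1} = \mathfrak m$ held, then every $x \in \mathfrak m^{-1}$ would act as an $\OK$-endomorphism of the finitely generated faithful $\OK$-module $\mathfrak m$, so by the determinant trick $x$ would be integral over $\OK$ and hence lie in $\OK$ by integral closedness---contradicting $b/a \notin \OK$; maximality of $\mathfrak m$ then yields $\mathfrak m \cdot \mathfrak m^{-1} = \OK$.

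With invertibility in hand, the remaining two steps are formal. For surjectivity, a Noetherian-maximal ideal $I$ admitting no prime factorization would be contained in some maximal $\mathfrak m$, and $I\mathfrak m^{-1} \subseteq \OK$ would strictly contain $I$ (otherwise the determinant trick above is violated again), so $I\mathfrak m^{-1}$ factors into primes by maximality and thus $I = \mathfrak m \cdot (I\mathfrak m^{-1})$ does too. For injectivity, if two tuples $(e_\idealp)$ and $(f_\idealp)$ yield the same ideal, then cancelling $\idealp^{\min(e_\idealp, f_\idealp)}$ via invertibility reduces to an identity $\OK = \prod_\idealp \idealp^{g_\idealp}$ with all $g_\idealp \geq 0$ and some $g_\idealq \geq 1$, which forces $\OK \subseteq \idealq$, an absurdity. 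I expect the invertibility argument---in particular the use of the determinant trick to rule out $\mathfrak m \cdot \mathfrak m^{-1} = \mathfrak m$---to be the main obstacle; everything else is routine Noetherian bookkeeping.
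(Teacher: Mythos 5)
The paper offers no proof of this statement; it is quoted as a standard fact with a citation to Neukirch, Chapter~I, Theorem~3.3. Your argument is the standard Dedekind-domain proof (essentially the one in the cited source): Noetherian + integrally closed + dimension one, invertibility of maximal ideals via the ``contains a product of primes'' lemma and the determinant trick, then Noetherian induction for existence and cancellation for uniqueness. It is correct, with one small imprecision in the uniqueness step: cancelling $\idealp^{\min(e_\idealp,f_\idealp)}$ does not in general reduce the identity to the form $\OK=\prod_\idealp\idealp^{g_\idealp}$ --- it leaves an equality of two products whose supports are disjoint. The contradiction is still immediate: if $g_\idealq\geq 1$ for some $\idealq$ appearing on the left, then $\idealq$ contains the right-hand product, hence (by primality and maximality) equals one of the primes in its support, contradicting disjointness; the degenerate case where the right-hand side is the empty product is exactly your $\OK\subseteq\idealq$ absurdity.
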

For two ideals 
$\ideala\in\Ideals_K\cup\{ (0)\}$ and 
$\idealb\in\Ideals_K$, we write $\idealb\mid\ideala$ if
$\idealb\supseteq\ideala$ holds. 
If $\ideala \neq (0)$, %
then this is equivalent to saying that
the exponent of each $\idealp$ appearing in the prime decomposition of $\idealb$
is at most that of $\ideala$. 

In Section~\ref{section=quadraticform}, we will treat fractional ideals of $\OK $, which generalize ideals. A \emph{fractional ideal} $\ideala$ of $\OK $ is a finitely generated $\OK$-submodule of $K$.  For a non-zero fractional ideal $\ideala$ of $\OK $, the set $\ideala^{-1}\coloneqq \{x\in K:x\ideala\subseteq \OK\}$ is again a non-zero fractional ideal, called the \emph{inverse fractional ideal} of $\ideala $; we have $\ideala \ideala^{-1}=\OK$; see \cite[Chapter~I, Proposition~3.8]{Neukirch} for the proof. The following generalization of  Theorem~\ref{theorem=primeideals} will be employed in Section~\ref{section=quadraticform}. Up to Section~\ref{section=slidetrick}, fractional ideals will not show up.
\begin{theorem}[Prime ideal decomposition of fractional ideals, 
see {\cite[Chapter~I, Corollary~3.9]{Neukirch}}]
\label{theorem=primeideals_frac}
The mapping from $\bigoplus\limits_{|\Spec(\OK)|}\ZZ$ to the group of non-zero fractional ideals of $\OK $ defined by
\[
(v_{\idealp})_{\idealp}\mapsto\prod_{\idealp\in |\Spec (\OK)|}\idealp^{v_{\idealp}}
\]
is an isomorphism of commutative groups.
\end{theorem}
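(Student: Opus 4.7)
The plan is to reduce Theorem~\ref{theorem=primeideals_frac} to Theorem~\ref{theorem=primeideals} using the fact, recalled just before the statement, that every non-zero fractional ideal $\ideala$ admits an inverse $\ideala^{-1}$ satisfying $\ideala \ideala^{-1} = \OK$. The underlying observation is that every non-zero fractional ideal is a quotient of two integral ones: since $\ideala$ is a finitely generated $\OK$-module inside $K$, I can choose a common denominator $d \in \OK\setminus\{0\}$ for its generators so that $d\ideala \subseteq \OK$, whence $\ideala = \idealb \cdot (d\OK)^{-1}$ with $\idealb = d\ideala \in \Ideals_K$.

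First I would verify that the map $\Phi\colon (v_\idealp)_\idealp \mapsto \prod_\idealp \idealp^{v_\idealp}$ is a well-defined group homomorphism. Well-definedness is immediate since only finitely many $v_\idealp$ are nonzero, and multiplicativity $\Phi((u_\idealp) + (v_\idealp)) = \Phi((u_\idealp))\Phi((v_\idealp))$ reduces, via $\idealp \cdot \idealp^{-1} = \OK$ and commutativity of fractional ideal multiplication, to the identity $\idealp^{a+b} = \idealp^a \cdot \idealp^b$ for all $a, b \in \ZZ$. Surjectivity is then handled by the reduction in the previous paragraph: apply Theorem~\ref{theorem=primeideals} to both $\idealb$ and $d\OK$ to write them as $\prod_\idealp \idealp^{a_\idealp}$ and $\prod_\idealp \idealp^{b_\idealp}$ respectively, so that $\ideala = \Phi((a_\idealp - b_\idealp)_\idealp)$.

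For injectivity, suppose $\Phi((v_\idealp)) = \OK$, and split the (finite) support of $(v_\idealp)$ into the indices where $v_\idealp > 0$ and where $v_\idealp < 0$. Multiplying the identity $\prod_\idealp \idealp^{v_\idealp} = \OK$ by $\prod_{v_\idealp < 0} \idealp^{-v_\idealp}$ on both sides clears negative exponents and yields an equality of two non-zero integral ideals, to which the monoid isomorphism of Theorem~\ref{theorem=primeideals} applies and forces every $v_\idealp$ to vanish. The main subtlety throughout is justifying the cancellation implicit in manipulating $\idealp \cdot \idealp^{-1} = \OK$; this is precisely the content of the fact cited from \cite[Chapter~I, Proposition~3.8]{Neukirch} just before the theorem, and, granting it, the argument reduces routinely to the monoid case with no additional input required.
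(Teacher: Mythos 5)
Your proof is correct. Note that the paper itself gives no argument for this statement---it is quoted directly from Neukirch (Chapter~I, Corollary~3.9), with the needed invertibility $\ideala\ideala^{-1}=\OK$ recalled just beforehand---and your reduction to Theorem~\ref{theorem=primeideals} by clearing denominators ($d\ideala\subseteq\OK$, then $\ideala=(d\ideala)(d\OK)^{-1}$), together with the disjoint-support argument for injectivity, is exactly the standard derivation given in that reference.
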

We define the \emph{ideal norm} of an ideal $\ideala\in\Ideals_K$ by
$\Nrm(\ideala)\coloneqq\#(\OK/\ideala)$.
If $\ideala$ is a principal ideal, that is, 
$\ideala=\alpha\OK$ for some $\alpha\in\OKnz$,
then $\Nrm(\alpha \OK)$ coincides with the ideal norm 
$\Nrm (\alpha)$ of $\alpha$ defined in 
Definition~\ref{definition=norm}.

Let $\sigma\colon K\to\CC$ be a homomorphism of fields.
If the image of $\sigma$ is contained in $\RR$, then we call
$\sigma$ a \emph{real embedding}, and a \emph{complex embedding} otherwise.
If we denote by $r_1$ the number of real embeddings of $K$, and by $r_2$ that of
conjugate pairs of complex embeddings of $K$,
then $n=r_1+2r_2$ holds.
\begin{setting}\label{setting=sigma}
We denote by 
$\sigma_1,\dots,\sigma_{r_1},\sigma_{r_1+1},\dots,\sigma_{r_1+2r_2}$
the embeddings of $K$ into $\CC$. We choose the numbering in such a way that
$\sigma_1,\ldots ,\sigma_{r_1}$ are real embeddings, while conjugate
pairs of complex embeddings are
$(\sigma_{r_1+1},\sigma_{r_1+r_2+1}),(\sigma_{r_1+2},\sigma_{r_1+r_2+2}),\dots,(\sigma_{r_1+r_2},\sigma_{r_1+2r_2})$.
\end{setting}
\begin{lemma}[{see \cite[Chapter~1, Proposition~2.6]{Neukirch}}]\label{lemma=idealnorm}
Under Setting~$\ref{setting=sigma}$,
we have
\[
\Nrm(\alpha)=\prod_{i\in [r_1]}|\sigma_i(\alpha)| 
\prod_{j\in [r_2]}|\sigma_{r_1+j}(\alpha)|^2
\]
for $\alpha\in\OKnz$.
\end{lemma}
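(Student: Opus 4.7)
The plan is to reduce the assertion to the classical identity $\Nrm(\alpha)=|\Nelm(\alpha)|$ relating the ideal norm to the field-theoretic norm, and then to expand the latter as a product over all embeddings and group the complex pairs according to Setting~\ref{setting=sigma}.

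First, I would fix an integral basis $\omom=(\omega_1,\ldots,\omega_n)$ of $\OK$ and consider the $\ZZ$-linear endomorphism $m_\alpha\colon\OK\to\OK$ given by multiplication by $\alpha$. Writing its matrix $M_\alpha\in M_n(\ZZ)$ in this basis, a Smith normal form argument (valid because $\alpha\neq 0$ forces $\det M_\alpha\neq 0$, so $\OK/\alpha\OK$ is finite) yields $\#(\OK/\alpha\OK)=|\det M_\alpha|$. To compute this determinant, I would extend scalars to $\CC$ via the $\QQ$-algebra isomorphism
\[
K\otimes_\QQ\CC\xrightarrow{\sim}\CC^n,\qquad x\otimes z\mapsto \bigl(z\sigma_i(x)\bigr)_{i=1}^n,
\]
under which $m_\alpha\otimes\id_\CC$ becomes diagonal with eigenvalues $\sigma_1(\alpha),\ldots,\sigma_n(\alpha)$. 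Since determinants are invariant under base change, this gives $\det M_\alpha=\prod_{i=1}^n\sigma_i(\alpha)=\Nelm(\alpha)$, hence $\Nrm(\alpha)=|\Nelm(\alpha)|$.

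Second, I would apply the ordering fixed in Setting~\ref{setting=sigma}: for each $j\in[r_2]$ the embedding $\sigma_{r_1+r_2+j}$ is the complex conjugate of $\sigma_{r_1+j}$, so
\[
\sigma_{r_1+j}(\alpha)\cdot\sigma_{r_1+r_2+j}(\alpha)=\sigma_{r_1+j}(\alpha)\,\overline{\sigma_{r_1+j}(\alpha)}=|\sigma_{r_1+j}(\alpha)|^2
\]
is a non-negative real number, while for $i\in[r_1]$ the factor $\sigma_i(\alpha)$ is already real. Taking the absolute value of $\prod_{i=1}^n\sigma_i(\alpha)$ therefore collapses the product to
\[
|\Nelm(\alpha)|=\prod_{i\in[r_1]}|\sigma_i(\alpha)|\cdot\prod_{j\in[r_2]}|\sigma_{r_1+j}(\alpha)|^2,
\]
which combined with the first step is the claimed formula.

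Neither step presents a serious obstacle; both are classical. The only point requiring attention is the identification $\#(\OK/\alpha\OK)=|\det M_\alpha|$, which is structure theory of finitely generated abelian groups, and the consistent bookkeeping of embeddings so that the index range $j\in[r_2]$ matches Setting~\ref{setting=sigma}. If one prefers, the identity $\Nrm(\alpha)=|\Nelm(\alpha)|$ can simply be cited from \cite[Chapter~I, \S 2]{Neukirch}, in which case only the half-line pairing of complex conjugate embeddings remains.
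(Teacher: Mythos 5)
Your proof is correct, and it is the standard argument that the paper delegates to \cite[Chapter~I, Proposition~2.6]{Neukirch} (the paper gives no proof of its own, only the citation): reduce to $\Nrm(\alpha)=|\Nelm(\alpha)|$ via the determinant of the multiplication-by-$\alpha$ map, then pair the conjugate complex embeddings as fixed in Setting~\ref{setting=sigma}. Nothing further is needed.
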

In particular, $\Nrm(W)=W^n$ for $W\in\NN$.
\begin{remark}\label{remark=idealnorm}
When $A\to B$ is a homomorphism of commutative unital rings which makes $B$ a free $A$-module of finite rank,
the {\em norm} $N_{B/A}(b )\in A$ of an element $b\in B$ is defined to be the determinant of the $A$-linear map induced by multiplication by $b\colon B\to B$.

In the case of $\QQ \to K$, the norm of $\alpha \in K$ is known to be equal to the product
$\sigma_1(\alpha)\sigma_2(\alpha)\cdots \sigma_{r_1+2r_2}(\alpha)$.
Lemma~\ref{lemma=idealnorm} says its absolute value is equal to the ideal norm $\Nrm (\alpha ) $,
except when $\alpha =0$, for which the ideal norm $\Nrm (0)$ is not defined in Definition~\ref{definition=norm}. 
\end{remark}
\begin{lemma}[{see \cite[Chapter~I, Proposition~6.1]{Neukirch}}]\label{lemma=completemultiplicativity}
Let $\ideala=\prod_{\idealp \in |\Spec (\OO_K) |} \idealp^{e_{\idealp}}$
be the prime ideal decomposition of an ideal $\ideala\in\Ideals_K$. Then
\[
\Nrm(\ideala)=\prod_{\idealp \in |\Spec (\OO_K) |} \Nrm(\idealp)^{e_{\idealp}}.
\]
\end{lemma}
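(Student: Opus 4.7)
The plan is to establish complete multiplicativity of the ideal norm in two stages, reflecting the structure of the prime decomposition given by Theorem~\ref{theorem=primeideals}: first handle coprime ideals via the Chinese Remainder Theorem, then handle prime powers via a filtration argument. Combining these, the general formula follows by induction on the number of distinct primes in the factorization of $\ideala$.

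First I would show that if $\idealb,\idealc\in\Ideals_K$ are coprime (meaning $\idealb+\idealc=\OK$), then $\OK/\idealb\idealc\cong\OK/\idealb\times\OK/\idealc$ as rings, whence $\Nrm(\idealb\idealc)=\Nrm(\idealb)\Nrm(\idealc)$. The coprimality is equivalent to saying no prime appears in both $\idealb$ and $\idealc$ in the decomposition of Theorem~\ref{theorem=primeideals}, and the isomorphism is the standard CRT statement for Dedekind domains (noting $\idealb\cap\idealc=\idealb\idealc$ under coprimality). Using this inductively on the number of distinct prime divisors, it suffices to prove $\Nrm(\idealp^e)=\Nrm(\idealp)^e$ for each prime $\idealp\in|\Spec(\OK)|$ and each $e\in\NN$.

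For this, I would analyze the descending chain
\[
\OK\supsetneq\idealp\supsetneq\idealp^2\supsetneq\cdots\supsetneq\idealp^e,
\]
where strict inclusion follows from unique factorization (Theorem~\ref{theorem=primeideals}). The key point is that each successive quotient $\idealp^i/\idealp^{i+1}$ is a one-dimensional vector space over the residue field $\OK/\idealp$. To see this, pick any $\pi\in\idealp^i\setminus\idealp^{i+1}$; the principal ideal $\pi\OK+\idealp^{i+1}$ lies strictly between $\idealp^{i+1}$ and $\idealp^i$, and by unique factorization the only ideal strictly between these two is none (since any intermediate ideal would have $\idealp$-adic exponent both $\geq i+1$ and $\leq i$ at every other prime appearing), forcing $\pi\OK+\idealp^{i+1}=\idealp^i$. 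Hence the $\OK/\idealp$-module $\idealp^i/\idealp^{i+1}$ is cyclic and non-zero, so it is a one-dimensional $\OK/\idealp$-vector space, giving $\#(\idealp^i/\idealp^{i+1})=\#(\OK/\idealp)=\Nrm(\idealp)$. Telescoping the chain then yields $\Nrm(\idealp^e)=\prod_{i=0}^{e-1}\#(\idealp^i/\idealp^{i+1})=\Nrm(\idealp)^e$.

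The main obstacle is the claim that no ideal lies strictly between $\idealp^{i+1}$ and $\idealp^i$, since this is where the Dedekind property of $\OK$ (or equivalently the unique factorization of Theorem~\ref{theorem=primeideals}) is actually used; everything else is formal. Once this is in hand, combining the prime-power formula with the coprime multiplicativity via the decomposition $\ideala=\prod_\idealp\idealp^{e_\idealp}$ (where only finitely many $e_\idealp$ are non-zero, and the factors $\idealp^{e_\idealp}$ are pairwise coprime) delivers the stated identity immediately.
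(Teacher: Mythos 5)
Your proof is correct and is essentially the argument of the cited reference \cite[Chapter~I, Proposition~6.1]{Neukirch}, which the paper invokes without reproducing: the Chinese Remainder Theorem reduces the claim to prime powers, and the filtration $\OK\supsetneq\idealp\supsetneq\cdots\supsetneq\idealp^e$ with one-dimensional successive quotients over $\OK/\idealp$ finishes the job. The only blemish is the parenthetical justifying that no ideal lies strictly between $\idealp^{i+1}$ and $\idealp^i$ --- as written it is garbled, but the intended point (containment corresponds to divisibility, so any intermediate ideal has $\idealp$-exponent in $\{i,i+1\}$ and exponent $0$ at every other prime) is sound.
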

Next we introduce two number theoretic functions.
\begin{definition}\label{def=totient}
The \emph{totient function} $\vph_K$ of $K$ is defined by
\[\vph_K\colon\Ideals_K\to\NN;
\quad \ideala\mapsto\#\left((\OK/\ideala)^{\times}\right).\]
For 
$\alpha\in\OKnz$, we write $\vph_K(\alpha)\coloneqq\vph_K(\alpha \OK)$.
\end{definition}
\begin{proposition}[{see \cite[Theorem 80 in \S 27]{Hecke}}]\label{prop=totient}
For $\ideala\in \Ideals_K$, we have
\[
\vph_K(\ideala)=\Nrm(\ideala)\prod_{\idealp\in|\Spec(\OK)|, \ \idealp\mid\ideala}
(1-\Nrm (\idealp)^{-1}).
\]
\end{proposition}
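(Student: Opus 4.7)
The plan is to proceed along the classical lines used for Euler's totient function, adapted to the Dedekind domain $\OK$. The key is to reduce to prime-power ideals using the Chinese remainder theorem, evaluate $\vph_K(\idealp^e)$ directly, and then recombine via multiplicativity of the ideal norm (Lemma~\ref{lemma=completemultiplicativity}).

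First, I would establish that $\vph_K$ is multiplicative on coprime ideals: if $\ideala,\idealb\in\Ideals_K$ satisfy $\ideala+\idealb=\OK$, then the Chinese remainder theorem provides a ring isomorphism
\[
\OK/\ideala\idealb\;\xrightarrow{\sim}\;\OK/\ideala\times\OK/\idealb,
\]
so passing to unit groups yields $\vph_K(\ideala\idealb)=\vph_K(\ideala)\vph_K(\idealb)$. Applying this to the prime ideal decomposition $\ideala=\prod_{\idealp\mid\ideala}\idealp^{e_\idealp}$ from Theorem~\ref{theorem=primeideals}, whose factors are pairwise coprime, it suffices to prove $\vph_K(\idealp^e)=\Nrm(\idealp)^e(1-\Nrm(\idealp)^{-1})$ for each prime power.

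Next, I would compute $\vph_K(\idealp^e)$. The crucial observation is that in the quotient ring $\OK/\idealp^e$, the image of $\idealp$ is the unique maximal ideal, so an element $\alpha+\idealp^e$ is a unit if and only if $\alpha\notin\idealp$. Indeed, if $\alpha\notin\idealp$, then the ideal $\alpha\OK+\idealp^e$ is not contained in $\idealp$ (the only maximal ideal of $\OK$ containing $\idealp^e$), so it must equal $\OK$, giving a multiplicative inverse of $\alpha$ modulo $\idealp^e$; conversely, any element of $\idealp$ clearly fails to be a unit modulo $\idealp^e$. Therefore
\[
\vph_K(\idealp^e)=\#(\OK/\idealp^e)-\#(\idealp/\idealp^e)=\Nrm(\idealp^e)-\Nrm(\idealp^{e-1}),
\]
where the second equality uses the short exact sequence $0\to\idealp/\idealp^e\to\OK/\idealp^e\to\OK/\idealp\to 0$ together with $\Nrm(\idealp)=\#(\OK/\idealp)$. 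By Lemma~\ref{lemma=completemultiplicativity}, $\Nrm(\idealp^e)=\Nrm(\idealp)^e$ and $\Nrm(\idealp^{e-1})=\Nrm(\idealp)^{e-1}$, so
\[
\vph_K(\idealp^e)=\Nrm(\idealp)^e-\Nrm(\idealp)^{e-1}=\Nrm(\idealp^e)\bigl(1-\Nrm(\idealp)^{-1}\bigr).
\]

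Finally, assembling the pieces,
\[
\vph_K(\ideala)=\prod_{\idealp\mid\ideala}\vph_K(\idealp^{e_\idealp})=\prod_{\idealp\mid\ideala}\Nrm(\idealp)^{e_\idealp}\bigl(1-\Nrm(\idealp)^{-1}\bigr)=\Nrm(\ideala)\prod_{\idealp\mid\ideala}\bigl(1-\Nrm(\idealp)^{-1}\bigr),
\]
where the last equality again invokes Lemma~\ref{lemma=completemultiplicativity}. The only non-routine point is justifying the local-ring argument characterizing units in $\OK/\idealp^e$; once that is in hand, everything else is bookkeeping. No step here should present a serious obstacle, since all the structural results (prime decomposition, multiplicativity of the norm, CRT in Dedekind domains) have already been quoted in the paper.
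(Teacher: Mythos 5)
The paper offers no proof of this proposition; it simply cites Hecke. Your argument is the standard one --- multiplicativity of $\vph_K$ on coprime ideals via the Chinese remainder theorem, the local computation $\vph_K(\idealp^e)=\Nrm(\idealp)^e-\Nrm(\idealp)^{e-1}$ from the fact that the units of $\OK/\idealp^e$ are exactly the classes of elements outside the unique maximal ideal containing $\idealp^e$, and recombination via Lemma~\ref{lemma=completemultiplicativity} --- and it is complete and correct.
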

\begin{definition}\label{dfn:3.9}
We define the \emph{\Mobius function}
$\mu_K\colon \Ideals_K\to\{0,\pm1\}$ by
\[
\mu_K(\ideala)=\begin{cases}
(-1)^r&\text{if $\ideala$ is a product of $r$ ($\ge 0$) distinct prime ideals,}\\
0&\text{otherwise.}
\end{cases}
\]
\end{definition}
It follows from the definition that the \Mobius function is \emph{multiplicative},
that is, $\mu_K(\ideala\idealb)=\mu_K(\ideala)\mu_K(\idealb)$
for ideals $\ideala$ and $\idealb$ relatively prime to each other.
\begin{proposition}\label{proposition=Moebius}
Given a function $f\colon \Ideals _K \to \CC$,
define $g\colon \Ideals _K \to \CC $ by
\[
g(\ideala)\coloneqq\sum_{\idealb\in\Ideals_K, \ \idealb\mid\ideala}f(\idealb).
\]
Then
\[ 
f(\ideala)=\sum_{\substack{\idealb,\idealc\in\Ideals_K \\ \idealb\cdot\idealc=\ideala}}\mu_K(\idealb)\cdot g(\idealc).
\]
\end{proposition}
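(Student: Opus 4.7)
The plan is to adapt the classical Möbius inversion argument to the setting of ideals in $\OK$. The crucial auxiliary input is the identity
\[
\sum_{\idealb \in \Ideals_K,\ \idealb \mid \ideala} \mu_K(\idealb) = \begin{cases} 1 & \text{if } \ideala = \OK, \\ 0 & \text{otherwise,} \end{cases}
\]
which I would establish first. For $\ideala = \OK$ the sum has only one term, namely $\mu_K(\OK) = 1$ (the empty product of distinct prime ideals, by Definition~\ref{dfn:3.9}). Otherwise, Theorem~\ref{theorem=primeideals} lets me write $\ideala = \idealp_1^{e_1}\cdots \idealp_r^{e_r}$ with $r \geq 1$; the divisors $\idealb$ with $\mu_K(\idealb) \neq 0$ are exactly the squarefree products $\prod_{i \in T}\idealp_i$ indexed by subsets $T \subseteq [r]$, each contributing $(-1)^{\#T}$. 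Hence the total is $\sum_{T \subseteq [r]}(-1)^{\#T} = (1-1)^r = 0$.

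With this identity in hand, I would substitute the definition of $g$ into the right-hand side of the claimed formula and swap the order of summation. A pair $(\idealb,\idealc)$ with $\idealb\cdot\idealc = \ideala$ is determined by the choice of $\idealc \mid \ideala$, and the complementary cofactor $\ideala\idealc^{-1}$ is a genuine ideal (well-defined via Theorem~\ref{theorem=primeideals_frac}, or directly as the ideal whose prime exponents are the coordinatewise differences). So
\[
\sum_{\idealb \cdot \idealc = \ideala} \mu_K(\idealb)\, g(\idealc) = \sum_{\idealc \mid \ideala} \mu_K(\ideala \idealc^{-1}) \sum_{\ideald \mid \idealc} f(\ideald) = \sum_{\ideald \mid \ideala} f(\ideald) \sum_{\idealb \mid \ideala\ideald^{-1}} \mu_K(\idealb),
\]
where in the second equality I exchange the two sums over the joint condition $\ideald \mid \idealc \mid \ideala$, reparametrize via $\idealc = \ideald\idealc''$ with $\idealc'' \mid \ideala\ideald^{-1}$, and observe that as $\idealc''$ ranges over divisors of $\ideala\ideald^{-1}$, so does the cofactor $\idealb = \ideala\ideald^{-1}(\idealc'')^{-1}$. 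Applying the auxiliary identity to $\ideala\ideald^{-1}$, the inner sum vanishes unless $\ideala\ideald^{-1} = \OK$, i.e., $\ideald = \ideala$, leaving the single surviving term $f(\ideala)$.

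The main obstacle, if any, is simply careful bookkeeping of the reindexing in the double sum. This becomes routine once every divisibility condition is translated into a coordinatewise inequality on prime exponents via the unique factorization of Theorem~\ref{theorem=primeideals}; no further ingredients beyond the multiplicative monoid structure of $\Ideals_K$ and the definition of $\mu_K$ are required.
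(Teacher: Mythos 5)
Your proof is correct. It differs from the paper's, which is a one-line reduction: the authors observe that $\Ideals_K$ is a free commutative monoid on the set of prime ideals (Theorem~\ref{theorem=primeideals}), hence isomorphic as a monoid to $\Ideals_{\QQ}$, and then simply cite M\"obius inversion for the standard M\"obius function $\mu_{\QQ}$. You instead re-prove the inversion formula from scratch inside $\Ideals_K$: first the orthogonality identity $\sum_{\idealb\mid\ideala}\mu_K(\idealb)=0$ for $\ideala\neq\OK$ via the binomial expansion of $(1-1)^r$, then the double-sum interchange and reparametrization $\idealc=\ideald\idealc''$. Both routes rest on exactly the same structural fact (unique prime ideal factorization), so neither is more general; the paper's version buys brevity by outsourcing the combinatorics to the classical case, while yours buys self-containedness and makes explicit where the divisor--cofactor bijection $\idealc\mapsto\ideala\idealc^{-1}$ and the exponentwise description of divisibility are used. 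Your bookkeeping in the reindexing step is accurate, and the auxiliary identity is stated and proved correctly, so there is no gap.
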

\begin{proof}
Since $\Ideals_K$ is isomorphic to $\Ideals_{\QQ}$ as monoids
by Theorem~\ref{theorem=primeideals},
the result follows from the property of the standard \Mobius function
$\mu_{\QQ}$.
\end{proof}
The next lemma may be regarded as a refinement of complete multiplicativity of the ideal norm (Lemma~\ref{lemma=completemultiplicativity}), and will be employed in Section~\ref{section=maintheoremfull}.
We will present a proof of a more general statement of this lemma  in Section~\ref{section=quadraticform}; see Proposition~\ref{prop:monogenic-modulo-b}.
\begin{lemma}\label{lemma=a/Wa}
For $\ideala,\idealb\in \Ideals_K$, we have an isomorphism
$\OK/\idealb\simeq\ideala/\ideala\idealb$
of $\OK$-modules.
\end{lemma}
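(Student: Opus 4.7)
The plan is to exhibit an explicit $\OK$-linear surjection $\OK \twoheadrightarrow \ideala/\ideala\idealb$ whose kernel is exactly $\idealb$. The whole argument reduces to producing a single element $\alpha \in \ideala$ that generates $\ideala$ modulo $\ideala\idealb$, after which the isomorphism will be the map $x \mapsto \alpha x + \ideala\idealb$.

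First I would construct such an $\alpha$. Let $\idealp_1,\ldots,\idealp_r$ be the prime ideals dividing $\idealb$, and write $\ideala = \prod_\idealq \idealq^{a_\idealq}$ for the prime ideal decomposition given by Theorem~\ref{theorem=primeideals}. Set $a_i \coloneqq a_{\idealp_i}$ and let $\idealc \coloneqq \prod_{\idealq \notin \{\idealp_1,\ldots,\idealp_r\}} \idealq^{a_\idealq}$, so that $\ideala = \idealc \cdot \prod_i \idealp_i^{a_i}$ with $\idealc$ coprime to each $\idealp_i$. Applying the Chinese remainder theorem to the pairwise coprime ideals $\idealc,\idealp_1^{a_1+1},\ldots,\idealp_r^{a_r+1}$, I can find $\alpha \in \OK$ with $\alpha \equiv 0 \pmod{\idealc}$ and with prescribed residue modulo $\idealp_i^{a_i+1}$ realizing $\idealp_i^{a_i}/\idealp_i^{a_i+1}$; in other words, $\alpha \in \ideala$ and the $\idealp_i$-adic valuation of $\alpha$ equals $a_i$ exactly for every $i\in[r]$.

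Next I would define the $\OK$-linear map $f\colon \OK \to \ideala/\ideala\idealb$ by $f(x) \coloneqq \alpha x + \ideala\idealb$ and compute its image and kernel by localizing at each prime, using that $\OK$ is a Dedekind domain so $\OK_\idealp$ is a DVR. For surjectivity it suffices to check that $\alpha\OK + \ideala\idealb = \ideala$: at every prime $\idealq$, the $\idealq$-adic valuation of $\alpha\OK + \ideala\idealb$ equals $\min(v_\idealq(\alpha), v_\idealq(\ideala) + v_\idealq(\idealb))$, and by the choice of $\alpha$ this is $a_\idealq = v_\idealq(\ideala)$ in all cases (the case $\idealq = \idealp_i$ uses $v_{\idealp_i}(\alpha) = a_i$; the case $\idealq \notin \{\idealp_i\}$ uses $v_\idealq(\idealb) = 0$ and $v_\idealq(\alpha)\geq a_\idealq$). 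For the kernel, $\alpha x \in \ideala\idealb$ translates to $v_\idealq(x) \geq v_\idealq(\ideala) + v_\idealq(\idealb) - v_\idealq(\alpha)$ at every prime $\idealq$; at $\idealq = \idealp_i$ this gives $v_{\idealp_i}(x) \geq v_{\idealp_i}(\idealb)$, while at other primes the condition is vacuous because $v_\idealq(\idealb) = 0 \leq v_\idealq(\alpha) - v_\idealq(\ideala)$. Therefore $\ker f = \idealb$, finishing the proof via the first isomorphism theorem.

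The main obstacle is the construction of $\alpha$, since $\ideala$ is typically non-principal and one cannot simply take $\alpha$ to be a generator. This is overcome by the Chinese remainder theorem, which lets us prescribe $\alpha$ locally at the finitely many primes dividing $\idealb$ while guaranteeing $\alpha\in\ideala$ globally; after that, everything else is a valuation-by-valuation verification.
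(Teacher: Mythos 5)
Your proof is correct. The skeleton — exhibit a single $\alpha\in\ideala$ generating $\ideala$ modulo $\ideala\idealb$ and check that multiplication by $\alpha$ has kernel exactly $\idealb$ — is the same as the paper's, but the implementation is genuinely different. The paper does not prove this lemma in place; it defers to Proposition~\ref{prop:monogenic-modulo-b}, a more general statement for an invertible fractional ideal $\idealc$ of an arbitrary (possibly non-maximal) order $\Or$, which is needed later for the quadratic-form results. In that generality there is no unique factorization into prime ideals and no valuation theory, so the paper cannot argue as you do: instead it uses the Chinese remainder theorem to reduce to an ideal $\ideald$ containing a power of a single maximal ideal $\idealp$ (Lemmas~\ref{lem:the-only-property-we-need} and \ref{lem:Chinese}), and then produces the generator from the partition of unity $\sum_i c_id_i=1$ supplied by $\idealc\idealc^{-1}=\Or$, picking an index with $c_id_i\notin\idealp$ (Lemma~\ref{lem:monogenic-modulo-p^e}); surjectivity and injectivity are then verified by purely ideal-theoretic manipulations such as $c\idealc^{-1}+\ideald=\Or$. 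Your argument instead exploits that $\OK$ is Dedekind: CRT prescribes the exact valuations $v_{\idealp_i}(\alpha)=v_{\idealp_i}(\ideala)$ at the primes dividing $\idealb$, and both $\alpha\OK+\ideala\idealb=\ideala$ and $\ker f=\idealb$ drop out of prime-by-prime valuation comparisons. What your route buys is a short, concrete, self-contained proof of the lemma exactly as stated (for the maximal order $\OK$); what the paper's route buys is the extra generality over non-maximal orders, which your valuation argument could not deliver. Both are sound; the only caveat is that your proof, unlike the paper's, would not suffice as a proof of Proposition~\ref{prop:monogenic-modulo-b}.
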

\subsection{The unit group and ideal class group}\label{subsection=Ot-Cl}
The multiplicative group $\OKt$ of
$\OK$ is called the \emph{unit group} of $K$.
The subgroup of 
$\OKt$ consisting of torsion elements is denoted by 
$\mu(K)$ (not to be confused with the \Mobius function $\mu_K$).
We continue to assume Settings~\ref{setting=section3} and
\ref{setting=sigma}.
\begin{lemma}[{see \cite[Chapter~I, Proposition 7.1]{Neukirch}}]\label{lemma=muK}
The group $\mu(K)$ is finite. Moreover, an element
$\alpha\in\OKnz$ is in $\mu (K)$ if and only if
$|\sigma_{i}(\alpha)|=1$ for all $i\in [r_1+r_2]$.
\end{lemma}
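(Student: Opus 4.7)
The plan is to prove both assertions by realizing $\OK$ as a discrete full-rank lattice via the Minkowski embedding
\[
j\colon K \longrightarrow \RR^{r_1}\times\CC^{r_2},\qquad \alpha\mapsto(\sigma_1(\alpha),\dots,\sigma_{r_1+r_2}(\alpha)),
\]
and then extracting finiteness from a compactness argument combined with the pigeonhole principle.

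The technical heart will be the assertion that, for every real $C>0$, the set
\[
B_C\coloneqq\{\alpha\in\OK : |\sigma_i(\alpha)|\leq C \text{ for every } i\in[r_1+r_2]\}
\]
is finite. To see this, I would fix an integral basis $\omom=(\omega_1,\dots,\omega_n)$ and verify that $j(\omega_1),\dots,j(\omega_n)$ are $\RR$-linearly independent in $\RR^{r_1}\times\CC^{r_2}\cong\RR^n$, relying on the non-vanishing of the discriminant determinant formed from the embeddings $\sigma_1,\dots,\sigma_{r_1+2r_2}$. This makes $j(\OK)$ a discrete lattice of full rank, and $j(B_C)$ is then the intersection of this lattice with a bounded subset of $\RR^{r_1}\times\CC^{r_2}$, hence finite.

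With $B_C$ in hand, the rest is quick. Any $\zeta\in\mu(K)$ satisfies $\zeta^m=1$ for some $m\in\NN$, so each $\sigma_i(\zeta)$ is a complex root of unity of modulus $1$; in particular $\mu(K)\subseteq B_1$, proving the first assertion. For the characterization, which I read as the statement that $\alpha\in\OKnz$ lies in $\mu(K)$ exactly when $|\sigma_i(\alpha)|=1$ for every $i\in[r_1+r_2]$, the forward implication was just observed. For the converse, if $|\sigma_i(\alpha)|=1$ for all such $i$, then the entire sequence $\alpha,\alpha^2,\alpha^3,\dots$ lies in $B_1$; the pigeonhole principle supplies integers $a<b$ with $\alpha^a=\alpha^b$. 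Lemma~\ref{lemma=idealnorm} gives $\Nrm(\alpha)=1$, so $\alpha\in\OKt$, and cancellation yields $\alpha^{b-a}=1$.

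The one substantive obstacle is the lattice statement for $j(\OK)$; once that is granted, everything else collapses into a brief compactness-plus-pigeonhole argument. Beyond the basic structure of the Minkowski embedding, no further deep input is needed, and I do not foresee additional complications.
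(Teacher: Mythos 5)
Your proof is correct; the paper itself gives no argument here, simply citing Neukirch (Chapter~I, Proposition~7.1), and what you have written is precisely the standard proof behind that citation — discreteness of the full-rank Minkowski lattice $j(\OK)$ makes each $B_C$ finite, $\mu(K)\subseteq B_1$ gives finiteness of the torsion group, and the pigeonhole principle applied to the powers of $\alpha$ inside $B_1$ yields the converse. Your decision to read ``unit'' as ``element of $\mu(K)$'' is also the right one: it matches how the lemma is invoked later in the paper (e.g.\ $(\OKnz)\cap\ker\calL=\mu(K)$ in the proof of Proposition~\ref{proposition=badchoicedomain}), whereas the literal reading would fail in the direction ``unit $\Rightarrow$ all $|\sigma_i(\alpha)|=1$'' whenever $\OKt$ is infinite.
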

We define
$\OKbar\coloneqq\OKt/\mu(K)$. 
The group $\OKbar$ is torsion-free with rank $r_1+r_2-1$ by
Dirichlet's unit theorem
(\cite[Chapter~I, Theorem~7.4]{Neukirch}).
See also Theorem~\ref{theorem=Dirichlet}.
A sequence $\ee=(\varepsilon_{1},\varepsilon_{2},\dots,\varepsilon_{r_1+r_2-1})$
which gives a basis $(\overline{\varepsilon_1},\overline{\varepsilon_2},\dots,\overline{\varepsilon_{r_1+r_2-1}})$ of $\OKbar $ is called \emph{fundamental units} of $K$.

There is an invariant of $K$ called the \emph{class number}, which is a positive integer; this will show up in the Chebotarev density theorem.
For the sake of completeness,
let us give a quick definition. Define an equivalence relation $\sim$
on $\Ideals_K$ as follows. We declare $\ideala\sim\idealb$ if there exist
$\gamma,\delta\in\OK\setminus\{0\}$ such that the equality of ideals
$(\gamma)\cdot\ideala=(\delta)\cdot\idealb$ holds.
The set of equivalence classes
$\Ideals_K/\sim$ inherits the monoid structure from $\Ideals_K$.
It is known that this is in fact a group (see \cite[Chapter~I, Definition~3.7 to Proposition~3.8]{Neukirch}), 
called the \emph{ideal class group}.
It is furthermore known that this is a finite group (see \cite[Chapter~I, Theorem~6.3]{Neukirch}),
and its order, written $h=h_K$, is the \emph{class number}.
\subsection{$p$-Ideals}\label{subsection=pideal}
Recall from Notation in 
Subsection~\ref{subsection=ideasofproof}
that the set of rational primes is denoted by
$\PP=\{2,3,5,\ldots\}$.
For
$\idealp\in|\Spec(\OK)|$, 
the intersection $\idealp\cap\ZZ$ is a non-zero prime ideal of $\ZZ$.
Hence, there exists a unique $p\in \PP$ such that $\idealp\cap\ZZ=p\ZZ$.
In this case, we call $\idealp$ a \emph{prime $p$-ideal}, 
and the set of
prime $p$-ideals is denoted by $|\Spec(\OK)|\ppart$.
For $\idealp\in|\Spec(\OK)|\ppart$, the quotient $\OK/\idealp$ is a finite 
extension field of the finite prime field $\FF_p$.
The extension degree 
$f_{\idealp}\coloneqq[\OK/\idealp:\FF_p]$
is called the \emph{degree} of $\idealp$.
Then $\Nrm(\idealp)=p^{f_{\idealp}}$ holds.

For the prime ideal decomposition $\ideala=\prod_{\idealp\in|\Spec(\OK)|}\idealp^{e_{\idealp}}$ of $\ideala \in \Ideals_K$, for each $p\in \PP$, we define
\[
\ideala\ppart\coloneqq\prod_{\idealp\in|\Spec(\OK)|\ppart}\idealp^{e_{\idealp}};
\]
it is called the \emph{$p$-part} of $\ideala$.
Then we have
\[
\ideala=\prod_{p\in \PP}\ideala\ppart.
\]
An ideal
$\ideala\in\Ideals_K$ is called a \emph{$p$-ideal} if
$\ideala ^{(p)}=\ideala $,
or equivalently,  $\Nrm(\ideala)$ is a power of $p$.
Observe that $\OK$ is a $p$-ideal for every $p\in\PP$.
We denote the set of $p$-ideals of $\OK$ by $\Ideals_K\ppart$.
For $\ideala,\idealb\in \Ideals_K$ and $p\in \PP$, we have
$(\ideala\cap\idealb)\ppart=\ideala\ppart\cap\idealb\ppart$.

When $K=\QQ$, we have $\OO_K=\ZZ$, and the positive generator of
the $p$-part of an ideal $D\ZZ$ ($D\in\NN$)
is denoted by $D\ppart$.
We then have
\[
D=\prod_{p\in \PP}D\ppart,
\]
which is the prime factorization of $D$.

We now exhibit two lemmas needed in
Section~\ref{section=GoldstonYildirim}.
Let $Z$ be a finite abelian group. Then $Z$ admits a unique
decomposition
\[
Z=\bigoplus _{p\in \PP}Z^{(p)},
\]
where $Z\ppart$ is a $p$-group for each $p$. More explicitly,
$Z\ppart=\{ z\in Z : \exists e\ge 0, \ p^e\cdot z =0\}$.
We call $Z\ppart$ the \emph{$p$-part} of $Z$.
\begin{lemma}\label{lemma=chineseremainder}
Let $p\in\PP$.
\begin{enumerate}[$(1)$]
\item\label{en:CRT1}
Let $\idealc \in \Ideals_{K}$.
The $p$-part $(\OO_K/\idealc)\ppart$ of the finite abelian group
$\OK/\idealc$ is isomorphic 
to $\OK/(\idealc\ppart)$
by the composition
\[
(\OK/\idealc)\ppart\hookrightarrow \OK/\idealc \twoheadrightarrow \OK /(\idealc\ppart)
\]
of the inclusion followed by
the canonical surjection. 
\item\label{en:CRT2}
Let $Z$ and $W$ be finite abelian groups.
Then the following map gives a bijection
\[
\Hom(Z,W)\xrightarrow{\simeq}\bigoplus_{p\in\PP}\Hom(Z\ppart,W\ppart);\quad \psi\mapsto(\psi\ppart )_{p\in \PP},
\]
where $\psi\ppart $ is defined as
the restriction of $\psi$ to
$Z\ppart$.
\end{enumerate}
\end{lemma}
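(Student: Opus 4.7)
The plan is to handle both parts via the primary decomposition of finite abelian groups, together with the Chinese Remainder Theorem in the number-field setting.

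For part \eqref{en:CRT1}, I would first invoke Theorem~\ref{theorem=primeideals} to write $\idealc = \prod_{p \in \PP} \idealc^{(p)}$, a product over distinct $p$-parts which are pairwise coprime (any two distinct prime ideals of $\OK$ contracting to different rational primes are coprime). Applying the Chinese Remainder Theorem for the ring $\OK$, this yields a ring isomorphism
\[
\OK / \idealc \;\xrightarrow{\ \sim\ }\; \prod_{p \in \PP} \OK / \idealc^{(p)},
\]
where all but finitely many factors are trivial. By Lemma~\ref{lemma=completemultiplicativity}, $\Nrm(\idealc^{(p)})$ is a power of $p$, so $\OK/\idealc^{(p)}$ is a finite abelian $p$-group. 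Since the $p$-part of a finite direct product of finite abelian groups of coprime prime-power orders is the corresponding factor, the $p$-component of $\OK/\idealc$ under this isomorphism is exactly $\OK/\idealc^{(p)}$. Finally, I would check that the inclusion-then-projection map described in the statement agrees with this identification: an element of the $p$-part of $\OK/\idealc$ corresponds under CRT to a tuple supported at the $p$-th slot, and projection onto $\OK/\idealc^{(p)}$ recovers the image under the canonical surjection $\OK/\idealc \twoheadrightarrow \OK/\idealc^{(p)}$.

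For part \eqref{en:CRT2}, the key observation is that every finite abelian group $A$ decomposes canonically as $A = \bigoplus_{p \in \PP} A^{(p)}$, and any group homomorphism $\psi \colon Z \to W$ necessarily carries $Z^{(p)}$ into $W^{(p)}$: if $z \in Z^{(p)}$ satisfies $p^e z = 0$, then $p^e \psi(z) = 0$, so $\psi(z)$ has order a power of $p$, hence lies in $W^{(p)}$. Therefore $\psi$ is the direct sum of its restrictions $\psi^{(p)} \colon Z^{(p)} \to W^{(p)}$, giving a well-defined map $\psi \mapsto (\psi^{(p)})_p$. The inverse is obtained by taking a family $(\psi_p)_p \in \bigoplus_p \Hom(Z^{(p)}, W^{(p)})$ and defining $\psi$ via the universal property of the direct sum $Z = \bigoplus_p Z^{(p)}$; both compositions are verified to be the identity by direct inspection on each $p$-component.

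I do not expect a genuine obstacle here: both statements are standard consequences of CRT and primary decomposition. The only mildly delicate points are bookkeeping: in \eqref{en:CRT1}, one must verify that the \emph{specific} composition given in the statement (inclusion followed by the canonical surjection) realizes the abstract isomorphism from CRT, which boils down to tracing where a representative $\alpha \in \OK$ with $\alpha \bmod \idealc \in (\OK/\idealc)^{(p)}$ ends up in $\OK/\idealc^{(p)}$; and in \eqref{en:CRT2}, one should remark that the direct sum on the right-hand side is actually finite because $Z^{(p)}$ and $W^{(p)}$ are trivial for all but finitely many $p$, so no convergence or support issue arises.
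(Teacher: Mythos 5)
Your proof is correct and follows essentially the same route as the paper: part (1) is the Chinese Remainder Theorem applied to the factorization $\idealc=\prod_p\idealc\ppart$ into pairwise coprime $p$-parts, followed by taking $p$-components, and part (2) is the primary decomposition of $Z$ and $W$ combined with the observation that homomorphisms between $p$-parts and $\ell$-parts vanish for $p\neq\ell$ (your remark that $\psi(Z\ppart)\subseteq W\ppart$ is exactly this fact, and your check that restriction agrees with restriction-then-projection matches the paper's final step). The extra bookkeeping you flag is handled correctly, so nothing is missing.
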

\begin{proof}
To prove \eqref{en:CRT1}, simply take the $p$-part of both sides of
the isomorphism by the Chinese remainder theorem:
\[
\OK /\idealc \xrightarrow\simeq \prod _{p\in \PP } \OK /\idealc\ppart.
\]

Next we prove \eqref{en:CRT2}. Since $Z$ and $W$ are finite abelian groups,
we have decompositions into finite direct products
$Z\simeq \bigoplus _{p\in\PP } Z\ppart $ 
and $W\simeq \bigoplus _{\ell\in\PP }W\lpart$.
This implies the direct sum decomposition
\[
\Hom (Z,W) \xrightarrow\simeq \bigoplus_{p,\ell \in \PP }\Hom (Z\ppart ,W\lpart)=\bigoplus_{p\in\PP}\Hom(Z\ppart, W\ppart),
\]
where the last equality follows by observing that
$\Hom (Z\ppart ,W\lpart)=0$ for $p\neq\ell$.
Thus, a homomorphism $\psi:Z\to W$ is determined by its $p$-components
$\psi\ppart:Z\ppart \to W\ppart$.
Since $\psi\ppart$ is the composition of the three homomorphisms
$Z\ppart \xrightarrow{\psi |_{Z\ppart}}W\ppart 
\hookrightarrow W\twoheadrightarrow W\ppart$
in which the composition of the last two is the identity,
the claimed correspondence follows.
\end{proof}
An ideal $\ideala\in\Ideals_K$ is said to be \emph{square-free} if,
in the prime ideal decomposition 
$\ideala=\prod_{\idealp \in |\Spec (\OO_K)|}\idealp^{e_{\idealp}}$,
the condition $e_{\idealp}\in \{0,1\}$ holds for all $\idealp \in |\Spec (\OO_K)|$.
\begin{lemma}\label{lemma=squarefree}
For $p\in\PP$, 
we have $\#(|\Spec (\OO_K)|\ppart)\leq n$.
In particular, the number of square free $p$-ideals is at most $2^n$.
\end{lemma}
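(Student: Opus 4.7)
The plan is to apply the prime ideal decomposition to the ideal $p\OK$ itself and compute its norm in two ways. First, I would note that by Theorem~\ref{theorem=primeideals} we may write
\[
p\OK=\prod_{\idealp\in |\Spec(\OK)|\ppart}\idealp^{e_{\idealp}},
\]
where the product ranges over precisely the prime $p$-ideals, since a prime ideal $\idealp$ appears in the factorization of $p\OK$ if and only if $\idealp\supseteq p\OK$, equivalently $\idealp\cap\ZZ=p\ZZ$. By Lemma~\ref{lemma=completemultiplicativity} and the formula $\Nrm(\idealp)=p^{f_\idealp}$, the norm of this product equals $p^{\sum_\idealp f_{\idealp}e_{\idealp}}$.

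Next I would compute $\Nrm(p\OK)$ directly: since $\OK$ is a free $\ZZ$-module of rank $n$ with respect to an integral basis, the quotient $\OK/p\OK$ is isomorphic to $(\ZZ/p\ZZ)^n$ as a $\ZZ$-module, so $\Nrm(p\OK)=\#(\OK/p\OK)=p^n$. Comparing with the previous expression gives the fundamental identity $\sum_{\idealp\in|\Spec(\OK)|\ppart}f_{\idealp}e_{\idealp}=n$. Since each term in the sum satisfies $f_{\idealp}\geq 1$ and $e_{\idealp}\geq 1$, the number of summands is at most $n$; this is exactly the claim $\#(|\Spec(\OK)|\ppart)\leq n$.

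For the ``in particular'' assertion, every square-free $p$-ideal is of the form $\prod_{\idealp\in T}\idealp$ for a unique subset $T\subseteq|\Spec(\OK)|\ppart$ (the empty product yielding $\OK$), by the unique factorization of Theorem~\ref{theorem=primeideals}. Hence the number of square-free $p$-ideals equals $2^{\#(|\Spec(\OK)|\ppart)}\leq 2^n$.

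There is no genuine obstacle here: the only mild point is to recognize that the prime divisors of $p\OK$ are precisely the prime $p$-ideals (immediate from the definition $\idealp\cap\ZZ=p\ZZ$) and to invoke the $\ZZ$-module structure of $\OK$ to evaluate $\#(\OK/p\OK)=p^n$. Everything else is bookkeeping with the multiplicativity of the norm already established in Lemma~\ref{lemma=completemultiplicativity}.
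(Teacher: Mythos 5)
Your proof is correct and follows essentially the same route as the paper: decompose $p\OK$ into prime $p$-ideals, take norms to obtain $n=\sum_{\idealp}f_{\idealp}e_{\idealp}\geq\#(|\Spec(\OK)|\ppart)$, and count square-free $p$-ideals as subsets of $|\Spec(\OK)|\ppart$. The only difference is that you spell out $\Nrm(p\OK)=p^n$ via the $\ZZ$-module structure, whereas the paper takes this as already established (it is noted right after Lemma~\ref{lemma=idealnorm}).
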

\begin{proof}
Every
$\idealp\in|\Spec (\OK)|\ppart$ appears in the prime ideal 
decomposition of $p\OO_K$. 
Thus
\[p\OK=\prod_{\idealp\in |\Spec (\OK)|\ppart}\idealp^{e_{\idealp}},
\quad e_{\idealp}\in \NN.\]
Taking the norm of both sides using 
Lemma~\ref{lemma=completemultiplicativity}, we find
\[
n=\sum_{\idealp\in|\Spec (\OK)|\ppart}f_{\idealp}e_{\idealp}\geq 
\#(|\Spec (\OK)|\ppart).
\]
This proves the first statement. The second statement follows from the first
by the definition of square-freeness.
\end{proof}
Let us record the following corollary to the Chinese remainder theorem. 
Lemma~\ref{lem:Chinese} will be employed in Subsection~\ref{subsection=GYforideals} and Section~\ref{section=quadraticform}. 
\begin{lemma}[Chinese Remainder Theorem for modules]\label{lem:Chinese}
    Let $M$ be a module over a \textup{(}commutative unital\textup{)} ring $A$ and $\ideala _1,\dots ,\ideala _s $ mutually coprime ideals of $A$.
    Then we have an equality of ideals
    \begin{equation}\label{eq:Chinese-ideals}
        \bigcap _{i\in[s]} \ideala _i  = \prod _{i\in[s]} \ideala _i 
    \end{equation}
    and the next natural maps of rings and $A$-modules are isomorphisms:
    \begin{align}
        A/\bigl( \prod _{i\in[s]}\ideala _i\bigr)  &\xrightarrow \simeq \prod _{i\in[s]} A/\ideala_i ,\label{eq:Chinese-rings}
        \\
        M/\bigl(\prod _{i\in[s]}\ideala _i\bigr) M &\xrightarrow \simeq \prod _{i\in[s]} M/\ideala _i M . \label{eq:Chinese-modules}
    \end{align}
\end{lemma}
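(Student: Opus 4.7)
The plan is to proceed by induction on $s$, with the case $s = 2$ carrying all the content. The inductive step rests on the observation that pairwise coprimality of $\ideala_1, \dots, \ideala_s$ implies coprimality of $\ideala_1$ with $\prod_{i \geq 2} \ideala_i$: for each $i \geq 2$ choose $a_i \in \ideala_1$ and $b_i \in \ideala_i$ with $a_i + b_i = 1$, then expand $1 = \prod_{i \geq 2}(a_i + b_i)$; every term of the expansion except $\prod_{i \geq 2} b_i$ contains a factor in $\ideala_1$, so $1 \in \ideala_1 + \prod_{i \geq 2} \ideala_i$.

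For the base case $s = 2$, I would abbreviate $\ideala_1 = I$, $\ideala_2 = J$ and pick $a \in I$, $b \in J$ with $a + b = 1$. The equality \eqref{eq:Chinese-ideals} is then immediate: $IJ \subseteq I \cap J$ is automatic, and conversely any $x \in I \cap J$ satisfies $x = xa + xb \in JI + IJ = IJ$. For the module isomorphism \eqref{eq:Chinese-modules}, which contains \eqref{eq:Chinese-rings} as the special case $M = A$, I would verify surjectivity of the natural map $M/IJM \to M/IM \times M/JM$ via the standard CRT trick: given lifts $u, v \in M$ of a pair, the element $z := bu + av$ satisfies $z - u = (b-1)u + av = -au + av \in IM$ and symmetrically $z - v \in JM$. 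Injectivity reduces to $IM \cap JM \subseteq IJM$, which follows from writing $m = am + bm$ for $m \in IM \cap JM$: since $a \in I$ and $m \in JM$ one has $am \in I \cdot JM = IJM$, and symmetrically $bm \in JIM = IJM$.

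For the inductive step with $s \geq 3$, I would combine the base case applied to the pair $(\ideala_1, \prod_{i \geq 2} \ideala_i)$ with the inductive hypothesis applied to $(\ideala_2, \dots, \ideala_s)$ to produce the natural isomorphisms
\[
M \bigm/ \bigl( {\textstyle\prod_{i \in [s]}} \ideala_i \bigr) M \xrightarrow{\simeq} M/\ideala_1 M \times M \bigm/ \bigl( {\textstyle\prod_{i \geq 2}} \ideala_i \bigr) M \xrightarrow{\simeq} \prod_{i \in [s]} M/\ideala_i M,
\]
and check that the composition coincides with the map stated in \eqref{eq:Chinese-modules}; the analogous chain with $M = A$ then propagates \eqref{eq:Chinese-rings} and \eqref{eq:Chinese-ideals}. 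The only mildly delicate point is the equality $IM \cap JM = IJM$ for an arbitrary module $M$, where flatness-type subtleties could in principle arise; the trick $m = am + bm$ sidesteps them because the coprimality identity $a + b = 1$ lives already in $A$ and acts on $M$ by scalar multiplication.
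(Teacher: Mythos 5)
Your proof is correct, but it takes a genuinely different route from the paper's. The paper disposes of \eqref{eq:Chinese-ideals} and \eqref{eq:Chinese-rings} by citing the classical Chinese remainder theorem, and then obtains \eqref{eq:Chinese-modules} in one step by tensoring the ring isomorphism \eqref{eq:Chinese-rings} with $M$ over $A$, using the standard identification $A/\ideala \otimes_A M \simeq M/\ideala M$ together with the fact that a finite product of modules commutes with $- \otimes_A M$; since one is tensoring an \emph{isomorphism} rather than an injection, no flatness hypothesis is needed there either. You instead give a self-contained induction on $s$, proving the base case $s=2$ by the explicit $a+b=1$ computations and reducing the general case via the (correctly proved) coprimality of $\ideala_1$ with $\prod_{i\geq 2}\ideala_i$. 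Your argument has the merit of making the crucial containment $IM\cap JM\subseteq IJM$ explicit and of avoiding any appeal to tensor products or external references, at the cost of being longer; the paper's argument is shorter but outsources both the ring-level CRT and the base-change identity to the literature. Both proofs are complete and valid.
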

\begin{proof}
The Chinese remainder theorem provides   \eqref{eq:Chinese-ideals} and \eqref{eq:Chinese-rings}.
    For \eqref{eq:Chinese-modules}, consider the tensor product of \eqref{eq:Chinese-rings} and $M$ over $A$ and apply
    \cite[Chapter 2, Exercise 2]{Atiyah-Macdonald}.
    (The usual proof of the Chinese remainder theorem equally works to prove \eqref{eq:Chinese-modules}.)
\end{proof}
\subsection{The Dedekind zeta function and density of ideals}
\label{subsection=densityofideals}
In this subsection, we present some results on the Dedekind zeta function
and density of ideals for a number field $K$.
\begin{definition}[{Dedekind zeta function; see \cite[Chapter~VII, Definition~5.1]{Neukirch}}]\label{definition=Dedekindzeta}
The sum
\[
\sum_{\ideala \in \Ideals_K}\frac{1}{\Nrm(\ideala)^s}
\]
converges absolutely and uniformly on every compact subset of
the domain $\Re(s)>1$ in the complex plane.
Here, the power is defined by $\Nrm(\ideala)^s= \exp ({s\log \Nrm(\ideala)})$
where $\log \Nrm (\ideala )\in \RR $ (among other branches of $\log $). 
We call the analytic function defined by this sum 
the \emph{Dedekind zeta function} of $K$, and denote it by $\zeta^{}_K$.
\end{definition}
\begin{proposition}[{see \cite[Chapter~VII, Proposition~5.2]{Neukirch}}]\label{proposition=Eulerproduct}
The infinite product
\[
\prod_{\idealp\in|\Spec(\OK)|}\left(1-\frac{1}{\Nrm (\idealp)^s}\right)^{-1}
\]
converges absolutely on the domain $\Re(s)>1$, and coincides with  $\zeta^{}_{K}(s)$.
\end{proposition}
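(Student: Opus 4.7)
The plan is to first establish absolute convergence of the Euler product on $\Re(s)>1$, and then identify it with $\zeta^{}_K(s)$ by combining unique prime ideal factorization (Theorem~\ref{theorem=primeideals}) with complete multiplicativity of the ideal norm (Lemma~\ref{lemma=completemultiplicativity}).

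For convergence, I fix $\sigma=\Re(s)>1$. Each $\idealp\in|\Spec(\OK)|$ lies above a unique $p\in\PP$, and by Lemma~\ref{lemma=squarefree} at most $n$ prime ideals lie above each such $p$; moreover $\Nrm(\idealp)\geq p$ since $p\OK\subseteq\idealp$. Hence
\[
\sum_{\idealp\in|\Spec(\OK)|}\Nrm(\idealp)^{-\sigma}\leq n\sum_{p\in\PP}p^{-\sigma}<\infty.
\]
Local uniform convergence of the infinite product on $\Re(s)\geq\sigma_0>1$ now follows from the elementary estimate $|\log(1-z)|\leq 2|z|$ for $|z|\leq 1/2$, applied to the principal branch of the logarithm used in Definition~\ref{definition=Dedekindzeta}.

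For the identification, I fix $\Re(s)>1$ and a real $N\geq 2$, and expand each local factor with $\Nrm(\idealp)\leq N$ as an absolutely convergent geometric series
\[
\bigl(1-\Nrm(\idealp)^{-s}\bigr)^{-1}=\sum_{e\geq 0}\Nrm(\idealp)^{-es}.
\]
Multiplying these finitely many series and applying Lemma~\ref{lemma=completemultiplicativity}, the product $\prod_{\idealp}\Nrm(\idealp)^{-e_{\idealp}s}$ equals $\Nrm(\ideala)^{-s}$ for $\ideala=\prod_{\idealp}\idealp^{e_{\idealp}}$; by Theorem~\ref{theorem=primeideals} each ideal $\ideala\in\Ideals_K$ whose prime factors all satisfy $\Nrm(\idealp)\leq N$ arises from a unique tuple $(e_{\idealp})_{\idealp}$. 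Thus the partial product equals
\[
\sum_{\ideala}\Nrm(\ideala)^{-s},
\]
the sum running over those $\ideala\in\Ideals_K$ whose prime factors have norm at most $N$. Letting $N\to\infty$, the missing tail $\sum_{\Nrm(\ideala)>N}\Nrm(\ideala)^{-\sigma}$ tends to $0$ by absolute convergence of the Dedekind zeta series, yielding the claimed equality.

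The only mildly technical point is to justify the term-by-term multiplication of infinite series and the passage from the partial products to the infinite Euler product. Both are handled by applying Fubini--Tonelli to the nonnegative double sum $\sum_{\idealp}\sum_{e\geq 1}\Nrm(\idealp)^{-e\sigma}$, whose finiteness is immediate from the first step above.
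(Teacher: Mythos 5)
Your proof is correct. The paper itself gives no argument for this proposition --- it simply cites \cite[Chapter~VII, Proposition~5.2]{Neukirch} --- and what you have written is precisely the standard textbook proof found there: absolute convergence of $\sum_{\idealp}\Nrm(\idealp)^{-\sigma}$ via the bound of at most $n$ primes above each rational prime (Lemma~\ref{lemma=squarefree}) together with $\Nrm(\idealp)\geq p$, followed by expansion of the finitely many local factors with $\Nrm(\idealp)\leq N$, identification of the resulting partial sum with $\sum_{\ideala}\Nrm(\ideala)^{-s}$ over ideals with all prime factors of norm at most $N$ via Theorem~\ref{theorem=primeideals} and Lemma~\ref{lemma=completemultiplicativity}, and the tail estimate as $N\to\infty$. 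All the technical points (the tail of missing ideals being contained in $\{\ideala:\Nrm(\ideala)>N\}$, and the Fubini--Tonelli justification) are handled correctly.
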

It is known that the Dedekind zeta function $\zeta^{}_K$ has analytic
continuation to a meromorphic function on $\CC$.
\begin{theorem}[{see~\cite[Chapter~VII, Corollary~5.11]{Neukirch}}]
\label{theorem=zeta_K}
The Dedekind zeta function $\zeta^{}_K$ has a  pole of order $1$ at $s=1$, and this is the unique pole.
The residue $\kappa=\kappa_K$ of $\zeta^{}_K$ at $s=1$ is a positive real number.
\end{theorem}
The residue $\kappa=\kappa_K$ can be expressed in terms of the class number $h=h_K$ (Subsection~\ref{subsection=Ot-Cl})
and an invariant called the regulator of $K$. 
This expression is known as the class number formula.
In this paper, however, we do not need the explicit form of $\kappa$.
\begin{proposition}[Density of ideals; see
{\cite[Theorem 121 in \S 40]{Hecke}}]
\label{proposition=idealdensity}
The residue $\kappa$ of $\zeta^{}_K$ at $s=1$ coincides with the 
limit of the density of ideals in the following sense:
\[
\lim_{L\to \infty}\frac{\#\{\ideala \in \Ideals _K:\Nrm (\ideala)\leq L\}}{L}=\kappa>0.
\]
\end{proposition}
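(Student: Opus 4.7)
The plan is to reduce the ideal count to a lattice point count via the Minkowski embedding, following the classical argument of Dedekind--Weber--Hecke (see \cite[\S40]{Hecke}). First, decompose by ideal class: writing $\Cl_K$ for the ideal class group of order $h=h_K$ (Subsection~\ref{subsection=Ot-Cl}),
\[
\#\{\ideala\in\Ideals_K:\Nrm(\ideala)\leq L\}=\sum_{\mathfrak{C}\in\Cl_K}\#\{\ideala\in\mathfrak{C}:\Nrm(\ideala)\leq L\},
\]
so it suffices to show each summand is asymptotic to $c\cdot L$ with a constant $c>0$ independent of $\mathfrak{C}$. The limit in the statement then equals $hc>0$, and the identification $hc=\kappa$ is obtained afterwards by inserting the density into the Dirichlet series $\zeta_K(s)=\sum_{k\geq 1}a_k k^{-s}$ with $a_k\coloneqq\#\{\ideala:\Nrm(\ideala)=k\}$ and applying a standard Abelian summation argument against Theorem~\ref{theorem=zeta_K}.

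For a fixed class $\mathfrak{C}$, I would choose a representative $\idealb\in\mathfrak{C}^{-1}$. Multiplication by $\idealb$ together with complete multiplicativity of the norm (Lemma~\ref{lemma=completemultiplicativity}) identifies $\{\ideala\in\mathfrak{C}:\Nrm(\ideala)\leq L\}$ with the set of $\OKt$-orbits of nonzero $\alpha\in\idealb$ satisfying $|\Nelm(\alpha)|\leq L\Nrm(\idealb)$. Via the Minkowski embedding $(\sigma_1,\dots,\sigma_{r_1+r_2})\colon K\hookrightarrow\RR^{r_1}\times\CC^{r_2}\eqqcolon K_{\RR}$, the ideal $\idealb$ becomes a full-rank lattice of covolume proportional to $\Nrm(\idealb)$, and the norm region $R_T\coloneqq\{x\in K_{\RR}:|\Nelm(x)|\leq T\}$ scales homogeneously of degree $n$, i.e.\ $R_T=T^{1/n}R_1$, so its volume is linear in $T$. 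To mod out by $\OKt$ I would invoke Dirichlet's unit theorem (Subsection~\ref{subsection=Ot-Cl}): applying the log-map $\ell\colon K_{\RR}^{\times}\to\RR^{r_1+r_2}$ and choosing a fundamental parallelepiped for the rank-$(r_1+r_2-1)$ lattice $\ell(\OKbar)$ inside the trace-zero hyperplane, one pulls back to obtain an $\RR_{>0}$-stable cone $\mathcal{F}\subseteq K_{\RR}$ meeting each $\OKt$-orbit in exactly $\#\mu(K)$ points.

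A standard lattice-points-in-Lipschitz-region count then yields
\[
\#(\idealb\cap\mathcal{F}\cap R_T)=\frac{\mathrm{vol}(\mathcal{F}\cap R_1)}{\#\mu(K)\cdot\Nrm(\idealb)\cdot\mathrm{covol}(\OK)}\cdot T+O_K\!\bigl(T^{1-1/n}\bigr),
\]
and substituting $T=L\Nrm(\idealb)$ makes the factor $\Nrm(\idealb)$ cancel out, giving the per-class asymptotic with constant $c>0$ depending only on $K$. Summing over the $h$ classes gives density $hc>0$, after which the Abelian comparison with Theorem~\ref{theorem=zeta_K} identifies $hc$ with $\kappa$.

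The principal obstacle is the rigorous construction of $\mathcal{F}$ and the remainder bound. The interaction of the multiplicative scaling direction with a fundamental parallelepiped for $\OKbar$ produces a region whose boundary is only piecewise smooth and unbounded along the scaling axis; one has to truncate the log-coordinates and verify that the resulting piecewise-smooth boundary satisfies a Lipschitz condition in order for the principle ``\#(lattice points) $=$ volume $+\,O(\text{boundary area})$'' to yield the claimed $O_K(T^{1-1/n})$ error. These estimates are worked out in full in \cite[\S40]{Hecke} (and, from a slightly different angle, in \cite[Chapter~VII, Theorem~5.9]{Neukirch}); rather than reproduce them, I would appeal directly to these references for both the density limit and the identification with $\kappa$.
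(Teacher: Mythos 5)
The paper does not prove this proposition; it is quoted as a known result with the citation \cite[Theorem 121 in \S 40]{Hecke}, which is exactly the source you ultimately defer to. Your sketch is the correct classical Dedekind--Weber--Hecke argument (class decomposition, reduction to counting $\OKt$-orbits of lattice points in a homogeneously scaling norm region, fundamental cone from Dirichlet's unit theorem, Abelian comparison with Theorem~\ref{theorem=zeta_K} to identify the constant with $\kappa$), so it is consistent with, and more detailed than, what the paper provides.
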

Next we turn to prime ideals.
Item~\eqref{Landau} of Theorem~\ref{theorem=Chebotarev} will be used to bound from above the number of
certain elements related to prime ideals in Proposition~\ref{proposition=Masani_Landau}, Proposition~\ref{proposition=PK_kouri_a} and
Subsection~\ref{subsection=proof_orders}.
Item~\eqref{Chebotarev} of Theorem~\ref{theorem=Chebotarev} will be used to bound from below the number of prime elements.

\begin{theorem}	
	\begin{enumerate}[$(1)$]
		\item \textup{(Landau's prime ideal theorem, see
\cite[Theorem~3]{Cassels-Froehlich}).} We have
		\begin{align*}
			\#\{\idealp \in | \Spec (\OK) | : \Nrm (\idealp)\leq L\}=(1+o_{L\to \infty;K}(1))\cdot \frac{L}{\log L}.
		\end{align*} \label{Landau}
		\item \textup{(Chebotarev density theorem for principal prime ideals).} Denote by $|\Spec(\OK)|^{\PI}$ the set of non-zero principal prime ideals of $\OK$.
    		Let $h$ be the class number of $K$.
		Then
		\begin{align*}
			\#\{\idealp\in |\Spec (\OK)| ^{\PI} : \Nrm (\idealp)\leq L\}=(1+o_{L\to \infty;K}(1))\cdot \frac{1}{h}\cdot\frac{L}{\log L}.
		\end{align*}	\label{Chebotarev}
	\end{enumerate}
	\label{theorem=Chebotarev}
\end{theorem}

\begin{proof}[Proof of~\eqref{Chebotarev}]
This is a special case of 
the \Cheb\ density theorem
\cite[Theorem 4]{Cassels-Froehlich},
a reformulation of which is stated as Thoerem~\ref{theorem=Chebotarev_narrow} below.
See the paragraph after Theorem~\ref{theorem=Chebotarev_narrow} for how to deduce our current statement.
    \begin{comment}
We can define a modified ideal class group
${\Cl}_K^{\mathfrak{m}}$ for each ideal $\mathfrak{m}\in \Ideals_K$; see \cite[Chapter~VI, before Proposition 1.9]{Neukirch}.
The finite group ${\Cl}_K^{\mathfrak{m}}$ is a quotient of the monoid
\[\Ideals _K^{\mathfrak{m}}\coloneqq\{\ideala\in\Ideals_K : 
\ideala\text{ is relatively prime to }\mathfrak{m}\},\]
and it has ${\Cl}_K$ as a homomorphic image. We use this fact only for
$\mathfrak{m}=1\coloneqq\OK$; see \cite[Chapter~VI, Proposition 1.11]{Neukirch}.

In \cite{Cassels-Froehlich}, the order of the group
${\Cl}_K^{\mathfrak{m}}$ is denoted by $h_\mathfrak{m}$, although no name
is given to the group itself (\cite[p.209]{Cassels-Froehlich}). 
For $C\in {\Cl}_K^{\mathfrak{m}}$, let
$|\Spec(\OK)|_C$ denote the subset of $|\Spec(\OK)|$ consisting of
prime ideals $\idealp\in|\Spec(\OK)|$ with $\idealp\nmid\mathfrak{m}$
such that $\idealp$ is mapped to $C$ by
the canonical surjection 
$\Ideals_K^{\mathfrak{m}}\twoheadrightarrow{\Cl}_K^{\mathfrak{m}}$.
The density theorem, see \cite[Theorem 4 on p.214]{Cassels-Froehlich}, asserts that, for every $C\in {\Cl}_K^{\mathfrak{m}}$,
\[
\#\{\idealp\in |\Spec(\OK)|_C :
\Nrm (\idealp )\leq L\} 
= (1+o_{L\to \infty ;\mathfrak{m}}(1))\cdot\frac{1}{h_{\mathfrak{m}}}\cdot\frac{L}{\log L}.
\]
Specializing this formula to the case $\mathfrak{m}=1$ and taking the sum over
all $C\in {\Cl}_K^1$ whose image is the identity under the epimorphism
${\Cl}_K^{1}\twoheadrightarrow {\Cl}_K$,
we obtain the desired result.
\end{comment}
\end{proof}
\begin{remark}\label{remark=Landau}    
Some readers might be more familiar with the Chebotarev density theorem
in the analytic density version
(e.g.\ \cite[Chapter~VII, Theorem 13.2]{Neukirch}):
\begin{equation}\label{eq:anal-density}
\lim_{s\to 1+0}\frac{\prod\limits _{\idealp \in |\Spec (\OO_K)|^{\PI}} \left(1-\frac 1 {\Nrm (\idealp )^{s}} \right)^{-1}}{\log \left( \frac{1}{s-1} \right) }= \frac{1}{h} .
\end{equation}
In general, if $f(s)=\sum_{n\geq 1}\frac{a_n}{n^s}$ is a Dirichlet series with real coefficients
convergent in the domain $\Re (s)>1$, then the following inequalities are known:
\begin{equation}\label{eq:Landau-book}
\limsup _{x\to +\infty } \frac{\sum\limits _{1\le n\le x} a_n}{\log \left( \frac{x}{\log x} \right)}\geq
\limsup _{s\to 1+0} \frac{f(s)}{\log \left( \frac{1}{s-1} \right)},
\quad 
\liminf _{s\to 1+0} \frac{f(s)}{\log \left( \frac{1}{s-1} \right)}
\geq 
\liminf _{x\to +\infty } \frac{\sum\limits _{1\le n\le x} a_n}{\log \left( \frac{x}{\log x} \right)} 
.
\end{equation}
The proof is straightforward with Abel's summation method. See for example
\cite[Definition on p.~103 and Theorem on p.~118]{Landau}.

By setting $a_n \coloneqq \# \{ \idealp \in |\Spec(\OK)|^{\PI} : \Nrm (\idealp)= n\} $, we can see that Theorem~\ref{theorem=Chebotarev}~\eqref{Chebotarev} implies \eqref{eq:anal-density}.
Conversely, 
given 
\eqref{eq:anal-density}, by the first half of \eqref{eq:Landau-book}
we conclude that the following
estimate for the number of principal prime ideals holds for
\emph{infinitely many} $x\in \NN$:
\[
\# \{\idealp \in |\Spec(\OK)|^{\PI} : \Nrm (\idealp)\leq x\}\geq \frac{1}{2h} \cdot \frac{x}{\log x}.
\]
Whereas the assertion above is weaker than that of Theorem~\ref{theorem=Chebotarev}~\eqref{Chebotarev}, this suffices for the proofs of infinitary statements such as Theorem~\ref{theorem=primeconstellationssimple} and Corollary~\ref{corollary=primeconstellationssimple}.
\end{remark}
\subsection{Distribution of rational primes}\label{subsection=rationalprimes}
In this subsection we list three results on the distribution of
primes in $K=\QQ$. 
We use the notation introduced in
Subsection~\ref{subsection=ideasofproof}.
As in Subsection~\ref{subsection=densityofideals}, 
$L$ will denote a real parameter greater than $1$.
The number of rational primes not exceeding
$L$ is denoted by $\pi(L)$.

The first result, Lemma~\ref{lemma=Mertenssecond} below,
will be used in the proof of Lemma~\ref{lem:prod_Ep_Ep'}.
\begin{lemma}\label{lemma=Mertenssecond}
For $L>1$, we have
\[
\sum_{p\in \PP_{>L}}\frac{1}{p^2}=O\left(\frac{1}{L\log L}\right).
\]
\end{lemma}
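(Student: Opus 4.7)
The plan is to estimate the tail sum by partial summation, using the standard bound $\pi(x) = O(x/\log x)$ (which follows from Chebyshev's estimate, or more strongly from Landau's theorem, Theorem~\ref{theorem=Landau}, in the case $K=\QQ$). Writing the sum as a Stieltjes integral against the prime counting function $\pi$, we have
\[
\sum_{p\in\PP_{>L}}\frac{1}{p^2} \;=\; \int_{L}^{\infty}\frac{d\pi(x)}{x^2}.
\]
Integration by parts gives
\[
\sum_{p\in\PP_{>L}}\frac{1}{p^2} \;=\; \lim_{X\to\infty}\frac{\pi(X)}{X^{2}} \;-\; \frac{\pi(L)}{L^{2}} \;+\; \int_{L}^{\infty}\frac{2\pi(x)}{x^{3}}\,dx.
\]
The boundary term at infinity vanishes since $\pi(X)/X^{2}=O(1/(X\log X))\to 0$, and the term $-\pi(L)/L^{2}$ is negative and hence already dominated by the desired bound.

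The remaining integral is controlled by substituting $\pi(x)\leq C x/\log x$ for some absolute constant $C$ and all $x\geq 2$:
\[
\int_{L}^{\infty}\frac{2\pi(x)}{x^{3}}\,dx \;\leq\; 2C\int_{L}^{\infty}\frac{dx}{x^{2}\log x} \;\leq\; \frac{2C}{\log L}\int_{L}^{\infty}\frac{dx}{x^{2}} \;=\; \frac{2C}{L\log L},
\]
where in the middle step we used monotonicity of $\log x$ to pull $1/\log L$ out of the integral. Combining these estimates yields the asserted bound $\sum_{p\in\PP_{>L}}1/p^{2}=O(1/(L\log L))$.

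There is essentially no obstacle here; the proof is routine once one commits to partial summation. The only bookkeeping to watch is the handling of the endpoint $L$ (which may or may not itself be prime) and the sign of the boundary term, but since we only need an upper bound in the $O$-sense, sign issues are harmless. If one prefers to avoid Stieltjes integration entirely, the same conclusion follows by the discrete Abel summation formula applied to $a_n=\mathbf{1}_{\PP}(n)$ and $f(n)=1/n^{2}$, using $f(n)-f(n+1)=O(1/n^{3})$ and then the identical $\pi(n)\leq Cn/\log n$ bound on the resulting series.
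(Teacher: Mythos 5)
Your proof is correct and follows essentially the same route as the paper: the paper also applies Abel's summation formula to obtain $\sum_{p\in\PP_{>L}}p^{-2}=-\pi(L)/L^{2}+2\int_{L}^{\infty}\pi(t)t^{-3}\,\rd t$, inserts the Chebyshev-type bound $\pi(t)=O(t/\log t)$, and bounds $\int_{L}^{\infty}t^{-2}(\log t)^{-1}\,\rd t$ by pulling out $1/\log L$. No substantive differences.
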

\begin{proof}
It follows from inequality
\eqref{eq:Chebyshev-ineq} below and
\cite[(22.4.2)]{Hardy-Wright} that
\[
\pi(L)\leq L^{\frac{3}{5}}+\frac{5}{3}\cdot\frac{\vartheta(L)}{\log L}<\left(1+\frac{10}{3}\log 2\right)\cdot\frac{L}{\log L}<\frac{3.5L}{\log L}.
\]
By Abel's summation formula (see \cite[Theorem~421]{Hardy-Wright}),
we obtain
\[
\sum_{p\in \PP_{>L}}\frac{1}{p^2}=-\frac{\pi(L)}{L^2}+2\int_L^{\infty}\frac{\pi(t)}{t^3}\rd t\leq 7\int_L^{\infty}\frac{\rd t}{t^2\log t}\leq\frac{7}{L\log L},
\]
as desired.
\end{proof}
The second result is known as Mertens's first theorem. It will be
used in the proof of Lemma~\ref{lem:Ep'(p_leq_w)}.
\begin{proposition}[Mertens's first theorem; see
{\cite[Theorem~425]{Hardy-Wright}}]
\label{proposition=Mertens}
For $L\geq 2$,
\[\sum_{p\in \PP_{\leq L}}\frac{\log p}{p}=\log L+O(1).\]
\end{proposition}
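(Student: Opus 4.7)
The plan is to follow the classical approach based on Legendre's formula for the prime factorization of $n!$ combined with Stirling's approximation, essentially reproducing the textbook proof from Hardy--Wright. Fix a real $L \geq 2$ and set $n \coloneqq \lfloor L \rfloor$; it suffices to prove the estimate for $n$, since replacing $L$ by $n$ changes $\log L$ by at most $\log 2$ and leaves the sum on the left unchanged.

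First I would recall Legendre's formula, which states that the exponent of $p$ in $n!$ equals $\sum_{k \geq 1} \lfloor n/p^k \rfloor$, so that
\[
\log(n!) = \sum_{p \in \PP_{\leq n}} \log p \sum_{k \geq 1} \left\lfloor \frac{n}{p^k} \right\rfloor.
\]
On the other hand, Stirling's formula gives $\log(n!) = n \log n - n + O(\log n) = n \log n + O(n)$. The next step is to isolate the $k=1$ contribution on the right-hand side and absorb everything else into an error term. For $k = 1$, write $\lfloor n/p \rfloor = n/p + O(1)$; the total $O(1)$ error contributes $O\bigl(\sum_{p \leq n} \log p\bigr) = O(\vartheta(n)) = O(n)$ by Chebyshev's upper bound for $\vartheta$, which I would quote from Hardy--Wright (the same bound used in Lemma~\ref{lemma=Mertenssecond}).

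For $k \geq 2$, the contribution is bounded above by
\[
\sum_{p \in \PP_{\leq n}} \log p \sum_{k \geq 2} \frac{n}{p^k} \leq n \sum_{p \in \PP} \frac{\log p}{p(p-1)},
\]
and the last series converges, giving $O(n)$. Putting everything together, Legendre plus Stirling yields
\[
n \log n + O(n) = n \sum_{p \in \PP_{\leq n}} \frac{\log p}{p} + O(n),
\]
so dividing by $n$ gives $\sum_{p \leq n} \frac{\log p}{p} = \log n + O(1)$, and passing back from $n$ to $L$ via $\log L - \log n = O(1/n)$ completes the proof.

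I do not foresee any real obstacle: every ingredient is classical and available in Hardy--Wright. The one point that needs a moment of care is the uniformity of the constants in $O(\cdot)$ as $L$ varies over $[2,\infty)$; this is handled by keeping track of each $O$ as an explicit bound (via Stirling's concrete form and the convergent series $\sum_p \log p / p(p-1)$). Strictly speaking, since the paper cites Hardy--Wright's Theorem~425 directly, the proof could simply be a reference, but the outline above supplies a self-contained argument.
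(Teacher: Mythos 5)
Your proof is correct, and the paper itself gives no argument for this proposition — it simply cites Hardy--Wright, Theorem~425, whose proof is exactly the Legendre-plus-Stirling argument you outline. So your proposal matches the (referenced) proof in approach and fills in the details accurately; the handling of the $k\geq 2$ terms via the convergent series $\sum_p \log p/(p(p-1))$ and the reduction from $L$ to $n=\lfloor L\rfloor$ are both fine.
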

The third result is on %
the
\emph{first Chebyshev function}:
$
\vartheta(L)\coloneqq \sum_{p\in\PP_{\leq L}}\log p.
$
The prime number theorem 
is equivalently formulated as the asymptotic
$\vartheta (L)=(1+o_{L\to \infty}(1))L$.
For us,
the following linear bound, whose proof is considerably easier, suffices:
\begin{proposition}[{Chebyshev's theorem; see
\cite[Theorem~415]{Hardy-Wright}}]\label{prop=elementaryChebyshev}
For $L\geq1$, we have
\begin{equation}\label{eq:Chebyshev-ineq}
\vartheta (L)\leq 2(\log 2)L.
\end{equation}
\end{proposition}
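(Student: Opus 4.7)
The plan is to follow the classical elementary argument due to Erd\H{o}s via central binomial coefficients. The key observation is that for any positive integer $m$, every prime $p$ with $m+1<p\leq 2m+1$ divides the binomial coefficient $\binom{2m+1}{m}$, because such a $p$ appears in the numerator $(2m+1)!$ of $\binom{2m+1}{m}=\frac{(2m+1)!}{m!(m+1)!}$ but cannot appear in $m!$ or $(m+1)!$. Therefore
\[
\prod_{m+1<p\leq 2m+1} p \;\leq\; \binom{2m+1}{m}.
\]

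Next I would bound $\binom{2m+1}{m}$ from above by $2^{2m}$. Since $\binom{2m+1}{m}=\binom{2m+1}{m+1}$, the quantity $\binom{2m+1}{m}$ appears twice among the $2m+2$ binomial coefficients summing to $(1+1)^{2m+1}=2^{2m+1}$, so $2\binom{2m+1}{m}\leq 2^{2m+1}$, i.e., $\binom{2m+1}{m}\leq 2^{2m}=4^{m}$. Taking logarithms in the displayed inequality then yields the recursion
\[
\vartheta(2m+1)-\vartheta(m+1) \;\leq\; 2m\log 2.
\]

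From here I would prove the integer case $\vartheta(n)\leq 2n\log 2$ for all $n\in\NN$ by strong induction. The base cases $n=1,2$ are trivial since $\vartheta(1)=0$ and $\vartheta(2)=\log 2$. For $n\geq 3$, if $n$ is even then $n$ is composite, so $\vartheta(n)=\vartheta(n-1)\leq 2(n-1)\log 2<2n\log 2$ by the inductive hypothesis. If $n=2m+1$ is odd, then the recursion together with $\vartheta(m+1)\leq 2(m+1)\log 2$ gives
\[
\vartheta(2m+1) \;\leq\; 2(m+1)\log 2 + 2m\log 2 \;=\; (4m+2)\log 2 \;=\; 2n\log 2.
\]
Finally, for real $L\geq 1$, the function $\vartheta$ only jumps at primes, so $\vartheta(L)=\vartheta(\lfloor L\rfloor)\leq 2\lfloor L\rfloor\log 2\leq 2L\log 2$, as desired.

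There is no real obstacle: all steps are elementary, and the only mild subtlety is choosing the odd form $\binom{2m+1}{m}$ (rather than $\binom{2n}{n}$) so that the factor of $2^{2m}$ instead of $4^{n}$ yields the sharp constant $2\log 2$ and allows the induction to close without losing any factor. Since this proposition is used only to derive the simple estimate in Lemma~\ref{lemma=Mertenssecond}, even a looser constant would suffice, but the stated $2(\log 2)L$ drops out of the argument above for free.
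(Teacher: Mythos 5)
Your proof is correct and is precisely the classical Erd\H{o}s argument via $\binom{2m+1}{m}$ that underlies \cite[Theorem~415]{Hardy-Wright}, which is exactly the source the paper cites for this proposition (the paper itself supplies no proof). Nothing to add.
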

This will be employed to determine the choice of $w=w(M)$ in Section~\ref{section=positiveweighteddensity}.
\begin{comment}
The estimate in the proposition above will be employed for the choice of $w=w(M)$ in Section~\ref{section=positiveweighteddensity}.
In fact, the celebrated prime number theorem provides a sharper estimate;
but we do not utilize it in the present paper.
%
\begin{theorem}[The prime number theorem; 
see {\cite[Theorem~7, Theorem~420]{Hardy-Wright}}]\label{theorem=PNT}
We have
\begin{equation}\label{eq:PNT-1stChebyshev}
\vartheta (L)=(1+o_{L\to \infty}(1))L
\end{equation}
and
\begin{equation}\label{eq:classical-PNT}
\pi(L)=(1+o_{L\to\infty}(1))\cdot\Li(L),
\end{equation}
where $\Li(L)$ denotes the logarithmic integral
\begin{equation}\label{eq:def-of-log-int}
\Li(L)\coloneqq\int_2^L\frac{\rd t}{\log t}=(1+o_{L\to\infty}(1))\cdot\frac{L}{\log L}.
\end{equation}
\end{theorem}
Note that the asymptotic formulas \eqref{eq:PNT-1stChebyshev}
and \eqref{eq:classical-PNT} are equivalent. The former gives the
ultimate improvement of the inequality
\eqref{eq:Chebyshev-ineq} when $L\to\infty$, while
the latter can be considered as the special cases of
Theorem~\ref{theorem=Chebotarev}~\eqref{Landau}
and of Theorem~\ref{theorem=Chebotarev}~\eqref{Chebotarev}
when $K=\QQ$.
\end{comment}
%
%
%
%
%
\section{Norm-length compatibility and geometry of numbers}\label{section=NLC}
In this section, we study the NL-compatibility of subsets of $\OKnz$, which was introduced in Definition~\ref{definition=normrespecting}. 
In Subsection~\ref{subsection=Miknowski}, we recall some definitions from the \emph{geometry of numbers}, including Minkowski embeddings. 
In Subsection~\ref{subsection=NLCcondition}, we characterize the NL-compatibility in terms of the (weighted) 
multiplicative Minkowski embedding (Theorem~\ref{theorem=NLcompatible}). 
Then, in Subsection~\ref{subsection=DDKee}, we  provide a way of constructing an NL-compatible $\OKt$-fundamental domain (Definition~\ref{definition=fundamentaldomainDDee}, Proposition~\ref{proposition=normrespectingfundamentaldomain}). 
The existence of an NL-compatible fundamental domain plays a key role throughout the present paper. 
We remark that these ideas have already been used essentially in~\cite[Lemma 4.2]{maynard_2020}.

In Subsection~\ref{subsection=OKt_orbit}, we estimate the size of 
subsets of $\OKt $-orbits truncated by bounding the $\lmugen $-length.
The results, Lemma~\ref{lemma=OKt_orbit} and Corollary~\ref{corollary=OKt_orbit_ideal}, enable us to switch from counting ideals to counting elements in Sections~\ref{section=maintheoremfull}--\ref{section=quadraticform}. Here is the setting of this section.

\begin{setting}\label{setting=section4}
	Let $K$ be a number field of degree $n$, and let
	$\omom=(\omega_1,\dots,\omega_ n)$ be an integral basis of $K$.
	Let $\ee$ be fundamental units of $K$.
	We also use the notation defined in 
	Setting~\ref{setting=sigma} for embeddings.
	We say that a subset of a finite-dimensional real vector space is
	\emph{bounded} if it is bounded with respect to some norm.
\end{setting}
The notion of boundedness is independent of the choice of a norm.
Indeed, it is equivalent to relative compactness in the natural topology.

\subsection{Weighted multiplicative Minkowski embedding}\label{subsection=Miknowski}

Throughout this subsection, we use 
Setting~\ref{setting=section4}.
In this section, we introduce the additive Minkowski embedding
and (weighted) multiplicative Minkowski embedding. 
We write 
\begin{align*}
\sigma_{i,\RR}&\colon K\otimes_{\QQ} \RR\to \RR\quad(i\in [r_1]), \\
\sigma_{r_1+j,\RR}&\colon K\otimes_{\QQ} \RR\to \CC\quad(j\in [2r_2])
\end{align*}
for the $\RR $-linear extensions of $\sigma_i$ ($i\in[r_1]$) and $\sigma_{r_1+j}$ ($j\in[2r_2]$). %
Then we define $\Nrm_{\RR}\colon K\otimes_{\QQ} \RR\to\RR$ by
\begin{equation}\label{eq=norm_extended}
\Nrm_{\RR}(x)\coloneqq\prod_{i\in [r_1]}|\sigma_{i,\RR}(x)| 
\prod_{j\in [r_2]}|\sigma_{r_1+j,\RR}(x)|^2 .
\end{equation}
This is the composite of the ring-theoretic norm (Remark~\ref{remark=idealnorm})
$N_{K\otimes _{\QQ }\RR / \RR} \colon K\otimes _{\QQ }\RR \to \RR $
and the absolute value $x\mapsto |x|$.
\begin{definition}\label{definition=Minkowski}
	We define the \emph{additive Minkowski embedding} 
	\[\calM_{\RR}\colon K\otimes_{\QQ}\RR\to \RR^{r_1}\times \CC^{r_2}\]
	by
	\[
	\calM_{\RR}(x)\coloneqq
	(\sigma_{1,\RR}(x),\ldots ,\sigma_{r_1,\RR}(x), \sigma_{r_1+1,\RR}(x),\ldots ,\sigma_{r_1+r_2,\RR}(x))
	\quad (x\in K\otimes_{\QQ}\RR).
	\]
	The restriction of $\calM_{\RR}$ to $K$ will be denoted by $\calM$.
	We also define the \emph{weighted multiplicative Minkowski embedding} 
	\[\calL_{\RR}\colon 
	(K\otimes_{\QQ}\RR)\setminus \{x\in K\otimes_{\QQ}\RR:\Nrm_{\RR}(x)=0 \}
	\to \RR^{r_1+r_2}\]
	by
	\begin{equation}\label{eq=calMcalL}
	\calL_{\RR}(x)\coloneqq\bflog(\calM_{\RR}(x)),
	\end{equation}
	where
	$\bflog\colon(\RR^{\times})^{r_1}\times(\CC^{\times})^{r_2}\to\RR^{r_1+r_2}$
	is defined by
	\[
	\bflog(x_1,\dots ,x_{r_1},z_1,\dots ,z_{r_2})=
	(\log |x_1|,\dots ,\log |x_{r_1}|,
	\sqrt{2}\log |z_1|,\dots ,\sqrt{2}\log |z_{r_2}|).
	\]
	In other words, the first $r_1$ coordinates of $\calL_{\RR }(x)$ are $\log |\sigma_{i,\RR}(x)|$ ($i\in [r_1]$)
	and the latter $r_2$ are $\sqrt 2 \log |\sigma_{r_1+j,\RR}(x)|$ ($j\in [r_2]$).
	See Remark~\ref{remark=standardMinkowski} for the motivation of this specific weight convention.
	The restriction of $\calL_{\RR}$ to $K^\times$ will be denoted by $\calL$.
\end{definition}
\begin{lemma}\label{lemma=Minkowski}
	The additive Minkowski embedding $\calM_{\RR}$
	is an isomorphism of $\RR$-algebras.
\end{lemma}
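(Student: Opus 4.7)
My plan is to exploit that both sides are finite-dimensional $\RR$-vector spaces of the same dimension and then verify injectivity. First I would record the dimensions: since $\OK$ is a free $\ZZ$-module of rank $n$, we have $\dim_\RR(K\otimes_\QQ\RR)=n$, while $\RR^{r_1}\times\CC^{r_2}$ has real dimension $r_1+2r_2$, which equals $n$ by the standard relation $n=r_1+2r_2$ recalled just before Setting~\ref{setting=sigma}. So it suffices to establish that $\calM_\RR$ (which is manifestly $\RR$-linear from the definition) is injective.

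To prove injectivity, suppose $x\in K\otimes_\QQ\RR$ satisfies $\calM_\RR(x)=0$, i.e.\ $\sigma_{i,\RR}(x)=0$ for every $i\in[r_1+r_2]$. Complex conjugation acts on $K\otimes_\QQ\RR$-algebra maps to $\CC$ by exchanging the pairs $(\sigma_{r_1+j},\sigma_{r_1+r_2+j})$, so applying conjugation to the vanishing $\sigma_{r_1+j,\RR}(x)=0$ for $j\in[r_2]$ gives $\sigma_{r_1+r_2+j,\RR}(x)=0$ as well. Hence all $n$ extended embeddings vanish at $x$. Writing $x=\sum_{i\in[n]}a_i\omega_i$ with $a_i\in\RR$, this translates into the linear system
\[
\sum_{i\in[n]} a_i \, \sigma_j(\omega_i) = 0 \qquad (j\in[n]).
\]

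The coefficient matrix $A\coloneqq(\sigma_j(\omega_i))_{j,i\in[n]}$ is the classical matrix whose determinant squared equals the discriminant of the integral basis $\omom$, which is a non-zero integer (the discriminant of $K$). Thus $A$ is invertible over $\CC$, forcing $a_1=\cdots=a_n=0$, i.e.\ $x=0$. This gives injectivity, and combined with the dimension count above, $\calM_\RR$ is an $\RR$-linear isomorphism.

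There is no real obstacle here; the only subtle point is recognising why knowing the embeddings only for $i\in[r_1+r_2]$ (and not all $n$ embeddings) is enough, which is handled by the conjugation argument. An alternative route would be via the primitive element theorem and the Chinese remainder theorem for $\RR[X]/f(X)$ decomposing into a product of copies of $\RR$ and $\CC$ according to the real and complex roots of the minimal polynomial, but the direct discriminant argument above is shorter and uses only what is already set up in the paper.
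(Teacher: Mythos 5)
Your proof is correct. The paper itself gives no argument for this lemma: it simply cites \cite[Chapter~I, Proposition~5.2]{Neukirch}, so your write-up supplies a self-contained proof where the paper defers to a reference. Your route — equal real dimensions $n=r_1+2r_2$ on both sides, plus injectivity from the invertibility of the matrix $(\sigma_j(\omega_i))_{j,i}$ whose determinant squares to the (non-zero) discriminant of $K$ — is the standard direct computation. The argument in Neukirch instead goes through the isomorphism $K\otimes_{\QQ}\CC\simeq\CC^{n}$ given by all $n$ embeddings (linear independence of characters) and then identifies $K\otimes_{\QQ}\RR$ with the conjugation-fixed subspace; your version trades that for the non-vanishing of the discriminant, which is an equally standard fact and arguably closer to the concrete coordinates $(a_1,\dots,a_n)$ the rest of Section~4 works with. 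The one step worth being explicit about is the conjugation reduction: since the $a_i$ are \emph{real}, one has $\sigma_{r_1+r_2+j,\RR}(x)=\sum_i a_i\,\overline{\sigma_{r_1+j}(\omega_i)}=\overline{\sigma_{r_1+j,\RR}(x)}$, which is exactly why the vanishing of the $r_1+r_2$ coordinates of $\calM_{\RR}(x)$ forces all $n$ extended embeddings to vanish. You state this correctly, and the rest follows.
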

\begin{proof}
	This follows from \cite[Chapter~I, Proposition~5.2]{Neukirch}.
\end{proof}
By definition we see that $\calL:K^\times\to\RR^{r_1+r_2}$ is a homomorphism of groups
and that $\mu(K)\subseteq\ker\calL$.
By Lemma~\ref{lemma=muK}, we also have an inclusion $\OK \cap \ker\calL \subseteq \mu (K)$ so that $\calL $ induces an injection
$\overline{\calL}\colon\overline{\OKt}\coloneqq \OKt / \mu (K)\to\RR^{r_1+r_2}$.

\begin{definition}[The hyperplane $\HH$ and vector $u_0$]\label{definition=hyperplaneHH}
	We define a vector $u_0\in\RR^{r_1+r_2}$ by
	\[u_{0}\coloneqq(\underbrace{1,1,\dots,1}_{r_1},\underbrace{\sqrt{2},\sqrt{2},\dots,\sqrt{2}}_{r_2}).\]
	Then we define the hyperplane $\HH$ in $\RR^{r_1+r_2}$ by
	\[\HH\coloneqq\{ x \in \RR^{r_1+r_2} : \langle x, u_0 \rangle = 0\}.\]
	Here, $\langle \cdot, \cdot \rangle$ denotes the standard inner product on $\RR^{r_1+r_2}$.
	We denote by $\mathbf{P}_{\HH}$ the orthogonal projection
	from $\RR^{r_1+r_2}$ onto the hyperplane $\HH$.
\end{definition}
\begin{theorem}\label{theorem=Dirichlet}
	The image $\overline{\calL}(\overline{\OKt})$ is a lattice of full rank in
	the hyperplane $\HH$.
\end{theorem}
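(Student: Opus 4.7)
My plan is to verify that $\overline{\calL}(\overline{\OKt})$ satisfies three properties separately: it is contained in $\HH$, it is discrete in $\RR^{r_1+r_2}$, and it spans $\HH$ as an $\RR$-vector space (equivalently, has rank $r_1+r_2-1$).

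First, I would check containment by direct computation. For $\alpha\in\OKt$, unwinding Definitions~\ref{definition=Minkowski} and \ref{definition=hyperplaneHH} gives
\[
\langle\calL(\alpha),u_0\rangle=\sum_{i\in[r_1]}\log|\sigma_i(\alpha)|+\sum_{j\in[r_2]}\sqrt{2}\cdot\sqrt{2}\log|\sigma_{r_1+j}(\alpha)|=\log\Nrm(\alpha),
\]
the last equality being Lemma~\ref{lemma=idealnorm}. Since $\alpha$ and $\alpha^{-1}$ both lie in $\OK$, the multiplicativity of the ideal norm forces $\Nrm(\alpha)=1$, so $\calL(\alpha)\in\HH$. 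For discreteness, if $\calL(\alpha)$ lies in a bounded region of $\RR^{r_1+r_2}$, then each $|\sigma_i(\alpha)|$ is bounded; because $\calM_{\RR}$ is an $\RR$-linear isomorphism by Lemma~\ref{lemma=Minkowski}, the $\omom$-coordinates of $\alpha\in\OK$ are bounded rational integers, which gives only finitely many choices. Combined with the finiteness of $\mu(K)$ (Lemma~\ref{lemma=muK}), this shows $\overline{\calL}(\overline{\OKt})$ is discrete.

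The only nontrivial point is the full-rank statement. My plan is to reduce this to the classical Dirichlet unit theorem \cite[Chapter~I, Theorem~7.4]{Neukirch}. Consider the unweighted logarithmic embedding
\[
\ell\colon K^{\times}\to\RR^{r_1+r_2},\qquad \ell(\alpha)\coloneqq(\log|\sigma_i(\alpha)|)_{i\in[r_1]}\oplus(2\log|\sigma_{r_1+j}(\alpha)|)_{j\in[r_2]},
\]
and the diagonal linear isomorphism $T\colon\RR^{r_1+r_2}\to\RR^{r_1+r_2}$ which is the identity on the first $r_1$ coordinates and multiplication by $1/\sqrt{2}$ on the last $r_2$. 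By construction $\calL=T\circ\ell$. Setting $\HH'\coloneqq\{x\in\RR^{r_1+r_2}:\sum_k x_k=0\}$, one checks for $x\in\HH'$ that $\langle T(x),u_0\rangle=\sum_{i\in[r_1]}x_i+\sum_{j\in[r_2]}(x_{r_1+j}/\sqrt{2})\cdot\sqrt{2}=0$, so $T(\HH')\subseteq\HH$, and dimension count yields $T(\HH')=\HH$. The classical Dirichlet theorem places $\ell(\overline{\OKt})$ as a full-rank lattice in $\HH'$, so applying the linear isomorphism $T$ transports this to a full-rank lattice $\calL(\overline{\OKt})=T(\ell(\overline{\OKt}))$ in $\HH$, as required.

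The genuine difficulty lies in the full-rank assertion, but it is absorbed into the classical Dirichlet unit theorem, whose standard proof uses Minkowski's theorem on lattice points in symmetric convex bodies and which the paper already cites in Subsection~\ref{subsection=Ot-Cl}. The remaining work is the routine bookkeeping needed to reconcile the $\sqrt{2}$-weighted convention used here with the unweighted formulation of Neukirch.
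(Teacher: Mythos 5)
Your proof is correct and takes essentially the same route as the paper, which simply cites the classical Dirichlet unit theorem \cite[Chapter~I, Theorem~7.3]{Neukirch} without further comment. The extra content in your write-up is the explicit diagonal isomorphism $T$ reconciling the $\sqrt{2}$-weighted convention of Definition~\ref{definition=Minkowski} with Neukirch's unweighted one (cf.\ Remark~\ref{remark=standardMinkowski}); this bookkeeping is accurate and is exactly what the paper leaves implicit.
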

\begin{proof}
	This is essentially Dirichlet's unit theorem: %
	\cite[Chapter~I, Theorem~7.3]{Neukirch}.
\end{proof}
\subsection{Geometric characterization of NL-compatibility}
\label{subsection=NLCcondition}

We continue to use Setting~\ref{setting=section4}.

\begin{lemma}\label{lemma=bounded}
	Let $Z$ be a subset of $K\otimes_{\QQ}\RR$. Then 
	the following statements hold.
	\begin{enumerate}[$(1)$]
		\item\label{en:B1} The set $Z$ is bounded if and only if 
		$\calM_{\RR}(Z)\subseteq \RR ^{r_1}\times \CC ^{r_2}$ is bounded.
		\item\label{en:B2} 
		Let $Z\subseteq (K\otimes_{\QQ}\RR)\setminus 
		\{x: \Nrm_{K,\RR}(x)=0\}$. If
		$\calL_{\RR}(Z)$ is bounded, then so is $Z$.
		\item\label{en:B3} Assume that 
		$Z\subseteq (K\otimes_{\QQ}\RR)\setminus 
		\{x: \Nrm_{K,\RR}(x)=0\}$ 
		is bounded and 
		$\inf\{\Nrm_{\RR}(x) : x\in Z\}>0$. Then $\calL_\RR(Z)$ is bounded.
	\end{enumerate}
\end{lemma}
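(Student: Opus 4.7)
The key observation is that boundedness of $\calL_\RR(Z)$ is equivalent to each $|\sigma_{i,\RR}(x)|$ (for $x\in Z$ and all $i$) being both bounded above and bounded away from zero, whereas boundedness of $\calM_\RR(Z)$ requires only the upper bound; by Lemma~\ref{lemma=Minkowski}, boundedness of $\calM_\RR(Z)$ in turn is equivalent to boundedness of $Z$. These two remarks will carry the entire argument.

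For part~\eqref{en:B1}, since $\calM_\RR$ is an $\RR$-linear isomorphism between finite-dimensional real vector spaces, it is a homeomorphism for the natural topologies, so it and its inverse preserve boundedness, which yields the equivalence. For part~\eqref{en:B2}, if $\calL_\RR(Z)$ is bounded then each coordinate $\log|\sigma_{i,\RR}(x)|$ (and likewise $\sqrt{2}\log|\sigma_{r_1+j,\RR}(x)|$) is uniformly bounded in $x\in Z$; exponentiating gives an upper bound on each $|\sigma_{i,\RR}(x)|$, so $\calM_\RR(Z)$ is bounded and part~\eqref{en:B1} concludes.

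For part~\eqref{en:B3}, boundedness of $Z$ already provides an upper bound $|\sigma_{i,\RR}(x)|\le C'$ for some $C'>0$ and all relevant $i$ via part~\eqref{en:B1}. The hypothesis $\inf_{x\in Z}\Nrm_\RR(x)\ge\epsilon>0$, combined with the product formula \eqref{eq=norm_extended} for $\Nrm_\RR$, then forces each individual factor to be bounded below by roughly $\epsilon/(C')^{n-1}$: any factor smaller than that, with the remaining factors at most $C'$, would make the whole product smaller than $\epsilon$. Taking logarithms gives a two-sided bound on each coordinate of $\calL_\RR(x)$, so $\calL_\RR(Z)$ is bounded. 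No serious obstacle arises; the statement is essentially a translation between the additive and the logarithmic-multiplicative incarnations of boundedness, and the only delicate point is the use of the lower bound on $\Nrm_\RR$ in part~\eqref{en:B3} to rule out the coordinates of $\calM_\RR(x)$ accumulating at zero.
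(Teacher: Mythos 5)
Your proposal is correct and follows essentially the same route as the paper's proof: part (1) from the linear isomorphism $\calM_{\RR}$, part (2) by converting a bound on the logarithmic coordinates into an upper bound on $\calM_{\RR}(Z)$ and invoking (1), and part (3) by combining the upper bound $C$ on the embeddings with the lower bound on $\Nrm_{\RR}$ to get $|\sigma_{j,\RR}(x)|\geq c/C^{n-1}$, exactly as in the paper. No gaps.
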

\begin{proof}
	Item \eqref{en:B1} is obvious by Lemma~\ref{lemma=Minkowski}.	
	It follows that \eqref{en:B2} and \eqref{en:B3} are equivalent to 
	the corresponding statements for subsets of $\RR ^{r_1}\times \CC ^{r_2}$.
	For $x = (x_1,\dots, x_{r_1},z_1,\dots ,z_{r_2})\in \RR ^{r_1}\times \CC ^{r_2}$, denote by $N(x)=N_{\RR ^{r_1}\times \CC ^{r_2}/\RR} (x)\in \RR $ the ring-theoretic norm $N(x)= \prod _{i\in [r_1]} x_i \prod _{j\in [r_2]} |z_j|^2 $.
	Now we have to prove:
	\begin{itemize}
		\item [\eqref{en:B2}{}$'$]
		Let $Z\subseteq (\RR^\times) ^{r_1}\times (\CC^\times)^{r_2}$. 
		If $\bflog (Z)\subseteq \RR ^{r_1+r_2}$ is bounded, then so is $Z$.
		\item [\eqref{en:B3}{}$'$]\label{en:B3'}
		Assume that 
		$Z\subseteq (\RR ^\times)^{r_1}\times (\CC ^\times)^{r_2}$ 
		is bounded and 
		$\inf\{ N(x) : x\in Z\}>0$. Then $\bflog (Z)$ is bounded.
	\end{itemize}
Let us prove \eqref{en:B2}{}$'$.
In general, if $X\subseteq \RR $ is a bounded set, then $
\{ z\in \CC ^\times : \log |z|\in X  \} $
is bounded because this set can be written as $\{ ue^{x} : u\in \CC ,\ |u|=1,\ x\in X \} $.
Hence \eqref{en:B2}{}$'$ follows.

Next we show \eqref{en:B3}{}$'$.
Write $x=(x_1,\dots ,x_{r_1+r_2})$ for elements of $\RRCC $. 
By the boundedness assumption on $Z$, there exists $C>0$ such that the following holds for all $x\in Z$:
\[|x_j| < C \quad (j\in [r_1]),  \quad |x_j|^2 <C \quad (j\in [r_1+1,r_2]). \]
Also, set $c= \inf\{ N(x) : x\in Z\}$, which is positive by assumption.
Then for every $i\in [r_1+r_2]$, if we set $a=1$ or $2$ depending on whether $i\le r_1$ or $i>r_1$, the following inequality holds for all $x\in Z $:
\[ |x_i|^a = \frac{N (x)}{\prod\limits _{j\in [r_1]\setminus \{ i\} }|x_j| \prod\limits _{j\in [r_1+1,r_2]\setminus \{ i \} } |x_j|^2  } > \frac{c}{C^{r_1+r_2-1}} .\]
Therefore there are uniform bounds on the values $\log |x_i|$ ($x\in Z$, $i\in [r_1+r_2]$) from above and below, which implies that $\bflog (Z)\subseteq \RR ^{r_1+r_2}$ is bounded.
	\begin{comment}
	Let us prove \eqref{en:B2}.
	Since $\calL_{\RR}(Z)$ is bounded,
	we see that $\bflog^{-1}(\calL_{\RR}(Z))$ is bounded.
	Since $\calM_{\RR}(Z)\subseteq\bflog^{-1}(\calL_{\RR}(Z))$
	by \eqref{eq=calMcalL}, $\calM_{\RR}(Z)$ is also bounded.
	Hence, $Z$ is bounded by \eqref{en:B1}.
	To show \eqref{en:B3}, 
	since $Z$ is bounded, $\calM_{\RR}(Z)$ is also bounded
	by \eqref{en:B1}. This implies that the value
	\[C=\sup\{|\sigma_{j,\RR}(x)| : x\in Z,\ j\in [r_1+r_2]\}\]
	is finite. Let 
	\[
	c=\inf\{\Nrm_{K,\RR}(x)\mid x\in Z\}.
	\]
	Then by assumption, we have $c>0$.
	Thus, for $x\in Z$ and $j\in[r_1+r_2]$, 
	\[
	|\sigma_{j,\RR}(x)|\geq
	\frac{\Nrm_{K,\RR}(x)}{
		\prod_{i\in [n]\setminus\{j\}}|\sigma_{i,\RR}(x)|}
	\geq\frac{c}{C^{n-1}}.
	\]
	\end{comment}
	\begin{comment}
	This implies
	\[
	\calL_{\RR}(Z)
	\subseteq
	\left[\log\frac{c}{C^{n-1}},\log C\right]_{\RR}^{r_1}\times
	\left[\sqrt{2}\log\frac{c}{C^{n-1}},
	\sqrt{2}\log C\right]_{\RR}^{r_2},
	\]
	and hence $\calL_{\RR}(Z)$ is bounded.
	\end{comment}
\end{proof}
\begin{theorem}[Geometric characterization of the NL-compatibility]
	\label{theorem=NLcompatible}
	With reference to Setting~$\ref{setting=section4}$, the following conditions %
	are equivalent for $X\subseteq \OKnz$.
	\begin{enumerate}[$(i)$] 
		\item $X$ is NL-compatible;
		\item $(\mathbf{P}_{\HH}\circ \calL)(X)(\subseteq \HH)$ is
		bounded. 
		Here $\HH$ and $\mathbf{P}_{\HH}$ are defined in
		Definition~$\ref{definition=hyperplaneHH}$.
	\end{enumerate}
\end{theorem}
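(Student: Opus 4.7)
The plan is to translate both conditions into statements about the real numbers $\log|\sigma_j(\alpha)|$ for $j\in[r_1+r_2]$, and then to exploit the identity relating these quantities, with weights $w_j=1$ (real) or $w_j=2$ (complex pair), to $\log\Nrm(\alpha)$ in order to promote one-sided boundedness to two-sided boundedness.

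First, I would establish the elementary comparison
\[
c_1\|\alpha\|_{\infty,\omom}\leq \max_{j\in[r_1+r_2]}|\sigma_j(\alpha)|\leq c_2\|\alpha\|_{\infty,\omom}\qquad(\alpha\in\OK)
\]
with constants $c_1,c_2>0$ depending only on $\omom$. The upper bound is immediate from the expansion $\sigma_j(\alpha)=\sum_i a_i\sigma_j(\omega_i)$, and the lower bound follows by inverting the matrix $(\sigma_j(\omega_i))_{i,j}$, which is nonsingular by Lemma~\ref{lemma=Minkowski}. Since obviously $\Nrm(\alpha)\leq \max_j|\sigma_j(\alpha)|^n$ (each of the $n$ factors in Lemma~\ref{lemma=idealnorm} being bounded by the maximum), condition (i) is equivalent to the existence of $C'>0$ with $\max_j|\sigma_j(\alpha)|^n\leq C'\Nrm(\alpha)$ for all $\alpha\in X$, i.e.\ to
\[
\max_{j\in[r_1+r_2]}\Bigl(\log|\sigma_j(\alpha)|-\tfrac{1}{n}\log\Nrm(\alpha)\Bigr)\leq \tfrac{1}{n}\log C'.
\]

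Second, I would compute directly from the definitions that
\[
\langle\calL(\alpha),u_0\rangle=\sum_{i\in[r_1]}\log|\sigma_i(\alpha)|+\sum_{j\in[r_2]}2\log|\sigma_{r_1+j}(\alpha)|=\log\Nrm(\alpha)
\]
by Lemma~\ref{lemma=idealnorm}, while $\|u_0\|^2=r_1+2r_2=n$. Therefore
\[
\mathbf{P}_{\HH}(\calL(\alpha))=\calL(\alpha)-\frac{\log\Nrm(\alpha)}{n}\,u_0,
\]
whose coordinates are $\log|\sigma_i(\alpha)|-\frac{1}{n}\log\Nrm(\alpha)$ (real places) and $\sqrt{2}\bigl(\log|\sigma_{r_1+j}(\alpha)|-\frac{1}{n}\log\Nrm(\alpha)\bigr)$ (complex places). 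Condition (ii) is thus equivalent to the \emph{two-sided} boundedness, uniformly in $j\in[r_1+r_2]$ and $\alpha\in X$, of the quantities $a_j(\alpha)\coloneqq\log|\sigma_j(\alpha)|-\frac{1}{n}\log\Nrm(\alpha)$.

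The implication (ii)$\Rightarrow$(i) is immediate from the reformulation in the first step, since two-sided boundedness is stronger than the required upper bound. For (i)$\Rightarrow$(ii), suppose $a_j(\alpha)\leq M$ for all $j$ and all $\alpha\in X$. The weighted sum identity $\sum_j w_j a_j(\alpha)=\log\Nrm(\alpha)-\frac{r_1+2r_2}{n}\log\Nrm(\alpha)=0$ then gives, for each fixed $k$,
\[
w_k a_k(\alpha)=-\sum_{j\neq k}w_j a_j(\alpha)\geq -(n-w_k)M,
\]
hence $a_k(\alpha)\geq -nM$, yielding the needed lower bound. I do not anticipate any real obstacle; the only mildly delicate point is the linear-algebra comparison in the first step, and all subsequent work is a clean unwinding of the definitions together with the zero-sum constraint imposed by the hyperplane $\HH$.
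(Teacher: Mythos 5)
Your proof is correct and follows essentially the same route as the paper's: the identity $\mathbf{P}_{\HH}(\calL(\alpha))=\calL(\alpha)-\frac{\log\Nrm(\alpha)}{n}u_0$ and the use of the zero-sum constraint on $\HH$ to upgrade the one-sided bound coming from (i) to the two-sided bound required for (ii) are exactly the content of the paper's normalization $\tilde{X}=\{\Nrm(\alpha)^{-1/n}\alpha\}$ combined with Lemma~\ref{lemma=bounded}. The only difference is presentational, in that you work additively in log-coordinates rather than multiplicatively in $K\otimes_{\QQ}\RR$.
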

\begin{proof}
	Note that the $\ell_\infty$-length $\|\cdot \|_{\infty,\omom}$
	defined in Definition~\ref{definition=lmugenlength}
	can be naturally extended to $K$ with values in $\QQ$,
	and then to $K\otimes_{\QQ}\RR$ with values in $\RR$
	(as the $\lmugen $-length with respect to the basis $\omom $).
	$
	$
	This makes $K\otimes_{\QQ}\RR$ a normed vector space
	over $\RR$.
	Let
	\[
	\tilde{X}\coloneqq
	\{\Nrm (\alpha)^{-1/n}\alpha 
	: \alpha\in X\}\subseteq K\otimes_{\QQ}\RR.
	\]
	It follows from Definition~\ref{definition=normrespecting}
	that (i) is equivalent to the boundedness of 
	$\tilde{X}$.
	Since
	\begin{equation}\label{eq=tildeXhasN1}
	\Nrm_{K,\RR}(\tilde{\alpha})=1\quad(\tilde{\alpha}
	\in\tilde{X}),
	\end{equation}
	we can use Lemma~\ref{lemma=bounded}~\eqref{en:B2}
	and \eqref{en:B3} to conclude that
	$\tilde{X}$ is bounded if and only if 
	$\calL_{\RR}(\tilde{X})$ is bounded.
	Therefore, it only remains to establish
	\[
	\calL_{\RR}(\tilde{X})=
	(\mathbf{P}_{\HH}\circ\calL)(X).
	\]
	
	Observe that, for
	$t>0$ and $x\in K\otimes_{\QQ}\RR$ with $\Nrm_{K,\RR}(x)\neq0$,
	we have
	\begin{equation}\label{eq=calLlog}
	\calL_{\RR}(t\cdot x)=\calL_{\RR}(x)+(\log t)\cdot u_0. 
	\end{equation}
	By \eqref{eq=tildeXhasN1}, we have
	$\calL_{\RR}(\tilde{X})\subseteq\HH$,
	and hence
	\begin{align*}
	\calL_{\RR}(\tilde{X})&=
	\mathbf{P}_{\HH}(\calL_{\RR}(\tilde{X}))
	\\&=
	\{\mathbf{P}_{\HH}(\calL_{\RR}(\Nrm (\alpha)^{-1/n}\alpha ))
	: \alpha\in X\}
	\\&=
	\{\mathbf{P}_{\HH}(
	\calL_{\RR}(\alpha)+(\log\Nrm (\alpha)^{-1/n})u_0)
	: \alpha\in X\}
	&&\text{(by \eqref{eq=calLlog})}
	\\&=
	\{\mathbf{P}_{\HH}(\calL(\alpha)): \alpha\in X\}
	\\&=
	(\mathbf{P}_{\HH}\circ\calL)(X).
	\end{align*}
	This completes the proof.
\end{proof}

We regard $\RR^{r_1}\times\CC^{r_2}$ as a normed real vector space by 
introducing the norm 
$\|\cdot\|^{}_{\infty}$ defined by
\begin{equation}\label{eq:lmugen-length}
\|(x_1,\dots,x_{r_1},z_1,\dots,z_{r_2})\|^{}_{\infty}=
\max\{|x_1|,\ldots ,|x_{r_1}|,|z_1|,\ldots ,|z_{r_2}|\}.
\end{equation}

\begin{lemma}\label{lemma=NLCreversed}
	Define
	\begin{equation}
	\Theta\coloneqq
	\max_{i\in[n]}
	\sum_{j\in[n]}|\sigma_i(\omega_j)|.
	\label{36e}
	\end{equation}
	Then,
	for all $\alpha\in \OKnz$, we have inequalities
	\begin{align}
	\|\calM(\alpha)\|^{}_{\infty}&\leq \Theta\|\alpha\|_{\infty,\omom},
	\label{28a}\\
	\Nrm (\alpha)&\leq 
	\Theta^n\|\alpha\|_{\infty,\omom}^n.
	\label{28b}
	\end{align}
\end{lemma}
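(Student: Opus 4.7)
The plan is to expand $\alpha$ in the integral basis $\omom$ and apply the triangle inequality embedding by embedding; the rest is bookkeeping with Lemma~\ref{lemma=idealnorm}.

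Concretely, write $\alpha=\sum_{j\in[n]}a_j\omega_j$ with $a_j\in\ZZ$, so that $\|\alpha\|_{\infty,\omom}=\max_{j\in[n]}|a_j|$ by Definition~\ref{definition=lmugenlength}. Applying any embedding $\sigma_i$ gives $\sigma_i(\alpha)=\sum_{j\in[n]}a_j\sigma_i(\omega_j)$, whence the triangle inequality yields
\[
|\sigma_i(\alpha)|\ \leq\ \sum_{j\in[n]}|a_j|\,|\sigma_i(\omega_j)|\ \leq\ \|\alpha\|_{\infty,\omom}\sum_{j\in[n]}|\sigma_i(\omega_j)|\ \leq\ \Theta\,\|\alpha\|_{\infty,\omom},
\]
by the definition of $\Theta$ in \eqref{36e}. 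Since this holds uniformly in $i\in[r_1+r_2]$ (and in fact $i\in[n]$, using conjugate pairs), the componentwise bound combined with the definition \eqref{eq:lmugen-length} of the norm on $\RR^{r_1}\times\CC^{r_2}$ gives $\|\calM(\alpha)\|_\infty\leq\Theta\,\|\alpha\|_{\infty,\omom}$, which is \eqref{28a}.

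For \eqref{28b}, I would simply substitute the pointwise bound $|\sigma_i(\alpha)|\leq\Theta\,\|\alpha\|_{\infty,\omom}$ into the product formula of Lemma~\ref{lemma=idealnorm}:
\[
\Nrm(\alpha)\ =\ \prod_{i\in[r_1]}|\sigma_i(\alpha)|\prod_{j\in[r_2]}|\sigma_{r_1+j}(\alpha)|^2\ \leq\ \Theta^{r_1}\,\Theta^{2r_2}\,\|\alpha\|_{\infty,\omom}^{r_1+2r_2}\ =\ \Theta^n\,\|\alpha\|_{\infty,\omom}^n,
\]
since $n=r_1+2r_2$.

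There is no real obstacle here; the lemma is purely a statement that a single integral basis plays the role of a compatibility constant between the $\lmugen$-length and the norm/Minkowski embedding. The only point worth checking is that the bound $|\sigma_i(\alpha)|\leq\Theta\,\|\alpha\|_{\infty,\omom}$ applies to the conjugate embeddings $\sigma_{r_1+r_2+j}$ as well, which is immediate because complex conjugation preserves absolute values; this justifies taking the maximum in the definition of $\Theta$ only over $i\in[n]$ rather than separately over real and complex embeddings.
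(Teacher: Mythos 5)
Your proof is correct and follows exactly the paper's own argument: expand $\alpha$ in the integral basis, apply the triangle inequality to get $|\sigma_i(\alpha)|\leq\Theta\|\alpha\|_{\infty,\omom}$ for each embedding (which gives \eqref{28a}), and then feed this bound into the product formula of Lemma~\ref{lemma=idealnorm} to obtain \eqref{28b}. The only difference is that you spell out the intermediate steps the paper leaves implicit.
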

\begin{proof}
	For $\alpha\in\OK$,
	we have $|\sigma_i(\alpha)|\leq\Theta\|\alpha\|_{\infty,\omom}$
	for all $i\in[n]$. This implies \eqref{28a}.
	Then \eqref{28b} follows from \eqref{28a} and
	Lemma~\ref{lemma=idealnorm}.
\end{proof}

Let $\DD$ be an NL-compatible
$\OKt$-fundamental domain on $\OKnz$.
Then there exist constants
$C=C(\omom,\DD)>0$ and $C'=C'(\omom)>0$
such that
\begin{equation}\label{NLC}
C\|\alpha\|_{\infty,\omom}^n\leq \Nrm (\alpha)\leq C' \|\alpha\|_{\infty,\omom}^n\tag{NLC}
\qquad(\alpha\in \DD).
\end{equation}
\renewcommand*{\theHequation}{notag.\theequation} %
Indeed, the existence of $C$ follows from 
Definition~\ref{definition=normrespecting},
while that of $C'$ is ensured by Lemma~\ref{lemma=NLCreversed}.
The inequality \eqref{NLC} will be used frequently in
Sections~\ref{section=positiveweighteddensity} 
and \ref{section=slidetrick}.

\begin{remark}\label{remark=standardMinkowski}
	In a conventional definition of the multiplicative Minkowski embedding,
	the coefficients of the logarithm of the imaginary embeddings are
	$2$, instead of $\sqrt{2}$ in our Definition~\ref{definition=Minkowski}.
	The motivation for the coefficients $\sqrt 2$
	is as follows.
	With any coefficients, if we define $\HH $ such that we have $\calL (\OKt )\subseteq \HH $, and $u_0$ such that \eqref{eq=calLlog}
	holds, 
	then our discussions so far work as well.
	In this case, we define $\mathbf{P}_{\HH} \colon \RR ^{r_1+r_2}\to \HH $ as the projection associated with the decomposition $\RR^{r_1+r_2}=\HH \oplus \RR u_0$.
	The advantage of our convention is that $u_0$ is orthogonal to $\HH $ with respect to the standard inner product of $\RR ^{r_1+r_2}$. 
	This makes $\mathbf{P}_{\HH}$ an orthogonal projection and makes our treatment in Subsection~\ref{subsection=OKt_orbit} slightly easier to write down.
\end{remark}
\subsection{Construction of the domain $\DD_K(\ee,\sigma)$}
\label{subsection=DDKee}
We continue to use Setting~\ref{setting=section4}.
We fix fundamental units
$\ee=(\varepsilon_1,\dots ,\varepsilon_{r_1+r_2-1})$, and 
for each $i\in[r_1+r_2-1]$, 
define
$u_i\coloneqq\overline{\calL}(\overline{\varepsilon_i})$.
Then $u_1,u_2,\dots,u_{r_1+r_2-1}$ form a basis of
$\HH$, and hence 
$u_0,u_1,\dots,u_{r_1+r_2-1}$ form a basis of
$\RR^{r_1+r_2}$.

In this subsection, we construct an example of an NL-compatible $\OKt$-fundamental domain.
Its construction is classical (see for instance \cite[\S 40]{Hecke})
and in fact related to Proposition \ref{proposition=idealdensity} on the density of ideals.

First, using the chosen fundamental units $\ee $, we construct a preliminary domain $\widetilde{\DD}_K(\ee )\subseteq \OKnz $, which is almost an $\OKt$-fundamental domain except that it is stable under the action of $\mu (K)$.
Given an embedding
$\sigma\colon K\hookrightarrow \CC$, we then
construct a fundamental domain $\DD_K(\ee,\sigma)$
for the action 
$\mu(K)\curvearrowright \widetilde{\DD}_K(\ee)$.
The $\OKt$-fundamental domain $\DD_K(\ee,\sigma)$ will turn out to be
NL-compatible in Proposition~\ref{proposition=normrespectingfundamentaldomain}.

\begin{comment}
In this subsection, we first introduce a domain
$\widetilde{\DD}_K(\ee)$. The construction
of $\widetilde{\DD}_K(\ee)$ is closely related
to density of ideals 
in Subsection~\ref{subsection=densityofideals}.
See for instance, \cite[\S 40]{Hecke}. 
Given an embedding
$\sigma\colon K\hookrightarrow \CC$, we then
construct a fundamental domain $\DD_K(\ee,\sigma)$
for the action 
$\mu(K)\curvearrowright \widetilde{\DD}_K(\ee)$.
The domain $\DD_K(\ee,\sigma)$ will turn out to be an
NL-compatible $\OKt$-fundamental domain in Proposition~\ref{proposition=normrespectingfundamentaldomain}.
\end{comment}

%
\begin{definition}[The fundamental domain $\DD(\ee,\sigma)$]
	\label{definition=fundamentaldomainDDee}
	We define
	\[
	\calC_K(\ee)=\left\{\Biggl(\sum_{i\in[r_1+r_2-1]}y_iu_i\Biggr)+y_0u_0 : y_i\in [0,1) \ (i\in[r_1+r_2-1]), \ y_0\in\RR\right\}.
	\]
	\begin{enumerate}[(1)] 
		\item\label{en:tildeD} 
		We define a subset $\widetilde{\DD}_K(\ee)$
		of $\OKnz$ as
		\[
		\widetilde{\DD}_K(\ee)\coloneqq\calL_{\RR}^{-1}(\calC_K(\ee))\cap( \OKnz) .
		\]
		\item\label{en:tildeDtoD} 
		For an embedding  
		$\sigma\colon K\hookrightarrow \CC$, 
		we define a subset
		$\DD_K(\ee,\sigma)$ of $\OKnz$ as
		\[
		\DD_K(\ee,\sigma)\coloneqq\left\{\alpha\in \widetilde{\DD}_K(\ee) : 0\leq \mathrm{arg}(\sigma(\alpha))<\frac{2\pi}{\# \mu(K)}\right\},
		\]
		where `$\mathrm{arg}$' denotes the argument of a non-zero complex number.
	\end{enumerate}
\end{definition}
\begin{proposition}\label{proposition=normrespectingfundamentaldomain}
	For an embedding $\sigma\colon K\hookrightarrow \CC$,
	the set $\DD_K(\ee,\sigma)\subseteq\OKnz$ is an
	$\OKt$-fundamental domain
	which is NL-compatible.
\end{proposition}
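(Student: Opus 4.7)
The plan is to separate the two assertions: that $\DD_K(\ee,\sigma)$ is an $\OKt$-fundamental domain, and that it is NL-compatible. Because the construction is a two-step filtration $\DD_K(\ee,\sigma)\subseteq\widetilde{\DD}_K(\ee)\subseteq\OKnz$, I will first show that $\widetilde{\DD}_K(\ee)$ meets every $\OKt$-orbit on $\OKnz$ in a single $\mu(K)$-orbit, and then show that the argument condition singles out one representative from each such $\mu(K)$-orbit. NL-compatibility will then follow immediately from the geometric characterization in Theorem~\ref{theorem=NLcompatible}.

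For the first step I use that $\calL\colon K^{\times}\to\RR^{r_1+r_2}$ is a group homomorphism with $\mu(K)\subseteq\ker\calL$ by Lemma~\ref{lemma=muK}, and with image $\calL(\OKt)=\ZZ u_1+\cdots+\ZZ u_{r_1+r_2-1}$ by Theorem~\ref{theorem=Dirichlet} and the definition \eqref{14a}. Since $(u_0,u_1,\dots,u_{r_1+r_2-1})$ is a basis of $\RR^{r_1+r_2}$, every $x\in\RR^{r_1+r_2}$ admits a unique expansion $x=y_0u_0+\sum_{i\in[r_1+r_2-1]}y_iu_i$, so for any $\alpha\in\OKnz$ there is a unique tuple $(k_1,\dots,k_{r_1+r_2-1})\in\ZZ^{r_1+r_2-1}$ for which $\calL(\alpha)-\sum k_iu_i\in\calC_K(\ee)$. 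Equivalently, there is a unique class $\overline{\eta}\in\overline{\OKt}$ such that every lift $\eta\in\OKt$ satisfies $\eta^{-1}\alpha\in\widetilde{\DD}_K(\ee)$; this uses that $\mu(K)\subseteq\ker\calL$ makes the membership depend only on the class modulo $\mu(K)$. Consequently, each $\OKt$-orbit meets $\widetilde{\DD}_K(\ee)$ in exactly one $\mu(K)$-orbit. For the second step, since $\sigma$ is injective, $\sigma(\mu(K))$ is the cyclic group of $\#\mu(K)$-th roots of unity, and the arguments $\arg(\sigma(\zeta\alpha))=\arg(\sigma(\alpha))+\frac{2\pi k}{\#\mu(K)}\pmod{2\pi}$ as $\zeta$ ranges over $\mu(K)$ distribute one point into each of the $\#\mu(K)$ subintervals $\bigl[\frac{2\pi k}{\#\mu(K)},\frac{2\pi(k+1)}{\#\mu(K)}\bigr)$. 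Exactly one such $\zeta\alpha$ lies in $\DD_K(\ee,\sigma)$, giving $\OKnz=\bigsqcup_{\eta\in\OKt}\eta\DD_K(\ee,\sigma)$.

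For NL-compatibility, Theorem~\ref{theorem=NLcompatible} reduces the claim to showing that $(\mathbf{P}_{\HH}\circ\calL)(\DD_K(\ee,\sigma))$ is bounded in $\HH$. For $\alpha\in\DD_K(\ee,\sigma)\subseteq\widetilde{\DD}_K(\ee)$, the very definition of $\calC_K(\ee)$ furnishes an expansion $\calL(\alpha)=y_0u_0+\sum_{i\in[r_1+r_2-1]}y_iu_i$ with all $y_i\in[0,1)$ for $i\geq 1$. Since $\mathbf{P}_{\HH}$ is linear and annihilates $u_0$ (Definition~\ref{definition=hyperplaneHH}), we get $\mathbf{P}_{\HH}(\calL(\alpha))=\sum_{i\in[r_1+r_2-1]}y_iu_i$, which lies in the bounded parallelepiped $\sum_{i\in[r_1+r_2-1]}[0,1)u_i\subseteq\HH$. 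The main obstacle, to my mind, is the bookkeeping in the first step: one must carefully distinguish the $\overline{\OKt}$-action (which is faithful on $\calL$-images and responsible for the lattice translation) from the $\mu(K)$-action (which is invisible to $\calL$ and is resolved only by the argument condition), and check that the two cuttings combine into a single fundamental domain without double-counting.
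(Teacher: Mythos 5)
Your proposal is correct and follows essentially the same route as the paper: NL-compatibility is obtained by projecting $\calL(\widetilde{\DD}_K(\ee))$ onto $\HH$ and invoking Theorem~\ref{theorem=NLcompatible}, while the fundamental-domain property comes from tiling $\RR^{r_1+r_2}$ by the lattice translates of $\calC_K(\ee)$ (your unique tuple $(k_1,\dots,k_{r_1+r_2-1})$ is exactly the decomposition $\RR^{r_1+r_2}=\bigsqcup_{\overline\eta}(\overline{\calL}(\overline\eta)+\calC_K(\ee))$ used in the paper) combined with the argument condition resolving the residual $\mu(K)$-ambiguity. Your orbit-wise bookkeeping and the paper's global disjoint-union computation are the same argument in two phrasings.
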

\begin{proof}
	By Definition~\ref{definition=fundamentaldomainDDee}~\eqref{en:tildeD},
	we have
	\begin{align*}
	(\mathbf{P}_{\HH}\circ \calL)(\widetilde{\DD}_K(\ee))
	&\subseteq 
	\calC_K(\ee)\cap \HH.
	\end{align*}
	Since $\calC_K(\ee)$ is bounded,
	this implies that
	$(\mathbf{P}_{\HH}\circ \calL)(\widetilde{\DD}_K(\ee))$
	is bounded. Then by Theorem~\ref{theorem=NLcompatible},
	$\widetilde{\DD}_K(\ee)$ is NL-compatible.
	Since 
	$\DD_K(\ee,\sigma)\subseteq\widetilde{\DD}_K(\ee)$,
	$\DD_K(\ee,\sigma)$ is also NL-compatible.
	
	Next we show that $\DD_K(\ee,\sigma)$ is an 
	$\OKt$-fundamental domain.
	It is clear from 
	Definition~\ref{definition=fundamentaldomainDDee} that 
	$\DD_K(\ee,\sigma)\subseteq\OKnz$.
	Since $\mu(K)$ is a cyclic group,
	we can write $\sigma(\mu(K))=\langle\zeta\rangle$, where
	$\zeta=e^{2\pi\sqrt{-1}/(\#\mu(K))}\in\CC$.
	Thus
	\begin{align}
	\sigma(\widetilde{\DD}_K(\ee))&=
	\bigsqcup_{\xi\in\mu(K)}\sigma(\xi\DD_K(\ee,\sigma)).
	\label{4.3a2}
	\end{align}
	Since $\RR^{r_1+r_2}=\bigsqcup_{\overline{\eta}\in\OKbar}
	(\overline{\calL}(\overline{\eta})+\calC_K(\ee))$,
	we have
	\[
	\OO_K\setminus\{0\}=
	\bigsqcup_{\overline{\eta}\in\OKbar}
	\calL^{-1}(\overline{\calL}(\overline{\eta})+\calC_K(\ee))
	\cap(\OO_K\setminus\{0\})
	=\bigsqcup_{\overline{\eta}\in\OKbar}
	\eta \bigsqcup_{\xi\in\mu(K)}\xi\DD_K(\ee,\sigma)=
	\bigsqcup_{\eta\in\OKt}
	\eta\DD_K(\ee,\sigma)
	\]
	by \eqref{4.3a2}.
	Therefore, $\DD_K(\ee,\sigma)$ is an
	$\OKt$-fundamental domain.
\end{proof}

Note that $\calL_{\RR}(\DD_K(\ee,\sigma))=\calC_K(\ee)$.

\begin{example}\label{example=sqrt10}
	For $K=\QQ(\sqrt{2})$, the ring of integers is
	$\OK=\ZZ[\sqrt{2}]$, and we may take fundamental units to be
	$\ee=(1+\sqrt{2})$. 
Then,
	\begin{align*}
	\calC_K(\ee)&=
	\{y_0(1,1)+y_1(\log(\sqrt{2}+1),\log(\sqrt{2}-1)):y_0\in\RR,\;y_1\in[0,1)\}
	\\&=
	\{(x,y):x,y\in\RR,\;0\leq x-y<\log(3+2\sqrt{2})\},\\
	\intertext{and}
	\calL_{\RR}^{-1}(\calC_K(\ee))
	&=
	\{a+b\sqrt{2}:a,b\in\RR,\;a>2b\geq0\text{ or }b>a\geq0\}
	\\&\quad\cup
	\{a+b\sqrt{2}:a,b\in\RR,\;a<2b\leq0\text{ or }b<a\leq0\}.
\end{align*}
Define an embedding
	$\sigma\colon K\hookrightarrow \CC$ by
	$\sigma(x+y\sqrt{2})=x+\sqrt{2}y$ ($x,y\in\QQ$).
Then, we have that $\DD_K(\ee,\sigma)=\{a+b\sqrt{2}:a,b\in\ZZ,\;a>2b\geq0\text{ or }b>a\geq0\}$.
In  Figure~\ref{figure=1}, we illustrate
	the $\OKt$-fundamental domain $\DD_{K}(\ee,\sigma)$ and its image $\calC_K(\ee)$ under $\calL_{\RR}$.
\end{example}
\begin{figure}[htbp]
	\centering
	\newcommand{\myD}{2} 
\newcommand{\mynorm}{6}
\newcommand{\mySize}{3}
\def\orenoscale{0.5}
    \begin{tikzpicture}
        \begin{scope}[xscale=\orenoscale, yscale=\orenoscale*sqrt(\myD)]
        
        \newcommand{\horiRate}{1.1} 
        \newcommand{\horiRateBis}{1.2} 
        \newcommand{\vertiRate}{0.85}
        \newcommand{\vertiRateBis}{1}
        
        \newcommand{\newi}{sqrt(\mynorm)*\i/20}
        \filldraw [lightgray!50] (0,0) \foreach \i in {0,1,...,20} { -- ( {\newi }, {sqrt(\mynorm +\newi*\newi)/sqrt(\myD)} ) }-- (0,0);
%        \filldraw [lightgray!50] (0,0) \foreach \i in {0,1,...,20} { -- ( {-\newi }, {-sqrt(\mynorm +\newi*\newi)/sqrt(\myD)} )} -- (0,0); 
        \filldraw [lightgray!50] (0,0) \foreach \i in {0,1,...,20} { -- ( {sqrt(\mynorm +\newi*\newi)}, {\newi /sqrt(\myD)} )} -- (0,0);
%        \filldraw [lightgray!50] (0,0) \foreach \i in {0,1,...,20} { -- ( {-sqrt(\mynorm +\newi*\newi)}, {-\newi /sqrt(\myD)} )} -- (0,0);

        \coordinate (eps) at (1,1); 
        \node [fill=black,inner sep=1pt,shape=circle] at (eps) {};
        \node[fill=white] (label-eps) at (6,1.5) {$\varepsilon :=1+\sqrt{2}$};
        \draw (eps)--(label-eps);

        \node [fill=black,inner sep=1pt,shape=circle, label=below:$1$] at (1,0) {};
        \node [fill=black,inner sep=1pt,shape=circle](root-d) at (0,1) {};
        \node [] at ($(root-d)-(1,0.2)$) {$\sqrt{\myD}\hspace{-0pt}\cdot\hspace{0pt}1$};

        \coordinate (0) at (0,,0); 
        \coordinate [label=right:$ 1\cdot \mathbb R$] (1) at (\mySize +2,0);
        \coordinate [label=above:$\sqrt{\myD} \cdot \mathbb R$] (2) at (0,{\mySize /sqrt(\myD)+2}); 
         \coordinate (3) at (-\mySize -2,0); 
        \coordinate (4) at (0,{-\mySize / sqrt(\myD)-2});
        
        \draw[->] (3) -- (1);
        \draw[->] (4) -- (2);
        
        \node[fill=white] at (\mySize/2-1.7,\mySize) {$\mathcal O_K(L) \cap \DD_K(\ee,\sigma)$};

        \newcommand{\drawhyperbolas}[3]{ 
        \draw[samples=100, domain=-\mySize/sqrt(\myD)*#3:\mySize/sqrt(\myD)*#3, variable=\y #2] plot( {sqrt((#1 +\myD*((\y)^2) ) ) },\y );
        \draw[samples=100, domain=-\mySize/sqrt(\myD)*#3:\mySize/sqrt(\myD)*#3, variable=\y #2] plot( -{sqrt((#1 +\myD*((\y)^2) ) ) },\y ); 
        \draw[samples=100, domain=-\mySize*#3:\mySize*#3 #2] plot( \x, {sqrt((#1 +(\x)^2 )/ \myD ) } ); 
        \draw[samples=100, domain=-\mySize*#3:\mySize*#3 #2] plot( \x, -{sqrt((#1 +(\x)^2 )/\myD ) } ); 
        }

        \drawhyperbolas{1}{,dotted}{1.4};
        \drawhyperbolas{\mynorm}{,dotted}{1.2};
        \draw[->](\mySize+1,{(\mySize+1)/sqrt(\myD)} )--++(1,{1/sqrt(\myD)}) node at ++(1,0.5)    {positive value of $\sigma _1$};
        \draw[->] (\mySize+1,{(-\mySize-1)/sqrt(\myD)})--++(1,{-1/sqrt(\myD)}) node at ++(1,-0.5) {positive value of $\sigma _2$};;

		\draw[thick, ->] (\mySize +6,1)--(\mySize +10,1);        
		\node at (\mySize +8,1.5) {$\mathcal L_{\mathbb R}$};
	\end{scope}

    \begin{scope}[xshift=9cm, yshift=-0.51cm, scale=0.65]
	    \node at (4,6) {$\mathcal L_{\mathbb R}(\mathcal O_K (L')) \cap \mathcal C_K (\boldsymbol{\epsilon})$};
        \newcommand{\upperr}{5}
        \newcommand{\lowerr}{-2.5}
        \input{root_2_points_new2}
        
		\coordinate [label=right:$\mathbb R$] (1) at (\upperr ,0);
        \coordinate [label=above:$\mathbb R$] (2) at (0,\upperr );
        \coordinate (3) at (\lowerr ,0);
        \coordinate (4) at (0,\lowerr );

        \draw[->] (3)--(1); 
        \draw[->] (4)--(2); 

        \draw[dotted,line width=0.51pt] (\lowerr ,-\lowerr )--(-\lowerr ,\lowerr ) node at (-\lowerr +0.2,\lowerr ) {$\mathscr H$};

        \def\zurashi{0.07}
        \draw[->] (0,0+\zurashi)--(1,1+\zurashi) node at (1.1,1.5) {$u_0$}; 

        \coordinate (log) at ({ln(1+sqrt(2))},{ln(sqrt(2)-1)}); 
		\node at ($(log)+(3.6,-0.15)$) {$\mathcal L_\mathbb{R}(\ee)=\begin{pmatrix}\log |1+\sqrt{2}|\\ \log |1-\sqrt{2} | \end{pmatrix}$}; 
             \node [fill=black,shape=circle,inner sep=1pt] at (log) {};               
            \end{scope}

            \node at (0,-4) {$\mathbb Q(\sqrt{2})\otimes _{\mathbb Q}\mathbb R$};
            \node at (10,-4) {$\mathbb R^2=$ the target of $\mathcal L_{\mathbb R}$};
    \end{tikzpicture}
	\caption{The case $K=\mathbb{Q}(\sqrt{2})$ with
		$L=6$, $L'=1000$.}
	\label{figure=1}
\end{figure}

The following statement explains why we need the notion of NL-compatibility.
\begin{proposition}\label{proposition=badchoicedomain}
	Assume the unit group of $K$ is infinite, equivalently,
	$r_1+r_2\geq 2$. 
	Let $S\subseteq\OK$ be a finite subset with
	$\#S\geq  3$. Then there exists an
	$\OKt$-fundamental domain 
	$\DD_S\subseteq \OKnz$ such that
	$\DD_S$ contains no $S$-constellation.
\end{proposition}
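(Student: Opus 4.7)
The plan is to construct $\DD_S$ by countable diagonalization: we inductively select a representative from each $\OKt$-orbit while maintaining the invariant that the chosen elements so far contain no $S$-constellation.

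First reduce to $\#S = 3$ by replacing $S$ with any $3$-subset $S' \subseteq S$: since $\alpha + kS' \subseteq \alpha + kS$, a set avoiding $S'$-constellations also avoids $S$-constellations. Write $S = \{s_1, s_2, s_3\}$. The preparatory observations are: the set $\Omega$ of $\OKt$-orbits on $\OKnz$ is countable (as a quotient of a countable set); each orbit is countably infinite, because $\OKt$ is infinite and acts freely on $\OKnz$ (an element with trivial stabiliser has orbit in bijection with $\OKt$); and the set of all $S$-constellations in $\OKnz$ is countable. Moreover, a constellation whose three elements do not lie in three distinct orbits is automatically missing from any $\OKt$-fundamental domain, so only those with three distinct orbits need to be handled.

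Fix enumerations $(C_j)_{j\geq 1}$ of $S$-constellations with three distinct orbits and $(\omega_n)_{n\geq 1}$ of $\Omega$. Build increasing partial sections $\sigma_0 \subseteq \sigma_1 \subseteq \cdots$ of the quotient $\OKnz \to \Omega$, each defined on a finite subset $D_n \subseteq \Omega$, by interleaving two types of stages: \emph{process $C_j$} (ensure some element of $C_j$ is absent from $\sigma_n(D_n)$) and \emph{process $\omega_n$} (ensure $\omega_n \in D_n$, assigning a value if necessary). The invariant to preserve is that $\sigma_n(D_n)$ contains no $S$-constellation. Whenever we freshly assign $\sigma_{n+1}(\omega) = \gamma$ for some new orbit $\omega$, only finitely many values of $\gamma$ are forbidden: each ordered pair of already-committed values, together with a choice of which two of the three positions in $S$ they should occupy, determines at most one $k \in \NN$ (via $k = (\sigma_n(\omega'_2) - \sigma_n(\omega'_1))/(s_j - s_i)$), hence at most one $\alpha$, hence at most one third element needed to complete an $S$-constellation. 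This gives $O(|D_n|^2)$ forbidden values in total, and since $\omega$ is infinite a valid $\gamma$ always exists.

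The main obstacle, and what makes the invariant essential, is showing that when processing $C_j$ we can always perform the required action. If all three orbits of $C_j$ were already in $D_n$ with $\sigma_n$ agreeing pointwise with $C_j$, then $C_j$ itself would lie in $\sigma_n(D_n)$, violating the invariant. Hence at least one orbit of $C_j$ either lies outside $D_n$---in which case we freshly assign a representative that simultaneously differs from the corresponding element of $C_j$ and avoids the finitely many forbidden values above---or is already assigned a value different from the corresponding $C_j$-element, in which case $C_j$ is automatically excluded. Setting $\sigma = \bigcup_n \sigma_n$ and $\DD_S = \sigma(\Omega)$ yields an $\OKt$-fundamental domain: every orbit is eventually processed by the interleaving, every $S$-constellation is eventually excluded, and the invariant guarantees that $\DD_S$ contains no $S$-constellation.
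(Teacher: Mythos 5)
Your proof is correct, but it takes a genuinely different route from the paper's. The paper argues geometrically: it defines the scale- and translation-invariant quantity $\calR=\max_{\{s_1,s_2,s_3\}\in\binom{S}{3}}\|\calM_\RR(s_3)-\calM_\RR(s_2)\|_\infty/\|\calM_\RR(s_2)-\calM_\RR(s_1)\|_\infty$, uses the infinitude of $\OKt$ to show each orbit is unbounded under the additive Minkowski embedding, and then picks orbit representatives whose Minkowski norms grow lacunarily (by a factor $2\calR+2$ at each step), so that any three chosen points have ``aspect ratio'' exceeding $\calR$ and hence cannot form a constellation. Your argument is instead a purely combinatorial diagonalization: it uses only that each orbit is countably infinite (freeness of the multiplication action plus $\#\OKt=\infty$) and that, for a $3$-point shape, two already-committed points determine at most finitely many completions to an $S$-constellation — this is exactly where $\#S\geq 3$ enters, just as it enters the paper's proof through $\binom{S}{3}$. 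Each approach has its merits: the paper's yields an explicit, concretely described domain and reuses the Minkowski machinery already built in Section~\ref{section=NLC}, while yours is more elementary (no geometry of numbers at all) and would transfer verbatim to any free action of a countably infinite group on a countable set. One streamlining remark: the stages that ``process $C_j$'' are logically redundant. Since every $S$-constellation is finite and the sets $\sigma_n(D_n)$ increase to $\DD_S$, maintaining the invariant while processing all orbits already forces $\DD_S$ to contain no $S$-constellation; the greedy choice of $\gamma$ avoiding the $O(\#D_n^2)$ forbidden values at each orbit-processing stage is the whole proof.
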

\begin{proof}
	By Lemma~\ref{lemma=muK}, we have
	$(\OKnz)\cap\ker\calL=\mu(K)$.
	Since $\OKt$ is infinite, 
	there exists 
	$\varepsilon\in\OKt\setminus\ker\calL$.
	We claim, for all $\alpha\in\OKnz$,
	\begin{equation}\label{4.3b1}
	\lim_{m\to\infty}\|\calM_{\RR}(\varepsilon^m\alpha)\|^{}_{\infty}=\infty.
	\end{equation}
	Indeed, since $\varepsilon\in\OKt$, we have
	$1=\Nrm(\varepsilon)$,
	while $\calL(\varepsilon)\neq0$ implies that 
	$|\sigma_i(\varepsilon)|\neq1$ for some $i\in[n]$.
	Thus, there exists $i_0\in[r_1+r_2]$ such that
	$|\sigma_{i_0}(\varepsilon)|>1$. Then for $m\in\NN$,
	\[\|\calM_{\RR}(\varepsilon^m\alpha)\|^{}_{\infty}
	\geq
	|\sigma_{i_0}(\varepsilon^m\alpha)|
	=
	|\sigma_{i_0}(\varepsilon)|^m|\sigma_{i_0}(\alpha)|
	\to\infty\quad(m\to\infty).\]
	Define
	\[
	\calR\coloneqq\max_{\{s_1,s_2,s_3\}\in\binom{S}{3}}
	\frac{\|\calM_{\RR}(s_3)-\calM_{\RR}(s_2)\|^{}_{\infty}}{\|\calM_{\RR}(s_2)-\calM_{\RR}(s_1)\|^{}_{\infty}},
	\]
	where $\binom{S}{3}$ is the family of all three-element subsets of $S$. 	Note that the denominator above is never zero by Lemma~\ref{lemma=Minkowski}.

	We enumerate the elements of the countable set 
	$(\OKnz)/\OKt$ as $\{\upsilon_m : m\in\NN\}$.
	Fix $\alpha_1$ for a representative for $\upsilon_1$.
	By \eqref{4.3b1}, the image of the equivalence class $\upsilon_m\subseteq \OKnz $
	under the embedding $\calM_{\RR}$ is unbounded.
	Thus, we may take a sequence $(\alpha_m)_{m \in \NN}$, chosen inductively on $m$, such that for all $m\in \NN$, $\alpha_m$ is a representative of $\upsilon_m$ and 
	\begin{equation}\label{eq=alpham}
	\|\calM_{\RR}(\alpha_{m+1})\|^{}_{\infty}\geq
	(2\mathcal{R}+2) \|\calM_{\RR} (\alpha_{m})\|^{}_{\infty}
	\end{equation}
	holds.
	Let $\DD_S\coloneqq\{\alpha_m:m\in\NN\}$.
	By construction, $\DD_S$ is an $\OKt$-fundamental domain.
	
	We will show that $\DD_S$ contains no $S$-constellation.
	Suppose, by way of contradiction, $\DD_S$ contains an $S$-constellation
	$\eS$. 
	By Lemma~\ref{lemma=Minkowski}, we have
	\begin{equation}\label{4.3b3}
	\calR=\max_{\{s'_1,s'_2,s'_3\}\in\binom{\eS}{3}}\frac{\|\calM_{\RR}(s'_3)-\calM_{\RR}(s'_2)\|^{}_{\infty}}{\|\calM_{\RR}(s'_2)-\calM_{\RR}(s'_1)\|^{}_{\infty}}.
	\end{equation}
	Let $\beta_1,\beta_2,\beta_3\in\eS$ be distinct.
	Then, there exist distinct $m_1,m_2,m_3\in\NN$ such that
	$\beta_j=\alpha_{m_j}$ for $j=1,2,3$. We may assume
	without loss of generality that $m_1<m_2<m_3$.
	Since
	\begin{align*}
	\|\calM_{\RR}(\beta_3)-\calM_{\RR}(\beta_2)\|^{}_{\infty}
	&\geq
	\big|\|\calM_{\RR}(\alpha_{m_3})\|^{}_{\infty}-\|\calM_{\RR}(\alpha_{m_2})\|^{}_{\infty}\bigr|
	\\&\geq
	\bigl((2\calR+2)^{m_3-m_2}-1\bigr)\|\calM_{\RR}(\beta_2)\|^{}_{\infty}
	&&\text{(by \eqref{eq=alpham})}
	\\&\geq
	(2\calR+1)\|\calM_{\RR}(\beta_2)\|^{}_{\infty},
	\end{align*}
	and
	\begin{align*}
	\|\calM_{\RR}(\beta_2)-\calM_{\RR}(\beta_1)\|^{}_{\infty}
	&\leq 
	\|\calM_{\RR}(\beta_2)\|^{}_{\infty}
	+\|\calM_{\RR}(\beta_1)\|^{}_{\infty}
	\\&\leq 
	\|\calM_{\RR}(\beta_2)\|^{}_{\infty}
	+\frac{1}{2\calR+2}\|\calM_{\RR}(\beta_2)\|^{}_{\infty}
	&&\text{(by \eqref{eq=alpham})}
	\\&<
	2\|\calM_{\RR}(\beta_2)\|^{}_{\infty},
	\end{align*}
	we have
	\[
	\frac{\|\calM_{\RR}(\beta_3)-\calM_{\RR}(\beta_2)\|^{}_{\infty}}{\|\calM_{\RR}(\beta_2)-\calM_{\RR}(\beta_1)\|^{}_{\infty}}
	>\frac{2\calR+1}{2}
	>\calR.
	\]
	This contradicts \eqref{4.3b3}.
\end{proof}
\subsection{Counting elements in $\OKt$-orbits with respect to the
	$\lmugen$-length}
\label{subsection=OKt_orbit}

In this subsection,
we give an estimate on the size of 
subsets of the orbit
$\OKt\cdot\alpha$ %
truncated by $\lmugen$-length.
More precisely,
for $M\geq1$, 
in Lemma~\ref{lemma=OKt_orbit}
we give an upper bound
on $\#( (\OKt\cdot \alpha )\cap \OK(\omom,M))$.

The results in this subsection are not needed to
prove Theorem~\ref{theorem=primeconstellationsfinite} because
the mapping $\OKnz\ni \alpha \mapsto \alpha\OK\in \Ideals_K$
restricted to an $\OKt$-fundamental domain is injective.
However, to prove Theorem~\ref{mtheorem=primeconstellationsfinite}, where we are no longer in an $\OKt$-fundamental domain,
an estimate as above is in addition required.

We continue to use Setting~\ref{setting=section4}. 
Recall from Definition~\ref{definition=Minkowski}
the additive and weighted multiplicative
Minkowski embeddings, and from Definition~\ref{definition=hyperplaneHH}
the hyperplane $\HH$ and vector $u_0$.
Further, recall the $\ell_\infty$-length on 
$\RR^{r_1}\times\CC^{r_2}$ defined in \eqref{eq:lmugen-length},
and the constant $\Theta$ defined in \eqref{36e}.
We define the following sets:
\[
\scrQ\coloneqq(-\infty,0]^{r_1+r_2},\quad\calT\coloneqq\frac{1}{n}(u_0+\scrQ)\cap\HH.
\]

The following lemma is a key to counting the number of
associates of a given element $\alpha\in\OKnz$
in $\OK(\omom,M)$. The assumption 
$\Nrm(\alpha)\leq\Xi M^n$ in 
Lemma~\ref{lemma=OKt_orbit}~\eqref{en:precisecounting}
is not essential, as is seen from Lemma~\ref{lemma=NLCreversed}.
We write $k\coloneqq r_1+r_2-1$.

\begin{lemma}
	\begin{enumerate}[$(1)$]
		\item\label{en:precisecounting}
		There exists $\Xi>0$ depending only on $\omom$, such that 
		for all $M\geq1$ and $\alpha\in \OKnz$ with
		$\Nrm(\alpha)\leq {\Xi} M^n$,
		\[
		\#(\OKt\cdot \alpha \cap \OK(\omom,M))\leq \Xi\cdot 
		\left(\log\frac{\Xi M^n}{\Nrm(\alpha)}\right)^{k}
		\]
		holds.
		\item\label{en:roughcounting}
		There exists $\Xi'>0$ depending only on $\omom$, such that 
		for all $M\geq2$ and $\alpha\in \OKnz$,
		\[
		\#(\OKt\cdot \alpha \cap \OK(\omom,M))\leq \Xi'\cdot (\log M)^{k}
		\]
		holds.
	\end{enumerate}
\label{lemma=OKt_orbit}
\end{lemma}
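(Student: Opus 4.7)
My plan is to translate the orbit-counting problem into a lattice-point count in the hyperplane $\HH$, using the weighted multiplicative Minkowski embedding $\calL$ and Dirichlet's unit theorem. Fix $\alpha\in\OKnz$ and consider those $\eta\in\OKt$ with $\eta\alpha\in\OK(\omom,M)$. By Lemma~\ref{lemma=NLCreversed} one has $\|\calM(\eta\alpha)\|^{}_\infty\leq\Theta M$, which is equivalent to $\calL(\eta\alpha)\in\log(\Theta M)\,u_0+\scrQ$ coordinate-wise. Using $\calL(\eta\alpha)=\calL(\alpha)+\calL(\eta)$ together with the orthogonal decomposition $\calL(\alpha)=\mathbf{P}_{\HH}(\calL(\alpha))+\tfrac{\log\Nrm(\alpha)}{n}u_0$ (valid because $\|u_0\|^2=n$), this rewrites as
\[
\calL(\eta)\in\HH\cap\bigl(R\,u_0-\mathbf{P}_{\HH}(\calL(\alpha))+\scrQ\bigr),\qquad R\coloneqq\frac{1}{n}\log\frac{(\Theta M)^n}{\Nrm(\alpha)}.
\]

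Next I would analyse the target region. A short argument will show that $\calT_R\coloneqq\HH\cap(R\,u_0+\scrQ)$ is empty when $R<0$ (if $h\in\HH$ satisfies $h\leq R u_0$ coordinate-wise, then $0=\langle h,u_0\rangle\leq nR<0$, a contradiction) and is a bounded simplex of diameter $O_K(R)$ when $R\geq0$; the translate occurring above has the same diameter. By Theorem~\ref{theorem=Dirichlet}, $\Lambda\coloneqq\overline{\calL}(\overline{\OKt})$ is a full-rank lattice in the $k$-dimensional hyperplane $\HH$, with $k=r_1+r_2-1$. A standard covering of $\calT_R+v$ by fundamental parallelepipeds of $\Lambda$ yields the uniform estimate
\[
\#\bigl(\Lambda\cap(\calT_R+v)\bigr)\leq C_K(1+R)^k\qquad(v\in\HH),
\]
and multiplying by $\#\mu(K)$ to return from $\overline{\OKt}$ to $\OKt$ produces $\#(\OKt\cdot\alpha\cap\OK(\omom,M))\leq C'_K(1+R)^k$.

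With this single inequality, both parts follow by constant-chasing. Choosing $\Xi\coloneqq e^n\Theta^n$ makes $\log(\Xi M^n/\Nrm(\alpha))=n(1+R)$; enlarging $\Xi$ so as to absorb $C'_K n^{-k}$ into the leading coefficient then delivers \eqref{en:precisecounting}. For \eqref{en:roughcounting}, if $\Nrm(\alpha)>\Xi M^n$ the region $\calT_R$ is empty and so is the orbit intersection; otherwise \eqref{en:precisecounting} applies and $\log(\Xi M^n/\Nrm(\alpha))\leq\log\Xi+n\log M\leq C''_K\log M$ for $M\geq 2$, giving the claimed $(\log M)^k$ bound.

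The one delicate point will be the uniform lattice-point estimate: the bound on $\#(\Lambda\cap(\calT_R+v))$ must be independent of the translate $v$, since $v$ depends on $\alpha$. This is classical but requires some care in the cover-by-parallelepipeds argument; a sharper version is available in the literature, for instance \cite[Corollary~5.3]{Widmer2010}, as the authors note.
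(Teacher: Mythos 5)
Your proposal is correct and follows essentially the same route as the paper: both reduce the orbit count to counting points of (a translate of) the unit lattice $\overline{\calL}(\overline{\OKt})$ inside the scaled simplex $\HH\cap(Ru_0+\scrQ)$, with the factor $\#\mu(K)$ accounting for the kernel of $\calL$, and both obtain the uniform bound $O_K((1+R)^k)$ by the standard volume comparison with disjoint translates of a fundamental domain of the lattice (which is exactly how the paper handles the translate-uniformity you flag as delicate). The subsequent constant-chasing for parts (1) and (2) matches the paper's choices of $\Xi$ and $\Xi'$.
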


Lemma~\ref{lemma=OKt_orbit} follows from the classical fact
that the number of lattice poins in a large enough, well-behaved bounded subset of a Euclidean space such as a convex body can be approximated by its volume, and in particular bounded by a multiple of the volume.
For the convenience of the reader
we explain the proof of Lemma~\ref{lemma=OKt_orbit} in a self-contained manner.
The proof of Lemma~\ref{lemma=OKt_orbit} will be provided
after the following auxiliary lemma.
In these proof, see Figure~\ref{figure:tohu}.
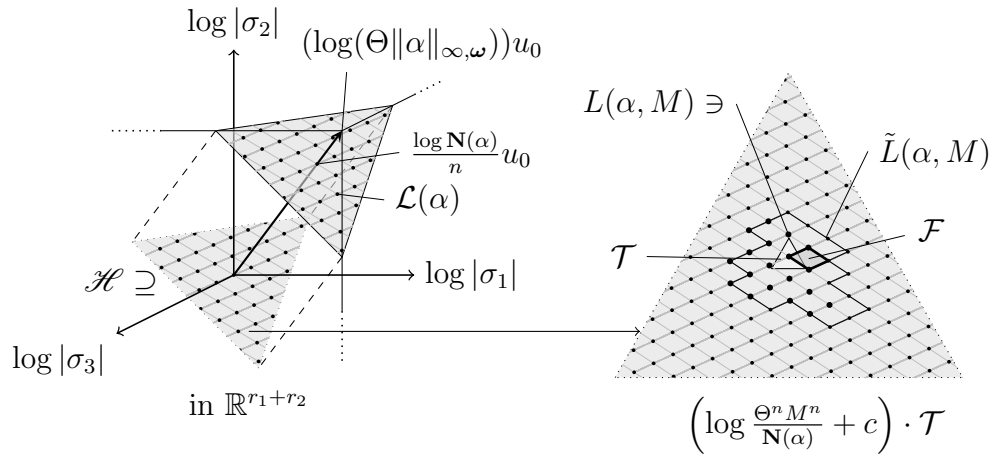
\begin{figure}[htbp]
	\centering
	\begin{tikzpicture}
\begin{scope}[xscale=0.7,yscale=0.7, xshift = 300 , yshift = 10pt]
	\coordinate (O) at (0,0) ;
	\coordinate (N) at (0,3.5) ;
	\coordinate (SE) at (-3.3,-2.3) ;
	\coordinate (SW) at (3.3,-2.3) ;
	\def\Tscale{0.1}
	\coordinate (n) at ($\Tscale*(N)$) ;
	\coordinate (se) at ($\Tscale*(SE)$) ;
	\coordinate (sw) at ($\Tscale*(SW)$) ;
	
	\def\FirstVector{0.37,0.17}
	\def\SecondVector{0.38,-0.25}
	\def\LattticeOrigin{O} 
				
	\begin{scope}
		\clip (N)--(SE)--(SW)--cycle;
	
		\foreach \x in {-10,-9,...,10}{
		\foreach \y in {-10,-9,...,10}{
			\draw[very thin,color=gray!50] ($10 *(\FirstVector )+\y *(\SecondVector )+(\LattticeOrigin )$)--($-10 *(\FirstVector )+\y *(\SecondVector )+(\LattticeOrigin )$);
		}
			\draw[very thin,color=gray!50] ($\x *(\FirstVector )-10*(\SecondVector )+(\LattticeOrigin )$)--($\x *(\FirstVector )+10*(\SecondVector )+(\LattticeOrigin )$);
		}
			
		\foreach \x in {-10,-9,...,10}{
		\foreach \y in {-10,-9,...,10}{
			\node[circle,fill=black,inner sep=0.5pt] at ($\x *(\FirstVector ) + \y *(\SecondVector ) +(\LattticeOrigin )$) {};
		}
		}
	\end{scope}	
	
	\draw ($-2*(\FirstVector)+3*(\SecondVector)$)
	--($-2*(\FirstVector)+4*(\SecondVector)$)
	--($-1*(\FirstVector)+4*(\SecondVector)$)
	--($0*(\FirstVector)+4*(\SecondVector)$)
	--($0*(\FirstVector)+3*(\SecondVector)$)
	--($0*(\FirstVector)+2*(\SecondVector)$)
	--($1*(\FirstVector)+2*(\SecondVector)$)
	--($1*(\FirstVector)+1*(\SecondVector)$)
	--($2*(\FirstVector)+1*(\SecondVector)$)
	--($2*(\FirstVector)+0*(\SecondVector)$)
	--($2*(\FirstVector)-1*(\SecondVector)$)
	--($2*(\FirstVector)-2*(\SecondVector)$)
	--($1*(\FirstVector)-2*(\SecondVector)$)
	--($1*(\FirstVector)-2*(\SecondVector)$)
	--($0*(\FirstVector)-2*(\SecondVector)$)
	--($0*(\FirstVector)-1*(\SecondVector)$)
	--($-1*(\FirstVector)-1*(\SecondVector)$)
	--($-2*(\FirstVector)-1*(\SecondVector)$)
	--($-2*(\FirstVector)+0*(\SecondVector)$)
	--($-3*(\FirstVector)+0*(\SecondVector)$)
	--($-3*(\FirstVector)+1*(\SecondVector)$)
	--($-3*(\FirstVector)+2*(\SecondVector)$)
	--($-3*(\FirstVector)+2*(\SecondVector)$)
	--($-2*(\FirstVector)+2*(\SecondVector)$)--cycle;
	
	\foreach \x /\y in {-3/0,-2/0,-1/0,0/0,1/0,-1/-1,0/-1,1/-1, 0/-2, 1/-2,  -3/1,-2/1,-1/1,0/1, -2/2,-1/2, -2/3,-1/3,-2/-1}{
			\node[circle,fill=black,inner sep=0.8pt] at ($\x *(\FirstVector ) + \y *(\SecondVector ) +(\LattticeOrigin )$) {};
	}
	
	\draw[line width = 1pt] (O)
		--($0*(\FirstVector)+1*(\SecondVector)$)
		--($1*(\FirstVector)+1*(\SecondVector)$)
		--($1*(\FirstVector)+0*(\SecondVector)$)--cycle;
	\node at (2.7,0.5) {$\mathcal{F}$};
	\draw[ultra thin] (2.2,0.4)--($0.5*(\FirstVector)+0.5*(\SecondVector)$);
	\coordinate (T) at (-2.7,0);
	\node[left] at (T) {$\mathcal{T}$};
	\draw[ultra thin] (T)--(-0.15,-0.05);
	
	\node[right] at (1.5,2) {$\tilde{L}(\alpha,M)$};
	\draw[ultra thin] (1.5,2)--(0.7,0.35);
	
	\coordinate (LaM) at (-1,2.9); 
	\node[left] at (LaM) {$L(\alpha,M) \ni$};
	\draw[ultra thin] (LaM)--($1*(\FirstVector)-1*(\SecondVector)$);
	
	\begin{scope}[on background layer]
		\filldraw[fill=lightgray!30,dotted] (SE)--(N)--(SW)--(SE);
		\filldraw[fill=white!80!lightgray] (se)--(sw)--(n)--(se);
		\filldraw[fill=white!10!lightgray,opacity=.5] 	(O)--($0*(\FirstVector)+1*(\SecondVector)$)--($1*(\FirstVector)+1*(\SecondVector)$)--($1*(\FirstVector)+0*(\SecondVector)$)--cycle;
	\end{scope}
	
	\node[] at (0.5,-3.2) {$\left( \log \frac{ \Theta^n M^n }{\Nrm(\alpha)} + c \right) \cdot \mathcal{T}$};
\end{scope}

	\def\vLength{0.8}
	\def\TohuScale{0.7}
	\def\FirstVector{0.23,0.1}
	\def\SecondVector{0.25,-0.15}
	\coordinate (O) at (0,0);
	\coordinate (R) at (3,0);
	\coordinate (U) at (0,3);
	\coordinate (LD) at (-1.2,-0.6);
	
	\coordinate (vR) at ($\vLength*(R)$);
	\coordinate (vU) at ($\vLength*(U)$);
	\coordinate (vLD) at ($\vLength*(LD)$);
	\coordinate (v) at ($(vR)+(vU)+(vLD)$);
		
	\coordinate (l) at ($(v)-\TohuScale*(vR)$);
	\coordinate (d) at ($(v)-\TohuScale*(vU)$);		
	\coordinate (ru) at ($(v)-\TohuScale*(vLD)$);

	\coordinate (TI) at (${1-0.33*\TohuScale}*(v)$);

	\coordinate (l2) at ($(l)-(TI)$);
	\coordinate (d2) at ($(d)-(TI)$);		
	\coordinate (ru2) at ($(ru)-(TI)$);

	\draw[dotted] (l2)--(d2)--(ru2)--cycle;
	\fill[color=lightgray!30] (l2)--(d2)--(ru2)--cycle;
	
	\begin{scope}
    	\def\LattticeOrigin{O}
        
		\clip (l2)--(d2)--(ru2)--cycle;
		\foreach \x in {-10,-9,...,10}{
		\foreach \y in {-10,-9,...,10}{
			\draw[very thin,color=gray!50] ($10 *(\FirstVector )+\y *(\SecondVector )+(\LattticeOrigin )$)--($-10 *(\FirstVector )+\y *(\SecondVector )+(\LattticeOrigin )$);
		}
			\draw[very thin,color=gray!50] ($\x *(\FirstVector )-10*(\SecondVector )+(\LattticeOrigin )$)--($\x *(\FirstVector )+10*(\SecondVector )+(\LattticeOrigin )$);
		}
        
		\foreach \x in {-10,-9,...,10}{
		\foreach \y in {-10,-9,...,10}{
            \node[circle,fill=black,inner sep=0.5pt] at ($\x *(\FirstVector ) + \y *(\SecondVector ) +(\LattticeOrigin)$) {};
        }
        }
    \end{scope}	
	\draw[semithick,->] (O)--($0.8*(R)$);	
	\draw[semithick,->] (O)--(U);	
	\draw[semithick,->] (O)--($1.3*(LD)$);	
	\draw ($0.8*(R)$)node[right]{$\log | \sigma_1|$};	
	\draw (U)node[above]{$\log | \sigma_2|$};	
	\draw ($1.3*(LD)$)node[below left]{$\log | \sigma_3|$};	

	\draw[dashed] (l)--(l2) (d)--(d2) (ru)--(ru2);
	\draw[thick] (O)--(TI);	

	\fill[color=lightgray!50,opacity=0.6] (l)--(d)--(ru)--cycle;
	\draw[ultra thin] (l)--(d)--(ru)--cycle;
	\begin{scope}
    	\def\LattticeOrigin{TI}
        
		\clip (l)--(d)--(ru)--cycle;
		\foreach \x in {-10,-9,...,10}{
		\foreach \y in {-10,-9,...,10}{
			\draw[very thin,color=gray!50] ($10 *(\FirstVector )+\y *(\SecondVector )+(\LattticeOrigin )$)--($-10 *(\FirstVector )+\y *(\SecondVector )+(\LattticeOrigin )$);
		}			\draw[very thin,color=gray!50] ($\x *(\FirstVector )-10*(\SecondVector )+(\LattticeOrigin )$)--($\x *(\FirstVector )+10*(\SecondVector )+(\LattticeOrigin )$);
		}
		\foreach \x in {-10,-9,...,10}{
		\foreach \y in {-10,-9,...,10}{
            \node[circle,fill=black,inner sep=0.5pt] at ($\x *(\FirstVector ) + \y *(\SecondVector ) +(\LattticeOrigin)$) {};
        }
        }
    \end{scope}	
    
    \coordinate (label_TI) at (2.2,1.63);
    \node[right] at (label_TI) {$\frac{\log \Nrm(\alpha)}{n} u_0$};
    \draw[ultra thin] (label_TI) -- (TI);
    
    \coordinate (label_v) at (1.5,2.7);
    \node[above] at ($(label_v)+(1,0)$) {$(\log (\Theta \|\alpha\|_{\infty,\omom})) u_0$};
    \draw[ultra thin] (label_v) -- (v);

    \coordinate (alpha) at ($-1*(\FirstVector)+2*(\SecondVector)+(TI)$);
    \coordinate (label_alpha) at (2,1);
    \node[right] at (label_alpha) {$\calL(\alpha)$};
    \draw[ultra thin] (label_alpha) -- (alpha);
    
    \node[] at (-1.5,-0.1) {$\HH \supseteq$};
    \node[] at (0.2,-1.7) {in $\RR^{r_1+r_2}$};
    
	\coordinate (vru) at ($(vR)+(vU)$);
	\coordinate (vl) at ($(vU)+(vLD)$);
	\coordinate (vd) at ($(vLD)+(vR)$);
	\draw[thin] (v) -- (vru) (v) -- (vl) (v) -- (vd);
	\draw[dotted, semithick] (vru) -- ($(vru)-0.4*(vLD)$) (vl) -- ($(vl)-0.3*(vR)$) (vd) -- ($(vd)-0.3*(vU)$);
	\draw[->,>=stealth,thick] (TI) -- (v);
	\node[inner sep = 1pt] at (TI){};
	
	\draw[->] (0.2,-0.75) -- (5.4,-0.75);
\end{tikzpicture} % for arXiv
	\caption{Images under $\mathcal{L}$ and areas in Lemmas~\ref{lemma=OKt_orbit} and \ref{lem:36a}}
\label{figure:tohu}
\end{figure}

\begin{lemma}\label{lem:36a}
	For $\alpha\in\OKnz$, the following statements hold.
	\begin{enumerate}[$(1)$]
		\item\label{36a-2} 
		$\calL(\alpha)\in(\log(\Theta\|\alpha\|_{\infty,\omom}))
		u_0+\scrQ$,
		\item\label{36a-3} 
		Let $M>0$ and $\alpha\in\OK(\Theta^nM^n)$.
		Then for every $\beta\in\OKt\cdot\alpha\cap\OK(\omom,M)$, we have
		\[\calL(\beta)-\frac{\log\Nrm(\alpha)}{n}u_0
		\in\left(\log\frac{\Theta^n M^n}{\Nrm(\alpha)}\right)
		\cdot\calT.\]
	\end{enumerate}
\end{lemma}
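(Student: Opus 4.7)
The plan is straightforward: prove $(\ref{36a-2})$ by a coordinatewise comparison of logarithms, then feed its conclusion into $(\ref{36a-3})$, where the required set-membership decomposes into one inner-product check (confirming that the candidate vector sits in $\HH$) and one orthant check (confirming that an appropriate translate sits in $\scrQ$).

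For $(\ref{36a-2})$, the key input is the coordinate bound $|\sigma_i(\alpha)| \le \Theta\|\alpha\|_{\infty,\omom}$ for all $i\in[n]$, which is essentially the content of \eqref{28a} in Lemma~\ref{lemma=NLCreversed}. Taking logarithms of $|\sigma_i(\alpha)|$ for $i \in [r_1]$ and of $|\sigma_{r_1+j}(\alpha)|$ for $j\in[r_2]$, and recalling that the weights $1$ and $\sqrt{2}$ appearing in $\bflog$ match exactly the entries defining $u_0$, each coordinate of $\calL(\alpha) - \log(\Theta\|\alpha\|_{\infty,\omom})\, u_0$ is $\le 0$, so the difference lies in $\scrQ$.

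For $(\ref{36a-3})$, the first move is to rewrite $\bigl(\log\tfrac{\Theta^n M^n}{\Nrm(\alpha)}\bigr)\cdot \calT$ as $\frac{1}{n}\bigl(\log\tfrac{\Theta^n M^n}{\Nrm(\alpha)}\cdot u_0 + \scrQ\bigr)\cap \HH$, using that $\scrQ$ is stable under positive scaling and that $\HH$ is a linear subspace. Setting $v \coloneqq \calL(\beta) - \tfrac{\log\Nrm(\alpha)}{n}u_0$, I then have to verify (i) $v \in \HH$ and (ii) $nv - \log\tfrac{\Theta^n M^n}{\Nrm(\alpha)}\,u_0 \in \scrQ$. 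Assertion (i) follows from the identity $\langle \calL(\gamma),u_0\rangle = \log \Nrm(\gamma)$ valid for every $\gamma \in K^\times$ (a direct consequence of Lemma~\ref{lemma=idealnorm} together with the definition of $\bflog$), applied to $\gamma = \beta$ and combined with $\Nrm(\beta)=\Nrm(\alpha)$ and $\langle u_0,u_0\rangle = r_1 + 2r_2 = n$.

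For (ii), the expression simplifies algebraically to $n\bigl(\calL(\beta) - \log(\Theta M)u_0\bigr)$. Applying part $(\ref{36a-2})$ to $\beta$ gives $\calL(\beta) - \log(\Theta\|\beta\|_{\infty,\omom})u_0 \in \scrQ$; the residual term $\log(\|\beta\|_{\infty,\omom}/M)\cdot u_0$ is a non-positive scalar (this is where the hypothesis $\beta \in \OK(\omom,M)$ enters) times a vector with non-negative entries, hence also in $\scrQ$. Summing these and multiplying by $n>0$ closes the proof. I do not anticipate any real obstacle: everything reduces to the observation that $\log\Nrm$ pairs with $u_0$ correctly. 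The only mild care concerns the degenerate case $\Nrm(\alpha)=\Theta^n M^n$, where the dilation factor vanishes; this is handled by the observation that $0 \in \calT$, since $-u_0 \in \scrQ$ and $0 \in \HH$.
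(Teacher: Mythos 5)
Your proof is correct and takes essentially the same route as the paper: part~\eqref{36a-2} from the coordinate bound \eqref{28a}, and part~\eqref{36a-3} by applying part~\eqref{36a-2} to $\beta$, using $\|\beta\|_{\infty,\omom}\leq M$ and $\langle\calL(\beta),u_0\rangle=\log\Nrm(\beta)=\log\Nrm(\alpha)$ to land in $\frac{1}{n}\bigl(\log\frac{\Theta^nM^n}{\Nrm(\alpha)}\,u_0+\scrQ\bigr)\cap\HH$. One small quibble: in the degenerate case $\Nrm(\alpha)=\Theta^nM^n$ the observation ``$0\in\calT$'' is not what closes the argument (you need the vector itself to be $0$, which follows from $\scrQ\cap\HH=\{0\}$ via your checks (i) and (ii)), but the paper glosses over this case as well and the statement holds.
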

\begin{proof}
	Item \eqref{36a-2} follows from \eqref{28a}.
	Since $\alpha\in\OK(\Theta^nM^n)$, we have
	\[(\log(\Theta^n M^n)-\log\Nrm(\alpha))\scrQ=\scrQ.\]
	Thus, by \eqref{36a-2}, we have
	\begin{align*}
	\calL(\beta)-\frac{\log\Nrm(\alpha)}{n}u_0&
	\in
	\left(\frac{1}{n}(\log(\Theta^n M^n)-{\log\Nrm(\alpha)})u_0+\scrQ\right)\cap\HH
	\\&=
	\frac{1}{n}(\log(\Theta^n M^n)-{\log\Nrm(\alpha)})
	((u_0+\scrQ)\cap\HH)
	\\&=
	\left(\log\frac{\Theta^n M^n}{\Nrm(\alpha)}\right)
	\cdot\calT.
	\end{align*}
	This completes the proof.
\end{proof}

\begin{proof}[Proof of Lemma~$\ref{lemma=OKt_orbit}$]
	Pick a relatively compact Borel measurable
	fundamental domain $\mathcal{F}$ of the lattice
	$\calL(\OKt)$ in $\HH$. 
	Since $0$ is an interior point of $\calT$ in $\HH$,
	we may choose $c>0$ in such a way that
	$\mathcal{F}\subseteq c\cdot\calT$.
	Let
	\begin{align*}
	\Xi&\coloneqq \max
	\left\{\#\mu(K)\cdot\frac{\lebesgue^{(k)}(\calT)}{\lebesgue^{(k)}(\mathcal{F})},\ e^c\Theta^n\right\},
	\end{align*}
	where $\lebesgue^{(k)}$ denotes the 
	$k$-dimensional Lebesgue measure.
	
	If $\Nrm(\alpha)>\Theta^n M^n$, then 
	$\OKt\cdot \alpha \cap \OK(\omom,M)=\varnothing$
	by Lemma~\ref{lemma=NLCreversed}.
	Thus, we assume $\Nrm(\alpha)\leq\Theta^n M^n$.
	
	We define $L(\alpha,M)$ and $\tilde{L}(\alpha,M)$ by 
	\begin{align}
	L(\alpha,M)&\coloneqq \calL(\OKt\cdot \alpha \cap \OK(\omom,M))
	-\frac{\log \Nrm(\alpha)}{n}u_0,\notag\\
	\tilde{L}(\alpha,M)
	&\coloneqq 
	\bigsqcup_{v\in L(\alpha,M)}
	(v+\mathcal{F}).
	\label{38d1}
	\end{align}
	Since
	\[L(\alpha,M)\subseteq\calL(\OKt)+\calL(\alpha)-
	\frac{\log \Nrm(\alpha)}{n}u_0\subseteq\HH \]
	and $\mathcal{F}$ is a fundamental domain of the lattice
	$\calL(\OKt)$ in $\HH$, the union in the right-hand side
	of \eqref{38d1} is indeed disjoint. 
	Since the restriction of $\calL$ to $\OKnz$ has kernel
	$\mu(K)$, we have
	\begin{equation}\label{eq:countingcalL}
	\#(\OKt\cdot \alpha \cap \OK(\omom,M))
	=\#\mu(K)\cdot\#L(\alpha, M).
	\end{equation}
	By Lemma~\ref{lem:36a}~\eqref{36a-3}, we have
	\[
	L(\alpha,M)\subseteq
	\left(\log\frac{\Theta^n M^n}{\Nrm(\alpha)}\right)
	\cdot\calT.
	\]
	Hence
	\[
	\tilde{L}(\alpha,M)=L(\alpha,M)+\mathcal{F}
	\subseteq
	\left(\log\frac{\Theta^n M^n}{\Nrm(\alpha)}+c\right)
	\cdot\calT,
	\]
	since $\calT$ is convex.
	Therefore, by \eqref{38d1}, we obtain
	\[
	\#L(\alpha,M)\cdot\lebesgue^{(k)}(\mathcal{F})=
	\lebesgue^{(k)}(\tilde{L}(\alpha,M))
	\leq
	\lebesgue^{(k)}(\calT)
	\left(\log\frac{\Theta^n M^n}{\Nrm(\alpha)}+c\right)^{k}.
	\]
	This, together with
	\eqref{eq:countingcalL} implies the inequality in \eqref{en:precisecounting}.
	
	To prove \eqref{en:roughcounting}, let $\Xi'\coloneqq
	(\log_2 \Xi+n)^{k} \cdot \Xi$.
	Assuming $M\geq2$, we have
	$\log(\Xi M^n)\leq(\log_2\Xi+n)\log M$.
	Since $\Nrm(\alpha)\geq1$, the desired inequality follows from
	\eqref{en:precisecounting}.
\end{proof}
The following variant, Corollary~\ref{corollary=OKt_orbit_ideal}, will be used to prove our results on quadratic forms in Section~\ref{section=quadraticform}
via the machinery in Sections~\ref{section=maintheoremfull} and \ref{section=slidetrick}.

For a $\ZZ$-basis $\bv=(v_1,\dots,v_n)$ 
of an ideal $\ideala\in\Ideals_K$,
write
\[
v_j=\sum_{i\in[n]}c_{ij}\omega_i\quad(j\in[n]),
\]
where $c_{ij}\in\ZZ$ for $i,j\in[n]$.
Define
\[C_{\bv}
\coloneqq\max_{i\in[n]}
\sum_{j\in[n]}|c_{ij}|.\]
Recall from Definition~\ref{definition=lmugenlength}
the $\lmugen$-length $\|\cdot\|_{\infty,\bv}$ and the set
$\ideala(\bv,M)$ for $M\geq0$.

\begin{corollary}\label{corollary=OKt_orbit_ideal}
	Let $\ideala\in \Ideals_K$, and let 
	$\bv$ be a $\ZZ$-basis of $\ideala$.
	\begin{enumerate}[$(1)$]
		\item\label{en:precisecounting_ideal}
		There exists a constant $\Xi(\bv)>0$
		depending on $\bv$ such that
		for all $M\in\RR_{\geq 2}$ and for all
		$\alpha\in \ideala \cap \OK(\Xi(\bv) M^n)$, the inequality
		\[
		\#(\OKt\cdot \alpha \cap \ideala(\bv,M))\leq 
		\Xi(\bv)\cdot 
		\left(\log \frac{\Xi(\bv) M^n}{\Nrm(\alpha)}\right)^{k}
		\]
		holds. 
		\item\label{en:roughcounting_ideal} 
		There exists a constant $\Xi'(\bv)>0$
		depending on $\bv$ such that,
		for all $M\in\RR_{\geq 2}$ and for all
		$\alpha\in \ideala\setminus\{0\}$, the inequality
		\[
		\#(\OKt\cdot \alpha \cap \ideala(\bv,M))
		\leq \Xi'(\bv)\cdot (\log M)^{k}
		\]
		holds.
	\end{enumerate}
\end{corollary}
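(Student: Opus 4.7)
The plan is to reduce Corollary~\ref{corollary=OKt_orbit_ideal} directly to Lemma~\ref{lemma=OKt_orbit} by comparing the $\lmugen$-length on $\ideala$ relative to $\bv$ with the $\lmugen$-length on $\OK$ relative to $\omom$. The comparison is a routine linear-algebra observation: if $\alpha=\sum_{j}a_j v_j\in\ideala$ and each $v_j=\sum_i c_{ij}\omega_i$, then $\alpha=\sum_i\bigl(\sum_j c_{ij}a_j\bigr)\omega_i$, so
\[
\|\alpha\|_{\infty,\omom}\leq C_{\bv}\cdot\|\alpha\|_{\infty,\bv}.
\]
Consequently $\ideala(\bv,M)\subseteq\OK(\omom,C_{\bv}M)$, and hence for any $\alpha\in\ideala\setminus\{0\}$,
\[
\OKt\cdot\alpha\cap\ideala(\bv,M)\;\subseteq\;\OKt\cdot\alpha\cap\OK(\omom,C_{\bv}M).
\]

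For \eqref{en:precisecounting_ideal}, I would set $\Xi(\bv)\coloneqq\max\{\Xi\cdot C_{\bv}^n,\,2\}$, where $\Xi$ is the constant of Lemma~\ref{lemma=OKt_orbit}~\eqref{en:precisecounting} attached to $\omom$. Then the hypothesis $\Nrm(\alpha)\leq \Xi(\bv)M^n$ implies $\Nrm(\alpha)\leq \Xi\cdot(C_{\bv}M)^n$, which allows an application of Lemma~\ref{lemma=OKt_orbit}~\eqref{en:precisecounting} with parameter $C_{\bv}M\geq 1$, yielding
\[
\#(\OKt\cdot\alpha\cap\ideala(\bv,M))\leq \Xi\left(\log\frac{\Xi (C_{\bv}M)^n}{\Nrm(\alpha)}\right)^{k}
=\Xi\left(\log\frac{\Xi(\bv)M^n}{\Nrm(\alpha)}\right)^{k},
\]
which is the desired bound (up to enlarging $\Xi(\bv)$ to absorb the leading constant).

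For \eqref{en:roughcounting_ideal}, I would apply Lemma~\ref{lemma=OKt_orbit}~\eqref{en:roughcounting} to $C_{\bv}M$, noting that since $\bv$ is a $\ZZ$-basis of a nonzero lattice the integer matrix $(c_{ij})$ is invertible, so every row has a nonzero entry and $C_{\bv}\geq 1$; in particular $C_{\bv}M\geq 2$. Then
\[
\#(\OKt\cdot\alpha\cap\ideala(\bv,M))\leq\Xi'\cdot(\log(C_{\bv}M))^{k}
\leq\Xi'\Bigl(1+\tfrac{\log C_{\bv}}{\log 2}\Bigr)^{k}\cdot(\log M)^{k},
\]
so setting $\Xi'(\bv)\coloneqq\Xi'\bigl(1+\log C_{\bv}/\log 2\bigr)^{k}$ finishes the proof. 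There is no genuine obstacle here; the only point that needs a moment's thought is verifying $C_{\bv}\geq 1$ to ensure the input $C_{\bv}M$ to Lemma~\ref{lemma=OKt_orbit}~\eqref{en:roughcounting} still satisfies the required lower bound.
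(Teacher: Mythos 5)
Your proof is correct and follows essentially the same route as the paper: the inclusion $\ideala(\bv,M)\subseteq\OK(\omom,C_{\bv}M)$ followed by an application of Lemma~\ref{lemma=OKt_orbit} with parameter $C_{\bv}M$, using $C_{\bv}\geq1$, with the same choices of constants (the paper takes $\Xi(\bv)=\Xi C_{\bv}^n$ and $\Xi'(\bv)=\Xi'(\log_2 C_{\bv}+1)^{k}$). No gaps.
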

\begin{proof}
	Let $\Xi(\bv)\coloneqq\Xi C_{\bv}^n$.
	Let $M\geq1$ and $\alpha\in\ideala\setminus\{0\}$.
	Since
	$\ideala(\bv,M)\subseteq \OK(\omom,C_{\bv}M)$,
	we have
	\begin{equation}
	\#(\OKt\cdot \alpha \cap \ideala(\bv,M))\leq 
	\#(\OKt\cdot \alpha \cap \OK(\omom,C_{\bv}M)).
	\label{40g3}
	\end{equation}
	To prove \eqref{en:precisecounting_ideal}, suppose further that
	$\alpha\in\OK(\Xi(\bv)M^n)=\OK(\Xi (C_{\bv} M)^n)$.
	Since $C_{\bv}\geq1$, the desired inequality follows from
	Lemma~\ref{lemma=OKt_orbit}~\eqref{en:precisecounting}
	and \eqref{40g3}.
	
	To prove \eqref{en:roughcounting_ideal}, let $\Xi'(\bv)\coloneqq\Xi'\cdot(\log_2 C_{\bv}+1)^{k}$ and assume $M\geq2$.
	Then we have
	$\Xi'\cdot(\log(C_{\bv}M))^{k}\leq
	\Xi'(\bv)(\log M)^{k}$.
	Now the desired inequality follows from
	Lemma~\ref{lemma=OKt_orbit}~\eqref{en:roughcounting}
	and \eqref{40g3}.
\end{proof}

\section{Relative multidimensional \Sz\ theorem}\label{section=relativeSzemeredi} 
We develop an axiomatic framework that enables us to carry out Steps~5--7
in Subsection~\ref{subsection=ideasofproof}.
More specifically, the goal of this section is to prove the relative multidimensional Szemer\'edi theorem (Theorem~\ref{thm:RMST}) below.
\begin{setting}\label{setting=section5}
    Let $\calZ$ be a free $\ZZ$-module of rank $n$.
    Let $\bv=(v_i)_{i\in[n]}$ be a basis for $\calZ$.
    Fix a finite subset $S\subseteq\calZ$ which is the shape of constellations we are looking for and assume it is a standard shape (Definition~\ref{definition=standardshape}).
    Namely, $S$ generates $\calZ $ as a $\ZZ $-module and satisfies $0\in S$ and $S=-S$.
    Let $r$ be the positive integer defined by $\#S=r+1$ and write $S=\{s_1, \dots, s_r, s_{r+1}=0\}$.
    For $j\in[r+1]$, set $e_j:=[r+1]\setminus\{j\}$. 
\end{setting}
    Recall that for a given positive integer $N$, we define $\calZ(\boldsymbol{v},N)=\{\sum_{i\in[n]}a_iv_i\in\calZ : a_i\in [-N,N] \text{ for all } i \in [n] \}$.
    We often denote elements of direct products $(a_i)_{i\in I}\in \prod _{i\in I}A_i$ by $a_I$ for short.
\subsection{The statement of the relative multidimensional \Sz\ theorem}
The following is a multidimensional generalization of the celebrated theorem of \Sz.
It was first established by Furstenberg--Katznelson \cite{Furstenberg-Katznelson78}, whose proof is based on ergodic theory.
See Gowers \cite[Theorems~10.2, 10.3]{Gowers07} and R\"odl--Schacht--Tengan--Tokushige \cite[Section~2]{Rodl-Schacht-Tengan-Tokushige06}
for a proof using the hypergraph removal lemma.
\begin{theorem}[Multidimensional \Sz\ theorem]\label{theorem=multiSzemeredi}
Let $n$ be a positive integer, $\delta$ a positive real number and $S$ a finite subset of $\ZZ^n$.
Then, there exists a positive integer $N_{\mathrm{MS}}(\delta,S)$ such that 
for every $N\geq N_{\mathrm{MS}}(\delta,S)$ and every subset $B\subseteq[-N,N]^n$ with
\[
\EE(\mathbf{1}_B\mid[-N,N]^n)\geq\delta ,
\]
there exist $S$-constellations in $B$.
\end{theorem}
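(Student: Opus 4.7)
I plan to deduce this classical theorem from the hypergraph removal lemma (HRL) via the Solymosi--Gowers--R\"odl--Schacht--Tengan--Tokushige strategy \cite{Gowers07,Rodl-Schacht-Tengan-Tokushige06}; this is the well-known prototype for the argument described in Subsection~\ref{subsection=ideasofproof} that drives our main results. The cases $\#S\leq 2$ being trivial pigeonhole arguments, I assume $\#S\geq 3$, and after translating $B$ so that $0\in S$, I write $S=\{s_1,\ldots,s_r,s_{r+1}=0\}$ with $r=\#S-1\geq 2$.

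The first step is a reduction to the corners theorem in $\ZZ^r$ for the canonical shape $C=\{0,e_1,\ldots,e_r\}$ via the homomorphism $\phi_S\colon\ZZ^r\to\ZZ^n$, $\phi_S(e_i)=s_i$: every corner $\alpha'+kC\subset\ZZ^r$ maps to the $S$-constellation $\phi_S(\alpha')+kS\subset\ZZ^n$. With $M\asymp_S N$ and a translation $\beta\in[-N'',N'']^n$ with $N''\asymp_S N$, an averaging argument over $\beta$ applied to the sets $B'_\beta\coloneqq\{x\in[-M,M]^r:\phi_S(x)+\beta\in B\}$ produces some $\beta$ for which $\#B'_\beta\geq\delta'(2M+1)^r$, where $\delta'=\delta'(\delta,S)>0$; any corner in such a $B'_\beta$ pulls back to an $S$-constellation in $B$, so it suffices to find a corner in $B'_\beta$.

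For this I use Solymosi's $(r{+}1)$-partite $r$-uniform hypergraph $H$: the vertex class $V_j$ parameterizes the affine hyperplanes $\{x_j=c\}$ for $j\in[r]$ and $\{x_1+\cdots+x_r=c\}$ for $j=r+1$, each of size $n_H=O_S(M)$, and the $r$-tuple $(v_i)_{i\ne j}$ is declared a hyperedge exactly when the common intersection of the $r$ hyperplanes lies in $B'_\beta$. A copy of $K_{r+1}^{(r)}$ in $H$ matches a pair $(\alpha',k)\in\ZZ^r\times\ZZ$ with $\alpha'+ke_j\in B'_\beta$ for all $j\in[r+1]$; the case $k=0$ is a \emph{degenerate} copy located at a single point of $B'_\beta$, while $k\ne 0$ is a genuine corner. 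Suppose for contradiction that $B'_\beta$ contains no corner. Then every copy is degenerate, totaling $\#B'_\beta\geq\delta'(2M+1)^r$, and conditional on this hypothesis each hyperedge of $H$ lies in precisely one copy (the trivial one at its common-intersection point), since the $1$-parameter family of candidate corners through any fixed hyperedge is forced to collapse to $k=0$. Hence destroying all copies requires removing at least $\#B'_\beta$ hyperedges. On the other hand, since $\#B'_\beta=\Theta(M^r)=o(n_H^{r+1})$, the HRL applied with parameter $\epsilon\coloneqq\delta'/2$ destroys all copies by removing at most $(\delta'/2)(2M+1)^r$ hyperedges once $M$ exceeds the HRL-threshold. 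These bounds are incompatible, producing the required corner in $B'_\beta$ and hence the $S$-constellation in $B$; the constant $N_{\mathrm{MS}}(\delta,S)$ is extracted by tracing the HRL's quantitative dependence back through the $\phi_S$-reduction.

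The principal delicate step is the unique-copy-per-edge claim under the no-corner hypothesis, a short linear-algebra verification relying on $r\geq 2$: fixing a hyperedge $(v_i)_{i\ne j}$ of $H$ determines all but one of the parameters in $(\alpha',k)\in\ZZ^{r+1}$, leaving a $1$-parameter family of candidate corners, all of which except the $k=0$ one are ruled out by the no-corner hypothesis.
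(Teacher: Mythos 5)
Your argument is, in essence, the unweighted specialization of the route the paper itself takes: Theorem~\ref{theorem=multiSzemeredi} is deduced there as the case $\lambda\equiv\mathbf{1}_{\calZ}$ of the relative version (Theorem~\ref{thm:RMST}), whose proof is exactly the Solymosi reduction to corners via $\phi_S$ followed by a removal-lemma count. The two genuine (and legitimate) divergences are: (i) you invoke the plain hypergraph removal lemma together with the classical edge-disjointness count — each hyperedge lies in precisely one, necessarily degenerate, copy of $K_{r+1}^{(r)}$ under the no-corner hypothesis — whereas the paper runs the weighted version through the relative removal lemma and a weighted density estimate; for $\lambda\equiv 1$ these are interchangeable, and your unique-copy claim is correct as stated (the degenerate copy at $T_j$ of a given hyperedge does contain that hyperedge, and the no-corner hypothesis kills every other member of the $1$-parameter family). (ii) You treat an arbitrary finite $S$ by averaging over translates $\beta$, whereas Theorem~\ref{thm:RMST} assumes $S$ is a standard shape and the paper would first inflate $S$; your averaging is a clean alternative that also dispenses with the hypothesis that $S$ generate $\ZZ^n$.

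The one point that needs repair is the sign of the scaling parameter. The paper's definition of an $S$-constellation requires $k$ to be a \emph{positive} integer, and the hypergraph argument only produces a corner with $k\neq 0$; for $k<0$ the image $\phi_S(\alpha')+\beta+kS$ is a $(-S)$-constellation, not an $S$-constellation, unless $S=-S$. This is precisely why Theorem~\ref{thm:RMST} imposes $S=-S$ (the symmetry is used explicitly at the end of its proof) and why Theorem~\ref{theorem=weighted-counting2} lets $k$ range over $[-2N,2N]\setminus\{0\}$ when that symmetry is dropped. The fix is one line: run your argument with $S\cup(-S)$ in place of $S$ (after the translation making $0\in S$); a corner with $k\neq 0$ of either sign then yields an $(S\cup(-S))$-constellation with positive parameter $|k|$, which contains the desired $S$-constellation. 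Aside from this, and some harmless imprecision in the constants of the final comparison (the vertex classes have size $O_S(M)$ rather than $2M+1$, so $\epsilon$ must be taken of size $c_S\delta'$ rather than $\delta'/2$), the proof is sound.
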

In this article, we prove a relative version (in terms of weight) of the above theorem, Theorem~\ref{thm:RMST}, and use it in the proof of the main theorems.
To state Theorem~\ref{thm:RMST}, we now introduce a condition on weight functions
$\lambda\colon\calZ \to\RR_{\geq0}$.
\begin{definition}[$(\rho,N,S)$-linear forms condition]\label{definition=S-linearform}
Assume Setting~\ref{setting=section5}.
For each $\omega\in\bigsqcup_{j\in [r+1]}\{0,1\}^{e_j}$, we define a $\ZZ $-linear map $\psi\super{\omega}_{S}\colon\ZZ^{2r+2}\to\calZ$ as follows:
if $j\in[r]$, then the map associated to $\omega=(\omega_i)_{i\in e_j}\in\{0,1\}^{e_j}$ is
\begin{equation}\label{Eq:the-linear-maps-1}
\psi\super{\omega}_S(a_{1}\super{0},\ldots ,a_{r+1}\super{0},a_{1}\super{1},\ldots ,a_{r+1}\super{1})\coloneqq\Biggl(\sum_{i\in [r]\setminus\{j\}}(s_i-s_j)a_i\super{\omega_i}\Biggr)+s_{j}a_{r+1}\super{\omega_{r+1}}
\end{equation}
and if $\omega=(\omega_i)_{i\in e_{r+1}}\in\{0,1\}^{e_{r+1}}$, then we define
\begin{equation}\label{Eq:the-linear-maps-2}
\psi\super{\omega}_{S}(a_{1}\super{0},\ldots ,a_{r+1}\super{0},a_{1}\super{1},\ldots ,a_{r+1}\super{1})\coloneqq\sum_{i\in[r]}s_ia_i\super{\omega_i}.
\end{equation}

Let $0<\rho<1$ be a real number and $N$ a positive integer.
A function $\lambda\colon\calZ\to\RR_{\geq0}$ is said to satisfy the \emph{$(\rho,N,S)$-linear forms condition}
if for every subset $\calB \subseteq \ZZ ^{r+1}$ that is the product of intervals of lengths $\ge N$
and every $(n_{\omega})_{\omega }\in\{0,1\} ^{\bigsqcup_{j\in [r+1]}\{0,1\}^{e_j}}$, we have
\begin{equation}
\left|\EE\left(\prod_{j\in [r+1]}\prod_{\omega\in\{0,1\}^{e_j}}(\lambda\circ\psi\super{\omega}_{S})^{n_{\omega}}\relmiddle|\calB\times \calB\right)-1\right|\leq\rho
\label{eq:linearform-condition}.\end{equation}

A function $\lambda \colon \calZ \to \RR _{\ge 0}$ satisfying the $(\rho,N,S)$-linear forms condition is also called a \emph{$(\rho,N,S)$-pseudorandom measure} on $\calZ $.
\end{definition}
We note that this terminology differs from the usage in preceding work of \cite{Green-Tao08} and \cite{Conlon-Fox-Zhao15}.
The role of this definition will be clear in the proof of Proposition~\ref{proposition=pseudorandom}. 

The goal in this section is the following theorem, whose proof will be completed in Subsection~\ref{subsection=proofofRMST}.
\begin{theorem}[Relative multidimensional Szemer\'edi theorem]\label{thm:RMST}
    Assume Setting~$\ref{setting=section5}$.
    Then for every $\delta>0$, there exist positive real numbers $\gamma=\gamma_{\mathrm{RMS}}(\bv,\delta,S)$ and $\rho=\rho_{\mathrm{RMS}}(\bv,\delta,S)$
    such that the following holds:
    let $N$ be a positive integer and $\lambda$ a $(\rho,N,S)$-pseudorandom measure on $\calZ $. Let $B\subseteq \calZ(\boldsymbol{v},N)$ be a subset 
    satisfying the next two conditions:
    \begin{enumerate}[$(i)$]
    \item\label{en:weighted density}
     \emph{(Weighted density)}\quad $\EE(\mathbf{1}_B\cdot\lambda\mid\calZ(\boldsymbol{v},N))\geq\delta$,
    \item\label{en:smallness}
    \emph{(Smallness)}\quad $\EE(\mathbf{1}_B\cdot\lambda^{r+1}\mid\calZ(\boldsymbol{v},N))\leq \gamma N$.
    \end{enumerate}
    Then $B$ contains an $S$-constellation.
    \end{theorem}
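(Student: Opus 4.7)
The plan is to reduce the theorem to the relative hypergraph removal lemma of Conlon--Fox--Zhao (Theorem~\ref{thm:RHRL}) via a Solymosi-style encoding of $S$-constellations as copies of $K_{r+1}^{(r)}$ in a weighted hypergraph system. Write $J = [r+1]$.

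\emph{Hypergraph construction and weighting.} For each $j \in J$, choose a vertex class $V_j$ to be a copy of a sufficiently large box in $\calZ$ (a product of intervals of $\lmugen$-length $\asymp N$ with respect to $\bv$), and build the complete $(r+1)$-partite $r$-uniform hypergraph with parts $V_1,\ldots,V_{r+1}$. Each hyperedge $\{x_i\}_{i\in e_j}$ (obtained by omitting the $j$-th part) is assigned a ``target'' $T_j(x_{e_j}) \in \calZ$ defined by a suitable affine-linear function of the coordinates of the $x_i$. The encoding is arranged so that (a) for every clique $(x_1,\ldots,x_{r+1}) \in V_1\times\cdots\times V_{r+1}$, the $r+1$ targets $\{T_j(x_{e_j})\}_{j\in J}$ form an $S$-constellation $\alpha + k\cdot S$ in $\calZ$, and (b) when one computes the moments of the edge-weights $\nu_j(x_{e_j}) := \lambda(T_j(x_{e_j}))$ that the relative counting lemma requires to be controlled, the resulting $(2r+2)$-variable linear forms are precisely the $\psi_S^{(\omega)}$ of Definition~\ref{definition=S-linearform}. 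This is exactly why those forms were singled out in the pseudorandomness condition.

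\emph{Applying the removal lemma.} Let $E_j := \{e \colon T_j(e) \in B\}$ be the edge set cut out by $B$. By~(b), the $(\rho,N,S)$-linear forms condition on $\lambda$ supplies the relative counting / linear forms hypothesis of Theorem~\ref{thm:RHRL} for $(\nu_j)_{j\in J}$ with error $O(\rho)$. A standard counting-lemma computation yields a lower bound on the $\nu$-weighted number of copies of $K_{r+1}^{(r)}$ whose $r+1$ edges all lie in $\bigsqcup_j E_j$ of the form
\[
\bigl(\EE(\mathbf{1}_B \cdot \lambda \mid \calZ(\bv,N))\bigr)^{r+1} - O(\rho) \geq \delta^{r+1} - O(\rho),
\]
using hypothesis~(i). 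Choose $\rho = \rho_{\mathrm{RMS}}(\bv,\delta,S)$ small enough to keep this above $\delta^{r+1}/2$; Theorem~\ref{thm:RHRL} then provides, for $N$ large, genuinely many such copies, each determining a set $\alpha + k \cdot S \subseteq B$.

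\emph{Non-degeneracy and main obstacle.} It remains to exclude the \emph{degenerate} case $k=0$, where the would-be ``constellation'' collapses to a single point of $B$. The $\nu$-weighted count of degenerate cliques is proportional (after the same normalization) to $\EE(\mathbf{1}_B \cdot \lambda^{r+1} \mid \calZ(\bv,N))$, which hypothesis~(ii) bounds by $\gamma N$. Choosing $\gamma = \gamma_{\mathrm{RMS}}(\bv,\delta,S)$ small enough makes the degenerate count strictly less than the total count from the previous paragraph, so at least one non-degenerate copy exists, yielding a genuine $S$-constellation in $B$. The principal technical effort lies in the combinatorial bookkeeping of the Solymosi encoding: setting up the vertex classes and maps $T_j$ so that the hypergraph moment computations reproduce exactly the $(2r+2)$-variable forms $\psi_S^{(\omega)}$, and checking that distinct cliques correspond to distinct constellations apart from the degeneracy handled above. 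Once this alignment is accomplished, the rest is a clean invocation of the Conlon--Fox--Zhao removal lemma together with the smallness hypothesis.
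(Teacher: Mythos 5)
Your hypergraph construction and the identification of the moment computations with the forms $\psi_S^{(\omega)}$ match the paper's Solymosi-style argument (Subsection~5.3 and Proposition~\ref{proposition=pseudorandom}), and your use of the smallness hypothesis to control the degenerate $k=0$ cliques is also the paper's mechanism. But the central step of your proof is wrong: you claim that ``a standard counting-lemma computation yields a lower bound on the $\nu$-weighted number of copies of $K_{r+1}^{(r)}$ \ldots of the form $(\EE(\mathbf{1}_B\cdot\lambda\mid\calZ(\bv,N)))^{r+1}-O(\rho)\geq\delta^{r+1}-O(\rho)$.'' No such bound holds, even in the dense case $\lambda\equiv 1$: a Behrend-type set $B$ of density $\delta$ can have a number of non-trivial $3$-term progressions far smaller than $\delta^3 N^2$, so the count of cliques cannot be bounded below by $\delta^{r+1}$ minus a small error. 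The relative counting lemma (Theorem~\ref{theorem=RCL}) only compares the $g$-weighted count with the count for a \emph{dense model} $\tilde g$ forming a discrepancy pair with $g$; it does not produce a product-type lower bound, because $\mathbf{1}_B\cdot\lambda$ is merely majorized by the pseudorandom $\nu$ and is itself completely arbitrary. You have in effect assumed a supersaturation statement that is exactly what the removal lemma is needed to replace.

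The correct logic, which the paper follows, is contrapositive: one \emph{supposes} that the weighted count of non-degenerate cliques is at most $\gamma$, adds the degenerate contribution (at most $\gamma/2$ by the smallness hypothesis and Lemma~\ref{lem:phi^(-1)-bound}), and concludes that the total is at most $\tfrac32\gamma=\gamma_{\mathrm{RHR}}$, so Theorem~\ref{thm:RHRL} applies and yields sets $E'_{e_j}$ with $\bigcap_j(E'_{e_j}\times V_j)=\varnothing$ and $\EE(\mathbf{1}_{E_{e_j}\setminus E'_{e_j}}\cdot\nu_{e_j}\mid V_{e_j})\leq\varepsilon$. The punchline is then an injectivity argument: every point of $\phi_S^{-1}(B)\cap[-UN,UN]^r$ gives a clique all of whose edges lie in the $E_{e_j}$, hence (since the $E'_{e_j}$ have empty common intersection) at least one of its edges lies in some $E_{e_j}\setminus E'_{e_j}$; summing the weights over this injection forces $\EE(\mathbf{1}_B\cdot\lambda\mid\calZ(\bv,N))<\delta$, contradicting the weighted density hypothesis. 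This contradiction is what produces the lower bound $\gamma$ on the non-degenerate count (and hence the constellation), with $\gamma$ depending on $\delta$ through the removal lemma rather than being anything like $\delta^{r+1}$. As written, your proposal omits this entire mechanism, so the lower bound at the heart of your argument is unjustified.
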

This covers Theorem~\ref{theorem=multiSzemeredi}.
Indeed, consider $\calZ \coloneqq \ZZ^n$ with the standard basis $\bv$ and $\lambda\coloneqq\mathbf{1}_{\calZ}$;
then Theorem~\ref{thm:RMST} implies Theorem~\ref{theorem=multiSzemeredi} with threshold $N_{\mathrm{MS}}(\delta,S)=(\gamma_{\mathrm{RMS}}(\bv,\delta,S))^{-1}$.

In some \Sz -type theorems, it is possible to obtain lower bounds of the asymptotic number of $S$-constellations with respect to the parameter $N$.
Results of this sort date back to Varnavides's \cite{Varnavides59} work on Roth's theorem.
This is also the case for our Theorem~\ref{thm:RMST} in a weighted sense we now state.
\begin{theorem}\label{theorem=weighted-counting}
In the setting of Theorem~$\ref{thm:RMST}$, in addition to the existence of $S$-constellations in $B$,
we have the following inequality:
\[
\EE\left(\prod_{s\in S}(\ichi_B\cdot\lambda)(\alpha+ks) \relmiddle| (\alpha,k)\in \calZ(\bv,N)\times[N]\right) > \gamma.
\]
\end{theorem}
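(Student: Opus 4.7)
The plan is that Theorem~\ref{theorem=weighted-counting} follows from essentially the same argument used to prove Theorem~\ref{thm:RMST}, by retaining the quantitative output of the relative hypergraph removal lemma (Theorem~\ref{thm:RHRL}) rather than only its existential consequence. Indeed, the proof of Theorem~\ref{thm:RMST} will construct from $(B,\lambda)$ a weighted hypergraph whose $\nu$-weighted copies of $K_{r+1}^{(r)}$ correspond, under the map $T$ followed by $\phi_S$ of the fundamental diagram (Subsection~\ref{subsection=ideasofproof}), to the $S$-constellations in $B$ weighted by $\prod_{s\in S}\lambda(\alpha+ks)$. Hypothesis~\eqref{en:weighted density} together with the pseudorandomness of $\lambda$ gives that the resulting weighted edge density is bounded below by a positive function of $\delta$, while \eqref{en:smallness} is the smallness condition needed for the relative removal lemma to apply. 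Theorem~\ref{thm:RHRL} in its quantitative form then asserts that the weighted count of $K_{r+1}^{(r)}$-copies is at least a positive constant $\gamma=\gamma(\delta,S,\bv)$, which pushes forward to the desired weighted count of $S$-constellations.

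An alternative route, treating Theorem~\ref{thm:RMST} as a black box, is a Varnavides-style supersaturation argument. Choose an auxiliary scale $L=L(\delta,S,\bv)\in\NN$, and for each pair $(\alpha_0,k_0)\in \calZ(\bv,N)\times[N]$ consider the affine map $P_{\alpha_0,k_0}\colon \calZ(\bv,L)\to\calZ$, $\beta\mapsto \alpha_0+k_0\beta$, together with the pulled-back data $B_{\alpha_0,k_0}\coloneqq P_{\alpha_0,k_0}^{-1}(B)$ and $\lambda_{\alpha_0,k_0}\coloneqq \lambda\circ P_{\alpha_0,k_0}$ on $\calZ(\bv,L)$. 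The steps are then: (a) verify that for an overwhelming majority of pairs $(\alpha_0,k_0)$ the pullback $\lambda_{\alpha_0,k_0}$ remains $(\rho',L,S)$-pseudorandom for a suitable $\rho'$; (b) by the linear forms condition on $\lambda$ and Markov's inequality, deduce that a positive-density subset of pairs satisfies both a density bound $\EE(\ichi_{B_{\alpha_0,k_0}}\lambda_{\alpha_0,k_0})\geq \delta/2$ and the smallness condition needed for Theorem~\ref{thm:RMST} at scale $L$; (c) apply Theorem~\ref{thm:RMST} on each good sub-box to produce an $S$-constellation, and sum over pairs while correcting for the bounded multiplicity with which each $S$-constellation of $B$ is hit. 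To upgrade the resulting unweighted count to the weighted count claimed, one applies Theorem~\ref{thm:RMST} in its self-strengthened form, i.e.\ Theorem~\ref{theorem=weighted-counting} itself at the bounded scale $L$, reducing the problem at scale $N$ to the analogous weighted-count problem at a fixed scale $L$, where it reduces again to the direct route above.

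The main technical obstacle in the Varnavides bootstrap is step (a): under the affine substitution $\beta\mapsto \alpha_0+k_0\beta$, the product $\prod_\omega(\lambda_{\alpha_0,k_0}\circ\psi_S^{(\omega)})^{n_\omega}$ unfolds, after interchanging expectations, into an average of $\lambda$ at an enlarged family of linear forms over a product of intervals involving $(\alpha_0,k_0)$ as additional variables. A Cauchy--Schwarz argument in $(\alpha_0,k_0)$ controls the defect, but closing the estimate requires choosing $L$ small enough relative to $N$ so that the resulting product-of-intervals has sides $\geq N$, bringing the $(\rho,N,S)$-linear forms condition of $\lambda$ into play. For the direct route, the only extra ingredient beyond Theorem~\ref{thm:RHRL} is the correspondence between $K_{r+1}^{(r)}$-copies in the hypergraph and the weighted $S$-constellations in $B$, which is precisely the content of Steps~5--7 of the fundamental diagram and will be established in the proof of Theorem~\ref{thm:RMST} regardless of whether one extracts existence or weighted counts.
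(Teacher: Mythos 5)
Your first (``direct'') route is essentially the paper's proof: Theorems~\ref{thm:RMST} and \ref{theorem=weighted-counting} are proved simultaneously, and the weighted count is extracted by contraposition of the relative hypergraph removal lemma --- one supposes the weighted count of nondegenerate copies of $K_{r+1}^{(r)}$ is at most $\gamma$, applies Theorem~\ref{thm:RHRL}, and derives a contradiction with the weighted density condition via the injection $\iota$; hence the count exceeds $\gamma$, and the fiber-counting of $\phi_S$ together with $S=-S$ converts this into the stated expectation over $(\alpha,k)\in\calZ(\bv,N)\times[N]$. Two small clarifications relative to your write-up: Theorem~\ref{thm:RHRL} does not by itself ``assert'' a lower bound on the count --- the bound comes only from combining its contrapositive with the density hypothesis --- and the smallness condition \eqref{en:smallness} is not a hypothesis of the removal lemma; its role is to bound the contribution of the degenerate $k=0$ terms by $\gamma/2$, which is exactly what lets one exclude $k=0$ from the final expectation (note the statement ranges over $k\in[N]$, not $k\geq 0$). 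Your second, Varnavides-style route is not what the paper does and is unnecessary here: since $\gamma$ is already taken to be (two thirds of) the removal-lemma threshold $\gamma_{\mathrm{RHR}}(r+1,\varepsilon)$, the supersaturation comes for free from the contradiction argument, and the bootstrap you describe --- in particular step (a), transferring the $(\rho,N,S)$-linear forms condition through the affine substitution $\beta\mapsto\alpha_0+k_0\beta$ --- would require substantial additional work that the paper's argument avoids entirely.
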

\begin{remark}\label{remark=smallness}
    The smallness condition guarantees that the contribution of the `trivial' $S$-constellations (those of the form $a+0\cdot S =\{ a\} $) is small.
    This condition is necessary to prove the existence of `non-trivial' $S$-constellations.
    The classical \Sz\ theorem as in \cite[Theorem~3.5]{Green-Tao08}
    and the relative multidimensional \Sz\ theorems in \cite[Theorem~2.18]{Tao06Gaussian}, \cite[Theorem~3.1]{Conlon-Fox-Zhao15} do not impose the smallness condition on their formulations of the relative multidimensional \Sz\ theorem; instead, in the step of applications, they argue that the smallness condition is satisfied if the parameter $N$ is sufficiently large.
\end{remark}
\begin{remark}\label{remark=affine}
    Our main problem is to show the existence of $S$-constellations in a subset $A\subseteq \calZ $.
    In this article, the subset $A$ is the set of prime elements of the ring of integers in a number field $K$.
    In this case, it seems difficult to construct directly a pseudorandom measure on $\calZ =\OK $ to make the relative multidimensional \Sz\ theorem applicable.
    We will instead employ the so-called $W$-trick,
    where we choose suitable $W\in \NN $ and $b\in \OK $ and try to apply the relative multidimensional \Sz\ theorem to the inverse image of $A$ by the affine transformation $\Aff _{W,b}\colon \OK \to \OK $.
    If we can find an $S$-constellation in the inverse image $\Aff ^{-1}_{W,b}(A)$,
    then we may send it back by $\Aff _{W,b}$ to obtain one in $A$.
    This approach was also used effectively by Green--Tao~\cite{Green-Tao08}.
    \end{remark}
    Before ending this subsection, let us prove the following elementary fact.
    This implies that the family of maps $(\psi _S\super{\omega })_{\omega }$ in Definition~\ref{definition=S-linearform} satisfies the conditions on kernels in Theorem~\ref{Th:Goldston_Yildirim} in Section~\ref{section=GoldstonYildirim}.
    \begin{lemma}
        \begin{enumerate}[$(1)$]
        \item\label{item:A-omega}
        For a given $j$ and $\omega \in \{ 0,1\} ^{e_j}$,
        consider the defining formulas \eqref{Eq:the-linear-maps-1} and \eqref{Eq:the-linear-maps-2} for the map $\psi_S\super{\omega}$.
        Then the indices $(i,\sign )\in [r+1]\times \{ 0,1\} $ 
        where the variable 
        $a_i\super{\sign} $ has a non-trivial coefficient are precisely those in the following set:
        \[
        A\super{\omega}\coloneqq\{(i,\omega_i)\}_{i\in e_j}.
        \]
        \item\label{item:A-omega-indep}
        For distinct $\omega, \omega' \in\bigsqcup_{j\in[r+1]}\{0,1\}^{e_j}$, we have $A\super{\omega} \not\subseteq A\super{\omega'}$.
        \item\label{item:ker-indep}
        For distinct $\omega, \omega' \in\bigsqcup_{j\in[r+1]}\{0,1\}^{e_j}$, 
        we have $\ker (\psi _S\super{\omega }) \not\subseteq \ker (\psi _S\super{\omega'})$.\\
        \end{enumerate}
\label{lem:indep}	
        \end{lemma}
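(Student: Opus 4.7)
The plan is to work directly from the defining formulas \eqref{Eq:the-linear-maps-1}--\eqref{Eq:the-linear-maps-2}; no auxiliary ideas are needed beyond the standing assumption that $S$ is a standard shape, which guarantees that the elements of $S$ are pairwise distinct and that $s_{r+1}=0$, forcing $s_i\neq 0$ for $i\in[r]$. Part \eqref{item:A-omega} is the only item requiring honest unpacking of the formulas; the other two parts will then be purely combinatorial consequences.

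For \eqref{item:A-omega}, I would fix $j$ and $\omega\in\{0,1\}^{e_j}$ and read off from \eqref{Eq:the-linear-maps-1} or \eqref{Eq:the-linear-maps-2} (whichever applies) that the coordinate $a_i\super{\sign}$ can appear only when $i\in e_j$ and $\sign=\omega_i$, with coefficient $s_i-s_j$, $s_j$, or $s_i$, according to whether one is in \eqref{Eq:the-linear-maps-1} with $i<r+1$, in \eqref{Eq:the-linear-maps-1} with $i=r+1$, or in \eqref{Eq:the-linear-maps-2}. Each such coefficient is a nonzero element of $\calZ$ by the standing assumption on $S$, so the set of indices where $\psi_S\super{\omega}$ depends non-trivially is exactly $\{(i,\omega_i):i\in e_j\}=A\super{\omega}$.

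For \eqref{item:A-omega-indep}, I would split distinct pairs $(\omega,\omega')$ over $e_j$ and $e_{j'}$ into two cases. If $j=j'$, the two sign sequences differ at some index $i\in e_j$, and then $(i,\omega_i)$ lies in $A\super{\omega}$ but not in $A\super{\omega'}$, since the only element of $A\super{\omega'}$ with first coordinate $i$ is $(i,\omega'_i)\neq(i,\omega_i)$. If $j\neq j'$, then $j'\in e_j\setminus e_{j'}$, so $(j',\omega_{j'})$ lies in $A\super{\omega}$ while $A\super{\omega'}$ contains no element with first coordinate $j'$.

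For \eqref{item:ker-indep}, I would transport \eqref{item:A-omega-indep} to the level of kernels via \eqref{item:A-omega}. Since the conclusion of \eqref{item:A-omega-indep} is symmetric in $(\omega,\omega')$, I may pick $(i,\sign)\in A\super{\omega'}\setminus A\super{\omega}$ and evaluate both maps on the standard basis vector $x$ of $\ZZ^{2r+2}$ corresponding to the coordinate $a_i\super{\sign}$: by \eqref{item:A-omega} the vector $x$ lies in $\ker(\psi_S\super{\omega})$, whereas $\psi_S\super{\omega'}(x)$ is the nonzero coefficient attached to $a_i\super{\sign}$ in the formula for $\psi_S\super{\omega'}$. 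I do not anticipate a real obstacle; the whole lemma is combinatorial bookkeeping, and the only mildly substantial point is the nonvanishing of every coefficient in \eqref{Eq:the-linear-maps-1}--\eqref{Eq:the-linear-maps-2}.
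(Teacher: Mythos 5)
Your proposal is correct and follows essentially the same route as the paper: part (1) by reading off the formulas (with the nonvanishing of the coefficients $s_i-s_j$, $s_j$, $s_i$ coming from the pairwise distinctness of the elements of the standard shape $S$ and $s_{r+1}=0$), part (2) by the same two-case split on whether $j=j'$, and part (3) by evaluating on the standard basis vector attached to an index in $A\super{\omega'}\setminus A\super{\omega}$. The only difference is cosmetic — you spell out the nonvanishing of coefficients that the paper dismisses as clear — so there is nothing to correct.
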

        \begin{proof}
        Item~\eqref{item:A-omega} is clear from the definition.
        
        Now we show \eqref{item:A-omega-indep}.
        For the notational convenience, we show the opposite inclusion $A\super{\omega'} \not\subseteq A\super{\omega}$. 
        If $j\neq j'$, then we have $(j,\omega'_j)\in A\super{\omega'}\setminus A\super{\omega}$.
        If $j=j'$, there exists an $i\in e_j$ with $\omega_i\neq \omega'_i$.
        For this $i$, we have $(i,w'_i)\in A\super{\omega'}\setminus A\super{\omega}$.
        
        The assertion \eqref{item:ker-indep} follows from \eqref{item:A-omega-indep}.
        Indeed, we have that
        there exists an element $(i,\sign)\in A\super{\omega'}\setminus A\super{\omega}$
        by assertion \eqref{item:A-omega-indep}.
        The vector whose $(i,\sign)$-entry is $1$ and the others are $0$
        belongs to $\ker(\psi_S\super{\omega})\setminus\ker(\psi_S\super{\omega'})$.
        This completes the proof.
    \end{proof}
\subsection{Relative hypergraph removal lemma}\label{subsec:RHRL}
As is the case for the other versions \cite[Theorem~2.18]{Tao06Gaussian}, \cite[Theorem~3.1]{Conlon-Fox-Zhao15},
Theorem~\ref{thm:RMST} is derived from the \emph{relative hypergraph removal lemma}.
We use the version of Conlon--Fox--Zhao \cite[Theorem~2.12]{Conlon-Fox-Zhao15}
which is a refinement of Tao's \cite[Theorem~2.17]{Tao06Gaussian},
which in tern is a relative generalization of the hypergraph removal lemma of  
Gowers~\cite{Gowers07} and Nagle--R\"odl--Schacht--Skokan~\cite{Rodl-Skokan04,Nagle-Rodl-Schacht06,Rodl-Schacht071,Rodl-Schacht072}.
Cook--Magyar--Titichetrakun have proved a further strengthening of
the relative hypergraph removal lemma \cite[Theorem~1.4]{Cook-Magyar-Titichetrakun18}
to prove the multidimensional \Sz\ theorem in the primes, although we do not need this in this paper.

Let $J$ be a non-empty finite set and $r$ a positive integer.
The pair of $J$ and a subset $E\subseteq\binom{J}{r}$ is called an {\em $r$-uniform hypergraph}
({\em $r$-graph} for short).
Here, recall $\binom{J}{r}=\{e\in2^J : \#e=r\}$ from our notation.
If further for each $j\in J$ a finite set $V_j$ is given, then we call the tuple $((J,E); (V_j)_{j\in J})$ an {\em $r$-graph system}.
For a subset $e\subseteq J$ of indices, we write $V_e\coloneqq\prod_{j\in e}V_j$ for short and 
$x_e \coloneqq (x_j)_{j\in e}\in V_e$.
Let 
$V=((J,E); (V_j)_{j\in J})$ be an $r$-graph system.
We say $g$ is a {\em weighted hypergraph} on $V$ if $g$ is a tuple $g=(g_e)_{e\in E}$
of functions $g_e\colon V_e\to \RR _{\geq 0}$.
For two weighted hypergraphs $g=(g_e)_{e\in E}$ and $g'=(g'_e)_{e\in E}$ on $V$,
we write $g\leq g' $ if for all $e\in E$, we have $g_e\leq g'_e$ pointwise.
\begin{definition}\label{def:pseudorandomsystem}
    Let $\rho$ be a positive real number and $V=((J,E);(V_j)_{j\in J})$ an $r$-graph system.
    We say a weighted hypergraph $\nu $ on $V$ is
    {\em $\rho$-pseudorandom} if the following inequality holds for every 
    tuple $(n_{\omega})_{\omega }\in\{0,1\} ^{\bigsqcup_{e\in E}\{0,1\}^e}$:
    \begin{equation}
    \left|\EE\left(\prod_{e\in E}\prod_{\omega\in\{0,1\}^e}\nu_e(x_e\super{\omega})^{n_{\omega}}\relmiddle| (x_J\super{0}, x_J\super{1})\in V_J\times V_J\right)-1\right|\leq\rho
    \label{eq:pseudorandomsystem}.\end{equation}
    Here, the symbol $x_e\super{\omega}$ denotes the tuple
    $(x_j^{(\omega_j)})_{j\in e} \in V_e$.
    \end{definition}
Now we are ready to state the relative hypergraph removal lemma.
\begin{theorem}[Relative hypergraph removal lemma\ {\cite[Theorem 2.12]{Conlon-Fox-Zhao15}}]\label{thm:RHRL}
For every positive integer $k$ and every positive $\varepsilon>0$,
there are positive real numbers $\gamma=\gamma_{\mathrm{RHR}}(k,\varepsilon)>0$
and $\rho=\rho_{\mathrm{RHR}}(k,\varepsilon)>0$ such that the following holds:
let $V=((J,E);(V_j)_{j\in J})$ be an $r$-graph system with $r\leq k= \#J $
and $g, \nu$ two weighted hypergraphs on $V$ with $g\leq \nu$.
If $\nu $ is $\rho $-pseudorandom and the estimate
\begin{equation}\label{eq:RHRLhyp}
\EE\left(\prod_{e\in E}g_e(x_e)\relmiddle|x_J\in V_J\right)\leq\gamma
\end{equation}
holds, then there exists a family of subsets $E_e\subseteq V_e$ for $e\in E$
such that the following hold:
\[
\bigcap_{e\in E}(E_e\times V_{J\setminus e})=\varnothing
\]
and for all $e\in E$,
\[
\EE\left(g_e\cdot \mathbf{1}_{V_e\setminus E_e} \mid V_e\right)\leq\varepsilon.
\]
\end{theorem}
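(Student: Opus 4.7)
The plan is to prove Theorem~\ref{thm:RHRL} by reducing it to the classical (dense) hypergraph removal lemma via a \emph{transference strategy}. The idea is that although $g$ is only a sparse weighted hypergraph bounded by a pseudorandom majorant $\nu$, it admits a \emph{dense model} $\tilde g$, namely a weighted hypergraph taking values in $[0,1]$, whose subgraph statistics closely track those of $g$. Once such a $\tilde g$ is produced, one applies the dense removal lemma to $\tilde g$ and transfers the resulting removal sets back to $g$ using the approximation property.

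Concretely, the first step will be to establish, for each edge type $e\in E$, a dense model $\tilde g_e\colon V_e\to[0,1]$ with the property that for every test function $\phi_e\colon V_e\to[0,1]$ the quantity $|\EE(g_e\phi_e\mid V_e)-\EE(\tilde g_e\phi_e\mid V_e)|$ is small. Existence of $\tilde g_e$ is the \emph{dense model theorem}, originating in Green--Tao and Tao--Ziegler and later given cleaner proofs by Gowers and by Reingold--Trevisan--Tulsiani--Vadhan. The underlying mechanism is convex separation: if $g_e$ admitted no bounded approximator, a separating hyperplane would produce a dual test function whose $g_e$-correlation would greatly exceed its best correlation with any $[0,1]$-valued function, and this would in turn contradict the $\rho$-pseudorandomness of the majorant $\nu_e$.

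The second, technical step is to prove a \emph{relative counting lemma}: $\EE\bigl(\prod_{e\in E}g_e(x_e)\mid x_J\in V_J\bigr)=\EE\bigl(\prod_{e\in E}\tilde g_e(x_e)\mid x_J\in V_J\bigr)+o_\rho(1)$. This is done by swapping $g_e$ for $\tilde g_e$ one edge type at a time, and bounding each replacement error by a cascade of Cauchy--Schwarz inequalities. The pseudorandomness condition \eqref{eq:pseudorandomsystem}, which controls the full $2^{r}$-fold correlations of $\nu$ across the doubled vertex classes indexed by $\{0,1\}^e$, is precisely what licenses these Cauchy--Schwarz steps. Combined with the hypothesis $\EE(\prod_e g_e\mid V_J)\leq\gamma$, this yields $\EE(\prod_e\tilde g_e\mid V_J)\leq\gamma+o_\rho(1)$.

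Finally, since $\tilde g\leq 1$ is a dense weighted hypergraph on $V$, the classical hypergraph removal lemma of Gowers and of Nagle--R\"odl--Schacht--Skokan applies: for threshold $\varepsilon/2$, one obtains sets $\tilde E_e\subseteq V_e$ with $\bigcap_{e\in E}(\tilde E_e\times V_{J\setminus e})=\varnothing$ and $\EE(\tilde g_e\mathbf{1}_{V_e\setminus \tilde E_e}\mid V_e)\leq\varepsilon/2$. Setting $E_e\coloneqq\tilde E_e$ and invoking the dense model approximation with the test function $\phi_e=\mathbf{1}_{V_e\setminus E_e}$ transfers the bound to $g$, giving the desired $\EE(g_e\mathbf{1}_{V_e\setminus E_e}\mid V_e)\leq\varepsilon$. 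The main obstacle will be the relative counting lemma: the telescoping introduces $O(\#E)$ error terms, each requiring a chain of Cauchy--Schwarz applications whose depth grows with $r$, so that driving every error to zero forces $\rho=\rho_{\mathrm{RHR}}(k,\varepsilon)$ to depend very delicately on $\varepsilon$, and tracking this quantitative dependence is the true heart of the argument.
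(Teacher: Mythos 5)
The paper does not actually prove this theorem: it is imported from Conlon--Fox--Zhao as a black box, and the only supporting argument supplied is Remark~5.12, which explains that the family-free quantitative form (with $\rho$ depending only on $k$ and $\varepsilon$) is obtained by inspecting their proof, whose sole source of dependence on the size of the vertex sets is the relative counting lemma (Theorem~5.13). Your proposal instead sketches a genuine proof, and its skeleton --- dense model, relative counting lemma, classical removal lemma, transfer --- is indeed the Conlon--Fox--Zhao route, so at the level of strategy you are on the standard track.

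There is, however, a concrete gap in your first and last steps. You assert that the dense model $\tilde g_e\colon V_e\to[0,1]$ satisfies $|\EE(g_e\phi_e\mid V_e)-\EE(\tilde g_e\phi_e\mid V_e)|=o(1)$ for \emph{every} test function $\phi_e\colon V_e\to[0,1]$. This is impossible when $g_e$ is genuinely sparse: if $g_e$ is supported on a set $S\subseteq V_e$ of density $o(1)$ while $\EE(g_e\mid V_e)\approx 1$ (the typical situation for a function dominated by a majorant normalized to have mean $1$), then $\phi_e=\ichi_S$ gives $\EE(g_e\phi_e\mid V_e)\approx 1$ but $\EE(\tilde g_e\phi_e\mid V_e)\leq \#S/\#V_e=o(1)$. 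The correct approximation property only holds against a restricted, lower-complexity class of test functions --- in Conlon--Fox--Zhao's formulation, products $\prod_{j\in e}u_j(x_{e\setminus\{j\}})$ of bounded functions of the $(r-1)$-element faces (their $\varepsilon$-discrepancy pairs). Consequently your final transfer step, which applies the approximation to the arbitrary indicator $\ichi_{V_e\setminus E_e}$ of a removal set handed to you by the dense removal lemma, does not follow: that indicator need not belong to the allowed test class. Closing this gap is precisely the delicate part of the real argument: one must construct $\tilde g_e$ as a conditional expectation onto a partition of $V_e$ built from lower-order data and arrange (via the regularity-based proof of the dense removal lemma) that the removal sets are measurable with respect to that structure before transferring back to $g_e$. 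As written, both the existence of your dense model and the last line of your proof fail.
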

\begin{remark}\label{remark=refinedCFZ}
    It will be useful later that the only requirement on the weighted hypergraph $\nu $
    is $\rho $-randomness, with $\rho $ depending only on $k$ and $\varepsilon $.
    This fact might not be clear from the reference \cite{Conlon-Fox-Zhao15},
    where they consider a family $(\nu \super{N})_{N\in\NN}$ and state the above result for $N$ large enough without explicitly mentioning how the threshold for $N$ is determined.

    Theorem~\ref{thm:RHRL} can be verified by examining the proof of Conlon--Fox--Zhao \cite[Theorem~2.12]{Conlon-Fox-Zhao15} as follows:
    first, note that the dependence on $N$ stems solely from the \RCL\ \cite[Theorem~2.17]{Conlon-Fox-Zhao15}.
    Also note that their arguments 
    actually show Theorem~\ref{theorem=RCL} below by examining each $o(1)$ related to $\nu $ in the proof.
    This verifies Theorem~\ref{thm:RHRL} above.
    Such a family-free argument has already appeared in Romani\'c--Wolf \cite{RW} for the study of a quantitative version of the $1$-dimensional relative \Sz\ theorem.
 \end{remark}
\begin{theorem}[Relative counting lemma]\label{theorem=RCL}
    Let $k$ be a positive integer, and 
    $\varepsilon$ and $\rho$ sufficiently small positive real numbers  depending on $k$.
    Then there exist positive real numbers
    \footnote{For instance, we may take $a_k=2^{1-2^{2^{k-1}-1}}$ and $b_k=2^{1-(k+2)2^{2^{k-1}-2}}$.}
    $a_k$ and $b_k$ depending only on $k$ such that the following holds.
    Let $V=((J,E);(V_j)_{j\in J})$ be an $r$-graph system with $r \leq k = \# J$ and $\nu$ a $\rho$-pseudorandom weighted hypergraph on $V$.
    Let $g$ and $\tilde{g}$ be two weighted hypergraphs on $V$ with $g\leq\nu$ and $\tilde{g}\leq 1$ such that $(g,\tilde g)$ is a \emph{$\varepsilon$-discrepancy pair}; see {\rm \cite[Definition~2.13]{Conlon-Fox-Zhao15}} for this notion.
    Then we have 
    \[
    \left|\EE\left(\prod_{e\in E}g_e(x_e)\relmiddle| x_J\in V_J\right)-\EE\left(\prod_{e\in E}\tilde{g}_e(x_e)\relmiddle| x_J\in V_J\right)\right|=O_k(\varepsilon^{a_k}+\rho^{b_k}).
    \]
    \end{theorem}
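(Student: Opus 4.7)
The plan is to adapt the densification argument of Conlon--Fox--Zhao~\cite{Conlon-Fox-Zhao15}, formulated in a ``family-free'' manner (cf.\ Remark~\ref{remark=refinedCFZ}): since only the $\rho$-pseudorandomness of $\nu$ (with $\rho$ depending on $r$ and $\varepsilon$) is available as an input, every error estimate produced along the way has to be tracked in terms of $\rho$ and $\varepsilon$ alone, with no recourse to $o(1)$ quantities that depend on an external parameter $N$.

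First, I fix an ordering $e_1,\dots,e_{|E|}$ of the hyperedges in $E$ and write the telescoping identity
\[
\prod_{e\in E}g_e(x_e)-\prod_{e\in E}\tilde g_e(x_e)=\sum_{i=1}^{|E|}\Biggl(\prod_{j<i}\tilde g_{e_j}(x_{e_j})\Biggr)\bigl(g_{e_i}(x_{e_i})-\tilde g_{e_i}(x_{e_i})\bigr)\Biggl(\prod_{j>i}g_{e_j}(x_{e_j})\Biggr).
\]
Taking expectations over $x_J\in V_J$ and applying the triangle inequality, the task reduces to bounding each telescoping term by $O_r(\varepsilon+\sqrt\rho)$. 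For fixed $i$, integrating over $x_{J\setminus e_i}$ produces a function $h_i\colon V_{e_i}\to\RR_{\geq0}$, and the $i$-th term equals
\[
\EE\bigl((g_{e_i}-\tilde g_{e_i})(x_{e_i})\cdot h_i(x_{e_i})\mid x_{e_i}\in V_{e_i}\bigr).
\]
Were $h_i$ bounded by a $\nu$-controlled function of the kind allowed in the definition of an $\varepsilon$-discrepancy pair, this would be immediately $O(\varepsilon)$. The difficulty is that $h_i$ is a mixed product of $\tilde g_{e_j}$'s (bounded by $1$) and $g_{e_j}$'s (only bounded by the pseudorandom majorant $\nu_{e_j}$).

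The key step---and the part I expect to be the main obstacle---is to \emph{densify} $h_i$: by applying a sequence of Cauchy--Schwarz inequalities in the vertex classes $V_j$ for $j\in J\setminus e_i$, one rewrites the telescoping term as an inner product against a function on $V_{e_i}$ that is pointwise bounded by a product of $\nu$-weights indexed by certain faces of the ``doubled'' hypergraph, plus an error. The $\varepsilon$-discrepancy hypothesis on $(g,\tilde g)$, applied to $e_i$ with these $\nu$-weighted test functions, yields the bound $O(\varepsilon)$ on the main term. The Cauchy--Schwarz step also produces moment averages of $\nu$ against linear forms of the type appearing in \eqref{eq:pseudorandomsystem}; the $\rho$-pseudorandomness of $\nu$ then shows that these moments differ from $1$ by at most $O_r(\rho)$, so the Cauchy--Schwarz error contributes at most $O_r(\sqrt\rho)$ after taking square roots. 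Summing over $i\in[|E|]$ and noting that $|E|=O_r(1)$, the whole difference is $O_r(\varepsilon+\sqrt\rho)$.

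The technically delicate point will be organizing the Cauchy--Schwarz steps so that (i) only finitely many applications of \eqref{eq:pseudorandomsystem} are needed, with $k=\#J$ fixed, and (ii) the ``test functions'' produced against which the discrepancy condition is invoked indeed fall within the class prescribed by Definition~2.13 of \cite{Conlon-Fox-Zhao15}. This is done inductively on the number of vertex classes one has already Cauchy--Schwarzed over, exactly as in the proof of \cite[Theorem 2.12]{Conlon-Fox-Zhao15}, the only novelty being the accounting of the $\sqrt\rho$-term in place of the $o_{N\to\infty}(1)$ appearing in their version.
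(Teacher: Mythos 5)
Your plan takes essentially the same route as the paper: the paper gives no independent proof of Theorem~\ref{theorem=RCL}, but in Remark~\ref{remark=refinedCFZ} asserts that it follows by rerunning the Conlon--Fox--Zhao counting-lemma argument and replacing each $o_{N\to\infty}(1)$ coming from the linear forms condition by $O_r(\sqrt{\rho})$ and each discrepancy error by $O_r(\varepsilon)$ --- exactly the telescoping-plus-densification bookkeeping you outline. The one point to watch, which you yourself flag as the delicate step, is that \cite[Definition~2.13]{Conlon-Fox-Zhao15} only admits $[0,1]$-valued test functions, so the densification must fully convert the $\nu$-bounded factors in $h_i$ into bounded ones \emph{before} the discrepancy hypothesis is invoked, rather than applying it to ``$\nu$-weighted test functions'' directly.
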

\subsection{Construction of pseudorandom weighted hypergraphs}\label{subsec:psuedo-random-measure}
Assume Setting~\ref{setting=section5}.
Let $\phi_S\colon\ZZ^r\to\calZ$ be the homomorphism of $\ZZ $-modules sending the $i$-th standard vector $\epsilon _i \in \ZZ ^r $ to $s_i\in \calZ $ for each $i\in [r]$.
Let $\epsilon _{r+1}\in \ZZ ^{r}$ denote the zero vector for a notational purpose.
Since $S$ generates $\calZ$ by assumption, we have the following exact sequence
\[
0\longrightarrow\ker(\phi_S)\longrightarrow\ZZ^r\stackrel{\phi_S}{\longrightarrow}\calZ\longrightarrow 0,
\]
which splits because $\calZ$ is a free $\ZZ$-module.
\begin{lemma}\label{lem:phi^(-1)-bound}
    There exists a positive integer $U=U(\bv,S)$ such that the following holds:
    for every positive integer $N$ and every $\alpha \in \calZ (\bv ,N)$, we have the inequality
    \[
    (2N+1)^{r-n}\leq\#(\phi_S^{-1}(\alpha)\cap[-UN,UN]^r)\leq(2UN+1)^{r-n}.
    \]
\end{lemma}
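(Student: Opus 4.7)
The plan is to view the fiber $\phi_S^{-1}(\alpha)$ as a coset of the sublattice $K \coloneqq \ker(\phi_S) \subseteq \ZZ^r$, which is free abelian of rank $r-n$ thanks to the split exact sequence displayed just before the lemma. For the upper bound, since $K \otimes_{\ZZ} \QQ$ has dimension $r-n$ inside $\QQ^r$, one can find a subset $I \subseteq [r]$ of cardinality $r-n$ such that the coordinate projection $\pi_I \colon \ZZ^r \to \ZZ^I$ restricts to an injection on $K$ (equivalently, some $(r-n)\times(r-n)$ minor of a matrix of basis vectors of $K$ is non-zero). Then $\pi_I$ is injective on the coset $\phi_S^{-1}(\alpha)$ and maps $[-UN,UN]^r$ into $[-UN,UN]^{r-n}$, which yields the inequality $\#(\phi_S^{-1}(\alpha) \cap [-UN,UN]^r) \leq (2UN+1)^{r-n}$ for every positive integer $U$.

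For the lower bound I will exhibit at least $(2N+1)^{r-n}$ preimages of $\alpha$ lying in $[-UN,UN]^r$ explicitly. Since $S$ generates $\calZ$, write each basis vector $v_i = \sum_{j\in[r]} c_{ij}\, s_j$ with $c_{ij} \in \ZZ$, and expand $\alpha = \sum_{i\in[n]} a_i v_i$ with $|a_i| \leq N$. Then the vector $x_0 \coloneqq \left(\sum_i a_i c_{ij}\right)_{j \in [r]} \in \ZZ^r$ satisfies $\phi_S(x_0) = \alpha$ and has coordinates bounded in absolute value by $C_1 N$, where $C_1 \coloneqq \max_j \sum_i |c_{ij}|$. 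Fix a $\ZZ$-basis $(k_1, \dots, k_{r-n})$ of $K$ and put $C_2 \coloneqq \max_\ell \|k_\ell\|_\infty$. For each $(t_1,\dots,t_{r-n}) \in [-N,N]^{r-n}$, the vector $x_0 + \sum_\ell t_\ell k_\ell$ lies in $\phi_S^{-1}(\alpha)$ and has coordinates bounded by $(C_1 + (r-n)C_2)\, N$; these $(2N+1)^{r-n}$ vectors are pairwise distinct because the $k_\ell$ are $\ZZ$-linearly independent.

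Taking $U=U(\bv,S)$ to be any positive integer with $U \geq C_1 + (r-n)C_2$ makes both bounds valid for every $N\ge 1$ and every $\alpha \in \calZ(\bv,N)$; the constant depends only on $\bv$ (through the coefficients $c_{ij}$) and on $S$ (through $C_2$ and the choice of basis of $K$, which is determined by $\phi_S$). The argument is essentially linear algebra over $\ZZ$, and I do not anticipate any serious obstacle; the only mild care needed is the choice of a coordinate projection injective on $K$ for the upper bound, which is automatic from the rank computation.
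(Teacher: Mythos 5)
Your proof is correct and follows essentially the same route as the paper: an explicit preimage of $\alpha$ with controlled coordinates (equivalent to the paper's choice of a $\ZZ$-linear section of $\phi_S$) translated by the box $[-N,N]^{r-n}$ in a basis of $\ker(\phi_S)$ gives the lower bound, and the rank-$(r-n)$ kernel gives the upper bound. Your coordinate-projection argument for the upper bound is a clean way to fill in the step the paper dismisses as ``easily follows.''
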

\begin{proof}
Choose a $\ZZ $-linear section $\sigma\colon\calZ\to\ZZ^r$ to $\phi_S$.
Choose a basis $w_1,\dots,w_{r-n}$ for the rank $r-n$ free $\ZZ $-module $\ker(\phi_S)$.
Then the vectors 
$\sigma(v_1),\dots,\sigma(v_n), w_1,\dots, w_{r-n}$ form a basis for $\ZZ^r$.
Let $U$ be $r$ times the maximum of the absolute values of the entries of this combined basis.

The second inequality easily follows from the fact that $\ker (\phi _S)$ is has rank $r-n$.
Next, by the choice of $U$, we have an inclusion of sets
\[
\left\{\sigma(\alpha)+\sum_{i\in[r-n]}b_iw_i : b_i\in[-N,N] \ (i\in[r-n])\right\}\subseteq\phi_S^{-1}(\alpha)\cap[-UN,UN]^r,
\]
which shows the first inequality.
\end{proof}
    Let $U$ be an integer given by Lemma~\ref{lem:phi^(-1)-bound}.
    Let $N$ be a positive integer and $\lambda\colon\calZ\to\RR_{\geq0}$ a function.
    By following Solymosi~\cite{Solymosi03}, we construct a weighted hypergraph $\nu=\nu(\lambda,N,\bv,S)$ as follows.
    Denote by $K_{r+1}^{(r)}=([r+1], \binom{[r+1]}{r})$ the complete $r$-hypergraph with $r+1$ vertices.
    By our notation $e_j=[r+1]\setminus\{j\}$, we have $\binom{[r+1]}{r}=\{e_j : j\in[r+1]\}$.
    For each integer $a$ and index $j\in[r+1]$, we define a hyperplane $H_j(a)$ of $\ZZ^r$ by
    \[
    H_j(a)\coloneqq
    \begin{cases}
    \{(x_1,\dots, x_r)\in\ZZ^r : x_j=a\} & \text{if} \ j\in[r], \\
    \{(x_1,\dots, x_r)\in\ZZ^r : \sum_{i\in[r]}x_i=a\} & \text{if} \ j=r+1
    \end{cases}
    \]
    and a set $V_j$ by
    \begin{equation}\label{eq:def-of-V_j}
    V_j\coloneqq
    \begin{cases}
    \{H_j(a) : a\in[-UN,UN]\} & \text{if} \ j\in [r], \\
    \{H_{r+1}(a) : a\in[-rUN,rUN]\} & \text{if} \ j=r+1.
    \end{cases}
    \end{equation}
    We define an $r$-graph system $V$ by $V\coloneqq(K_{r+1}^{(r)}; (V_j)_{j\in[r+1]})$.
        Let $j\in[r+1]$.
    For every tuple $(H_i)_i \in \prod _{i\in e_j} V_i$, the intersection $\bigcap_{i\in e_j}H_i$ consists of a single point.
    Let $T_j\colon V_{e_j}\to\ZZ^r$ be the map sending the given tuple to the point.
    We define a weighted hypergraph $\nu=(\nu_{e_j})_{j\in[r+1]}$ on $V$ by the following composition:
    \begin{equation}\label{eq:def-of-nu}
        \nu_{e_j}\colon V_{e_j}\stackrel{T_j}{\longrightarrow}\ZZ^r\stackrel{\phi_S}{\longrightarrow}\calZ\stackrel{\lambda}{\longrightarrow}\RR_{\geq 0}.
    \end{equation}
        Here, we exhibit an explicit form of the map $T_j$.
    Set $\calB=\calB(N,\bv,S)\coloneqq[-UN,UN]^r\times[-rUN,rUN]$.
    Then, for every point $a_{[r+1]}\in\calB$, we have
    \begin{equation}\label{equation=T_j-cal}
    T_j((H_i(a_i))_{i\in e_j})=
    \begin{cases}
    \displaystyle \biggl(a_1,\dots,a_{j-1},a_{r+1}-\sum_{i\in [r]\setminus\{j\}}a_i,a_{j+1},\dots,a_r\biggr) & \text{if} \ j\in[r], \\
    (a_1,\dots,a_r) & \text{if} \ j=r+1.
    \end{cases}
    \end{equation}
\begin{proposition}\label{proposition=pseudorandom}
    Let $\rho$ be a positive real number.
    If the function $\lambda \colon \calZ \to \RR _{\geq 0}$ satisfies the $(\rho,N,S)$-linear forms condition,
    then the weighted hypergraph $\nu $ constructed in \eqref{eq:def-of-V_j} and \eqref{eq:def-of-nu} is $\rho $-pseudorandom.
    \end{proposition}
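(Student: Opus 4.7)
The plan is to translate the $\rho$-pseudorandomness inequality \eqref{eq:pseudorandomsystem} for $\nu$ directly into the $(\rho,N,S)$-linear forms inequality \eqref{eq:linearform-condition} for $\lambda$ via an explicit change of variables.

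First I would parametrize each factor $V_j$ by the integer $a_j$ such that its generic element is $H_j(a_j)$; then pairs $(x_J\super{0}, x_J\super{1}) \in V_J \times V_J$ correspond bijectively to pairs $(a\super{0}, a\super{1}) \in \calB \times \calB$, where $\calB = [-UN,UN]^r \times [-rUN, rUN] \subseteq \ZZ^{r+1}$ is a product of integer intervals of lengths at least $2UN + 1 \geq N$ (since $U \geq 1$). Thus $\calB$ is of the form allowed by the linear forms condition.

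Next I would verify the key identity
\[
(\phi_S \circ T_j)\bigl((H_i(a_i\super{\omega_i}))_{i \in e_j}\bigr) = \psi_S\super{\omega}(a\super{0}, a\super{1})
\]
for every $j \in [r+1]$ and $\omega = (\omega_i)_{i \in e_j} \in \{0,1\}^{e_j}$. This is a direct calculation: substitute the explicit formula \eqref{equation=T_j-cal} for $T_j$ into $\phi_S$ and use $\phi_S(\epsilon_i)=s_i$. The case $j \in [r]$ gives
\[
\sum_{i \in [r] \setminus \{j\}} (s_i - s_j)\, a_i\super{\omega_i} + s_j\, a_{r+1}\super{\omega_{r+1}},
\]
which is exactly \eqref{Eq:the-linear-maps-1}; the case $j = r+1$ gives $\sum_{i \in [r]} s_i\, a_i\super{\omega_i}$, which is \eqref{Eq:the-linear-maps-2}.

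Consequently $\nu_{e_j}(x_{e_j}\super{\omega}) = (\lambda \circ \psi_S\super{\omega})(a\super{0}, a\super{1})$, and the expectation on the left-hand side of \eqref{eq:pseudorandomsystem} equals
\[
\EE\left(\prod_{j \in [r+1]} \prod_{\omega \in \{0,1\}^{e_j}} (\lambda \circ \psi_S\super{\omega})^{n_\omega} \relmiddle| \calB \times \calB\right),
\]
whose distance from $1$ is at most $\rho$ by the $(\rho,N,S)$-linear forms hypothesis on $\lambda$, completing the proof. There is no real obstacle; the work is essentially combinatorial bookkeeping, and the only point requiring care is matching the two cases $j \in [r]$ versus $j = r+1$ of the identity above against the two defining formulas of $\psi_S\super{\omega}$—which is by design, as the maps $\psi_S\super{\omega}$ were evidently set up precisely so that this calculation would come out clean.
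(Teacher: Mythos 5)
Your proposal is correct and follows essentially the same route as the paper: parametrize the vertex sets by the integers $a_j$ so that the expectation in \eqref{eq:pseudorandomsystem} becomes an expectation over $\calB\times\calB$, verify the identity $(\phi_S\circ T_j)((H_i(a_i\super{\omega_i}))_{i\in e_j})=\psi_S\super{\omega}(a\super{0},a\super{1})$ from \eqref{equation=T_j-cal}, and invoke the $(\rho,N,S)$-linear forms condition. Your explicit check that $\calB$ is a product of intervals of length at least $N$ is a detail the paper leaves implicit, and it is correctly handled.
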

\begin{proof}
    In the current situation, the expectation in \eqref{eq:pseudorandomsystem} is equal to
    \begin{align*}
    &\EE\left(\prod_{j\in[r+1]}\prod_{\omega\in\{0,1\}^{e_j}}(\lambda\circ\phi_S\circ T_j)(H_{e_j}\super{\omega})^{n_{\omega}} \relmiddle| (H_{[r+1]}\super{0}, H_{[r+1]}\super{1})\in V_{[r+1]}\times V_{[r+1]}\right) \\
    &=\EE\left(\prod_{j\in[r+1]}\prod_{\omega\in\{0,1\}^{e_j}}(\lambda\circ\phi_S\circ T_j)((H_i(a_i\super{\omega}))_{i\in e_j})^{n_{\omega}}\relmiddle|(a_{[r+1]}\super{0}, a_{[r+1]}\super{1})\in \calB\times\calB\right).
    \end{align*}
    By \eqref{equation=T_j-cal} and defining formulas \eqref{Eq:the-linear-maps-1}, \eqref{Eq:the-linear-maps-2}, we have for each $\omega\in\bigsqcup_{j\in[r+1]}\{0,1\}^{e_j}$,
    \[
    (\phi_S\circ T_j)((H_i(a_i\super{\omega}))_{i\in e_j})=\psi_S\super{\omega}(a_{[r+1]}\super{0},a_{[r+1]}\super{1})
    \]
    and hence the expectation above equals
    \[
    \EE\left(\prod_{j\in[r+1]}\prod_{\omega\in\{0,1\}^{e_j}}(\lambda\circ\psi_S\super{\omega})^{n_{\omega}}\relmiddle|\calB\times\calB \right).
    \]
    Since $\lambda $ satisfies the $(\rho ,N,S)$-linear forms condition, the difference of the above value and $1$ is at most $\rho $.
    This verifies the desired $\rho $-pseudorandomness.
    \end{proof}
\subsection{The proof of \RMST}\label{subsection=proofofRMST}
We use notation in Theorem~\ref{thm:RMST}.
We prove Theorems~\ref{thm:RMST} and \ref{theorem=weighted-counting} simultaneously.
\begin{proof}[Proof of Theorems~$\ref{thm:RMST}$ and $\ref{theorem=weighted-counting}$]
For a positive real number $\delta>0$, set 
\[
\varepsilon=\varepsilon(\bv,\delta,S)\coloneqq\frac{\delta}{(r+1)(rU)^r} ,
\]
where $U$ is the integer given by Lemma~\ref{lem:phi^(-1)-bound}. 
Also, using Theorem~\ref{thm:RHRL}, set 
\[
\gamma=\gamma_{\mathrm{RMS}}(\bv,\delta,S)\coloneqq\frac{2}{3}\gamma_{\mathrm{RHR}}(r+1,\varepsilon),\quad \rho=\rho_{\mathrm{RMS}}(\bv,\delta,S)\coloneqq\rho_{\mathrm{RHR}}(r+1,\varepsilon).
\]
Let $N$ be a positive integer, $\lambda$ a $(\rho,N,S)$-pseudorandom measure on $\calZ$ and $B\subseteq\calZ(\bv, N)$ a subset satisfying the weighted density and the smallness conditions in Theorem~\ref{thm:RMST}.
Let $\nu=\nu(\lambda,N,\bv,S)$ be the weighted hypergraph defined in \eqref{eq:def-of-nu}, which is $\rho $-pseudorandom by Proposition~\ref{proposition=pseudorandom}.
Define a subset $E_{e_j}\subseteq V_{e_j}$ for each $j\in[r+1]$ by $E_{e_j}\coloneqq(\phi_S\circ T_j)^{-1}(B)$ and a weighted hypergraph $g=(g_{e_j})_{j\in[r+1]}$ on $V$
by $g_{e_j}\coloneqq\mathbf{1}_{E_{e_j}}\cdot\nu_{e_j}$.
Then we have $g\leq\nu$ in the sense discussed at the beginning of Subsection \ref{subsec:RHRL}.

By the definitions and \eqref{equation=T_j-cal}, we have
\begin{align}
&\EE\left(\prod_{j\in[r+1]}g_{e_j}(H_{e_j}) \relmiddle| H_{[r+1]}\in V_{[r+1]}\right)\notag\\
&=\EE\left(\prod_{j\in[r+1]}(\mathbf{1}_{E_{e_j}}\cdot\nu_{e_j})((H_i(a_i))_{i\in e_j}) \relmiddle| a_{[r+1]}\in \calB\right)\notag\\
&=\EE\left(\prod_{j\in[r+1]}(\mathbf{1}_{\phi_S^{-1}(B)}\cdot(\lambda\circ\phi_S))(a_{[r]}+k\epsilon_j) \relmiddle| a_{[r+1]}\in \calB\right),\label{eq:g-nu-lambda}
\end{align}
where $k\coloneqq a_{r+1}-\sum_{i\in[r]}a_i$.
By the upper bound in Lemma~\ref{lem:phi^(-1)-bound} and the smallness condition,
the contribution of those $a_{[r+1]}$'s with $k=0$ in the above expectation
is bounded from above by
\begin{equation}\label{eq:contribution_k=0}
\begin{split}
&\frac{1}{2rUN+1}\EE(\mathbf{1}_{\phi_S^{-1}(B)}\cdot(\lambda\circ\phi_S)^{r+1}\mid[-UN,UN]^r)\\
&\leq\frac{(2UN+1)^{r-n}}{2rUN+1}\cdot\frac{(2N+1)^n}{(2UN+1)^r}\cdot\EE(\mathbf{1}_B\cdot\lambda^{r+1}\mid\calZ(\bv,N) )\leq\frac{\gamma}{2}.
\end{split}
\end{equation}

By way of contradiction, suppose that the contribution of those $a_{[r+1]}$'s with $k\neq 0$ in \eqref{eq:g-nu-lambda} is at most $\gamma$, that is
\begin{equation}\label{eq:supposition}
    \EE\left(\prod_{j\in[r+1]}(\mathbf{1}_{\phi_S^{-1}(B)}\cdot(\lambda\circ\phi_S))(a_{[r]}+k\epsilon_j) \relmiddle| \underset{\text{with } k\neq 0}{a_{[r+1]}\in \calB}\right)
    \le \gamma .
\end{equation}
Combined with \eqref{eq:contribution_k=0}, it implies that the value \eqref{eq:g-nu-lambda} does not exceed $\frac 3 2 \gamma =\gamma _{\mathrm{RHR}} (r+1,\varepsilon )$.
Therefore, Theorem~\ref{thm:RHRL} applies, and there exists a family $(E'_{e_j})_{j\in[r+1]}$ of subsets $E'_{e_j}\subseteq V_{e_j}$ for $j\in [r+1]$ such that
\begin{equation}
\bigcap_{j\in[r+1]} (E'_{e_j}\times V_{j})=\varnothing
\label{equation=RHRresult1}\end{equation}
and 
\begin{equation}
\EE(\mathbf{1}_{E_{e_j}\setminus E'_{e_j}}\cdot\nu_{e_j}\mid V_{e_j})\leq\varepsilon
\quad \text{ for all $j\in [r+1]$.}
\label{equation=RHRresult2}\end{equation}
We will argue that this contradicts the assumption of the weighted density condition.

Define a map $\iota_0\colon\phi_S^{-1}(B)\cap[-UN,UN]^r\to V_{[r+1]}$ by
\[
\iota_0(a_{[r]})\coloneqq\biggl(H_1(a_1),\dots,H_r(a_r),H_{r+1}\biggl(\sum_{i\in[r]}a_i\biggr)\biggr)
.\]
Denote by $\pr _{e_j} \colon V_{[r+1]} \to V_{e_j}$ the projection which forgets the $j$-th entry.
Then, for all $j\in[r+1]$, the map $T_j \circ \pr _{e_j} \circ \iota _0$ coincides with the inclusion map from $\phi ^{-1}(B)\cap [-UN,UN]^r$ to $\ZZ^r$.
Hence by the definition of $E_{e_j}$, it follows that 
$\iota _0$ maps into $\bigcap_{j\in[r+1]} (E_{e_j}\times V_j)$.

Let us write $\widetilde{E_{e_j}}\coloneqq E_{e_j}\times V_{j}$ and $\widetilde{E'_{e_j}}\coloneqq E'_{e_j}\times V_{j}$ for short.
Consider the following decreasing sequence of subsets
\[
\bigcap_{j\in[r+1]}\widetilde{E_{e_j}}
 \: \supseteq \: 
\Biggl(\bigcap_{j\in[r]}\widetilde{E_{e_j}}\cap\widetilde{E_{e_{r+1}}'}\Biggr)
 \: \supseteq \: 
\Biggl(\bigcap_{j\in[r-1]}\widetilde{E_{e_j}}\cap\bigcap_{j\in[r,r+1]}\widetilde{E_{e_j}'}\Biggr)
 \: \supseteq \: \cdots \: \supseteq \: \bigcap_{j\in[r+1]}\widetilde{E_{e_j}'} =\varnothing,
\]
where the last equality is \eqref{equation=RHRresult1}.
Consider also the associated partition
\[
\bigcap_{j\in[r+1]}\widetilde{E_{e_j}}=\bigsqcup _{j\in [r+1]} 
\left( \widetilde {E_{e_1}}\cap \dots \cap \left( \widetilde{E_{e_j}}\setminus \widetilde{E'_{e_j}}\right) \cap \dots \cap \widetilde{E'_{e_{r+1}} }\right).
\]
Composing the projection $\pr _{e_j}\colon \widetilde{E_{e_j}}\setminus \widetilde{E'_{e_j}} \to E_{e_j}\setminus E'_{e_j}$,
we obtain the following composite map
\[
\iota\colon\phi_S^{-1}(B)\cap[-UN,UN]^r\xrightarrow{\iota_0}\bigcap_{j\in[r+1]}\widetilde{E_{e_j}}\to\bigsqcup_{j\in[r+1]}E_{e_j}\setminus E'_{e_j}
,\]
which is injective since $T_j \circ \pr _{e_j} \circ \iota _0$ is an inclusion map.

If $a_{[r]}\in\phi_S^{-1}(B)\cap[-UN,UN]^r$ is mapped by $\iota $ into $ E_{e_j}\setminus E_{e_j}'$, then we have by definition of $\nu _{e_j}$,
\[
(\lambda\circ\phi_S)(a_{[r]})=\nu_{e_j}(\iota(a_{[r]})).
\]
Therefore by the injectivity of $\iota$ and the lower bound in Lemma~\ref{lem:phi^(-1)-bound} we have that 
\begin{align*}
(2N+1)^{r-n}\sum_{\alpha\in\calZ(\bv,N)}(\mathbf{1}_{B}\cdot\lambda)(\alpha)
&\leq \sum_{a_{[r]}\in[-UN,UN]^r}(\mathbf{1}_{\phi^{-1}(B)}\cdot(\lambda\circ\phi_S))(a_{[r]})\\
&\leq\sum_{\substack{j\in[r+1] \\ H_{e_j}\in V_{e_j} }}(\mathbf{1}_{E_{e_j}\setminus E'_{e_j}}\cdot\nu_{e_j})(H_{e_j})
.\end{align*}
We divide this formula by $(2N+1)^r$ and apply \eqref{equation=RHRresult2}.
Since $\# V_{e_j} \leq (2rUN+1)^r $, we have
\begin{align*}
\EE(\mathbf{1}_B\cdot\lambda\mid\calZ(\bv,N))
&\leq\frac{1}{(2N+1)^r}\sum_{j\in[r+1]}\#V_{e_j}\cdot \EE(\mathbf{1}_{E_{e_j}\setminus E'_{e_j}}\cdot\nu_{e_j}\mid V_{e_j})\\
&< (r+1)(rU)^r\varepsilon=\delta.
\end{align*}
This contradicts the assumption of the weighted density condition.
Therefore our supposition \eqref{eq:supposition} turns out false and we conclude:
\begin{equation}
\sum_{\substack{a_{[r]}\in[-UN,UN]^r \\ a_{r+1}\in[-rUN,rUN] \\ \text{with }k\neq 0 }}\prod_{j\in[r+1]}(\ichi_{\phi_S^{-1}(B)}\cdot(\lambda\circ\phi_S))(a_{[r]}+k\epsilon_j) > \gamma\cdot\#\calB
\label{equation=k-part},\end{equation}
where we recall $k=a_{r+1}-\sum_{i\in[r]}a_i$.
By the upper bound in Lemma~\ref{lem:phi^(-1)-bound} and the assumptions $0\in S$ and $S=-S$, we have
\begin{align*}
\text{L.H.S. of \eqref{equation=k-part}}
&\leq\sum_{\substack{a_{[r]}\in[-UN,UN]^r \\ k\in[-2rUN,2rUN] \setminus \{0\} }}\prod_{j\in[r+1]}(\ichi_{\phi_S^{-1}(B)}\cdot(\lambda\circ\phi_S))(a_{[r]}+k\epsilon_j)\\
&\leq2(2UN+1)^{r-n}\sum_{\substack{\alpha\in\calZ(\bv,N) \\ k\in[2rUN]}}\prod_{s\in S}(\ichi_B\cdot\lambda)(\alpha+ks)\\
&=2(2UN+1)^{r-n}\sum_{\substack{\alpha\in\calZ(\bv,N) \\ k\in[N]}}\prod_{s\in S}(\ichi_B\cdot\lambda)(\alpha+ks)
.\end{align*}
The last equality holds because the summand can be non-zero only when $\alpha+kS\subseteq B\subseteq\calZ(\bv,N)$ and because $S=-S$.
Therefore we conclude
\begin{align*}
\EE\left(\prod_{s\in S}(\ichi_B\cdot\lambda)(\alpha+ks) \relmiddle| (\alpha,k)\in \calZ(\bv,N)\times[N]\right)
&>\left(\frac{2UN+1}{2N+1}\right)^n\cdot\frac{2rUN+1}{N}\cdot\frac{\gamma}{2}
\geq\gamma.
\end{align*}
This completes the proof of Theorems~\ref{thm:RMST} and \ref{theorem=weighted-counting}.
\end{proof}
A slight modification of  the argument in the proof above yields the following variant, Theorem~\ref{theorem=weighted-counting2}. It estimates a weighted expectation in the situation where
the shape $S$ is not assumed to satisfy the condition `$S=-S$' but instead the scaling factor is allowed to be negative. 
This theorem enables us to 
allow the length $k$ of arithmetic progression in Theorem~\ref{theorem=BanachGreenTao} to be even as well.
\begin{theorem}\label{theorem=weighted-counting2}
	In the setting of Theorem~$\ref{thm:RMST}$, instead of assuming $S$ is a standard shape, we only assume that a finite subset $S\subseteq \calZ$ generates $\calZ $ as a $\ZZ $-module, and that $0\in S$. Then we have that
	\[
	\EE\left(\prod_{s\in S}(\ichi_B\cdot\lambda)(\alpha+ks) \relmiddle| (\alpha,k)\in \calZ(\bv,N)\times([-2N,2N]\setminus\{0\})\right) > \frac{\gamma}{2}.
	\]
\end{theorem}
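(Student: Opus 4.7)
The plan is to run the proof of Theorem~\ref{theorem=weighted-counting} essentially verbatim, diverging only at the very end where the condition $S=-S$ is invoked. Reading through that proof, the assumption $S=-S$ plays no role in any of the intermediate steps: the definition of the linear forms $\psi_S^{(\omega)}$ in \eqref{Eq:the-linear-maps-1}--\eqref{Eq:the-linear-maps-2}, the construction of the $r$-graph system $V$ and the weighted hypergraph $\nu$ via \eqref{eq:def-of-nu}, the $\rho$-pseudorandomness of $\nu$ from Proposition~\ref{proposition=pseudorandom}, and the invocation of the relative hypergraph removal lemma (Theorem~\ref{thm:RHRL}) all go through under the weaker hypotheses that $S$ generates $\calZ$ and contains $0$. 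In particular, we will reach the inequality \eqref{equation=k-part} unchanged.

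From \eqref{equation=k-part}, I will change variables by setting $k=a_{r+1}-\sum_{i\in[r]}a_i$, so that $k$ ranges over $[-2rUN,2rUN]\setminus\{0\}$, and invoke the upper bound of Lemma~\ref{lem:phi^(-1)-bound} to bound the sum over $a_{[r]}$ by $(2UN+1)^{r-n}$ times the sum over $\alpha=\phi_S(a_{[r]})\in\calZ$. Crucially, I will \emph{not} perform the folding step which uses $S=-S$ to collapse negative $k$ contributions onto positive ones. Instead, I will observe directly that the summand $\prod_{s\in S}(\mathbf{1}_B\cdot\lambda)(\alpha+ks)$ vanishes unless $(\alpha,k)\in\calZ(\bv,N)\times([-2N,2N]\setminus\{0\})$: since $0\in S$, the condition $\alpha+k\cdot 0\in B\subseteq\calZ(\bv,N)$ forces $\alpha\in\calZ(\bv,N)$, and for any nonzero $s\in S$ we have $\|s\|_{\infty,\bv}\geq 1$, so $\alpha,\alpha+ks\in\calZ(\bv,N)$ forces $|k|\leq 2N$.

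Putting these together, the inequality becomes
\[
\gamma\cdot(2UN+1)^r\cdot(2rUN+1)<(2UN+1)^{r-n}\sum_{\substack{\alpha\in\calZ(\bv,N)\\ k\in[-2N,2N]\setminus\{0\}}}\prod_{s\in S}(\mathbf{1}_B\cdot\lambda)(\alpha+ks).
\]
Dividing by the size $(2N+1)^n\cdot 4N$ of the sample space $\calZ(\bv,N)\times([-2N,2N]\setminus\{0\})$, I will obtain
\[
\EE\left(\prod_{s\in S}(\mathbf{1}_B\cdot\lambda)(\alpha+ks)\relmiddle|(\alpha,k)\in\calZ(\bv,N)\times([-2N,2N]\setminus\{0\})\right)>\gamma\cdot\left(\frac{2UN+1}{2N+1}\right)^n\cdot\frac{2rUN+1}{4N}.
\]
Since $U,r\geq 1$ and $N\geq 1$, the first factor is at least $1$ and the second is at least $1/2$, giving the desired lower bound $\gamma/2$.

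There is essentially no real obstacle beyond verifying that $S=-S$ is never silently invoked in the earlier portions of the proof of Theorem~\ref{theorem=weighted-counting}; the only subtlety is the bookkeeping of the constants $U$ and $r$ in the final normalization, which works out cleanly thanks to the trivial bounds $U,r\geq 1$.
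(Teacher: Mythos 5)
Your proposal is correct and follows essentially the same route as the paper: the paper's own proof likewise notes that $S=-S$ enters only in the final estimate of the left-hand side of \eqref{equation=k-part}, and replaces the folding of $\pm k$ by a direct sum over $k\in[-2N,2N]\setminus\{0\}$, losing exactly the factor of $2$ that accounts for the bound $\gamma/2$. Your normalization $(2UN+1)^n(2rUN+1)/\bigl((2N+1)^n\cdot 4N\bigr)\geq 1/2$ matches the paper's computation.
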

\begin{proof}
Almost all parts of the proof of Theorem~\ref{theorem=weighted-counting} remain to work under the current weaker assumptions on $S$. However, since the condition `$S=-S$' is  dropped, the estimate of the left-hand side of \eqref{equation=k-part} may not hold in the original form. This is the only point to be modified in the present proof; the modification can be done in the following manner.
\begin{align*}
\text{L.H.S. of \eqref{equation=k-part}}
&\leq\sum_{\substack{a_{[r]}\in[-UN,UN]^r \\ k\in[-2rUN,2rUN] \setminus \{0\} }}\prod_{j\in[r+1]}(\ichi_{\phi_S^{-1}(B)}\cdot(\lambda\circ\phi_S))(a_{[r]}+k\epsilon_j)\\
&\leq(2UN+1)^{r-n}\sum_{\substack{\alpha\in\calZ(\bv,N) \\ k\in[-2rUN,2rUN]\setminus\{0\}}}\prod_{s\in S}(\ichi_B\cdot\lambda)(\alpha+ks)\\
&=(2UN+1)^{r-n}\sum_{\substack{\alpha\in\calZ(\bv,N) \\ k\in[-2N,2N]\setminus\{0\}}}\prod_{s\in S}(\ichi_B\cdot\lambda)(\alpha+ks).
\end{align*}
This provides the desired estimate.
\end{proof}
\section{Goldston--Y\i ld\i r\i m type asymptotic formula}\label{section=GoldstonYildirim}

The main result in this section is a Goldston--Y\i ld\i r\i m type asymptotic formula (Theorem~\ref{Th:Goldston_Yildirim}).
It essentially shows the pseudorandomness of the weight function $\tilde{\lambda}$ in the `$N$-world,'
which is required in Step~$4$ of the strategy %
in Subsection~\ref{subsection=ideasofproof}.
Recall that the notion of pseudorandomness of a weight function is formulated in Definition~\ref{definition=S-linearform},
and $\tilde{\lambda}$ will be constructed in Definition~\ref{def=our-measure}.

The presence of the factor $W$ in our formulation of Theorem~\ref{Th:Goldston_Yildirim} is related to our use of the $W$-trick.
If we tried to use the relative multidimensional \Sz\ theorem (Theorem~\ref{thm:RMST}) directly in the `$M$-world,' then we would need to prove an asymptotic formula without $W$-trick.
Unfortunately, this seems to be an extremely hard task.
For this reason, we need to transfer the setting in the `$M$-world' to that in the `$N$-world.' 
In this transition, the fact that the choices of $b_1,\ldots,b_m$ are rather flexible in Theorem~\ref{Th:Goldston_Yildirim} will play a significant role.

\subsection{Von Mangoldt function and its variants}\label{subsection=vonMangoldt}
For a number field $K$, recall symbols such as $\Ideals_K$ and $|\Spec(\OO_K)|$ from Subsection~\ref{subsection=OK}.
Let us recall that %
the von Mangoldt function $\Lambda$ ($=\Lambda_K$) for $K$ is defined by
\[ 
\Lambda(\ideala)\coloneqq
\begin{cases}
\log \Nrm (\idealp ) & (\text{if $\ideala$ is a non-trivial power of a prime ideal $\idealp$}),\\
0	& (\text{otherwise})
\end{cases}
\]
for each non-zero ideal $\ideala \in \Ideals_K$.

Recall that the norm $\Nrm$ is multiplicative by Lemma~\ref{lemma=completemultiplicativity}.
Hence, by considering the decomposition of an ideal $\ideala$ into prime ideals 
we have 
$\sum_{\idealb | \ideala} \Lambda(\idealb) = \log \Nrm(\ideala)$.
The M\"obius inversion formula (Proposition~\ref{proposition=Moebius}) implies 
\[
\Lambda(\ideala)=\sum\limits_{\idealb\mid\ideala}\mu(\idealb)\log\left(\frac{\Nrm(\ideala)}{\Nrm(\idealb)}\right)=\log\Nrm(\ideala)\cdot\sum\limits_{\idealb\mid\ideala}\mu(\idealb)\left(1-\frac{\log\Nrm(\idealb)}{\log\Nrm(\ideala)}\right),
\]
where $\mu \coloneqq\mu_K$.
Next we consider an analog 
of the Goldston--Y\i ld\i r\i m truncated divisor sum.
More precisely we take a sufficiently large $R > 0$ %
and consider the sum over the ideals $\idealb$ with $\Nrm(\idealb) \leq R$.
Moreover we replace $\log \Nrm(\ideala)$ with $\log R$:
\[
\log R\cdot\sum_{\idealb\mid\ideala, \ \Nrm(\idealb)\leq R}\mu(\idealb)\left(1-\frac{\log\Nrm(\idealb)}{\log R}\right).
\]
This is a version of the Goldston--Y\i ld\i r\i m truncated divisor sum for $K$. 
Following the method of Tao~\cite{Tao06Gaussian}, we first regard $1-\log \Nrm (\idealb )/\log R$ as the function obtained by substituting $\log\Nrm(\idealb) / \log R$ for $x$ in $\max\{1-|x|,0\}$.
This makes the condition `$\Nrm(\idealb)\leq R$' under the summation symbol redundant.
Secondly, we replace the function $\max\{1-|x|,0\}$ with a non-negative $C^\infty$-function $\chi$ whose support is contained in $[-1,1]_\RR$.
Note nonetheless that the values of $\chi$ on $[-1,0)_\RR$ do not influence the sum.
Thus we are led to the following definition.

\begin{definition}\label{def:chi}
Let $R$ be a real number greater than $1$,
and fix a non-negative $C^\infty$-function $\chi$ whose support is contained in $[-1,1]_\RR$.
For convenience of later calculation, we assume that $\chi(0)=1$ and $\chi(x) \leq 1$ for all  $x$.
Then the \emph{$(R,\chi)$-von Mangoldt function} $\Lambda_{R,\chi}\colon\Ideals_K\cup\{(0)\}\to\RR$ is defined to be
\begin{equation}\label{Eq:def-of-von-M}
\Lambda_{R,\chi}(\ideala)\coloneqq\log R\cdot\sum_{\idealb                    \in \Ideals_K \text{ with } \idealb\mid\ideala}\mu(\idealb)\chi\left(\frac{\log\Nrm(\idealb)}{\log R}\right).
\end{equation} 
By abuse of notation, we write for each $\alpha \in \OK$, 
\[
	\Lambda_{R,\chi }(\alpha) \coloneqq \Lambda_{R,\chi}(\alpha \OK).
\]
Let $c_\chi$ be the positive real number defined by
\begin{equation}\label{Eq:def-c-chi}
c_\chi\coloneqq\int_0^{\infty }\chi'(x)^2\rd x,
\end{equation}
where $\chi'$ is the derivative of $\chi$.
\end{definition}
Note that $\Lambda_{R,\chi}(0)$ is well-defined because 
the right-hand side of \eqref{Eq:def-of-von-M} is a finite sum 
by the assumption on $\supp (\chi)$ and Proposition~\ref{proposition=idealdensity}.
\subsection{Statement of Goldston--Y\i ld\i r\i m type asymptotic formula}
The following asymptotic formula implies that the measure constructed in Definition~\ref{def=our-measure} satisfies the linear forms condition (Definition~\ref{definition=S-linearform}).
The main challenge here is finding an appropriate formula for a general number field. Once it is successfully done, the proof can be carried out %
by following strategies in \cite[Section 9]{Tao06Gaussian} and \cite[Section 10]{Conlon-Fox-Zhao14}.
Recall that $\kappa$ is the positive real number in Theorem~\ref{theorem=zeta_K},
$c_\chi$ is the positive real number in \eqref{Eq:def-c-chi} determined by $\chi$,
and $\vph_K$ is the totient function (Definition~\ref{def=totient}). 
\begin{theorem}[Goldston--Y\i ld\i r\i m type asymptotic formula]\label{Th:Goldston_Yildirim}
Let $K$ be a number field of degree $n$.
Let $m$ and $t$ be positive integers, and $\psi_1,\dots,\psi_m\colon\ZZ ^t\to\OK$ be $\ZZ $-module homomorphisms.
Let $w$ be a positive real number, and $W$ a positive integer of which the set of prime divisors is $\PP_{\leq w} = \{ p \in \PP : p \leq w \}$.
Let $b_1,\dots ,b_m$ be elements in $\OK$ each of which is prime to $W$, and define affine transformations $\theta _1,\dots ,\theta _m\colon\ZZ^t \to \OK$ as
\[
\theta_j(x)\coloneqq\Aff _{W,b_j}(\psi_j(x))=W\psi _j(x)+b_j.
\]
Let $R$ be a positive real number,
and $I_1, \ldots, I_t\subseteq \mathbb Z$ intervals of lengths at least $R^{4m+1}$.
Set $\calB\coloneqq I_1\times\dots\times I_t \subseteq\ZZ^t$. Fix a $C^{\infty}$-function $\chi\colon\mathbb{R}\to[0,1]_{\RR}$ which satisfies $\chi(0)=1$ and $\supp (\chi)\subseteq [-1,1]_{\RR}$.  
Assume that
\begin{equation}\label{Eq:no-inclusion}
\text{all $\coker(\psi_j)$ are finite, and for all $i,j \in [m]$, $\ker(\psi_j) \subseteq \ker(\psi_i)$ implies $i=j$.}
\end{equation}
Then there exist positive real numbers $R_0=R_0(m,K)$, $F_0=F_0(m,n)$ and $w_0=w_0((\psi_j)_{j\in [m]})$ such that  if $R \geq R_0$, $ w \geq w_0$ and $\log w \leq F_0\cdot\sqrt{\log R}$, then
\begin{multline}\label{Eq:formula_to_show}
\EE(\Lambda_{R,\chi }(\theta_1(x))^2\cdots\Lambda_{R,\chi}(\theta_m(x))^2\mid x\in\calB) \\
=\left(1+O_{\chi,m,n}\left(\frac{1}{w\log w}\right)+O_{\chi ,m,t,K }\left(\frac{\log w}{\sqrt{\log R}}\right)\right)\cdot \left(\frac{W^nc_\chi \log R}{\vph_K(W)\cdot\kappa}\right)^m
\end{multline}
holds true.
In particular, the error terms $O_{\chi,m,n}\left(\frac{1}{w\log w}\right)$ and $O_{\chi ,m,t,K }\left(\frac{\log w}{\sqrt{\log R}}\right)$ are bounded uniformly on $W$ and $(b_j)_{j\in[m]}$.
\end{theorem}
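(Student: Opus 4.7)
The plan is to adapt the Tao \cite{Tao06Gaussian} / Conlon--Fox--Zhao \cite{Conlon-Fox-Zhao14} strategy to the ideal-theoretic framework. First I will expand each square using the definition \eqref{Eq:def-of-von-M}, so that the left-hand side of \eqref{Eq:formula_to_show} becomes
\[
(\log R)^{2m}\sum_{\idealb_1,\idealb_1',\ldots ,\idealb_m,\idealb_m'} \Biggl(\prod_{j\in[m]}\mu(\idealb_j)\mu(\idealb_j')\chi\bigl(\tfrac{\log\Nrm\idealb_j}{\log R}\bigr)\chi\bigl(\tfrac{\log\Nrm\idealb_j'}{\log R}\bigr)\Biggr)\cdot\EE\Bigl(\prod_{j\in[m]}\ichi_{\idealc_j\mid\theta_j(x)}\,\Bigm|\,x\in\calB\Bigr),
\]
where each $\idealb_j,\idealb_j'$ ranges over square-free ideals with $\Nrm\leq R$ (so $\mu\neq 0$) and $\idealc_j\coloneqq\idealb_j\cap\idealb_j'$. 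Since $\Nrm(\idealc_j)\le R^2\le R^{4m}$ and each interval in $\calB$ has length at least $R^{4m+1}$, standard lattice-point counting via Lemma~\ref{lem:Chinese} replaces the inner expectation by a purely local quantity $\alpha(\idealc_1,\ldots,\idealc_m)=\prod_{p\in\PP}\alpha_p(\idealc_1^{(p)},\ldots,\idealc_m^{(p)})$, up to an error of order $O(R^{-1})$ which is negligible after summation.

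Second, I will compute each local factor $\alpha_p$. For $p\mid W$, the assumption $(b_j,W)=1$ forces $\alpha_p$ to vanish unless every $\idealc_j^{(p)}=\OK$, giving an outright restriction of the sum to ideals coprime to $W$. For $p\nmid W$, $W$ becomes a unit modulo $p$, so the constraint $\idealc_j^{(p)}\mid W\psi_j(x)+b_j$ is equivalent to a linear condition on $\psi_j(x)$ modulo $\idealc_j^{(p)}$. The hypothesis \eqref{Eq:no-inclusion} together with Lemma~\ref{lemma=chineseremainder} and the finiteness of $\coker(\psi_j)$ then yield $\alpha_p(\idealc^{(p)})=\prod_{j\in[m]}\Nrm(\idealc_j^{(p)})^{-1}$ provided $p$ is larger than some bound $w_0$ depending on the $\psi_j$'s, and the factorization $\prod_p$ is multiplicative.

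Third, having reduced everything to a weighted sum of the form $(\log R)^{2m}\sum_{(\idealc_j)}\prod_j\bigl(\sum_{\idealb_j\cap\idealb_j'=\idealc_j}\mu(\idealb_j)\mu(\idealb_j')\chi\cdots\chi\cdots\bigr)\alpha(\idealc)$, I will use the Fourier representation $\chi(x)=\int_{\RR}\hat\varphi(\xi)e^{-\xi|x|}\rd\xi$ of Tao, or equivalently the Mellin transform, to turn the sum into a contour integral of a product of Dirichlet series. For each prime $p$ the local Euler factor can be written as a ratio involving $(1-\Nrm(\idealp)^{-s})^{-1}$, and factoring out $\zeta_K(s)^m$ leaves a residual Euler product that converges absolutely in a right half-plane. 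Using Theorem~\ref{theorem=zeta_K} to extract the residue $\kappa$ and Theorem~\ref{theorem=PNT}/Proposition~\ref{proposition=Mertens} to control sums over primes, the main contribution yields the stated $(c_\chi\log R/\kappa)^m$, and Proposition~\ref{prop=totient} combined with the removal of the primes $p\le w$ from the Euler product produces the factor $(W^n/\vph_K(W))^m$.

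Finally, the error analysis will use Lemma~\ref{lemma=squarefree} to bound the number of relevant local configurations, Lemma~\ref{lemma=Mertenssecond} and Proposition~\ref{proposition=Mertens} for the sums over small primes (producing the $O(1/(w\log w))$ error), and contour-shift estimates for the integral (producing the $O(\log w/\sqrt{\log R})$ error, which fixes the threshold $w\le F_0\sqrt{\log R}$). I expect the main obstacle to be the uniform control of these error terms in $W$ and $(b_j)$: the $W$-trick must be handled so that the ratio $W^n/\vph_K(W)$ appears cleanly while the error constants do not depend on $W$ or the specific residues $b_j$. The key to this uniformity is that after the $W$-trick the Dirichlet-series manipulations involve only the primes $p\nmid W$, whose local factors are intrinsic to $K$ and independent of the choice of $b_j$'s.
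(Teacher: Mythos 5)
Your proposal follows essentially the same route as the paper's proof: expansion of the truncated divisor sums, reduction of the box expectation to a local expectation modulo $D$ (using that $D\le R^{2m}$ while the box sides are $\ge R^{4m+1}$), $p$-adic factorization and computation of the local densities via the coprimality of $b_j$ with $W$ and hypothesis \eqref{Eq:no-inclusion}, the Fourier-integral representation of $\chi$, and the Euler-product/Dedekind-zeta analysis isolating $\kappa$, $c_\chi$ and $W^n/\vph_K(W)$, with the same sources of error ($1/(w\log w)$ from large primes, $\log w/\sqrt{\log R}$ from the small primes and the contour truncation). The one caveat is that your claimed exact local formula $\alpha_p=\prod_{j\in[m]}\Nrm(\idealc_j^{(p)})^{-1}$ for all $p>w_0$ is stronger than what \eqref{Eq:no-inclusion} yields (the paper proves the exact value only when at most one $\idealc_j^{(p)}$ is nontrivial, and otherwise settles for the upper bound $\le p^{-2}$), but since those correlated configurations contribute only to the $O(1/(w\log w))$ error term, this imprecision does not invalidate the plan.
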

The rest of this section is devoted to the proof of this theorem.

In this section, we use `Propositions' to rewrite the expectation on the 
left-hand side of \eqref{Eq:formula_to_show}; during the rewriting processes, we describe partial estimates as `Lemmas.'
We divide this section into subsections according to the particular aspect of estimate we focus on, such as $p$-parts of ideals.
We clarify our setting as `Setting' at the beginning of each subsection.

Recall the symbols and definitions given in `Notation' at the end of Subsections~\ref{subsection=ideasofproof} and~\ref{subsection=pideal}.
In particular, the following symbols are frequently used:
\begin{itemize}
	\item the set $\PP$ of all prime numbers, and subsets of form $\PP_{\leq x}$ and  $\PP_{> x}$, 
	\item the set $\Ideals_K\ppart$ of all $p$-ideals of $\OK$, 
	\item and the set $|\Spec(\OK)|\ppart$ of all prime $p$-ideals of $\OK$.
\end{itemize}
Under Setting~\ref{setting=claim_GY} below, we transform the following expectation
\begin{equation}\label{eq:main-exp}
\EE\Biggl(\prod_{j\in[m]}(\Lambda_{R,\chi}\circ\theta_j)^2 \ \Bigg| \ \calB\Biggr),
\end{equation}
which appears on the left-hand side of~\eqref{Eq:formula_to_show}.
\begin{setting}\label{setting=claim_GY}
Assume the setting in Theorem~\ref{Th:Goldston_Yildirim}.
In addition, we assume $R > 1$.
\end{setting}

\begin{proposition}\label{prop:average-to-show-2.5}
Expectation~\eqref{eq:main-exp} is equal to
\begin{equation}\label{Eq:average-to-show-2.5}
 (\log R)^{2m}\sum_{(\ideala_j,\idealb_j )_{j \in[m]}\in\Ideals_K^{2m}}\Pi_{R,\chi}\left((\ideala_j,\idealb_j)_{j\in[m]}\right)\cdot\EE\Bigg(\prod_{j \in [m]}(\ichi_{\ideala_j\cap\idealb_j}\circ\theta_j) \ \Bigg| \ \calB\Biggr),
\end{equation}
where
 \[
 \Pi_{R,\chi}\left((\ideala_j,\idealb_j)_{j\in[m]}\right)\coloneqq\prod_{j \in[m]}\mu(\ideala_j)\mu(\idealb_j) \chi\left(\frac{\log\Nrm(\ideala_j)}{\log R}\right)\chi\left(\frac{\log\Nrm(\idealb_j)}{\log R}\right).
 \]
\end{proposition}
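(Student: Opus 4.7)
The plan is essentially an unfolding of definitions combined with a standard duality between ideal divisibility and containment. The proposition is really a formal rewriting of the integrand rather than a substantive estimate, so I would aim for a clean, short proof rather than any delicate analysis.

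First, I would expand each factor $\Lambda_{R,\chi}(\theta_j(x))^2$ using the defining formula \eqref{Eq:def-of-von-M}. This writes the square as
\[
\Lambda_{R,\chi}(\theta_j(x))^2 = (\log R)^{2}\!\!\sum_{\ideala_j,\,\idealb_j\in\Ideals_K}\!\!\mu(\ideala_j)\mu(\idealb_j)\chi\!\left(\tfrac{\log\Nrm(\ideala_j)}{\log R}\right)\!\chi\!\left(\tfrac{\log\Nrm(\idealb_j)}{\log R}\right)\ichi[\ideala_j\mid\theta_j(x)\OK]\,\ichi[\idealb_j\mid\theta_j(x)\OK],
\]
using independent summation variables $\ideala_j,\idealb_j$ for the two copies of the sum. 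The crucial observation is that $\ideala_j\mid\theta_j(x)\OK$ is equivalent to $\theta_j(x)\in\ideala_j$, so the product of the two indicator conditions equals $\ichi_{\ideala_j\cap\idealb_j}(\theta_j(x))$.

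Next, I would take the product over $j\in[m]$ and interchange the $2m$-fold sum with the expectation over $x\in\calB$; this is legal because the sums are supported on a \emph{finite} set of ideals. Indeed, the support of $\chi$ forces $\Nrm(\ideala_j),\Nrm(\idealb_j)\le R$, and by Proposition~\ref{proposition=idealdensity} there are only finitely many such ideals. Pulling the $(\log R)^{2m}$ and the factor $\Pi_{R,\chi}((\ideala_j,\idealb_j)_{j\in[m]})$ outside the expectation yields exactly \eqref{Eq:average-to-show-2.5}.

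The only point requiring a small comment is the boundary case $\theta_j(x)=0$, since the equivalence $\ideala\mid\alpha\OK\iff\alpha\in\ideala$ is phrased for nonzero $\alpha$. In the degenerate case $\theta_j(x)=0$ one has $\theta_j(x)\in\ideala_j\cap\idealb_j$ trivially, and on the other hand the defining formula \eqref{Eq:def-of-von-M}, together with Definition~\ref{def:chi}, still makes $\Lambda_{R,\chi}(0)$ equal to the finite sum obtained by setting every divisibility indicator to $1$; so the identity is valid at zero as well. There is essentially no obstacle to speak of here — the proof is a two-line calculation once one has observed the $\{\ideala\mid\alpha\OK\}\leftrightarrow\{\alpha\in\ideala\}$ duality and ensured that the ideal sums are effectively finite.
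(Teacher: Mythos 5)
Your proof is correct and follows essentially the same route as the paper: expand $\Lambda_{R,\chi}(\theta_j(x))^2$ via \eqref{Eq:def-of-von-M}, use the equivalence $\idealb\mid\theta_j(x)\OK\iff\theta_j(x)\in\idealb$ to turn the divisibility conditions into the indicator $\ichi_{\ideala_j\cap\idealb_j}(\theta_j(x))$, and swap the (effectively finite) sum with the average over $\calB$. Your extra remarks on finiteness of the support and on the case $\theta_j(x)=0$ are consistent with the paper's conventions (divisibility is defined for the zero ideal, and $\Lambda_{R,\chi}(0)$ is noted to be finite) and only make the argument more careful.
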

\begin{proof}
Let $x\in\calB$.
Substituting \eqref{Eq:def-of-von-M} for $\Lambda_{R,\chi}(\theta_j(x))$ in~\eqref{eq:main-exp} and expanding it, we \havethat\
\begin{align*}
&(\log R)^{-2m}\prod_{j\in[m]}\Lambda_{R,\chi}(\theta_j(x))^2\\
&=\sum_{\substack{(\ideala_j,\idealb_j)_{j\in[m]}\in\Ideals_K^{2m}\\ \theta_j(x)\in\ideala_j\cap\idealb_j \ (\forall j\in[m])}}\prod_{j\in[m]}\mu(\ideala_j)\mu(\idealb_j)\chi\left(\frac{\log\Nrm(\ideala_j)}{\log R}\right)\chi\left(\frac{\log\Nrm(\idealb_j)}{\log R}\right)\\
&=\sum_{(\ideala_j,\idealb_j)_{j\in[m]}\in\Ideals_K^{2m}}\left(\prod_{j\in[m]}\ichi_{\ideala_j\cap\idealb_j}(\theta_j(x))\right)\cdot\Pi_{R,\chi}\left((\ideala_j,\idealb_j)_{j\in[m]}\right).
\end{align*}
Only the characteristic functions $\prod_{j\in[m]}\ichi_{\ideala_j\cap\idealb_j}(\theta_j(x))$ depend on $x \in \calB$, and hence the desired result holds.
\end{proof}
Next we focus on the expectation in~\eqref{Eq:average-to-show-2.5}.
Although a statement similar to the following lemma might be standard in this research area, we write down a proof for the convenience of the reader.
\begin{lemma}\label{lem:E(D)}
Let $(\ideala_j,\idealb_j)_{j\in[m]}\in\Ideals_K^{2m}$,
and define the positive integer $D=D\left((\ideala_j,\idealb_j)_{j\in[m]}\right)$ by
\begin{align}\label{lem:E(D):1}
D\ZZ=\ZZ\cap\Biggl(\bigcap_{j\in[m]}(\ideala_j\cap\idealb_j)\Biggr).
\end{align}
If $\Nrm(\ideala_j),\Nrm(\idealb_j)\leq R$ holds for every $j\in[m]$, then the following hold.
\begin{enumerate}[$(1)$]
\item\label{en:D-R}
$D\le R^{2m}$.
\item\label{en:B-D}
We have
\[
\EE\Biggl(\prod_{j\in[m]}(\ichi_{\ideala_j\cap\idealb_j}\circ\theta_j) \ \Bigg| \ \calB\Biggr)\\
=\EE\Biggl(\prod_{j\in[m]}(\ichi_{\ideala_j\cap\idealb_j}\circ\theta_j) \ \Bigg| \ (\ZZ/D\ZZ)^t\Biggr)+O_t(R^{-2m-1}).	
\]
\end{enumerate}
\end{lemma}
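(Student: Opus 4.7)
The plan has two separable components. Assertion \eqref{en:D-R} is purely algebraic and rests on bounding $D$ via ideal norms; assertion \eqref{en:B-D} is a standard ``Riemann-sum'' / periodicity argument once one checks that the indicator function involved is $D$-periodic in each coordinate.

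For \eqref{en:D-R}, I would first observe the ideal inclusion
\[
\bigcap_{j\in[m]}(\ideala_j\cap\idealb_j)\supseteq\prod_{j\in[m]}\ideala_j\idealb_j,
\]
so that $D\ZZ\supseteq \ZZ\cap \prod_{j}\ideala_j\idealb_j$. Writing $\ZZ\cap\prod_{j}\ideala_j\idealb_j=d\ZZ$ for some $d\in\NN$, the relation $D\mid d$ reduces the claim to the inequality $d\leq\prod_{j}\Nrm(\ideala_j)\Nrm(\idealb_j)\leq R^{2m}$. To verify this, I would use the basic fact that for any $\ideala\in\Ideals_K$, if we let $d(\ideala)\in\NN$ be the positive generator of $\ZZ\cap\ideala$, then the natural additive homomorphism $\ZZ\to\OK/\ideala$ has kernel $d(\ideala)\ZZ$, so that $\ZZ/d(\ideala)\ZZ$ embeds into $\OK/\ideala$ and hence $d(\ideala)$ divides $\Nrm(\ideala)=\#(\OK/\ideala)$. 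Applying this to $\ideala=\prod_j\ideala_j\idealb_j$ and using the multiplicativity of the norm (Lemma~\ref{lemma=completemultiplicativity}) gives the bound.

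For \eqref{en:B-D}, the key point is that the function
\[
F(x)\coloneqq\prod_{j\in[m]}\ichi_{\ideala_j\cap\idealb_j}(\theta_j(x))
\]
is $D$-periodic in each coordinate of $x\in\ZZ^t$. Indeed, for any standard basis vector $e_i\in\ZZ^t$, we have
\[
\theta_j(x+De_i)-\theta_j(x)=WD\,\psi_j(e_i)\in D\OK\subseteq\ideala_j\cap\idealb_j,
\]
the last inclusion holding because $D\in\bigcap_{j}(\ideala_j\cap\idealb_j)$ by definition, so $D\OK\subseteq\ideala_j\cap\idealb_j$ for every $j$. Thus $\theta_j(x+De_i)\in\ideala_j\cap\idealb_j$ if and only if $\theta_j(x)\in\ideala_j\cap\idealb_j$, proving periodicity.

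Once $F$ is $D$-periodic in each coordinate and bounded by $1$, I would carry out the comparison of averages by the standard one-variable estimate: for a $D$-periodic function $f\colon\ZZ\to[0,1]_{\RR}$ and an interval $I\subseteq\ZZ$ of length at least $D$, one has $|\EE(f\mid I)-\EE(f\mid\ZZ/D\ZZ)|=O(D/\#I)$, obtained by decomposing $I$ into $\lfloor \#I/D\rfloor$ complete residue systems plus a remainder of size less than $D$. Applying this coordinate by coordinate and using Fubini yields
\[
\Bigl|\EE(F\mid\calB)-\EE(F\mid(\ZZ/D\ZZ)^t)\Bigr|=O_t\!\left(\frac{D}{\min_{i\in[t]}\#I_i}\right).
\]
Combining with \eqref{en:D-R}, namely $D\leq R^{2m}$, and the hypothesis $\#I_i\geq R^{4m+1}$, gives the claimed error term $O_t(R^{-2m-1})$. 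Neither step looks like the main obstacle; the only subtle point is confirming the divisibility $D\mid d$ in part \eqref{en:D-R}, but this follows directly from the ideal inclusion together with the elementary observation on $\ZZ\cap\ideala$.
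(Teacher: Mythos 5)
Your proof is correct and follows essentially the same route as the paper: bound $D$ by $R^{2m}$ via the embedding of $\ZZ$ modulo the relevant ideal into $\OK$ modulo that ideal together with multiplicativity of the norm, then exploit the $D$-periodicity of $\prod_j\ichi_{\ideala_j\cap\idealb_j}\circ\theta_j$ to compare the box average with the average over $(\ZZ/D\ZZ)^t$, with error $O_t(D/\min_i\#I_i)\leq O_t(R^{-2m-1})$. The only cosmetic differences are that you form the product ideal $\prod_j\ideala_j\idealb_j$ before intersecting with $\ZZ$ (the paper intersects each factor with $\ZZ$ first), and you replace coordinates one at a time rather than tiling $\calB$ by translates of $[D]^t$ as the paper does; you also make explicit the periodicity check that the paper only records as a remark after the statement.
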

Note that for every $x\in(\ZZ/D\ZZ)^t$, the value $\ichi_{\ideala_j\cap\idealb_j}(\theta_j(x))\in\{0,1\}$ is well-defined.
\begin{proof}
First we prove \eqref{en:D-R}.
For each $\ideala\in\{\ideala_j,\idealb_j\}_{j\in [m]}$,
the injection $\ZZ/(\ZZ\cap\ideala)\hookrightarrow\OK/\ideala$ shows $\#(\ZZ/(\ZZ\cap\ideala))\leq\#(\OK/\ideala)=\Nrm(\ideala)\leq R$.
This together with
\[
D\ZZ=\bigcap_{j\in[m]}\left((\ZZ\cap\ideala_j)\cap(\ZZ\cap\idealb_j)\right)\supseteq\prod_{j\in [m]}\left((\ZZ\cap \ideala_j)\cdot(\ZZ\cap\idealb_j)\right) 
\]
implies that
\[
D=\# (\ZZ /D\ZZ )\leq\prod_{j\in[m]}(\#(\ZZ /\ZZ \cap\ideala_j)\cdot\#(\ZZ/\ZZ\cap\idealb_j))\leq R^{2m}.
\]

Next we prove \eqref{en:B-D}.
Since $\calB$ contains pairwise disjoint $\prod_{i\in[t]}\left\lfloor\frac{\#I_i}{D}\right\rfloor$  translates of $[D]^t$ in $\ZZ^t$,
we see that
\begin{multline}
\#\calB\cdot\EE\Biggl(\prod_{j\in[m]}(\ichi_{\ideala_j\cap\idealb_j}\circ\theta_j) \ \Bigg| \ \calB\Biggr)
=\Biggl(\prod_{i\in[t]}\left\lfloor\frac{\#I_i}{D}\right\rfloor\Biggr)\cdot D^t\cdot\EE\Biggl(\prod_{j\in[m]}(\ichi_{\ideala_j \cap\idealb_j}\circ\theta_j) \ \Bigg| \ (\ZZ/D\ZZ)^t\Biggr)\\
+O\left(\#\calB-\Biggl(\prod_{i\in[t]}\left\lfloor\frac{\#I_i}{D}\right\rfloor\Biggr)\cdot D^t\right).
\end{multline}
Since $\left\lfloor\frac{\#I_i}{D}\right\rfloor\cdot D>\#I_i-D$, we have
\[
\frac{1}{\#\calB}\left(\#\calB-\Biggl(\prod_{i\in[t]}\left\lfloor\frac{\#I_i}{D}\right\rfloor\Biggr)\cdot D^t\right)<1-\frac{1}{\#\calB}\prod_{i\in[t]}(\#I_i-D)=1-\prod_{i\in[t]}\left(1-\frac{D}{\#I_i}\right).
\]
Note that $D/\#I_i\leq R^{2m}/R^{4m+1}=R^{-2m-1}$ follows from \eqref{en:D-R}.
By Bernoulli's inequality $(1-x)^t\ge 1-tx $ for every $x \leq 1$, we conclude that
\[
1-\prod_{i\in[t]}\left(1-\frac{D}{\#I_i}\right)\leq1-(1-R^{-2m-1})^t\leq tR^{-2m-1}.
\]
Furthermore, 
\[
\frac{1}{\#\calB}\cdot\Biggl(\prod_{i\in[t]}\left\lfloor\frac{\#I_i}{D}\right\rfloor\Biggr)\cdot D^t=1+O_t(R^{-2m-1})
\]
follows, and we derive the desired equality.
\end{proof}
We write for short the following expectation
\begin{equation}\label{Eq:E}
\E\left((\ideala_j,\idealb_j)_{j \in [m]}\right)=\E\left((\ideala_j,\idealb_j)_{j\in[m]} ; (\theta_j)_{j \in [m]}\right)\coloneqq\EE\Biggl(\prod _{j\in[m]}(\ichi_{\ideala_j\cap\idealb_j}\circ\theta_j) \ \Bigg| \ (\ZZ/D\ZZ)^t\Biggr),
\end{equation}
which depends on $(\theta_j)_{j \in [m]}$ and $(\ideala_j,\idealb_j)_{j\in [m]}$.
Here the positive integer $D=D\left((\ideala_j,\idealb_j)_{j\in [m]}\right)$ is taken as in~Lemma~\ref{lem:E(D)}.
In what follows, we prove the multiplicativity of $\E$ in Lemma~\ref{lem:p-typical-E} and the estimates in Lemma~\ref{Lem:value_of_E}. %
Once these two properties are established, we will no longer need the definition of $\E$ for the proof of Theorem~\ref{Th:Goldston_Yildirim}.

\begin{proposition}\label{prop:replace_B_by_D}
Expectation~\eqref{Eq:average-to-show-2.5} equals
\begin{equation}\label{Eq:formula_to_show_3}
(\log R)^{2m}\sum_{(\ideala_j,\idealb_j)_{j\in[m]}\in(\Ideals_K)^{2m}}\Pi_{R,\chi}\left((\ideala_j,\idealb_j)_{j\in[m]}\right)\cdot\E\left((\ideala_j,\idealb_j)_{j \in [m]}\right)
\end{equation}
with an additive error term $O_{m,t,K}\left(\frac{(\log R)^{2m}}{R}\right)$, where $\Pi_{R,\chi}\left((\ideala_j,\idealb_j)_{j\in[m]}\right)$ is defined in Proposition~$\ref{prop:average-to-show-2.5}$.
\end{proposition}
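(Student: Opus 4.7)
The plan is to argue termwise: replace the inner expectation using Lemma~\ref{lem:E(D)}\eqref{en:B-D} and bound the accumulated error by counting the effective summands via the ideal-density estimate (Proposition~\ref{proposition=idealdensity}).

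First I would exploit the support condition on $\chi$. Since $\mathrm{supp}(\chi)\subseteq [-1,1]_{\RR}$, the factor $\chi(\log\Nrm(\ideala_j)/\log R)$ vanishes unless $\Nrm(\ideala_j)\leq R$, and similarly for $\idealb_j$. Hence in the sum~\eqref{Eq:average-to-show-2.5} only tuples $(\ideala_j,\idealb_j)_{j\in[m]}\in(\Ideals_K)^{2m}$ with $\Nrm(\ideala_j),\Nrm(\idealb_j)\leq R$ for all $j\in[m]$ contribute nonzero terms. For such a tuple, Lemma~\ref{lem:E(D)}\eqref{en:B-D} applies and yields
\[
\EE\Biggl(\prod_{j\in[m]}(\ichi_{\ideala_j\cap\idealb_j}\circ\theta_j) \ \Bigg| \ \calB\Biggr)=\E\bigl((\ideala_j,\idealb_j)_{j\in[m]}\bigr)+O_t(R^{-2m-1}).
\]
Since the same support consideration lets us freely extend the right-hand summation back to all $(\ideala_j,\idealb_j)_{j\in[m]}\in(\Ideals_K)^{2m}$ without changing anything (the discarded terms again vanish thanks to $\chi$), we arrive at the main term~\eqref{Eq:formula_to_show_3} plus an error consisting of one $O_t(R^{-2m-1})$ per contributing tuple, weighted by $(\log R)^{2m}\cdot\Pi_{R,\chi}$.

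Next I would bound this error. Because $|\mu_K|\leq 1$ and $0\leq\chi\leq 1$, we have $|\Pi_{R,\chi}((\ideala_j,\idealb_j)_{j\in[m]})|\leq 1$ uniformly. So the total error is at most
\[
(\log R)^{2m}\cdot\#\bigl\{(\ideala_j,\idealb_j)_{j\in[m]}:\Nrm(\ideala_j),\Nrm(\idealb_j)\leq R\bigr\}\cdot O_t(R^{-2m-1}).
\]
By Proposition~\ref{proposition=idealdensity}, $\#\{\ideala\in\Ideals_K:\Nrm(\ideala)\leq R\}=O_K(R)$, so the cardinality above is $O_K(R^{2m})$. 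Multiplying out, the total error is
\[
(\log R)^{2m}\cdot O_K(R^{2m})\cdot O_t(R^{-2m-1})=O_{t,K}\!\left(\frac{(\log R)^{2m}}{R}\right),
\]
which is exactly what is claimed.

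There is no real obstacle here: this is a clean bookkeeping argument once Lemma~\ref{lem:E(D)} and the ideal-counting estimate are in place. The only two points requiring care are (i) remembering that the hypothesis $\Nrm(\ideala_j),\Nrm(\idealb_j)\leq R$ of Lemma~\ref{lem:E(D)} is automatically enforced by the support of $\chi$, and (ii) that after invoking the lemma one may re-extend the sum in the main term back over all of $(\Ideals_K)^{2m}$ without penalty, again because $\Pi_{R,\chi}$ already kills the oversized tuples.
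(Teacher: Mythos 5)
Your argument is correct and coincides with the paper's own proof: both apply Lemma~\ref{lem:E(D)}~\eqref{en:B-D} termwise, use the support of $\chi$ to restrict to tuples with $\Nrm(\ideala_j),\Nrm(\idealb_j)\leq R$, bound $|\Pi_{R,\chi}|\leq 1$, and count the $O_K(R^{2m})$ contributing tuples via Proposition~\ref{proposition=idealdensity}. Nothing further is needed.
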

\begin{proof}
By Lemma~\ref{lem:E(D)}~\eqref{en:B-D}, $\supp(\chi) \subseteq [-1,1]_\RR$ and $|\Pi_{R,\chi}((\ideala_j,\idealb_j)_{j\in[m]})|\leq 1$, the absolute value of
 the difference between \eqref{Eq:average-to-show-2.5} and \eqref{Eq:formula_to_show_3} is estimated as
\begin{align*}
	&\left\vert 
		(\log R)^{2m}\sum_{(\ideala_j,\idealb_j)_{j\in[m]}\in(\Ideals_K)^{2m}}\Pi_{R,\chi}\left((\ideala_j,\idealb_j)_{j\in[m]}\right)\cdot O_t(R^{-2m-1}) 
	\right\vert
	\\
	&\le 
	(\log R)^{2m}\sum_{
		\substack{
			(\ideala_j,\idealb_j)_{j\in[m]}\in(\Ideals_K)^{2m} \\
			\Nrm(\ideala_j),\Nrm(\idealb_j)\leq R \text{ for all } j \in [m]
		}
	}
	O_t(R^{-2m-1}) .
\end{align*}
Since Proposition~\ref{proposition=idealdensity} implies that the number of summands in the right-hand side is $O_{m,K}(R^{2m})$, the assertion follows.
\end{proof}
It will turn out that %
the contribution of the error term in 
Proposition~\ref{prop:replace_B_by_D}
is permissible in the proof of Theorem~\ref{Th:Goldston_Yildirim} as well as all the other error terms that appear below.
Next we focus on the expectation $\E\left((\ideala_j,\idealb_j)_{j \in [m]}\right)$ in~\eqref{Eq:formula_to_show_3}.
\subsection{The expectation of the characteristic function}\label{subsection=ichi-exp}
In this subsection, we assume the following setting:
\begin{setting}\label{Setting:kitaichi-hyoka}
Assume Setting~\ref{setting=claim_GY}.
Fix a tuple $(\ideala_j,\idealb_j)_{j\in [m]}$ of arbitrary non-zero ideals of $\OK$ each of which is not necessary of norm at most $R$,
and write $\idealc _j\coloneqq\ideala_j\cap\idealb_j$ for short.
Note that $\idealc_j\ppart=\ideala_j\ppart\cap\idealb_j\ppart $ (see Subsection~\ref{subsection=pideal} where $p$-parts are defined). 
Let $D$ be the positive integer defined in Lemma~\ref{lem:E(D)}.
Then note that $D\ZZ=\ZZ\cap(\bigcap_{j\in[m]}\idealc_j)$ and $D\ppart\ZZ=\ZZ\cap(\bigcap_{j\in[m]}\idealc_j\ppart )$.
\end{setting}
The following symbols and easy equality are helpful to estimate \eqref{Eq:E}.
We consider the $\ZZ$-module homomorphisms and affine transformations
\[
\overline{\psi_j}, \overline{\theta_j}\colon(\ZZ/D\ZZ)^t\to\OK/\idealc_j
\]
induced by $\psi_j$ and $\theta_j$, respectively.
Let
\[
\overline{\psi}, \overline{\theta}\colon(\ZZ/D\ZZ)^{t}\to\prod_{j\in[m]}\OK /\idealc_j
\]
be the two maps defined by $\overline{\psi}(x) = (\overline{\psi_1}(x),\ldots,\overline{\psi_m}(x))$ and $\overline{\theta}(x) = (\overline{\theta_1}(x),\ldots,\overline{\theta_m}(x))$.
Then we see that
\begin{equation}\label{Eq:easy-paraphrase}
\E\left((\ideala_j,\idealb_j)_{j \in[m]}\right)=\EE\Biggl(\prod_{j\in[m]}(\ichi_{\idealc_j}\circ\theta_j) \ \Bigg| \ (\ZZ/D\ZZ)^t\Biggr)=\EE\left(\ichi_{\{0\}}\circ\overline{\theta}\mid (\ZZ/D\ZZ)^t\right).
\end{equation}
\begin{lemma}\label{lem:p-typical-E}
The expectation $\E\left((\ideala_j,\idealb_j)_{j\in[m]}\right)$ is decomposed into its $p$-parts.
Namely,
\[
\E\left((\ideala_j,\idealb_j)_{j\in [m]}\right)=\prod_{p \in\PP}\E\left((\ideala^{(p)}_j,\idealb^{(p)}_j)_{j\in[m]}\right).
\]
\end{lemma}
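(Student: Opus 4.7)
The plan is to apply the Chinese Remainder Theorem (Lemma~\ref{lem:Chinese}) separately to the domain and the codomain of the map $\overline{\theta}$ appearing in the reformulation \eqref{Eq:easy-paraphrase}, and then observe that $\overline{\theta}$ respects the resulting factorizations into $p$-parts, so that both the map and the indicator function $\ichi_{\{0\}}$ split as products over $p$.

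First I would write $D=\prod_{p\in\PP}D\ppart$ (a finite product) and use Lemma~\ref{lem:Chinese} to obtain a natural ring isomorphism $\ZZ/D\ZZ\xrightarrow{\simeq}\prod_p \ZZ/D\ppart\ZZ$, and for each $j\in[m]$ an isomorphism $\OK/\idealc_j\xrightarrow{\simeq}\prod_p \OK/\idealc_j\ppart$ (using that $\idealc_j\ppart=\ideala_j\ppart\cap\idealb_j\ppart$ as observed in Setting~\ref{Setting:kitaichi-hyoka}). Taking the $t$-fold Cartesian power of the first and the $m$-fold product over $j$ of the second gives identifications
\[
(\ZZ/D\ZZ)^t\xrightarrow{\simeq}\prod_{p\in\PP}(\ZZ/D\ppart\ZZ)^t,\qquad \prod_{j\in[m]}\OK/\idealc_j\xrightarrow{\simeq}\prod_{p\in\PP}\prod_{j\in[m]}\OK/\idealc_j\ppart,
\]
in which only finitely many factors on the right are nontrivial.

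Next, I would verify that $\overline{\theta}$ is compatible with these identifications. Since $\overline{\theta}$ is built from the $\ZZ$-linear maps $\psi_j$ and the constants $b_j$ by reduction modulo $\idealc_j$, and since the CRT isomorphisms are defined by reduction modulo $D\ppart\ZZ$ and $\idealc_j\ppart$ respectively, the map $\overline{\theta}$ corresponds via these isomorphisms to the product of the affine maps $\overline{\theta}\ppart\colon (\ZZ/D\ppart\ZZ)^t\to\prod_{j\in[m]}\OK/\idealc_j\ppart$ induced in the same way from $\psi_j$ and $b_j$ modulo the $p$-parts. Under this decomposition, the indicator $\ichi_{\{0\}}$ of the origin in $\prod_j \OK/\idealc_j$ factors as the product over $p$ of the corresponding indicators on $\prod_j \OK/\idealc_j\ppart$.

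Since the expectation of a product of functions depending on pairwise disjoint coordinates of a finite product of finite sets equals the product of the expectations, this yields
\[
\EE\bigl(\ichi_{\{0\}}\circ\overline{\theta}\bigm|(\ZZ/D\ZZ)^t\bigr)=\prod_{p\in\PP}\EE\bigl(\ichi_{\{0\}}\circ\overline{\theta}\ppart\bigm|(\ZZ/D\ppart\ZZ)^t\bigr),
\]
and by \eqref{Eq:easy-paraphrase} applied to the $p$-parts, the right-hand side is $\prod_p \E((\ideala_j\ppart,\idealb_j\ppart)_{j\in[m]})$. The only point that requires checking is that the positive integer associated by \eqref{lem:E(D):1} to the tuple $(\ideala_j\ppart,\idealb_j\ppart)_{j\in[m]}$ is exactly $D\ppart$; this follows because $p$-part extraction commutes with finite intersections of ideals (as recalled in Subsection~\ref{subsection=pideal}), and applying CRT to $\ZZ\to\OK/\bigcap_j\idealc_j\simeq\prod_p\OK/\bigcap_j\idealc_j\ppart$ gives $D\ppart\ZZ=\ZZ\cap\bigcap_j\idealc_j\ppart$. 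I do not expect a substantive obstacle here; the lemma is essentially a bookkeeping consequence of CRT.
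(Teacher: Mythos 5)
Your proposal is correct and follows essentially the same route as the paper's own proof: the paper likewise applies the Chinese remainder theorem to decompose $(\ZZ/D\ZZ)^t$ and each $\OK/\idealc_j$ into $p$-parts, observes that $\overline{\theta}$ factors as the product of the induced maps $\overline{\theta}\ppart$, and concludes by noting that $\ichi_{\{0\}}\circ\overline{\theta}$ splits as a product over $p$ of functions of independent coordinates. Your final check that the integer attached to $(\ideala_j\ppart,\idealb_j\ppart)_{j\in[m]}$ is exactly $D\ppart$ is also needed and is recorded in the paper as part of Setting~\ref{Setting:kitaichi-hyoka}.
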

\begin{proof}
Recall the definition of $\E$ from \eqref{Eq:E}  %
or \eqref{Eq:easy-paraphrase}. It suffices to prove that
\[
\EE\Biggl(\prod\limits_{j\in[m]}(\ichi_{\idealc_j}\circ\theta_j) \ \Bigg| \ (\ZZ/D\ZZ)^t \Biggr) = \prod_{p \in\PP}\EE\Biggl(\prod_{j\in[m]}(\ichi_{\idealc_j\ppart}\circ\theta_j) \ \Bigg| \ (\ZZ/D^{(p)}\ZZ)^t \Biggr).
\]

By the Chinese remainder theorem (Lemma~\ref{lemma=chineseremainder}), the $\ZZ $-module homomorphism $\overline{\psi}$ equals the product of its restrictions $\overline{\psi}\ppart$ to $(\ZZ/D\ppart \ZZ)^t$, that means
\[
\overline{\psi}=\prod_{p\in\PP}\overline{\psi}\ppart\colon\prod_{p\in\PP}(\ZZ/D\ppart\ZZ)^t\to\prod_{p\in\PP}\left(\prod_{j\in[m]}\OK/\idealc_j\ppart\right).
\]
Hence the affine transformation $\overline{\theta}$ is the product of the restrictions 
\[
\overline{\theta}\ppart\colon(\ZZ/D\ppart\ZZ)^t\to\prod_{j\in[m]}\OK/\idealc_j\ppart  .
\]
Applying \eqref{Eq:easy-paraphrase} to the ideals $(\idealc_j)_{j\in[m]}$ and $(\idealc_j\ppart)_{j\in[m]}$, we see that it suffices to prove
\begin{equation}\label{Eq:Chinese-equality-to-show}
\EE\left(\ichi_{\{0\}}\circ\overline{\theta}\relmiddle|(\ZZ/D\ZZ)^t\right)=\prod_{p\in\PP}\EE \left(\ichi_{\{0\}}\circ\overline{\theta}\ppart\relmiddle|(\ZZ/D\ppart\ZZ)^t\right).
\end{equation}
Since $\idealc_j=\prod_{p \in \PP} \idealc_j^{(p)}=\bigcap_{p \in \PP} \idealc_j^{(p)}$, for each element
\[
x= (x_p)_{p\in\PP}\in\prod_{p\in\PP}\left(\ZZ/D\ppart\ZZ\right)^t ,
\]
we have $\overline{\theta}(x)=0$ if and only if $\overline{\theta}\ppart(x_p)=0$ holds for every $p\in\PP$.
Hence
\[
\EE\left(\ichi_{\{0\}}(\overline{\theta}(x))\relmiddle|x\in(\ZZ/D\ZZ)^t\right)=\EE\left(\prod_{p\in \PP}\ichi_{\{0\}}(\overline{\theta}\ppart(x_p))\relmiddle| (x_p)_{p\in\PP}\in\prod_{p\in\PP}(\ZZ/D\ppart\ZZ)^t\right).
\]
This coincides with the right-hand side of \eqref{Eq:Chinese-equality-to-show} by a Fubini-type argument.
\end{proof}
By this lemma, in the next subsection, we may restrict our attention to a tuple of $p$-ideals for each rational prime number $p$. 
\subsection{The expectation of the characteristic function for $p$-ideals}
In this subsection, we assume the following setting.
We use Greek symbols such as $\alpha_j$, $\beta_j$ and $\gamma_j$ for $p$-ideals.
\begin{setting}\label{Setting:p-ideals}
Let $K,n,t,m,(\psi_j)_{j\in[m]}$ be as in Theorem~\ref{Th:Goldston_Yildirim}; we assume \eqref{Eq:no-inclusion}.
In addition, we take $w, W, (\theta_j)_{j\in[m]}$ as in Theorem~\ref{Th:Goldston_Yildirim}.
Let $(\pideala_j,\pidealb_j)_{j\in [m]}\in(\Ideals _K^{(p)})^{2m}$ be a tuple of $p$-ideals for some prime number $p$.
Write $\pidealc_j\coloneqq\pideala_j\cap\pidealb _j$.
Let $D$ be the positive integer such that $D\ZZ=\ZZ \cap\left(\bigcap_{j\in[m]}\pidealc_j\right)$.
Then the ideals $\pidealc_j$ are $p$-ideals, and $D$ is a power of $p$.
The two maps $\overline{\psi}$, $\overline{\theta}\colon(\ZZ/D\ZZ)^t\to\prod_{j\in[m]}\OK/\pidealc_j$ are defined in Subsection~\ref{subsection=ichi-exp}.
\end{setting}
\begin{setting}\label{Setting:w_0}
Assume that $w$ is at least $w'_0$ and $w''_0$, which are defined as follows:
\begin{enumerate}[(1)]
\item\label{en:w_1}
For every $j\in[m]$, the cardinality of $\coker(\psi_j)$ is finite by assumption~\eqref{Eq:no-inclusion}.
We let $w'_0$ be the largest prime factor of $\prod_{j\in[m]} \#\coker(\psi_j)$.
\item\label{en:w_2}
By assumption~\eqref{Eq:no-inclusion}, for each $(j,k)\in[m]^2$  with $j\neq k$, we may take $x_{jk}\in\ker(\psi_j)\setminus\ker(\psi_k)$.
Then let
\[
w''_0\coloneqq\max\biggl\{\ell\in\PP : \exists\idealp\in|\Spec\OK|^{(\ell)}, \ \idealp\mid \prod_{(j,k)\in[m]^2, \ j\neq k}\psi_k(x_{jk})\biggr\}.
\]
We take the elements $x_{jk}$ which minimizes $w_0''$, so that $w_0''$ depends only on $(\psi_j)_{j \in [m]}$.
\end{enumerate}
\end{setting}
Below we estimate the expectation
\[
\E\left((\pideala_j,\pidealb_j)_{j\in[m]}\right)=\EE\Biggl(\prod_{j\in[m]}(\ichi_{\pidealc_j}\circ\theta_j) \ \Bigg| \ (\ZZ/D\ZZ)^t\Biggr)=\EE\left(\ichi_{\{0\}}\circ\overline{\theta}\mid (\ZZ/D\ZZ)^t\right).
\]
We prove the following without assuming Setting~\ref{Setting:w_0}.
\begin{lemma}\label{Lem:smaller_or_bigger}
If $p>w$, then
\[
\E\left((\pideala_j,\pidealb_j)_{j\in[m]}\right)=
\begin{cases}
	(\#\Im(\overline{\psi}))^{-1} & \text{if } 0\in\Im(\overline{\theta}),\\ 
	0 & \text{otherwise}.\end{cases}
\]

\end{lemma}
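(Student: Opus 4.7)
The plan is to exploit that, because $p > w$, the scalar $W$ is a unit in each quotient $\OK/\pidealc_j$, so the affine map $\overline{\theta} = W\overline{\psi} + \overline{b}$ is, up to a bijective twist, just a translate of the linear map $\overline{\psi}$. This will reduce the expectation to counting a fiber of a $\ZZ$-module homomorphism.

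\medskip

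\noindent\textbf{Step 1 (Invertibility of $W$).} First, I would observe that every $\pidealc_j$ is a $p$-ideal, and the codomain $\prod_{j\in[m]}\OK/\pidealc_j$ is a finite abelian $p$-group. Since every prime divisor of $W$ is at most $w < p$, we have $\gcd(W,p)=1$, so multiplication by $W$ is invertible on each factor $\OK/\pidealc_j$, and hence on the whole product; let $W^{-1}$ denote the componentwise inverse.

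\medskip

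\noindent\textbf{Step 2 (Reduction to a fiber of $\overline{\psi}$).} Using $\overline{\theta}(x) = W\overline{\psi}(x) + \overline{b}$ where $\overline{b} = (b_j \bmod \pidealc_j)_{j\in[m]}$, the equation $\overline{\theta}(x) = 0$ is equivalent to $\overline{\psi}(x) = -W^{-1}\overline{b}$. Therefore
\[
\overline{\theta}^{-1}(0) = \overline{\psi}^{-1}\bigl(-W^{-1}\overline{b}\bigr),
\]
and likewise $0 \in \Im(\overline{\theta})$ if and only if $-W^{-1}\overline{b} \in \Im(\overline{\psi})$. If this condition fails, the fiber is empty and the expectation is $0$, giving the second case.

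\medskip

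\noindent\textbf{Step 3 (Counting the fiber).} Assume now $-W^{-1}\overline{b} \in \Im(\overline{\psi})$. Since $\overline{\psi}$ is a homomorphism of $\ZZ$-modules, every non-empty fiber is a coset of $\ker(\overline{\psi})$ and hence has cardinality
\[
\#\ker(\overline{\psi}) = \frac{\#(\ZZ/D\ZZ)^t}{\#\Im(\overline{\psi})} = \frac{D^t}{\#\Im(\overline{\psi})}.
\]
Dividing by $\#(\ZZ/D\ZZ)^t = D^t$ yields the claimed value $(\#\Im(\overline{\psi}))^{-1}$, establishing the first case.

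\medskip

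The only substantive point, and the one I would double-check carefully, is Step 1: that $W$ acts invertibly on the entire product $\prod_j \OK/\pidealc_j$. This rests on the fact that each $\pidealc_j$, being an intersection of $p$-ideals, is again a $p$-ideal so that $\OK/\pidealc_j$ is annihilated by some power of $p$ and $W$ is coprime to $p$. Once this is in hand, the rest is a routine counting argument and the structural assumptions of Setting~\ref{Setting:w_0} play no role in this particular lemma (they will only matter when $p \leq w$).
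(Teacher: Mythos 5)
Your proposal is correct and is essentially the paper's own argument: the paper first applies the general fact that an affine map $\theta$ between finite abelian groups has $\EE(\ichi_{\{0\}}\circ\theta)=(\#\Im(\theta))^{-1}$ when $0\in\Im(\theta)$, and then uses the coprimality of $W$ and $p$ to conclude $\#\Im(\overline{\theta})=\#\Im(\overline{\psi})$, whereas you use the invertibility of $W$ first to rewrite $\overline{\theta}^{-1}(0)$ as a fiber of $\overline{\psi}$ and then count; these are the same computation in a slightly different order. Your closing remark is also consistent with the paper, which explicitly proves this lemma without invoking Setting~\ref{Setting:w_0}.
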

\begin{proof}
In general, for an affine transformation $\theta\colon Z\to Z'$ between two finite abelian groups  $Z$ and $Z'$, we have
\[
\EE\left(\ichi_{\{0\}}\circ\theta\mid Z\right)=\begin{cases} (\#\Im(\theta))^{-1} &\text{if } 0\in\Im(\theta), \\ 0 & \text{otherwise}.\end{cases}
\]
By applying this to $\overline{\theta}\colon(\ZZ/D\ZZ)^t\to\prod_{j\in[m]}\OK/\pidealc_j $,
we see that the expectation $\E\left((\pideala_j,\pidealb_j)_{j\in[m]}\right)$ equals $(\#\Im(\overline{\theta}))^{-1}$ if $0\in\Im(\overline{\theta})$, and $0$ otherwise.
Consider the first case.
By $p>w$, $W$ and $p$ are coprime.
Since the order of each $\OK/\pidealc_j$ is a power of $p$, this implies that the multiplication by $W$ on $\prod_{j\in[m]}\OK/\pidealc_j$ is an automorphism.
Therefore $\#\Im(\overline{\theta})=\#\Im (\overline{\psi})$ follows.
\end{proof}
Only the following lemma and its consequences exploit assumption~\eqref{Eq:no-inclusion}.
\begin{lemma}\label{Lem:value_of_E}
For each prime number $p$ and each tuple $(\pideala_j,\pidealb_j)_{j\in[m]}$ of $p$-ideals, the following hold:
\begin{enumerate}[$(1)$]
\item\label{en:localfactor1}
If $\pidealc_j=\OK$ holds for every $j \in [m]$, then $\E\left((\pideala_j,\pidealb_j)_{j\in[m]}\right)=1$.
\item\label{en:localfactor2}
Suppose that $p\leq w$ and there exists $j_0\in[m]$ such that $\pidealc_{j_0}\subsetneq\OK$.
Then $\E\left((\pideala_j,\pidealb_j)_{j\in[m]}\right)=0$.
\item\label{en:localfactor3}
Suppose that $p>w$ and there exists $j_0 \in [m]$ such that $\pidealc_{j_0}\subsetneq\OK$ and $\pidealc_j=\OK$ for all $j\in[m]\setminus\{j_0\}$.
Then $\E\left((\pideala_j,\pidealb_j)_{j\in[m]}\right)=1/\Nrm(\pidealc_{j_0})$.
\item\label{en:localfactor4}
Suppose that $p>w$ and there exist two distinct $j_1$ and $j_2\in[m]$ such that $\pidealc_{j_1},\pidealc_{j_2}\subsetneq\OK$.
Then $\E\left((\pideala_j,\pidealb_j)_{j\in[m]}\right)\leq1/p^2$.
\end{enumerate}
\end{lemma}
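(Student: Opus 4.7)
The proof proceeds by case analysis. Case~\eqref{en:localfactor1} is immediate from the definition of $\E$: when every $\pidealc_j = \OK$, the integer $D$ equals $1$, each $\ichi_{\pidealc_j}$ is the constant function $1$, and the expectation is $1$.

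For case~\eqref{en:localfactor2}, $p \leq w$ forces $p \mid W$. I would pick any prime ideal $\idealp$ of $\OK$ that contains $\pidealc_{j_0}$; since $\pidealc_{j_0}$ is a proper $p$-ideal, such $\idealp$ is itself a prime $p$-ideal, so $p \in \idealp$ and hence $W \in \idealp$. The coprimality of $b_{j_0}$ and $W$ yields $b_{j_0}\OK + p\OK = \OK$ (because $W\OK \subseteq p\OK$), which implies $b_{j_0} \notin \idealp$. Therefore $\theta_{j_0}(x) \equiv b_{j_0} \not\equiv 0 \pmod{\idealp}$ for every $x \in (\ZZ/D\ZZ)^t$, and a fortiori $\theta_{j_0}(x) \not\equiv 0 \pmod{\pidealc_{j_0}}$. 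The indicator $\ichi_{\pidealc_{j_0}} \circ \theta_{j_0}$ vanishes identically, giving $\E = 0$.

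For cases~\eqref{en:localfactor3} and~\eqref{en:localfactor4}, $p > w \geq w_0$ so $p \nmid W$, and Lemma~\ref{Lem:smaller_or_bigger} applies: the expectation equals $(\#\Im(\overline{\psi}))^{-1}$ when $0 \in \Im(\overline{\theta})$, and $0$ otherwise. The key preparatory fact is that, for each $j$ with $\pidealc_j \subsetneq \OK$, the map $\overline{\psi_j} \colon (\ZZ/D\ZZ)^t \to \OK/\pidealc_j$ is surjective. Indeed, $\OK/\pidealc_j$ is a finite $p$-group, so $p^k\OK \subseteq \pidealc_j$ for some $k$; and since $p > w'_0$ is coprime to $\#\coker(\psi_j)$, the cokernel of $\psi_j$ modulo $p^k$ vanishes, so $\psi_j(\ZZ^t) + p^k\OK = \OK$, and hence $\psi_j(\ZZ^t) \twoheadrightarrow \OK/\pidealc_j$. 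Multiplication by $W$ is moreover an automorphism of $\OK/\pidealc_j$ since $p \nmid W$.

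In case~\eqref{en:localfactor3}, this gives $\#\Im(\overline{\psi}) = \#(\OK/\pidealc_{j_0}) = \Nrm(\pidealc_{j_0})$, while $\overline{\theta_{j_0}}$ is also surjective, so $0 \in \Im(\overline{\theta})$; thus $\E = 1/\Nrm(\pidealc_{j_0})$. In case~\eqref{en:localfactor4}, I would use the elements $x_{j_1 j_2}, x_{j_2 j_1}$ from Setting~\ref{Setting:w_0}~\eqref{en:w_2}: since $p > w''_0$, no prime $p$-ideal divides $\psi_{j_2}(x_{j_1 j_2})$, so $\psi_{j_2}(x_{j_1 j_2}) \notin \pidealc_{j_2}$ (as $\pidealc_{j_2}$ is contained in some prime $p$-ideal), and symmetrically $\psi_{j_1}(x_{j_2 j_1}) \notin \pidealc_{j_1}$. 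Under $(\overline{\psi_{j_1}}, \overline{\psi_{j_2}})$ the residues of $x_{j_1 j_2}$ and $x_{j_2 j_1}$ land at points of the form $(0, a)$ and $(b, 0)$ with $a \in \OK/\pidealc_{j_2}$ and $b \in \OK/\pidealc_{j_1}$ both non-zero. The cyclic subgroups generated by these two points each have order divisible by $p$ and meet trivially, so their sum has order at least $p^2$; this shows $\#\Im(\overline{\psi}) \geq p^2$ and therefore $\E \leq 1/p^2$. The mild subtlety worth flagging is the separation of roles between $w'_0$ (which buys surjectivity) and $w''_0$ (which buys ``independence'' of the $j_1, j_2$ components in case~\eqref{en:localfactor4}); once these are kept apart the verifications are routine.
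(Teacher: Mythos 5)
Your proof is correct and follows essentially the same route as the paper's: case (1) is immediate, case (2) uses a prime ideal $\idealp\supseteq\pidealc_{j_0}$ with $W\in\idealp$ and $b_{j_0}\notin\idealp$, case (3) uses surjectivity of $\overline{\psi_{j_0}}$ obtained from the coprimality of $p$ with $\#\coker(\psi_{j_0})$ together with Lemma~\ref{Lem:smaller_or_bigger}, and case (4) uses the elements $x_{j_1j_2},x_{j_2j_1}$ to exhibit a subgroup of order at least $p^2$ inside $\Im(\overline{\psi_{j_1}},\overline{\psi_{j_2}})$. Your explicit remark that the two generated cyclic subgroups meet trivially is a slightly more detailed justification of the paper's "linear span has order at least $p^2$" step, but the argument is the same.
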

\begin{proof}
First we prove \eqref{en:localfactor1}.
If $\pidealc_j=\OK$ for every $j \in [m]$, then $\prod_{j\in[m]}(\ichi_{\pidealc_j}\circ\theta_j)$ is identical with the constant function $1$, and hence $\E\left((\pideala_j,\pidealb_j)_{j\in[m]}\right)=1$.

Secondly, we prove \eqref{en:localfactor2}.
It suffices to show that for all $x\in(\ZZ/D\ZZ)^t$, $\theta_{j_0}(x)=W\psi_{j_0}(x)+b_{j_0}\not\in\pidealc_{j_0}$.
Let $\idealp$ be an arbitrary prime ideal $\idealp\supseteq\pidealc_{j_0}$.
Since $\pidealc_{j_0}$ is a $p$-ideal, we have $\idealp\cap\ZZ=p\ZZ$.
From $p\leq w$, $p$ divides $W$, and hence $W\in\idealp$ follows.
In addition, the assumption $b_{j_0} \OK+ W \OK=\OK$ implies that  $b_{j_0}\not\in\idealp$.
Hence we see that for all $x \in \ZZ^t$, $\theta_{j_0}(x)=W\psi_{j_0}(x)+b_{j_0}\not\in\idealp$.
This together with $\pidealc_{j_0}\subseteq\idealp$ implies that $\theta_{j_0}(x)\not\in\pidealc_{j_0}$,
as desired.

Thirdly, we prove \eqref{en:localfactor3}.
Set $C_{j_0}\coloneqq\#\coker(\psi_{j_0})$.
Note that for every $x \in \OK$, $C_{j_0} \cdot x$ is contained in the image of $\psi_{j_0}$.
By Setting~\ref{Setting:w_0}~\eqref{en:w_1}, $p$ and $C_{j_0}$ are coprime.
Since the order of $\OK/\pidealc_{j_0}$ is a power of $p$, this implies that the multiplication by $C_{j_0}$ on $\OK/\pidealc_{j_0}$ is an automorphism.
Hence we see that $\overline{\psi_{j_0}}\colon(\ZZ/D\ZZ)^t\to\OK/\pidealc_{j_0}$ is surjective.
Since $W$ and $p$ are coprime, the map $\overline{\theta_{j_0}}=\Aff_{W,b_{j_0}}\circ\overline{\psi_{j_0}}\colon(\ZZ/D\ZZ)^t\to\OK/\pidealc_{j_0}$ is also surjective.
Then Lemma~\ref{Lem:smaller_or_bigger} yields that $\E\left((\pideala_j,\pidealb_j)_{j\in[m]}\right)=(\#\Im(\overline{\psi}))^{-1}$;
recall that $\pidealc_j=\OK$ for all $j \in [m] \setminus \{j_0\}$.
We have
\[
\#\Im(\overline{\psi})=\#\Im(\overline{\psi_{j_0}})=\#(\OK/\pidealc_{j_0})=\Nrm(\pidealc_{j_0}).
\]
This is the desired result.

Finally we prove \eqref{en:localfactor4}.
By Lemma~\ref{Lem:smaller_or_bigger}, it suffices to show that $\#\Im(\overline{\psi})\geq p^2$.
Without loss of generality, we may assume that $\pidealc_1,\pidealc_2 \subsetneq \OK$. 
Recall that two elements $x_{12}$ and $x_{21}$ are chosen in Setting~\ref{Setting:w_0}~\eqref{en:w_2}.
Since $p>w\geq w''_0$, both $\psi_2(x_{12})$ and $\psi_1(x_{21})$ are prime to all $p$-ideals.
We focus on the mapping $(\overline{\psi_1},\overline{\psi_2})$, which is defined as
\begin{equation*}\begin{array}{cccc}
(\overline{\psi_1},\overline{\psi_2})\colon
&(\ZZ/D\ZZ)^t 
&\to
&\OK/\pidealc_1\times\OK/\pidealc_2
\\[10pt]
& x
& \mapsto
& (\overline{\psi_1}(x),\overline{\psi_2}(x)).
\end{array}
\end{equation*}
This maps $x_{12}$ and $x_{21}$ to non-zero elements $(0,\overline{\psi_2}(x_{12}))$ and $(\overline{\psi_1}(x_{21}),0)$, respectively.
The order of the linear span of these two images is at least $p^2$.
Hence
\[
p^2\leq\#\Im(\overline{\psi_1},\overline{\psi_2})\leq\#\Im(\overline{\psi}),
\]
and \eqref{en:localfactor4} follows.
\end{proof}
\subsection{Estimate for the error by a change of domain of integration}
\label{subsec:domain change}
In this subsection, we estimate expectation~\eqref{eq:main-exp}.
Its main term is equal to \eqref{Eq:formula_to_show_3} by Proposition~\ref{prop:replace_B_by_D}.
In this subsection, we prove Proposition~\ref{proposition=third}, which provides an integral representation of the main term.

First, we use  the Fourier transform to derive an integral representation of $\chi$.
Recall that $\chi $ is the smooth compactly supported function which was chosen in the setting of Theorem~\ref{Th:Goldston_Yildirim}.
Let $\chihat $ be the inverse Fourier transform of the function $x\mapsto e^x \chi (x)$.
Here we normalize it to satisfy
\begin{equation}\label{eq:int_rep_of_chi}
e^x\chi(x)=\int_{\RR}\chihat(\xi)e^{-x\xi\kyo}\rd\xi,\quad\text{or equivalently, }\quad\chi(x)=\int_{\RR}\chihat(\xi)e^{-x(1+\xi\kyo)}\rd\xi.
\end{equation}
Then for all $\idealc\in\Ideals_K$ and $R>1$, we \havethat\
\begin{equation}\label{eq:int-chi}
\chi\left(\frac{\log\Nrm(\idealc)}{\log R}\right)=\int_{\RR}\chihat(\xi)\Nrm(\idealc)^{-(1+\xi\kyo)/\log R}\rd\xi.
\end{equation}

Assume the following:
\begin{setting}\label{setting=error}
Assume the setting in Theorem~\ref{Th:Goldston_Yildirim}.
In addition, assume that $R \geq e$ and $w \geq\max\{4^{mn},w'_0,w''_0\}$,
where $w'_0$ and $w''_0$ are defined in Setting~\ref{Setting:w_0}.
Recall that $\E\left((\ideala_j,\idealb_j)_{j\in[m]}\right)$ is defined as \eqref{Eq:E}.
Let $I=I(R)\coloneqq[-\sqrt{\log R},\ +\sqrt{\log R}]_{\RR}$, and $\xi_j$ and $\eta_j$ be variables of integration.
Let us use shorthand symbols %
$\rd\uxi = \rd\xi_1\cdots\rd\xi_m$ and $\rd\ueta = \rd\eta_1 \cdots \rd\eta_m$.
Let
\[
z_j \coloneqq\frac{1+\xi_j\sqrt{-1}}{\log R}, \quad w_j\coloneqq\frac{1+\eta_j\sqrt{-1}}{\log R}.
\]
\end{setting}
The goal in this subsection is to prove the following.
We define $E=E\left((\xi_j,\eta_j)_{j\in[m]};R\right)$, depending also on $(\theta_j)_{j\in[m]}$, by
\begin{equation}\label{eq:def-of-E}
E\coloneqq\sum_{(\ideala_j,\idealb_j)_{j\in[m]}\in\Ideals_K^{2m}}\Biggl(\prod_{j\in[m]}\left\{\mu(\ideala_j)\mu(\idealb_j)\Nrm(\ideala_j)^{-z_j}\Nrm(\idealb_j)^{-w_j}\right\}\Biggr)\E\left((\ideala_j,\idealb_j)_{j \in [m]}\right).
\end{equation}

\begin{proposition}\label{proposition=third}
The series $E=E\left((\xi_j,\eta_j)_{j\in[m]};R\right)$ converges uniformly on $(\xi_j,\eta_j)_{j\in[m]}\in I^{2m}$.
Furthermore, for every positive real number $A$, \eqref{Eq:formula_to_show_3} is equal to
\begin{equation}\label{Eq:formula_to_show_4'}
(\log R)^{2m}\int_{I^{2m}}\rd\uxi\rd\ueta\left[\Biggl(\prod_{j\in[m]}\chihat(\xi_j)\chihat(\eta_j)\Biggr)\cdot E\left((\xi_j,\eta_j)_{j\in[m]};R\right)\right]
\end{equation}
with an additive error term $O_{A,\chi,m,n}\left((\log R)^{-A}\right)$.
\end{proposition}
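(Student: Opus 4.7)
The claim comprises two statements: the uniform convergence of $E$ on $I^{2m}$, and the integral identity up to a small error. For uniform convergence, I factor $E$ as an Euler product $E=\prod_{p\in\PP}E_p$ using that the coefficient $\prod_j\mu_K(\ideala_j)\mu_K(\idealb_j)\Nrm(\ideala_j)^{-z_j}\Nrm(\idealb_j)^{-w_j}$ is multiplicative in each ideal and that $\E$ decomposes over $p$-parts by Lemma~\ref{lem:p-typical-E}. Because the M\"obius factors kill non-square-free tuples, each $E_p$ is a finite sum over at most $(2^n)^{2m}$ tuples of square-free $p$-ideals (Lemma~\ref{lemma=squarefree}). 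Lemma~\ref{Lem:value_of_E}\eqref{en:localfactor2} gives $E_p=1$ exactly for $p\le w$. For $p>w$, Lemma~\ref{Lem:value_of_E}\eqref{en:localfactor3},\eqref{en:localfactor4} together with the bound $\Nrm(\idealp)\ge p$ yield quantitative estimates on $|E_p-1|$ uniformly in $(\uxi,\ueta)\in I^{2m}$, whose summability over $p$ produces the desired uniform convergence of the infinite product.

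For the integral identity, I substitute the Fourier inversion formula \eqref{eq:int-chi} into each of the $2m$ factors $\chi(\log\Nrm(\cdot)/\log R)$ in $\Pi_{R,\chi}$ appearing in \eqref{Eq:formula_to_show_3}. Since $\chi$ has compact support, only ideals with $\Nrm\le R$ contribute and the outer sum is effectively a finite sum. The $2m$-fold inner integrals over $\RR$ converge absolutely (using $|\Nrm^{-z}|=\Nrm^{-1/\log R}\le 1$ for $\Nrm\ge 1$ and $\chihat\in L^1(\RR)$), so Fubini's theorem permits interchanging the finite sum with the integrals. Restricting the integration domain from $\RR$ to $I$ in each coordinate introduces an error controlled by the rapid decay of $\chihat$: since $x\mapsto e^x\chi(x)\in C_c^\infty(\RR)$, $\chihat$ is Schwartz, hence $\int_{\RR\setminus I}|\chihat(\xi)|\,\rd\xi=O_{A,\chi}((\log R)^{-A})$ for every $A>0$. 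After restriction, the resulting sum--integral is precisely the sum defining $E$ integrated against $\prod\chihat$ over $I^{2m}$, which coincides with \eqref{Eq:formula_to_show_4'} by the uniform convergence of $E$ justified above.

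The principal obstacle is the combined control of two tails: the ideal-sum tail (contributions with some $\Nrm>R$) and the $(\uxi,\ueta)$-integration tail (from $\RR^{2m}\setminus I^{2m}$). The Schwartz decay of $\chihat$ handles the latter directly, giving an error of size $O_A((\log R)^{-A})$ per coordinate, and the outer factor $(\log R)^{2m}$ in \eqref{Eq:formula_to_show_3} is absorbed by choosing $A$ large. The ideal-sum tail requires a more delicate argument using the Euler product structure established in the first paragraph, combined with the bound $\Nrm^{-1/\log R}\le e^{-1}$ valid for $\Nrm>R$; once the tail is expressed as an integral of M\"obius-weighted Dirichlet partial sums, the cancellation among M\"obius factors keeps the contribution within the target bound.
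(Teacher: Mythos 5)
Your first two paragraphs follow the paper's route: the uniform and absolute convergence of $E$ is obtained, exactly as in Lemma~\ref{Lem:Into_Euler_product}, from the Euler factorization $E=\prod_p E_p$ together with Lemmas~\ref{lemma=squarefree} and \ref{Lem:value_of_E}, and the main term arises by substituting \eqref{eq:int-chi} and cutting each Fourier integral down to $I$. The problems are in the error bookkeeping of your last paragraph, and one of the two mechanisms you propose would fail.

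First, the $\RR^{2m}\setminus I^{2m}$ tail. The per-tuple error from restricting the integration domain is $O_{A,\chi,m}\bigl((\log R)^{-A}\prod_j\Nrm(\ideala_j)^{-1/\log R}\Nrm(\idealb_j)^{-1/\log R}\bigr)$ (this is Lemma~\ref{lem:prod_chi}), and it must then be summed over the roughly $R^{2m}$ contributing tuples, weighted by $\E$. Schwartz decay alone does not close this; one needs the quantitative bound $\aE=\sum\prod_j\Nrm(\ideala_j)^{-1/\log R}\Nrm(\idealb_j)^{-1/\log R}\,\E\le(\log R+O(1))^{O_{m,n}(1)}$, which comes from the Euler product for $\aE$ and the generalized binomial estimate $\prod_p(1+Cp^{-1-1/\log R})\le\zeta(1+1/\log R)^C$ (Lemma~\ref{lem:by_binomial_theorem}). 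Only then is the prefactor $(\log R)^{2m}$ absorbed by enlarging $A$. You set up exactly this machinery in your first paragraph but never apply it to the error sum.

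Second, and more seriously, the $\Nrm>R$ tail that your truncation creates. Since you restrict the ideal sum to $\Nrm\le R$ before substituting the Fourier representation, matching \eqref{Eq:formula_to_show_4'} requires adding back $\int_{I^{2m}}\prod\chihat$ against the $\Nrm>R$ part of $E$. Your proposed mechanism — the bound $\Nrm^{-1/\log R}\le e^{-1}$ plus ``cancellation among M\"obius factors'' — cannot work: already $\sum_{\Nrm(\ideala)>R}\Nrm(\ideala)^{-1-1/\log R}\approx e^{-1}\kappa\log R$ is a \emph{constant fraction} of the corresponding full sum, so the absolute-value tail is comparable to $\aE$ itself, and the absolutely convergent majorant admits no M\"obius cancellation. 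The reason this tail is in fact small is again the Schwartz decay combined with the vanishing of $\chi$: for $\Nrm(\idealc)>R$ one has $\int_I\chihat(\xi)\Nrm(\idealc)^{-(1+\xi\kyo)/\log R}\rd\xi=\chi(\log\Nrm(\idealc)/\log R)-\int_{\RR\setminus I}(\cdots)=O_{A,\chi}\bigl((\log R)^{-A}\Nrm(\idealc)^{-1/\log R}\bigr)$, since $\chi$ vanishes outside $[-1,1]_{\RR}$. The paper avoids the issue entirely by applying the decomposition $\int_{\RR}=\int_I+\int_{\RR\setminus I}$ to \emph{every} tuple uniformly (Lemma~\ref{lem:prod_chi}) and only then summing, so that no separate $\Nrm>R$ tail ever appears; you should either do the same, or keep the full infinite sum and interchange it with $\int_{\RR^{2m}}$ using $\aE<\infty$.
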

The proof of this proposition will proceed in several steps:
\begin{itemize}
	\item Substitute for $\chi $ in \eqref{Eq:formula_to_show_3} its integral representation over $\RR$
	to obtain a summation of integrals over $\RR^{2m}$. 
	\item Replace these integrals by ones over ${I^{2m}}$ up to small additive errors.
	\item 
	Show the uniform (and absolute) convergence of $E$
	to
	interchange summation and integration. 
	We end up with an integral over $I^{2m}$ of a certain summation, 
	which turns out to be the desired expression.
\end{itemize} 
Each term of the series~\eqref{Eq:formula_to_show_3} has $\chi(\log\Nrm(\cdot)/ \log R)$ as a factor.
Hence it suffices to consider this series over $(\ideala_j,\idealb_j)_{j \in [m]}$ all of whose entries have norm at most $R$.
Hence it is a finite series.
However, the term represented by the integral in Lemma~\ref{lem:prod_chi} should be considered over all $(\ideala_j,\idealb_j)_{j \in [m]}$,
and hence $E$ defined in \eqref{eq:def-of-E} is an infinite series over all $(\ideala_j,\idealb_j)_{j \in [m]}$.
This causes a subtlety in interchanging summation and integration
unless we prove that $E$ converges absolutely and uniformly.

We begin by estimating the error caused by the change of the domain of integration from $\mathbb{R}^{2m}$ to $I^{2m}$.
First we recall the following estimate in Fourier analysis; we write down a proof for the convenience of the reader.
\begin{lemma}\label{lem:Fourier-easy}
For every real-valued $C^N$-function $f$ with compact support, we have
\[
\Fourier(f)(x)\coloneqq\int_{\RR}f(\xi)e^{x\xi\kyo}\rd\xi=O_{N,f}\left((1+|x|)^{-N}\right),
\]
where $\Fourier(f)$ denotes the inverse Fourier transform of $f$.
\end{lemma}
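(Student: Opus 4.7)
The plan is to proceed by the standard method of integration by parts, exploiting the $C^N$-regularity of $f$ together with its compact support.

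First, I would dispose of the trivial case $|x|\le 1$: since $f$ has compact support, $|\Fourier(f)(x)| \le \int_{\RR}|f(\xi)|\rd\xi =\|f\|_{L^1}$, and on the bounded region $|x|\le 1$ we have $(1+|x|)^{-N}\ge 2^{-N}$, so the estimate $\Fourier(f)(x)=O_{N,f}((1+|x|)^{-N})$ holds automatically.

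The main case is $|x|\ge 1$. Here the idea is to integrate by parts $N$ times with respect to $\xi$, differentiating $f$ and integrating the factor $e^{x\xi\kyo}$. Because $f$ has compact support, every boundary term vanishes. After $k$ such steps one obtains
\[
\Fourier(f)(x)=\frac{1}{(-x\kyo)^{k}}\int_{\RR}f^{(k)}(\xi)\,e^{x\xi\kyo}\rd\xi .
\]
Taking $k=N$ and estimating the remaining integral crudely by $\|f^{(N)}\|_{L^1}$, which is finite by the $C^N$-regularity and compact support hypotheses, yields
\[
|\Fourier(f)(x)|\le \|f^{(N)}\|_{L^1}\cdot |x|^{-N}.
\]
For $|x|\ge 1$ we have $|x|^{-N}\le 2^{N}(1+|x|)^{-N}$, so combining with the first case gives the claimed bound with an implicit constant depending on $N$ and on the $C^N$-norm of $f$ (equivalently, on $f$).

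No substantial obstacle is anticipated; the only care needed is verifying that each intermediate derivative remains absolutely integrable, which is immediate from compact support of $f$ (the support of $f^{(k)}$ is contained in that of $f$).
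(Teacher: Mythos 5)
Your proof is correct and uses essentially the same mechanism as the paper: the identity $\Fourier(f^{(k)})(x)=(-\kyo x)^{k}\Fourier(f)(x)$ obtained by repeated integration by parts, with all boundary terms vanishing by compact support, plus the $L^1$-bounds on the derivatives. The only (cosmetic) difference is that you split into the cases $|x|\le 1$ and $|x|\ge 1$, whereas the paper sums $\sum_{k\in[0,N]}|x|^{k}$ and compares it with $(1+|x|)^{N}$ via the binomial theorem.
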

\begin{proof}
Since the support of $f$ is compact, $f\in L^1(\RR)$ holds, and then $\|\Fourier(f)\|^{}_{\infty}\leq\|f\|^{}_1<\infty$ follows.
For every $k \in [0,N]$, since the $k$th derivative of $f$ has a compact support, there exists a real number $C_{N,f}$ such that $\|\Fourier(f^{(k)})\|^{}_{\infty}\leq C_{N,f}$.
By integration by parts, we have $\Fourier(f')(x)=-\kyo x\Fourier(f)(x)$. 
Applying this repeatedly, we see that for all $k \in [0,N]$, $|\Fourier(f^{(k)})(x)|=|x|^k|\Fourier(f)(x)|$.
Hence
\[
|\Fourier(f)(x)|\sum_{k\in[0,N]}|x|^k=\sum_{k\in[0,N]}|\Fourier(f^{(k)})(x)|\leq(N+1)C_{N,f}\eqqcolon C'_{N,f}.
\]
By the binomial theorem, there exists $c_N>0$ such that $c_N(1+|x|)^N\leq\sum_{k\in[0,N]}|x|^k$.
Therefore we obtain the desired estimate $|\Fourier(f)(x)|\leq\frac{C'_{N,f}}{c_N}(1+|x|)^{-N}$.
\end{proof} 
\begin{corollary}\label{cor:ineq_of_chihat}
The integral $\int_{\RR}|\chihat(\xi)|\rd\xi$ is a finite value.
In other words
\begin{equation}\label{Eq:ineq_of_chihat2pre}
\int_{\RR}|\chihat(\xi)|\rd\xi=O_{\chi}(1).
\end{equation}
In addition, for all positive real numbers $b$ and $A$, we have
\begin{equation}\label{Eq:ineq_of_chihat2}
\int_b^{\infty}|\chihat(\xi)|\rd\xi=O_{A,\chi}(b^{-A})\quad \text{ and } \quad \int_{-\infty}^{-b}|\chihat(\xi)|\rd\xi=O_{A,\chi}(b^{-A}).
\end{equation}
\end{corollary}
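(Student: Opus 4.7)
The plan is to reduce both statements to a direct application of Lemma~\ref{lem:Fourier-easy}. The function $f(x) \coloneqq e^x \chi(x)$ is $C^{\infty}$ with support contained in $[-1,1]_{\RR}$, so in particular it is a $C^N$-function with compact support for every positive integer $N$. Since $\chihat$ is by definition the inverse Fourier transform $\Fourier(f)$ (see \eqref{eq:int_rep_of_chi}), Lemma~\ref{lem:Fourier-easy} yields
\[
|\chihat(\xi)| = O_{N,\chi}\bigl((1+|\xi|)^{-N}\bigr) \qquad (\xi \in \RR)
\]
for every positive integer $N$.

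For the first estimate \eqref{Eq:ineq_of_chihat2pre}, I would apply this with $N=2$ to obtain $|\chihat(\xi)| \leq C_{\chi}(1+|\xi|)^{-2}$, which is integrable over $\RR$, giving a bound depending only on $\chi$. This bound is trivially $O_{A,\chi}(1)$ for any $A>0$.

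For the tail estimates \eqref{Eq:ineq_of_chihat2}, given $A>0$ I would choose a positive integer $N \geq A+2$ and use the corresponding bound to compute, for $b>0$,
\[
\int_{b}^{\infty}|\chihat(\xi)|\rd\xi \;\leq\; C_{N,\chi}\int_{b}^{\infty}(1+\xi)^{-N}\rd\xi \;=\; \frac{C_{N,\chi}}{N-1}(1+b)^{-(N-1)} \;\leq\; \frac{C_{N,\chi}}{N-1}\, b^{-A},
\]
which is the desired bound $O_{A,\chi}(b^{-A})$. The estimate for $\int_{-\infty}^{-b}|\chihat(\xi)|\rd\xi$ follows by the same argument applied to the left tail. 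There is no real obstacle here; the only minor point is that $A$ is allowed to be an arbitrary positive real, which is handled by rounding up to an integer $N$ in the application of Lemma~\ref{lem:Fourier-easy}.
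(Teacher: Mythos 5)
Your proof is correct and follows essentially the same route as the paper's: both apply Lemma~\ref{lem:Fourier-easy} to $f(x)=e^x\chi(x)$ and integrate the resulting polynomial decay bound, the only cosmetic difference being that the paper handles \eqref{Eq:ineq_of_chihat2pre} and \eqref{Eq:ineq_of_chihat2} in one unified computation with $B=A+1$ (treating $b=0$ and $b>0$ together), while you split the two cases and round the exponent up to an integer. No gap.
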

\begin{proof}
The $C^\infty$-function $\chi$ has a compact support.
Applying Lemma~\ref{lem:Fourier-easy} with $f(x)=e^x\chi(x)$, we have $\chihat(\xi)=O_{B,\chi}\left((1+|\xi|)^{-B}\right)$ for all $B\geq 0$.
Let $b\geq 0$, $B=A+1$, and $I_b=[b,\infty]_{\RR}$ or $[-\infty,-b]_{\RR}$.
Then we \havethat\
\begin{align*}
\int_{I_b}|\chihat(\xi)|\rd\xi	
&=\int_{I_b}O_{B,\chi}\left((1+|\xi|)^{-B}\right)\rd\xi=O_{B,\chi}\left(\int_{b}^{\infty}(1+\xi)^{-B}\rd\xi \right)\\
&=O_{B,\chi}\left(\frac{1}{B-1}(1+b)^{1-B}\right)
=\begin{cases}
O_{A,\chi}\left(b^{-A}\right) & \text{if } b>0,\\
O_{A,\chi}(1) &\text{otherwise}.\
\end{cases}
\end{align*}
\end{proof}
\begin{lemma}\label{lem:prod_chi}
For every positive real number $A$ and every tuple $(\ideala_j,\idealb_j)_{j\in[m]}\in\Ideals_K^{2m}$, we \havethat\
\begin{multline*}
\prod_{j\in[m]}\chi\left(\frac{\log\Nrm(\ideala_j)}{\log R}\right)\chi\left(\frac{\log\Nrm(\idealb_j)}{\log R}\right)
=\int_{I^{2m}}\rd\uxi\rd\ueta\left[\prod_{j\in[m]}\Nrm(\ideala_j)^{-z_j}\Nrm(\idealb_j)^{-w_j}\chihat(\xi_j)\chihat(\eta_j)\right]\\
+O_{A,\chi,m}\left((\log R)^{-A}\prod_{j\in[m]}\Nrm(\ideala_j)^{-\frac{1}{\log R}}\Nrm(\idealb_j)^{-\frac{1}{\log R}}\right).
\end{multline*}
\end{lemma}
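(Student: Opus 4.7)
The plan is to view the left-hand side as a $2m$-fold iterated Fourier-type integral over $\RR^{2m}$ via the formula \eqref{eq:int-chi}, recognize the integral over $I^{2m}$ as the main term on the right, and then estimate the leftover contribution from $\RR^{2m}\setminus I^{2m}$ using Corollary~\ref{cor:ineq_of_chihat}. Concretely, applying \eqref{eq:int-chi} to each of the $2m$ factors on the left and using Fubini (justified by the crude pointwise bound $|\chi|\le 1$ and $\chihat\in L^1(\RR)$, which follows from \eqref{Eq:ineq_of_chihat2pre}), I would write
\[
\prod_{j\in[m]}\chi\!\left(\frac{\log\Nrm(\ideala_j)}{\log R}\right)\chi\!\left(\frac{\log\Nrm(\idealb_j)}{\log R}\right)
=\int_{\RR^{2m}}\prod_{j\in[m]}\Nrm(\ideala_j)^{-z_j}\Nrm(\idealb_j)^{-w_j}\,\chihat(\xi_j)\chihat(\eta_j)\,\rd\uxi\rd\ueta.
\]
Splitting $\RR^{2m}=I^{2m}\sqcup(\RR^{2m}\setminus I^{2m})$ isolates the integral over $I^{2m}$ as the main term, and what remains is to bound the integral over the complement.

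For this, I would use that $\Re(z_j)=\Re(w_j)=1/\log R$, so that at every point of integration
\[
\bigl|\Nrm(\ideala_j)^{-z_j}\Nrm(\idealb_j)^{-w_j}\bigr|=\Nrm(\ideala_j)^{-1/\log R}\Nrm(\idealb_j)^{-1/\log R}.
\]
Pulling this pointwise bound out of the integral factors off the product $\prod_j \Nrm(\ideala_j)^{-1/\log R}\Nrm(\idealb_j)^{-1/\log R}$ that appears in the desired error. It remains to control $\int_{\RR^{2m}\setminus I^{2m}}\prod_j|\chihat(\xi_j)\chihat(\eta_j)|\,\rd\uxi\rd\ueta$. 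The complement is covered by the $2m$ slabs in which at least one of the $2m$ variables lies outside $I=[-\sqrt{\log R},\sqrt{\log R}]_{\RR}$. On each such slab, the integral in the distinguished variable is bounded by $\int_{|\xi|\geq\sqrt{\log R}}|\chihat(\xi)|\rd\xi =O_{B,\chi}((\log R)^{-B/2})$ from \eqref{Eq:ineq_of_chihat2} with $b=\sqrt{\log R}$ and arbitrary $B>0$, while each of the remaining $2m-1$ integrals over $\RR$ contributes $O_{\chi}(1)$ via \eqref{Eq:ineq_of_chihat2pre}. A union bound over the $2m$ slabs therefore yields
\[
\int_{\RR^{2m}\setminus I^{2m}}\prod_{j\in[m]}|\chihat(\xi_j)\chihat(\eta_j)|\,\rd\uxi\rd\ueta
=O_{B,\chi,m}\!\left((\log R)^{-B/2}\right).
\]

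Taking $B=2A$, this produces precisely the claimed error term $O_{A,\chi,m}\!\left((\log R)^{-A}\prod_j\Nrm(\ideala_j)^{-1/\log R}\Nrm(\idealb_j)^{-1/\log R}\right)$, completing the plan. There is no serious obstacle here: the argument is a routine truncation of a Fourier-type integral. The only point to be careful about is that one cannot pull the $\chihat$ estimates out naively while ignoring the oscillatory factor $\Nrm(\ideala_j)^{-z_j}$; this is handled at the stroke by the fact that the real part of the exponent $-z_j$ does not depend on the imaginary variable $\xi_j$, so the modulus of the oscillatory factor is a constant (in $\xi_j$) that cleanly separates from the integral in $\chihat$.
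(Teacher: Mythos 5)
Your proof is correct and follows essentially the same route as the paper: both rest on the integral representation \eqref{eq:int-chi}, the truncation of $\RR$ to $I$, and the two decay estimates of Corollary~\ref{cor:ineq_of_chihat}, with the modulus $\Nrm(\ideala_j)^{-1/\log R}\Nrm(\idealb_j)^{-1/\log R}$ of the oscillatory factors supplying the product of norms in the error term. The only difference is bookkeeping: the paper decomposes each one-dimensional integral as $\int_I+\int_{\RR\setminus I}$ and expands the resulting product of $2m$ binomials, whereas you assemble the full integral over $\RR^{2m}$ first and cover $\RR^{2m}\setminus I^{2m}$ by $2m$ slabs — the two yield the same bound.
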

\begin{proof}
Let $\idealc\in\Ideals_K$.
Integral representation~\eqref{eq:int-chi} of $\chi$ is decomposed as
\begin{equation}	\label{eq:decompose integral representation}
\chi\left(\frac{\log\Nrm(\idealc)}{\log R}\right)=\int_I\chihat(\xi)\Nrm(\idealc)^{-(1+\xi\kyo)/\log R}\rd\xi+\int_{\RR\setminus I}\chihat(\xi)\Nrm(\idealc)^{-(1+\xi\kyo)/\log R}\rd\xi.
\end{equation}
The two terms on the right-hand side are estimated in the following manner:
the first estimate~\eqref{Eq:ineq_of_chihat2pre} in Corollary~\ref{cor:ineq_of_chihat} implies that
\[
\left|\int_I\chihat(\xi)\Nrm(\idealc)^{-(1+\xi\kyo)/\log R}\rd\xi\right|\leq\Nrm(\idealc)^{-1/\log R}\int_I|\chihat(\xi)|\rd\xi=O_{A,\chi}\left(\Nrm(\idealc)^{-1/\log R}\right).
\]
Similarly, the second estimate~\eqref{Eq:ineq_of_chihat2} in Corollary~\ref{cor:ineq_of_chihat} implies that
\begin{equation}\label{eq:chichi-error2}
\left|\int_{\RR\setminus I}\chihat(\xi)\Nrm(\idealc)^{-(1+\xi\kyo)/\log R}\rd\xi\right|=O_{A,\chi}\left(\Nrm(\idealc)^{-1/\log R}(\log R)^{-A}\right).
\end{equation}
Note that  $O_{A,\chi}\left(\Nrm(\idealc)^{-1/\log R}(\log R)^{-A}\right)=O_{A,\chi}\left(\Nrm(\idealc)^{-1/\log R}\right)$ by $R\geq e$.
We decompose each of the $2m$ factors of
\[
\prod_{j\in[m]}\chi\left(\frac{\log\Nrm(\ideala_j)}{\log R}\right)\chi\left(\frac{\log\Nrm(\idealb_j)}{\log R}\right)
\]
as \eqref{eq:decompose integral representation},
and expand this product.
As a result, this product is expressed as the sum of the main term whose domain of integration is $I^{2m}$ and other $2^{2m}-1$ error terms.
Each error term
is the product of $2m$ factors.
We apply \eqref{eq:chichi-error2} to one of the
factors whose domain of integration is $\RR \setminus I$
to conclude that it is expressed as $O_{A,\chi}\left(\Nrm(\idealc)^{-1/\log R}(\log R)^{-A}\right)$.
The other factors can be estimated as $O_{A,\chi}\left(\Nrm(\idealc)^{-1/\log R}\right)$.
Hence the desired conclusion follows.
\end{proof}

In the rest of this subsection, we mainly show that the error caused by application of Lemma~\ref{lem:prod_chi} is small enough.
A rough estimate of the error suffices here; however we prepare a precise lemma for the estimation of the main term that comes later.

For each prime number $p\in\PP$, we define
\begin{equation}\label{eq:def-of-E_p}
\begin{split}
E_p&=E_p\left((\xi_j,\eta_j)_{j\in[m]}; R\right)\\
&\coloneqq\sum_{(\pideala_j,\pidealb_j)_{j\in[m]}\in\left(\Ideals_K\ppart\right)^{2m}}\Biggl(\prod_{j\in[m]}\left\{\mu(\pideala_j)\mu(\pidealb_j)\Nrm(\pideala_j)^{-z_j}\Nrm(\pidealb_j)^{-w_j}\right\}\Biggr)\E\left((\pideala_j,\pidealb_j)_{j \in [m]}\right).
\end{split}
\end{equation}
Since each summand of this sum contains $\mu(\alpha_j)\mu(\beta_j)$ as a factor,
it suffices to consider the sum over the tuples $(\pideala_j,\pidealb_j)_{j \in [m]}$ consisting of square free $p$-ideals.
The number of such tuples is at most $4^{nm}$ by Lemma~\ref{lemma=squarefree}.
The absolute value of the summand labeled by $(\pideala_j,\pidealb_j)_{j\in[m]} \neq (\OK,\dots,\OK)$ is at most $1/p$ by Lemma~\ref{Lem:value_of_E}.
Hence for every prime number $p$ greater than $w \ (\geq 4^{mn})$, $E_p$ is a non-zero finite value.
We will need the following more precise result.
\begin{lemma}\label{Lem:Into_Euler_product}
For every prime number $p$ greater than $w$, we have
\begin{equation}\label{eq:E_p>w}
E_p=1-\Sigma_p\left((\xi_j,\eta_j)_{j\in[m]};R\right)+O(4^{mn}/p^2),
\end{equation}
where
\[
\Sigma_p\left((\xi_j,\eta_j)_{j\in[m]};R\right)\coloneqq\sum_{j\in[m]}\sum_{\pidealp\in|\Spec(\OK)|\ppart}\left(\Nrm(\pidealp)^{-1-z_j}+\Nrm(\pidealp)^{-1-w_j}-\Nrm(\pidealp)^{-1-z_j-w_j}\right).
\]
The infinite series $E$ defined by \eqref{eq:def-of-E} and the infinite product $\prod_{p\in\PP_{>w}}E_p$ converge absolutely and uniformly on $(\xi_j,\eta_j)_{j\in[m]}\in I^{2m}$.
Furthermore, they coincide, that is,
\begin{equation}\label{Eq:sum-equal-product}
E=\prod_{p\in\PP_{>w}}E_p.
\end{equation}
\end{lemma}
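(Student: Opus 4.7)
The strategy is to combine the multiplicativity of $\E$ given by Lemma~\ref{lem:p-typical-E} with the complete case analysis of the local factors $E_p$ given by Lemma~\ref{Lem:value_of_E}. The identity $E=\prod_{p\in\PP_{>w}}E_p$ is then a formal Euler product factorization, to be justified by absolute convergence.

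First I would establish the asymptotic \eqref{eq:E_p>w} for fixed $p>w$ by classifying the tuples $(\pideala_j,\pidealb_j)_{j\in[m]}\in(\Ideals_K^{(p)})^{2m}$ in \eqref{eq:def-of-E_p} according to the set $J\coloneqq\{j\in[m]\mid\pidealc_j\neq\OK\}$, restricting to square-free entries since $\mu(\pideala_j)\mu(\pidealb_j)$ vanishes otherwise. The case $J=\varnothing$ contributes $1$ by Lemma~\ref{Lem:value_of_E}~\eqref{en:localfactor1}. For $J=\{j_0\}$ with $\pidealc_{j_0}$ a single prime $p$-ideal $\pidealp$, the three possibilities $(\pideala_{j_0},\pidealb_{j_0})\in\{(\OK,\pidealp),(\pidealp,\OK),(\pidealp,\pidealp)\}$, combined with Lemma~\ref{Lem:value_of_E}~\eqref{en:localfactor3} and the $\mu$-signs, produce the three summands $-\Nrm(\pidealp)^{-1-z_{j_0}},-\Nrm(\pidealp)^{-1-w_{j_0}},\Nrm(\pidealp)^{-1-z_{j_0}-w_{j_0}}$; summing over $j_0$ and $\pidealp$ yields $-\Sigma_p$. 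All remaining square-free tuples satisfy $|\E|\leq p^{-2}$: if $\#J\geq 2$, by Lemma~\ref{Lem:value_of_E}~\eqref{en:localfactor4}; if $J=\{j_0\}$ with $\pidealc_{j_0}$ a product of at least two distinct prime $p$-ideals, because $\E=\Nrm(\pidealc_{j_0})^{-1}\leq p^{-2}$ by Lemma~\ref{Lem:value_of_E}~\eqref{en:localfactor3}. Since Lemma~\ref{lemma=squarefree} bounds the number of square-free tuples by $(2^n)^{2m}=4^{mn}$, this contributes the total error $O(4^{mn}/p^2)$.

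For the convergence, Setting~\ref{setting=error} ensures $\Re(z_j)=\Re(w_j)=1/\log R>0$, so each summand of $\Sigma_p$ has modulus bounded by $\Nrm(\pidealp)^{-1-1/\log R}\leq p^{-1-1/\log R}$. Combined with Lemma~\ref{lemma=squarefree}, this gives $|\Sigma_p|\leq 3mn\cdot p^{-1-1/\log R}$ uniformly on $(\xi_j,\eta_j)\in I^{2m}$, whence $|E_p-1|\leq 3mn\cdot p^{-1-1/\log R}+O(4^{mn}/p^2)$. Summing over $p>w$ yields a convergent bound---the first term because the exponent $1+1/\log R$ strictly exceeds $1$, the second by comparison with $\sum p^{-2}$---so $\prod_{p>w}E_p$ converges absolutely and uniformly on $I^{2m}$. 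The same term-by-term absolute bound, applied via the multiplicative structure to $E$ itself (noting that Lemma~\ref{Lem:value_of_E}~\eqref{en:localfactor2} forces $\ideala_j^{(p)}=\idealb_j^{(p)}=\OK$ whenever $p\leq w$, so only primes $p>w$ contribute nontrivially), yields absolute uniform convergence of $E$.

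Finally, \eqref{Eq:sum-equal-product} follows from multiplicativity: the coefficients $\mu(\ideala_j)\mu(\idealb_j)\Nrm(\ideala_j)^{-z_j}\Nrm(\idealb_j)^{-w_j}$ are multiplicative under the unique factorization $\ideala_j=\prod_p\ideala_j^{(p)}$, $\E$ is multiplicative by Lemma~\ref{lem:p-typical-E}, and $E_p=1$ for $p\leq w$ as just observed. Absolute convergence then legitimizes the rearrangement of the iterated sum as an Euler product over $p>w$. The main obstacle is securing the summability of $|\Sigma_p|$ over $p>w$, which is only marginal: it hinges on the improvement of the exponent from $1$ to $1+1/\log R$ provided by $\Re(z_j),\Re(w_j)>0$ in Setting~\ref{setting=error}, without which the product would diverge.
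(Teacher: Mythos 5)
Your proof is correct and follows essentially the same route as the paper: derive \eqref{eq:E_p>w} from the case analysis of Lemma~\ref{Lem:value_of_E} (the three tuples $(\pidealp,\OK),(\OK,\pidealp),(\pidealp,\pidealp)$ producing $-\Sigma_p$, the remaining square-free tuples contributing $O(4^{mn}/p^2)$ via Lemma~\ref{lemma=squarefree}), then control the sum of absolute values of the summands by $\zeta(1+\tfrac{1}{\log R})<\infty$ and invoke the multiplicativity of $\mu_K$, $\Nrm$ and $\E$ (Lemma~\ref{lem:p-typical-E}) to justify the Euler factorization. The only difference is presentational: you spell out the $O(4^{mn}/p^2)$ bookkeeping more explicitly than the paper does.
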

\begin{proof}
Let $p$ be a prime number greater than $w$.
Consider the case of  Lemma~\ref{Lem:value_of_E}~\eqref{en:localfactor3}, namely,
there exists $j_0\in[m]$ such that $\pidealp\coloneqq\pideala_{j_0}\cap\pidealb_{j_0}\in|\Spec(\OK)|\ppart$ and $\pideala_j=\pidealb_j=\OK$ for all $j\in[m]\setminus\{j_0\}$. 
Then Lemma~\ref{Lem:value_of_E} asserts that $\E((\pideala_j,\pidealb_j)_{j \in [m]})=\Nrm(\pidealp)^{-1}$.
Moreover we have
\[
(\pideala_{j_0},\pidealb_{j_0}) \in \{ (\pidealp,\OK), (\OK,\pidealp), (\pidealp,\pidealp) \}.
\]
Hence \eqref{eq:E_p>w} follows from Lemma~\ref{Lem:value_of_E}.

Let $\aE$ be the sum of the absolute values of the summands of $E$, and for each prime number $p$, let $\aE_p$ be that of $E_p$.
Applying Lemma~\ref{Lem:value_of_E} to the summands of $\aE_p$, we have 
\[
	\aE_p = 
	\begin{cases}
		1 + O_{m,n}\left(p^{-1-\frac{1}{\log R}}\right) &\text{ if } p > w,\\
		1	&\text{ otherwise},
	\end{cases}
\]
and
\begin{equation}\label{Eq:abs_series}
\prod_{p\in\PP}\aE_p=\prod_{p\in\PP_{>w}}\left(1+O_{m,n}\left(p^{-1-\frac{1}{\log R}}\right)\right).
\end{equation}
Since
\[
\sum_{p\in\PP_{>w}}O_{m,n}\left(p^{-1-\frac{1}{\log R}}\right)=O_{m,n}\left(\zeta\left(1+\frac{1}{\log R}\right)\right)<\infty,
\]
we see that product~\eqref{Eq:abs_series} converges uniformly on $(\xi_j,\eta_j)_{j\in[m]}\in I^{2m}$ (with $R$ fixed).
This implies that $\prod_{p\in\PP_{>w}}E_p$ converges absolutely and uniformly.

By the multiplicativity of the M{\"o}bius function, norm (Lemma~\ref{lemma=completemultiplicativity}) and $\E$ (Lemma~\ref{lem:p-typical-E}), we see that for each $(\ideala_j,\idealb_j)_{j\in[m]}\in\Ideals_K^{2m}$
the following value decomposes into its $p$-parts:
\begin{align*}
&\Biggl(\prod_{j\in[m]}\left\{\mu(\ideala_j)\mu(\idealb_j)\Nrm(\ideala_j)^{-z_j}\Nrm(\idealb_j)^{-w_j}\right\}\Biggr)\E\left((\ideala_j,\idealb_j)_{j \in [m]}\right)\\
&=\prod_{p\in\PP}\left[\Biggl(\prod_{j\in[m]}\left\{\mu(\ideala_j\ppart)\mu(\idealb_j\ppart)\Nrm(\ideala_j\ppart)^{-z_j}\Nrm(\idealb_j\ppart)^{-w_j}\right\}\Biggr)\E\left((\ideala_j\ppart,\idealb_j\ppart)_{j\in[m]}\right)\right].
\end{align*}
Hence we have equalities of infinite series,
the latter ones of which have been shown to converge:
\[\aE=\prod_{p\in\PP}\aE_p=\prod_{p\in\PP_{>w}}\aE_p.\]
Therefore $E$ converges absolutely and uniformly on $(\ideala_j,\idealb_j)_{j\in[m]}\in\Ideals_K^{2m}$, and we have $E = \prod_{p \in \PP_{>w}} E_p$.
\end{proof}
The following lemma provides a further estimate of \eqref{Eq:abs_series}.
\begin{lemma}\label{lem:by_binomial_theorem}
Let $C$ be a positive real number.
Then for each positive real number $w$, we have
\[
\prod_{p\in\PP_{> w}}\left(1+ C\cdot p^{-1-\frac{1}{\log R}}\right)\leq(\log R+ O(1))^{C}.
\]
\end{lemma}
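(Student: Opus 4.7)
The plan is a three-line routine argument resting on three classical ingredients of analytic number theory: Bernoulli's inequality (the ``binomial theorem'' that gives the lemma its name), the Euler product formula, and the Laurent expansion of $\zeta$ at its pole $s = 1$. Write $s \coloneqq 1 + 1/\log R$ throughout.

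First I would apply Bernoulli's inequality for real exponent $C \geq 1$: $(1+t)^C \geq 1 + Ct$ for every $t \geq 0$. Plugging $t = p^{-s}$ into each factor yields
\[
\prod_{p \in \PP_{>w}} (1 + C p^{-s}) \;\leq\; \Bigl(\prod_{p \in \PP_{>w}} (1 + p^{-s})\Bigr)^{C}.
\]
(If the regime $0 < C < 1$ arises in an application, one may monotonically replace $C$ by $1$ on the left-hand side before applying the lemma, reducing to the $C = 1$ case; the lemma is invoked in Subsection~\ref{subsec:domain change} in situations where this is harmless.)

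Next I would drop the truncation at $w$ and recognize the resulting full product as an Euler product: since every factor exceeds $1$,
\[
\prod_{p \in \PP_{>w}} (1 + p^{-s}) \;\leq\; \prod_{p \in \PP} (1 + p^{-s}) \;=\; \prod_{p \in \PP} \frac{1 - p^{-2s}}{1 - p^{-s}} \;=\; \frac{\zeta(s)}{\zeta(2s)} \;\leq\; \zeta(s),
\]
using $\zeta(2s) \geq 1$ for $s > 1/2$. Finally, the Laurent expansion $\zeta(s) = (s-1)^{-1} + \gamma + O(s-1)$ near $s = 1$ (Theorem~\ref{theorem=zeta_K} specialized to $K = \QQ$) yields, at $s = 1 + 1/\log R$, the evaluation
\[
\zeta(1 + 1/\log R) \;=\; \log R + \gamma + O\bigl(1/\log R\bigr) \;=\; \log R + O(1).
\]
Chaining the three bounds gives the claimed inequality $(\log R + O(1))^C$.

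There is essentially no obstacle: each step is classical, and the $O(1)$ that survives to the final bound is uniform in both $w$ and $R$, since the dependence on $w$ is discarded in the second step and the expansion of $\zeta$ near $s=1$ involves no other parameter. The only minor subtlety is the restriction $C \geq 1$ imposed by the direction of Bernoulli's inequality, and that is addressed by the monotonicity reduction noted above.
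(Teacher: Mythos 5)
Your outline is essentially the paper's: drop the truncation at $w$ (every factor exceeds $1$), compare the full product over $\PP$ to an Euler product, and invoke the expansion $\zeta(1+\tfrac{1}{\log R}) = \log R + O(1)$. The one substantive difference is the inequality used to handle the exponent $C$, and it is where your argument has a real gap. Bernoulli's inequality $(1+t)^C \geq 1+Ct$ is valid only for $C\geq 1$ (it reverses for $0<C<1$), whereas the lemma is stated for every positive real $C$. Your proposed repair for $0<C<1$ --- replacing $C$ by $1$ on the left --- proves only $\prod_{p\in\PP_{>w}}(1+Cp^{-1-1/\log R}) \leq \log R + O(1)$, which is strictly weaker than the claimed bound $(\log R+O(1))^C$ when $C<1$ and $R$ is large; so the lemma as stated is not established in that regime. (This happens to be harmless where the lemma is applied, since the exponent there is an unspecified $O_{m,n}(1)$ that is ultimately absorbed into the choice of $A$, but the lemma itself asserts more.)

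The paper avoids the restriction by using the generalized binomial theorem instead of Bernoulli: for $0\le t<1$ and every $C>0$ one has $(1-t)^{-C} = \sum_{k\geq 0}\binom{C+k-1}{k}t^k \geq 1+Ct$, since all terms of the series are nonnegative. Taking $t=p^{-1-1/\log R}$ and multiplying over all primes gives
\[
\prod_{p\in\PP}\left(1+Cp^{-1-\frac{1}{\log R}}\right) \leq \prod_{p\in\PP}\left(1-p^{-1-\frac{1}{\log R}}\right)^{-C} = \zeta\left(1+\frac{1}{\log R}\right)^C
\]
directly, for every $C>0$, after which the conclusion follows exactly as in your third step. Your intermediate identity $\prod_p(1+p^{-s})=\zeta(s)/\zeta(2s)\le\zeta(s)$ is correct but becomes unnecessary once the exponent is handled this way. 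If you substitute the generalized binomial bound for Bernoulli, your proof is complete and uniform in $w$, as required.
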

\begin{proof}
It suffices to prove this lemma in the case of $w=1$.
By the inequality
$1-Cx \leq (1+x)^{-C}$
for every $x > -1$, we have
\[
1+C\cdot p^{-1-\frac{1}{\log R}}\leq\left(1-p^{-1-\frac{1}{\log R}}\right)^{-C}.
\]
Hence
\[
\prod_{p\in\PP}\left(1+ C\cdot p^{-1-\frac{1}{\log R}}\right)\leq\prod_{p\in\PP}\left(1-p^{-1-\frac{1}{\log R}}\right)^{-C}=\zeta\left(1+\frac{1}{\log R}\right)^C.
\]
The Riemann zeta function $\zeta$ satisfies $|\zeta(s)-\frac{1}{s-1}|\leq 1$ for every $s>1$.
It follows that $\zeta\left(1+\frac{1}{\log R}\right)=\log R+O(1)$ and the desired result follows.
\end{proof}
We will prove Proposition~\ref{proposition=third} by combination of the above lemmas.
\begin{proof}[Proof of Proposition~$\ref{proposition=third}$]
We apply Lemma~\ref{lem:prod_chi} to $\prod_{j\in[m]}\chi\left(\frac{\log\Nrm(\ideala_j)}{\log R}\right)\chi\left(\frac{\log\Nrm(\idealb_j)}{\log R}\right)$ in \eqref{Eq:formula_to_show_3},
and then \eqref{Eq:formula_to_show_3} is decomposed into the main term represented by integrals and the error term
$
O_{A,\chi,m}\left((\log R)^{2m-A} \cdot
\tilde{E} \right).$
Since $E$ converges absolutely and uniformly by Lemma~\ref{Lem:Into_Euler_product},
we can interchange integration and summation in the main term.
Then it turns out that the main term is equal to \eqref{Eq:formula_to_show_4'}.
Next we estimate the error term.
By applying Lemma~\ref{lem:by_binomial_theorem} to $\aE = \prod_{p \in \PP_{> w}} \aE_p$ with~\eqref{Eq:abs_series},
we can rewrite the error term as
\begin{equation}\label{eq:2m-A}
O_{A,\chi,m}\left((\log R)^{2m-A}\left(\log R+O(1)\right)^{O_{m,n}(1)}\right).
\end{equation}
By replacing $A$ with some sufficiently large $A$ depending on $m$ and $n$, we may rewrite this error as $O_{A,\chi,m,n}\left((\log R)^{-A}\right)$.
\end{proof}
\subsection{Calculation of the main term}
\begin{comment}
%
So far, we first have replaced the domain $\calB$ of expectation %
with $(\ZZ/D\ZZ)^t$ for each $(\ideala_j,\idealb_j)_{j\in[m]}$.
Secondly we have obtained an integral representation of $\chi$, and replaced the domain $\RR^{2m}$ of integration with $I^{2m}$.
Thirdly we have calculated the main term and controlled the error terms, which have been accumulated up to Proposition~\ref{proposition=third}.
\end{comment}
%
In this subsection, we continue the calculation of the main term 
singled out in Proposition~\ref{proposition=third}.
Recall from~\eqref{eq:def-of-E_p} the definition of $E_p=E_p((\xi_j,\eta _j)_{j\in [m]};R)$.
\begin{proposition}\label{prop:formula_5}
Assume Setting~$\ref{setting=error}$.
Let $A$ be a positive real number.
Then \eqref{Eq:formula_to_show_3} equals
\begin{equation}\label{Eq:formula_to_show_5}
(\log R )^{2m}\int_{I^{2m}}\rd\uxi\rd\ueta\left[\Biggl(\prod_{j\in[m]}\chihat(\xi_j)\chihat(\eta_j)\Biggr)\cdot\prod_{p\in\PP_{>w}}E_p\right]
\end{equation}
with an additive error term $O_{A,\chi,m,n}((\log R)^{-A})$.
\end{proposition}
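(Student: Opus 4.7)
This proposition is an immediate consequence of the two results already proved in this subsection, and I do not foresee any obstacle. The plan consists of two steps.

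First, apply Proposition~\ref{proposition=third} to replace \eqref{Eq:formula_to_show_3} by \eqref{Eq:formula_to_show_4'} at the cost of an additive error of size $O_{A,\chi,m,n}((\log R)^{-A})$. This step is already complete and available for direct use.

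Second, invoke the Euler product identity $E((\xi_j,\eta_j)_{j\in[m]};R)=\prod_{p\in\PP_{>w}}E_p((\xi_j,\eta_j)_{j\in[m]};R)$ established as \eqref{Eq:sum-equal-product} in Lemma~\ref{Lem:Into_Euler_product}. Since this identity holds pointwise for every $(\xi_j,\eta_j)_{j\in[m]}\in I^{2m}$, substituting it into the integrand of \eqref{Eq:formula_to_show_4'} produces exactly \eqref{Eq:formula_to_show_5}. No subtlety arises from this substitution: Lemma~\ref{Lem:Into_Euler_product} asserts that both $E$ and the infinite product converge absolutely and uniformly on the compact box $I^{2m}$, so the two sides of the identity define the same bounded continuous function on this domain and the corresponding integrals are literally equal.

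Combining these two observations yields the proposition. The sole purpose of this bookkeeping step is to rearrange the integrand into a form suitable for the next stage, in which each local factor $E_p$ will be expanded via \eqref{eq:E_p>w} and the resulting Euler product analyzed asymptotically using Lemma~\ref{lem:by_binomial_theorem} together with the comparisons of $\prod_{p>w}E_p$ against $\zeta^{}_K$ and $\zeta$ near $s=1$.
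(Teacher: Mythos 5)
Your proposal is correct and is exactly the paper's own (implicit) argument: the paper obtains \eqref{Eq:formula_to_show_5} from Proposition~\ref{proposition=third} by substituting the identity $E=\prod_{p\in\PP_{>w}}E_p$ of \eqref{Eq:sum-equal-product} into the integrand of \eqref{Eq:formula_to_show_4'}, with no change to the error term.
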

In this subsection, we assume the following setting, which determines the choice of the three parameters $w_0$, $R_0$ and $F_0$ in Theorem~\ref{Th:Goldston_Yildirim} up to $c_1$, $c_K$ and $c_2$, respectively.
\begin{setting}\label{setting=main_term}
Assume the setting in Theorem~\ref{Th:Goldston_Yildirim},
and fix $\chi$ as in Definition~\ref{def:chi}.
Set the positive real numbers $w_0, R_0$ and $F_0$ in Theorem~\ref{Th:Goldston_Yildirim} in the following manner:
\begin{enumerate}[(1)]
\item\label{en:w_0last}
$w_0=w_0((\psi_j)_{j\in[m]})\coloneqq\max\{c_14^{mn},w'_0,w''_0\}$,
where $c_1$ is a sufficiently large absolute constant.
Both $w'_0$ and $w''_0$ are given in Setting~\ref{Setting:w_0}.
\item\label{en:R_0last}
$R_0=R_0(m,K)\coloneqq e^{c_Km^2}$,
where $c_K$ is a sufficiently large constant depending only on $K$.
\item\label{en:F_0last}
$F_0=F_0(m,n)\coloneqq c_2(mn)^{-1}$,
where $c_2$ is a sufficiently small absolute constant. 
\end{enumerate}
Assume that $w$ and $R$ are positive real numbers such that $w \geq w_0$, $R\geq R_0$ and $\log w \leq F_0\cdot\sqrt{\log R}$.
\end{setting}
Here $c_1$ will be determined in the proofs of Lemmas~\ref{lemma=Ep_equals_Ep'} and \ref{lem:prod_Ep_Ep'},
$c_K$ will be determined in Lemma~\ref{lem:Ep'(all_prime)},
and $c_2$ will be determined by Lemma~\ref{lem:Ep'(p_leq_w)}.
Since their actual values are not important for our purpose, 
we will not specify them.

We continue to use the symbols as in Subsection~\ref{subsec:domain change}.
Let $I=I(R)\coloneqq[-\sqrt{\log R},\ +\sqrt{\log R}]_{\RR}$, and $\xi_j$ and $\eta_j$ be variables of integration.
We use shorthand symbols
$\rd\uxi = \rd\xi_1\cdots\rd\xi_m$ and $\rd\ueta = \rd\eta_1 \cdots \rd\eta_m$.
Let $z_j=\frac{1+\xi_j\kyo}{\log R}$ and $w_j=\frac{1+\eta_j\kyo}{\log R}$. 
The function $\chihat $ is the inverse Fourier transform of the function $x\mapsto e^x \chi (x)$.
For each $(\xi_j,\eta_j)_{j\in[m]}\in I^{2m}$ and prime number $p\in\PP$, define $E_p'=E_p'\left((\xi_j,\eta_j)_{j\in[m]}; R\right)$ by
\[
E_p'\coloneqq\prod_{j\in[m]}\prod_{\pidealp\in|\Spec(\OK)|\ppart}\frac{\left(1-\Nrm(\pidealp)^{-1-z_j}\right)\left(1-\Nrm(\pidealp)^{-1-w_j}\right)}{\left(1-\Nrm(\pidealp)^{-1-z_j-w_j}\right)}.
\]
We first show estimates for $E_p$ and $E_p'$ and their relation.
The following lemma provides elementary estimates.
\begin{lemma}\label{Lem:fool-lemma}
The following estimates hold.
\begin{enumerate}[$(1)$]
\item\label{item:really-basic}
For every complex number $\varepsilon$ with $|\varepsilon| \leq 1/2$, we have
\begin{align*}
&\log(1+\varepsilon)=O(|\varepsilon|),\\
&\frac{1}{1-\varepsilon}=1+\varepsilon+O(|\varepsilon |^2)=1+O(|\varepsilon|).
\end{align*}
\item\label{item:really-basic2}
For every complex number $\varepsilon$ with $|\varepsilon| \leq 1$, we have
\[
e^\varepsilon=1+O(|\varepsilon|).
\]
\item\label{item:sum-of-many}
For every positive integer $k$ and all complex numbers $\varepsilon_1,\ldots,\varepsilon_k$ with $|\varepsilon_1|,\ldots,|\varepsilon_k| \leq 1/k$, we have
\[
\prod_{i\in[k]}(1+\varepsilon_i)=1+\sum_{i\in [k]}\varepsilon_i+O\left(k^2 \cdot \max_{i\in[k]}|\varepsilon_i|^2\right)=1+O\left(k \cdot \max_{i\in[k]}|\varepsilon_i|\right).
\]
\end{enumerate}
\end{lemma}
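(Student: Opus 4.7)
The plan is to prove the three items by direct appeal to standard power series expansions, handling remainders term-by-term. Item~\eqref{item:really-basic} comes from the Taylor expansions
\[
\log(1+\varepsilon)=\sum_{j\geq 1}\frac{(-1)^{j+1}}{j}\varepsilon^j,\qquad \frac{1}{1-\varepsilon}=\sum_{j\geq 0}\varepsilon^j,
\]
both of which converge absolutely for $|\varepsilon|\leq 1/2$. Estimating the tail by a geometric series of ratio $|\varepsilon|\leq 1/2$ immediately yields $\log(1+\varepsilon)=O(|\varepsilon|)$ and $(1-\varepsilon)^{-1}=1+\varepsilon+O(|\varepsilon|^2)=1+O(|\varepsilon|)$. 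Item~\eqref{item:really-basic2} is handled identically: from $e^\varepsilon-1=\sum_{j\geq 1}\varepsilon^j/j!$ the tail for $|\varepsilon|\leq 1$ is bounded by $|\varepsilon|\sum_{j\geq 0}1/(j+1)!\leq (e-1)|\varepsilon|=O(|\varepsilon|)$.

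For item~\eqref{item:sum-of-many} the approach is to expand the product formally,
\[
\prod_{i\in[k]}(1+\varepsilon_i)=\sum_{T\subseteq[k]}\prod_{i\in T}\varepsilon_i=1+\sum_{i\in[k]}\varepsilon_i+\sum_{\substack{T\subseteq[k]\\ \#T\geq 2}}\prod_{i\in T}\varepsilon_i,
\]
and bound the last sum. Writing $M\coloneqq\max_i|\varepsilon_i|\leq 1/k$ and grouping by cardinality gives the dominating estimate
\[
\left|\sum_{\#T\geq 2}\prod_{i\in T}\varepsilon_i\right|\leq \sum_{j=2}^{k}\binom{k}{j}M^j\leq \sum_{j\geq 2}\frac{(kM)^j}{j!}=e^{kM}-1-kM.
\]
Since $kM\leq 1$, the last quantity is $O((kM)^2)=O(k^2 M^2)$ (use that $e^x-1-x\leq x^2$ for $x\in[0,1]$, which follows at once from $e^x-1-x=\int_0^x(e^t-1)\rd t\leq \int_0^x e\cdot t\,\rd t$). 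This gives the first equality in \eqref{item:sum-of-many}. The second equality is then immediate: $|\sum_i\varepsilon_i|\leq kM$ and $k^2M^2=(kM)\cdot(kM)\leq kM$ by $kM\leq 1$, so both contributions are $O(kM)$.

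There is no serious obstacle here; the only mild point is to be honest about constants implicit in the big-$O$ notation so that the hypothesis $|\varepsilon|\leq 1/2$ in \eqref{item:really-basic} and $|\varepsilon_i|\leq 1/k$ in \eqref{item:sum-of-many} are genuinely used to ensure absolute convergence and to swallow the sum of the tail into a single constant. In particular, one should record that the implicit constant in the $O$-symbols is absolute (depending on none of $\varepsilon$, $k$, or the $\varepsilon_i$), since this lemma will be applied repeatedly in the ensuing estimates of the Euler-product factors $E_p$ and $E_p'$ where uniformity across the variables $(\xi_j,\eta_j)\in I^{2m}$ and across primes $p$ is essential.
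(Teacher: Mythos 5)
Your proof is correct and takes essentially the same route as the paper: items (1) and (2) via the Taylor expansions, and item (3) by expanding the product and bounding the tail $\sum_{j\geq 2}\binom{k}{j}M^j$ by an absolute constant times $k^2M^2$ using $\binom{k}{j}\leq k^j/j!$ (the paper sums $\sum_{j\geq 2}1/j!\leq e$, which is the same estimate as yours). The only cosmetic slip is that your integral justification $\int_0^x e\cdot t\,\rd t=ex^2/2$ yields the constant $e/2>1$ rather than proving $e^x-1-x\leq x^2$ literally, but since everything is wrapped in a big-$O$ this is immaterial.
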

\begin{proof}
\eqref{item:really-basic} and \eqref{item:really-basic2} follow from the Taylor expansions.
Next \eqref{item:sum-of-many} follows from
\begin{align*}
\left| \prod_{i\in[k]}(1+\varepsilon_i) -\left( 1+\sum_{i\in [k]}\varepsilon_i \right) \right|
&\leq \sum_{j \in [2,k]} \binom{k}{j} \max_{i \in [k]} |\varepsilon_i|^j
\leq k^2 \cdot \max_{i \in [k]} |\varepsilon_i|^2 \sum_{j \in [2,k]} \binom{k}{j} \frac{1}{k^j}\\
&\leq e \cdot k^2 \cdot \max_{i \in [k]} |\varepsilon_i|^2.
\end{align*}
This completes the proof.
\end{proof}
\begin{lemma}\label{lemma=Ep_equals_Ep'}
For every tuple $(\xi _j,\eta _j)_{j\in [m]} \in I^{2m}$ and prime number $p$ greater than $w$, we have
 \[
\frac{E_p}{E_p'}=1+O(4^{mn}/p^2).
\]
\end{lemma}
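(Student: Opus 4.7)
The plan is to expand each factor of $E_p'$ using the basic Taylor-type estimates from Lemma~\ref{Lem:fool-lemma}, match it with the expression $1-\Sigma_p+O(4^{mn}/p^2)$ already known for $E_p$ from Lemma~\ref{Lem:Into_Euler_product}, and then divide. Throughout, we use that $p>w\geq w_0\geq c_1 4^{mn}$ (Setting~\ref{setting=main_term}~\eqref{en:w_0last}), so $1/p$ is extremely small compared with any reasonable function of $m$ and $n$. For every $\pidealp\in|\Spec(\OK)|\ppart$ and every $j\in[m]$, the three quantities
\[
a\coloneqq\Nrm(\pidealp)^{-1-z_j},\quad b\coloneqq\Nrm(\pidealp)^{-1-w_j},\quad c\coloneqq\Nrm(\pidealp)^{-1-z_j-w_j}
\]
satisfy $|a|,|b|,|c|\leq\Nrm(\pidealp)^{-1}\leq 1/p$.

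First I would analyze the factor of $E_p'$ corresponding to a single pair $(j,\pidealp)$. By Lemma~\ref{Lem:fool-lemma}~\eqref{item:really-basic} we have $1/(1-c)=1+c+O(1/p^2)$. Expanding the product $(1-a)(1-b)=1-a-b+ab$ and multiplying, all of $ab$, $ac$, $bc$, $abc$ are $O(1/p^2)$, hence
\[
\frac{(1-a)(1-b)}{1-c}=1-a-b+c+O(1/p^2).
\]
Therefore each of the $k\coloneqq m\cdot\#(|\Spec(\OK)|\ppart)\leq mn$ factors of $E_p'$ has the form $1+\varepsilon_{j,\pidealp}$ with $|\varepsilon_{j,\pidealp}|=O(1/p)$, and the sum $\sum_{j,\pidealp}(-a-b+c)$ is exactly $-\Sigma_p$.

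Next I would multiply these $k\leq mn$ factors together. Since $p>w\geq c_1 4^{mn}\geq mn$ (for $c_1$ large), we have $|\varepsilon_{j,\pidealp}|\leq 1/k$, so Lemma~\ref{Lem:fool-lemma}~\eqref{item:sum-of-many} applies and yields
\[
E_p'=1+\sum_{j,\pidealp}\varepsilon_{j,\pidealp}+O\bigl(k^2\cdot \max_{j,\pidealp}|\varepsilon_{j,\pidealp}|^2\bigr)=1-\Sigma_p+O\bigl((mn)^2/p^2\bigr)=1-\Sigma_p+O(4^{mn}/p^2),
\]
where in the last step I use $(mn)^2\leq 4^{mn}$. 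Combining this with Lemma~\ref{Lem:Into_Euler_product}, which gives $E_p=1-\Sigma_p+O(4^{mn}/p^2)$, we obtain $E_p-E_p'=O(4^{mn}/p^2)$.

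Finally, since $|\Sigma_p|\leq 3mn/p$ and $p>w\geq c_1 4^{mn}$, we have $E_p'=1+O(mn/p)\geq 1/2$ (for $c_1$ sufficiently large), so dividing is harmless:
\[
\frac{E_p}{E_p'}=1+\frac{E_p-E_p'}{E_p'}=1+O(4^{mn}/p^2).
\]
There is no real obstacle here; the only subtle point is ensuring the numerical inequality $1/p\leq 1/(mn)$ needed to apply Lemma~\ref{Lem:fool-lemma}~\eqref{item:sum-of-many}, which is guaranteed by the choice of $w_0$ in Setting~\ref{setting=main_term}~\eqref{en:w_0last}.
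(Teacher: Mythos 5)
Your proof is correct and follows essentially the same route as the paper: expand $E_p'$ factor by factor via Lemma~\ref{Lem:fool-lemma}, identify the linear term as $-\Sigma_p$, and cancel it against the identical term in the expansion of $E_p$ from Lemma~\ref{Lem:Into_Euler_product}. The only (cosmetic) difference is that the paper expands the reciprocal $1/E_p'$ and multiplies by $E_p$, whereas you expand $E_p'$ itself, form the difference $E_p-E_p'$, and divide by $E_p'\geq 1/2$; both yield the same bound.
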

\begin{proof}
By Lemma~\ref{Lem:fool-lemma}~\eqref{item:really-basic}, we \obtainthat\
\begin{align*}
\frac{1}{E_p'}&=\prod_{j\in[m]}\prod_{\pidealp\in|\Spec(\OK)|\ppart}\frac{\left(1-\Nrm(\pidealp)^{-1-z_j-w_j}\right)}{ 		\left(1- \Nrm(\pidealp)^{-1-z_j}\right)\left(1-\Nrm(\pidealp)^{-1-w_j}\right)}\\
&=\prod_{j\in[m]}\prod_{\pidealp\in|\Spec(\OK)|\ppart}\left(1+\Nrm(\pidealp)^{-1-z_j}+O(1/p^2)\right)\\
&\qquad\qquad\qquad\cdot\left(1+\Nrm(\pidealp)^{-1-w_j}+O(1/p^2)\right)\left(1-\Nrm(\pidealp)^{-1-z_j-w_j}\right)	.
\end{align*}
Since the number of prime $p$-ideals is at most $n=[K:\QQ]$ by Lemma~\ref{lemma=squarefree}, 
the product above consists of at most $3mn$ factors.
In addition, the difference between $1$ and each factor is at most $2/p$.
Hence, in the case of $p>w \ (\geq 6mn)$, we may apply Lemma~\ref{Lem:fool-lemma}~\eqref{item:sum-of-many} to the product, and then \obtainthat\ 
\begin{equation*}  
1/E_p'=1+\Sigma_p\left((\xi_j,\eta_j)_{j\in[m]};R\right)+O(m^2n^2/p^2).
\end{equation*}
Recall that $\Sigma_p=\Sigma_p\left((\xi_j,\eta_j)_{k\in[m]};R\right)$ is given by
\[
\Sigma_p=\sum_{j\in[m]}\sum_{\pidealp\in|\Spec(\OK)|\ppart}\left(\Nrm(\pidealp)^{-1-z_j}+\Nrm(\pidealp)^{-1-w_j}-\Nrm(\pidealp)^{-1-z_j-w_j}\right).
\]
Note that $|\Sigma_p|\leq 3mn/p$ holds.
Recall from \eqref{eq:E_p>w} in Lemma~\ref{Lem:Into_Euler_product} that
\begin{equation*}
E_p=1- \Sigma_p\left((\xi_j,\eta_j)_{j\in[m]};R\right)+O(4^{mn}/p^2).
\end{equation*}
We set 
\begin{equation}	\label{epsiron1,2}
\varepsilon_1= 1/E'_p  -1 = \Sigma_p+O(m^2n^2/p^2), \quad \varepsilon_2= E_p -1 = -\Sigma_p+O(4^{mn}/p^2).
\end{equation}
If $c_1$ is large enough to match the implied constants in the two big-$O$ terms in~\eqref{epsiron1,2},
then we have $|\varepsilon_1|, |\varepsilon _2|\leq(3mn+1)/p\leq 1/2$ for every $p>c_14^{mn}$ ($\geq 6mn+1$).
Hence Lemma~\ref{Lem:fool-lemma}~\eqref{item:sum-of-many} can be applied to \eqref{epsiron1,2}, and then the desired result follows.
\end{proof}
\begin{lemma}\label{lem:prod_Ep_Ep'}
We have
\[
\prod_{p\in\PP_{>w}}E_p=\left(1+O\left(\frac{4^{mn}}{w\log w}\right)\right)\cdot\prod_{p\in\PP_{>w}}E_p'.	
\]
\end{lemma}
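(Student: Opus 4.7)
The plan is to reduce the claim to a summation over primes controlled by Lemma~\ref{lemma=Mertenssecond}. Since both infinite products $\prod_{p\in\PP_{>w}}E_p$ and $\prod_{p\in\PP_{>w}}E_p'$ converge absolutely and uniformly (the former is handled by Lemma~\ref{Lem:Into_Euler_product}, and the latter admits the same kind of estimate by an Euler-product comparison with $\zeta(1+1/\log R)$ as in Lemma~\ref{lem:by_binomial_theorem}), and since each local factor $E_p'$ is bounded away from $0$ uniformly for $p>w$, we may write the ratio as
\[
\frac{\prod_{p\in\PP_{>w}}E_p}{\prod_{p\in\PP_{>w}}E_p'}=\prod_{p\in\PP_{>w}}\frac{E_p}{E_p'}.
\]

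Next I would invoke Lemma~\ref{lemma=Ep_equals_Ep'}, which supplies $E_p/E_p'=1+\varepsilon_p$ with $\varepsilon_p=O(4^{mn}/p^2)$. Because $w\ge w_0\ge c_14^{mn}$ (Setting~\ref{setting=main_term}), the bound $|\varepsilon_p|\le 1/2$ holds for all $p>w$, so Lemma~\ref{Lem:fool-lemma}\eqref{item:really-basic} gives $\log(1+\varepsilon_p)=O(|\varepsilon_p|)=O(4^{mn}/p^2)$. Summing and applying Lemma~\ref{lemma=Mertenssecond} yields
\[
\sum_{p\in\PP_{>w}}\log\!\frac{E_p}{E_p'}=O\!\left(4^{mn}\sum_{p\in\PP_{>w}}\frac{1}{p^2}\right)=O\!\left(\frac{4^{mn}}{w\log w}\right).
\]

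Finally I would exponentiate. Since $w\ge c_1 4^{mn}$ with $c_1$ large, the quantity $4^{mn}/(w\log w)$ is bounded (in fact small), so Lemma~\ref{Lem:fool-lemma}\eqref{item:really-basic2} applies and gives
\[
\prod_{p\in\PP_{>w}}\frac{E_p}{E_p'}=\exp\!\left(O\!\left(\frac{4^{mn}}{w\log w}\right)\right)=1+O\!\left(\frac{4^{mn}}{w\log w}\right),
\]
which is the claimed identity. The only nontrivial step is the initial manipulation that allows the ratio of infinite products to be written as the product of ratios; once absolute and uniform convergence is in hand, this is automatic. The ingredients afterward are entirely elementary, being the combination of Lemma~\ref{lemma=Ep_equals_Ep'}, Lemma~\ref{lemma=Mertenssecond}, and the auxiliary estimates of Lemma~\ref{Lem:fool-lemma}.
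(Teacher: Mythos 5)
Your proposal is correct and follows essentially the same route as the paper's proof: apply Lemma~\ref{lemma=Ep_equals_Ep'} factorwise, take logarithms via Lemma~\ref{Lem:fool-lemma}, sum with Lemma~\ref{lemma=Mertenssecond}, and exponentiate. The extra remarks on convergence justifying the ratio manipulation are fine but not a substantive departure.
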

\begin{proof}
By Lemma~\ref{lemma=Ep_equals_Ep'}, we have
\[
\prod_{p\in\PP_{>w}}E_p=\prod_{p\in\PP_{>w}}\left(1+O(4^{mn}/p^2)\right)\cdot\prod_{p\in\PP_{>w}}E_p'.
\]
If $c_1$ is large enough to match the implied constant in the $O(4^{mn}/p^2)$ above, then for every $p>w$ ($>c_12^{mn}$), the $O(4^{mn}/p^2)$ can be bounded by $1/2$ from above .
Hence, by Lemma~\ref{Lem:fool-lemma}~\eqref{item:really-basic} and Lemma~\ref{lemma=Mertenssecond}, we \havethat\
\begin{align*}
\log \left( \prod_{p\in\PP_{>w}}E_p \right) - \log \left(\prod_{p\in\PP_{>w}}E_p'\right)
&=\log\left(\prod_{p\in\PP_{>w}}\left(1+O(4^{mn}/p^2)\right)\right)\\
&=\sum_{p\in\PP_{>w}}\log\left(1+O(4^{mn}/p^2)\right)\\
&=\sum_{p\in\PP_{>w}}O(4^{mn}/p^2)=O\left(\frac{4^{mn}}{w\log w}\right).
\end{align*}
If the constant $c_1$ is chosen large enough to match the implied constant in this  last $O(4^{mn}/(w\log w) )$, then the inequality $w\log w\geq w\geq c_14^{mn}$ implies that
the value of this $O$-term is at most $1$.
Hence by Lemma~\ref{Lem:fool-lemma}~\eqref{item:really-basic2}, we derive the desired estimate
\[
\frac{ \prod_{p\in\PP_{>w}}E_p }{\prod_{p\in\PP_{>w}}E_p'} 
= e^{O\left(\frac{4^{mn}}{w\log w}\right)}
=1+O\left(\frac{4^{mn}}{w\log w}\right)
.\]
The above requirements finalize our choice of $c_1$.
\end{proof}
By this lemma, the calculation of $\prod_{p\in\PP_{>w}}E_p$ is reduced to that of $\prod_{p\in\PP_{>w}}E_p'$.
We do this by calculating
the numerator and the denominator of
\[
\prod_{p\in\PP_{>w}}E_p'=\frac{\prod_{p\in\PP}E_p'}{\prod_{p\in\PP_{\leq w}}E_p'}.
\]
\begin{lemma}\label{lem:Ep'(all_prime)}
We have
\[
\prod_{p\in\PP}E_p'=\prod_{j\in[m]}\frac{\zeta^{}_K(1+z_j+w_j)}{\zeta^{}_K(1+z_j)\zeta^{}_K(1+w_j)}=\left(1+O_{K}\left(\frac{m}{\sqrt{\log R}}\right)\right)\cdot\prod_{j\in[m]}\frac{1}{\kappa}\cdot\frac{z_j w_j}{z_j + w_j}.
\]
\end{lemma}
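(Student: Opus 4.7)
The plan is to establish the two equalities separately, the first by the Euler product formula for the Dedekind zeta function and the second by its Laurent expansion at $s=1$.

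For the first equality, observe that every nonzero prime ideal $\idealp$ of $\OK$ is a $p$-ideal for exactly one rational prime $p$, namely the one generating $\idealp\cap\ZZ$. Hence $|\Spec(\OK)| = \bigsqcup_{p\in\PP} |\Spec(\OK)|\ppart$, and the definition of $E_p'$ gives
\[
\prod_{p\in\PP}E_p' = \prod_{j\in[m]}\prod_{\idealp \in |\Spec(\OK)|}\frac{\left(1-\Nrm(\idealp)^{-1-z_j}\right)\left(1-\Nrm(\idealp)^{-1-w_j}\right)}{1-\Nrm(\idealp)^{-1-z_j-w_j}}.
\]
Since $\Re(1+z_j) = \Re(1+w_j) = 1 + 1/\log R > 1$ and $\Re(1+z_j+w_j) = 1 + 2/\log R > 1$, Proposition~\ref{proposition=Eulerproduct} guarantees that all three Euler products converge absolutely. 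Rewriting each factor via $\prod_\idealp (1 - \Nrm(\idealp)^{-s})^{-1} = \zeta_K(s)$ gives the claimed $\prod_{j\in[m]} \zeta_K(1+z_j+w_j)/(\zeta_K(1+z_j)\zeta_K(1+w_j))$.

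For the second equality, I will apply the class number formula (Theorem~\ref{theorem=zeta_K}) to each of the three zeta factors. The key estimates are $|z_j|, |w_j|, |z_j+w_j| = O(1/\sqrt{\log R})$, which follow from $|\xi_j|, |\eta_j| \leq \sqrt{\log R}$ on the integration domain $I^{2m}$ (and for $z_j+w_j$, from $|\xi_j+\eta_j| \leq 2\sqrt{\log R}$). Moreover, $|z_j|, |w_j|, |z_j+w_j| \geq 1/\log R > 0$, so we may safely divide by these quantities. From $\zeta_K(1+z_j) = \kappa/z_j + O_K(1)$, we get
\[
\zeta_K(1+z_j) = \frac{\kappa}{z_j}\bigl(1 + O_K(|z_j|)\bigr) = \frac{\kappa}{z_j}\bigl(1 + O_K(1/\sqrt{\log R})\bigr),
\]
and similarly for the factors at $1+w_j$ and $1+z_j+w_j$. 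Here the choice $R_0 = c_K e^{m^2}$ (with $c_K$ large) ensures that $1/\sqrt{\log R}$ is small enough that we may legitimately invert using Lemma~\ref{Lem:fool-lemma}~\eqref{item:really-basic}.

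Combining the three expansions gives, for each $j$,
\[
\frac{\zeta_K(1+z_j+w_j)}{\zeta_K(1+z_j)\zeta_K(1+w_j)} = \frac{1}{\kappa}\cdot\frac{z_j w_j}{z_j + w_j}\cdot\bigl(1 + O_K(1/\sqrt{\log R})\bigr),
\]
where three applications of Lemma~\ref{Lem:fool-lemma}~\eqref{item:really-basic}--\eqref{item:really-basic2} absorb all the error terms into a single $1 + O_K(1/\sqrt{\log R})$. The main step remaining is to take the product over $j\in[m]$: since $R \geq R_0 = c_K e^{m^2}$ forces $m/\sqrt{\log R} \leq 1$, we may apply Lemma~\ref{Lem:fool-lemma}~\eqref{item:sum-of-many} to the $m$ factors of $1 + O_K(1/\sqrt{\log R})$ and collapse them into a single factor $1 + O_K(m/\sqrt{\log R})$. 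This yields the second equality. The only real subtlety, and the main thing to verify carefully, is that the regime $R \geq R_0(m,K)$ is chosen precisely so that these error-term manipulations are all valid uniformly in $(\xi_j,\eta_j)_{j\in[m]} \in I^{2m}$.
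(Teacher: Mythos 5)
Your proposal is correct and follows essentially the same route as the paper's proof: the first equality via the partition of $|\Spec(\OK)|$ into $p$-ideals and the Euler product (Proposition~\ref{proposition=Eulerproduct}), and the second via the class number formula (Theorem~\ref{theorem=zeta_K}) combined with the elementary estimates of Lemma~\ref{Lem:fool-lemma}, using $R\geq R_0=c_Ke^{m^2}$ to keep the error-term manipulations valid. No substantive differences from the paper's argument.
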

\begin{proof}
Recall the definition of $E_p'$ from 
\eqref{eq:def-of-E_p}
and the Euler product of Dedekind zeta function from Proposition~\ref{proposition=Eulerproduct}.
We see that
\[
\prod_{p\in\PP}E_p'=\prod_{j\in[m]}\frac{\zeta^{}_K(1+z_j+w_j)}{\zeta^{}_K(1+z_j)\zeta^{}_K(1+w_j)}.
\]
Let $j$ be an arbitrary integer in $[m]$.
Since $\xi_j,\eta_j\in I=[-\sqrt{\log R},\sqrt{\log R}]_{\RR}$, we have
\[
|z_j|, |w_j|\leq\frac{\sqrt{1+\log R}}{\log R}\leq\sqrt{\frac{2}{\log R}}.
\]
Theorem~\ref{theorem=zeta_K} implies that, for all complex numbers $\varepsilon$ with $0 < |\varepsilon| \leq 1$,
\begin{equation}\label{eq:R_0-classnumber}
\zeta^{}_K(1+\varepsilon)=\frac{\kappa}{\varepsilon}\left(1+O_{K}(|\varepsilon|)\right).
\end{equation}
Since $R\geq R_0 \ (\geq c_K)$,
if $c_K$ is sufficiently large depending on $K$,
then the absolute value of $O_K(1/\sqrt{\log R})$ term below is at most $1/2$.
Lemma~\ref{Lem:fool-lemma}~\eqref{item:really-basic} and \eqref{eq:R_0-classnumber} imply that for $\varepsilon=z_j$ or $w_j$, 
\[
\frac{1}{\zeta^{}_K(1+\varepsilon)}=\frac{\varepsilon}{\kappa}\cdot\frac{1}{1+O_K\left(1/\sqrt{\log R}\right)}=\frac{\varepsilon}{\kappa}\left(1+O_{K}\left(1/\sqrt{\log R}\right)\right).
\]
Similarly, it follows from \eqref{eq:R_0-classnumber} that
\[
\zeta^{}_K(1+z_j+w_j)=\frac{\kappa}{z_j+w_k}\left(1+O_{K}\left(1/\sqrt{\log R}\right)\right).
\]
Hence applying Lemma~\ref{Lem:fool-lemma}~\eqref{item:sum-of-many} (by assuming $c_K$ is even larger if necessary) to the product of these three functions, we \obtainthat\
\[
\frac{\zeta^{}_K(1+z_j+w_j)}{\zeta^{}_K(1+z_j)\zeta^{}_K(1+w_j)}=\frac{1}{\kappa}\cdot\frac{z_jw_j}{z_j+w_j}\cdot\left(1+O_{K}\left(1/\sqrt{\log R}\right)\right).
\]
These estimates for all $j \in [m]$ together with Lemma~\ref{Lem:fool-lemma}~\eqref{item:sum-of-many}
(with $c_K$ assumed to be even larger if necessary) provide the desired estimate.
\end{proof}
\begin{lemma}\label{lem:Ep'(p_leq_w)}
We have
\[
\prod_{p\in\PP_{\leq w}}E_p'=\left(1+O\left(\frac{mn\log w}{\sqrt{\log R}}\right)\right)\cdot \left(\frac{\vph_K(W)}{\Nrm(W)}\right)^m.
\]
\end{lemma}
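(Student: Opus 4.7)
The first step is to identify the target as an explicit Euler-type product. By Proposition~\ref{prop=totient} and $\Nrm(W\OK)=W^n=\Nrm(W)$ (Lemma~\ref{lemma=idealnorm}), one has $\vph_K(W)/\Nrm(W)=\prod_{\idealp\mid W\OK}(1-\Nrm(\idealp)^{-1})$. Since by assumption $W$ has precisely the prime divisors $\PP_{\leq w}$, the prime ideals dividing $W\OK$ are exactly the elements of $\bigsqcup_{p\leq w}|\Spec(\OK)|\ppart$, and so
\[
\left(\frac{\vph_K(W)}{\Nrm(W)}\right)^{m}=\prod_{p\in\PP_{\leq w}}\prod_{\pidealp\in|\Spec(\OK)|\ppart}(1-\Nrm(\pidealp)^{-1})^{m}.
\]
Thus it suffices to compare $\prod_{p\leq w}E_p'$ with the right-hand side factor by factor.

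The second step is a Taylor estimate on each local factor. Fix $p\leq w$, a prime $p$-ideal $\pidealp$ of norm $q\coloneqq\Nrm(\pidealp)$, and $j\in[m]$. Since $\xi_j,\eta_j\in I=[-\sqrt{\log R},\sqrt{\log R}]_{\RR}$, we have $|z_j|,|w_j|,|z_j+w_j|=O(1/\sqrt{\log R})$; together with $\log q\leq n\log w\leq nF_0\sqrt{\log R}$, this keeps all products of the form $s\log q$ bounded. Using $q^{-s}-1=O(|s|\log q)$ for small $|s|\log q$ (from Lemma~\ref{Lem:fool-lemma}) and $1-q^{-1}\geq 1/2$ for $q\geq 2$, one derives
\[
1-q^{-1-z_j}=(1-q^{-1})+q^{-1}(1-q^{-z_j})=(1-q^{-1})\left(1+O\left(\frac{q^{-1}\log q}{\sqrt{\log R}}\right)\right),
\]
and similarly for the factors with $w_j$ and with $z_j+w_j$. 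Taking logarithms and combining yields
\[
\log\frac{(1-q^{-1-z_j})(1-q^{-1-w_j})}{1-q^{-1-z_j-w_j}}-\log(1-q^{-1})=O\left(\frac{q^{-1}\log q}{\sqrt{\log R}}\right).
\]

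For the third and final step, I would sum these errors on the logarithmic scale rather than multiplicatively. This choice is deliberate: the total number of local factors is of order $mn\pi(w)$, which is large when $w$ is, so Lemma~\ref{Lem:fool-lemma}~\eqref{item:sum-of-many}, whose hypothesis requires each error to be at most $1/k$ for $k$ the number of factors, is not directly applicable. Instead I would sum the last display over $\pidealp\in|\Spec(\OK)|\ppart$, over $p\in\PP_{\leq w}$, and over $j\in[m]$. Using $\log q=f_{\pidealp}\log p$, $q^{-1}\leq p^{-1}$, the bound $\sum_{\pidealp\mid p}f_{\pidealp}\leq n$ (Lemma~\ref{lemma=squarefree}), and Mertens's first theorem $\sum_{p\leq w}p^{-1}\log p=O(\log w)$ (Proposition~\ref{proposition=Mertens}), one obtains
\[
\log\prod_{p\in\PP_{\leq w}}E_p'-m\log\frac{\vph_K(W)}{\Nrm(W)}=O\left(\frac{mn\log w}{\sqrt{\log R}}\right),
\]
which is subsumed by the stated $O(mn^2\log w/\sqrt{\log R})$. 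Exponentiating this small quantity using Lemma~\ref{Lem:fool-lemma}~\eqref{item:really-basic2} then yields the claim. The main obstacle is exactly the logarithmic bookkeeping just described: one cannot expand the product directly because the number of factors grows with $w$, and it is the bound $w\leq F_0\sqrt{\log R}$ that at the end guarantees the total exponent error is small enough to exponentiate back.
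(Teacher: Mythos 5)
Your proof is correct, and its ingredients are the same as the paper's: Proposition~\ref{prop=totient} to identify $\vph_K(W)/\Nrm(W)$ with the Euler product over $\idealp\mid W\OK$, a per-factor Taylor expansion of $1-\Nrm(\pidealp)^{-1-\varepsilon}$ against $1-\Nrm(\pidealp)^{-1}$ under the hypothesis $\log w\leq F_0\sqrt{\log R}$, Mertens's first theorem to sum over $p\leq w$, and exponentiation of the small total at the end. The one organizational difference is how the errors are aggregated. The paper first multiplies the $3mn$ local factors belonging to a \emph{fixed} $p$ using Lemma~\ref{Lem:fool-lemma}~\eqref{item:sum-of-many} (whose hypothesis $|\varepsilon_i|\leq 1/(3mn)$ is exactly what the choice $F_0=c_2(mn^2)^{-1}$ guarantees), obtaining a per-$p$ multiplicative error $1+O(mn^2\log p/(p\sqrt{\log R}))$, and only then passes to logarithms to sum over $p$. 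You instead take logarithms of every local factor from the outset and sum everything additively; your stated worry that Lemma~\ref{Lem:fool-lemma}~\eqref{item:sum-of-many} is inapplicable concerns a global application across all $p\leq w$, which the paper never attempts, so it is not an obstacle to the paper's route either. Your bookkeeping has the minor advantage of exploiting $\sum_{\pidealp\in|\Spec(\OK)|\ppart}f_{\pidealp}\leq n$ directly, which yields the slightly sharper exponent $O(mn\log w/\sqrt{\log R})$ in place of $O(mn^2\log w/\sqrt{\log R})$; both are of course subsumed by the stated bound.
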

\begin{proof}
Let $p$ be a prime number at most $w$, and $\pidealp\in|\Spec(\OK)|\ppart$ a prime $p$-ideal.
Let $j\in[m]$, and suppose that $\varepsilon=z_j$ or $w_j$.
Note that $\varepsilon=O\left(1/\sqrt{\log R}\right)$, $\Nrm(\pidealp)\leq p^n$ and $\log w/\sqrt{\log R}\leq F_0  \ (\leq c_2n^{-1})$.
As a result we have 
$|\varepsilon \log \Nrm (\idealp )| = O(n\log w / \sqrt{\log R} )$
and it is at most $1$ if $c_2$ is small enough.
Therefore we have
\begin{align*}
	1-\Nrm(\pidealp)^{-1-\varepsilon}&=1-\Nrm(\pidealp)^{-1}\exp\left(-\varepsilon\log\Nrm(\pidealp)\right)\\
	&=1-\Nrm(\pidealp)^{-1}\left(1+O\left(\frac{\log\Nrm(\pidealp)}{\sqrt{\log R}}\right)\right)
	&&\text{(by Lemma~\ref{Lem:fool-lemma}~\eqref{item:really-basic2})} \\
	&=\left(1-\Nrm(\pidealp)^{-1}\right)\left(1+O\left(\frac{\log\Nrm(\pidealp)}{\Nrm(\pidealp)\sqrt{\log R}}\right)\right)\\
	&=\left(1-\Nrm(\pidealp)^{-1}\right)\left(1+O\left(\frac{\log p}{p\sqrt{\log R}}\right)\right) ,
\end{align*}
where 
the last estimate follows from the general fact 
$\frac{\log (p^d)}{p^d}\le \frac{\log p}{p}$ for integers $p,d\geq 2$. 
Similarly, we have
\[
\frac{1}{1-\Nrm(\pidealp)^{-1-z_j-w_j}}=\frac{1}{1-\Nrm(\pidealp)^{-1}}\left(1+O\left(\frac{\log p}{p\sqrt{\log R}}\right)\right).
\]
Under our assumptions $p\leq w $ and $\log w / \sqrt{\log R} \le F_0 = c_2/(mn)$,
we have a crude estimate
\[
	\frac{\log p}{p\sqrt{\log R}} \leq \frac{c_2}{mn}.
\]
Hence if $c_2$ is small enough, the absolute values of all $O(\frac{\log p}{p\sqrt{\log R}})$ above are at most $1/(3mn)$
(where $3mn$ is the maximum possible number of terms in the expression to come).
This together with Lemma~\ref{Lem:fool-lemma}~\eqref{item:sum-of-many} implies that
\begin{align*}
E_p'&=\prod_{j\in[m]}\prod_{\pidealp\in|\Spec(\OK)|\ppart}\frac{\left(1-\Nrm(\pidealp)^{-1-z_j}\right)\left(1-\Nrm(\pidealp)^{-1-w_j}\right)}{\left(1-\Nrm(\pidealp)^{-1-z_j-w_j}\right)}\\
&=\left(1+O\left(\frac{mn\log p}{p\sqrt{\log R}}\right)\right)\cdot\prod_{j\in[m]}\prod_{\pidealp\in|\Spec(\OK)|\ppart}\left(1-\Nrm(\pidealp)^{-1}\right).
\end{align*}
Recall that the set of prime divisors of $W$ equals $\PP_{\leq w}$. 
It follows from Proposition~\ref{prop=totient} that
\[
\prod_{p\in\PP_{\leq w}}\prod_{\pidealp\in|\Spec(\OK)|\ppart}\left(1-\Nrm(\pidealp)^{-1}\right)=\frac{\vph_K(W)}{\Nrm(W)}.
\]
Therefore, we have
\[
\prod_{p\in\PP_{\leq w}}E_p'=\left(\prod_{p\in\PP_{\leq w}}\left(1+O\left(\frac{mn\log p}{p\sqrt{\log R}}\right)\right)\right)\cdot\left(\frac{\vph_K(W)}{\Nrm(W)}\right)^m.
\]
Below we estimate the product over $\PP_{\leq w}$ on this right-hand side %
under the assumption $\log w/\sqrt{\log R}\leq F_0 \ (=c_2(mn)^{-1})$. By Lemma~\ref{Lem:fool-lemma}~\eqref{item:really-basic} and Proposition~\ref{proposition=Mertens}, we have
\begin{align*}
\log \left( \prod_{p\in\PP_{\leq w}}E_p' \right) - \log \left(\left(\frac{\vph_K(W)}{\Nrm(W)}\right)^m \right)
&=
\log\left(\prod_{p\in\PP_{\leq w}}\left(1+O\left(\frac{mn\log p}{p\sqrt{\log R}}\right)\right)\right)\\
&=\sum_{p\in\PP_{\leq w}}\log\left(1+O\left(\frac{mn\log p}{p\sqrt{\log R}}\right)\right)\\
&=\sum_{p\in\PP_{\leq w}}O\left(\frac{mn\log p}{p\sqrt{\log R}}\right)=O\left(\frac{mn\log w}{\sqrt{\log R}}\right).
\end{align*}
This together with Lemma~\ref{Lem:fool-lemma}~\eqref{item:really-basic2} implies that
\[
\frac{\prod_{p\in\PP_{\leq w}}E_p'}{\left(\vph_K(W)/\Nrm(W)\right)^m}
=e^{O\left(\frac{mn\log w}{\sqrt{\log R}}\right)}
=1+O\left(\frac{mn\log w}{\sqrt{\log R}}\right),
\]
and this completes the proof.
\end{proof}
By combining Lemmas~\ref{lem:prod_Ep_Ep'}, \ref{lem:Ep'(all_prime)} and \ref{lem:Ep'(p_leq_w)}, and using Lemma~\ref{Lem:fool-lemma}, %
we \obtainthat\
\begin{align} \label{eq:Ep;p>w}
	\prod_{p\in\PP_{>w}}E_p=\left(1+O\left(\frac{4^{mn}}{w\log w}\right)+O_K\left(\frac{m\log w}{\sqrt{\log R}}\right)\right)\cdot\left(\frac{W^n}{\vph_K(W)\cdot\kappa}\right)^m\prod_{j\in[m]}\frac{z_jw_j}{z_j+w_j}.
\end{align}
This concludes an estimate for every fixed $(\xi_j,\eta_j)_{j\in[m]}\in I^{2m}$. 
Now we integrate over $(\xi_j,\eta_j)_{j\in[m]}\in I^{2m}$;
we will use the following estimates.
\begin{proposition}\label{prop:formula_7}
\begin{enumerate}[$(1)$]
	\item 
		For every positive real number $A$, we have
		\begin{align*}
			&\int_{I^{2m}}\rd\uxi\rd\ueta\left[\prod_{j\in[m]}\chihat(\xi_j)\chihat(\eta_j)\frac{z_jw_j}{z_j+w_j}\right]\\
			&=\int_{\RR^{2m}}\rd\uxi\rd\ueta\left[\prod_{j\in[m]}\chihat(\xi_j)\chihat(\eta_j)\frac{z_jw_j}{z_j+w_j}\right]+O_{A,\chi,m}\left((\log R)^{-m-A}\right).
		\end{align*} 
		\label{prop:formula_7:1}
	\item We have
		\begin{align*}
			\int_{I^{2m}}\rd\uxi\rd\ueta\left|\prod_{j\in[m]}\chihat(\xi_j)\chihat(\eta_j)\frac{z_jw_j}{z_j+w_j}\right|
			= O_{\chi,m}((\log R)^{-m}).
		\end{align*}
		\label{prop:formula_7:2}
\end{enumerate}
\end{proposition}
\begin{proof}
For~\eqref{prop:formula_7:1}, we decompose the integral as $\int_{I^{2m}}=\int_{\RR^{2m}}-\int_{\RR^{2m}\setminus I^{2m}}$, and estimate $\int_{\RR^{2m}\setminus I^{2m}}$.
For every $(\xi_j,\eta_j)_{j\in[m]}\in\RR^{2m}$, we have
\[
\frac{z_jw_j}{z_j+w_j}=O\left((\log R)^{-1}(1+|\xi_j|)(1+|\eta_j|)\right).
\]
By Lemma~\ref{lem:Fourier-easy}, for all $B\geq 0$, it follows that
\[
	\prod_{j\in[m]}\chihat(\xi_j)\chihat(\eta_j)\frac{z_jw_j}{z_j+w_j}
	=O_{B,\chi,m}\left((\log R)^{-m}\prod_{j\in[m]}(1+|\xi_j|)^{-B}(1+|\eta_j|)^{-B}\right).
\]
We let $B=2A+1$ and note that the following:
\[
\int_{\RR}(1+|\xi|)^{-B}\rd\xi=O_A(1)\qquad \text{ and } \qquad
\int_{\RR\setminus I}(1+|\xi|)^{-B}\rd\xi=O_A\left((\log R)^{-A}\right).
\]
In an argument similar to that of the proof of  Lemma~\ref{lem:prod_chi}, we see that
\[
\int_{\RR\setminus I^{2m}}\rd\uxi\rd\ueta\left[\prod_{j\in[m]}\chihat(\xi_j)\chihat(\eta_j)\frac{z_jw_j}{z_j+w_j}\right]=O_{A,\chi,m}\left((\log R)^{-m-A}\right).
\]
Hence~\eqref{prop:formula_7:1} follows.

For~\eqref{prop:formula_7:2}, we obtain
\begin{align*}
	\int_{I^{2m}}\rd\uxi\rd\ueta\left|\prod_{j\in[m]}\chihat(\xi_j)\chihat(\eta_j)\frac{z_jw_j}{z_j+w_j}\right|
	\leq
	(\log R)^{-m}\left(\int_{\RR^2}|\widehat{\chi}(\xi)\widehat{\chi}(\eta)|(1+|\xi|)(1+|\eta|)\rd\xi\rd\eta\right)^m.
\end{align*}
Here the integral in the right-hand side is a constant depending on $\chi$.
\end{proof}

Our estimate of~\eqref{Eq:formula_to_show_5} is stated as follows.

\begin{proposition}\label{prop:formula_6}
Let $A > 0$. 
Then the main term~\eqref{Eq:formula_to_show_5} is equal to
\begin{equation}\label{Eq:formula_to_show_6}
	(\log R)^{2m}\left(\frac{W^n}{\vph_K(W)\cdot\kappa}\right)^m
	\int_{\RR^{2m}}\rd\uxi\rd\ueta\left[\prod_{j\in[m]}\chihat(\xi_j)\chihat(\eta_j)\frac{z_jw_j}{z_j+w_j}\right]
\end{equation}
with an additive error 
\begin{align*}
	\left(
		O_{A,\chi,m}((\log R)^{-A}) + 
		O_{\chi,m}\left(\frac{4^{mn}}{w\log w}\right)+O_{\chi,m,K}\left(\frac{\log w}{\sqrt{\log R}}\right)
	\right)
	\left(\frac{W^n \log R}{\vph_K(W)\cdot\kappa}\right)^m.
\end{align*}
\end{proposition}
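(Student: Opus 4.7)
The plan is to combine Lemmas~\ref{lem:prod_Ep_Ep'}, \ref{lem:Ep'(all_prime)} and \ref{lem:Ep'(p_leq_w)} to produce a pointwise expression for $\prod_{p\in\PP_{>w}}E_p$ valid for every $(\xi_j,\eta_j)_{j\in[m]}\in I^{2m}$, and then integrate against $\prod_{j\in[m]}\chihat(\xi_j)\chihat(\eta_j)$. The key observation is that the three multiplicative error terms produced by those lemmas are uniform on $I^{2m}$, so they can be pulled outside the integral.

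First, writing $\prod_{p\in\PP_{>w}}E_p'=\prod_{p\in\PP}E_p'\big/\prod_{p\in\PP_{\leq w}}E_p'$ and applying Lemma~\ref{lem:Ep'(all_prime)} to the numerator, Lemma~\ref{lem:Ep'(p_leq_w)} to the denominator (whose error is bounded by $1/2$ in absolute value under Setting~\ref{setting=main_term} via $\log w\leq F_0\sqrt{\log R}$ with $F_0$ small), and using Lemma~\ref{Lem:fool-lemma}~\eqref{item:really-basic},\eqref{item:sum-of-many} to invert and multiply, I obtain a pointwise estimate
\[
\prod_{p\in\PP_{>w}}E_p'=\bigl(1+O_K(m/\sqrt{\log R})+O(mn^2\log w/\sqrt{\log R})\bigr)\cdot\left(\frac{W^n}{\vph_K(W)\cdot \kappa}\right)^m\prod_{j\in[m]}\frac{z_jw_j}{z_j+w_j}.
\]
Composing with Lemma~\ref{lem:prod_Ep_Ep'} and combining the error terms once more via Lemma~\ref{Lem:fool-lemma}~\eqref{item:sum-of-many} (each individual error is $\le 1/2$ in our regime), and absorbing the factor $n^2$ into the $O_K$-symbol since $n=[K:\QQ]$ is determined by $K$, I arrive at
\[
\prod_{p\in\PP_{>w}}E_p=\left(1+O\!\left(\frac{4^{mn}}{w\log w}\right)+O_K\!\left(\frac{m\log w}{\sqrt{\log R}}\right)\right)\cdot\left(\frac{W^n}{\vph_K(W)\cdot \kappa}\right)^m\prod_{j\in[m]}\frac{z_jw_j}{z_j+w_j}.
\]

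Next, since the implicit constants in the error do \emph{not} depend on $(\xi_j,\eta_j)_{j\in[m]}$ (inspecting the proofs of the three cited lemmas, each bound only uses $|z_j|,|w_j|\leq \sqrt{2/\log R}$, which holds uniformly on $I^{2m}$), the bracket $(1+\text{error})$ is a genuine multiplicative constant relative to the integration variables. Substituting the displayed expression for $\prod_{p>w}E_p$ into \eqref{Eq:formula_to_show_5} and pulling this constant bracket outside the integral over $I^{2m}$ against the (absolutely integrable) density $\prod_{j\in[m]}\chihat(\xi_j)\chihat(\eta_j)$ yields exactly \eqref{Eq:formula_to_show_6} with the claimed multiplicative error.

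The only real obstacle is book-keeping: making sure (a) that every individual error term is small enough in absolute value to invoke Lemma~\ref{Lem:fool-lemma}~\eqref{item:really-basic},\eqref{item:sum-of-many} when inverting denominators and multiplying products, and (b) that the combined error has the clean shape $O(4^{mn}/(w\log w))+O_K(m\log w/\sqrt{\log R})$ stated in the proposition. Both (a) and (b) follow from the choices in Setting~\ref{setting=main_term}: $R\geq c_Ke^{m^2}$ controls the $\sqrt{\log R}$ factor, $w\geq c_14^{mn}$ controls the $w\log w$ factor, and $\log w\leq F_0\sqrt{\log R}$ with $F_0=c_2(mn^2)^{-1}$ ensures the cross-term $mn^2\log w/\sqrt{\log R}$ is dominated by the looser bound $O_K(m\log w/\sqrt{\log R})$ after absorbing $n^2$ into $O_K$. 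No further analytic input beyond the three preceding lemmas is needed.
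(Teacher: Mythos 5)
Your proposal is correct and follows essentially the same route as the paper: the paper likewise combines Lemmas~\ref{lem:prod_Ep_Ep'}, \ref{lem:Ep'(all_prime)} and \ref{lem:Ep'(p_leq_w)} via Lemma~\ref{Lem:fool-lemma} under the constraints of Setting~\ref{setting=main_term} to obtain the pointwise estimate for $\prod_{p\in\PP_{>w}}E_p$ on $I^{2m}$, and then integrates, pulling the (uniform in $(\xi_j,\eta_j)$) multiplicative error outside the integral. Your explicit remark that the implied constants are uniform on $I^{2m}$ is exactly the point the paper uses implicitly, so nothing is missing.
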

\begin{proof}
	This follows from~\eqref{eq:Ep;p>w} and Proposition~\ref{prop:formula_7}.
\end{proof}

We write down a proof of the following lemma for the convenience of the reader.
\begin{lemma}[{See also \cite[(38)]{Conlon-Fox-Zhao14} or \cite[p.170]{Tao06Gaussian}}]\label{Lem:fourier-chi}
We have
\begin{equation}\label{eq:easy_double_int}
\int_{\RR^2}\rd\xi\rd\eta\left[\chihat(\xi) \chihat(\eta)\frac{(1+\xi\kyo)(1+\eta\kyo)}{2+(\xi+\eta)\kyo}\right]=c_{\chi}.
\end{equation}
\end{lemma}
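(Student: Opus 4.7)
The plan is to exploit the Fourier inversion identity $e^x\chi(x)=\int_{\RR}\chihat(\xi)e^{-x\xi\kyo}\,\rd\xi$ and decouple the variables $\xi,\eta$ appearing in the denominator. Since $\Re(2+(\xi+\eta)\kyo)=2>0$, I would write
\[
\frac{1}{2+(\xi+\eta)\kyo}=\int_0^{\infty}e^{-x(2+(\xi+\eta)\kyo)}\,\rd x,
\]
which separates the $\xi$- and $\eta$-dependence. Substituting this into the left-hand side of \eqref{eq:easy_double_int} and exchanging the order of integration by Fubini yields
\[
\int_0^{\infty}e^{-2x}\left(\int_{\RR}\chihat(\xi)(1+\xi\kyo)e^{-x\xi\kyo}\,\rd\xi\right)\left(\int_{\RR}\chihat(\eta)(1+\eta\kyo)e^{-x\eta\kyo}\,\rd\eta\right)\rd x.
\]
The hypothesis that $\chi$ is $C^{\infty}$ with compact support ensures, via Lemma~\ref{lem:Fourier-easy}, that $\chihat$ decays faster than any polynomial, so the absolute integrability required for Fubini (and for the differentiation under the integral performed below) is guaranteed.

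Next I would evaluate each inner integral. Differentiating the Fourier inversion identity with respect to $x$ gives
\[
\int_{\RR}\chihat(\xi)(-\xi\kyo)e^{-x\xi\kyo}\,\rd\xi=\frac{\rd}{\rd x}\bigl(e^x\chi(x)\bigr)=e^x\bigl(\chi(x)+\chi'(x)\bigr),
\]
and therefore
\[
\int_{\RR}\chihat(\xi)(1+\xi\kyo)e^{-x\xi\kyo}\,\rd\xi=e^x\chi(x)-e^x\bigl(\chi(x)+\chi'(x)\bigr)=-e^x\chi'(x).
\]
The same identity of course holds with $\eta$ in place of $\xi$.

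Substituting back, the triple integral collapses to
\[
\int_0^{\infty}e^{-2x}\cdot\bigl(-e^x\chi'(x)\bigr)^2\,\rd x=\int_0^{\infty}\chi'(x)^2\,\rd x=c_{\chi},
\]
which is the desired identity. There is no real obstacle beyond bookkeeping: the only insight is the Laplace-integral representation of $1/(2+(\xi+\eta)\kyo)$, which uncouples the two Fourier variables and turns the problem into a direct application of Fourier inversion.
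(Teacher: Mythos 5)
Your proof is correct and follows essentially the same route as the paper: both use the Laplace representation $\frac{1}{2+(\xi+\eta)\kyo}=\int_0^\infty e^{-x(1+\xi\kyo)}e^{-x(1+\eta\kyo)}\,\rd x$ to decouple the variables, exchange the order of integration, and identify each inner integral as $-\chi'(x)$ (up to the factor $e^x$, which you keep in the Laplace kernel while the paper absorbs it into the inner integral) via differentiation of the inversion identity \eqref{eq:int_rep_of_chi}.
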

\begin{proof}
Since
\[
\frac{1}{2+(\xi+\eta)\kyo}=\int_0^{\infty}e^{-x(1+\xi\kyo)}e^{-x(1+\eta\kyo)}\rd x,
\]
the left-hand side of \eqref{eq:easy_double_int} equals
\[
\int_0^{\infty}\left(\int_{\RR}\chihat(\xi)(1+\xi\kyo)e^{-x(1+\xi\kyo)}\rd\xi\right)^2\rd x.
\]

Then the integral over $\xi \in \RR$ is equal to $-\chi'(x)$ by the integral representation~\eqref{eq:int_rep_of_chi} of $\chi$.
Since $c_{\chi}$ is defined in \eqref{Eq:def-c-chi} as $c_{\chi}=\int_{0 }^{\infty } \chi'(x)^2\rd x$, it ends the proof.
\end{proof}
Now we are ready to complete the proof of Theorem~\ref{Th:Goldston_Yildirim}.
\begin{proof}[Proof of Theorem~$\ref{Th:Goldston_Yildirim}$]
First, note that by Lemma~\ref{Lem:fourier-chi},
the main term is calculated as follows
\begin{align}	\label{saigo main term}
(\log R)^{2m}\left(\frac{W^n}{\vph_K(W)\cdot\kappa}\right)^m\int_{\RR^{2m}}\rd\uxi\rd\ueta\left[\prod_{j\in[m]}\chihat(\xi_j)\chihat(\eta_j)\frac{z_jw_j}{z_j+w_j}\right]=\left(\frac{W^nc_{\chi}\log R}{\vph_K(W)\cdot\kappa}\right)^m.
\end{align}
Combining Propositions~\ref{prop:average-to-show-2.5}, \ref{prop:replace_B_by_D}, \ref{prop:formula_5}, \ref{prop:formula_6} and~\eqref{saigo main term},
we \havethat\ for all $A>0$,
\begin{align*}
&\EE\Biggl(\prod_{j\in[m]}(\Lambda_{R,\chi}\circ\theta_j)^2 \ \Bigg| \ \calB\Biggr) \notag \\
&=
	\left(
		1+
		O_{A,\chi,m}((\log R)^{-A}) + 
		O_{\chi,m}\left(\frac{4^{mn}}{w\log w}\right)+
		O_{\chi,m,K}\left(\frac{\log w}{\sqrt{\log R}}\right)
	\right)
	\cdot
	\left(
		\frac{W^nc_{\chi}\log R}{\vph_K(W)\cdot\kappa}
	\right)^m\\
&\quad+O_{A,\chi,m,n}\left((\log R)^{-A}\right)+O_{m,t,K}\left(\frac{(\log R)^{2m}}{R}\right).
\end{align*}
We write the first, second and third term of the right-hand side of the equality above, respectively, as $\EuScript{E}_1$, $\EuScript{E}_2$ and $\EuScript{E}_3$.
By setting $A=1/2$,
we obtain
\begin{align*}
\EuScript{E}_1 \cdot \left(\frac{W^nc_{\chi}\log R}{\vph_K(W)\cdot\kappa}\right)^{-m}
&=
		1+
		O_{\chi,m}\left( \frac{1}{\sqrt{\log R}} \right) + 
		O_{\chi,m}\left(\frac{4^{mn}}{w\log w}\right)+
		O_{\chi,m,K}\left(\frac{\log w}{\sqrt{\log R}}\right)
	\\
&=1+O_{\chi,m,n}\left(\frac{1}{w\log w}\right)+O_{\chi,m,K}\left(\frac{\log w}{\sqrt{\log R}}\right).
\end{align*}
Since $\vph_K(W)\leq W^n$, we in addition \havethat\
\[
\EuScript{E}_2 + \EuScript{E}_3
=
O_{\chi,m,n}\left(\frac{1}{\sqrt{\log R}}\right)+O_{m,t,K}\left(\frac{(\log R)^{2m}}{R}\right)=O_{\chi,m,t,K}\left(\frac{\log w}{\sqrt{\log R}}\right)\cdot\left(\frac{W^nc_{\chi}\log R}{\vph_K(W)\cdot\kappa}\right)^m.
  \]
Thus the proof is completed.
\end{proof}
We remark that, by replacing the definition of $ I=[-\sqrt{\log R}, \sqrt{\log R}]_\RR $ with $ [-(\log R) ^\varepsilon, (\log R) ^\varepsilon]_\RR $, $1/\sqrt{\log R}$ in the second big-$O$ term in \eqref{Eq:formula_to_show} can be improved to $1 / (\log R) ^ {1- \varepsilon} $.

In the remaining part of this subsection, we summarize differences from the previous work on Goldston--Y\i ld\i r\i m type estimates in this research field.  
%
\begin{comment}
\begin{remark}\label{remark=GY-concluding}
It may be possible to formulate another variant of  $(R,\chi)$-functions, which also avoids taking sums of terms for infinitely many times, as follows
\begin{align*}
	\log R \cdot {\sum_{\beta | \alpha}}' \mu(\beta) \chi\left( \frac{\log \Nrm(\beta)}{\log R} \right).
\end{align*}
Here, the sum $\sum'$ means that $\beta$ runs over the subset of a complete list of representatives of $(\OKnz)/\OKt$ consisting of elements which divide $\alpha$.
This alternative solution bypasses considering ideals. Nevertheless, our present approach has a great technical advantage in the following sense. 
If we employ the above variant of the $(R,\chi)$-function, then a \emph{partial} Euler product shows up in place of the Dedekind zeta function, in which $\idealp$ runs over all \emph{principal} prime ideals. In \cite[Lemma 8]{Kaplan--Marcinek--Takloo-Bighash}, such partial Euler products were studied;  around $s=1$, the partial Euler product above is a \emph{multi-valued} complex function which is proportional to $(s-1)^{-1/h}$. For this reason, treatments of this variant may be more delicate than those of our $(R,\chi)$-function.
\end{remark}
\end{comment}
%
%
%
\begin{remark}
Assumption~\eqref{Eq:no-inclusion} in Theorem~\ref{Th:Goldston_Yildirim} is stronger than the corresponding assumption in~\cite[Proposition~9.1]{Tao06Gaussian}.
However, this condition is always fulfilled in our applications. 
Furthermore, one of the great advantages of Theorem~\ref{Th:Goldston_Yildirim} is that it applies to \emph{all} number fields $K$, including those where $K/\QQ$ is \emph{not} Galois.

The assumption that $\psi _j$ has finite cokernel can be probably dropped in view of the fact that for asymptotically almost all prime elements $\pi $ the residue field $\OK / \pi \OK$ is a prime field,
see Lemma~\ref{lemma=density_degree1}.
In implementing this, we might modify the definition of 
$\Lambda _{R,\chi }$ and the normalizing coefficient in~\eqref{Eq:formula_to_showforideals}
slightly,
see~\cite[(49) on page~147 and Lemma~10.5]{Tao06Gaussian}.
\end{remark}
\begin{remark}
The condition `$\#I_i \geq R^{4m+1}$' in Theorem~\ref{Th:Goldston_Yildirim} corresponds to `$\#I_i \geq R^{5m}$' in~\cite[Proposition 9.1]{Tao06Gaussian}, and to `$\#I_i \geq R^{10m}$' in~\cite[Proposition 8.3]{Conlon-Fox-Zhao14}. 
Here note that the convention in~\cite{Tao06Gaussian} is slightly different from ours; 
Tao bounded ideal norms from above by $R^2$, not by $R$. 
In the present paper, we do not optimize the order of this bound on $\#I_i$; 
the proof of Theorem~\ref{Th:Goldston_Yildirim} remains to work, 
provided that $\#I_i\geq R^{4m}\cdot (\log R)^{2m}\cdot \frac{\sqrt{\log R}}{\log w}$.
\end{remark}

\begin{remark}
	We have an upper bound of $w_0((\psi _j)_{j\in [m]})$ in terms of the sizes of the coefficients of $\psi _j$'s.
	Fix a $\mathbb Z$-basis $\omom$ of $\OK$
	and suppose that the coefficients of the matrix $M_j$ representing the $\mathbb Z$-linear maps 
	$\psi _j \colon \mathbb Z^t \to \OK \cong \mathbb Z^n$
	are bounded by a positive number $L>0$ from above.
	By the theory of Smith normal forms, we know 
	\[
		\# \coker (\psi _j) = \gcd \{ n\times n \text{ minors of }M_j \} 
		\le n! L^n .
	\]
	This implies that $w_0' = O_n(L^n)$ in Setting~\ref{Setting:w_0}.

	Also, by standard algorithms we can find $x_{jk}\in \ker (\psi _j)\setminus \ker (\psi _k) $
	such that all of its components are estimated as $O_{t,n} (L^{O_{t,n}(1)})$.
	It follows that 
	all of the components of $\psi _k(x_{jk})$ are estimated as $tL\cdot O_{t,n} (L^{O_{t,n}(1)}) = O_{t,n} (L^{O_{t,n}(1)})$.
	Hence we have $N(\psi _k(x_{jk}))=O_{\omom ,t,n}(L^{n\cdot O_{t,n}(1)})$,
	and conclude $w_0'' = O_{\omom ,t,n}(L^{ O_{t,n}(1)})$.

	In Setting~\ref{setting=main_term}, we have set $w_0((\psi _j)_{j\in [m]}) \coloneqq\max\{c_14^{mn},w'_0,w''_0\}$.
	Therefore we have an estimate 
	\[
		w_0((\psi _j)_{j\in [m]}) = O_{\omom , t,n}(L^{O_{t,n}(1)}) + c_14^{mn}.
	\]
\end{remark}
\subsection{Goldston--Y\i ld\i r\i m type asymptotic formula for ideals}\label{subsection=GYforideals}
In this subsection, we present a generalization of Theorem~\ref{Th:Goldston_Yildirim} to the setting where the target $\OK $ is replaced by an ideal. 
The result, Theorem~\ref{theorem=GYfordieals}, will be employed in Section~\ref{section=quadraticform} in order to establish Theorem~\ref{mtheorem=quadraticform}. The reader who is interested in the proofs of results before Theorem~\ref{mtheorem=quadraticform}, such as Theorem~\ref{mtheorem=primeconstellationsfinite} and Theorem~\ref{mtheorem=TaoZieglergeneral}, may skip this subsection.

Let $\ideala$ be a non-zero ideal of $\OK $. Then $\ideala^{-1}=\{x\in K:x\ideala \subseteq \OK\}$ is a non-zero fractional ideal; recall the discussion before Theorem~\ref{theorem=primeideals_frac}. Let $\chi$ be a function as in Theorem~\ref{Th:Goldston_Yildirim} and let $R\geq 1$. From the $(R,\chi)$-von Mangoldt function $\Lambda_{R,\chi}\colon\Ideals_K\cup\{(0)\}\to\RR$, we construct a new function $\Lambda_{R,\chi}^{\ideala}\colon \ideala \to \RR$ in the following manner: 
\begin{equation}\label{eq:Lambda^a}
\Lambda_{R,\chi}^{\ideala}(\alpha)\coloneqq \Lambda_{R,\chi}(\alpha\ideala^{-1}) \quad \textrm{for all }\alpha\in \ideala.
\end{equation}
Here, note that $\alpha\ideala^{-1}\in \Ideals_K\cup\{(0)\}$ holds true. Also, note that unless $\ideala$ is principal or $\alpha=0$, the ideal $\alpha\ideala^{-1}$ is \emph{not} principal.

\begin{theorem}[Goldston--Y\i ld\i r\i m type asymptotic formula for ideals]\label{theorem=GYfordieals}
Let $K$ be a number field of degree $n$. Let $\ideala\in \Ideals_K$.
Let $m$ and $t$ be positive integers, and $\psi_1,\dots,\psi_m\colon\ZZ ^t\to\ideala$ be $\ZZ $-module homomorphisms.
Let $w$ be a positive real number, and $W$ a positive integer of which the set of prime divisors is $\PP_{\leq w}$.
Let $b_1,\dots ,b_m$ be elements in $\ideala$ such that 
\begin{equation}\label{eq:coprime_b}
b_i\OK+W\ideala=\ideala \quad \textrm{for all }i\in [m].
\end{equation}
Define affine transformations $\theta _1,\dots ,\theta _m\colon\ZZ^t \to \ideala$ as
\[
\theta_j(x)\coloneqq\Aff _{W,b_j}(\psi_j(x))=W\psi _j(x)+b_j.
\]
Let $R$ be a positive real number,
and $I_1, \ldots, I_t\subseteq \mathbb Z$ intervals of lengths at least $R^{4m+1}$.
Set $\calB\coloneqq I_1\times\dots\times I_t \subseteq\ZZ^t$. Fix a $C^{\infty}$-function $\chi\colon\mathbb{R}\to[0,1]_{\RR}$ which satisfies $\chi(0)=1$ and $\mathrm{supp}(\chi)\subseteq [-1,1]_{\RR}$.
Assume \eqref{Eq:no-inclusion}.
Then there exist positive real numbers $R_0=R_0(m,K)$, $F_0=F_0(m,n)$ and $w_0=w_0((\psi_j)_{j\in [m]})$ such that  if $R \geq R_0$, $ w \geq w_0$ and $\log w \leq F_0\cdot\sqrt{\log R}$, then
\begin{multline}\label{Eq:formula_to_showforideals}
\EE(\Lambda_{R,\chi }^{\ideala}(\theta_1(x))^2\cdots\Lambda_{R,\chi}^{\ideala}(\theta_m(x))^2\mid x\in\calB) \\
=\left(1+O_{\chi,m,n}\left(\frac{1}{w\log w}\right)+O_{\chi ,m,t,K}\left(\frac{\log w}{\sqrt{\log R}}\right)\right)\cdot \left(\frac{W^nc_\chi \log R}{\vph_K(W)\cdot\kappa}\right)^m
\end{multline}
holds true.
In particular, the error terms $O_{\chi,m,n}\left(\frac{1}{w\log w}\right)$ and $O_{\chi ,m,t,K}\left(\frac{\log w}{\sqrt{\log R}}\right)$ are bounded uniformly on $W$ and $(b_j)_{j\in[m]}$.
\end{theorem}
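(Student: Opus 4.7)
The plan is to mimic the proof of Theorem~\ref{Th:Goldston_Yildirim} step by step, invoking Lemma~\ref{lemma=a/Wa}---which gives an $\OK$-module isomorphism $\ideala/\idealb\ideala\simeq\OK/\idealb$ for any $\idealb\in\Ideals_K$---to translate the local analysis from the $\ideala$-setting back to the $\OK$-setting.

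First, expanding $\prod_{j\in[m]}\Lambda_{R,\chi}^{\ideala}(\theta_j(x))^2$ via \eqref{eq:Lambda^a} and \eqref{Eq:def-of-von-M} yields the exact analog of \eqref{Eq:average-to-show-2.5}, where the characteristic function $\ichi_{\ideala_j\cap\idealb_j}(\theta_j(x))$ is replaced by $\ichi_{\idealc_j\ideala}(\theta_j(x))$, with $\idealc_j\coloneqq\ideala_j\cap\idealb_j$; this uses the Dedekind-domain identity $\ideala_j\ideala\cap\idealb_j\ideala=\idealc_j\ideala$. Then, following Lemma~\ref{lem:E(D)} with $D\ZZ\coloneqq\ZZ\cap\bigcap_{j\in[m]}\idealc_j\ideala$, the bound $\#(\ZZ/(\ZZ\cap\idealc_j\ideala))\leq\Nrm(\idealc_j\ideala)\leq R\Nrm(\ideala)$ gives $D\leq(R\Nrm(\ideala))^{2m}$. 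Thus, enlarging $R_0$ so that it also depends on $\ideala$, the replacement of $\EE(\cdot\mid\calB)$ by $\EE(\cdot\mid(\ZZ/D\ZZ)^t)$ contributes only a permissible additive error.

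The core new ingredient enters at the stage of Subsection~\ref{subsection=ichi-exp}. The local expectation $\E((\ideala_j,\idealb_j)_{j\in[m]})$ equals the probability that the affine map $\overline\theta\colon(\ZZ/D\ZZ)^t\to\prod_j\ideala/\idealc_j\ideala$ vanishes, and Lemma~\ref{lemma=a/Wa} provides an $\OK$-module isomorphism $\ideala/\idealc_j\ideala\simeq\OK/\idealc_j$ under which $\overline{\theta_j}$ becomes an affine map $\tilde\theta_j=W\tilde\psi_j+\tilde b_j\colon(\ZZ/D\ZZ)^t\to\OK/\idealc_j$. The coprimality condition \eqref{eq:coprime_b} then translates to the standard coprimality of $\tilde b_j$ with $W$: for any $p\mid W$ we have $W\ideala\subseteq p\ideala$, so \eqref{eq:coprime_b} forces $b_j\OK+p\ideala=\ideala$, whence $\tilde b_j$ is a unit in $\OK/p\OK$. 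The non-inclusion hypothesis \eqref{Eq:no-inclusion} on $(\psi_j)_{j\in[m]}$ is preserved under the identification, since any element of $\ker\psi_j$ automatically kills $\overline{\theta_j}$. Consequently, the $p$-factor decomposition of Lemma~\ref{lem:p-typical-E} and the case analysis of Lemmas~\ref{Lem:smaller_or_bigger}--\ref{Lem:value_of_E} carry over unchanged and produce the same local factors $E_p$.

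Once the local factors coincide with those of the original theorem, the subsequent computations---comparing $E_p$ with $E_p'$ (Lemma~\ref{lem:prod_Ep_Ep'}), evaluating $\prod_p E_p'$ in terms of the Dedekind zeta function of $K$ (Lemmas~\ref{lem:Ep'(all_prime)}, \ref{lem:Ep'(p_leq_w)}), switching the contour of integration back to $\RR^{2m}$, and applying Lemma~\ref{Lem:fourier-chi}---are identical to those in the proof of Theorem~\ref{Th:Goldston_Yildirim} and yield the same main term $(W^nc_\chi\log R/(\vph_K(W)\cdot\kappa))^m$. The principal obstacle lies in step three: Lemma~\ref{lemma=a/Wa} delivers only a non-canonical isomorphism, so I must verify that the quantities entering Lemmas~\ref{Lem:smaller_or_bigger}--\ref{Lem:value_of_E} (the cardinality $\#\Im(\overline\psi)$, whether $0\in\Im(\overline\theta)$, and so on) are intrinsic to the $\OK$-module map $\overline{\theta_j}$ rather than dependent on any chosen identification. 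This will ultimately reduce to the observation that these quantities are invariants of the kernel and cokernel of maps of finite cyclic $\OK$-modules, and hence are unaffected by the isomorphism used.
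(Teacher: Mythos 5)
Your overall strategy is the same as the paper's: rerun the proof of Theorem~\ref{Th:Goldston_Yildirim} with $\ideala$ in place of $\OK$, observing that only the local expectations need re-examination. Two points of comparison are worth making. First, your reduction modulus differs from the paper's, and this matters for the precise statement. You set $D\ZZ=\ZZ\cap\bigcap_{j}\idealc_j\ideala$ and consequently only obtain $D\leq (R\,\Nrm(\ideala))^{2m}$, which forces you to let $R_0$ (equivalently, the required interval length $R^{4m+1}$) depend on $\ideala$; the theorem asserts $R_0=R_0(m,K)$ with no such dependence. The paper keeps the same modulus as in the element case, $D\ZZ=\ZZ\cap\bigcap_j(\ideala_j\cap\idealb_j)$: this still reduces $\ichi_{\ideala\idealc_j}\circ\theta_j$ modulo $D$, because $x\equiv x'\pmod{D}$ gives $\psi_j(x-x')\in D\ideala\subseteq\idealc_j\ideala$ (using that $\psi_j$ already lands in $\ideala$), and it retains $D\leq R^{2m}$. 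As written your argument therefore proves a slightly weaker statement than the one claimed; the repair is simply to use the smaller modulus. Second, for the local factors you transport everything to $\OK/\idealc_j$ via the isomorphism $\ideala/\idealc_j\ideala\simeq\OK/\idealc_j$ of Lemma~\ref{lemma=a/Wa}, whereas the paper reproves the relevant lemmas directly for the modules $\ideala/\ideala\idealc_j$, using only $\#(\ideala/\ideala\idealc_j)=\Nrm(\idealc_j)$ and the Chinese remainder theorem for modules (Lemma~\ref{lem:Chinese}). Your route is legitimate: the quantities entering the local analysis ($\#\Im(\overline{\psi})$, whether $0\in\Im(\overline{\theta})$, and the $p$-part decomposition) are invariant under an additive group isomorphism of the target fixing $0$, and the coprimality \eqref{eq:coprime_b} and hypothesis \eqref{Eq:no-inclusion} translate as you indicate; the invariance check you defer at the end is exactly the right one and does go through. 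Once the local factors agree, the Euler-product and Fourier computations are untouched, as you note.
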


We remark that condition~\eqref{eq:coprime_b} is the counterpart of the coprime condition imposed on $b_1,\ldots ,b_m$ in Theorem~\ref{Th:Goldston_Yildirim}; see Section~\ref{section=maintheoremfull}, more specifically, Lemma~\ref{lemma=coprime}, for more details.

In the rest of this subsection, we prove Theorem~\ref{theorem=GYfordieals} under the setting of Theorem~\ref{theorem=GYfordieals}.
\begin{proposition}\label{prop:average-to-show-2.5forideals}
Expectation~\eqref{Eq:formula_to_showforideals} is equal to
\begin{equation}\label{Eq:average-to-show-2.5forideals}
 (\log R)^{2m}\sum_{(\ideala_j,\idealb_j )_{j \in[m]}\in\Ideals_K^{2m}}\Pi_{R,\chi}\left((\ideala_j,\idealb_j)_{j\in[m]}\right)\cdot\EE\Bigg(\prod_{j \in [m]}(\ichi_{\ideala \cdot (\ideala_j\cap\idealb_j)}\circ\theta_j) \ \Bigg| \ \calB\Biggr),
\end{equation}
where 
 \[
 \Pi_{R,\chi}\left((\ideala_j,\idealb_j)_{j\in[m]}\right) = \prod_{j \in[m]}\mu(\ideala_j)\mu(\idealb_j) \chi\left(\frac{\log\Nrm(\ideala_j)}{\log R}\right)\chi\left(\frac{\log\Nrm(\idealb_j)}{\log R}\right).
 \]
\end{proposition}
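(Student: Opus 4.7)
The plan is to unfold the definition of $\Lambda_{R,\chi}^{\ideala}$ and then proceed exactly as in the proof of Proposition~\ref{prop:average-to-show-2.5}, of which this is the special case $\ideala=\OK$. Only one extra ingredient is required, namely a translation between divisibility of the ideal $\theta_j(x)\ideala^{-1}$ and membership of $\theta_j(x)$ in an appropriate multiple of $\ideala$.

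The key observation is the equivalence
\[
\idealb \mid \alpha\ideala^{-1} \iff \alpha \in \ideala\idealb, \qquad \text{for every } \alpha\in\ideala \text{ and } \idealb\in\Ideals_K.
\]
For $\alpha\neq 0$ the hypothesis $\alpha\in\ideala$ makes $\alpha\ideala^{-1}$ an \emph{integral} ideal (since $\alpha\ideala^{-1}\subseteq \ideala\ideala^{-1}=\OK$), and both directions follow by multiplying the containment $\alpha\ideala^{-1}\subseteq\idealb$ by $\ideala$ or by $\ideala^{-1}$; the case $\alpha=0$ is trivial. Substituting this equivalence into \eqref{Eq:def-of-von-M} and \eqref{eq:Lambda^a} gives, for each $j\in[m]$,
\[
\Lambda_{R,\chi}^{\ideala}(\theta_j(x))^2 = (\log R)^2 \sum_{\ideala_j,\idealb_j \in \Ideals_K} \mu(\ideala_j)\mu(\idealb_j)\, \chi\!\left(\tfrac{\log\Nrm(\ideala_j)}{\log R}\right)\chi\!\left(\tfrac{\log\Nrm(\idealb_j)}{\log R}\right) \ichi_{\ideala\ideala_j}(\theta_j(x))\, \ichi_{\ideala\idealb_j}(\theta_j(x)).
\]
To collapse the two indicator factors into $\ichi_{\ideala\cdot(\ideala_j\cap\idealb_j)}(\theta_j(x))$, I would invoke the distributive identity $\ideala\ideala_j \cap \ideala\idealb_j = \ideala(\ideala_j\cap\idealb_j)$, which holds in the Dedekind ring $\OK$: working prime by prime via Theorem~\ref{theorem=primeideals}, both sides have $\idealp$-exponent equal to $v_{\idealp}(\ideala) + \max\{v_{\idealp}(\ideala_j),v_{\idealp}(\idealb_j)\}$ at every prime ideal $\idealp$.

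Finally, multiplying these identities over $j\in[m]$, averaging over $x\in\calB$, and exchanging the outer sum with the expectation (legitimate because the support condition $\mathrm{supp}(\chi)\subseteq[-1,1]_{\RR}$ confines the sum to the finite range $\Nrm(\ideala_j),\Nrm(\idealb_j)\le R$) yields exactly \eqref{Eq:average-to-show-2.5forideals}. I do not anticipate any genuine obstacle: this proposition is a \emph{bookkeeping} statement rewriting the square of the modified von Mangoldt function as a double Möbius sum, and the only arithmetic content beyond Proposition~\ref{prop:average-to-show-2.5} is the divisibility--membership equivalence above, which is a direct consequence of the existence of the inverse fractional ideal $\ideala^{-1}$.
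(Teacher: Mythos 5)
Your proposal is correct and follows essentially the same route as the paper's own proof, which simply expands $\Lambda^{\ideala}_{R,\chi}(\theta_j(x))^2$ into a double M\"obius sum with the membership condition $\theta_j(x)\in\ideala\cdot(\ideala_j\cap\idealb_j)$ and then swaps the (effectively finite) sum with the average over $\calB$. The only difference is that you spell out the two small facts the paper leaves implicit --- the equivalence $\idealb\mid\alpha\ideala^{-1}\iff\alpha\in\ideala\idealb$ for $\alpha\in\ideala$, and the identity $\ideala\ideala_j\cap\ideala\idealb_j=\ideala(\ideala_j\cap\idealb_j)$ --- both of which you justify correctly.
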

\begin{proof}
Let $x\in\calB$. 
We \havethat\
\begin{align*}
&(\log R)^{-2m}\prod_{j\in[m]}\Lambda^\ideala_{R,\chi}(\theta_j(x))^2\\
&=\sum_{\substack{(\ideala_j,\idealb_j)_{j\in[m]}\in\Ideals_K^{2m}\\ \theta_j(x)\in \ideala \cdot (\ideala_j\cap\idealb_j) \ (\forall j\in[m])}}\prod_{j\in[m]}\mu(\ideala_j)\mu(\idealb_j)\chi\left(\frac{\log\Nrm(\ideala_j)}{\log R}\right)\chi\left(\frac{\log\Nrm(\idealb_j)}{\log R}\right)\\
&=\sum_{(\ideala_j,\idealb_j)_{j\in[m]}\in\Ideals_K^{2m}}\left(\prod_{j\in[m]}\ichi_{\ideala \cdot ( \ideala_j\cap\idealb_j)}(\theta_j(x))\right)\cdot\Pi_{R,\chi}\left((\ideala_j,\idealb_j)_{j\in[m]}\right).
\end{align*}
Only the characteristic functions $\prod_{j\in[m]}\ichi_{\ideala \cdot (\ideala_j\cap\idealb_j)}(\theta_j(x))$ depend on $x \in \calB$, and hence the desired result holds.
\end{proof}
The following lemma is verified in the same way as the proof of Lemma~\ref{lem:E(D)}. Indeed, observe that 
for every $x\in(\ZZ/D\ZZ)^t$, the value $\ichi_{\ideala\cdot (\ideala_j\cap\idealb_j)}(\theta_j(x))\in\{0,1\}$ is well-defined.
\begin{lemma}\label{lem:E(D)forideals}
Let $(\ideala_j,\idealb_j)_{j\in[m]}\in\Ideals_K^{2m}$.
Let $D=D\left((\ideala_j,\idealb_j)_{j\in[m]}\right)$ be the positive integer defined in~\eqref{lem:E(D):1}.
If $\Nrm(\ideala_j),\Nrm(\idealb_j)\leq R$ holds for every $j\in[m]$, then the following hold true:
\[
\EE\Biggl(\prod_{j\in[m]}(\ichi_{\ideala \cdot (\ideala_j\cap\idealb_j)}\circ\theta_j) \ \Bigg| \ \calB\Biggr)\\
=\EE\Biggl(\prod_{j\in[m]}(\ichi_{\ideala \cdot (\ideala_j\cap\idealb_j)}\circ\theta_j) \ \Bigg| \ (\ZZ/D\ZZ)^t\Biggr)+O_t(R^{-2m-1}).	
\]
\end{lemma}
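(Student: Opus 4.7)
The plan is to mimic the proof of Lemma~\ref{lem:E(D)} verbatim, the only genuinely new point being to check that the characteristic function $\ichi_{\ideala\cdot(\ideala_j\cap\idealb_j)}\circ\theta_j$ descends to a well-defined function on $(\ZZ/D\ZZ)^t$. Recall that $D\ZZ = \ZZ\cap\bigcap_{j\in[m]}(\ideala_j\cap\idealb_j)$, so for any $y\in D\ZZ^t$ we have $\psi_j(y)\in D\ideala$ because $\psi_j$ takes values in $\ideala$. Since $D\in \ideala_j\cap\idealb_j$, this gives $\psi_j(y)\in \ideala\cdot(\ideala_j\cap\idealb_j)$, and therefore
\[
\theta_j(x+y)-\theta_j(x)=W\psi_j(y)\in \ideala\cdot(\ideala_j\cap\idealb_j).
\]
Consequently $\ichi_{\ideala\cdot(\ideala_j\cap\idealb_j)}(\theta_j(x))$ depends only on the class of $x$ modulo $D\ZZ^t$, and the expectation on the right-hand side of the claimed equality is well-defined.

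Next I would invoke the bound $D\le R^{2m}$ already proved as Lemma~\ref{lem:E(D)}~\eqref{en:D-R}: the argument there uses only the inclusions $\#(\ZZ/(\ZZ\cap\ideala_j)),\#(\ZZ/(\ZZ\cap\idealb_j))\leq R$ together with $D\ZZ\supseteq\prod_{j\in[m]}(\ZZ\cap\ideala_j)(\ZZ\cap\idealb_j)$, none of which involves the ideal $\ideala$ at all. With this bound in hand, I would decompose $\calB=I_1\times\cdots\times I_t$ into $\prod_{i\in[t]}\lfloor \#I_i/D\rfloor$ pairwise disjoint translates of $[D]^t$ plus a remainder. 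On each translate the expectation of $\prod_{j\in[m]}\ichi_{\ideala\cdot(\ideala_j\cap\idealb_j)}\circ\theta_j$ is exactly the expectation over $(\ZZ/D\ZZ)^t$ by the descent property just established. The fractional loss from the remainder is bounded by
\[
1-\prod_{i\in[t]}\bigl(1-D/\#I_i\bigr)\leq 1-(1-R^{-2m-1})^t\leq t\cdot R^{-2m-1}
\]
via Bernoulli's inequality, using $D/\#I_i\leq R^{2m}/R^{4m+1}=R^{-2m-1}$. This yields the error term $O_t(R^{-2m-1})$.

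The hard part will really be nothing more than the descent check in the first paragraph, which is why the result of Lemma~\ref{lem:E(D)forideals} mirrors that of Lemma~\ref{lem:E(D)} with identical error bound: replacing $\OK$ by $\ideala$ in the codomain of $\psi_j$ multiplies the relevant periodicity ideal by $\ideala$, and this rescaling is harmless because $D\ideala \subseteq \ideala\cdot(\ideala_j\cap\idealb_j)$. All remaining counting estimates are functions only of the tuple $(\ideala_j,\idealb_j)_{j\in[m]}$ and the parameters $R,t$, so they transfer without change.
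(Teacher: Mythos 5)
Your proof is correct and follows exactly the route the paper takes: the paper dispatches this lemma by noting that the argument of Lemma~\ref{lem:E(D)} carries over verbatim once one observes that $\ichi_{\ideala\cdot(\ideala_j\cap\idealb_j)}(\theta_j(x))$ is well-defined on $(\ZZ/D\ZZ)^t$, which is precisely the descent check you supply via $D\ideala\subseteq\ideala\cdot(\ideala_j\cap\idealb_j)$. The remaining counting steps (the bound $D\le R^{2m}$ and the tiling of $\calB$ by translates of $[D]^t$) are indeed unchanged, as you say.
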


As in~\eqref{Eq:E}, we write for short the following expectation
\begin{equation}\label{Eq:Eforideals}
\Ea \left((\ideala_j,\idealb_j)_{j \in [m]}\right)=\Ea\left((\ideala_j,\idealb_j)_{j\in[m]} ; (\theta_j)_{j \in [m]}\right)\coloneqq\EE\Biggl(\prod _{j\in[m]}(\ichi_{\ideala \cdot (\ideala_j\cap\idealb_j)}\circ\theta_j) \ \Bigg| \ (\ZZ/D\ZZ)^t\Biggr),
\end{equation}
which depends on $(\theta_j)_{j \in [m]}$, $(\ideala_j,\idealb_j)_{j\in [m]}$ and $\ideala$.
Here a positive integer $D=D\left((\ideala_j,\idealb_j)_{j\in [m]}\right)$ is taken as in~Lemma~\ref{lem:E(D)forideals}.
Then we derive the following proposition in the same way as the proof of Proposition~\ref{prop:replace_B_by_D}.
\begin{proposition}\label{prop:replace_B_by_Dforideals}
Expectation \eqref{Eq:average-to-show-2.5forideals} equals
\begin{equation}\label{Eq:formula_to_show_3forideals}
(\log R)^{2m}\sum_{(\ideala_j,\idealb_j)_{j\in[m]}\in(\Ideals_K)^{2m}}\Pi_{R,\chi}\left((\ideala_j,\idealb_j)_{j\in[m]}\right)\cdot\Ea\left((\ideala_j,\idealb_j)_{j \in [m]}\right)
\end{equation}
with an additive error term $O_{m,t,K}\left(\frac{(\log R)^{2m}}{R}\right)$.
\end{proposition}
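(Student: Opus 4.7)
The plan is to mimic the proof of Proposition~\ref{prop:replace_B_by_D} verbatim, with the only change being that Lemma~\ref{lem:E(D)forideals} plays the role of Lemma~\ref{lem:E(D)}~\eqref{en:B-D}. First I would subtract \eqref{Eq:average-to-show-2.5forideals} from \eqref{Eq:formula_to_show_3forideals}; by Lemma~\ref{lem:E(D)forideals} the difference equals
\[
(\log R)^{2m}\sum_{(\ideala_j,\idealb_j)_{j\in[m]}\in\Ideals_K^{2m}}\Pi_{R,\chi}\left((\ideala_j,\idealb_j)_{j\in[m]}\right)\cdot O_t(R^{-2m-1}),
\]
where the sum is restricted (via Lemma~\ref{lem:E(D)forideals}'s hypothesis on norms) to those tuples with $\Nrm(\ideala_j),\Nrm(\idealb_j)\leq R$ for all $j\in[m]$.

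The second step is to control the size of this sum. Since $\mathrm{supp}(\chi)\subseteq[-1,1]_{\RR}$, the factor $\Pi_{R,\chi}$ vanishes on tuples failing the norm bound, so restricting the sum is legitimate. By Proposition~\ref{proposition=idealdensity}, the number of $\ideala\in\Ideals_K$ with $\Nrm(\ideala)\leq R$ is $O_K(R)$, so the number of tuples $(\ideala_j,\idealb_j)_{j\in[m]}$ appearing is $O_K(R^{2m})$. Combined with the uniform bound $|\Pi_{R,\chi}((\ideala_j,\idealb_j)_{j\in[m]})|\leq 1$ (which follows from $|\mu|\leq 1$ and $0\leq\chi\leq 1$), the total additive error becomes
\[
(\log R)^{2m}\cdot O_K(R^{2m})\cdot O_t(R^{-2m-1})=O_{t,K}\!\left(\frac{(\log R)^{2m}}{R}\right),
\]
which is exactly the claimed bound.

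There is essentially no obstacle here: the only reason this proof needed to be stated separately rather than invoked from Proposition~\ref{prop:replace_B_by_D} is that the indicator functions $\ichi_{\ideala\cdot(\ideala_j\cap\idealb_j)}$ replace $\ichi_{\ideala_j\cap\idealb_j}$, which required the separate Lemma~\ref{lem:E(D)forideals}. The lemma itself is proved exactly as Lemma~\ref{lem:E(D)}: the integer $D$ depends only on $(\ideala_j,\idealb_j)_{j\in[m]}$ and satisfies $D\leq R^{2m}$, and the indicator $\ichi_{\ideala\cdot(\ideala_j\cap\idealb_j)}\circ\theta_j$ is periodic modulo $D$ on $\ZZ^t$ because $\theta_j(\ZZ^t)\subseteq\ideala$ and $D\ZZ\subseteq\bigcap_{j}(\ideala_j\cap\idealb_j)$ imply that translation by $D\ZZ^t$ preserves membership in $\ideala\cdot(\ideala_j\cap\idealb_j)$. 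Hence the argument of Lemma~\ref{lem:E(D)}~\eqref{en:B-D} applies without modification, and Proposition~\ref{prop:replace_B_by_Dforideals} follows.
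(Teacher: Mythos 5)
Your proposal is correct and follows essentially the same route as the paper, which proves Proposition~\ref{prop:replace_B_by_Dforideals} by repeating the argument of Proposition~\ref{prop:replace_B_by_D} with Lemma~\ref{lem:E(D)forideals} in place of Lemma~\ref{lem:E(D)}~\eqref{en:B-D}: restrict to tuples with $\Nrm(\ideala_j),\Nrm(\idealb_j)\leq R$ via the support of $\chi$, count $O_K(R^{2m})$ such tuples by Proposition~\ref{proposition=idealdensity}, and combine with the $O_t(R^{-2m-1})$ error per tuple. Your side remark on the well-definedness of $\ichi_{\ideala\cdot(\ideala_j\cap\idealb_j)}\circ\theta_j$ modulo $D$ matches the observation the paper makes just before stating Lemma~\ref{lem:E(D)forideals}.
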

Recall that \eqref{Eq:formula_to_show} and \eqref{Eq:formula_to_showforideals} equal \eqref{Eq:formula_to_show_3} and \eqref{Eq:formula_to_show_3forideals}, respectively.
In addition, the properties of $\E$ required in the proof of Theorem~\ref{Th:Goldston_Yildirim} are the multiplicativity of $\E$ as in Lemma~\ref{lem:p-typical-E} and the estimates as in Lemma~\ref{Lem:value_of_E} for $\E$.
Hence, in order to show Theorem~\ref{theorem=GYfordieals}, it suffices to prove that $\Ea$ has the same properties of $\E$.
Namely, we show Lemmas~\ref{lem:p-typical-Eforideals} and \ref{Lem:value_of_Eforideals} below.
\begin{lemma}\label{lem:p-typical-Eforideals}
Let $(\ideala_j,\idealb_j)_{j\in [m]}$ be a tuple of arbitrary non-zero ideals of $\OK$.
Then Lemma~$\ref{lem:p-typical-E}$ with $\E$ 
replaced by $\Ea$ holds.
Namely, we \havethat\
\[
\Ea\left((\ideala_j,\idealb_j)_{j\in [m]}\right)=\prod_{p \in\PP}\Ea\left((\ideala^{(p)}_j,\idealb^{(p)}_j)_{j\in[m]}\right).
\]
\end{lemma}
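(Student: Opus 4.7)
The plan is to mimic the proof of Lemma~\ref{lem:p-typical-E}, replacing the role of the quotient ring $\OK/\idealc_j$ by the $\OK$-module $\ideala/\ideala\cdot\idealc_j$, where $\idealc_j \coloneqq \ideala_j \cap \idealb_j$. First I would observe that $\theta_j$ descends to an affine map
\[
\overline{\theta_j}\colon (\ZZ/D\ZZ)^t \longrightarrow \ideala/\ideala\cdot\idealc_j,
\]
which is well-defined because $W\psi_j(D\ZZ^t) \subseteq WD\ideala \subseteq \ideala\cdot\idealc_j$ (using $D \in \bigcap_j \idealc_j$ and that $\ideala\cdot\idealc_j$ is an ideal). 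Setting $\overline{\theta} \coloneqq (\overline{\theta_j})_{j\in[m]}$, one rewrites
\[
\Ea\bigl((\ideala_j,\idealb_j)_{j\in[m]}\bigr) = \EE\bigl(\ichi_{\{0\}} \circ \overline{\theta} \mid (\ZZ/D\ZZ)^t\bigr).
\]

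Next I would invoke the Chinese Remainder Theorem for modules (Lemma~\ref{lem:Chinese}, isomorphism \eqref{eq:Chinese-modules}) applied to $M = \ideala$ and the pairwise coprime ideals $\{\idealc_j^{(p)}\}_{p\in\PP}$ of $\OK$ to obtain a natural isomorphism
\[
\ideala/\ideala\cdot\idealc_j \xrightarrow{\ \sim\ } \prod_{p\in\PP} \ideala/\ideala\cdot\idealc_j^{(p)}.
\]
The classical CRT likewise yields $\ZZ/D\ZZ \cong \prod_{p\in\PP} \ZZ/D^{(p)}\ZZ$, and since taking $p$-parts commutes with finite intersections of ideals, $D^{(p)}$ is precisely the integer attached by Lemma~\ref{lem:E(D)forideals} to the tuple $(\ideala_j^{(p)}, \idealb_j^{(p)})_{j\in[m]}$. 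Under these two identifications the affine map $\overline{\theta}$ becomes a product $\prod_p \overline{\theta}^{(p)}$, where each $\overline{\theta}^{(p)} \colon (\ZZ/D^{(p)}\ZZ)^t \to \prod_j \ideala/\ideala\cdot\idealc_j^{(p)}$ is exactly the map used in evaluating $\Ea\bigl((\ideala_j^{(p)}, \idealb_j^{(p)})_{j\in[m]}\bigr)$. Applying $\ichi_{\{0\}}$ componentwise and averaging coordinate by coordinate produces the desired factorization.

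The one mild subtlety to watch is the module CRT step: the individual ideals $\ideala\cdot\idealc_j^{(p)}$ are generally \emph{not} pairwise coprime for distinct $p$, since they all contain the common factor $\ideala$. What saves the argument is that Lemma~\ref{lem:Chinese}~\eqref{eq:Chinese-modules} only requires coprimality of the ideals $\idealc_j^{(p)}$ themselves in $\OK$ (which holds by construction), and then delivers the isomorphism $M/\bigl(\prod_p \idealc_j^{(p)}\bigr)M \cong \prod_p M/\idealc_j^{(p)}M$ applied to $M = \ideala$. Once this observation is in place, every remaining step parallels the proof of Lemma~\ref{lem:p-typical-E} essentially verbatim.
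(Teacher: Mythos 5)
Your proposal is correct and follows essentially the same route as the paper: both reduce $\Ea$ to the expectation of $\ichi_{\{0\}}\circ\overline{\theta}$ on $(\ZZ/D\ZZ)^t$ and then apply the module Chinese Remainder Theorem (Lemma~\ref{lem:Chinese}) with $M=\ideala$ and the mutually coprime ideals $\idealc_j^{(p)}$ to split $\overline{\theta}$ into its $p$-components. Your closing remark about why coprimality of the $\idealc_j^{(p)}$ in $\OK$ (rather than of the modules $\ideala\idealc_j^{(p)}$) suffices is exactly the point that makes the paper's application of Lemma~\ref{lem:Chinese} legitimate, even though the paper does not spell it out.
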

\begin{proof}
We write $\idealc _j\coloneqq\ideala_j\cap\idealb_j$ for short,
and then we have $D\ZZ=\ZZ\cap(\bigcap_{j\in[m]}\idealc_j)$.
Note that $\idealc_j\ppart=\ideala_j\ppart\cap\idealb_j\ppart $ and $D\ppart\ZZ=\ZZ\cap(\bigcap_{j\in[m]}\idealc_j\ppart )$.
We consider the $\ZZ$-module homomorphisms and affine transformations
$
\overline{\psi_j}, \overline{\theta_j}\colon(\ZZ/D\ZZ)^t\to \ideala/\ideala \idealc_j
$
induced by $\psi_j$ and $\theta_j$, respectively.
Let
\[
\overline{\psi}, \overline{\theta}\colon(\ZZ/D\ZZ)^{t}\to\prod_{j\in[m]}\ideala/\ideala \idealc_j
\]
be the two maps defined by $\overline{\psi}(x) = (\overline{\psi_1}(x),\ldots,\overline{\psi_m}(x))$ and $\overline{\theta}(x) = (\overline{\theta_1}(x),\ldots,\overline{\theta_m}(x))$.
Then we see that
\begin{equation}	\label{lem:p-typical-Eforideals:1}
\Ea\left((\ideala_j,\idealb_j)_{j \in[m]}\right)
=\EE\Biggl(\prod_{j\in[m]}(\ichi_{\ideala \idealc_j}\circ\theta_j) \ \Bigg| \ (\ZZ/D\ZZ)^t\Biggr)
=\EE\left(\ichi_{\{0\}}\circ\overline{\theta}\mid (\ZZ/D\ZZ)^t\right).
\end{equation}
By Lemma~\ref{lem:Chinese}, the $\ZZ $-module homomorphism $\overline{\psi}$ equals the product of its restrictions $\overline{\psi}\ppart$ to $(\ZZ/D\ppart \ZZ)^t$, that means
\[
\overline{\psi}=\prod_{p\in\PP}\overline{\psi}\ppart\colon\prod_{p\in\PP}(\ZZ/D\ppart\ZZ)^t\to\prod_{p\in\PP}\left(\prod_{j\in[m]}\ideala/\ideala \idealc_j\ppart\right).
\]
Hence the affine transformation $\overline{\theta}$ is the product of the restrictions 
\[
\overline{\theta}\ppart\colon(\ZZ/D\ppart\ZZ)^t\to\prod_{j\in[m]}\ideala/\ideala \idealc_j\ppart  .
\]
Therefore, by \eqref{lem:p-typical-Eforideals:1}, we \obtainthat\
\begin{align*}\label{Eq:Chinese-equality-to-show}
\Ea\left((\ideala_j,\idealb_j)_{j \in[m]}\right)
&= \EE\left(\ichi_{\{0\}}\circ\overline{\theta}\mid (\ZZ/D\ZZ)^t\right)
=\prod_{p\in\PP}\EE \left(\ichi_{\{0\}}\circ\overline{\theta}\ppart\relmiddle|(\ZZ/D\ppart\ZZ)^t\right)\\
&=\prod_{p \in\PP}\Ea\left((\ideala^{(p)}_j,\idealb^{(p)}_j)_{j\in[m]}\right).
\end{align*}
This is the desired conclusion.
\end{proof}
In what follows, we assume the following setting.
\begin{setting}\label{Setting:p-idealsforideals}
Let $(\pideala_j,\pidealb_j)_{j\in [m]}\in(\Ideals _K^{(p)})^{2m}$ be a tuple of $p$-ideals for some prime number $p$.
Write $\pidealc_j\coloneqq\pideala_j\cap\pidealb _j$.
Let $D$ be the positive integer such that $D\ZZ=\ZZ \cap\left(\bigcap_{j\in[m]}\pidealc_j\right)$.
Let 
$
\overline{\psi_j}, \overline{\theta_j}\colon(\ZZ/D\ZZ)^t\to \ideala/\ideala \idealc_j
$
be the $\ZZ$-module homomorphisms induced by $\psi_j$ and $\theta_j$, respectively.
Let
$
\overline{\psi}, \overline{\theta}\colon(\ZZ/D\ZZ)^{t}\to\prod_{j\in[m]}\ideala/\ideala \idealc_j
$
be the two maps defined by $\overline{\psi}(x) = (\overline{\psi_1}(x),\ldots,\overline{\psi_m}(x))$ and $\overline{\theta}(x) = (\overline{\theta_1}(x),\ldots,\overline{\theta_m}(x))$.
\end{setting}
Note the equality $\Nrm(\pidealc) = \#( \ideala/\ideala \pidealc)$ for all ideal $\gamma \in \Ideals_K$.
The arguments in the proof of Lemma~\ref{Lem:smaller_or_bigger} hence shows the following lemma.
\begin{lemma}\label{Lem:smaller_or_biggerforideals}
Under Setting~$\ref{Setting:p-idealsforideals}$, Lemma~$\ref{Lem:smaller_or_bigger}$ with $\E$ replaced by $\Ea$ holds.
\end{lemma}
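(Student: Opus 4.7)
The plan is to imitate the proof of Lemma~\ref{Lem:smaller_or_bigger} line for line, since the only structural change in passing from $\E$ to $\Ea$ is that the target group of $\overline{\theta}$ is $\prod_{j\in[m]}\ideala/\ideala\pidealc_j$ rather than $\prod_{j\in[m]}\OK/\pidealc_j$. First I would invoke the same abstract fact used in the original proof: for any affine transformation $\theta\colon Z\to Z'$ of finite abelian groups, $\EE(\ichi_{\{0\}}\circ\theta\mid Z) = (\#\Im(\theta))^{-1}$ if $0\in\Im(\theta)$ and $0$ otherwise. Applied to $\overline{\theta}\colon(\ZZ/D\ZZ)^t\to\prod_{j\in[m]}\ideala/\ideala\pidealc_j$ from Setting~\ref{Setting:p-idealsforideals}, this directly yields the desired two-case dichotomy, but with $\#\Im(\overline{\theta})^{-1}$ in the non-vanishing case.

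To finish I must check that $\#\Im(\overline{\theta}) = \#\Im(\overline{\psi})$, exactly as in the original proof. Since $\overline{\theta}$ factors as $\Aff_{W,(\overline{b_j})_j}\circ\overline{\psi}$, it suffices to show that multiplication by $W$ is an automorphism of the target group $\prod_{j\in[m]}\ideala/\ideala\pidealc_j$. This is the only place where the new setting requires a genuine input: by Lemma~\ref{lemma=a/Wa} (the isomorphism $\OK/\idealb\simeq\ideala/\ideala\idealb$ of $\OK$-modules), each factor has cardinality $\#(\ideala/\ideala\pidealc_j) = \Nrm(\pidealc_j)$, and this is a power of $p$ because $\pidealc_j$ is a $p$-ideal. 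Since the prime divisors of $W$ all lie in $\PP_{\leq w}$ and $p > w$, we have $\gcd(W,p)=1$, so multiplication by $W$ is a bijection on each factor and hence on the product.

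I do not anticipate any genuine obstacle beyond locating this single numerical identification; every other step in the original proof is purely formal and transfers to the ideal setting without modification. In particular, no new computation with $p$-parts or with the affine structure is needed — the identity $\Nrm(\pidealc) = \#(\ideala/\ideala\pidealc)$ flagged immediately before the statement is exactly the bridge that reduces the ideal version to the original one.
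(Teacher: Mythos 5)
Your proof is correct and follows essentially the same route as the paper: the paper likewise reduces everything to the observation that $\#(\ideala/\ideala\pidealc_j)=\Nrm(\pidealc_j)$ is a power of $p$, so that for $p>w$ multiplication by $W$ is an automorphism of the target group and $\#\Im(\overline{\theta})=\#\Im(\overline{\psi})$, the rest of the argument being the purely formal computation of the expectation of an indicator of $\{0\}$ composed with an affine map of finite abelian groups.
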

Although the next lemma is also verified by following the proof of Lemma~\ref{Lem:value_of_E}, we write down a proof for the convenience of the reader.
\begin{lemma}\label{Lem:value_of_Eforideals}
Under Settings~$\ref{Setting:w_0}$ and $\ref{Setting:p-idealsforideals}$, Lemma~$\ref{Lem:value_of_E}$ with $\E$ replaced by $\Ea$ holds.
\end{lemma}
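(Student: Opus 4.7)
The plan is to run the argument of Lemma~\ref{Lem:value_of_E} essentially verbatim, while replacing the module $\OK/\idealc_j$ by $\ideala/\ideala\idealc_j$ throughout and replacing `$b_j$ is prime to $W$' by the coprimality condition \eqref{eq:coprime_b}. The key structural input is the $\OK$-module isomorphism $\ideala/\ideala\idealc\simeq\OK/\idealc$ for any $\idealc\in\Ideals_K$ (Lemma~\ref{lemma=a/Wa}, generalized in Proposition~\ref{prop:monogenic-modulo-b}); this identifies $\#(\ideala/\ideala\idealc)=\Nrm(\idealc)$ and guarantees that the target of $\overline\psi$ has exactly the same size and $p$-primary structure as in the element-level proof.

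Items~\eqref{en:localfactor1}, \eqref{en:localfactor3}, and \eqref{en:localfactor4} will follow by a direct translation. For \eqref{en:localfactor1}, the integrand is identically $1$ since every $\theta_j(x)\in\ideala=\ideala\cdot\OK$. For \eqref{en:localfactor3}, I will use Setting~\ref{Setting:w_0}~\eqref{en:w_1} to see that $C_{j_0}=\#\coker(\psi_{j_0})$ is prime to $p$, so multiplication by $C_{j_0}$ is an automorphism of the $p$-power-order module $\ideala/\ideala\pidealc_{j_0}$; this makes $\overline{\psi_{j_0}}$ surjective, and since $W$ is prime to $p$ the same holds for $\overline{\theta_{j_0}}$. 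Lemma~\ref{Lem:smaller_or_biggerforideals} then gives $\Ea=1/\#\Im(\overline\psi)=1/\#(\ideala/\ideala\pidealc_{j_0})=1/\Nrm(\pidealc_{j_0})$. For \eqref{en:localfactor4}, the elements $x_{12},x_{21}$ from Setting~\ref{Setting:w_0}~\eqref{en:w_2} produce, under $(\overline{\psi_{j_1}},\overline{\psi_{j_2}})$, two independent nonzero vectors in the first and second factors respectively (using that $\psi_2(x_{12}),\psi_1(x_{21})$ are prime to every $p$-ideal since $p>w\ge w_0''$), yielding $\#\Im(\overline\psi)\geq p^2$.

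The one point requiring care is the analogue of item~\eqref{en:localfactor2}, where in the element case one argued that $\theta_{j_0}(x)\notin\pidealc_{j_0}$ because $W\in\idealp$ while $b_{j_0}\notin\idealp$ for any prime $\idealp\supseteq\pidealc_{j_0}$. Here I must instead show $\theta_{j_0}(x)\notin\ideala\cdot\pidealc_{j_0}$, i.e.\ $\theta_{j_0}(x)\notin\ideala\idealp$ for any such prime $\idealp$. The term $W\psi_{j_0}(x)$ lies in $W\ideala$, which is contained in $\ideala\idealp$ since $p\le w$ forces $W\in\idealp$. So it reduces to verifying $b_{j_0}\notin\ideala\idealp$. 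This is exactly where the coprimality hypothesis \eqref{eq:coprime_b}, $b_{j_0}\OK+W\ideala=\ideala$, enters: if $b_{j_0}\in\ideala\idealp$, then combined with $W\ideala\subseteq\ideala\idealp$ one would get $\ideala=b_{j_0}\OK+W\ideala\subseteq\ideala\idealp\subsetneq\ideala$, a contradiction. I expect this to be the only nontrivial step; once it is in place, the remaining items and the overall structure of the proof are a routine adaptation of the arguments given for Lemma~\ref{Lem:value_of_E}.
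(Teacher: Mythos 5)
Your proposal is correct and follows essentially the same route as the paper's proof: a verbatim adaptation of Lemma~\ref{Lem:value_of_E} with $\OK/\idealc_j$ replaced by $\ideala/\ideala\idealc_j$, using $\#(\ideala/\ideala\pidealc)=\Nrm(\pidealc)$ and the ideal version of Lemma~\ref{Lem:smaller_or_bigger} for items \eqref{en:localfactor3} and \eqref{en:localfactor4}. In particular, you correctly isolate item \eqref{en:localfactor2} as the only step where the hypothesis $b_{j_0}\OK+W\ideala=\ideala$ genuinely intervenes, and your argument that $b_{j_0}\in\ideala\idealp$ would force $\ideala\subseteq\ideala\idealp\subsetneq\ideala$ is exactly the justification the paper leaves implicit.
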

\begin{proof}
First we prove \eqref{en:localfactor1}.
If $\pidealc_j=\OK$ for every $j \in [m]$, 
then $\prod_{j\in[m]}(\ichi_{\ideala \pidealc_j}\circ\theta_j)$ is identical with the constant function $1$, and hence $\Ea\left((\pideala_j,\pidealb_j)_{j\in[m]}\right)=1$.

Secondly, we prove \eqref{en:localfactor2}.
It suffices to show the non-membership of $\theta_{j_0}(x)=W\psi_{j_0}(x)+b_{j_0}$ in $\ideala\pidealc_{j_0}$
for all $x\in(\ZZ/D\ZZ)^t$.
Let $\idealp$ be an arbitrary prime ideal $\idealp\supseteq\pidealc_{j_0}$.
Since $\pidealc_{j_0}$ is a $p$-ideal, we have $\idealp\cap\ZZ=p\ZZ$.
From $p\leq w$, $p$ divides $W$, and hence $W\in\idealp$ follows.
In addition, the assumption $b_{j_0} \OK+ W \ideala =\ideala$ implies that  $b_{j_0}\not\in\ideala\idealp$.
Hence we see that for all $x \in \ZZ^t$, $\theta_{j_0}(x)=W\psi_{j_0}(x)+b_{j_0}\not\in\ideala\idealp$.
This together with $\pidealc_{j_0}\subseteq\idealp$ implies that $\theta_{j_0}(x)\not\in\ideala\pidealc_{j_0}$,
as desired.

Thirdly, we prove \eqref{en:localfactor3}.
Set $C_{j_0}\coloneqq\#\coker(\psi_{j_0})$.
Note that for every $x \in \ideala$, the element $C_{j_0} \cdot x$ is contained in the image of $\psi_{j_0}$.
By Setting~\ref{Setting:w_0}~\eqref{en:w_1}, $p$ and $C_{j_0}$ are coprime.
Since the order of $\ideala/\ideala \pidealc_{j_0}$ is a power of $p$, this implies that the multiplication by $C_{j_0}$ on $\ideala / \ideala\pidealc_{j_0}$ is an automorphism.
Hence we see that $\overline{\psi_{j_0}}\colon(\ZZ/D\ZZ)^t\to\ideala/\ideala\pidealc_{j_0}$ is surjective.
Since $W$ and $p$ are coprime, the map $\overline{\theta_{j_0}}=\Aff_{W,b_{j_0}}\circ\overline{\psi_{j_0}}\colon(\ZZ/D\ZZ)^t\to\ideala/\ideala\pidealc_{j_0}$ is also surjective.
Then Lemma~\ref{Lem:smaller_or_bigger} yields that $\Ea\left((\pideala_j,\pidealb_j)_{j\in[m]}\right)=(\#\Im(\overline{\psi}))^{-1}$;
recall that $\pidealc_j=\OK$ for all $j \in [m] \setminus \{j_0\}$.
We have
\[
\#\Im(\overline{\psi})=\#\Im(\overline{\psi_{j_0}})=\#(\ideala/\ideala \pidealc_{j_0})=\Nrm(\pidealc_{j_0}).
\]
This is the desired result.

Finally we prove \eqref{en:localfactor4}.
By Lemma~\ref{Lem:smaller_or_bigger}, it suffices to show that $\#\Im(\overline{\psi})\geq p^2$.
Without loss of generality, we may assume that $\pidealc_1,\pidealc_2 \subsetneq \OK$. 
Recall that two elements $x_{12}$ and $x_{21}$ are chosen in Setting~\ref{Setting:w_0}~\eqref{en:w_2}.
Since $p>w\geq w''_0$, both $\psi_2(x_{12})\not\in\ideala\pidealc_2$ and $\psi_1(x_{21})\not\in\ideala \pidealc_1$ hold.
We focus on the mapping $(\overline{\psi_1},\overline{\psi_2})$, which is defined as
\begin{equation*}\begin{array}{cccc}
(\overline{\psi_1},\overline{\psi_2})\colon
&(\ZZ/D\ZZ)^t 
&\to
&\ideala/\ideala \pidealc_1\times\ideala/\ideala \pidealc_2
\\[10pt]
& x
& \mapsto
& (\overline{\psi_1}(x),\overline{\psi_2}(x)).
\end{array}
\end{equation*}
This maps $x_{12}$ and $x_{21}$ to non-zero elements $(0,\overline{\psi_2}(x_{12}))$ and $(\overline{\psi_1}(x_{21}),0)$, respectively.
The order of the linear span of these two images is at least $p^2$.
Hence
\[
p^2\leq\#\Im(\overline{\psi_1},\overline{\psi_2})\leq\#\Im(\overline{\psi}),
\]
and \eqref{en:localfactor4} follows.
\end{proof}
This provides the desired estimate for $\Ea$, and the proof of Theorem~\ref{theorem=GYfordieals} is completed.
\section{Construction of pseudorandom measures and estimates of weighted densities}\label{section=positiveweighteddensity}
In the present section, we will prove our first goal Theorem~\ref{theorem=primeconstellationsfinite}, as mentioned in Subsection~\ref{subsection=domain}. The main argument in this section consists of two parts: switching our `worlds,' which treat $\OK$, among the three worlds appearing in Steps~1--4 in Subsection~\ref{subsection=ideasofproof}, and estimations of weighted densities corresponding to these switches. Let us recall the three worlds above.
\begin{itemize}
\item\emph{The $N$-world $(\OK, \|\cdot\|_{\infty,\omom}, N)$}: it is an auxiliary world to apply the relative multidimensional Szemer\'{e}di theorem (Theorem~\ref{thm:RMST}). We use the $\lmugen$-length scale and the parameter $N$.
Starting from a set $A$ in the $M$-world, we choose appropriate $W\in\ZZ$ and $b\in\OK$, and connect the $M$-world (range) and the $N$-world (domain) via the affine transformation
\[
\Aff_{W,b}\colon\OK\to\OK;\quad \beta\mapsto W\beta+b.
\]
Then we construct a set $B$ from $A$ in this $N$-world; we will apply Theorem~\ref{thm:RMST} to this set $B$.
\item\emph{The $M$-world $(\OK, \|\cdot\|_{\infty,\omom}, M)$}: this is the world where the set $A$ in the setting of Theorem~\ref{theorem=primeconstellationsfinite} lives. We use the $\lmugen$-length scale and the parameter $M\in \RR_{\geq1}$. We choose  the parameter $N$ above appropriately depending on $M$. 
\item\emph{The $L$-world $(\OK, \Nrm, L)$}: the underlying space is the same as that of the $M$-world. The differences between these two worlds are the scales we take: in the $L$-world, we consider the (ideal) norm $\Nrm$. We use the parameter $L\in \mathbb{R}_{\geq 1}$. 
We appeal to Theorem~\ref{theorem=Chebotarev}~\eqref{Chebotarev} for counting elements in a given set $A$ with respect to a certain scale; since Theorem~\ref{theorem=Chebotarev}~\eqref{Chebotarev} is stated in terms of ideals, this $L$-world is suited for this counting procedure.
\end{itemize}

We assume the following setting throughout the current section.
\begin{setting}\label{setting=section7-1}
Let $K$ be a number field of degree $n$ and $\omom$ an integral basis of $K$.
Let $\DD$ be an NL-compatible $\OKt$-fundamental domain; recall Definitions~\ref{definition=fundamentaldomain} and \ref{definition=normrespecting}.
Let $C=C(\omom,\DD)>0$ and $C'=C'(\omom)>0$, respectively, be constants which satisfy  \eqref{NLC}.
Let $S$ be a finite subset of $\OK$. We moreover assume that $S$ is a standard shape; recall Definition~\ref{definition=standardshape}. As mentioned after Theorem~\ref{mtheorem=primeconstellationsfinite}, we may assume this without loss of generality. Set $r\coloneqq\#S-1$.
\end{setting}
In this section and Sections~\ref{section=maintheoremfull} and \ref{section=slidetrick}, we use the following convention on cosets without mentioning it: each element of $\OK/W\OK$ may be seen as a subset of $\OK$.
In particular, if $b\in \OK$ is sent to $\overline{b}\in\OK/W\OK$ by the natural projection, then we may express it as `$b\in \overline{b}$.' Thus we frequently regard $\overline{b}\in\OK/W\OK$ as a subset of $\OK$. In our arguments in these sections, we often take $\overline{b}\in\OK/W\OK$ first, and then choose $b$ in $\overline{b}$.

\subsection{Outline of the proof of Theorem~\ref{mtheorem=primeconstellationsfinite}}\label{subsection=setting_section7}

Let $\delta>0$. Assume that for a sufficiently large real number $M$, $A\subseteq\PP_K \cap\DD\cap\OK(\omom,M)$ satisfies
\[
\#A\geq\delta\cdot\#(\PP_K\cap\DD\cap\OK(\omom,M)).
\]
The goal is to show that  $A$ contains an $S$-constellation.
Here we outline the argument to achieve this goal. 
\begin{itemize}
\item Choose a parameter $w$, which arises from the $W$-trick employed in the proof of the Goldston--Y\i ld\i r\i m type asymptotic formula (Theorem~\ref{Th:Goldston_Yildirim}). 
\item For the parameter $w$ and the main parameter $M$, determine three auxiliary parameters $W=W_w$, $R=R_{M;r}$ and $N=N_{w,M}$ appropriately in order to apply Theorem~\ref{thm:RMST}.
\item Under the condition that $w$ and $M$ are sufficiently large depending on a given $\rho>0$, construct a pseudorandom measure $\tilde{\lambda}\colon\OK\to\RR_{\geq 0}$, meaning that $\tilde{\lambda}$ satisfies the $(\rho,N,S)$-linear forms condition in the sense of Definition~\ref{definition=S-linearform}.
\end{itemize}

The exact argument will be presented in Subsection~\ref{subsection=pseudorandommeasure}. The proof of pseudorandomness of $\tilde{\lambda}$ is based on Theorem~\ref{Th:Goldston_Yildirim}. Here, one of the keys to the proof is that the error term of Theorem~\ref{Th:Goldston_Yildirim} decomposes into two parts: one is of the form $o_{w\to \infty}(1)$, depending only on $w$ out of $w$ and $M$; the other is of the form $o_{M\to \infty}(1)$, as long as $w$ is fixed.  To switch from the $M$-world to the $N$-world, we need to take $b\in \OK$ as well as $W\in \NN$.
Although in Subsection~\ref{subsection=pseudorandommeasure} we argue with an arbitrarily taken $b$ which is prime to $W$, we will eventually choose $b$ according to the given set $A$.

In Subsection~\ref{subsection=normlengthcomparison}, 
we prove Proposition~\ref{proposition=densityequivalence}, 
which describes the switch between the $L$-world and the $M$-world in the setting of the infinitary version of  Theorem~\ref{theorem=primeconstellationsfinite} (Corollary~\ref{corollary=primeconstellationsupperdense}). Next, we proceed to the following two steps:
\begin{itemize}
\item trim $A$ by removing an `exceptional set' to obtain $A'$, choose an appropriate $b\in\OK$ according to $A'$, 
\item and set $B\subseteq\OK(\omom, N)$ as $B\coloneqq(\Aff_{W,b})^{-1}(A')\cap\OK(\omom,N)$, and establish the following estimate of the weighted density of $B$,
\[
\EE\left(\ichi_{B}\cdot\tilde{\lambda}\relmiddle|\OK(\omom,N)\right)\geq\tide,
\]
provided that $M$ is sufficiently large. Here, $\tide$ is a strictly positive constant depending on $\delta$, $\omom$, $\DD$ and $S$.
\end{itemize}
\begin{figure}[htbp]
    \def\myW{3.77}
    \def\myN{2.1}
    \def\smallaxes{5.01}
    \def\mydistance{24.1}
    \def\myadvance{25.3} 
    \newcommand{\shift}{(0,\mydistance)}

        \begin{tikzpicture}[scale=0.25,xshift = -1000]
        
            \draw[thick,fill=lightgray!0] (\myN-\mydistance,\myN)--(\myN-\mydistance,-\myN)--(-\myN-\mydistance,-\myN)--(-\myN-\mydistance,\myN)--(\myN-\mydistance,\myN); 
            
            \draw[->] (-\smallaxes-\mydistance,0)--(\smallaxes-\mydistance,0); 
            \node at (1+\smallaxes-\mydistance,0) {$\omega_1$}; 
            \draw[->] (0-\mydistance,-\smallaxes)--(0-\mydistance,\smallaxes); 
            \node at (0-\mydistance,1+\smallaxes) {$\omega_2$};
            \node at (0-\mydistance,-13) {$N$-world};

            \node at ($\myN*(-2.2,1.9)+(0-\mydistance,0)$) {$\mathcal O_K(\mathbf{\omom},N)$};
            \draw[<->] (0.2-\mydistance,0.1)--(0.2-\mydistance,\myN -0.1); 
            \node at (1-\mydistance,\myN/2) {$N$};

            \coordinate (bN) at (-1.6,1.2); 
            \def\myM{5.9} 
            \draw[thin,fill=lightgray!0] (\myM,\myM)--(\myM,-\myM)--(-\myM,-\myM)--(-\myM,\myM)--cycle; 

            \newcommand{\drawhyperbolas}[2]{ 
            \draw[samples=100, domain=-9: 9, variable=\y #2] plot( {sqrt((#1 +((\y)^2) ) ) +\myadvance},\y ) 
                  [samples=100, domain=9:-9, variable=\x #2] plot( \x+\myadvance, {sqrt((#1 +(\x)^2  ) ) } ) 
                  [samples=100, domain=9:-9, variable=\y #2] plot( {-sqrt((#1 +((\y)^2) ) ) +\myadvance},\y ) 
                 [samples=100, domain=-9: 9, variable=\x #2] plot( \x+\myadvance, -{sqrt((#1 +(\x)^2 ) ) } ); 
            } 
            \newcommand{\fillhyperbolas}[2]{
            \fill[pattern=dots, pattern color=lightgray!50, samples=30, domain=-10:10, variable=\y #2] plot( {sqrt((#1 +((\y)^2) ) ) +\myadvance},\y ) 
            [samples=30, domain=10:-10, variable=\x #2] -- plot( \x+\myadvance, {sqrt((#1 +(\x)^2  ) ) } ) 
            [samples=30, domain=10:-10, variable=\y #2] -- plot( {-sqrt((#1 +((\y)^2) ) ) +\myadvance},\y ) 
            [samples=30, domain=-10:10, variable=\x #2] -- plot( \x+\myadvance, -{sqrt((#1 +(\x)^2 ) ) } ); 
            }
            \newcommand{\fillOurDomain}[3]{
                                            
            \fill[#3]
            (0+\myadvance,0) \foreach \y in {0,0.1,...,#2} {--({sqrt(#1+\y*\y)+\myadvance},\y)} -- cycle;
            
            \fill[#3]
            (0+\myadvance,0) \foreach \x in {0,0.1,...,#2} {--(\x+\myadvance,{sqrt(#1+\x*\x)})} -- cycle;
            \node (labelHere) at (0.4*#2,{0.7*sqrt(#1+0.16*#2*#2)}) {}; 
            }
            
            \node (OL) at (6+\myadvance,11) {$\mathcal O_K(L)\cap \mathcal D$}; 
            \def\widthOfDomain{25}
            \fillOurDomain{\widthOfDomain}{5}{color=lightgray!50}
            \draw (OL) -- ($0.9*(labelHere)+(\myadvance ,0)$);

            \node[fill=white] (label-OM) at (0.7*\myM,-\myN*\myW-2) {$\mathcal O_K(\omom ,M)$}; 
            \draw[thin] (label-OM)--(0.5*\myM,-\myM); 
            \draw[<->] (-1,-0.2)--(-1,0.2-\myM); 
            \node[fill=white] at (-2.5,-2.5) {$M$}; 
            
            \node[fill=white] (label-OM-shift) at ($(label-OM)+(\myadvance,0)$) {$\mathcal O_K(\omom ,M)$}; 
            \draw[thin] (label-OM-shift)--(0.5*\myM+\myadvance,-\myM);

            \draw[->] (-10,0)--(10,0); 
            \node at (11,0) {$\omega_1$}; 
            \draw[->] (0,-10)--(0,10); 
            \node at (0,11) {$\omega_2$}; 
            \node at (0,-13) {$M$-world}; 

            \draw[->] (-10+\myadvance,0)--(10+\myadvance,0); 
            \node at (11+\myadvance,0) {$\omega_1$}; 
            \draw[->] (0+\myadvance,-10)--(0+\myadvance,10); 
            \node at (0+\myadvance,11) {$\omega_2$}; 
            \node at (0+\myadvance,-13) {$L$-world}; 

            \draw[thick] ($(bN)+\myW*\myN*(1,1)$)--($(bN)+\myW*\myN*(1,-1)$)--($(bN)+\myW*\myN*(-1,-1)$)--($(bN)+\myW*\myN*(-1,1)$)--cycle; 
            \node[fill=black,inner sep=1pt] at (bN) {};
            \node [fill=white] at ($(bN)+(1.5,+0.5)$) {$b$}; 
            \draw[thick,<->] ($(bN)+(0,0.3)$)--($(bN)+(0,\myN*\myW -0.1)$); 
            \node[fill=white] at (-2.9,+3.5) {$W\cdot N$}; 
            \node[fill=white] at (-\myN*\myW-1,\myN*\myW+3) {$\mathrm{Aff}_{W,b} (\mathcal O_K(\omom,N))$};

            \newcommand{\drawDottedHyperbolas}[2]{ 
            \draw[samples=#2, domain=-10:10, variable=\y ,dotted] plot( {sqrt((#1 +((\y)^2) ) ) },\y ) 
            [samples=#2, domain=10:-10, variable=\x ,dotted] plot( \x, {sqrt((#1 +(\x)^2  ) ) } ) 
            [samples=#2, domain=10:-10, variable=\y ,dotted] plot( -{sqrt((#1 +((\y)^2) ) ) },\y ) 
            [samples=#2, domain=-10:10, variable=\x ,dotted] plot( \x, -{sqrt((#1 +(\x)^2 ) ) } ); 
            }

            \newcommand{\drawMiniHyperbolas}[3]{
            \draw[samples=#2, domain=-#3:#3, variable=\y ,dotted] plot( {sqrt((#1 +((\y)^2) ) ) },\y ) 
            [samples=#2,  domain=#3:-#3, variable=\x ,dotted] plot( \x, {sqrt((#1 +(\x)^2  ) ) } ) 
            [samples=#2,  domain=#3:-#3, variable=\y ,dotted] plot( -{sqrt((#1 +((\y)^2) ) ) },\y ) 
            [samples=#2, domain=-#3:#3, variable=\x ,dotted] plot( \x, -{sqrt((#1 +(\x)^2 ) ) } ); 
            }

    \draw[thin] (\myM,\myM)--(\myM,-\myM)--(-\myM,-\myM)--(-\myM,\myM)--cycle; 
    \draw[thin] (\myM+\myadvance,\myM)--(\myM+\myadvance,-\myM)--(-\myM+\myadvance,-\myM)--(-\myM+\myadvance,\myM)--cycle; 
    
    \draw[thin,dashed] (\myM+1.3,\myM)--(-\myM+\myadvance-1,\myM);
    \draw[thin,dashed] (\myM+1.3,-\myM)--(-\myM+\myadvance-1,-\myM);

    \drawhyperbolas{\widthOfDomain}{}
    
    \draw[thick] ($(bN)+\myW*\myN*(1,1)$)--($(bN)+\myW*\myN*(1,-1)$)--($(bN)+\myW*\myN*(-1,-1)$)--($(bN)+\myW*\myN*(-1,1)$)--cycle; 

    \node at (0.55*\myadvance,0) {\Large $\overset{\mathrm{id}}{=}$};
    \node at (-0.6*\mydistance,0) {\Large$\xrightarrow{\mathrm{Aff}_{W,b}}$};
    \draw[thin, dashed] (-\myN-\mydistance,\myN)--($(-\myN*\myW,\myN*\myW)+(bN)$); 
    \draw[thin, dashed] (-\myN-\mydistance,-\myN)--($(-\myN*\myW,-\myN*\myW)+(bN)$);

    \newcommand{\drawA}[5]{ 
            \def\coordA{
            ($#2*(12,1.5)-#3*(bN)+(#1,0)$) 
            ($#2*(6,0.1) -#3*(bN)+(#1,0)$) 
            ($#2*(2.5,0.7) -#3*(bN)+(#1,0)$) 
            ($#2*(0.8,0.2)   -#3*(bN)+(#1,0)$) 
            ($#2*(3,2)   -#3*(bN)+(#1,0)$) 
            ($#2*(7,2.5)   -#3*(bN)+(#1,0)$) 
            ($#2*(11,5)  -#3*(bN)+(#1,0)$)}

    \begin{scope}
            \clip ($(-#2*\myM+#1+#3,-#2*\myM+#3)$) rectangle ($(+#2*\myM+#1+#3,+#2*\myM+#3)$);
            \filldraw [#5,smooth,tension=0.7] plot coordinates \coordA ; 
    \end{scope}
    \draw [smooth,tension=0.7] plot coordinates \coordA ; 
    \node[fill=white] at 
        ($#2*(\myM ,0)+(2,-1.4)+(#1,0)$) {#4}; 
    }
    \drawA{0}{1}{0}{$A$}{pattern=dots}
    \drawA{\myadvance}{1}{0}{$A$}{pattern=dots}
    \drawA{-\mydistance}{0.265}{0.265}{$B$}{pattern=crosshatch dots}

\end{tikzpicture}
    \caption{Switching among the three worlds \label{fig:4}}
\end{figure}

In Subsection~\ref{subsection=positiverelativedensity}, these two steps will be implemented by switching among the three worlds.
Figure~\ref{fig:4} illustrates the switchings; see Proposition~\ref{proposition=positiverelativeweighteddensity} for the precise statement.
To define the measure $\tilde{\lambda}$, we also need to choose $\chi$ as in Setting~\ref{setting=section7-2}. 
Since the choice of $\chi $ can be fixed throughout the paper,  
in what follows we sometimes omit to write the dependence on $\chi$ explicitly.

Finally, in Subsection~\ref{subsection=proofofmaintheorem}, we choose the parameter $w$ in the $W$-trick appropriately. 
We determine this $w$ depending only on $\omom$, $\DD$, $\delta$ and $S$, but independently of the main parameter $M$. Then $W=W_w$ is also fixed. We will deduce from previous arguments that Theorem~\ref{thm:RMST} applies to our case, provided that $M$ is sufficiently large depending on $\omom$, $\DD$, $\delta$ and $S$; here we also confirm the remaining condition \eqref{en:smallness} (the smallness condition). 
This will allow us to verify that $B$ contains an $S$-constellation. 
As we explained in Remark~\ref{remark=affine}, we then conclude that $A$ contains an $S$-constellation. 
For the estimate of $\scrN_{S}(A)$, we appeal to Theorem~\ref{theorem=weighted-counting}. 
This will complete the proof of Theorem~\ref{theorem=primeconstellationsfinite}.
\subsection{Construction of the pseudorandom measure}\label{subsection=pseudorandommeasure}
To construct the pseudorandom measure mentioned in Subsection~\ref{subsection=setting_section7}, we first  choose the parameter $w$, arising from the $W$-trick in the Goldston--Y\i ld\i r\i m type asymptotic formula. Then, assume that the main parameter  $M\in \RR_{\geq 1}$ is sufficiently large depending on this $w$. We choose parameters $W$, $R$ and $N$ according to these two parameters $w$ and $M$, and a function $\chi$ in the following manner.
\begin{setting}[The choice of the parameters]\label{setting=section7-2}
Assume that two parameters $w\geq 2$ and $M\in \RR_{\geq 1}$ satisfy that $M\geq e^{2w}$. Under this assumption, set $W=W_{w}\in\NN$ and $R=R_{M;r}\in\RR$ as follows:
\begin{equation}\label{eq:RwW}
W\coloneqq\prod_{p\in\PP_{\leq w}}p,\quad \textrm{and}\quad 
R\coloneqq M^{\frac{1}{17(r+1)2^r}}.
\end{equation}
Also, set $N=N_{w,M}\in\NN$ as
\begin{equation}\label{eq:def_of_N}
N\coloneqq\left\lceil\frac{M}{W}\right\rceil.
\end{equation}
Fix a $C^{\infty}$-function $\chi\colon\mathbb{R}\to\mathbb{R}_{\geq 0}$ which satisfies the conditions $\chi(0)=1$, $\mathrm{supp}(\chi)\subseteq [-1,1]_{\RR}$, and $\chi(x)\leq 1$ for every $x\in \mathbb{R}$. Let $c_{\chi}\coloneqq\int_{0}^{\infty} \chi '(x)^2\rd x$.
\end{setting}
We note that 
\begin{equation}\label{eq=W_N_f}
W\leq M^{\log 2}
\end{equation}
holds true.
Indeed, this follows from the definition of $W$, $M\geq e^{2w}$, and \eqref{eq:Chebyshev-ineq} in Proposition~\ref{prop=elementaryChebyshev}.

Here is the construction of the pseudorandom measure $\tilde{\lambda}$, which plays a key role in the proof of the main theorems in the present paper.
\begin{definition}[The pseudorandom measure $\tilde{\lambda}$]\label{def=our-measure}
We use Settings~$\ref{setting=section7-1}$ and $\ref{setting=section7-2}$. Let $\kappa=\kappa_K>0$ be the constant as in Theorem~\ref{theorem=zeta_K}. Let $\vph_K$ be the totient function of $K$ (Definition~\ref{def=totient}) and $\Lambda_{R,\chi}$ be the $(R,\chi)$-von Mangoldt function (Definition~\ref{def:chi}).
Then, define a function $\lambda=\lambda_{M; \chi,r,K}\colon\OK\to\RR_{\geq 0}$ by
\begin{equation}\label{eq=lambda_0}
\lambda(\alpha)\coloneqq\frac{\kappa\cdot\Lambda_{R,\chi}(\alpha)^2}{c_{\chi}\log R}.
\end{equation}
For a fixed $b\in\OK$, define a function $\tilde{\lambda}=\tilde{\lambda}_{w,M;\chi,r,K,b}\colon\OK\to\RR_{\geq 0}$ by
\begin{equation}\label{eq=lambda}
\tilde{\lambda}(\beta)\coloneqq\frac{\vph_K(W)}{W^n}(\lambda\circ \Aff_{W,b})(\beta).
\end{equation}
\end{definition}
We will deduce from the Goldston--Y{\i}ld{\i}r{\i}m type asymptotic formula (Theorem~\ref{Th:Goldston_Yildirim}) that the function $\tilde{\lambda}$ indeed satisfies the linear forms condition. 
For this purpose, %
we rewrite the assertion of the asymptotic formula in the following manner.

\begin{theorem}\label{Th:Goldston_Yildirim_rho}
Let $K,n,m,t$, $(\psi_j)_{j\in [m]}$, $\chi$ be as in Theorem~$\ref{Th:Goldston_Yildirim}$.  Then there exist positive numbers $w_0=w_0((\psi_j)_{j\in [m]})$, $R_0=R_0(m,K)$ and  $F_0=F_0(m,n)$ such that the following hold true. Let $w\geq w_0$ and $W\in \NN$ such that the set of prime factors of $W$ coincides with $\PP_{\leq w}=\{ p\in \PP : p\le w\}$. Let $b_1,\dots ,b_m\in\OK$ be coprime with $W$. Define affine transformations $\theta _1,\dots ,\theta _m\colon\ZZ^t \to \OK$ by $\theta_j(x)\coloneqq\Aff _{W,b_j}(\psi_j(x))=W\psi _j(x)+b_j$. Let
$R\geq R_0$, $I_1,\ldots ,I_t\subseteq  \ZZ$ be intervals  with lengths at least $R^{4m+1}$, and $\calB\coloneqq I_1\times\dots\times I_t \subseteq\ZZ^t$. Then, 
under the assumption that $\log w \leq F_0\cdot\sqrt{\log R}$, there exist a positive constant $c_{\mathrm{GY};m,n}^{(1)}$, depending only on $\chi$, $m$ and $n$, and a positive constant $c^{(2)}_{\mathrm{GY};\chi,m,t,K}$, depending only on $\chi,m,t$ and $K$, such that for
\begin{equation}\label{Eq:rho_bigO}
\rho_{\mathrm{GY};\chi,m,n}^{(1)}(w)\coloneqq\frac{c_{\mathrm{GY};\chi,m,n}^{(1)}}{w\log w},\quad \rho_{\mathrm{GY};\chi,m,t,K}^{(2)}(w,R)\coloneqq c^{(2)}_{\mathrm{GY};\chi,m,t,K}\cdot \frac{\log w}{\sqrt{\log R}},
\end{equation} 
we \havethat\ 
\begin{multline}\label{Eq:GY_rho}
\left|\left(\frac{\vph_K(W)\cdot\kappa}{W^nc_\chi \log R}\right)^{m}\cdot \EE(\Lambda_{R,\chi }(\theta_1(x))^2\cdots\Lambda_{R,\chi}(\theta_m(x))^2\mid x\in\calB)-1\right|\\
\leq \rho_{\mathrm{GY};\chi,m,n}^{(1)}(w)+\rho_{\mathrm{GY};\chi,m,t,K}^{(2)}(w,R).
\end{multline}
\end{theorem}
\begin{theorem}[Pseudorandomness of $\tilde{\lambda}$]\label{theorem=pseudorandommeasure}
Assume Settings~$\ref{setting=section7-1}$ and $\ref{setting=section7-2}$. Fix
 $\rho > 0$. Then there exist real numbers $w_{\mathrm{PR}}(\rho,\chi,S)$, depending only on $\rho$, $\chi$ and $S$, and $M_{\mathrm{PR}}(w,\rho,\chi,S)$, depending only on $w$, $\rho$, $\chi$ and $S$, such that the following holds: if $w\geq w_{\mathrm{PR}}(\rho,\chi,S)$, $M\geq M_{\mathrm{PR}}(w,\rho,\chi,S)$, and $b\in \OK$ is coprime with $W$, then $\tilde{\lambda}=\tilde{\lambda}_{w,M;\chi,r,K,b}$, constructed in Definition~$\ref{def=our-measure}$, is a $(\rho,N,S)$-pseudorandom measure.
\end{theorem}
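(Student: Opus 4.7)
The plan is to apply the Goldston--Y\i ld\i r\i m type asymptotic formula (Theorem~\ref{Th:Goldston_Yildirim}) directly, one tuple of exponents $(n_\omega)_\omega$ at a time. Fix such a tuple, set $\Omega\coloneqq\{\omega:n_\omega=1\}$ and $m\coloneqq\#\Omega$; the case $m=0$ is trivial, so assume $m\geq 1$. For $\omega\in\Omega$ write $\theta_\omega\coloneqq\Aff_{W,b}\circ\psi_S^{(\omega)}\colon\ZZ^{2r+2}\to\OK$. Unwinding the definition \eqref{eq=lambda} of $\tilde\lambda$, the inner expectation appearing in \eqref{eq:linearform-condition} becomes
\[
\left(\frac{\vph_K(W)\,\kappa}{W^n c_\chi\log R}\right)^{m}\EE\!\left(\prod_{\omega\in\Omega}\Lambda_{R,\chi}(\theta_\omega(x))^{2}\ \bigg|\ \calB\times\calB\right).
\]

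Next, I apply Theorem~\ref{Th:Goldston_Yildirim} with $t=2r+2$, the $m$ linear maps $(\psi_S^{(\omega)})_{\omega\in\Omega}$, the common translate $b_j=b$ for every $j$, and the box $\calB\times\calB\subseteq\ZZ^{2r+2}$. The algebraic hypothesis \eqref{Eq:no-inclusion} splits into two parts that are already at our disposal. First, every $\psi_S^{(\omega)}$ is surjective (hence has zero cokernel): since $S$ is a standard shape the elements $s_1,\dots,s_r$ generate $\OK$, and direct inspection of \eqref{Eq:the-linear-maps-1}--\eqref{Eq:the-linear-maps-2} shows that the image of each $\psi_S^{(\omega)}$ contains every $s_i$. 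Second, the non-inclusion $\ker(\psi_S^{(\omega)})\not\subseteq\ker(\psi_S^{(\omega')})$ for distinct $\omega,\omega'$ is exactly Lemma~\ref{lem:indep}~\eqref{item:ker-indep}. The integer $b$ is prime to $W$ by hypothesis, and the numerical constraints on $R$, $w$, and the side-lengths can all be arranged for $M$ large in terms of $f,\chi,r,K$: using \eqref{eq:RwW}, \eqref{eq:def_of_N} and \eqref{eq=W_N_f} one checks in particular that $R^{4m+1}\leq M^{5/17}\leq N$ and that $R,w\to\infty$ as $M\to\infty$.

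Theorem~\ref{Th:Goldston_Yildirim} then produces
\[
\EE\!\left(\prod_{\omega\in\Omega}\Lambda_{R,\chi}(\theta_\omega(x))^{2}\ \bigg|\ \calB\times\calB\right)=\bigl(1+\epsilon_m(M)\bigr)\left(\frac{W^n c_\chi\log R}{\vph_K(W)\,\kappa}\right)^{m},
\]
with $\epsilon_m(M)=O_{m,n}(1/(w\log w))+O_{\chi,m,r,K}(\log w/\sqrt{\log R})\to 0$ as $M\to\infty$, uniformly in $b$ and in $\calB$. Multiplying by the prefactor displayed above, the normalising constants $\vph_K(W)\kappa/(W^n c_\chi\log R)$ cancel exactly, leaving $1+\epsilon_m(M)$. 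Since only finitely many subsets $\Omega\subseteq\bigsqcup_{j\in[r+1]}\{0,1\}^{e_j}$ can occur, a single threshold $M_{\mathrm{PR}}(f,\chi,S,\rho)$ may be chosen so that $|\epsilon_m(M)|\leq\rho$ holds for every such $\Omega$ and every $M\geq M_{\mathrm{PR}}$; this is the required $(\rho,N,S)$-linear forms condition.

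The real analytic work has already been carried out in Theorem~\ref{Th:Goldston_Yildirim}, so the present step is essentially bookkeeping. The three points that require care are the algebraic verification of \eqref{Eq:no-inclusion} via Lemma~\ref{lem:indep}, the exact cancellation between the prefactor coming from the definition of $\tilde\lambda$ and the main term produced by the Goldston--Y\i ld\i r\i m formula, and the uniformity of the error across all admissible boxes $\calB$ and all choices of $b$ prime to $W$---a uniformity built into Theorem~\ref{Th:Goldston_Yildirim} itself, which is what allows a single threshold $M_{\mathrm{PR}}$ to suffice.
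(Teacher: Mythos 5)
Your proposal is correct and follows essentially the same route as the paper: both reduce the linear forms condition to a single application of Theorem~\ref{Th:Goldston_Yildirim} for each subset of exponents, verify hypothesis \eqref{Eq:no-inclusion} via surjectivity of the $\psi_S^{(\omega)}$ (from the standard-shape assumption) together with Lemma~\ref{lem:indep}, check the numerical constraints $N\geq M^{5/17}\geq R^{4m+1}$ and $\log w\leq F_0\sqrt{\log R}$, and use the uniformity in $b$ and the finiteness of the number of exponent tuples to extract a single threshold $M_{\mathrm{PR}}$. The exact cancellation of the normalising prefactor against the Goldston--Y\i ld\i r\i m main term is precisely how the paper's Definition~\ref{def=our-measure} is engineered, so there is nothing to add.
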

\begin{proof}
We will check that the conditions of Theorem~\ref{Th:Goldston_Yildirim_rho} are fulfilled. Let $t=2r+2$. Take an arbitrary non-empty subset $\mathcal{J}$ of $\bigsqcup_{j\in [r+1]}\{0,1\}^{e_j}$, and let $m\coloneqq \#\mathcal{J}$. Here for each $j\in [r+1]$, $e_j=[r+1]\setminus\{j\}$.
Write $S$ as $S=\{s_1,\dots,s_r\}\sqcup\{0\}$. Then for each $\omega\in \mathcal{J}$, define a homomorphism $\psi_S^{(\omega)}\colon\ZZ^t\to\OK$ of $\ZZ$-modules by \eqref{Eq:the-linear-maps-1} and \eqref{Eq:the-linear-maps-2} in Definition~\ref{definition=S-linearform}. (Thus we consider $m$ homomorphisms in total for a fixed $\mathcal{J}$.) Since $S$ is assumed to be a standard shape, these $\psi_S^{(\omega)}$ are all surjective. Hence, by Lemma~\ref{lem:indep}, condition~\eqref{Eq:no-inclusion} holds for $(\psi_S^{(\omega)})_{\omega\in\mathcal{J}}$.

Take $R_0(m,K)$, $F_0(m,n)$ and $w_0((\psi_S^{(\omega)})_{\omega \in \mathcal{J}})$ as in Theorem~\ref{Th:Goldston_Yildirim_rho}. Set the maximums and minimum over all non-empty $\mathcal{J}$ as
\begin{equation}\label{eq:R_1,F_1,w_1}
\begin{split}
&R_1(r,K)\coloneqq \max_{m\in[(r+1)2^r]}R_0(m,K),\\
&F_1(r,n)\coloneqq \min_{m\in[(r+1)2^r]}F_0(m,n),\\
&w_1(S)\coloneqq \max_{\varnothing\neq\mathcal{J}\subseteq \bigsqcup_{j\in [r+1]}\{0,1\}^{e_j}}w_0((\psi_S^{(\omega)})_{\omega\in\mathcal{J}}).
\end{split}
\end{equation}
Under the assumption of $w\geq w_1(S)$, set
\[
\rho_{\mathrm{PR};\chi,r,n}^{(1)}(w)\coloneqq \max_{m\in[(r+1)2^r]}\rho_{\mathrm{GY};\chi,m,n}^{(1)}(w).
\]
Under the assumptions of $R\geq R_1(r,K)$ and $\log w \leq F_1(r,n)\cdot\sqrt{\log R}$, set
\[
\rho_{\mathrm{PR};\chi,r,K}^{(2)}(w,M)\coloneqq \max_{m\in[(r+1)2^r]}\rho_{\mathrm{GY};\chi,m,t,K}^{(2)}(w,R).
\]
Here recall that $R$ is determined from $M$ and $r$ by \eqref{eq:RwW}.

By \eqref{Eq:rho_bigO}, if $w\geq w_1(S)$ is sufficiently large depending on $\chi, r,n$ and $\rho$, then we have
\begin{equation}\label{eq:rho1}
\rho_{\mathrm{PR};\chi,r,n}^{(1)}(w)\leq \frac{1}{2}\rho.
\end{equation}
Take $w_{\mathrm{PR}}(\rho,\chi,S)$ with $w_{\mathrm{PR}}(\rho,\chi,S)\geq w_1(S)$ in such a way that the condition $w\geq w_{\mathrm{PR}}(\rho,\chi,S)$ implies \eqref{eq:rho1}. 
Let $w\geq w_{\mathrm{PR}}(\rho,\chi,S)$. 
Then, choose $M_{\mathrm{PR}}(w,\rho,\chi,S)$ with $M_{\mathrm{PR}}(w,\rho,\chi,S)\geq e^{2w}$ in such a way that if $M\geq M_{\mathrm{PR}}(w,\rho,\chi,S)$, then the following three conditions
\begin{equation}\label{eq:rho2}
\rho_{\mathrm{PR};\chi,r,K}^{(2)}(w,M)\leq \frac{1}{2}\rho,
\end{equation}
$R\geq R_1$, and $\log w \leq F_1\cdot \sqrt{\log R}$ all hold true. 

Take an arbitrary subset $\cal{B}$ of $\ZZ^{r+1}$ which is the product of intervals of lengths at least $N$. By \eqref{eq=W_N_f}, the choice of $R$ in \eqref{eq:RwW} and that of $N$ in \eqref{eq:def_of_N}, we have $N\geq M^{\frac{5}{17}}\geq R^{4m+1}$. Here, recall that $m\leq (r+1)2^r$. 
Hence, $\calB\times\calB\subseteq\ZZ^t$ is the product of intervals of lengths at least $R^{4m+1}$. 
Therefore, for every $b\in\OK$ coprime with $W$, we may appeal to Theorem~\ref{Th:Goldston_Yildirim_rho}. By \eqref{Eq:GY_rho}, \eqref{eq:rho1} and \eqref{eq:rho2}, 
we conclude that $\tilde{\lambda}$ satisfies the $(\rho,N,S)$-linear forms condition as in Definition~\ref{definition=S-linearform}, 
provided that $w\geq w_{\mathrm{PR}}(\rho,\chi,S)$ and that $M\geq M_{\mathrm{PR}}(w,\rho,\chi,S)$.
\end{proof}
\subsection{Comparison between \counting s with the norm scale and with the $\lmugen$-length scale}
\label{subsection=normlengthcomparison}
 In Subsection~\ref{subsection=positiverelativedensity}, we will estimate the weighted density of the set $B$ appearing in Subsection~\ref{subsection=setting_section7}. Before that, in this subsection, for a subset of $\PP_K\cap\DD$, we make a comparison of the relative upper asymptotic density measured by norm and that measured by $\lmugen$-length; see Proposition~\ref{proposition=densityequivalence}. This relates to Corollary~\ref{corollary=primeconstellationsupperdense}, 
 which is the infinitary version of Theorem~\ref{theorem=primeconstellationsfinite}. The key to this comparison is the following lemma for an  NL-compatible $\OKt$-fundamental domain $\DD$.
\begin{lemma}\label{lemma=NLC-incl}
Let $\DD$, $C$ and $C'$ be as in Setting~$\ref{setting=section7-1}$. Then the following hold true.
\begin{enumerate}[$(1)$]
\item\label{en:NLC-incl1}
For $L,M\in\RR_{\geq0}$ with $L\leq CM^n$,
\[
\DD\cap\OK(L)\subseteq\DD\cap\OK(\omom,M).
\]
\item\label{en:NLC-incl2}
For $L,M\in\RR_{\geq0}$ with $L\geq C'M^n$,
\[
\DD\cap\OK(\omom,M)\subseteq\DD\cap\OK(L).
\]
\end{enumerate}
\end{lemma}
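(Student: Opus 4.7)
The plan is to derive both inclusions directly from the defining two-sided estimate \eqref{NLC} of the NL-compatible fundamental domain, together with the definitions of $\OK(L)$ and $\OK(\omom,M)$. No additional ingredient is needed; the main (and only) point is to track which side of \eqref{NLC} is used in each direction.

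For \eqref{en:NLC-incl1}, I would take $\alpha\in\DD\cap\OK(L)$, so that $\Nrm(\alpha)\le L$ and $\alpha\in\DD$. Applying the lower bound in \eqref{NLC} gives $C\|\alpha\|_{\infty,\omom}^n\le\Nrm(\alpha)\le L\le CM^n$. Since $C>0$, dividing and taking $n$-th roots (both sides non-negative) yields $\|\alpha\|_{\infty,\omom}\le M$, that is, $\alpha\in\OK(\omom,M)$. Combined with $\alpha\in\DD$, this gives the desired containment.

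For \eqref{en:NLC-incl2}, I would take $\alpha\in\DD\cap\OK(\omom,M)$, so $\|\alpha\|_{\infty,\omom}\le M$ and $\alpha\in\DD$. By the upper bound in \eqref{NLC} (which is in fact Lemma~\ref{lemma=NLCreversed} and holds for every nonzero $\alpha\in\OK$, not just elements of $\DD$), we have $\Nrm(\alpha)\le C'\|\alpha\|_{\infty,\omom}^n\le C'M^n\le L$. Hence $\alpha\in\OK(L)$, and intersecting with $\DD$ gives the inclusion.

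There is no real obstacle here: the statement is essentially a restatement of \eqref{NLC} in the language of the sublevel sets $\OK(L)$ and $\OK(\omom,M)$. The only thing to be a little careful about is that the inequality $L\le CM^n$ (respectively $L\ge C'M^n$) is exactly what is required to chain the two estimates into a single implication, and that the edge case $L=0$ or $M=0$ causes no issue, since then the relevant set on the smaller side is empty.
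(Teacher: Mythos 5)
Your proof is correct and follows exactly the paper's argument: the paper likewise deduces both inclusions directly from the two-sided estimate \eqref{NLC}, using the lower bound $C\|\alpha\|_{\infty,\omom}^n\leq\Nrm(\alpha)$ for \eqref{en:NLC-incl1} and the upper bound $\Nrm(\alpha)\leq C'\|\alpha\|_{\infty,\omom}^n$ for \eqref{en:NLC-incl2}.
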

\begin{proof}
By the assumptoin, we have the following inequality (which is \eqref{NLC})
\[
C\|\alpha\|_{\infty,\omom}^n\leq \Nrm (\alpha)\leq C'\|\alpha\|_{\infty,\omom}^n
\]
for all $\alpha\in \DD$.
Both items easily follow from this.
\end{proof}
We will derive the following proposition from Lemma~\ref{lemma=NLC-incl} and Theorem~\ref{theorem=Chebotarev}~\eqref{Chebotarev}. Recall the definition of the relative upper asymptotic densities from Definition~\ref{definition=relativedensity}.
\begin{proposition}\label{proposition=densityequivalence}
Assume Setting~$\ref{setting=section7-1}$.
Then for every $A\subseteq\PP_K\cap\DD$, the density $\overline{d}_{\PP_K\cap \DD}(A)$ is strictly positive  if and only if $\overline{d}_{\PP_K\cap\DD,\omom}(A)$ is strictly positive.
\end{proposition}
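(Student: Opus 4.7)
The plan is to show that both densities have the same qualitative behaviour (positive or zero) by relating each of the four sets $A\cap\OK(L)$, $A\cap\OK(\omom,M)$, $\PP_K\cap\DD\cap\OK(L)$, $\PP_K\cap\DD\cap\OK(\omom,M)$ to one another via the sandwich inclusions of Lemma~\ref{lemma=NLC-incl} (applied with $L=CM^n$ and $L=C'M^n$), and then controlling the ``$\PP_K\cap\DD$'' denominators using the \Cheb\ density theorem.

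First I would establish the key asymptotic: since $\DD$ is an $\OKt$-fundamental domain, the map $\alpha\mapsto\alpha\OK$ is a bijection between $\PP_K\cap\DD$ and $|\Spec(\OK)|^{\PI}$, and it preserves norm, so
\[
\#(\PP_K\cap\DD\cap\OK(L))=\#\{\idealp\in|\Spec(\OK)|^{\PI}:\Nrm(\idealp)\leq L\}=(1+o_{L\to\infty;K}(1))\cdot\frac{1}{h}\cdot\frac{L}{\log L}
\]
by Theorem~\ref{theorem=Chebotarev}. Substituting $L=CM^n$ and $L=C'M^n$ and using Lemma~\ref{lemma=NLC-incl}, one gets
\[
\#(\PP_K\cap\DD\cap\OK(\omom,M))=\Theta\Bigl(\frac{M^n}{\log M}\Bigr)
\]
as $M\to\infty$, with explicit upper and lower constants proportional to $C'/(hn)$ and $C/(hn)$ respectively.

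Next, I would prove the two implications separately. Suppose $\overline{d}_{\PP_K\cap\DD,\omom}(A)>0$, realised along some sequence $M_k\to\infty$. Set $L_k\coloneqq C'M_k^n$; since $A\subseteq\DD$, Lemma~\ref{lemma=NLC-incl}\,\eqref{en:NLC-incl2} gives $A\cap\OK(\omom,M_k)\subseteq A\cap\OK(L_k)$, so the numerators satisfy $\#(A\cap\OK(L_k))\geq\#(A\cap\OK(\omom,M_k))$. On the denominator side, both $\#(\PP_K\cap\DD\cap\OK(L_k))\sim\frac{C'M_k^n}{hn\log M_k}$ and $\#(\PP_K\cap\DD\cap\OK(\omom,M_k))$ are between $\frac{CM_k^n}{hn\log M_k}(1+o(1))$ and $\frac{C'M_k^n}{hn\log M_k}(1+o(1))$, hence their ratio is bounded away from $0$ and $\infty$. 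Dividing the numerator inequality by the corresponding denominators therefore yields $\overline{d}_{\PP_K\cap\DD}(A)>0$. Conversely, if $\overline{d}_{\PP_K\cap\DD}(A)>0$ along $L_k\to\infty$, set $M_k\coloneqq\lceil(L_k/C)^{1/n}\rceil$ so that $L_k\leq CM_k^n$; Lemma~\ref{lemma=NLC-incl}\,\eqref{en:NLC-incl1} gives $A\cap\OK(L_k)\subseteq A\cap\OK(\omom,M_k)$, and the same denominator comparison (now with $M_k^n\sim L_k/C$) produces a positive lower bound for the $\lmugen$-length ratio along $M_k$.

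The argument is essentially routine once the two sandwich inclusions and the \Cheb\ asymptotic are in hand; the only point that requires a moment of care is making sure that, when passing from $L_k$ to $M_k$ (or vice versa), the ratio of the two ``$\PP_K\cap\DD$'' denominators stays bounded between two positive constants. This is not hard but it is the one place where the NL-compatibility is used in an essential, quantitative way rather than just as a set-theoretic inclusion, and it is where the proof would break down if $\DD$ were allowed to be an arbitrary $\OKt$-fundamental domain (as in Proposition~\ref{proposition=badchoicedomain}).
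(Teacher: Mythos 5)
Your proposal is correct and follows essentially the same route as the paper's proof: the injectivity of $\alpha\mapsto\alpha\OK$ on $\DD$ combined with Theorem~\ref{theorem=Chebotarev} to control $\#(\PP_K\cap\DD\cap\OK(L))$, the two sandwich inclusions of Lemma~\ref{lemma=NLC-incl} with $L=CM^n$ and $L=C'M^n$, and a subsequence argument in each direction. The only cosmetic difference is that the paper uses the crude two-sided bound $\tfrac{1}{2h}\tfrac{L}{\log L}\leq\#(\PP_K\cap\DD\cap\OK(L))\leq\tfrac{2}{h}\tfrac{L}{\log L}$ rather than the full asymptotic, which costs a harmless extra factor in the resulting constant $\tfrac{C}{8C'}$.
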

\begin{proof}
Since $\DD$ is an $\OKt$-fundamental domain, the following holds true: for a finite subset $\mathscr{A}\subseteq\Ideals_K$ consisting of principal ideals, we have 
\begin{equation}\label{eq:princ-ideal-counting}
\#\{\alpha\in\DD : (\alpha)\in\mathscr{A}\}=\#\mathscr{A}.
\end{equation}

Apply \eqref{eq:princ-ideal-counting} to $\mathscr{A}=\{\pidealp\in|\Spec(\OK)|^{\mathrm{PI}} : \Nrm(\pidealp)\leq L\}$. Then by %
the \naturaldensityversionofthe Chebotarev density theorem (Theorem~\ref{theorem=Chebotarev}~\eqref{Chebotarev}), for a sufficiently large $L$, we \obtainthat\ 
\begin{equation}\label{eq:weak-Chebotarev}
\frac{1}{2h}\cdot\frac{L}{\log L}\leq\#(\PP_K\cap\DD\cap\OK(L))\leq \frac{2}{h}\cdot\frac{L}{\log L}.
\end{equation}
Here $h=h_K$ denotes the class number of $K$.

First, we will prove $\overline{d}_{\PP_K\cap\DD,\omom}(A)>0$ if $\overline{d}_{\PP_K\cap\DD}(A)>0$. Let $\delta\coloneqq\overline{d}_{\PP_K\cap\DD}(A)>0$. Then there exists a strictly increasing real sequence $(L_k)_{k\in\NN}$ with $\lim\limits_{k\to\infty}L_k=\infty$ such that for all $k\in \NN$,
\[
\#(A\cap\OK(L_k))\geq\frac{\delta}{2}\cdot\#(\PP_K\cap\DD\cap\OK(L_k))
\]
holds. By \eqref{eq:weak-Chebotarev}, if $k$ is sufficiently large, then we have
\[
\#(A\cap\OK(L_k))\geq\delta\cdot\frac{1}{4h}\cdot\frac{L_k}{\log L_k}.
\]
Now set $M_k\coloneqq(L_k/C)^{1/n}$. Then $\lim\limits_{k\to\infty}M_k=\infty$ holds. By Lemma~\ref{lemma=NLC-incl}~\eqref{en:NLC-incl1}, for a sufficiently large $k$, 
\[
\#(A\cap\OK(\omom,M_k))\geq\delta\cdot\frac{1}{4h}\cdot\frac{CM_k^n}{\log(CM_k^n)}
\]
holds. In addition, by Lemma~\ref{lemma=NLC-incl}~\eqref{en:NLC-incl2} and by \eqref{eq:weak-Chebotarev}, we have
\[
\#(\PP_K\cap\DD\cap\OK(\omom,M_k))\leq\frac{2}{h}\cdot\frac{C'M_k^n}{\log(C'M_k^n)}.
\]
By combining them, we \havethat\  for a sufficiently large $k$,
\[
\frac{\#(A\cap\OK(\omom,M_k))}{\#(\PP_K\cap\DD\cap\OK(\omom,M_k))}\geq\frac{C}{8C'}\delta;
\]
here recall that $C'\geq C$. This implies that
\[
\overline{d}_{\PP_K\cap \DD,\omom}(A)\geq\frac{C}{8C'}\cdot\overline{d}_{\PP_K\cap \DD}(A) .
\]
Since the right-hand side is assumed to be positive, we conclude that $\overline{d}_{\PP_K\cap \DD,\omom}(A)>0$.

Finally, we will prove $\overline{d}_{\PP_K\cap\DD}(A)>0$ if $\overline{d}_{\PP_K\cap\DD,\omom}(A)>0$. 
We reset $\delta $ as $\delta\coloneqq\overline{d}_{\PP_K\cap\DD,\omom}(A)>0$.
By the definition of $\overline{d}_{\PP_K\cap\DD,\omom}(A)$,
there exists a strictly increasing real sequence $(M_k)_{k\in\NN}$ with $\lim\limits_{k\to\infty}M_k=\infty$ such that for every $k\in \NN$, 
\[
\#(A\cap\OK(\omom,M_k))\geq\frac{\delta}{2}\cdot\#(\PP_K\cap\DD\cap\OK(\omom,M_k))
\]
holds. Set $L_k\coloneqq C'M_k^n$. Then $\lim\limits_{k\to\infty}L_k=\infty$. By Lemma~\ref{lemma=NLC-incl} and \eqref{eq:weak-Chebotarev}, if $k$ is sufficiently large, then we \havethat\
\begin{align*}
\#(A\cap\OK(L_k))&\geq \#(A\cap\OK(\omom,M_k))\\
&\geq\frac{\delta}{2}\cdot\#(\PP_K\cap\DD\cap\OK(\omom,M_k))\\
&\geq\frac{\delta}{2}\cdot\#(\PP_K\cap\DD\cap\OK(CM_k^n))
\geq\delta\cdot\frac{1}{4h}\cdot\frac{CM_k^n}{\log(CM_k^n)}.
\end{align*}
Again by \eqref{eq:weak-Chebotarev}, for a sufficiently large $k$, we obtain
\[
\frac{\#(A\cap\OK(L_k))}{\#(\PP_K\cap\DD\cap\OK(L_k))}\geq \delta\cdot\frac{1}{4h}\cdot\frac{CM_k^n}{\log(CM_k^n)}\cdot\frac{h}{2}\cdot\frac{\log L_k}{L_k}\geq\frac{C}{8C'}\delta.
\]
This implies that
\[
\overline{d}_{\PP_K\cap \DD}(A)\geq\frac{C}{8C'}\cdot\overline{d}_{\PP_K\cap \DD,\omom}(A)
\]
and hence that $\overline{d}_{\PP_K\cap \DD}(A)>0$.
\end{proof}
We remark that in the above proof of Proposition~\ref{proposition=densityequivalence}, the multiplicative constant $\frac{C}{8C'}$, appearing twice, may be improved to $\frac{C}{C'}$. Indeed, for every $\varepsilon>0$, take $L$ and $k$ both sufficiently large according to $\varepsilon$, and improve the factors $\frac{1}{2h}$, $\frac{2}{h}$ and $\frac{\delta}{2}$ in the proof to $\frac{1-\varepsilon}{h}$, $\frac{1+\varepsilon}{h}$ and $(1-\varepsilon)\delta$, respectively. Finally, let $\varepsilon\to 0$.

The following counting result will be employed in Subsection~\ref{subsection=positiverelativedensity}.
\begin{proposition}\label{proposition=counting_below_DD}
Under Setting~$\ref{setting=section7-1}$, we \havethat\
\[
\liminf_{M\to \infty}\frac{\#(\PP_K\cap \DD \cap \OK(\omom, M))}{M^n(\log M)^{-1}}\geq \frac{C}{(n+1)h},
\]
where $h$ is the class number of $K$.
\end{proposition}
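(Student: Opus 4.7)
The plan is to reduce the proposition to a direct application of the natural-density Chebotarev density theorem (Theorem~\ref{theorem=Chebotarev}), using the NL-compatibility of $\DD$ to switch from the $\lmugen$-length scale to the norm scale. The key identity is that, for any $L \geq 0$,
\[
\#(\PP_K \cap \DD \cap \OK(L)) = \#\{\idealp \in |\Spec(\OK)|^{\PI} : \Nrm(\idealp) \leq L\}.
\]
Indeed, since $\DD$ is an $\OKt$-fundamental domain, the map $\alpha \mapsto \alpha\OK$ restricted to $\DD$ is a bijection between prime elements of $\OK$ lying in $\DD$ and nonzero principal prime ideals of $\OK$; and this bijection respects the norm.

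Next, I would invoke Lemma~\ref{lemma=NLC-incl}~\eqref{en:NLC-incl1} with the specific choice $L = CM^n$, where $C = C(\omom,\DD)$ is the NL-compatibility constant from inequality~\eqref{NLC}. This choice yields the inclusion
\[
\PP_K \cap \DD \cap \OK(CM^n) \subseteq \PP_K \cap \DD \cap \OK(\omom,M),
\]
so combining with the identity above,
\[
\#(\PP_K \cap \DD \cap \OK(\omom,M)) \geq \#\{\idealp \in |\Spec(\OK)|^{\PI} : \Nrm(\idealp) \leq CM^n\}.
\]

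Applying Theorem~\ref{theorem=Chebotarev} to the right-hand side, for $M$ large enough this is
\[
(1 + o_{M\to\infty;K}(1)) \cdot \frac{1}{h} \cdot \frac{CM^n}{\log(CM^n)}.
\]
Dividing by $M^n/\log M$ and using $\log(CM^n) = n\log M + \log C = n\log M \cdot (1 + o_{M\to\infty;K}(1))$, we obtain
\[
\frac{\#(\PP_K \cap \DD \cap \OK(\omom,M))}{M^n(\log M)^{-1}} \geq (1 + o_{M\to\infty;K}(1)) \cdot \frac{C}{nh}.
\]
Taking the $\liminf$ as $M \to \infty$ gives $\tfrac{C}{nh}$, which is strictly greater than $\tfrac{C}{(n+1)h}$, yielding the desired bound. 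There is no serious obstacle here: the only subtlety is ensuring that we use the NL-compatibility in the correct direction (inclusion \eqref{en:NLC-incl1} rather than \eqref{en:NLC-incl2}), so that the lower bound on $\#(\PP_K \cap \DD \cap \OK(\omom,M))$ involves the constant $C$ and not $C'$. The weaker denominator $n+1$ in the statement is simply a convenient slack built in to absorb the $o(1)$ error coming from $\log C$.
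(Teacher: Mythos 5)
Your proof is correct and follows essentially the same route as the paper: the bijection between $\PP_K\cap\DD$ and principal prime ideals, the inclusion from Lemma~\ref{lemma=NLC-incl}~\eqref{en:NLC-incl1} with $L=CM^n$, and Theorem~\ref{theorem=Chebotarev}. The only cosmetic difference is at the last step, where the paper absorbs the logarithm via the crude bound $CM^n\leq M^{n+1}$ (valid for $M\geq C$) to land exactly on the stated constant $\frac{C}{(n+1)h}$, whereas you compute the asymptotics of $\log(CM^n)$ and obtain the slightly stronger $\frac{C}{nh}$.
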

\begin{proof}
Use the estimate from below in \eqref{eq:weak-Chebotarev} (with a finer constant $(1-\varepsilon)/h$ for each $\varepsilon>0$ and for a sufficiently large $L$ depending on $\varepsilon$) with $L=CM^n$. Then, the desired result follows from Lemma~\ref{lemma=NLC-incl}~\eqref{en:NLC-incl1} and $CM^n\leq M^{n+1}$, which is valid for $M\geq C$.
\end{proof}
\subsection{Estimates of weighted densities}\label{subsection=positiverelativedensity}
Let $A$ be a subset of $\OK$ in which we hope to find an $S$-constellation.
In Subsection~\ref{subsection=pseudorandommeasure}, we have constructed a pseudorandom measure $\tilde{\lambda}$ corresponding to suitable parameters and for a (yet unspecified) $b\in \OK$ prime to $W$. 
In this subsection, we specify a suitable $b$ and make an estimation of the weighted density of $B$, whose construction is outlined in Subsection~\ref{subsection=setting_section7}, with respect to the weight $\tilde{\lambda}$. 
As mentioned in Subsection~\ref{subsection=setting_section7}, to construct $B$ from $A$, we will trim  $A$ by removing an exceptional set $T$, which behaves badly in our proof. The following lemma collects the properties of the exceptional set $T$ employed  in this section.
\begin{lemma}[Exceptional set]\label{lemma=jogai}
Assume Settings~$\ref{setting=section7-1}$ and $\ref{setting=section7-2}$. Let 
$A\subseteq\PP_K\cap\DD\cap\OK(\omom,M)$. Then 
\begin{equation}\label{eq:jogai_T}
T\coloneqq A\cap \OK(R)
\end{equation}
satisfies the following, provided that $M$ is sufficiently large depending on $r$ and $K$:
\begin{enumerate}[$(1)$]
\item\label{en:noudo_T} $\#T\leq M^{\frac{1}{16}}$,
 \item\label{en:log_compatible} for every $\alpha\in A\setminus T$,
\[
\lambda(\alpha)=\frac{\kappa}{17(r+1)2^rc_{\chi}}\cdot\log M,
\]
 \item\label{en:coprime_A'} every $\alpha\in A\setminus T$ is prime to $W$.
\end{enumerate}
\end{lemma}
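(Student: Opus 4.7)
\textbf{Proof plan for Lemma~\ref{lemma=jogai}.}
My plan is to verify the three items in reverse order, since (3) essentially bootstraps the other computations and clarifies why the exceptional set $T$ was defined precisely as $A\cap\OK(R)$.

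For (3), I will use that every $\alpha\in A\setminus T$ is a prime element with $\Nrm(\alpha)>R$. If $\alpha$ were not prime to $W=\prod_{p\in\PP_{\leq w}}p$, then the prime ideal $\alpha\OK$ would divide $W\OK$, forcing $\alpha\OK$ to be a $p$-ideal for some $p\leq w$. By the definition of $p$-ideals and Lemma~\ref{lemma=squarefree}, $\Nrm(\alpha)=p^{f_\alpha}\leq p^n\leq w^n$. Since $w=f(M)\leq\tfrac{1}{2}\log M$, we would have $\Nrm(\alpha)\leq(\tfrac{1}{2}\log M)^n$, contradicting $\Nrm(\alpha)>R=M^{1/(17(r+1)2^r)}$ once $M$ is large enough.

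For (2), I will directly unfold the definition of $\Lambda_{R,\chi}$ using that $\alpha\in A\subseteq\PP_K$ is a prime element, so $\alpha\OK$ is a prime ideal whose only ideal divisors are $\OK$ and $\alpha\OK$ itself. Thus
\[
\Lambda_{R,\chi}(\alpha)=\log R\cdot\biggl[\chi(0)-\chi\Bigl(\tfrac{\log\Nrm(\alpha)}{\log R}\Bigr)\biggr].
\]
Because $\alpha\notin T$ means $\Nrm(\alpha)>R$, we have $\log\Nrm(\alpha)/\log R>1$; as $\mathrm{supp}(\chi)\subseteq[-1,1]_{\RR}$, the second term vanishes. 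Combined with $\chi(0)=1$, this yields $\Lambda_{R,\chi}(\alpha)=\log R$, and feeding this into \eqref{eq=lambda_0} together with $\log R=\tfrac{1}{17(r+1)2^r}\log M$ gives precisely the stated value of $\lambda(\alpha)$.

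For (1), I will bound $\#T$ by passing to ideals: since $\DD$ is an $\OKt$-fundamental domain, the map $\alpha\mapsto\alpha\OK$ is injective on $\DD$, so
\[
\#T\leq\#(\DD\cap\OK(R))\leq\#\{\ideala\in\Ideals_K:\Nrm(\ideala)\leq R\}.
\]
By Proposition~\ref{proposition=idealdensity}, this last quantity is $(\kappa+o(1))R=O_K(R)$. Since $R=M^{1/(17(r+1)2^r)}$ and $17(r+1)2^r\geq 17>16$, we get $\#T=O_K(M^{1/17})$, which is at most $M^{1/16}$ for $M$ sufficiently large in terms of $K$ (and $r$). The only mild subtlety is ensuring the threshold for $M$ indeed depends only on $r$ and $K$ as claimed; this is immediate because the implicit constant in $\#\{\Nrm(\ideala)\leq R\}=O_K(R)$ depends only on $K$, and the exponent $1/(17(r+1)2^r)$ depends only on $r=\#S-1$. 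Accordingly, all three items will be established simultaneously once $M$ exceeds a constant determined by $r$, $K$, and $f$.
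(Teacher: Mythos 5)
Your proposal is correct and follows essentially the same route as the paper's proof: item (1) via the injectivity of $\alpha\mapsto\alpha\OK$ on $\DD$ together with Proposition~\ref{proposition=idealdensity} and $R=M^{1/(17(r+1)2^r)}\leq M^{1/17}$; item (2) by evaluating the divisor sum over the two ideal divisors of the prime ideal $\alpha\OK$ and using $\mathrm{supp}(\chi)\subseteq[-1,1]_{\RR}$; and item (3) by noting that a prime element not prime to $W$ would have norm at most $w^n\leq(\tfrac{1}{2}\log M)^n<R$. Your only inessential imprecision is the closing remark that the threshold might depend on $f$ — it does not, because the bound $f(t)\leq\tfrac{1}{2}\log t$ is uniform, which is exactly why the paper states the dependence as on $r$ and $K$ only.
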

\begin{proof}
First, we will prove \eqref{en:noudo_T}. By Proposition~\ref{proposition=idealdensity}, if $M$ is sufficiently large depending on $r$, then the number of ideals whose ideal norms are at most $R$ does not exceed $2\kappa R$. By \eqref{eq:princ-ideal-counting}, $\#T\leq 2\kappa R$ holds.
From the choice of $R$, if $M$ is sufficiently large depending on $K$, then $R\leq \frac{1}{2\kappa}M^{\frac{1}{16}}$ holds. 

Secondly, we will prove \eqref{en:log_compatible}. Here note that for every $\alpha\in A\setminus T$, we have $\Nrm(\alpha)>R$ and $\alpha$ is a prime element.  
Since $\chi(0)=1$ and $\mathrm{supp}(\chi)\subseteq [-1,1]_{\RR}$, we have
\[
\sum_{\idealb\mid \alpha \OK}\mu(\idealb)\chi\left(\frac{\log\Nrm(\idealb)}{\log R}\right)=1,
\]
which ensures \eqref{en:log_compatible}.

Finally, we will prove \eqref{en:coprime_A'}. Since $w\leq\frac{1}{2}\log M$, for a sufficiently large $M$ depending on $r$ and $K$, we have $w^n\leq R$. 
Hence, every $\alpha\in A\setminus T$ satisfies that $\Nrm(\alpha)>w^n$. 
Let $p$ be the characteristic of the residue field $\OK /\alpha \OK $.
Since it is a field extension of $\mathbb F _p$ of degree at most $n$, 
we have $p^n\ge \Nrm (\alpha )$ and hence $p > w$.
This implies that $p$ is not a factor of $W$.
Since $p\in \alpha \OK $, it follows that $\alpha $ and $W$ are coprime in $\OK $.
\end{proof}
The next proposition is the goal of the present subsection.
\begin{proposition}[Estimate of the weighted density with respect to the weight $\tilde{\lambda}$]\label{proposition=positiverelativeweighteddensity}
Assume Settings~$\ref{setting=section7-1}$ and $\ref{setting=section7-2}$.
Then there exist {a} positive real number $M_{\mathrm{DS}}=M_{\mathrm{DS}}(\omom,\DD,\delta,r)$, depending only on $\omom$, $\DD$, $\delta$ and $r$, and a positive real number $u=u_{\mathrm{DS}}(\omom,\DD,\chi,r)>0$, depending only on $\omom$, $\DD$, $\chi$ and $r$, such that the following holds true:
let $\delta>0$ and $M\geq M_{\mathrm{DS}}$. Assume that a set
$A\subseteq\PP_K\cap\DD\cap\OK(\omom,M)$ satisfies
\begin{equation}\label{eq:A-hyp}
\# A\geq\delta\cdot\#(\PP_K\cap\DD\cap\OK(\omom,M)).
\end{equation}
Let $A'\coloneqq A\setminus T$, where $T$ is the exceptional set defined in \eqref{eq:jogai_T}. Then there exists $b\in\OK$ such that $b$ is prime to $W$ and that for
\begin{equation}\label{def_of_B}
B\coloneqq(\Aff_{W,b})^{-1}(A')\cap\OK(\omom,N),
\end{equation}
the following estimate 
\[
\EE\left(\ichi_{B}\cdot\tilde{\lambda} \relmiddle| \OK(\omom,N)\right)\geq\tide
\]
of the weighted density of $B$ with respect to the weight $\tilde{\lambda}=\tilde{\lambda}_{w,M;\chi,r,K,b}$, which is defined in \eqref{eq=lambda}, holds true. Here,  $\tide\coloneqq u\cdot\delta$.
\end{proposition}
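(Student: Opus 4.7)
The plan is to exploit the fact from Lemma~\ref{lemma=jogai}~\eqref{en:log_compatible} that $\lambda$ takes a single explicit value on $A'$, so that after a pigeonhole on the unit cosets modulo $W\OK$ and a convenient choice of the representative $b$, the weighted average over $B$ reduces to an elementary counting calculation.

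First I would estimate $\#A'$ from below. By Lemma~\ref{lemma=jogai}~\eqref{en:noudo_T} we have $\#T \leq M^{1/16}$, while Proposition~\ref{proposition=counting_below_DD} yields $\#(\PP_K\cap\DD\cap\OO_K(\omom,M)) \geq \frac{C}{2(n+1)h}\cdot M^n/\log M$ once $M$ is large enough. Since $M^{1/16}=o(M^n/\log M)$, the hypothesis \eqref{eq:A-hyp} gives
\[
\#A' \geq \#A - \#T \geq \frac{\delta C}{4(n+1)h}\cdot\frac{M^n}{\log M}
\]
for $M \geq M_{\mathrm{DS}}(\omom,\DD,\delta,r)$ sufficiently large.

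Next I apply pigeonhole. By Lemma~\ref{lemma=jogai}~\eqref{en:coprime_A'}, $A'$ is contained in the union of the $\vph_K(W)$ unit cosets of $W\OK$ in $\OK$, so some unit coset $\overline{b}$ satisfies $\#(A'\cap\overline{b}) \geq \#A'/\vph_K(W)$. Let $b\in\overline{b}$ be the unique representative whose $\omom$-coordinates lie in $[0,W-1]$; then $b$ is automatically prime to $W$. I would verify $\Aff_{W,b}^{-1}(A'\cap\overline{b})\subseteq\OK(\omom,N)$, giving $\#B=\#(A'\cap\overline{b})$: for $\alpha=\sum a_i\omega_i\in A'\cap\overline{b}$ with $|a_i|\leq M$, the integer $\beta_i=(a_i-b_i)/W$ lies in $[0,\lfloor M/W\rfloor]$ when $a_i\geq0$ and in $[-\lceil M/W\rceil,-1]$ when $a_i<0$ (using that $\beta_i$ is a multiple of $1$ forced to be an integer), so $|\beta_i|\leq\lceil M/W\rceil=N$ in either case.

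Finally, since $\lambda$ equals the constant $\kappa(\log M)/(17(r+1)2^r c_\chi)$ on $A'$ by Lemma~\ref{lemma=jogai}~\eqref{en:log_compatible}, the definition \eqref{eq=lambda} of $\tilde{\lambda}$ gives
\[
\sum_{\beta\in B}\tilde{\lambda}(\beta) \;=\; \#B\cdot\frac{\vph_K(W)}{W^n}\cdot\frac{\kappa\log M}{17(r+1)2^r c_\chi}.
\]
The factor $\vph_K(W)$ cancels against the pigeonhole division. Dividing by $\#\OK(\omom,N)=(2N+1)^n$ and using $(2N+1)W\leq 3M$ for large $M$ (valid since \eqref{eq=W_N} gives $W=O(M^{\log 2})$, so $W=o(M)$), we obtain
\[
\EE(\ichi_B\cdot\tilde{\lambda}\mid\OK(\omom,N)) \;\geq\; \frac{C\kappa}{4\cdot 3^n\cdot 17(n+1)(r+1)2^r\,h\, c_\chi}\cdot\delta,
\]
which has the required form $u\cdot\delta$; since $C,\kappa,h,n$ are all determined by $\omom$ and $\DD$, the constant $u$ depends only on $\omom,\DD,\chi,r$. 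The main technical subtlety is the integrality argument in the second step: a looser choice of representative for $b$ would introduce an $O(WM^{n-1})$ boundary defect in $\#B$; although still negligible against $M^n/\log M$, the refined choice $b_i\in[0,W-1]$ together with the sign case split above eliminates the defect exactly, so $\#B=\#(A'\cap\overline{b})$ without any correction term.
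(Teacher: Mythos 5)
Your proposal is correct and follows essentially the same route as the paper's proof: lower-bound $\#A'$ via Proposition~\ref{proposition=counting_below_DD} and Lemma~\ref{lemma=jogai}, pigeonhole over the unit cosets of $W\OK$ using Lemma~\ref{lemma=jogai}~\eqref{en:coprime_A'}, pick a small representative $b$ so that $A'\cap\overline{b}\subseteq \Aff_{W,b}(\OK(\omom,N))$, and then evaluate the weighted average using the constant value of $\lambda$ on $A'$. The only (immaterial) differences are your specific choice of representative with coordinates in $[0,W-1]$ in place of the paper's $\|b\|_{\infty,\omom}<W$, and slightly different numerical constants in $u$.
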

An important point here is that not all $b\in \OK$ prime to $W$ satisfy the estimate of the weighted density in Proposition~\ref{proposition=positiverelativeweighteddensity}, whereas Theorem~\ref{theorem=pseudorandommeasure} holds true for all $b\in \OK$ prime to $W$. We will go back to this point in Section~\ref{section=slidetrick} in more detail. In the proof of Proposition~\ref{proposition=positiverelativeweighteddensity} below, note that $C=C(\omom,\DD)$ only depends on $\omom$ and $\DD$.
\begin{proof}
By Proposition~\ref{proposition=counting_below_DD}, for a sufficiently large $M$ depending on $\omom, \DD$ and $\delta$, we \havethat\
\[
\#A\geq\frac{C}{2(n+1)h}\delta\cdot\frac{M^n}{\log M}.
\]
Since $M^{\frac{1}{16}}=o_{M\to\infty}\bigl(\frac{M^n}{\log M}\bigr)$, Lemma~\ref{lemma=jogai} \eqref{en:noudo_T} implies that for a sufficiently large $M$ depending on $\omom$, $\DD$ and $\delta$, 
\begin{equation}\label{eq:M-world-counting2}
\#A'\geq\frac{C}{3(n+1)h}\delta\cdot\frac{M^n}{\log M}
\end{equation}
holds true.

Next, we will choose an appropriate $b\in \OK$.
By Lemma~\ref{lemma=jogai}~\eqref{en:coprime_A'}, if $M$ is sufficiently large depending on $r$ and $K$, then the image of $A'$ by the natural projection $\OK\twoheadrightarrow \OK/W\OK$ is a subset of $(\OK/W\OK)^{\times}$. Hence $A'$ is partitioned as
\begin{equation}\label{eq:partition_of_A}
A'=\bigsqcup_{\overline{c}\in(\OK/W\OK)^{\times}}(A'\cap \overline{c});
\end{equation}
here, we regard $\overline c$ as a subset $\overline{c}\subseteq \OK$ as mentioned at the beginning of the current section.
Apply the pigeonhole principle to \eqref{eq:partition_of_A}. Then \eqref{eq:M-world-counting2} implies that there exists $\overline{b}\in(\OK/W\OK)^{\times}$ such that
\begin{equation}\label{eq:M-world-counting3}
\#(A'\cap \overline{b})\geq\frac{1}{\vph_K(W)}\cdot\frac{C}{3(n+1)h}\delta\cdot\frac{M^n}{\log M}
\end{equation}
holds.
Fix such a coset $\overline{b}\in \OK/W\OK$, and choose $b\in \overline{b}$ which satisfies
\begin{equation}\label{eq=b}
\|b\|_{\infty,\omom}<W.
\end{equation}
We set $\tilde{\lambda}=\tilde{\lambda}_{w,M;\chi,r,K,b}$ corresponding to this $b$.

In what follows, we will make estimates of weight densities in the $N$-world. Define $B$ by \eqref{def_of_B}, corresponding to the element $b$ above. Since 
$\Aff_{W,b}\colon\OK\to\OK$ is injective, 
\[
\EE(\ichi_B\cdot\tilde{\lambda}\mid\OK(\omom,N))=\frac{\vph_K(W)}{W^n}\cdot\EE\bigl(\ichi_{\Aff_{W,b}(B)}\cdot\lambda \ \big| \ \Aff_{W,b}(\OK(\omom,N))\bigr)
\]
holds true.
Note that $\Aff_{W,b}(B)=A'\cap\Aff_{W,b}(\OK(\omom,N))$. Hence, by Lemma~\ref{lemma=jogai}~\eqref{en:log_compatible}, we \havethat\
\begin{equation}\label{eq:Step4-counting1}
\EE(\ichi_B\cdot\tilde{\lambda}\mid\OK(\omom,N))\geq\frac{\vph_K(W)}{W^n}\cdot\frac{\kappa}{17(r+1)2^r c_{\chi}}\cdot\frac{\#\bigl(A'\cap\Aff_{W,b}(\OK(\omom,N))\bigr)}{(2N+1)^n}\cdot \log M.
\end{equation}
Here observe by the choice of $N$ in \eqref{eq:def_of_N}, \eqref{eq=b} and by the triangle inequality for $\|\cdot\|_{\infty,\omom}$,
\[
\Aff_{W,b}(\OK(\omom,N))\supseteq\OK(\omom, M)\cap (W\OK +b)
\]
holds true; indeed,
our choice \eqref{eq:def_of_N} of $N$ is made in such a way that the inclusion above is satisfied.

Therefore, by \eqref{eq:M-world-counting3} and \eqref{eq:Step4-counting1}, we conclude that
\[
\EE(\ichi_B\cdot\tilde{\lambda}\mid\OK(\omom,N))\geq\frac{\kappa}{17(r+1)2^rc_{\chi}}\cdot\frac{C}{3(n+1)h}\delta\cdot\frac{M^n}{W^n(2N+1)^n}.
\]
Finally, since $N\leq \frac{2M}{W}$ and $(2N+1)^n\leq 3^n N^n$, we \obtainthat\ 
\begin{equation}\label{eq=tide_PES}
\EE(\ichi_B\cdot\tilde{\lambda}\mid\OK(\omom,N))\geq\frac{\kappa}{51h(n+1)6^{n}}\cdot\frac{C}{(r+1)2^{r}c_{\chi}}\cdot \delta.
\end{equation}
Hence we can take $u=\frac{\kappa C}{51h(n+1)6^{n}(r+1)2^{r}c_{\chi}}$.
\end{proof}
\subsection{Proof of Theorem~\ref{theorem=primeconstellationsfinite}}
\label{subsection=proofofmaintheorem}
In this subsection, we present the proof of Theorem~\ref{theorem=primeconstellationsfinite}, which is the first goal of the present paper. Here, we restate Theorem~\ref{theorem=primeconstellationsfinite}.
\begin{theorem}[Theorem~\ref{theorem=primeconstellationsfinite}, restated]\label{theorem=primeconstellationsfiniteagain}
Assume Setting~$\ref{setting=section7-1}$.
Let $\delta>0$.
Then, there exists a positive integer $M_{\mathrm{PES}}=M_{\mathrm{PES}}(\omom,\DD,\delta,S)$ depending %
on $\omom$, $\DD$, $\delta$ and $S$ such that the following holds true: assume that $M\geq M_{\mathrm{PES}}$ and  that a subset $A$ of $\PP_K\cap\DD\cap\OK(\omom,M)$ satisfies
\[
\#A\geq\delta\cdot\#(\PP_K\cap \DD \cap\OK(\omom,M)).
\]
Then there exists an $S$-constellation in $A$. Moreover, there exists a positive real number $\gamma=\gamma_{\mathrm{PES}}(\omom,\DD,\delta,S)$ depending only on $\omom$, $\DD$, $\delta$ and $S$ such that in the setting above,
\[
\scrN_S(A)\geq \gamma\cdot \frac{M^{n+1}}{(\log M)^{\# S}}
\]
holds true.
\end{theorem}
\begin{proof}
We use Setting~\ref{setting=section7-2}. First, we will prove the former assrtion. For $A$ in the assumption, let $A'$ be as in Proposition~\ref{proposition=positiverelativeweighteddensity} and let $\tide$ as in \eqref{eq=tide_PES}. Here, note that $\tide$ depends only on $\omom,\DD,\delta,\chi$ and $r$. Take $\gamma'=\gamma_{\mathrm{RMS}}(\omom,\tide,S)$ and  $\rho=\rho_{\mathrm{RMS}}(\omom,\tide,S)$, which are determined from the relative multidimensional Szemer\'{e}di theorem (Theorem~\ref{thm:RMST}); note that we here substitute $\tide$ for $\delta$. For this $\rho$, set $w=w_{\mathrm{PES}}(\omom,\DD,\delta,\chi,S)$ as $w\coloneqq w_{\mathrm{PR}}(\rho,\chi,S)$; set $W$ from $w$ by \eqref{eq:RwW}. Take $b, B, \tilde{\lambda}=\tilde{\lambda}_{w,M;\chi,r,K,b}$ as in the statement of Proposition~\ref{proposition=positiverelativeweighteddensity}. Then, we set the lower bound of the main parameter $M$ in the following manner:

\begin{itemize}
\item (Pseudorandomness) Define $M_{\mathrm{PES}}^{(1)}=M_{\mathrm{PES}}^{(1)}(\omom,\DD,\delta,\chi,S)$ by $M_{\mathrm{PES}}^{(1)}\coloneqq M_{\mathrm{PR}}(w,\rho,\chi,S)$.
Then by Theorem~\ref{theorem=pseudorandommeasure}, if $M\geq M_{\mathrm{PES}}^{(1)}$, then $\tilde{\lambda}$ is a $(\rho,N,S)$-pseudorandom measure. 
\item (Weighted density) Define $M_{\mathrm{PES}}^{(2)}\coloneqq M_{\mathrm{DS}}(\omom,\DD,\delta,r)$. By Proposition~\ref{proposition=positiverelativeweighteddensity}, if $M\geq M_{\mathrm{PES}}^{(2)}$, then the weighted density condition
\[
\EE(\ichi_B\cdot\tilde{\lambda}\mid\OK(\omom,N))\geq\tide
\]
is fulfilled.
\item (Smallness) By Lemma~\ref{lemma=jogai}~\eqref{en:log_compatible}, we \havethat\

\begin{align*}
\EE(\ichi_B\cdot\tilde{\lambda}^{r+1}\mid\OK(\omom,N))&\leq\EE\bigl(\ichi_{A'}\cdot\lambda^{r+1} \ \big| \ \Aff_{W,b}(\OK(\omom,N))\bigr)\\
&\leq\left(\frac{\kappa}{17(r+1)2^rc_{\chi}}\right)^{r+1} (\log M)^{r+1}.
\end{align*}
It then follows that
\begin{align*}
\frac{1}{N}\cdot\EE(\ichi_{B}\cdot\tilde{\lambda}^{r+1}\mid\OK(\omom ,N))&\leq W\cdot\left(\frac{ \kappa}{17(r+1)2^rc_{\chi}}\right)^{r+1} \frac{(\log M)^{r+1}}{M}\\
&=o_{M\to\infty; \omom,\DD,\delta,\chi,S}(1).
\end{align*}
Hence, there exists $M^{(3)}_{\mathrm{PES}}=M^{(3)}_{\mathrm{PES}}(\omom,\DD,\delta,\chi,S)$ such that the following holds: if $M\geq M^{(3)}_{\mathrm{PES}}$, then the smallness condition

\[
\EE(\ichi_{B}\cdot\tilde{\lambda}^{r+1}\mid\OK(\omom ,N))\leq \gamma'\cdot N
\]
is fulfilled.
\end{itemize}
Finally, set $M'_{\mathrm{PES}}=M'_{\mathrm{PES}}(\omom,\DD,\delta,\chi,S)$ as $M'_{\mathrm{PES}}\coloneqq\max\{M^{(1)}_{\mathrm{PES}},M^{(2)}_{\mathrm{PES}},M^{(3)}_{\mathrm{PES}}\}$.
Then, for every $M$ with $M\geq M'_{\mathrm{PES}}$, Theorem~\ref{thm:RMST} applies to the set $B$ above. This implies that there exists an $S$-constellation in $B$. By sending it by $\Aff_{W,b}$, we obtain an $S$-constellation in $A'$; recall Remark~\ref{remark=affine}. Since $A'\subseteq A$, in particular, there exists an $S$-constellation in $A$, as desired.

On dependence on parameters, observe that $\chi$ is taken in order to construct $\tilde{\lambda}$; it does not appear in the setting itself of Theorem~\ref{theorem=primeconstellationsfiniteagain}. Hence, we may set 

\[
M_{\mathrm{PES}}\coloneqq\min\limits_{\chi}\left\lceil M'_{\mathrm{PES}}(\omom,\DD,\delta,\chi,S)\right\rceil.
\]
Then $M_{\mathrm{PES}}$ only depends on $\omom$, $\DD$, $\delta$ and $S$. This ends the proof of the former assertion.

Next, we prove the latter assertion on $\scrN_S(A)$. Fix $\chi$ with $M_{\mathrm{PES}}=\left\lceil M'_{\mathrm{PES}}(\omom,\DD,\delta,\chi,S)\right\rceil$. For this $\chi$, take $w$ and $W$ as in the proof of the former assertion. In what follows, let $M\geq M_{\mathrm{PES}}$. By Theorem~\ref{theorem=weighted-counting}, we have for $\gamma'=\gamma_{\mathrm{RMS}}(\omom,\tide,S)>0$ that 
\[
\frac{\mathscr{N}_S(B)\times\left(\frac{ \kappa}{17(r+1)2^rc_{\chi}}\log M\right)^{r+1}}{N(2N+1)^n}\geq\gamma'.
\]
Indeed, since $\frac{\vph_K(W)}{W^n} \leq 1$, for every $S$-constellation $\eS$, the value of $\prod_{s'\in \eS}(\ichi_{B}\cdot\tilde{\lambda})(s')$ does not exceed $\left(\frac{\kappa}{17(r+1)2^rc_{\chi}}\log M\right)^{r+1}$ by Lemma~\ref{lemma=jogai}~\eqref{en:log_compatible}. Also note that $\mathscr{N}_S(B)\leq \mathscr{N}_S(A')\leq \mathscr{N}_S(A)$ from the argument in the last part of the proof of the former assertion. Together with \eqref{eq:def_of_N}, we \havethat\ 
\[
\mathscr{N}_S(A)\geq\left(\frac{\gamma'\cdot(17(r+1)2^rc_{\chi})^{r+1}2^n}{\kappa^{r+1}\cdot W^{n+1}}\right)\cdot\frac{M^{n+1}}{(\log M)^{r+1}}.
\]
Recall that our $W$ depends only on the data $\omom,\DD,\delta,\chi$ and $S$.
Set $\gamma=\gamma_{\mathrm{PES}}(\omom,\DD,\delta,S)$ as
\[
\gamma\coloneqq \frac{(17(r+1)2^rc_{\chi})^{r+1}2^n}{\kappa^{r+1}\cdot W^{n+1}}\cdot\gamma';
\]
here we do not indicate the dependence on $\chi$ since $\chi$ is already fixed.
Therefore, for $M\geq M_{\mathrm{PES}}$, we obtain the desired estimate of $\mathscr{N}_S(A)$. This completes the proof of Theorem~\ref{theorem=primeconstellationsfiniteagain}.
\end{proof}
\begin{proof}[Proof of Corollary~$\ref{corollary=primeconstellationsupperdense}$]
Note that Proposition~\ref{proposition=densityequivalence} implies that the two conditions in Corollary~\ref{corollary=primeconstellationsupperdense} are equivalent to each other.
Take an arbitrary finite subset $S$ of $\OK$.
Assume that $\overline{d}_{\PP_K\cap \DD,\omom}(A)>\delta>0$. 
Then there exists a sequence $M_1<M_2<M_3<\cdots$ of positive real numbers with $\lim\limits_{n\to \infty}M_n=\infty$ such that for all $n\in \mathbb{N}$, the set $A\cap\OK(\omom,M_n)$ witnesses the relative density at least $\delta$. 
There exists $m$ such that $M_m\geq M_{\mathrm{PES}}(\omom,\DD,\delta,S)$. 
Apply Theorem~\ref{theorem=primeconstellationsfiniteagain} to $A\cap\OK(\omom,M_m)$ with the parameter $M_m$ with such $m$, we can find an $S$-constellation in $A\cap\OK(\omom,M_m)(\subseteq A)$.
Since $S$ is arbitrarily taken, this completes the proof; note that $m$ itself does depend on $S$ but $A\supseteq A\cap\OK(\omom,M_n)$ for all $n\in \mathbb{N}$.
\end{proof}
\begin{proof}[Proof of Corollary~$\ref{corollary=primeconstellationssimple}$]
Proposition~\ref{proposition=normrespectingfundamentaldomain} ensures the existence of an NL-compatible $\OKt$-fundamental domain $\DD$. Apply Corollary~\ref{corollary=primeconstellationsupperdense} with $A=\PP_K\cap \DD$; since $\DD$ is an $\OKt$-fundamental domain, $\DD$ itself admits no associate pairs.
\end{proof}
\begin{remark}[On the choice of $W$]
	On the estimate of $\scrN_{S}(A)$ in Theorem~\ref{theorem=primeconstellationsfinite}, the important observation is that we can take $w$, and hence also $W$, independent of $M$ as long as $M$ is sufficiently large. This observation is based on \cite[footnote~20]{Green-Tao08}.
\end{remark}

\section{Szemer\'{e}di-type theorems in prime elements of number fields}\label{section=maintheoremfull}
In the proof of Theorem~\ref{theorem=primeconstellationsfinite} in Section~\ref{section=positiveweighteddensity},
the first key was the existence of a suitable pseudorandom measure. 
Once this %
was ensured, the main part of the rest of the proof was %
counting arguments of elements. 
We will \emph{axiomatize} such processes and establish Theorem~\ref{mtheorem=primeconstellationsfinite}.

We will have two types of \emph{axiomatized constellation theorems}: %
\begin{enumerate}[(i)]
\item\label{en:douhan_kamo_seiza} Axiomatized constellation theorems of type~$1$: Theorem~\ref{theorem=package}, Theorem~\ref{theorem=package_infinite}, Corollary~\ref{corollary=package_infinite}.
\item\label{en:hidouhan_seiza} Axiomatized constellation theorems of type~$2$: Theorem~\ref{theorem=package_DD}, Theorem~\ref{theorem=package_infinite_DD}, Corollary~\ref{corollary=package_infinite_DD}.
\end{enumerate}
Here, by constellation theorems \emph{of type~$1$}, we mean those that guarantee the mere existence of constellations; constellation theorems \emph{of type~$2$} those that guarantee the existence of constellations \emph{without associate pairs}. 
As the cost of this stronger conclusion, constellation theorems of type~2
require an additional hypothesis. %
More specifically, whereas we assume a certain condition on \counting s from below of elements in both types, in %
type~2 we furthermore impose a certain condition on \counting s \emph{from above}. %

Theorem~\ref{mtheorem=primeconstellationsfinite} in Introduction will be dedued from the axiomatic Theorem~\ref{theorem=package_DD} applied to the number field situation.
For this, the required estimates of the number of prime elements,
Propositions~\ref{proposition=PK_kouri} and \ref{proposition=prime_counting_above},
will be verified using the materials in Sections~\ref{section=NLC} in addition to the \Cheb\ density theorem. %

We collect the setting in this section.
\begin{setting}\label{setting=package}
Let $K$ be a number field, and $n$ the degree.
Let $\vph_K$ be the totient function of $K$ (Definition~\ref{def=totient}).
Let $\ideala\subseteq \OK$ be a non-zero ideal of $\OK$.
Then $\ideala$ is a free module of rank $n$ as a $\ZZ$-module; we fix a $\ZZ$-basis $\bv$ of $\ideala$.
\end{setting}
In Section~\ref{section=relativeSzemeredi}, the $(\rho,N,S)$-pseudorandom condition on measures (Definition~\ref{definition=S-linearform}) is stated for a non-negative integer parameter $N$. Hereafter, we relax the range of the parameter $N$ and consider $N$ to be a non-negative real parameter. More precisely, for a non-negative real number $N$, we define the $(\rho,N,S)$-pseudorandom condition as the $(\rho,\lceil N\rceil,S)$-pseudorandom condition in the original sense. This relaxation enables us to avoid inessential issues caused by the integrality of the parameters. In the present paper, when we express dependences of constants, we omit writing that on $K$ if the constant depends on $\bv$. This is because $K$ equals the $\QQ$-span of $\bv$, and hence $\bv$ remembers $K$.
\subsection{Preliminaries to the axiomatized constellation theorems}\label{subsection=package}
In this subsection and Subsection~\ref{subsection=packagetheorem}, we will axiomatize the arguments in Section~\ref{section=positiveweighteddensity}.
At the same time, we extend the setting to a more general case; more precisely, we consider a general ideal $\ideala$ rather than the whole $\OK$. 
The case $\ideala = \OK $ suffices 
for the proofs of Theorem~\ref{mtheorem=primeconstellationsfinite} and Theorem~\ref{mtheorem=TaoZieglergeneral}; 
however, to establish Theorem~\ref{mtheorem=quadraticform} in the full generality, we will %
need results %
for a general non-zero ideal $\ideala$. %
This generalization does not require additional work.

Recall from Section \ref{section=relativeSzemeredi} the symbol
$\ideala(\bv,M)=\{ \sum_{i \in [n]} a_i v_i : a_i \in [-M,M] \text{ for all } i \in [n] \} $.
In our axiomatic formulation, %
the sufficient condition for 
the existence of $S$-constellations in $A\subseteq \ideala(\bv,M)$ with %
$\#A \geq \delta \cdot \frac{M^n}{\log M}$
(for a large enough $M$) is formulated as the \compati. 
The counterpart in the infinitary version is described as the  family $\logpseu$ of subsets of $\ideala$. 
These two notions will be introduced in Definitions~\ref{definition=logpseudorandom} and \ref{definition=logpseudorandom_infinite} below.
\begin{definition}[\compatiW]\label{definition=logpseudorandom}
Assume Setting~$\ref{setting=package}$. 
Let $S\subseteq \ideala$ be a standard shape. 
Let $\rho>0$, $M\in \RR_{\geq 1}$, $D_1, D_2 >0$, and $\varepsilon \in (0,1)_{\RR}$.
Let $W\in \NN$ be a natural number with $W\leq M^{\varepsilon}$. 
Then  $A\subseteq \ideala$ is said to satisfy the \emph{\compatiW\ with parameters $(D_1,D_2,\varepsilon)$} if $A\subseteq \ideala(\bv,M)$ and if there exists $\lambda\colon \ideala\to \RR_{\geq 0}$ such that the following hold.
\begin{enumerate}[(1)]
  \item \label{en:pseudorandom}
For every $b\in \ideala$ with $b\OK +W\ideala=\ideala$, the function $\beta \mapsto \frac{\vph_K(W)}{W^n}(\lambda\circ \Aff_{W,b})(\beta)$ on $\ideala$
is a $(\rho,\frac{M}{W},S)$-pseudorandom measure.
  \item \label{en:measurebelow} 
There exists $T\subseteq A$ with $\#T\leq M^{\varepsilon n}$ 
such that, for every $\alpha\in A\setminus T$,
\begin{equation*}
D_1 \cdot \log M \leq \lambda(\alpha)\leq D_2\cdot  \log M
\end{equation*}
holds.
  \item \label{en:coprime} 
  For the subset  $T$ in \eqref{en:measurebelow}
  and for every $\alpha\in A\setminus T$, the equality $\alpha \OK +W\ideala =\ideala$ holds.\end{enumerate}
\end{definition}
\begin{definition}[\compati]\label{definition=logpseudorandomW}
Assume Setting~$\ref{setting=package}$.
Let $S\subseteq \ideala$ be a standard shape.
Let $\rho>0$, $M\in \RR_{\geq 1}$, and take $D_1, D_2>0$ and $\varepsilon \in (0,1)_{\RR}$. Then $A\subseteq \ideala$ is said to satisfy the \emph{\compati\ with parameters $(D_1,D_2,\varepsilon)$} if there exists a natural number $W\in \NN$ with $W\leq M^{\varepsilon}$ such that $A$ satisfies the \compatiW\ with parameters $(D_1,D_2,\varepsilon)$.
\end{definition}

In the three conditions in Definition~\ref{definition=logpseudorandom}, \eqref{en:pseudorandom} corresponds to Theorem~\ref{theorem=pseudorandommeasure} in the proof of Theorem~\ref{theorem=primeconstellationsfinite} in Section~\ref{section=positiveweighteddensity}; \eqref{en:measurebelow} and \eqref{en:coprime} correspond to Lemma~\ref{lemma=jogai}.

To formulate the infinitary version of the \compati, dependence between $M$ and $\rho$ is of importance. However, on the data $(D_1, D_2,\varepsilon)$, the only requirement is that they exist without depending on $M$; we do not care the exact values of them. For this reason, in Definition~\ref{definition=logpseudorandom}, we divide the seven data into two classes, $(\rho,M,\bv,S)$ and $(D_1,D_2,\varepsilon)$, and use the terminology of `the \compati\ with parameters $(D_1,D_2,\varepsilon)$.' Note also that $\bv$ and $S$ are given data.
\begin{definition}[The family $\logpseu$]\label{definition=logpseudorandom_infinite}
Under Setting~$\ref{setting=package}$,
we define a family $\logpseu$ of subsets of 
$\ideala$ as follows.
For $A\subseteq\ideala$, we declare that
$A\in \logpseu$ if for every standard shape
$S\subseteq \ideala$, there exist 
$D_1,D_2>0$ and $\varepsilon\in (0,1)_{\RR}$ such that the following holds true:
for every $\rho>0$, there exists $M(\rho)=M(\rho,S)\in \RR$
such that, for every $M\geq M(\rho)$,
$A\cap \ideala(\bv,M)$
satisfies the \compati\ with parameters
$(D_1,D_2,\varepsilon)$.
\end{definition}
In the symbol `$\logpseu$' above, `S', `$\Psi$' and `$\log$', respectively, stand for \emph{subset}, \emph{pseudorandom}, and \emph{having the sparsity of order $1/\log$}; here, we will show in  Lemma~\ref{lemma=logpseudorandom_subset}~\eqref{en:pseudosubset} that the family $\logpseu$ is closed under taking subsets. 
The condition of $A\in \logpseu$ is formulated for a fixed $\ZZ$-basis $\bv$; however, it may be easily seen that this condition does not depend on the choice of $\bv$. 

When $\ideala = \OK $, the condition `$b\OK+W\ideala=\ideala$'
in conditions~\eqref{en:pseudorandom} and \eqref{en:coprime} in Definition~\ref{definition=logpseudorandom}
is equivalent to saying that $b$ is prime to $W$.
Also note the following fact.
\begin{lemma}\label{lemma=coprime}
Let $\ideala\in \Ideals_K$ and  let $W\in \NN$.
Then the image of the set
$\{b \in \ideala : b\OK+W\ideala=\ideala\}$
under the natural projection
$\ideala\twoheadrightarrow \ideala/W\ideala$
has cardinality $\vph_K(W)$.
\end{lemma}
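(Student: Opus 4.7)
The plan is to reduce the counting problem to the standard fact that an element of the finite ring $\OK/W\OK$ generates it as a module over itself if and only if it is a unit, and then to invoke the definition of $\vph_K(W)$.

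First, I would rewrite the defining condition `$b\OK + W\ideala = \ideala$' in the quotient $\ideala/W\ideala$. Writing $\overline{b}\in\ideala/W\ideala$ for the image of $b$, I claim $b\OK + W\ideala = \ideala$ holds if and only if the $\OK$-submodule $\OK\cdot\overline{b}$ equals all of $\ideala/W\ideala$. This is just a rephrasing: $\OK\cdot\overline{b}$ is the image in $\ideala/W\ideala$ of $b\OK$, so it is the whole target precisely when $b\OK + W\ideala = \ideala$. Therefore the map
\[
\{b\in\ideala : b\OK + W\ideala = \ideala\}\;\twoheadrightarrow\;\{\text{cyclic generators of the $\OK$-module }\ideala/W\ideala\}
\]
is surjective with fibres the cosets modulo $W\ideala$, and the quantity to be computed is exactly the number of cyclic $\OK$-module generators of $\ideala/W\ideala$.

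Next, I would transport this count to the monogenic module $\OK/W\OK$. By Lemma~\ref{lemma=a/Wa} applied to the ideal $\idealb = W\OK$, there is an isomorphism
\[
\OK/W\OK \;\xrightarrow{\simeq}\; \ideala/W\ideala
\]
of $\OK$-modules. Under such an isomorphism, cyclic $\OK$-module generators correspond bijectively. So it suffices to count the cyclic $\OK$-module generators of $\OK/W\OK$, that is, the elements $\overline{x}\in\OK/W\OK$ with $\OK\cdot\overline{x} = \OK/W\OK$.

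Finally, $\overline{x}$ generates $\OK/W\OK$ as an $\OK$-module if and only if $\overline{1}\in\OK\cdot\overline{x}$, which is equivalent to $\overline{x}$ being a unit of the ring $\OK/W\OK$. By Definition~\ref{def=totient} the cardinality of $(\OK/W\OK)^\times$ is $\vph_K(W\OK) = \vph_K(W)$, yielding the claimed count. The whole argument is formal; the only non-trivial input is the isomorphism $\OK/W\OK\simeq\ideala/W\ideala$ from Lemma~\ref{lemma=a/Wa}, so there is no real obstacle to overcome.
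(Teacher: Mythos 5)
Your proposal is correct and follows essentially the same route as the paper's own proof: both reduce the condition $b\OK+W\ideala=\ideala$ to $\overline{b}$ generating $\ideala/W\ideala$ as an $\OK$-module, transport the count to $\OK/W\OK$ via Lemma~\ref{lemma=a/Wa} with $\idealb=W\OK$, and identify the generators with the units $(\OK/W\OK)^{\times}$, whose cardinality is $\vph_K(W)$ by definition. No gaps.
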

\begin{proof}
Apply Lemma~\ref{lemma=a/Wa} with
$\idealb=W\OK$; this yields an isomorphism 
$\ideala/W\ideala\simeq \OK/W\OK$ as $\OK$-modules.
An element $\gamma\in \OK/W\OK$ belongs to $(\OK/W\OK)^{\times}$ if and only if it generates $\OK/W\OK$ as an $\OK$-module. For $b\in \ideala$, the equality $b\OK+W\ideala=\ideala$ holds if and only if $\overline{b}$ generates $\ideala/W\ideala$ as an $\OK$-module, where $\overline{b}$ is the image of $b$ under $\ideala\twoheadrightarrow \ideala/W\ideala$. Combination of these observations ends the proof.
\end{proof}
\begin{lemma}[Heredity to subsets]\label{lemma=logpseudorandom_subset}
The following statements hold true.
\begin{enumerate}[$(1)$]
\item \label{en:compatisubset} 
If $A\subseteq \ideala(\bv,M)$ satisfies the \compatiW\
with parameters $(D_1,D_2,\varepsilon)$, and
$A_1\subseteq A$, then $A_1$ satisfies the \compatiW\
with parameters $(D_1,D_2,\varepsilon)$.
\item \label{en:pseudosubset}
If $A\in \logpseu$ and 
$A_1\subseteq A$, then $A_1\in \logpseu$.
\end{enumerate}
\end{lemma}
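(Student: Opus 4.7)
The plan is to derive both statements directly from the definitions by verifying that the witnessing data $(W,\lambda,T)$ for $A$ restricts compatibly to $A_1$.

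For part~\eqref{en:compatisubset}, I will take the witnesses $W\in\NN$, $\lambda\colon\ideala\to\RR_{\geq 0}$ and exceptional set $T\subseteq A$ guaranteed by the hypothesis that $A$ satisfies the \compati\ with parameters $(D_1,D_2,\varepsilon)$, and propose the \emph{same} $W$ and $\lambda$ as witnesses for $A_1$, together with the exceptional set $T_1\coloneqq T\cap A_1\subseteq A_1$. Condition~\eqref{en:pseudorandom} is intrinsic to $(W,\lambda)$ and makes no reference to the set $A$, so it transfers automatically. For condition~\eqref{en:measurebelow}, I just need $\#T_1\leq\#T\leq M^{\varepsilon n}$, which is immediate, and the two-sided bound $D_1\log M\leq\lambda(\alpha)\leq D_2\log M$ for $\alpha\in A_1\setminus T_1$, which follows from the inclusion $A_1\setminus T_1\subseteq A\setminus T$ and the corresponding bound for $A$. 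Condition~\eqref{en:coprime} follows from the same inclusion $A_1\setminus T_1\subseteq A\setminus T$.

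For part~\eqref{en:pseudosubset}, let $A\in\logpseu$ and $A_1\subseteq A$. Fix an arbitrary standard shape $S\subseteq\ideala$. The hypothesis $A\in\logpseu$ produces constants $D_1,D_2>0$ and $\varepsilon\in(0,1)_{\RR}$, and for each $\rho>0$ a threshold $M(\rho,S)$, such that $A\cap\ideala(\bv,M)$ satisfies the \compati\ with parameters $(D_1,D_2,\varepsilon)$ for every $M\geq M(\rho,S)$. Since $A_1\cap\ideala(\bv,M)\subseteq A\cap\ideala(\bv,M)\subseteq\ideala(\bv,M)$, part~\eqref{en:compatisubset} applied to the pair $\bigl(A\cap\ideala(\bv,M),\,A_1\cap\ideala(\bv,M)\bigr)$ shows that $A_1\cap\ideala(\bv,M)$ satisfies the \compati\ with the same parameters $(D_1,D_2,\varepsilon)$. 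Taking the same thresholds $M(\rho,S)$ then exhibits $A_1\in\logpseu$.

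No step is an obstacle; both assertions are routine bookkeeping once one observes that every condition in Definition~\ref{definition=logpseudorandom} is either independent of $A$ (for~\eqref{en:pseudorandom}) or a universally-quantified statement over $A\setminus T$ that becomes easier on a subset when one shrinks the exceptional set accordingly. The only minor point to keep in mind is choosing $T_1=T\cap A_1$ rather than $T_1=T$ (the latter need not be contained in $A_1$), so that $T_1\subseteq A_1$ as required by Definition~\ref{definition=logpseudorandom}~\eqref{en:measurebelow}.
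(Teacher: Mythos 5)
Your proposal is correct and matches the paper's own proof: both keep the same $W$ and $\lambda$, shrink the exceptional set to $T_1=T\cap A_1$, and deduce part~\eqref{en:pseudosubset} from part~\eqref{en:compatisubset}. Your write-up is simply a more detailed version of the same routine verification.
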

\begin{proof}
First we will prove \eqref{en:compatisubset}. 
We use the same measure $\lambda $.
Let $\lambda $ and $T$ be the measure and exceptional set (a set satisfying~\eqref{en:measurebelow} and~\eqref{en:coprime} of Definition~\ref{definition=logpseudorandom_infinite}) for $A$.
Then $T_1\coloneqq T\cap A_1$ works as %
an exceptional set for $A_1$. 
Hence \eqref{en:compatisubset} holds. Item~\eqref{en:pseudosubset} immediately follows from \eqref{en:compatisubset}.
\end{proof}
The following theorem provides a motivating example of a member of the family $\logpseuOK$.
\begin{theorem}\label{theorem=logpseudorandom}
The set $\PP_K$ of prime elements
of a number field $K$ satisfies that
$\PP_K\in  \logpseuOK$.
Furthermore, for every integer $r\geq [K:\QQ]$, there exist $D_1,D_2>0$ and $\varepsilon\in (0,1)_{\RR}$ such that the following holds true. 
Let $S\subseteq \OK$ be a standard shape with $\#S=r+1$, and $\omom$ an integral basis of $K$. Let $\rho>0$. Then there exist a natural number $W=W_{\PP_K,\mathrm{S}\Psi_{\log}}(\rho,S)$ and a positive real number $M_{\PP_K,\mathrm{S}\Psi_{\log}}(\rho,\omom,S)$ such that for every  $M\geq M_{\PP_K,\mathrm{S}\Psi_{\log}}(\rho,\omom,S)$, the set $\PP_K\cap \OK(\omom,M)$ satisfies the $(\rho,W,M,\omom,S)$-condition with parameters $(D_1,D_2,\varepsilon)$.
\end{theorem}
Theorem~\ref{theorem=pseudorandommeasure} works for the proof of the pseudorandomness required in Theorem~\ref{theorem=logpseudorandom}. Nevertheless, prior to the proof of Theorem~\ref{theorem=logpseudorandom}, we prove the following theorem, which treats a more general setting. This is because similar arguments to Theorem~\ref{theorem=package_PR} will be needed in Sections~\ref{section=slidetrick} and \ref{section=quadraticform}. 

\begin{theorem}\label{theorem=package_PR}
Let $K$ be a number field of degree $n$, $S\subseteq \OK$ a finite subset such that $0\in S$ and that $S$ generates $\OK$ as a $\ZZ$-module. Let $\chi$ and $c_{\chi}$ be as in Setting~$\ref{setting=section7-2}$. 
Let $\rho>0$, $\uvarsigma>0$ and $a\in (0,1]_{\RR}$. 
Then there exist positive real numbers $w_{\mathrm{PRSI}}(\rho,\chi,S)$, depending only on $\rho$, $\chi$ and $S$, and $M_{\mathrm{PRSI}}(w,\rho,\uvarsigma,\chi,S,a)$, depending only on $w,\rho,\uvarsigma,\chi,S$ and $a$, such that the following holds true. Assume that $w\geq w_{\mathrm{PRSI}}(\rho, \chi, S)$ and $M\geq M_{\mathrm{PRSI}}(w,\rho,\uvarsigma,\chi,S,a)$. Define $W=W_w$ by \eqref{eq:RwW}, and set $R=R_{M;r,a}$ as
\begin{equation}\label{eq:RMa}
R\coloneqq M^{\frac{a}{17(r+1)2^r}}.
\end{equation}
Define a function $\lambda=\lambda_{M;\chi,r,a,K}\colon \OK\to\RR_{\geq 0}$ by
\[
\lambda(\alpha)\coloneqq\frac{\kappa\cdot\Lambda_{R,\chi}(\alpha)^2}{c_{\chi}\log R}.
\]
Here, $\kappa=\kappa_K>0$ is the constant appearing in Theorem~$\ref{theorem=zeta_K}$, $\vph_K$ is the totient function of $K$ $($Definition~$\ref{def=totient}$$)$, and $\Lambda_{R,\chi}$ is the $(R,\chi)$-von Mangoldt function $($Definition~$\ref{def:chi}$$)$. For $b\in\OK$ coprime with $W$, define $\tilde{\lambda}=\tilde{\lambda}_{w,M;\chi,r,a,K,b}\colon\OK\to\RR_{\geq 0}$ by
\[
\tilde{\lambda}(\beta)\coloneqq\frac{\vph_K(W)}{W^n}(\lambda\circ \Aff_{W,b})(\beta).
\]
Then, $w\leq \frac{a}{2}\log M$ holds, and $\tilde{\lambda}$ is a $(\rho,\frac{\uvarsigma M^a}{W},S)$-pseudorandom measure.
\end{theorem}

In the assertion of Theorem~\ref{theorem=package_PR}, by $w\leq \frac{a}{2}\log M$ and \eqref{eq:Chebyshev-ineq}, we in particular \havethat\ 
\begin{equation}\label{eq:WaM}
W\leq M^{(\log 2)a}.
\end{equation}

\begin{proof}
We prove Theorem~\ref{theorem=package_PR} by generalizing the proof of Theorem~\ref{theorem=pseudorandommeasure}. Let $t=2r+2$.
Take an arbitrary non-empty subset $\mathcal{J}$ of $\bigsqcup_{j\in [r+1]}\{0,1\}^{e_j}$, and let $m\coloneqq \# \mathcal{J}$. Here, $e_j=[r+1]\setminus\{j\}$.
Write $S=\{s_1,\dots,s_r\}\sqcup\{0\}$. Then for each $\omega\in\mathcal{J}$, the homomorphism $\psi_S^{(\omega)}\colon\ZZ^t\to\OK$ of $\ZZ$-modules is defined by Definition~\ref{definition=S-linearform}~\eqref{Eq:the-linear-maps-1} and \eqref{Eq:the-linear-maps-2}. By the assumptions of $S$, these maps are all surjective.

Take $R_0(m,K)$, $F_0(m,n)$ and $w_0((\psi_S^{(\omega)})_{\omega\in\mathcal{J}})$ as in Theorem~\ref{Th:Goldston_Yildirim_rho}. Set $R_1(r,K)$, $F_1(r,n)$ and $w_1(S)$ in the same manner as \eqref{eq:R_1,F_1,w_1}. 
Recall $\rho_{\mathrm{GY};\chi, m,n}^{(1)}(w)$ and  $\rho_{\mathrm{GY};\chi,m,t,K}^{(2)}(w,R)$ from Theorem~\ref{Th:Goldston_Yildirim_rho}. For $w\geq w_1(S)$, set
\[
\rho_{\mathrm{PRSI};\chi,r,n}^{(1)}(w)\coloneqq \max_{m\in[(r+1)2^r]}\rho_{\mathrm{GY};\chi,m,n}^{(1)}(w),
\]
and under the conditions that $R\geq R_1(r,K)$ and $\log w \leq F_1(r,n)\cdot\sqrt{\log R}$, set
\[
\rho_{\mathrm{PRSI};\chi,r,K}^{(2)}(w,M,a)\coloneqq \max_{m\in[(r+1)2^r]}\rho_{\mathrm{GY};\chi,m,t,K}^{(2)}(w,R).
\]
Here recall that $R$ is defined from $M,r$ and $a$ by \eqref{eq:RMa}.

For $w\geq w_1(S)$, it follows from \eqref{Eq:rho_bigO} that if $w$ is sufficiently large depending on $\chi, r,n$ and $\rho$, then $\rho_{\mathrm{PRSI};\chi,r,n}^{(1)}(w)\leq \rho/2$  holds. Set $w_{\mathrm{PRSI}}(\rho,\chi,S)$ with $w_{\mathrm{PRSI}}(\rho,\chi,S)\geq w_1(S)$ in such a way that $w\geq w_{\mathrm{PRSI}}(\rho,\chi,S)$ implies $\rho_{\mathrm{PRSI};\chi,r,n}^{(1)}(w)\leq\rho/2$. Let $w\geq w_{\mathrm{PRSI}}(\rho,\chi,S)$. Then, choose $M_{\mathrm{PRSI}}(w,\rho,\uvarsigma,\chi,S,a)$ with $M_{\mathrm{PRSI}}(w,\rho,\uvarsigma,\chi,S,a)\geq e^{2w/a}$ such that if $M\geq M_{\mathrm{PRSI}}(w,\rho,\uvarsigma,\chi,S,a)$, then the following four inequalities
\begin{align*}
&R\geq R_1,\\
&\log w \leq F_1\cdot \sqrt{\log R},\\
&M^{\left(\frac{12}{17}-\log 2\right)a}\geq \frac{1}{u},\\
&\rho_{\mathrm{PRSI};\chi,r,K}^{(2)}(w,M,a)\leq \frac{\rho}{2}
\end{align*}
are all satisfied. Then since $M\geq e^{2w/a}$, we in particular have $w\leq \frac{a}{2}\log M$.

Take an arbitrary subset $\cal{B}$ of $\ZZ^{r+1}$ which may be written as the product of intervals of lengths at least $\frac{\uvarsigma M^a}{W}$. 
Since $m\leq (r+1)2^r$, we have $R^{4m+1}\leq R^{5(r+1)2^r}$. It follows from $M^{a((12/17)-\log 2)}\geq u^{-1}$  that $R^{4m+1}\leq \frac{\uvarsigma M^a}{W}$. Hence, $\calB\times\calB\subseteq\ZZ^t$ is the product of intervals of lengths at least $R^{4m+1}$. 
Therefore, we may appeal to Theorem~\ref{Th:Goldston_Yildirim_rho} and conclude the following: if $w\geq w_{\mathrm{PRSI}}(\rho,\chi,S)$ and if $M\geq M_{\mathrm{PRSI}}(w,\rho,\uvarsigma,\chi,S,a)$, then $\tilde{\lambda}$ satisfies the $(\rho,\frac{\uvarsigma M^a}{W},S)$-linear forms condition.
\end{proof}
\begin{proof}[Proof of Theorem~$\ref{theorem=logpseudorandom}$]
The proof goes along the same line as the arguments in Section~\ref{section=positiveweighteddensity}, except one on the bound of $\#T$. For the reader's convenience, we go into details of the proof.
Let $W=W_{\PP_K,\mathrm{S}\Psi_{\log}}(\rho,\chi,S)$ be the positive  integer determined from $w=w_{\mathrm{PRSI}}(\rho,\chi,S)$ by \eqref{eq:RwW}. 
Let $M$ be a parameter with $M\geq M_{\mathrm{PRSI}}(w_{\mathrm{PRSI}}(\rho,\chi,S),\rho,1,\chi,S,1)$, and take the function $\lambda=\lambda_{M;\chi,r,1,K}\colon \OK\to\RR_{\geq 0}$ defined  in Theorem~\ref{theorem=package_PR}. In particular, we set $R=M^{\frac{1}{17(r+1)2^r}}$. Now apply Theorem~\ref{theorem=package_PR} for $(a,\uvarsigma)=(1,1)$. Then we conclude that for every $b\in \OK$ coprime with $W$, the function $\frac{\vph_K(W)}{W^n}(\lambda\circ \Aff_{W,b})$ is a $(\rho,M/W,S)$-pseudorandom measure. In addition, by \eqref{eq:WaM}, we have $W\leq M^{\log 2}\leq M^{\frac{3}{4}}$.

We define the exceptional set $T\subseteq \PP_K\cap \OK(\omom,M)$ by $T\coloneqq \PP_K\cap \OK(\omom,M)\cap \OK(R)$.
Now, recall the proof of Lemma~\ref{lemma=jogai}.
The arguments for \eqref{en:log_compatible} and \eqref{en:coprime_A'} still work in the current setting. More precisely, the following holds: if $M$ is sufficiently large depending on $r$ and $K$, then \eqref{en:coprime} in the \compatiW\ is fulfilled and $\lambda(\alpha)=\frac{\kappa}{17(r+1)2^r \cdot c_{\chi}}\cdot \log M$ holds for every $\alpha\in A\setminus T$.

Therefore, what remains is the estimate of $\#T$; this part is more involved than the proof of Lemma~\ref{lemma=jogai}~\eqref{en:noudo_T}.
Proposition~\ref{proposition=idealdensity} implies that if $M$ is sufficiently large depending on $r$, then the number of ideals of $\OK$ whose ideal norms are at most $R$ does not exceed $2\kappa R$. In particular, the same estimate holds for the number of principal ideals with the same condition. For each such a principal ideal $\ideala$, consider $\alpha\in \OK(\omom,M)$ which is sent to $\ideala$ by the map $\alpha\mapsto \alpha\OK$. For every $\ideala$ above, the number of such $\alpha$ does not exceed of $\Xi'\cdot (\log M)^{n-1}$; here  $\Xi'=\Xi'(\omom)>0$ is the constant appearing in Lemma~\ref{lemma=OKt_orbit}~\eqref{en:roughcounting}. Indeed, observe that $r_1+r_2\leq n$, where $r_1$ and $r_2$ are as in Setting~\ref{setting=section4}. Hence, we conclude that
\begin{equation}\label{eq:boundT}
\#T\leq \Xi' \cdot (\log M)^{n-1} \cdot (2\kappa R).
\end{equation}
For sufficiently large $M$ depending on $\omom$, we have
$R\leq \frac{1}{2\kappa \Xi'}\cdot \frac{M^{\frac{1}{16}}}{(\log M)^{n-1}}$; in this case, we \obtainthat\ 
\[
\#T\leq M^{\frac{1}{16}}\leq M^{\frac{3}{4}n}.
\]

Therefore, by setting $M_{\PP_K,\mathrm{S}\Psi_{\log}}(\rho,\omom,\chi,S)$ as the minimum of integers for which the arguments above work, we conclude the following: if $M\geq M_{\PP_K,\mathrm{S}\Psi_{\log}}(\rho,\omom,\chi,S)$, then 
$\PP_K\cap \OK(\omom,M)$ satisfies the $(\rho,W,M,\omom,S)$-condition with parameters
\[
(D_1,D_2,\varepsilon)=\left(\frac{\kappa}{17(r+1)2^r \cdot c_{\chi}},\frac{\kappa}{17(r+1)2^r \cdot c_{\chi}},\frac{3}{4}\right).
\]
This ends the proof of the latter assertion of Theorem~\ref{theorem=logpseudorandom}. 
Here, by fixing $\chi$, we omit to write dependences of 
$W_{\PP_K,\mathrm{S}\Psi_{\log}}(\rho,\omom,\chi,S)$ and 
$M_{\PP_K,\mathrm{S}\Psi_{\log}}(\rho,\omom,\chi,S)$ on $\chi$. Thus, we write 
$W_{\PP_K,\mathrm{S}\Psi_{\log}}(\rho,\omom,S)$ and 
$M_{\PP_K,\mathrm{S}\Psi_{\log}}(\rho,\omom,S)$ for short.
Then the former assertion immediately follows.
\end{proof}

As in the proof above, in the case where $A$ is not inside an $\OKt$-fundamental domain, then the map $A\ni \alpha \mapsto \alpha \OK\in \Ideals_K$ is not injective in general. Then, we need to take into account the contribution of the action of the group of units in order to transfer ideal \counting s to element \counting s. To treat this, Lemma~\ref{lemma=OKt_orbit} is a key tool, as we have seen in the deduction of \eqref{eq:boundT} in the above proof.

\subsection{Axiomatized constellation theorems of type~1}
\label{subsection=packagetheorem}

In this subsection, we establish axiomatized constellation theorems of type~1 with the aid of the \compati\ and $\logpseu$ introduced in Subsection~\ref{subsection=package}. Here exhibit the two statements: one is the finitary version and the other is the infinitary version. 
Recall the definition of $\scrN_S(A)$ from Definition~\ref{definition=standardshape}~\eqref{en:kosuu}.
\begin{theorem}\label{theorem=package}
Assume Setting~$\ref{setting=package}$. Let $S\subseteq \ideala$ be a standard shape.
Let $\delta>0$, $D_1,D_2>0$ and $\varepsilon\in (0,1)_{\RR}$. 
Then there exist $\rho=\rho_{\rmI}(D_1,\bv,\delta,S)>0$ and
$M_{\rmI}=M_{\rmI}(D_1,D_2,\varepsilon,\bv,\delta,S)\in \NN$ such that the following
holds true.
Assume that $M\in \NN$ with 
$M\geq M_{\rmI}$ and $A\subseteq\ideala$ with
\begin{equation}\label{88a}
A\subseteq  \ideala(\bv,M)
\end{equation}
satisfies the following two conditions:
\begin{enumerate}[$(i)$]
 \item\label{en:counting} the cardinality $\#A$ satisfies
\begin{equation}\label{eq:delta_log}
\#A\geq \delta \cdot \frac{M^n}{\log M},
\end{equation}
 \item\label{en:logpseudorandom} $A$
 satisfies the \compati\ with parameters $(D_1,D_2,\varepsilon)$.
\end{enumerate}
Then $A$ contains an $S$-constellation.
Furthermore, there exists $\gamma=\gamma_{\rmI}(D_1,D_2,\bv,\delta,S)>0$ such that
\begin{equation}\label{eq:scrNN}
\scrN_S(A)\geq \gamma W^{-(n+1)}\cdot \frac{M^{n+1}}{(\log M)^{\#S}}
\end{equation}
holds.
Here, $W$ is an integer appearing in the 
\compati, which comes from condition~\eqref{en:logpseudorandom}.
\end{theorem}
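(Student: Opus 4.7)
The plan is to axiomatize the argument from Section~\ref{section=positiveweighteddensity} used to prove Theorem~\ref{theorem=primeconstellationsfinite}, passing from the $M$-world to an $N$-world via a $W$-trick and then invoking Theorems~\ref{thm:RMST} and \ref{theorem=weighted-counting}. Let $W \in \NN$, $\lambda \colon \ideala \to \RR_{\geq 0}$, and an exceptional set $T \subseteq A$ be the data furnished by the \compati. First, since $\#T \leq M^{\varepsilon n}$ and $\varepsilon < 1$, the inequality $M^{\varepsilon n} \leq \tfrac{\delta}{2} M^n / \log M$ holds for all $M$ large enough in terms of $\varepsilon, \delta, n$, so $A' \coloneqq A \setminus T$ satisfies $\#A' \geq \tfrac{\delta}{2} \cdot M^n / \log M$.

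Next, condition~\eqref{en:coprime} of the \compati\ guarantees that the image of $A'$ in $\ideala / W\ideala$ lies in the set described by Lemma~\ref{lemma=coprime}, which has cardinality $\vph_K(W)$. The pigeonhole principle produces $b \in \ideala$ with $b\OK + W\ideala = \ideala$ and
\[
\#\bigl( A' \cap (b + W\ideala) \bigr) \geq \frac{1}{\vph_K(W)} \cdot \frac{\delta}{2} \cdot \frac{M^n}{\log M}.
\]
By subtracting an element of $W\ideala$ we may assume $\|b\|_{\infty,\bv} < W$. Set $N \coloneqq \lceil M/W \rceil$ and $B \coloneqq \Aff_{W,b}^{-1}(A') \cap \ideala(\bv, N)$. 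A triangle-inequality argument analogous to that preceding~\eqref{eq:Step4-counting1} shows $\Aff_{W,b}\bigl(\ideala(\bv, N)\bigr) \supseteq \ideala(\bv, M) \cap (b + W\ideala)$. Define $\tilde\lambda(\beta) \coloneqq \tfrac{\vph_K(W)}{W^n} (\lambda \circ \Aff_{W,b})(\beta)$; by condition~\eqref{en:pseudorandom}, this is a $(\rho, N, S)$-pseudorandom measure. Using the lower bound $\lambda \geq D_1 \log M$ on $A' = A \setminus T$ from condition~\eqref{en:measurebelow}, combined with $W \leq M^{\varepsilon}$ and $N \leq 2M/W$, a direct computation yields a constant $\tilde\delta = \tilde\delta(D_1, \bv, \delta) > 0$ such that
\[
\EE\bigl(\ichi_B \cdot \tilde\lambda \bigm| \ideala(\bv, N)\bigr) \geq \tilde\delta.
\]

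Now define $\rho_{\rmI}(D_1, \bv, \delta, S) \coloneqq \rho_{\mathrm{RMS}}(\bv, \tilde\delta, S)$ from Theorem~\ref{thm:RMST}. For the smallness condition, the upper bound $\lambda \leq D_2 \log M$ on $A'$ gives
\[
\EE\bigl(\ichi_B \cdot \tilde\lambda^{r+1} \bigm| \ideala(\bv, N)\bigr) \leq \Bigl(\tfrac{\vph_K(W)}{W^n}\Bigr)^r (D_2 \log M)^{r+1}.
\]
Since $W \leq M^{\varepsilon}$ forces $N \geq \tfrac{1}{2} M^{1-\varepsilon}$, the ratio $(\log M)^{r+1}/N$ tends to $0$, so for $M \geq M_{\rmI}(D_1, D_2, \varepsilon, \bv, \delta, S)$ the smallness condition $\EE(\ichi_B \cdot \tilde\lambda^{r+1}) \leq \gamma_{\mathrm{RMS}}(\bv, \tilde\delta, S) \cdot N$ holds. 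Theorem~\ref{thm:RMST} then produces an $S$-constellation in $B$; its image under the injection $\Aff_{W,b}$ lies in $A' \subseteq A$.

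For the quantitative bound~\eqref{eq:scrNN}, apply Theorem~\ref{theorem=weighted-counting} instead: it gives
\[
\EE\Biggl( \prod_{s \in S}(\ichi_B \cdot \tilde\lambda)(\alpha + ks) \Biggm| (\alpha, k) \in \ideala(\bv, N) \times [N] \Biggr) > \gamma_{\mathrm{RMS}}(\bv, \tilde\delta, S).
\]
Each nonzero summand is at most $\bigl(\tfrac{\vph_K(W)}{W^n} D_2 \log M\bigr)^{r+1}$, and each $S$-constellation in $B$ contributes to at most one pair $(\alpha, k)$ (by $S = -S$ and $0 \in S$, as in the end of the proof of Theorem~\ref{theorem=weighted-counting}). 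Since the injection $\Aff_{W,b}$ carries $S$-constellations in $B$ to $S$-constellations in $A' \subseteq A$, rearranging yields $\scrN_S(A) \geq \gamma \cdot (M/W)^{n+1} (\log M)^{-(r+1)}$ for a constant $\gamma = \gamma_{\rmI}(D_1, D_2, \bv, \delta, S) > 0$ independent of $\varepsilon$ (the latter enters only through $M_{\rmI}$ via the smallness threshold). The main delicate point will be verifying that $\rho$ and $\gamma$ depend only on the advertised parameters: this requires separating the role of $D_1$, which governs $\tilde\delta$ and hence $\rho_{\mathrm{RMS}}$, from that of $D_2$ and $\varepsilon$, which enter only into the threshold $M_{\rmI}$ via the smallness estimate.
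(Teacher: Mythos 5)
Your proposal is correct and follows essentially the same route as the paper's proof: trim by the exceptional set, pigeonhole on residues modulo $W\ideala$ via Lemma~\ref{lemma=coprime}, transfer to the $N$-world with $\Aff_{W,b}$, verify the weighted density and smallness hypotheses from conditions \eqref{en:measurebelow} and the bound $N\geq M^{1-\varepsilon}$, and then invoke Theorems~\ref{thm:RMST} and \ref{theorem=weighted-counting}. Your accounting of which parameters $\rho$ and $\gamma$ depend on also matches the paper's.
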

\begin{theorem}\label{theorem=package_infinite}
Assume Setting~$\ref{setting=package}$.
Assume that a subset
$A\subseteq \ideala$ satisfies the following two conditions:
\begin{enumerate}[$(i)$]
\item\label{en:counting_infinite} the inequality
\begin{equation}
\limsup_{M\to \infty}\frac{\#(A\cap \ideala(\bv,M))}{M^n(\log M)^{-1}}>0\label{eq:upperlogdensity}
\end{equation}
holds,
 \item\label{en:logpseudo_infinite}$A\in \logpseu$.
\end{enumerate}
Then for every finite subset $S\subseteq\ideala$,
there exists an $S$-constellation in $A$.
\end{theorem}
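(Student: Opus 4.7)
The plan is to reduce the infinitary statement to the finitary version (Theorem~\ref{theorem=package}) via a standard compactness-type extraction from condition \eqref{eq:upperlogdensity}. The first step is to observe that it suffices to produce $S'$-constellations in $A$ for a \emph{standard shape} $S'$ containing $S$; indeed, any $S'$-constellation $\alpha+kS'\subseteq A$ contains the $S$-constellation $\alpha+kS$. Since $\ideala$ is a free $\ZZ$-module of rank $n$ with basis $\bv=(v_1,\ldots,v_n)$, the set $S' \coloneqq S\cup(-S)\cup\{0\}\cup\{\pm v_1,\ldots,\pm v_n\}$ is finite, satisfies $0\in S'$ and $S'=-S'$, and generates $\ideala$ as a $\ZZ$-module, so it is a standard shape in the sense of Definition~\ref{definition=standardshape}~\eqref{en:standard_shape}.

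Next I apply $A\in\logpseu$ to this standard shape $S'$: Definition~\ref{definition=logpseudorandom_infinite} supplies constants $D_1,D_2>0$ and $\varepsilon\in(0,1)_{\RR}$ (depending on $A$ and $S'$) such that, for every $\rho>0$, there is a threshold $M(\rho,S')$ past which $A\cap\ideala(\bv,M)$ satisfies the \compati\ with parameters $(D_1,D_2,\varepsilon)$. Separately, condition \eqref{eq:upperlogdensity} gives a $\delta>0$ and an unbounded sequence of positive integers $M_1<M_2<\cdots\to\infty$ with
\[
\#(A\cap\ideala(\bv,M_k))\geq \delta\cdot \frac{M_k^n}{\log M_k} \qquad \text{for all }k\in\NN.
\]
With $D_1,\bv,\delta,S'$ now fixed, Theorem~\ref{theorem=package} produces the threshold parameter $\rho_{\rmI}=\rho_{\rmI}(D_1,\bv,\delta,S')>0$ and the integer $M_{\rmI}=M_{\rmI}(D_1,D_2,\varepsilon,\bv,\delta,S')$.

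Finally, choose any $k$ large enough that $M_k\geq \max\{M(\rho_{\rmI},S'),M_{\rmI}\}$; such $k$ exists because $M_k\to\infty$. For this $M_k$, the subset $A_k\coloneqq A\cap\ideala(\bv,M_k)$ satisfies both hypotheses of Theorem~\ref{theorem=package}: the counting hypothesis \eqref{eq:delta_log} by construction of $(M_k)$, and the \compati\ by the choice of $M_k\geq M(\rho_{\rmI},S')$. Theorem~\ref{theorem=package} then yields an $S'$-constellation inside $A_k\subseteq A$, which contains an $S$-constellation, completing the proof. The only substantive point in this argument is the \emph{order of quantifiers}: the data $(D_1,D_2,\varepsilon)$ must be extracted first (depending on $S'$), then used to determine $\rho_{\rmI}$ and $M_{\rmI}$, and only afterwards do we select a member of the sequence $(M_k)$ large enough to clear both thresholds simultaneously. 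Nothing deeper is required, since the hard analytic content has already been absorbed into Theorem~\ref{theorem=package} and the definition of $\logpseu$.
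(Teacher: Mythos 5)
Your proposal is correct and follows essentially the same route as the paper: fix the pseudorandomness data $(D_1,D_2,\varepsilon)$ from $A\in\logpseu$ first, then extract $\rho_{\rmI}$ and $M_{\rmI}$ from Theorem~\ref{theorem=package}, and finally pick a sufficiently large scale $M$ on which both the counting bound and the \compati\ hold simultaneously. The only difference is that you make explicit the inflation of a general finite $S$ to a standard shape $S'$ (which the paper's proof leaves implicit, relying on its earlier remark that this loses no generality), and this is a correct and harmless addition.
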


In Theorem~\ref{theorem=package}, the first condition is a condition on \counting s, and the second condition is a condition which inherits to subsets; recall  Lemma~\ref{lemma=logpseudorandom_subset}. The same holds true for the two conditions in Theorem~\ref{theorem=package_infinite}.
The existence of a constellation no way inherits to subsets. Nevertheless, in the proof of constellation theorems, we can decompose our criteria into the following two parts: conditions on mere \counting s, and conditions related to pesudorandomness, which inherit to subsets. In this manner, we may  have a clear description of the proofs. For instance, Theorem~\ref{theorem=package_infinite} yields the following relative Szemer\'{e}di-type theorem. To state it, we extend the definition of the relative asymptotic density in Definition~\ref{definition=relativedensity}~\eqref{en:lenghtdensity} in the following manner. 

\begin{definition}\label{definition=reldens_c}
Let $\calZ$ be a free $\ZZ$-module of finite rank and $\bv$ a $\ZZ$-basis. For a non-empty set $X\subseteq \calZ$ and  a subset $A\subseteq X$, define the \emph{relative upper asymptotic  density of $A$ in $X$ measured by $\|\cdot\|_{\infty,\bv}$} as
\[
\overline{d}_{X,\bv}(A)\coloneqq\limsup_{M\to\infty}\frac{\#(A\cap \calZ(\bv,M))}{\#(X \cap \calZ(\bv,M))}.
\]
The \emph{relative lower asymptotic  density of $A$ in $X$ measured by $\|\cdot\|_{\infty,\bv}$} is also defined as
\[
\underline{d}_{X,\bv}(A)\coloneqq\liminf_{M\to\infty}\frac{\#(A\cap \calZ(\bv,M))}{\#(X \cap \calZ(\bv,M))}.
\]
\end{definition}

\begin{corollary}\label{corollary=package_infinite}
Assume Setting~$\ref{setting=package}$.
Assume that a subset
$A\subseteq \ideala$ satisfies the following two conditions:
\begin{enumerate}[$(i)$]
\item\label{en:counting_infinite_X}the inequality
\begin{equation}
\liminf_{M\to \infty}\frac{\#(A\cap \ideala(\bv,M))}{M^n(\log M)^{-1}}>0\label{eq:lowerlogdensity}
\end{equation}
holds,
\item\label{en:logpseudo_infinite_X}$A\in \logpseu$.
\end{enumerate}
Then for every subset $A'\subseteq A$ with
$\overline{d}_{A,\bv}(A')>0$, the following holds:
for every finite subset $S$, 
$A'$ contains an
$S$-constellation.
\end{corollary}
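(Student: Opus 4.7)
The plan is to combine Theorem~\ref{theorem=package_infinite} with the hereditary property of $\logpseu$ (Lemma~\ref{lemma=logpseudorandom_subset}~\eqref{en:pseudosubset}). First I would reduce the statement to the case where $S$ is a standard shape: given an arbitrary finite $S\subseteq\ideala$, inflate it to some standard shape $S'\supseteq S$, exactly as in the remark following Theorem~\ref{mtheorem=primeconstellationsfinite} (if $S$ fails to generate $\ideala$ as a $\ZZ$-module, adjoin a $\ZZ$-basis of $\ideala$; then symmetrize and adjoin $0$). Any $S'$-constellation $\alpha+kS'$ in $A'$ contains the $S$-constellation $\alpha+kS$, so it suffices to produce $S'$-constellations.

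Next I would verify the two hypotheses of Theorem~\ref{theorem=package_infinite} for $A'$. The pseudorandomness hypothesis is automatic: since $A'\subseteq A$ and $A\in\logpseu$, Lemma~\ref{lemma=logpseudorandom_subset}~\eqref{en:pseudosubset} yields $A'\in\logpseu$. The counting hypothesis is the only substantive point. Set
\[
 c\coloneqq \liminf_{M\to\infty}\frac{\#(A\cap\ideala(\bv,M))}{M^n(\log M)^{-1}}>0,\qquad d\coloneqq \overline{d}_{A,\bv}(A')>0.
\]
By definition of $d$ there is an increasing sequence $M_k\to\infty$ with
\[
 \frac{\#(A'\cap\ideala(\bv,M_k))}{\#(A\cap\ideala(\bv,M_k))}\xrightarrow[k\to\infty]{}d.
\]
Multiplying the ratios and using the uniform lower bound from $c>0$ gives, for all sufficiently large $k$,
\[
 \frac{\#(A'\cap\ideala(\bv,M_k))}{M_k^n(\log M_k)^{-1}}=\frac{\#(A'\cap\ideala(\bv,M_k))}{\#(A\cap\ideala(\bv,M_k))}\cdot\frac{\#(A\cap\ideala(\bv,M_k))}{M_k^n(\log M_k)^{-1}}\geq \frac{d}{2}\cdot\frac{c}{2},
\]
so $\limsup_{M\to\infty}\#(A'\cap\ideala(\bv,M))/(M^n(\log M)^{-1})\geq dc/4>0$, which is exactly hypothesis~\eqref{en:counting_infinite} of Theorem~\ref{theorem=package_infinite}.

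With both hypotheses checked, applying Theorem~\ref{theorem=package_infinite} to $A'$ and to the standard shape $S'$ produces an $S'$-constellation inside $A'$, and hence the desired $S$-constellation. There is no serious obstacle; the only point requiring care is the interplay between the $\limsup$ in the definition of $\overline{d}_{A,\bv}(A')$ and the $\liminf$ in hypothesis~\eqref{en:counting_infinite_X}, which is resolved by passing to the subsequence $(M_k)$ realising the $\limsup$ and applying the uniform bound from the $\liminf$ on that subsequence.
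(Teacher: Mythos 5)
Your proposal is correct and follows essentially the same route as the paper: heredity of $\logpseu$ to subsets via Lemma~\ref{lemma=logpseudorandom_subset}, verification that $A'$ satisfies the $\limsup$ counting condition \eqref{eq:upperlogdensity} by combining the $\liminf$ bound for $A$ with the positive relative upper density of $A'$ along a realising subsequence, and then an application of Theorem~\ref{theorem=package_infinite} to $A'$. The paper's proof states the counting step without detail, and your subsequence argument is the intended justification; the reduction to standard shapes is likewise consistent with the paper's conventions.
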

\begin{proof}[Proof of ``Theorem~$\ref{theorem=package_infinite}$ $\Longrightarrow$ Corollary~$\ref{corollary=package_infinite}$'']
Suppose $A'\subseteq A$ and
$\overline{d}_{A,\bv}(A')>0$.
By Lemma~\ref{lemma=logpseudorandom_subset}~\eqref{en:logpseudo_infinite_X}, we have $A'\in\logpseu$.
The set $A'$ satisfies
\eqref{eq:upperlogdensity} with $A$ replaced by $A'$.
Therefore, we can apply Theorem~\ref{theorem=package_infinite}
with replacing $A$ by $A'$.
\end{proof}

Theorem~\ref{theorem=package_infinite} is deduced from Theorem~\ref{theorem=package} in the following manner.
\begin{proof}[Proof of ``Theorem~$\ref{theorem=package}$ $\Longrightarrow$ Theorem~$\ref{theorem=package_infinite}$'']
Let $A\subseteq\ideala$ be a subset fulfilling the two conditions in Theorem~$\ref{theorem=package_infinite}$. Take an arbitrary standard shape $S\subseteq\ideala$.
Since $A\in \logpseu$, we can take parameters $(D_1,D_2,\varepsilon)$ associated with $S$. Set $\delta>0$ as the left-hand side of \eqref{eq:upperlogdensity}; if the left-hand side is $+\infty$, then set $\delta=2$. By Theorem~\ref{theorem=package}, there exist $\rho=\rho_{\mathrm{I}}(D_1,\bv,\delta/2,S)>0$ and $M_{\mathrm{I}}=M_{\mathrm{I}}(D_1,D_2,\varepsilon,\bv,\delta/2,S)\in \NN$.
Then since $A\in \logpseu$, there exist $M(\rho,\bv,S)$ as in Definition~\ref{definition=logpseudorandom_infinite} and a positive integer $M$ at least $M_{\mathrm{I}}$ such that the following hold true: we have
\[
\#(A\cap \ideala(\bv,M))\geq \frac{\delta}{2}\cdot \frac{M^n}{\log M},
\]
and $A\cap \ideala(\bv,M)$ satisfies the \compati\ with parameters $(D_1,D_2,\varepsilon)$. Therefore, Theorem~\ref{theorem=package} applies to $A\cap \ideala(\bv,M)$, and we can find an $S$-constellation in $A$. As $S$ is arbitrarily taken, this ends our proof.
\end{proof}
It remains to prove Theorem~\ref{theorem=package}. The following proof is motivated by the proofs of 
Proposition~\ref{proposition=positiverelativeweighteddensity} and Theorem~\ref{theorem=primeconstellationsfiniteagain}.
\begin{proof}[Proof of Theorem~$\ref{theorem=package}$]
Let $S$, $\delta$, $D_1$, $D_2$ and $\varepsilon$ be as in the statement.
Let $M$ be a real parameter; we will take it sufficiently large. Take an arbitrary $\rho>0$.

Let $A\subseteq \ideala(\bv,M)$ be a set which satisfies \eqref{eq:delta_log} and which fulfills the \compatiW\ with parameters $(D_1,D_2,\varepsilon)$; here $W \leq M^{\varepsilon}$ holds. Take $\lambda\colon \ideala\to \RR_{\geq 0}$ and $T\subseteq A$ associated with $A$. Trim $A$ as $A'\coloneqq A\setminus T$. First, we count $A'$ from below.
For a sufficiently large $M$ depending on $\varepsilon$ and $\delta$, we have
\begin{align*}
\#T\leq M^{\varepsilon n}\leq \frac{\delta}{2}\cdot \frac{M^n}{\log M}.
\end{align*}
Hence by \eqref{eq:delta_log} the inequality
\begin{equation}\label{eq:countingA'}
\#A'\geq \frac{\delta}{2}\cdot\frac{M^n}{\log M}
\end{equation}
holds.

In what follows, we will verify that Theorem~\ref{thm:RMST} applies to $A'$, provided that $M$ is sufficiently large. By the \compatiW~\eqref{en:coprime} and by Lemma~\ref{lemma=coprime}, the cardinality of the image of $A'$ under the natural projection $\ideala \twoheadrightarrow \ideala/W\ideala$ is at most $\vph_K(W)$. The pigeonhole principle tells us that there exists 
$\overline{b}\in \ideala/W\ideala$ such that
\[
\#(A'\cap \overline{b})\geq \frac{1}{\vph_K(W)}\cdot \#A'
\]
holds.
Here we regard $\overline{b}$ as a subset of $\ideala$.
Combine this with \eqref{eq:countingA'}, and \obtainthat\ 
\begin{equation}\label{eq:countingA'b}
\#(A'\cap \overline{b})\geq \frac{1}{2\vph_K(W)}\delta \cdot \frac{M^n}{\log M}.
\end{equation}

Set a positive integer $N$ from $M$ and $W$ by $N\coloneqq\left\lceil\frac{M}{W}\right\rceil$.
Since $W\leq M^{\varepsilon}$, we have
\begin{equation}\label{eq:NtoMW}
M^{1-\varepsilon}\leq  N\leq  \frac{2M}{W}.
\end{equation}
Choose a representative $b\in \overline{b}$ in such a way that $\|b\|_{\infty,\bv}<W$. 
Since $A'\cap\overline{b}\neq\varnothing$, we have $b\OK+W\ideala=\ideala$ by the \compatiW~\eqref{en:coprime}. By the definition of $N$ and by the triangle inequality, we have
\[
\Aff_{W,b}(\ideala (\bv,N))\supseteq\ideala(\bv,M)\cap(W\ideala+b)\supseteq A' \cap \overline{b}.
\]
This implies that $B\coloneqq\Aff_{W,b}^{-1}(A' \cap \overline{b})$  is a subset of $\ideala(\bv,N)$. For this $B$, by \eqref{eq:countingA'b}, we \havethat\ 
\begin{equation}\label{eq:relativeooi}
\#B \geq \frac{1}{2\vph_K(W)}\delta \cdot \frac{M^n}{\log M}.
\end{equation}
Define $\tilde{\lambda}\colon\ideala\to\RR_{\geq0}$ by $\tilde{\lambda}\coloneqq \frac{\vph_K(W)}{W^n}( \lambda\circ \Aff_{W,b})$.
By the \compatiW~\eqref{en:pseudorandom}, this $\tilde{\lambda}$ is a $(\rho,N,S)$-pseudorandom measure.
Furthermore, by the estimate from below in the \compatiW~\eqref{en:measurebelow} and by \eqref{eq:relativeooi} and \eqref{eq:NtoMW}, we have the following estimate of the weighted density:
\begin{equation}\label{eq:type1_weighted_density}
\begin{split}
\EE(\ichi_B\cdot \tilde{\lambda}\mid\ideala(\bv,N) ) &\geq\left(\frac{1}{2\vph_K(W)}\delta \cdot \frac{M^n}{\log M}\right)\cdot \left(\frac{\vph_K(W)}{W^n} D_1 \cdot \log M\right) \cdot (2N+1)^{-n}\\
&\geq\frac{D_1\delta}{2} \cdot \left(\frac{M}{3WN}\right)^n\geq\frac{D_1}{2\cdot 6^n}\cdot  \delta;
\end{split}
\end{equation}
it ensures the weighted density condition in the relative multidimensional Szemer\'{e}di theorem.
On the smallness condition, we employ 
\eqref{eq:NtoMW} and the estimate from above in the \compatiW~\eqref{en:measurebelow}. Then, we \havethat\ 
\begin{equation}\label{eq:small}
\frac{1}{N}\cdot \EE(\ichi_B\cdot \tilde{\lambda}^{r+1}   \mid \ideala(\bv,N) )\leq D_2^{r+1}\cdot  \frac{(\log M)^{r+1}}{M^{1-\varepsilon}},
\end{equation}
where $r\coloneqq\#S-1$.

Finally, we specify $\rho>0$ and $M_{\rmI}$ so as to activate Theorem~\ref{thm:RMST}. 
We set $\rho=\rho_{\rmI}(D_1,\bv,\delta,S)\coloneqq\rho_{\mathrm{RMS}}(\bv,\frac{D_1}{2\cdot 6^n}\delta,S)$.
Also, set $\gamma'_{\rmI}=\gamma'_{\rmI}(D_1,\bv,\delta,S)>0$ as $\gamma'_{\rmI}\coloneqq \gamma_{\mathrm{RMS}}(\bv,\frac{D_1}{2\cdot 6^n} \delta,S)$. The arguments in the current proof up to this point work for a sufficiently large $M$ depending on $\varepsilon$ and $\delta$; if necessary, we replace $M$ with a larger number depending on $D_1$, $D_2$, $\varepsilon$, $\bv$, $\delta$ and $S$ in such a way that
\[
D_2^{r+1}\cdot  (\log M)^{r+1}\leq \gamma'_{\rmI} \cdot M^{1-\varepsilon}
\]
holds true. Set $M_{\rmI}$ as the smallest positive integer which satisfies the inequality above. 

Then, we may appeal to the relative multidimensional Szemer\'{e}di Theorem (Theorem~\ref{thm:RMST}); indeed, by \eqref{eq:type1_weighted_density} and \eqref{eq:small}, if $M\geq M_{\rmI}$, then it applies to $B\subseteq \ideala(\bv,N)$. Therefore, there exists an $S$-constellation in $B$. Since $\Aff_{W,b}(B)\subseteq A'\subseteq A$, we can find an $S$-constellation in $A$.

Finally, we make an estimate of $\scrN_{S}(A)$. 
By Theorem~\ref{theorem=weighted-counting} and the estimate from above in the \compatiW~\eqref{en:measurebelow}, we \havethat\ 
\[
\frac{1}{N(2N+1)^n} \cdot \scrN_{S}(B) \cdot (D_2\log M)^{r+1}\geq \gamma'_{\rmI}.
\]
Hence, by setting $\gamma_{\rmI}=\gamma_{\rmI}(D_1,D_2,S,\delta,\bv)$ as $\gamma_\rmI \coloneqq 2^n D_2^{-(r+1)}\gamma'_{\rmI}$, we \obtainthat\ 

\[
\scrN_{S}(B)\geq \gamma_\rmI \cdot N^{n+1} (\log M)^{-(r+1)}\geq\gamma_{\rmI}   W^{-(n+1)}\cdot \frac{M^{n+1}}{(\log M)^{r+1}}.
\]
Since $\scrN_{S}(B)\leq \scrN_{S}(A')\leq \scrN_{S}(A)$, this ends our proof.
\end{proof}
\begin{remark}
Condition \eqref{en:measurebelow} in the definition of the {\compatiW } involved a bound from above $\lambda (\alpha ) \le D_2\cdot \log M$.
If we only need to prove the existence part of Theorem~\ref{theorem=package},
we may relax it to $o_{M\to\infty}(M^{\frac{1-\varepsilon}{r+1}})$, where $\varepsilon $ is the constant appearing in the assumption $W\leq M^\varepsilon $.
However, this makes the estimate of $\scrN_{S}(A)$ in Theorem~\ref{theorem=package} worse.
Examples in this paper all obey estimates of the form $\lambda (\alpha ) \le D_2\cdot \log M$.
%
\begin{comment}
The estimate from above in the \compatiW~\eqref{en:measurebelow} is stated as the order of $\log M$.
On the existence part of Theorem~\ref{theorem=package}, we may relax this order; for instance, under $W\leq M^{\varepsilon}$, we can take the order of $o_{M\to\infty}(M^{\frac{1-\varepsilon}{r+1}})$. 
However, if we raise the order of this estimate from above, then the estimate of $\scrN_{S}(A)$ in Theorem~\ref{theorem=package} becomes worse.
Examples appearing in the present paper have the estimate from above in the \compatiW\ ~\eqref{en:measurebelow} with the order of $\log M$; see for instance Theorem~\ref{theorem=logpseudorandom}.
\end{comment}
\end{remark}
In the last part of this subsection, we will prove Theorem~\ref{theorem=primeconstellationsfiniteagain_full} below as an application of Theorem~\ref{theorem=package}. For the proof, we employ the following estimate from below.
\begin{proposition}\label{proposition=PK_kouri}
Let $\omom$ be an integral basis of $K$. Then there exist $C_{\PP_K,\rmI}(\omom)>0$ and a positive integer $M_{\PP_K,\rmI}(\omom)$ such that for every $M\geq M_{\PP_K,\rmI}(\omom)$, the inequality
\begin{equation}\label{eq:counting_inPPK}
\#(\PP_K\cap \OK(\omom,M))\geq C_{\PP_K,\rmI}(\omom) \cdot \frac{M^n}{\log M}
\end{equation}
holds. 
In particular,  for $\ideala=\OK$, the set $A=\PP_K$ fulfills the two conditions $\eqref{en:counting_infinite_X}$, $\eqref{en:logpseudo_infinite_X}$ of Corollary~$\ref{corollary=package_infinite}$.
\end{proposition}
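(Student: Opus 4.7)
The plan is to reduce the counting of prime elements with bounded $\lmugen$-length to the counting of principal prime ideals with bounded norm, and then apply the Chebotarev density theorem (Theorem~\ref{theorem=Chebotarev}). The bridge between these two countings will be an NL-compatible $\OKt$-fundamental domain, whose existence is guaranteed by Proposition~\ref{proposition=normrespectingfundamentaldomain}. This gives us an injective correspondence between elements of $\PP_K \cap \DD$ and nonzero principal prime ideals of $\OK$, converting element counts into ideal counts without loss.

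More concretely, first I would fix an NL-compatible $\OKt$-fundamental domain $\DD$ with constant $C = C(\omom, \DD) > 0$ satisfying \eqref{NLC}. By Lemma~\ref{lemma=NLC-incl}~\eqref{en:NLC-incl1}, with $L \coloneqq C M^n$ we have the inclusion $\DD \cap \OK(L) \subseteq \DD \cap \OK(\omom, M)$, and hence
\[
\#(\PP_K \cap \OK(\omom, M)) \geq \#(\PP_K \cap \DD \cap \OK(L)).
\]
Since $\DD$ is an $\OKt$-fundamental domain, the map $\alpha \mapsto \alpha \OK$ restricts to a bijection between $\PP_K \cap \DD$ and the set of nonzero principal prime ideals of $\OK$; in particular, $\#(\PP_K \cap \DD \cap \OK(L)) = \#\{\idealp \in |\Spec(\OK)|^{\PI} : \Nrm(\idealp) \leq L\}$. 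Theorem~\ref{theorem=Chebotarev} gives the asymptotic $\frac{1}{h} \cdot \frac{L}{\log L} (1 + o_{L \to \infty; K}(1))$ for this quantity, so for $M$ sufficiently large (depending on $\omom$ and $K$),
\[
\#(\PP_K \cap \DD \cap \OK(CM^n)) \geq \frac{1}{2h} \cdot \frac{CM^n}{\log(CM^n)} \geq \frac{C}{4hn} \cdot \frac{M^n}{\log M},
\]
which yields \eqref{eq:counting_inPPK} with $C_{\PP_K,\rmI}(\omom) \coloneqq \frac{C}{4hn}$ and a suitable threshold $M_{\PP_K,\rmI}(\omom)$.

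For the ``in particular'' clause, condition~\eqref{en:counting_infinite_X} of Corollary~\ref{corollary=package_infinite} with $\ideala = \OK$, $\bv = \omom$, $A = \PP_K$ is immediate from the just-established inequality. Condition~\eqref{en:logpseudo_infinite_X}, namely $\PP_K \in \logpseuOK$, is precisely the content of Theorem~\ref{theorem=logpseudorandom}. There is no real obstacle here; the only delicate point is ensuring that the fundamental-domain reduction is valid, but that is handled cleanly once $\DD$ is chosen to be NL-compatible, so that the $\lmugen$-box $\OK(\omom, M)$ absorbs the norm-ball $\OK(CM^n) \cap \DD$.
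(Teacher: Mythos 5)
Your proposal is correct and follows essentially the same route as the paper: the paper's proof simply cites Proposition~\ref{proposition=counting_below_DD} (which itself packages the chain NL-compatible domain $\Rightarrow$ Lemma~\ref{lemma=NLC-incl}~(1) $\Rightarrow$ bijection with principal prime ideals $\Rightarrow$ Theorem~\ref{theorem=Chebotarev}) and then observes $\PP_K\cap\DD\subseteq\PP_K$, whereas you unwind that chain explicitly. The constants differ harmlessly, and your handling of the ``in particular'' clause via Theorem~\ref{theorem=logpseudorandom} matches the paper.
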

\begin{proof}
First we will show \eqref{eq:counting_inPPK}. 
Take an NL-compatible $\OKt$-fundamental domain $\DD$; such a $\DD$ exists by Proposition~\ref{proposition=normrespectingfundamentaldomain}. 
Proposition~\ref{proposition=counting_below_DD} provides a constant $\tilde{C}=\tilde{C}(\omom,\DD)>0$ depending on  $\omom$ and $\DD$ such that for sufficiently large $M$, the inequality
\[
\#(\PP_K\cap \DD\cap \OK(\omom,M))\geq \tilde{C}\cdot \frac{M^n}{\log M}
\]
holds.
Then we obtain \eqref{eq:counting_inPPK} by observing that $\PP_K\cap \DD\subseteq \PP_K$.
The rest of the statement of Proposition~\ref{proposition=PK_kouri} now immediately follows from \eqref{eq:counting_inPPK} and Theorem~\ref{theorem=logpseudorandom}.
\end{proof}
Theorem~\ref{theorem=primeconstellationsfiniteagain_full} is a type~1 version of Theorem~\ref{mtheorem=primeconstellationsfinite}, that means, we do not go into the point whether an $S$-constellation admits an associate pair.
We will present the full proof of Theorem~\ref{mtheorem=primeconstellationsfinite} in the latter part of the present section; for this, we will develop an axiomatic framework for constellation theorems of type~2.
Note that, by Proposition~\ref{proposition=PK_kouri}, Corollary~\ref{corollary=package_infinite} implies the infinitary version of Theorem~\ref{theorem=primeconstellationsfiniteagain_full}.
For the reader's convenience, we sketch the proof of Theorem~\ref{theorem=primeconstellationsfiniteagain_full} itself. 
\begin{theorem}\label{theorem=primeconstellationsfiniteagain_full}
Let $K$ be a number field of degree $n$, and  $\omom$ an integral basis of $K$. Let  $S\subseteq\OK$ be a standard shape. Let $\delta>0$.
Then there exists a positive integer $M_{\mathrm{PES},\rmI}=M_{\mathrm{PES},\rmI}(\omom,\delta,S)$ depending only on $\omom$, $\delta$ and $S$ such that the following holds true:
if $M\geq M_{\mathrm{PES},\rmI}$ and if a subset $A$ of $\PP_K\cap\OK(\omom,M)$ satisfies
    \begin{align*}  %
        \#A\geq\delta\cdot\#(\PP_K \cap\OO_K(\omom,M)),
    \end{align*}
    then there exists an $S$-constellation in $A$.
    Furthermore, there exist a constant $\gamma=\gamma_{\mathrm{PES},\rmI}(\omom,\delta,S)>0$,  depending only on $\omom$, $\delta$ and $S$, such that in the setting above,
    \[
        \scrN_{S}(A)\geq\gamma\cdot \frac{M^{n+1}}{(\log M)^{\#S}}
    \]
    holds.
\end{theorem}
\begin{proof}
Recall that we have fixed $\chi$ to obtain 
$M_{\PP_K,\mathrm{S}\Psi_{\log}}(\rho,\omom, S)=M_{\PP_K,\mathrm{S}\Psi_{\log}}(\rho, \omom, \chi,S)$ 
in the proof of Theorem~\ref{theorem=logpseudorandom}.
Set $D\coloneqq\kappa\cdot(17(r+1)2^r\cdot c_{\chi})^{-1}$. Set $\rho\coloneqq\rho_{\rmI}(D,\omom,\delta\cdot C_{\PP_K,\rmI}(\omom),S)$. 
Define
\[
M_{\PP_K,\rmI}\coloneqq\max\{M_{\PP_K,\rmI}(\omom),M_{\PP_K,\mathrm{S}\Psi_{\log}}(\rho,\omom,S),M_{\rmI}(D,D,3/4,\omom,\delta\cdot C_{\PP_K,\rmI}(\omom),S)\}.
\]
Then, for $M\geq M_{\PP_K,\rmI}$ and for $A\subseteq \PP_K\cap\OK(\omom,M)$ with $\#A\geq\delta\cdot\#(\PP_K\cap\OK(\omom,M))$, Proposition~\ref{proposition=PK_kouri} implies that
\[
\#A\geq\delta\cdot C_{\PP_K,\rmI}(\omom)\cdot \frac{M^n}{\log M}.
\]
Hence by Theorem~\ref{theorem=logpseudorandom} and Lemma~\ref{lemma=logpseudorandom_subset}~\eqref{en:compatisubset}, $A$ satisfies the $(\rho,W,M,\omom,S)$-condition with parameters $(D,D,3/4)$, 
where $W=W_{\PP_K,\mathrm{S}\Psi_{\log}}(\rho,S)$.
Therefore, Theorem~\ref{theorem=package} applies to $A$, and we can find an $S$-constellation in $A$. 
Observing that $W$ is independent of $M$, we obtain the desired estimate of $\scrN_{S}(A)$.
\end{proof}

\begin{remark}
	In the seminal work \cite{Bloom-Sisask}, Bloom and Sisask have proved the existence of an absolute constant $c>0$ such that every $A\subseteq \NN$ with
	$$\limsup_{M \to \infty} \frac{\#(A \cap [M])}{M(\log M)^{-(1+c)}} >0$$ contains infinitely many $3$-APs. 
	Hence, in the case where $K=\QQ$ and $S=\{ -1,0,1\}$ is fixed, 
	condition~\eqref{en:logpseudo_infinite} in Theorem~\ref{theorem=package_infinite} is redundant.
\end{remark}

\subsection{Counting prime elements from above}\label{subsection=counting_above}In Subsection~\ref{subsection=Omega}, we will upgrade Theorem~\ref{theorem=primeconstellationsfiniteagain_full} to its type~2 version, Theorem~\ref{mtheorem=primeconstellationsfinite}. The key to this upgrading is counting prime elements from above. 
In this subsection, as a prototype of this counting, we will make an estimate of $\#(\PP_K\cap \OK(\omom,M))$ from above; see also Remark~\ref{remark=Mitsui}.

\begin{proposition}\label{proposition=prime_counting_above}
Let $\omom$ be an integral basis of $K$. 
Then there exist a constant $C_{\PP_K,\rmII}(\omom)>0$ and a positive integer $M_{\PP_K,\rmII}(\omom)$ such that for every $M\geq M_{\PP_K,\rmII}(\omom)$, the inequality
\[
\#(\PP_K\cap \OK(\omom,M))\leq C_{\PP_K,\rmII}(\omom) \cdot \frac{M^n}{\log M}
\]
holds true.
\end{proposition}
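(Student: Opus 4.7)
The plan is to pass from prime \emph{elements} to prime \emph{principal ideals} and then sum over these ideals, weighting each by the number of its associates that lie in $\OK(\omom,M)$. First, by Lemma~\ref{lemma=NLCreversed}, every $\alpha\in\PP_K\cap\OK(\omom,M)$ satisfies $\Nrm(\alpha)\leq\Theta^n M^n$. Each such $\alpha$ generates a prime principal ideal $(\alpha)$ of norm at most $\Theta^n M^n$, and conversely each prime principal ideal $\idealp$ corresponds to the $\OKt$-orbit of any generator $\alpha_\idealp$. Thus
\[
\#(\PP_K\cap\OK(\omom,M)) \;=\; \sum_{\idealp\in|\Spec(\OK)|^{\PI},\ \Nrm(\idealp)\leq\Theta^n M^n}\#\bigl(\OKt\cdot\alpha_\idealp\cap\OK(\omom,M)\bigr).
\]

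Next, I would invoke the \emph{precise} orbit-count Lemma~\ref{lemma=OKt_orbit}~\eqref{en:precisecounting} (rather than the rough version \eqref{en:roughcounting}): enlarging $\Theta^n$ to the constant $\Xi$ supplied by that lemma, each term on the right is at most
\[
\Xi\cdot\Bigl(\log\tfrac{\Xi M^n}{\Nrm(\idealp)}\Bigr)^{k},\qquad k=r_1+r_2-1.
\]
The rough bound $(\log M)^k$ would only yield $O(M^n(\log M)^{k-1})$, which is insufficient when $k\geq 2$; the point of using the precise bound is that prime ideals with norm close to the cap $\Xi M^n$ (where most of the mass lies by Landau) contribute orbits of size $O(1)$, not $O((\log M)^k)$.

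The main work, then, is to estimate the weighted sum
\[
\Sigma(L_0)\;\coloneqq\;\sum_{\Nrm(\idealp)\leq L_0}\Bigl(\log\tfrac{L_0}{\Nrm(\idealp)}\Bigr)^{k}\qquad\text{with }L_0=\Xi M^n,
\]
where the sum ranges over prime principal ideals. I would apply Abel summation with the counting function of prime (principal) ideals, using Landau's prime ideal theorem (Theorem~\ref{theorem=Landau}) in the form $\pi(t)=O(t/\log t)$ for $t\geq 2$. The boundary terms vanish because $(\log(L_0/t))^k$ is zero at $t=L_0$, reducing the task to bounding
\[
\int_2^{L_0}\frac{(\log(L_0/t))^{k-1}}{\log t}\,\rd t.
\]
The substitution $t=L_0^u$ converts this to $(\log L_0)^{k-1}\int L_0^u(1-u)^{k-1}/u\,\rd u$; because $L_0^u$ concentrates the mass near $u=1$, substituting $v=1-u$ and rescaling $w=v\log L_0$ turns the integrand into an incomplete Gamma integral, giving $\Sigma(L_0)=O_k(L_0/\log L_0)$.

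The conclusion then follows by plugging $L_0=\Xi M^n$ into this estimate: for $M$ large enough depending on $\omom$ and $K$,
\[
\#(\PP_K\cap\OK(\omom,M))\;\leq\;\Xi\cdot\Sigma(\Xi M^n)\;=\;O_{\omom,K}\!\left(\frac{M^n}{\log M}\right),
\]
yielding the desired constant $C_{\PP_K,\rmII}(\omom)$ and threshold $M_{\PP_K,\rmII}(\omom)$. The principal obstacle is the Abel-summation estimate above: the naive application yields an extra $(\log M)^{k-1}$ factor, and the payoff of $(\log M)^{-1}$ rather than $(\log M)^{k-1}$ comes entirely from exploiting the sharp orbit bound together with the concentration of the integral near the upper endpoint $t=L_0$.
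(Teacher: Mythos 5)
Your proposal is correct and follows essentially the same route as the paper: both reduce the count to a sum over prime principal ideals weighted by the orbit-count bound of Lemma~\ref{lemma=OKt_orbit}~\eqref{en:precisecounting}, apply Abel summation together with Landau's prime ideal theorem, and are left with the integral $\int_2^{L_0}(\log(L_0/t))^{k-1}(\log t)^{-1}\,\rd t$. The only difference is how that integral is bounded by $O_k(L_0/\log L_0)$ — you propose a substitution leading to an incomplete Gamma integral, while the paper isolates this as Lemma~\ref{lemma=integral} and proves it by induction on $k$ via integration by parts; both work.
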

Recall the definition of the logarithmic integral $\Li$: %
\begin{equation}\label{eq:def-of-log-int}
  \Li(L)\coloneqq\int_2^L\frac{\rd t}{\log t}=(1+o_{L\to\infty}(1))\cdot\frac{L}{\log L}.
  \end{equation}
The following lemma will be employed in the proof of Proposition~\ref{proposition=prime_counting_above}. 
\begin{lemma}\label{lemma=integral}
For every $k\in \ZZ_{\geq 0}$, there exists a constant $C_{\Li}(k)>0$ such that  for every $L,\eta\in\RR_{\geq 2}$, the inequalities
\begin{equation}\label{eq:int_iterated_2}
\int_{2}^{L}\frac{1}{\log t}\left\{\log\left(\frac{\eta L}{t}\right)\right\}^k \rd t \leq C_{\Li}(k)\cdot(\log \eta)^k\cdot \frac{L}{\log L}
\end{equation}
and
\begin{equation}\label{eq:int_iterated_1}
\int_{2}^{L}\frac{1}{\log t}\left\{\log\left(\frac{L}{t}\right)\right\}^k \rd t \leq C_{\Li}(k)\cdot \frac{L}{\log L}
\end{equation}
hold true.
\end{lemma}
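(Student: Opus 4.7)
The plan is to establish the simpler inequality \eqref{eq:int_iterated_1} first and then derive \eqref{eq:int_iterated_2} from it via a binomial expansion, since
\[
\log\!\left(\frac{\eta L}{t}\right) = \log\eta + \log\!\left(\frac{L}{t}\right).
\]

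For \eqref{eq:int_iterated_1}, I would perform the substitution $u=L/t$, which transforms the integral into
\[
\int_{2}^{L}\frac{1}{\log t}\left\{\log\!\left(\frac{L}{t}\right)\right\}^{k}\rd t \;=\; L\int_{1}^{L/2}\frac{(\log u)^{k}}{\log(L/u)}\cdot\frac{\rd u}{u^{2}},
\]
and then split the range at $u=\sqrt{L}$. On $[1,\sqrt{L}]$ the denominator satisfies $\log(L/u)\geq (\log L)/2$, so this part is bounded by
\[
\tfrac{2L}{\log L}\int_{1}^{\infty}\tfrac{(\log u)^{k}}{u^{2}}\rd u \;=\; O_{k}\!\left(\tfrac{L}{\log L}\right),
\]
using the elementary formula $\int_{1}^{\infty}(\log u)^{k}u^{-2}\rd u=k!$. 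On $[\sqrt{L},L/2]$ I would use the estimates $u^{-2}\leq L^{-1}$, $(\log u)^{k}\leq (\log L)^{k}$, and $\log(L/u)\geq\log 2$ to bound this part by $O_{k}(\sqrt{L}(\log L)^{k})$, which is $o(L/\log L)$. Combining, \eqref{eq:int_iterated_1} holds with a suitable $C_{\Li}(k)$.

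For \eqref{eq:int_iterated_2}, the binomial theorem gives
\[
\left\{\log\!\left(\tfrac{\eta L}{t}\right)\right\}^{k}=\sum_{j=0}^{k}\binom{k}{j}(\log\eta)^{k-j}\left\{\log\!\left(\tfrac{L}{t}\right)\right\}^{j},
\]
so integrating term by term and applying \eqref{eq:int_iterated_1} yields a bound of the form
\[
\left(\sum_{j=0}^{k}\binom{k}{j}(\log\eta)^{k-j}C_{\Li}(j)\right)\cdot\frac{L}{\log L} \;\leq\; C'(k)\,(1+\log\eta)^{k}\cdot\frac{L}{\log L}.
\]
To convert the factor $(1+\log\eta)^{k}$ to the claimed $(\log\eta)^{k}$, I would invoke the hypothesis $\eta\geq 2$, which gives $\log\eta\geq\log 2>0$ and hence $1+\log\eta\leq(1+1/\log 2)\log\eta$. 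Raising to the $k$th power and absorbing the resulting constant into $C_{\Li}(k)$ completes the deduction.

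There is no serious obstacle; the argument is essentially a routine manipulation of elementary integrals. The most delicate part is keeping track of the near-endpoint contribution around $t=L$ (equivalently $u\to 1$) in \eqref{eq:int_iterated_1}, which is handled by the split at $u=\sqrt{L}$ so that $\log(L/u)$ never gets smaller than a constant multiple of $\log L$ on the main piece. Everything else, including the passage from $(1+\log\eta)^{k}$ to $(\log\eta)^{k}$, is made possible by the uniform lower bound on $\eta$.
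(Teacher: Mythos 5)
Your argument is correct, but it runs in the opposite direction from the paper's. The paper proves \eqref{eq:int_iterated_2} first, by induction on $k$: integration by parts produces $\Li(L)\cdot(\log\eta)^k+k\int_2^L\frac{\Li(t)}{t}\{\log(\eta L/t)\}^{k-1}\rd t$, the inequality $(\log\eta)^k\geq(\log 2)(\log\eta)^{k-1}$ lets the induction close, and \eqref{eq:int_iterated_1} is then read off by specializing $\eta=e$. You instead prove \eqref{eq:int_iterated_1} directly via the substitution $u=L/t$ and a dyadic-style split at $u=\sqrt{L}$, and recover \eqref{eq:int_iterated_2} by expanding $\log(\eta L/t)=\log\eta+\log(L/t)$ binomially (legitimate, since all terms are nonnegative on $[2,L]$) and using $1+\log\eta\leq(1+1/\log 2)\log\eta$ for $\eta\geq 2$. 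Your route is more elementary and self-contained — it needs only $\int_1^\infty(\log u)^k u^{-2}\,\rd u=k!$ rather than the behaviour of $\Li$ — while the paper's induction is shorter on the page and handles the $\eta$-dependence in one stroke.

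Two small points to tighten. First, on $[\sqrt{L},L/2]$ the pointwise bound $u^{-2}\leq L^{-1}$ multiplied by the interval length only yields $O_k(L(\log L)^k)$, which is not $o(L/\log L)$; to get your claimed $O_k(\sqrt{L}(\log L)^k)$ you should instead integrate exactly, $\int_{\sqrt{L}}^{L/2}u^{-2}\,\rd u\leq L^{-1/2}$, after pulling out $(\log u)^k\leq(\log L)^k$ and $\log(L/u)\geq\log 2$. Second, the split at $u=\sqrt{L}$ presupposes $\sqrt{L}\leq L/2$, i.e.\ $L\geq 4$; for $2\leq L<4$ the integral is bounded by an absolute constant depending on $k$ while $L/\log L$ is bounded below, so the inequality holds trivially there. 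With these repairs the proof is complete.
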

\begin{proof}
First, we will prove \eqref{eq:int_iterated_2} by induction on $k$. For 
$k=0$, this follows from \eqref{eq:def-of-log-int}.
Now we proceed to the induction step. We will reduce the assertion for $k\geq 1$ to that for $k-1$. By integration by parts, we \havethat\ 
\begin{align*}
\int_{2}^{L}\frac{1}{\log t}\left\{\log\left(\frac{\eta L}{t}\right)\right\}^k \rd t=\Li(L)\cdot(\log\eta)^k+k\cdot \int_{2}^{L}\frac{\Li(t)}{t}\left\{\log\left(\frac{\eta L}{t}\right)\right\}^{k-1}\rd t.
\end{align*}
By noting that $(\log \eta)^{k}\geq (\log 2)(\log \eta)^{k-1}$, we can make the desired reduction; recall also  \eqref{eq:def-of-log-int}. Therefore, \eqref{eq:int_iterated_2} holds.

Next we will prove  \eqref{eq:int_iterated_1}. Apply  \eqref{eq:int_iterated_2} with $\eta=e>2$, and obtain
\[
\int_{2}^{L}\frac{1}{\log t}\left\{\log\left(\frac{eL}{t}\right)\right\}^k \rd t \leq C_{\Li}(k)\cdot \frac{L}{\log L}.
\]
Since $\int_{2}^{L}\frac{1}{\log t}\left\{\log\left(\frac{eL}{t}\right)\right\}^k \rd t\geq \int_{2}^{L}\frac{1}{\log t}\left\{\log\left(\frac{L}{t}\right)\right\}^k \rd t$, we conclude \eqref{eq:int_iterated_1}.
\end{proof}

\begin{proof}[Proof of Proposition~$\ref{proposition=prime_counting_above}$]
Let $r_1$ and $r_2$ be the numbers, respectively, of real embeddings and of imaginary embeddings. Let $k\coloneqq r_1+r_2-1$.
Take the constant $\Xi=\Xi(\omom)>0$ as in Lemma~\ref{lemma=OKt_orbit}~\eqref{en:precisecounting} in such a way that $C'\leq \Xi$, where $C'=C'(\omom)$ is as in \eqref{NLC}. In particular, for every $\pi\in \PP_K\cap \OK(\omom,M)$, we have $\Nrm(\pi)\in [2, \Xi M^n]_{\RR}\cap \ZZ$.
For each $t\in \ZZ_{\geq 2}$, set
\[
p_{K}(t)\coloneqq \#\{\idealp\in |\Spec(\OK)|^{\PI}\colon \Nrm(\idealp)=t\},
\]
and for every $L\in \RR_{\geq 2}$, define
\[
P_{K}(L)\coloneqq \sum_{t\in [2,L]_{\RR}\cap\ZZ}p_K(t).
\]
By  Theorem~\ref{theorem=Chebotarev}~\eqref{Landau}, there exists $C_{\mathrm{Lan}}=C_{\mathrm{Lan}}(K)>0$ such that for every $L\in\RR_{\geq 2}$,
\begin{equation}\label{eq:Landau}
P_K(L)\leq C_{\mathrm{Lan}}\cdot \frac{L}{\log L}
\end{equation}
holds true; this rough estimate suffices for the present proof.
If $k=0$, in other words, if $\#(\OKt)<\infty$, then \eqref{eq:Landau} already provides the desired estimate. In what follows, we treat the case where $k\geq 1$.

Focus on the map $\PP_K\cap \OK(\omom,M)\ni \pi \mapsto \pi\OK\in |\Spec(\OK)|^{\PI}$. By considering the multiplicities of this map, we derive the following inequality from Lemma~\ref{lemma=OKt_orbit} \eqref{en:precisecounting}:
\[
\#(\PP_K\cap \OK(\omom,M))\leq \Xi \cdot\sum_{t\in [2,\Xi M^n]_{\RR}\cap \ZZ} p_K(t) \left\{\log\left(\frac{\Xi M^n}{t}\right)\right\}^{k}.
\]
By Abel's summation formula (see \cite[Theorem~421]{Hardy-Wright}), this implies that
\[
\#(\PP_K\cap \OK(\omom,M))\leq k\Xi\cdot \int_{2}^{\Xi M^n}\frac{P_K(t)}{t}\left\{\log\left(\frac{\Xi M^n}{t}\right)\right\}^{k-1}\rd t.
\]
Recall \eqref{eq:Landau}, and apply \eqref{eq:int_iterated_1} with $k$ replaced by $k-1$ and with $L=\Xi M^n$. Then, we obtain the desired estimate for the case of $k\geq 1$. It completes our proof.
\end{proof}
\begin{remark}\label{remark=Mitsui}
Proposition~\ref{proposition=PK_kouri} and
Proposition~\ref{proposition=prime_counting_above} assert that 
\[
0<\liminf_{M\to \infty}\frac{\#(\PP_K\cap \OK(\omom,M))}{M^n(\log M)^{-1}}\quad \textrm{and}\quad  \limsup_{M\to \infty}\frac{\#(\PP_K\cap \OK(\omom,M))}{M^n(\log M)^{-1}}<\infty
\]
for a fixed $\omom$.
Mitsui's generalized prime number theorem \cite[Corollary on p.35]{Mitsui56} implies that the limit infimum and limit supremum coincide and the common limit is equal to $ 2^n / \kappa $ (where $2^n$ reflects the count $\# \OK (\omom ,M)= (2M+1)^n$).
See \cite[Theorem~2, 3]{Kuperberg-Rodgers-RodittyGershon20} for the details of this deduction.%
However, 
we chose not to use Mitsui's theorem
partly because 
its proof is considerably more involved than 
the total effort needed to establish the inequalities above.

\end{remark}
\subsection{Reduction to the case of a fixed fundamental domain}\label{subsection=Omega}
In this subsection, we present our axiomatic framework for constellation theorems \emph{of type~$2$}, i.e., constellation theorems that ensure the existence of constellations \emph{without associate pairs}. 
Recall that two non-zero elements in an ideal $\ideala $ are said to be \emph{associate} if they are in the same orbit of the multiplication action $\OKt\curvearrowright \ideala\setminus \{0\}$. 

The technical key here is Theorem~\ref{theorem=fundamental_Omega}; the 
main results
are
Theorem~\ref{theorem=package_DD} (finitary version) and Theorem~\ref{theorem=package_infinite_DD} (infinitary version). 

With the aid of them, Theorem~\ref{mtheorem=primeconstellationsfinite} and Theorem~\ref{theorem=primeconstellationsdensesemiprecise} will be established.
\begin{lemma}\label{lemma=Omega}
Assume Setting~$\ref{setting=package}$.
Let $\Omega>0$ and $M\in \RR_{\geq 1}$.
Assume that $A\subseteq \ideala(\bv,M)\setminus \{0\}$ satisfies
\begin{equation}\label{eq:half}
\#(A \setminus \OK(\Omega M^n))\geq \frac{1}{2}\cdot \#A.
\end{equation}
Then there exist a constant $c_{\Omega,\bv}\in (0,1]_{\RR}$, depending only on $\Omega$, $\bv$, and a subset $A_0\subseteq A\setminus\OK(\Omega M^n)$ such that the following hold true:
\begin{enumerate}[$(1)$]
\item\label{en:c_Omega} $\#A_0\geq c_{\Omega,\bv}\cdot \#A$,
\item\label{en:hidouhan} the subset $A_0$ admits no associate pairs.
\end{enumerate}
\end{lemma}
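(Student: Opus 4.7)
The plan is to apply Corollary~\ref{corollary=OKt_orbit_ideal}\eqref{en:precisecounting_ideal} to obtain an upper bound on $\#(\OKt\cdot\alpha\cap\ideala(\bv,M))$ that is independent of $M$ whenever $\alpha\in A\setminus\OK(\Omega M^n)$, and then to build $A_0$ as a greedy system of $\OKt$-orbit representatives inside $A\setminus\OK(\Omega M^n)$. Write $k\coloneqq r_1+r_2-1$ and let $\Xi(\bv)>0$ be the constant from Corollary~\ref{corollary=OKt_orbit_ideal}.

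First I would dispose of the trivial range of $\Omega$. By Lemma~\ref{lemma=NLCreversed} together with the comparison $\|\cdot\|_{\infty,\omom}\leq C_{\bv}\|\cdot\|_{\infty,\bv}$ that is implicit in the definition of $C_{\bv}$, there is a constant $\Omega_0=\Omega_0(\bv)>0$ such that every $\alpha\in\ideala(\bv,M)$ satisfies $\Nrm(\alpha)\leq\Omega_0 M^n$; moreover $\Omega_0<\Xi(\bv)$ by the very choice of $\Xi(\bv)$ in the proof of Lemma~\ref{lemma=OKt_orbit}. Consequently, if $\Omega>\Omega_0$ then $A\setminus\OK(\Omega M^n)=\varnothing$, so \eqref{eq:half} forces $\#A=0$ and any $c_{\Omega,\bv}\in(0,1]_{\RR}$ does the job with $A_0=\varnothing$. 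So from now on assume $\Omega\leq\Omega_0$.

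Next, for $M\geq 2$, every $\alpha\in A\setminus\OK(\Omega M^n)$ satisfies $\Omega M^n<\Nrm(\alpha)\leq\Xi(\bv)M^n$, and Corollary~\ref{corollary=OKt_orbit_ideal}\eqref{en:precisecounting_ideal} then yields the $M$-independent bound
\[
\#\bigl(\OKt\cdot\alpha\cap\ideala(\bv,M)\bigr)\leq \Xi(\bv)\bigl(\log(\Xi(\bv)/\Omega)\bigr)^{k}\eqqcolon N_{\Omega,\bv}.
\]
I would then carry out the greedy construction: set $A_1\coloneqq A\setminus\OK(\Omega M^n)$, and repeatedly pick any $\alpha\in A_1$, add it to $A_0$, and delete $\OKt\cdot\alpha\cap\ideala(\bv,M)$ from $A_1$. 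Each iteration adds one element to $A_0$ while removing at most $N_{\Omega,\bv}$ points from $A_1$, so
\[
\#A_0\;\geq\;\frac{\#(A\setminus\OK(\Omega M^n))}{N_{\Omega,\bv}}\;\geq\;\frac{\#A}{2N_{\Omega,\bv}}
\]
by \eqref{eq:half}, and $A_0$ is by construction a partial $\OKt$-transversal, hence admits no associate pairs. Setting $c_{\Omega,\bv}\coloneqq(2N_{\Omega,\bv})^{-1}$ then gives the lemma for $M\geq 2$; the residual range $M\in[1,2)_{\RR}$ is handled uniformly by observing that $\ideala(\bv,M)\subseteq\ideala(\bv,2)$ is a finite set whose orbit intersections are bounded purely in terms of $\bv$ (apply Corollary~\ref{corollary=OKt_orbit_ideal} at $M=2$), and then lowering $c_{\Omega,\bv}$ accordingly.

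The main obstacle is really the displayed $M$-independent orbit bound: it is essential that the logarithmic factor $\log(\Xi(\bv)M^n/\Nrm(\alpha))$ in Corollary~\ref{corollary=OKt_orbit_ideal}\eqref{en:precisecounting_ideal} collapses to the constant $\log(\Xi(\bv)/\Omega)$ under the hypothesis $\Nrm(\alpha)>\Omega M^n$. Using the coarser Corollary~\ref{corollary=OKt_orbit_ideal}\eqref{en:roughcounting_ideal} instead would produce a bound of size $(\log M)^{k}$, which would ruin the counting passage from Theorem~\ref{theorem=primeconstellationsfiniteagain_full} to Theorem~\ref{mtheorem=primeconstellationsfinite} for which this lemma is designed. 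Everything else is routine combinatorics.
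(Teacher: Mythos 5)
Your proof is correct and follows essentially the same route as the paper's: both apply Corollary~\ref{corollary=OKt_orbit_ideal}~\eqref{en:precisecounting_ideal} and use the hypothesis $\Nrm(\alpha)>\Omega M^n$ to collapse the logarithmic factor to the $M$-independent constant $\Xi(\bv)\bigl(\log(\Xi(\bv)/\Omega)\bigr)^{k}$, then take a transversal of the associate classes in $A\setminus\OK(\Omega M^n)$ (your greedy construction is just this), arriving at the same constant $c_{\Omega,\bv}$. Your extra care with the trivial range of $\Omega$ and with $M\in[1,2)_{\RR}$ is a harmless refinement the paper leaves implicit.
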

\begin{proof}
Let $k\coloneqq r_1+r_2-1$ be the rank of $\OKbar$. Take $\Xi=\Xi(\bv)>0$ as in Corollary~\ref{corollary=OKt_orbit_ideal}~\eqref{en:precisecounting_ideal}.
Then, for every $\alpha \in \ideala(\bv,M)\setminus (\{0\}\cup\OK(\Omega M^n))$, there exist at most $\Xi\cdot \left\{\log\left(\frac{\Xi}{\Omega}\right)\right\}^k$ elements in $\ideala(\bv,M)\setminus \{0\}$ which are associate to $\alpha$. Set 
\[
c_{\Omega,\bv}\coloneqq \frac{1}{2\Xi}\cdot \left\{\log\left(\frac{\Xi}{\Omega}\right)\right\}^{-k}.
\]
Consider the quotient set of $A\setminus \OK(\Omega M^n)$ by the equivalence relation of being associate. Take  a complete system of representatives for this, and write $A_0$ for it. Then, under \eqref{eq:half}, we have \eqref{en:c_Omega} with the constant $c_{\Omega,\bv}$ as defined above. By construction of $A_0$, \eqref{en:hidouhan} also holds.
\end{proof}
We remark that Lemma~\ref{lemma=Omega}~\eqref{en:hidouhan} may be rephrased as follows: there exists a fundamental domain $\DD$ for the action $\OKt\curvearrowright \ideala\setminus \{0\}$ such that $A_0\subseteq A\cap \DD$ holds. Also \eqref{en:c_Omega} asserts that the counting in $A$ is comparable to that in $A_0$, as long as $\Omega$ is fixed. Hence, in order to reduce the general case of $A$ to that of fixing a fundamental domain, it suffices to find $\Omega>0$ for $A$ in a certain controlled way.

The following proposition provides a criterion on $A$ for which such a controlled constant $\Omega>0$ exists.

\begin{proposition}\label{proposition=get_Omega}
Assume Setting~$\ref{setting=package}$. Let $\delta>0$ and $\Delta>0$.
Then, there exist a constant $\Omega=\Omega_{\mathrm{red}}(\bv,\delta,\Delta)>0$ and a positive integer $M_{\mathrm{red}}=M_{\mathrm{red}}(\bv,\delta,\Delta)$ such that for all $M\geq M_{\mathrm{red}}$, the following holds true:
if $A\subseteq \ideala(\bv,M)\setminus \{0\}$ satisfies that
\begin{equation}\label{eq:ideal_Delta}
\textrm{for all $L\in \RR_{\geq 2}$},\quad \# \{\alpha \OK \in \Ideals_K\colon \alpha \in A\cap \OK(L)\}\leq \Delta \cdot \frac{L}{\log L},
\end{equation}
then we \havethat\ 
\begin{equation}\label{eq:delta_Omega}
\#(A\cap \OK(\Omega M^n))\leq \delta\cdot \frac{M^n}{\log M}.
\end{equation}
\end{proposition}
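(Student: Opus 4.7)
The plan is to partition $A \cap \OK(\Omega M^n)$ into $\OKt$-orbits indexed by principal ideals, estimate each orbit's size via Corollary~\ref{corollary=OKt_orbit_ideal}, and then estimate the resulting sum over ideals by combining Abel's summation, the hypothesis \eqref{eq:ideal_Delta}, and Lemma~\ref{lemma=integral}. This will give a bound of the form $C(\bv,\Delta)\,\Omega(\log(1/\Omega))^k\,M^n/\log M$, where $k=r_1+r_2-1$, and $\Omega$ will then be chosen small.

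First I would fix the constant $\Xi(\bv)>0$ coming from Corollary~\ref{corollary=OKt_orbit_ideal}~\eqref{en:precisecounting_ideal} and impose from the outset $\Omega \le \Xi(\bv)$, so that every $\alpha \in A \cap \OK(\Omega M^n)$ satisfies $\alpha \in \ideala \cap \OK(\Xi(\bv)M^n)$. Partitioning by the principal ideal $\alpha\OK$ and noting that two elements of $\ideala\setminus\{0\}$ generate the same principal ideal iff they are associate, the partition class through $\alpha$ is contained in $\OKt\cdot\alpha \cap \ideala(\bv,M)$; by Corollary~\ref{corollary=OKt_orbit_ideal}~\eqref{en:precisecounting_ideal} its cardinality is at most
\[
\Xi(\bv)\cdot\Bigl(\log\tfrac{\Xi(\bv)M^n}{\Nrm(\alpha)}\Bigr)^{k}.
\]
Hence
\[
\#(A\cap\OK(\Omega M^n))\;\le\;\Xi(\bv)\cdot S,\qquad S\coloneqq\sum_{\mathfrak{p}\in\mathcal I}\Bigl(\log\tfrac{\Xi(\bv)M^n}{\Nrm(\mathfrak{p})}\Bigr)^{k},
\]
where $\mathcal I\coloneqq\{\alpha\OK:\alpha\in A\cap\OK(\Omega M^n)\}$; the (at most one) contribution from $\Nrm=1$ is absorbed as a negligible error $O_{\bv}((\log M)^k)$.

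Next, letting $P(L)\coloneqq\#\{\mathfrak{p}\in\mathcal I:\Nrm(\mathfrak{p})\le L\}$, the hypothesis \eqref{eq:ideal_Delta} gives $P(L)\le \Delta L/\log L$ for $L\ge 2$. Applying Abel's summation to $S$, with $L=\Omega M^n$ and $\eta=\Xi(\bv)/\Omega$ (so that $\eta L=\Xi(\bv)M^n$), yields
\[
S \;\le\; \Delta\cdot\frac{\Omega M^n}{\log(\Omega M^n)}\Bigl(\log\tfrac{\Xi(\bv)}{\Omega}\Bigr)^{k} \;+\; k\Delta\int_{2}^{\Omega M^n}\!\frac{1}{\log t}\Bigl(\log\tfrac{\eta L}{t}\Bigr)^{k-1}\rd t.
\]
Invoking Lemma~\ref{lemma=integral}~\eqref{eq:int_iterated_2} with this choice of $\eta$ and $L$ bounds the integral by $C_{\Li}(k-1)(\log\eta)^{k-1}\,L/\log L$. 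Thus, setting $C_{\Omega,\bv}\coloneqq(\log(\Xi(\bv)/\Omega))^{k}+kC_{\Li}(k-1)(\log(\Xi(\bv)/\Omega))^{k-1}$, we obtain
\[
S\;\le\;\Delta\,C_{\Omega,\bv}\cdot\frac{\Omega M^n}{\log(\Omega M^n)}.
\]
For $M$ large enough that $\log(\Omega M^n)\ge \tfrac{n}{2}\log M$ (this is where $M_{\mathrm{red}}$ enters and depends on $\Omega$, i.e.\ on $\bv,\delta,\Delta$), we conclude
\[
\#(A\cap\OK(\Omega M^n))\;\le\;\frac{3\Xi(\bv)\Delta}{n}\,\Omega\,C_{\Omega,\bv}\cdot\frac{M^n}{\log M}.
\]

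Since $\Omega\,C_{\Omega,\bv}\to 0$ as $\Omega\to 0^{+}$, we may choose $\Omega=\Omega_{\mathrm{red}}(\bv,\delta,\Delta)>0$ so that the prefactor is at most $\delta$, which yields \eqref{eq:delta_Omega}. The case $k=0$ (i.e.\ $\#\OKt<\infty$) is simpler: Corollary~\ref{corollary=OKt_orbit_ideal}~\eqref{en:precisecounting_ideal} bounds each orbit-intersection by the constant $\Xi(\bv)$, and the hypothesis \eqref{eq:ideal_Delta} directly gives the estimate with $\Omega\le n\delta/(2\Xi(\bv)\Delta)$. The only mildly delicate point is the joint dependence of the $(\log(1/\Omega))^{k}$ factor and the threshold $M_{\mathrm{red}}$ on $\Omega$; once $\Omega$ is fixed as above, $M_{\mathrm{red}}$ is determined by a simple lower bound needed to absorb $|\log\Omega|$ into $\log(\Omega M^n)$.
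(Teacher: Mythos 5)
Your proposal is correct and follows essentially the same route as the paper's proof: both decompose $A\cap\OK(\Omega M^n)$ along principal ideals, bound each $\OKt$-orbit intersection via Corollary~\ref{corollary=OKt_orbit_ideal}~\eqref{en:precisecounting_ideal}, apply Abel summation together with the hypothesis \eqref{eq:ideal_Delta} and Lemma~\ref{lemma=integral}~\eqref{eq:int_iterated_2} with $\eta L=\Xi(\bv)M^n$, treat the norm-one term and the case $k=0$ separately, and finally shrink $\Omega$ using $\Omega(\log(\Xi(\bv)/\Omega))^{k}\to 0$. The only cosmetic difference is that the paper carries the cutoff as a parameter $\theta$ and sums over integer norms with multiplicities $h_A(t)$ rather than over the set of distinct ideals, which changes nothing of substance.
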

Note that in the \counting\ in \eqref{eq:ideal_Delta}, we count \emph{ideals} instead of the elements themselves.
\begin{proof}
Let $k\coloneqq r_1+r_2-1$ be the rank of $\OKbar$.
For $k=0$, we can set
\[
\Omega\coloneqq\frac{\delta n}{2\#\mu(K)\cdot \Delta}.
\]
Indeed, \eqref{eq:ideal_Delta} directly ensures \eqref{eq:delta_Omega}; note that for a sufficiently large $M$ depending on $\delta$, $\Delta$ and $K$, we have $\Omega M^n\geq M^{n/2}$.

Hence, in what follows, we focus on the case of $k\geq 1$; in particular, we have $n\geq 2$ in this case. 
We will generalize the argument of the proof of Proposition~\ref{proposition=prime_counting_above} in the following manner. 
For each $t\in \NN$, define $h_A(t)\coloneqq \#\{\alpha\OK \in \Ideals_K \colon \alpha \in A,\ \Nrm(\alpha)=t\}$, and for each $L\in \RR_{\geq 1}$, set $H_A(L)\coloneqq \sum\limits_{t\in [1,L]_{\RR}\cap \ZZ} h_A(t)$.
Then, assumption \eqref{eq:ideal_Delta} is equivalent to saying that
\begin{equation}\label{eq:Delta_L}
\textrm{for all $L\in \RR_{\geq 2}$},\quad H_A(L)\leq \Delta \cdot \frac{L}{\log L}.
\end{equation}

Take a constant $\Xi=\Xi(\bv)$ as in Corollary~\ref{corollary=OKt_orbit_ideal}~\eqref{en:precisecounting_ideal} with $\Xi\geq 2$. 
Take a parameter $\theta \in (0,1)_{\RR}$; we will indicate its value later.
Then, Corollary~\ref{corollary=OKt_orbit_ideal}~\eqref{en:precisecounting_ideal} implies that
\begin{align}\label{eq:inequality-h_A}
\#(A\cap \OK(\theta  M^n))&\leq \Xi\cdot \sum_{t\in [1,\theta  M^n ]_{\RR}\cap \ZZ} h_A(t) \left\{\log \left(\frac{\Xi M^n}{t}\right)\right\}^k
\notag\\
&\leq \Xi\cdot \left(\left\{\log(\Xi M^n)\right\}^k+\sum_{t\in [2,\theta  M^n]_{\RR}\cap \ZZ} h_A(t) \left\{\log \left(\frac{\Xi M^n}{t}\right)\right\}^k\right).
\end{align}
Indeed, note that $h_A(1)\leq 1$.
Since $\{ \log (\Xi M^n )\} ^k$, as a function of $M$, has a smaller order than $M^n/\log M$, we \havethat\  for a sufficiently large $M$ depending on $\bv$,
\begin{equation}\label{eq:the-initial-term}
\Xi\cdot\{\log(\Xi M^n)\}^k\leq\frac{1}{2}\delta\cdot\frac{M^n}{\log M}.
\end{equation}
In what follows, we will make estimates of $\sum\limits_{t\in [2,\theta  M^n]_{\RR}\cap \ZZ} h_A(t) \left\{\log \left(\frac{\Xi M^n}{t}\right)\right\}^k$ appearing in \eqref{eq:inequality-h_A}. By Abel's summation formula, we \havethat\ 
\begin{align*}
&\sum_{t\in[2,\theta M^n]_{\RR}\cap\ZZ}h_A(t) \left\{\log\left(\frac{\Xi M^n}{t}\right)\right\}^k\\
&\leq H_A(\theta M^n)\left\{\log\left(\frac{\Xi}{\theta}\right)\right\}^k+k\cdot\int_2^{\theta M^n}\frac{H_A(t)}{t}\left\{\log\left(\frac{\Xi M^n}{t}\right)\right\}^{k-1}\rd t.
\end{align*}
By assumption \eqref{eq:Delta_L}, the value $\sum\limits_{t\in [2,\theta  M^n]_{\RR}\cap \ZZ} h_A(t) \left\{\log \left(\frac{\Xi M^n}{t}\right)\right\}^k$ does not exceed the following:
\[
\Delta\cdot\frac{\theta M^n}{\log (\theta M^n)}\left\{\log\left(\frac{\Xi}{\theta}\right)\right\}^k+k\Delta\cdot\int _2^{\theta M^n}\frac{1}{\log t}\left\{\log\left(\frac{\Xi M^n}t\right)\right\}^{k-1}\rd t.
\]
The second term is estimated as follows: apply \eqref{eq:int_iterated_2} with $k$ replaced by $k-1$ and with $\eta=\Xi/\theta$ and $L=\theta M^n$. Then, with the constant $C=kC_{\Li}(k-1)$, we \havethat\ 
\[
k\Delta\cdot\int _2^{\theta M^n}\frac{1}{\log t}\left\{\log\left(\frac{\Xi M^n}t\right)\right\}^{k-1}\rd t\leq C\Delta\cdot\frac{\theta M^n}{\log (\theta M^n)}\left\{\log\left(\frac{\Xi}{\theta}\right)\right\}^{k-1}.
\]
Hence, if $M\geq 1/\theta$, then we \obtainthat\ 
\begin{align*}
&\sum_{t\in[2,\theta M^n]_{\RR}\cap\ZZ}h_A(t) \left\{\log\left(\frac{\Xi M^n}{t}\right)\right\}^k\\
&\leq\frac{\theta\Delta}{n+\frac{\log\theta}{\log M}}\left(\left\{\log\left(\frac{\Xi}{\theta}\right)\right\}^k+C\left\{\log\left(\frac{\Xi}{\theta}\right)\right\}^{k-1}\right)\cdot\frac{M^n}{\log M}\\
&\leq\frac{\theta\Delta}{n-1}\left(\left\{\log\left(\frac{\Xi}{\theta}\right)\right\}^k+C\left\{\log\left(\frac{\Xi}{\theta}\right)\right\}^{k-1}\right)\cdot\frac{M^n}{\log M}.
\end{align*}
There exists a real number $\theta\in(0,1]_{\RR}$, depending on $\delta$, $\Delta$ and $\bv$, such that
\[
\frac{\theta\Delta}{n-1}\left(\left\{\log\left(\frac{\Xi}{\theta}\right)\right\}^k+C\left\{\log\left(\frac{\Xi}{\theta}\right)\right\}^{k-1}\right)\leq\frac{\delta}{2\Xi};
\]
we take such a small $\theta$, and write $\Omega$ for it.
Take  $M_{\mathrm{red}}\in \NN$ such that $M_{\mathrm{red}}\geq1/\Omega $ and that for all $M\geq M_{\mathrm{red}}$, \eqref{eq:the-initial-term} holds. Then, by \eqref{eq:inequality-h_A} and the arguments after this, we conclude that for every $M\geq M_{\mathrm{red}}$,
\[
\frac{\#(A\cap \OK(\Omega M^n))}{M^n(\log M)^{-1}}\leq\frac{1}{2}\delta+\frac{1}{2}\delta=\delta;
\]
this is the desired estimate. It completes our proof.
\end{proof}
Proposition~\ref{proposition=get_Omega} together with Lemma~\ref{lemma=Omega}  derives the following theorem.

\begin{theorem}[Reduction to the case of fixing a fundamental domain]\label{theorem=fundamental_Omega}
Assume Setting~$\ref{setting=package}$.
Let $\delta>0$ and $\Delta>0$.
Then there exist $\delta'=\delta'_{\mathrm{red}}(\bv,\delta,\Delta)>0$, $\Omega'=\Omega'_{\mathrm{red}}(\bv,\delta,\Delta)>0$ and $M'_{\mathrm{red}}=M'_{\mathrm{red}}(\bv,\delta,\Delta)\in\NN$ such that for every $M\geq M'_{\mathrm{red}}$, the following holds true.
Let $A\subseteq \ideala(\bv,M)\setminus \{0\}$ be a set that satisfies \eqref{eq:delta_log} and \eqref{eq:ideal_Delta}.
Then, there exist a fundamental domain $\DD$ for $\OKt\curvearrowright(\ideala\setminus\{0\})$ and a subset $A_0\subseteq\DD$ such that
\begin{equation}\label{eq:AcapDD}
\#A_0\geq \delta'\cdot \frac{M^n}{\log M}
\end{equation}
and
\begin{equation}\label{eq:Omega_norm}
A_0\subseteq A \setminus\OK(\Omega'M^n)
\end{equation}
hold true.
\end{theorem}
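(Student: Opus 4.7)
The plan is to deduce the theorem as a fairly direct consequence of Proposition~\ref{proposition=get_Omega} together with Corollary~\ref{corollary=OKt_orbit_ideal}~\eqref{en:precisecounting_ideal}, in essentially the spirit of Lemma~\ref{lemma=Omega}, except that I do not need to insist on the $1/2$-majority hypothesis of that lemma because the lower bound \eqref{eq:delta_log} together with Proposition~\ref{proposition=get_Omega} will directly give me a large complement.

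First I would apply Proposition~\ref{proposition=get_Omega} with $\delta/2$ in place of $\delta$ (keeping $\Delta$ unchanged). This produces $\Omega = \Omega_{\mathrm{red}}(\bv,\delta/2,\Delta)>0$ and $M_0=M_{\mathrm{red}}(\bv,\delta/2,\Delta)\in \NN$ such that, whenever $M\geq M_0$ and $A$ satisfies \eqref{eq:ideal_Delta}, one has $\#(A\cap\OK(\Omega M^n))\leq (\delta/2)\cdot M^n/\log M$. Set $\Omega':=\Omega$. Combined with \eqref{eq:delta_log} this yields
\[
\#(A\setminus\OK(\Omega' M^n))\;\geq\;\frac{\delta}{2}\cdot\frac{M^n}{\log M},
\]
which is exactly the density I will exploit on the complement of $\OK(\Omega'M^n)$; this already delivers the shape of condition \eqref{eq:Omega_norm}.

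Second, I would thin the set $A\setminus\OK(\Omega'M^n)$ down to a single representative per $\OKt$-orbit. The quantitative input is Corollary~\ref{corollary=OKt_orbit_ideal}~\eqref{en:precisecounting_ideal}: any $\alpha$ in this set satisfies $\Nrm(\alpha)>\Omega'M^n$ by construction, while $\Nrm(\alpha)\leq \Xi(\bv)M^n$ holds once $M$ is large enough (using $\ideala(\bv,M)\subseteq\OK(\omom,C_{\bv}M)$ together with Lemma~\ref{lemma=NLCreversed}, just as in the proof of that corollary). Hence each $\OKt$-orbit meets $A\setminus\OK(\Omega'M^n)$ in at most
\[
N_{\Omega',\bv}\;:=\;\Xi(\bv)\cdot\Bigl(\log\frac{\Xi(\bv)}{\Omega'}\Bigr)^{r_1+r_2-1}
\]
points. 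Picking one element from each orbit which does meet $A\setminus\OK(\Omega'M^n)$ yields a subset $A_0\subseteq A\setminus\OK(\Omega'M^n)$ admitting no associate pairs and of size
\[
\#A_0\;\geq\;\frac{1}{N_{\Omega',\bv}}\cdot\#(A\setminus\OK(\Omega'M^n))\;\geq\;\frac{\delta}{2\,N_{\Omega',\bv}}\cdot\frac{M^n}{\log M}.
\]
This forces the definition $\delta':=\delta/(2N_{\Omega',\bv})$, which indeed depends only on $\bv,\delta,\Delta$.

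Third, since each $\OKt$-orbit in $\ideala\setminus\{0\}$ meets $A_0$ in at most one point, I extend $A_0$ to a fundamental domain $\DD$ for the action $\OKt\curvearrowright\ideala\setminus\{0\}$ by selecting, for each orbit disjoint from $A_0$, one arbitrary representative. Then $A_0\subseteq\DD$, and \eqref{eq:AcapDD}, \eqref{eq:Omega_norm} both hold. Finally, I set $M'_{\mathrm{red}}$ to be the maximum of $M_0$ and whatever (mild) threshold guarantees the hypothesis $\alpha\in\OK(\Xi(\bv)M^n)$ needed to invoke Corollary~\ref{corollary=OKt_orbit_ideal}~\eqref{en:precisecounting_ideal}. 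I do not expect a genuine obstacle here: the heavy lifting is done inside Proposition~\ref{proposition=get_Omega}, and the remaining work is the orbit-counting bookkeeping above, together with checking that the constants really depend only on the permitted parameters $\bv,\delta,\Delta$.
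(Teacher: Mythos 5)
Your proposal is correct and follows essentially the same route as the paper: the paper likewise applies Proposition~\ref{proposition=get_Omega} with $\delta/2$ to get $\#(A\setminus\OK(\Omega'M^n))\geq\frac12\#A$, then invokes Lemma~\ref{lemma=Omega} (whose proof is exactly your orbit-counting step via Corollary~\ref{corollary=OKt_orbit_ideal}~\eqref{en:precisecounting_ideal}), and finally extends $A_0$ to a fundamental domain; your inlining of that lemma and your constant $\delta'=\delta/(2N_{\Omega',\bv})$ agree with the paper's $c_{\Omega',\bv}\cdot\delta$.
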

\begin{proof}
Take $\Omega'=\Omega'_{\mathrm{red}}(\bv,\delta,\Delta)\coloneqq \Omega_{\mathrm{red}}(\bv,\delta/2,\Delta)$ and $M'_{\mathrm{red}}(\bv,\delta,\Delta)\coloneqq M_{\mathrm{red}}(\bv,\delta/2,\Delta)$.
Let $A\subseteq \ideala(\bv,M)\setminus \{0\}$ be the set in the assertion of the theorem. 
Then, by assumption~\eqref{eq:ideal_Delta}, Proposition~\ref{proposition=get_Omega} implies that
\[
\#(A\cap \OK(\Omega'M^n))\leq \frac{\delta}{2}\cdot \frac{M^n}{\log M}.
\]
By combining this with assumption~\eqref{eq:delta_log}, we \havethat\ 
\[
\#(A\setminus \OK(\Omega'M^n))\geq \frac{1}{2}\cdot \# A.
\]
Therefore, we can apply Lemma~\ref{lemma=Omega}. This provides a subset $A_0\subseteq A\setminus\OK(\Omega'M^n)$ with $\#A_0\geq c_{\Omega}\cdot \#A$ such that $A_0$ admits no associate pairs.
This $A_0$ satisfies \eqref{eq:Omega_norm}.
Finally, to ensure \eqref{eq:AcapDD}, take an arbitrary fundamental domain $\DD$ for the $\OKt$-action in such a way that $A_0\subseteq \DD$ holds. Then, set $\delta'\coloneqq c_{\Omega',\bv}\cdot\delta$.
\end{proof}
Theorem~\ref{theorem=fundamental_Omega} enables us to upgrade axiomatized constellation theorems of type~1 to those of type~2 in the following manner. 
Recall here that we have three axiomatized constellation theorems of type~1: 
Theorem~\ref{theorem=package}, Theorem~\ref{theorem=package_infinite} and Corollary~\ref{corollary=package_infinite}.
\begin{theorem}\label{theorem=package_DD}
Assume Setting~$\ref{setting=package}$.
Let $S\subseteq\ideala$ be a standard shape.
Let $\delta>0$, $\Delta>0$, $D_1,D_2>0$, and  $\varepsilon\in (0,1)_{\RR}$.
Then, there exist a positive real number $\rho=\rho_{\rmII}(D_1,\bv,\delta,\Delta,S)>0$ and a positive integer $M_{\rmII}=M_{\rmII}(D_1,D_2,\varepsilon,\bv,\delta,\Delta,S)\in\NN$ such that the following holds true.
Assume that $M\geq M_{\rmII}$ and a set $A\subseteq  \ideala(\bv,M)\setminus\{0\}$ fulfill the following three conditions:
\begin{enumerate}[$(i)$]
 \item\label{en:counting_DD}the inequality
\[
\#A\geq \delta \cdot \frac{M^n}{\log M}
\]
holds, 
 \item\label{en:counting_ideal_DD}for every $L\in \RR_{\geq 2}$, 
\[
\#\{\alpha\OK\in \Ideals_K\colon \alpha \in A\cap \OK(L)\}\leq \Delta\cdot \frac{L}{\log L}
\]
holds,
 \item\label{en:logpseudorandom_DD} $A$ satisfies the \compati\ with parameters $(D_1,D_2,\varepsilon)$.
\end{enumerate}
Then, there exists an $S$-constellation without associate pairs in $A$. 
Furthermore, there exists $\gamma=\gamma_{\rmII}(D_1,D_2,\bv,\delta,\Delta,S)>0$ such that 
\[
\scrN_S^{\sharp}(A)\geq \gamma W^{-(n+1)}\cdot \frac{M^{n+1}}{(\log M)^{\# S}}
\]
holds true.
Here, $W$ is an integer appearing in the \compati, which comes from condition~\eqref{en:logpseudorandom_DD}.
\end{theorem}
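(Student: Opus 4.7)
The plan is to deduce Theorem~\ref{theorem=package_DD} directly from the type-1 axiomatization Theorem~\ref{theorem=package}, using Theorem~\ref{theorem=fundamental_Omega} to localize the problem inside an $\OKt$-fundamental domain. Conceptually, the two extra ingredients appearing in type-2 compared to type-1 are the ideal-counting hypothesis~(ii) and the stronger conclusion of ``no associate pairs'': the former is exactly what is needed to invoke Theorem~\ref{theorem=fundamental_Omega}, and the latter becomes automatic once we are inside a fundamental domain.

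First I would set $M'_{\mathrm{red}}=M'_{\mathrm{red}}(\bv,\delta,\Delta)$, $\delta'=\delta'_{\mathrm{red}}(\bv,\delta,\Delta)$ as produced by Theorem~\ref{theorem=fundamental_Omega}, and for $M\geq M'_{\mathrm{red}}$ apply that theorem to $A$: hypotheses (i) and (ii) of Theorem~\ref{theorem=package_DD} are precisely the two input hypotheses, so we obtain a fundamental domain $\DD$ for $\OKt\curvearrowright(\ideala\setminus\{0\})$ and a subset $A_0\subseteq\DD\cap A$ with
\[
\#A_0\geq\delta'\cdot\frac{M^n}{\log M}.
\]
(The inclusion $A_0\subseteq A\setminus\OK(\Omega'M^n)$ is not needed explicitly in what follows, but it is what makes the reduction possible inside the proof of Theorem~\ref{theorem=fundamental_Omega}.)

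Next, since $A_0\subseteq A$, Lemma~\ref{lemma=logpseudorandom_subset}~\eqref{en:compatisubset} shows that $A_0$ inherits from hypothesis~(iii) the \compati\ with the same parameters $(D_1,D_2,\varepsilon)$ and, crucially, with the same auxiliary integer $W$. I would then apply Theorem~\ref{theorem=package} to $A_0$ with density constant $\delta'$, setting
\[
\rho_{\rmII}(D_1,\bv,\delta,\Delta,S)\coloneqq\rho_{\rmI}(D_1,\bv,\delta',S),\qquad \gamma_{\rmII}(D_1,D_2,\bv,\delta,\Delta,S)\coloneqq\gamma_{\rmI}(D_1,D_2,\bv,\delta',S),
\]
and $M_{\rmII}(D_1,D_2,\varepsilon,\bv,\delta,\Delta,S)\coloneqq\max\{M'_{\mathrm{red}},\,M_{\rmI}(D_1,D_2,\varepsilon,\bv,\delta',S)\}$. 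This yields an $S$-constellation in $A_0$ together with the quantitative bound
\[
\scrN_S(A_0)\geq\gamma_{\rmII}\cdot\left(\frac{M}{W}\right)^{n+1}(\log M)^{-(r+1)}.
\]

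Finally, because $A_0$ is contained in the fundamental domain $\DD$, no two distinct elements of $A_0$ lie in the same $\OKt$-orbit, so any $S$-constellation in $A_0$ is automatically free of associate pairs. Thus every $S$-constellation in $A_0$ is an $S$-constellation without associate pairs in $A$, and $\scrN_S(A_0)=\scrN_S^{\sharp}(A_0)\leq\scrN_S^{\sharp}(A)$, giving the claimed lower bound. There is essentially no hard step left: the conceptual content has been packaged into Theorem~\ref{theorem=package} (which handles the pseudorandomness / $W$-trick / relative Szemer\'edi input) and Theorem~\ref{theorem=fundamental_Omega} (which handles the reduction to a fundamental domain via the geometry of numbers). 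The only point requiring a bit of care is the bookkeeping of parameter dependencies, in particular verifying that the output constants of Theorem~\ref{theorem=package}, once composed with the map $(\delta,\Delta)\mapsto\delta'_{\mathrm{red}}(\bv,\delta,\Delta)$, indeed depend on the variables advertised in the statement of Theorem~\ref{theorem=package_DD} and on nothing more.
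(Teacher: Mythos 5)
Your proposal is correct and follows essentially the same route as the paper: reduce to a fundamental domain via Theorem~\ref{theorem=fundamental_Omega}, inherit the \compati\ by Lemma~\ref{lemma=logpseudorandom_subset}~\eqref{en:compatisubset}, apply Theorem~\ref{theorem=package} to $A_0$ with density $\delta'$, and observe that $A_0\subseteq\DD$ forces $\scrN_S(A_0)=\scrN_S^{\sharp}(A_0)\leq\scrN_S^{\sharp}(A)$. Your choice $M_{\rmII}=\max\{M'_{\mathrm{red}},M_{\rmI}(\ldots,\delta',\ldots)\}$ is in fact slightly more careful than the paper's, which tacitly assumes $M\geq M'_{\mathrm{red}}$ so that Theorem~\ref{theorem=fundamental_Omega} applies.
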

\begin{theorem}\label{theorem=package_infinite_DD}
Assume Setting~$\ref{setting=package}$. Assume that $A\subseteq \ideala\setminus\{0\}$ fulfills the following three conditions:
\begin{enumerate}[$(i)$]
 \item\label{en:counting_infinite_DD} the inequality
\[
\limsup_{M\to \infty}\frac{\#(A\cap \ideala(\bv,M))}{M^n(\log M)^{-1}}>0
\]
holds, 
\item\label{en:counting_ideal_infinite_DD}there exists $\Delta>0$ such that for every $L\in \RR_{\geq 2}$,
\[
\#\{\alpha\OK\in \Ideals_K\colon \alpha \in A\cap \OK(L)\}\leq \Delta\cdot \frac{L}{\log L}
\]
holds,
 \item\label{en:logpseudo_infinite_DD}$A\in \logpseu$.
\end{enumerate}
Then, for every finite subset $S\subseteq \ideala$, there exists an $S$-constellation with no associate pairs in $A$.
\end{theorem}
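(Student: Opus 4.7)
The plan is to deduce Theorem~\ref{theorem=package_infinite_DD} from its finitary counterpart Theorem~\ref{theorem=package_DD}, following the template of the argument ``Theorem~\ref{theorem=package} $\Longrightarrow$ Theorem~\ref{theorem=package_infinite}'' given in the excerpt. Let $S\subseteq\ideala$ be an arbitrary finite subset; the case $\#S\leq 1$ is trivial, so assume $\#S\geq 2$. First I would enlarge $S$ to a standard shape $\tilde S\supseteq S$ via the inflating procedure described after Theorem~\ref{mtheorem=primeconstellationsfinite}: adjoining (the elements of) a $\ZZ$-basis of $\ideala$ to $S$ to obtain a generating set $S'$, then setting $\tilde S\coloneqq S'\cup(-S')\cup\{0\}$. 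Then $\tilde S$ contains $0$, is symmetric, generates $\ideala$ as a $\ZZ$-module, and satisfies $S\subseteq\tilde S$. The key observation is that any $\tilde S$-constellation $\alpha+k\tilde S\subseteq A$ without associate pairs restricts to an $S$-constellation $\alpha+kS\subseteq A$ with the same no-associate-pair property. Hence it suffices to produce a $\tilde S$-constellation in $A$ without associate pairs.

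Next I would collect the parameters. By hypothesis~\eqref{en:logpseudo_infinite_DD}, $A\in\logpseu$, so for the standard shape $\tilde S$ there exist $D_1,D_2>0$ and $\varepsilon\in(0,1)_{\RR}$, together with a threshold $M(\rho,\tilde S)$ for each $\rho>0$, such that $A\cap\ideala(\bv,M)$ satisfies the \compatiaomom\ replaced by the $(\rho,M,\bv,\tilde S)$-condition with parameters $(D_1,D_2,\varepsilon)$ whenever $M\geq M(\rho,\tilde S)$. Let $\Delta$ be the constant furnished by hypothesis~\eqref{en:counting_ideal_infinite_DD}, and choose $\delta>0$ strictly smaller than the limsup value in~\eqref{en:counting_infinite_DD} (which is assumed strictly positive). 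Invoke Theorem~\ref{theorem=package_DD} with the data $(\tilde S,\delta,\Delta,D_1,D_2,\varepsilon)$ to extract $\rho\coloneqq\rho_{\rmII}(D_1,\bv,\delta,\Delta,\tilde S)$ and the threshold $M_{\rmII}\coloneqq M_{\rmII}(D_1,D_2,\varepsilon,\bv,\delta,\Delta,\tilde S)$.

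Finally, by the choice of $\delta$, hypothesis~\eqref{en:counting_infinite_DD} provides arbitrarily large $M$ with $\#(A\cap\ideala(\bv,M))\geq\delta\cdot M^n/\log M$; select one such $M$ with $M\geq\max\{M_{\rmII},M(\rho,\tilde S)\}$. Set $A_M\coloneqq A\cap\ideala(\bv,M)\subseteq\ideala(\bv,M)\setminus\{0\}$. Then the three hypotheses of Theorem~\ref{theorem=package_DD} hold for $A_M$ and shape $\tilde S$: condition~\eqref{en:counting_DD} by the choice of $M$; condition~\eqref{en:counting_ideal_DD} because $A_M\subseteq A$ can only decrease the ideal count in~\eqref{en:counting_ideal_infinite_DD}; and condition~\eqref{en:logpseudorandom_DD} because $M\geq M(\rho,\tilde S)$. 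Theorem~\ref{theorem=package_DD} therefore produces a $\tilde S$-constellation in $A_M\subseteq A$ without associate pairs, which restricts to the desired $S$-constellation.

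This deduction amounts to routine bookkeeping, and there is no serious obstacle. The only step requiring care is the enlargement $S\rightsquigarrow\tilde S$, and its correctness is immediate from $S\subseteq\tilde S$: both membership in $A$ and the no-associate-pair property are inherited by subsets of a given constellation, so a $\tilde S$-constellation without associate pairs automatically contains an $S$-constellation without associate pairs.
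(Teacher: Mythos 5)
Your deduction is correct and is essentially the paper's own argument: the paper proves Theorem~\ref{theorem=package_infinite_DD} by the one-line remark that it follows from Theorem~\ref{theorem=package_DD} exactly as Theorem~\ref{theorem=package_infinite} follows from Theorem~\ref{theorem=package}, and your write-up is a faithful expansion of that template (parameters $(D_1,D_2,\varepsilon)$ from $A\in\logpseu$, then $\rho$ and $M_{\rmII}$ from the finitary theorem, then a large $M$ along the $\limsup$, plus the standard inflation of $S$ to a standard shape). The only cosmetic difference is that you take $\delta$ strictly below the $\limsup$ where the paper sets $\delta$ equal to it and then works with $\delta/2$; both are fine.
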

\begin{corollary}\label{corollary=package_infinite_DD}
Assume Setting~$\ref{setting=package}$.
Assume that $A\subseteq \ideala\setminus\{0\}$ fulfills the following three conditions:
\begin{enumerate}[$(i)$]
 \item\label{en:counting_infinite_DD_above_cor} the inequality
\[
\liminf_{M\to \infty}\frac{\#(A\cap \ideala(\bv,M))}{M^n(\log M)^{-1}}>0
\]
holds, 
\item\label{en:counting_ideal_infinite_DD_cor} condition~\eqref{en:counting_ideal_infinite_DD} in Theorem~$\ref{theorem=package_infinite_DD}$ is satisfied,
 \item\label{en:logpseudo_infinite_DD_cor}$A\in \logpseu$.
\end{enumerate}
Then, for every $A'\subseteq A$ with $\overline{d}_{A,\bv}(A')>0$, the following holds true: for every finite subset $S\subseteq \ideala$, there exists an $S$-constellation with no associate pairs in $A'$.
\end{corollary}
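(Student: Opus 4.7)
The plan is to reduce the corollary to Theorem~\ref{theorem=package_infinite_DD} by checking, for the given subset $A'\subseteq A$, each of the three hypotheses of that theorem in turn. The argument parallels the deduction of Corollary~\ref{corollary=package_infinite} from Theorem~\ref{theorem=package_infinite}, with one extra coordinate (the ideal-counting bound) to track; no new analytic input is needed beyond what is assembled earlier in the excerpt.

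First I would verify condition~\eqref{en:logpseudo_infinite_DD} of Theorem~\ref{theorem=package_infinite_DD} for $A'$. This is immediate from Lemma~\ref{lemma=logpseudorandom_subset}~\eqref{en:pseudosubset}, which shows that $\logpseu$ is closed under taking subsets. Next I would verify condition~\eqref{en:counting_ideal_infinite_DD}: since $A'\subseteq A$, the inclusion $\{\alpha\OK : \alpha\in A'\cap\OK(L)\}\subseteq\{\alpha\OK : \alpha\in A\cap\OK(L)\}$ yields the same bound $\Delta\cdot L/\log L$ with the same constant $\Delta$ supplied by hypothesis~\eqref{en:counting_ideal_infinite_DD_cor} of the corollary.

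The one substantive step is to upgrade the lower-density hypothesis \eqref{en:counting_infinite_DD_above_cor} on $A$ to the upper-density hypothesis~\eqref{en:counting_infinite_DD} of Theorem~\ref{theorem=package_infinite_DD} for $A'$. Set $c\coloneqq\liminf_{M\to\infty}\#(A\cap\ideala(\bv,M))\cdot(M^n/\log M)^{-1}>0$ and $\delta'\coloneqq\overline{d}_{A,\bv}(A')>0$. Choose a sequence $M_k\to\infty$ realizing the $\limsup$ in the definition of $\overline{d}_{A,\bv}(A')$, so that
\[
\frac{\#(A'\cap\ideala(\bv,M_k))}{\#(A\cap\ideala(\bv,M_k))}\longrightarrow \delta'.
\]
For $k$ large enough, $\#(A\cap\ideala(\bv,M_k))\geq (c/2)\cdot M_k^n/\log M_k$ by the definition of $\liminf$, and simultaneously the ratio above is at least $\delta'/2$. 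Multiplying the two inequalities gives
\[
\frac{\#(A'\cap\ideala(\bv,M_k))}{M_k^n/\log M_k}\;\geq\;\frac{c\,\delta'}{4}
\]
for all sufficiently large $k$, which forces $\limsup_{M\to\infty}\#(A'\cap\ideala(\bv,M))\cdot(M^n/\log M)^{-1}\geq c\delta'/4>0$, as required.

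Having verified all three hypotheses, Theorem~\ref{theorem=package_infinite_DD} applies to $A'$ and produces, for any given finite $S\subseteq\ideala$, an $S$-constellation in $A'$ without associate pairs. Since the hypotheses do not depend on $S$, this works uniformly over all finite shapes, completing the proof. The only step requiring any care is the density computation above, but it is routine once one unwinds the definitions of $\limsup$, $\liminf$, and $\overline{d}_{A,\bv}$; I do not foresee any genuine obstacle.
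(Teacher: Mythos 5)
Your proposal is correct and follows essentially the same route as the paper: the paper derives this corollary from Theorem~\ref{theorem=package_infinite_DD} exactly by noting that conditions \eqref{en:logpseudo_infinite_DD} and \eqref{en:counting_ideal_infinite_DD} inherit to subsets and that the lower-density hypothesis on $A$ combined with $\overline{d}_{A,\bv}(A')>0$ yields the upper-density hypothesis for $A'$ (mirroring the deduction of Corollary~\ref{corollary=package_infinite} from Theorem~\ref{theorem=package_infinite}). Your explicit $c\delta'/4$ computation simply spells out the density step that the paper leaves implicit.
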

\begin{remark}\label{remark=Delta_detekuru}
In the statement of Theorem~\ref{theorem=package_infinite_DD}, \eqref{en:counting_ideal_infinite_DD} implies that the limit suprimum in \eqref{en:counting_infinite_DD} is finite.
To verify this, run an argument similar to that of the proof of Proposition~\ref{proposition=prime_counting_above}; examine also the proof of Proposition~\ref{proposition=get_Omega}.
\end{remark}
\begin{proof}[Proofs of Theorem~$\ref{theorem=package_DD}$, Theorem~$\ref{theorem=package_infinite_DD}$ and Corollary~$\ref{corollary=package_infinite_DD}$]
First, we will prove Theorem~\ref{theorem=package_DD}.
Set
\begin{align*}
\rho_{\rmII}(D_1,\bv,\delta,\Delta,S)&\coloneqq\rho_{\rmI}(D_1,\bv,\delta',S),\\
M_{\rmII}(D_1,D_2,\varepsilon,\bv,\delta,\Delta,S)&\coloneqq M_{\rmI}(D_1,D_2,\varepsilon,\bv,\delta',S),\\
\gamma_{\rmII}(D_1,D_2,\bv,\delta,\Delta,S)&\coloneqq\gamma_{\rmI}(D_1,D_2,\bv,\delta',S);
\end{align*}
here $\delta'=\delta'_{\mathrm{red}}(\bv,\delta,\Delta)$ is the one as in Theorem~\ref{theorem=fundamental_Omega}.
Consider a set $A$ that fulfills the three conditions of Theorem~\ref{theorem=package_DD}. Take an integer $W$ appearing in an \compati, which comes from condition~\eqref{en:logpseudorandom_DD}.
By assumptions~\eqref{en:counting_DD} and \eqref{en:counting_ideal_DD}, Theorem~\ref{theorem=fundamental_Omega} applies. Hence, there exist a fundamental domain $\DD$ for the $\OKt$-action and a set $A_0\subseteq\DD$ such that \eqref{eq:AcapDD} holds.
Then, by Lemma~\ref{lemma=logpseudorandom_subset}~\eqref{en:compatisubset}, $A_0$ satisfies the \compatiW\  with parameters $(D_1,D_2,\varepsilon)$.
Therefore, we can apply Theorem~\ref{theorem=package} to this $A_0$ and $\delta'$, thus proving Theorem~\ref{theorem=package_DD}. Here, recall that since $A_0\subseteq\DD$, we have $\scrN_S^{\sharp}(A_0)=\scrN_S(A_0)$.

In a similar manner to the deduction of Theorem~\ref{theorem=package_infinite} from Theorem~\ref{theorem=package}, we can deduce Theorem~\ref{theorem=package_infinite_DD} from Theorem~\ref{theorem=package_DD}.
We can also derive Corollary~\ref{corollary=package_infinite_DD} from Theorem~\ref{theorem=package_infinite_DD} in a way similar to the deduction of Corollary~\ref{corollary=package_infinite} from Theorem~\ref{theorem=package_infinite}. Here, observe that assumption~\eqref{en:counting_ideal_infinite_DD_cor}  in Corollary~\ref{corollary=package_infinite_DD} on $A$ inherits to subsets.
\end{proof}
\begin{proposition}\label{proposition=Masani_Landau}
For a number field  $K$, $\PP_K\subseteq \OKnz$ satisfies the three conditions in Corollary~$\ref{corollary=package_infinite_DD}$.
\end{proposition}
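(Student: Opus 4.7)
The plan is to verify the three conditions of Corollary~\ref{corollary=package_infinite_DD} with $\ideala=\OK$ and $\bv=\omom$ by quoting exactly one earlier result for each. Condition~\eqref{en:logpseudo_infinite_DD_cor}, namely $\PP_K\in\logpseuOK$, is the content of Theorem~\ref{theorem=logpseudorandom}, so nothing more needs to be said about it. Condition~\eqref{en:counting_infinite_DD_above_cor} follows immediately from Proposition~\ref{proposition=PK_kouri}, which gives the stronger bound
\[
\#(\PP_K\cap\OK(\omom,M))\geq C_{\PP_K,\rmI}(\omom)\cdot\frac{M^n}{\log M}
\]
for $M\geq M_{\PP_K,\rmI}(\omom)$, and hence the relevant $\liminf$ is strictly positive.

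The only remaining condition is \eqref{en:counting_ideal_infinite_DD_cor}. Here I would argue as follows. For any $\alpha\in\PP_K\cap\OK(L)$, the ideal $\alpha\OK$ is a principal prime ideal and satisfies $\Nrm(\alpha\OK)=\Nrm(\alpha)\leq L$ by Definition~\ref{definition=norm} and Lemma~\ref{lemma=completemultiplicativity}. Therefore
\[
\{\alpha\OK\in\Ideals_K : \alpha\in\PP_K\cap\OK(L)\}\subseteq\{\idealp\in|\Spec(\OK)| : \Nrm(\idealp)\leq L\},
\]
and Landau's prime ideal theorem (Theorem~\ref{theorem=Landau}) furnishes a constant $\Delta=\Delta(K)>0$ for which the right-hand side has cardinality at most $\Delta\cdot L/\log L$ for every $L\geq 2$. (Alternatively, one could invoke only the weaker Chebotarev-type estimate of Theorem~\ref{theorem=Chebotarev} restricted to principal prime ideals.) This gives \eqref{en:counting_ideal_infinite_DD_cor}, completing the verification.

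There is essentially no obstacle: all three conditions have already been prepared in earlier subsections, and the short calculation above is the only glue needed. The only mild point to be careful about is to note that ``$\alpha\in\OK(L)$'' refers to the \emph{norm} scale (Definition~\ref{definition=norm}), so that $\Nrm(\alpha\OK)\leq L$ holds automatically and the comparison with Theorem~\ref{theorem=Landau} is immediate; no appeal to the NL-compatibility or to any counting with respect to $\lmugen$-length is required in this step.
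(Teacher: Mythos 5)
Your proof is correct and follows the same route as the paper: condition (i) via Proposition~\ref{proposition=PK_kouri}, condition (iii) via the membership $\PP_K\in\logpseuOK$, and condition (ii) by bounding the set of ideals $\alpha\OK$ by the set of all prime ideals of norm at most $L$ and invoking Landau's prime ideal theorem (Theorem~\ref{theorem=Landau}). The paper's own proof is exactly this three-line verification, so no further comment is needed.
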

\begin{proof}
We have already proved in Proposition~\ref{proposition=PK_kouri} that $\PP_K$ satisfies assumptions~\eqref{en:counting_infinite_DD_above_cor} and \eqref{en:logpseudo_infinite_DD_cor}.
By Theorem~\ref{theorem=Chebotarev}~\eqref{Landau}, $\PP_K$ satisfies assumption~\eqref{en:counting_ideal_infinite_DD_cor} as well.
\end{proof}
Now we are ready to complete the proofs of Theorem~\ref{mtheorem=primeconstellationsfinite} and Theorem~\ref{theorem=primeconstellationsdensesemiprecise}.
\begin{proof}[Proofs of Theorem~$\ref{mtheorem=primeconstellationsfinite}$ and Theorem~$\ref{theorem=primeconstellationsdensesemiprecise}$]
Theorem~$\ref{theorem=primeconstellationsdensesemiprecise}$ immediately follows from Proposition~\ref{proposition=Masani_Landau} and Corollary~\ref{corollary=package_infinite_DD}.
Recall that we have shown Theorem~\ref{theorem=primeconstellationsfiniteagain_full} from Proposition~\ref{proposition=PK_kouri}, Theorem~\ref{theorem=logpseudorandom} and Theorem~\ref{theorem=package}. 
In a manner similar to this, we can deduce Theorem~\ref{mtheorem=primeconstellationsfinite} from Proposition~\ref{proposition=Masani_Landau} by replacing Theorem~\ref{theorem=package} with Theorem~\ref{theorem=package_DD}. Here, for the estimate of $\scrN_S^{\sharp}(A)$, recall that we can take $W$ independently of $M$ by Theorem~\ref{theorem=logpseudorandom}.
\end{proof}
In the last part of this subsection, we will prove the following theorem; it may be regarded as the infinitary version of Theorem~\ref{theorem=fundamental_Omega}.
\begin{theorem}[Reduction to the case of fixing a fundamental domain, infinitary version]\label{theorem=fundamental_Omega_infinite}
Assume Setting~$\ref{setting=package}$. Assume that $A\subseteq \ideala\setminus \{0\}$ satisfies condition~\eqref{en:counting_ideal_infinite_DD} of Theorem~$\ref{theorem=package_infinite_DD}$.
Then the following hold true.
\begin{enumerate}[$(1)$]
\item\label{en:limsup} Assume that $A$ satisfies condition~\eqref{en:counting_infinite_DD} of Theorem~$\ref{theorem=package_infinite_DD}$. Then there exists an NL-compatible fundamental domain $\DD=\DD(A,\bv)$ for the action $\OKt\curvearrowright(\ideala\setminus\{0\})$ such that 
\begin{equation}\label{eq:limsup_X}
\limsup_{M\to \infty}\frac{\#(A\cap \DD\cap \ideala(\bv,M))}{M^n(\log M)^{-1}}>0
\end{equation}
holds true.
\item\label{en:liminf}
Assume that $A$ satisfies condition~\eqref{en:counting_infinite_DD_above_cor} in Corollary~$\ref{corollary=package_infinite_DD}$. Then there exists an NL-compatible fundamental domain $\DD'=\DD'(A,\bv)$ for the action $\OKt\curvearrowright(\ideala\setminus\{0\})$ such that 
\begin{equation}\label{eq:liminf_X}
\liminf_{M\to \infty}\frac{\#(A\cap \DD'\cap \ideala(\bv,M))}{M^n(\log M)^{-1}}>0
\end{equation}
holds true.
\end{enumerate}
\end{theorem}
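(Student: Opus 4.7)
The plan is to reduce to the finitary Theorem~\ref{theorem=fundamental_Omega} by observing that the constant it produces does not depend on $M$. Fix $\delta_0>0$ strictly less than the limsup value assumed in \eqref{en:limsup} (respectively the liminf value in \eqref{en:liminf}), and apply Proposition~\ref{proposition=get_Omega} with parameters $(\bv,\delta_0/2,\Delta)$ to obtain a fixed $\Omega'>0$ such that $\#(A\cap\OK(\Omega'M^n))\le(\delta_0/2)M^n/\log M$ for all sufficiently large $M$; setting
\[
A'_M\coloneqq A\cap\ideala(\bv,M)\setminus\OK(\Omega'M^n),
\]
we then have $\#A'_M\geq(\delta_0/2)M^n/\log M$ either along a subsequence $M_k\to\infty$ in case~\eqref{en:limsup} or for all large $M$ in case~\eqref{en:liminf}. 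The geometric crux is that there exists a bounded set $K_0=K_0(\Omega',\bv)\subseteq\HH$ with $\mathbf{P}_\HH(\calL(\alpha))\in K_0$ for every $\alpha\in A'_M$ and every $M$: the bound $\|\alpha\|_{\infty,\bv}\le M$ gives $|\sigma_i(\alpha)|\le\Theta_\bv M$ for every embedding $\sigma_i$ (where $\Theta_\bv$ is the analogue of \eqref{36e} for the basis $\bv$), while $\Nrm(\alpha)=\prod_i|\sigma_i(\alpha)|^{e_i}>\Omega'M^n$ forces each $|\sigma_i(\alpha)|$ to exceed a positive constant times $M$; hence $\calL(\alpha)-(\log M)u_0$ lies in a bounded subset of $\RR^{r_1+r_2}$, and applying $\mathbf{P}_\HH$ (which annihilates $u_0$) shows $\mathbf{P}_\HH\calL(\alpha)\in K_0$.

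Next I construct $\DD$. Fix any NL-compatible $\OKt$-fundamental domain $\DD_0$, for instance $\DD_K(\ee,\sigma)$ from Definition~\ref{definition=fundamentaldomainDDee}; for each $\OKt$-orbit $\mathcal{O}$ in $\ideala\setminus\{0\}$, write $\alpha_0(\mathcal{O})$ for its unique point in $\DD_0$. For each $\mathcal{O}$, if there exists $\beta\in\mathcal{O}\cap A$ with $\mathbf{P}_\HH\calL(\beta)\in K_0$, let $\alpha_\DD(\mathcal{O})$ be such a $\beta$ of minimum $\lmugen$-length $\|\cdot\|_{\infty,\bv}$ (with ties broken by any fixed rule); otherwise set $\alpha_\DD(\mathcal{O})\coloneqq\alpha_0(\mathcal{O})$. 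The choice in the first case is well-defined because $\mathbf{P}_\HH\calL$ restricted to any orbit is injective modulo $\mu(K)$ with image a translate of the lattice $\calL(\OKt)\subseteq\HH$ (Theorem~\ref{theorem=Dirichlet}), so only finitely many $\beta$'s satisfy $\mathbf{P}_\HH\calL(\beta)\in K_0$. Setting $\DD\coloneqq\{\alpha_\DD(\mathcal{O}):\mathcal{O}\}$, $\DD$ is a fundamental domain with $\mathbf{P}_\HH(\calL(\DD))\subseteq K_0\cup\mathbf{P}_\HH(\calL(\DD_0))$ bounded in $\HH$, so $\DD$ is NL-compatible by the geometric characterization in Theorem~\ref{theorem=NLcompatible}.

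For the counting, every $\alpha\in A'_M$ certifies that the orbit $\mathcal{O}=\OKt\cdot\alpha$ admits a point of $A$ with $\mathbf{P}_\HH\calL\in K_0$ and $\lmugen$-length $\le M$ (namely $\alpha$ itself), so $\alpha_\DD(\mathcal{O})$ is chosen according to the first case and satisfies $\|\alpha_\DD(\mathcal{O})\|_{\infty,\bv}\le M$. Thus $\alpha_\DD(\mathcal{O})\in A\cap\DD\cap\ideala(\bv,M)$, and the map $\alpha\mapsto\alpha_\DD(\OKt\cdot\alpha)$ from $A'_M$ to $A\cap\DD\cap\ideala(\bv,M)$ has fibers of size at most $\Xi(\bv)\bigl(\log(\Xi(\bv)/\Omega')\bigr)^{r_1+r_2-1}$ by Corollary~\ref{corollary=OKt_orbit_ideal}~\eqref{en:precisecounting_ideal} (possibly after enlarging $\Xi(\bv)$ to ensure $\Nrm(\alpha)\le\Xi(\bv)M^n$). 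Consequently $\#(A\cap\DD\cap\ideala(\bv,M))\ge\#A'_M/C$ for a constant $C=C(\Omega',\bv)$, giving \eqref{eq:limsup_X} in part~\eqref{en:limsup} and \eqref{eq:liminf_X} in part~\eqref{en:liminf}. The main obstacle is ensuring NL-compatibility of a single $\DD$ uniformly in $M$: the finitary Theorem~\ref{theorem=fundamental_Omega} allows $\DD$ to depend on $M$, whereas here we must confine the chosen representatives to a bounded $\HH$-region $K_0$ independent of $M$, which is precisely the content of the geometric lemma in the first paragraph.
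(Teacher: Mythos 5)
Your proof is correct, but it takes a genuinely different route from the paper's. The paper extracts a sequence of scales $M_l$, applies the finitary reduction (Theorem~\ref{theorem=fundamental_Omega}) at each scale to produce finite sets $A_0^{(l)}$ with no associate pairs whose norms are confined to the windows $(\Omega'M_l^n, C'M_l^n]$, arranges these windows to be pairwise disjoint (so that the union $A_0=\bigsqcup_l A_0^{(l)}$ still has no associate pairs and is NL-compatible), and then completes $A_0$ to a fundamental domain using a reference NL-compatible domain; for the liminf case the scales are chosen to tile $[M_1,\infty)$ multiplicatively so that an arbitrary $M$ is comparable to some $M_l$. You instead make the uniformity in $M$ explicit at the level of the logarithmic space: the two-sided norm bound $\Omega'M^n<\Nrm(\alpha)\le\Theta_\bv^nM^n$ combined with $\|\alpha\|_{\infty,\bv}\le M$ pins $\mathbf{P}_{\HH}\calL(\alpha)$ into a single bounded region $K_0\subseteq\HH$ independent of $M$ (this is exactly the mechanism behind Lemma~\ref{lem:36a} and Theorem~\ref{theorem=NLcompatible}), and you then define the fundamental domain orbit-by-orbit via a minimal-length selection among the finitely many orbit points of $A$ landing in $K_0$, defaulting to a reference domain otherwise. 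NL-compatibility is then immediate from the geometric characterization, and the fiber bound from Corollary~\ref{corollary=OKt_orbit_ideal}~\eqref{en:precisecounting_ideal} (constant, since the norm ratio $\Xi(\bv)M^n/\Nrm(\alpha)$ is bounded by $\Xi(\bv)/\Omega'$) closes the counting. What your approach buys is that the limsup and liminf cases are handled by literally the same construction, with no subsequence extraction or disjoint-window bookkeeping; what the paper's approach buys is that it reuses the already-proved finitary theorem as a black box and directly yields the additional structure that $A\cap\DD$ contains a disjoint union of associate-pair-free pieces. All the ingredients you invoke (the bounded-projection characterization, Dirichlet's unit theorem for the finiteness of orbit points in $K_0$, and the orbit-counting corollary) are available in the paper, so the argument is complete.
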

Note that the notion of the NL-compatibility is defined for subsets of $\OK$; in particular, it is defined for a subset of $\ideala$. Also recall that for a fixed integral basis $\omom$ of $K$, the restriction of $\|\cdot\|_{\infty,\omom}$ on $\ideala$ is bi-Lipschitz equivalent to $\|\cdot\|_{\infty,\bv}$.

\begin{proof}
There exists a constant $C'>0$ depending only on $\bv$ such that for every $\alpha\in \ideala\setminus \{0\}$, $\Nrm(\alpha)\leq C'\|\alpha\|_{\infty,\bv}^n$ holds. Indeed, this can be verified in a similar manner to the proof of Lemma~\ref{lemma=NLCreversed}. We fix such $C'>0$ in the present proof.

First, we will prove \eqref{en:limsup}.
Since condition~\eqref{en:counting_infinite_DD} in Theorem~$\ref{theorem=package_infinite_DD}$ is fulfilled, there exist a strictly increasing positive real sequence $(M_l)_{l\in \NN}$ with $\lim\limits_{l\to \infty}M_l=\infty$ and $\delta>0$ such that for every $l\in \NN$, the inequality
\[
\frac{\#(A\cap  \ideala(\bv,M_l))}{M_l^n(\log M_l)^{-1}}\geq\delta
\]
holds.
Take constants $\delta'=\delta'_{\mathrm{red}}(\bv,\delta,\Delta)>0$, $\Omega'=\Omega'_{\mathrm{red}}(\bv,\delta,\Delta)>0$ and $M'_{\mathrm{red}}=M'_{\mathrm{red}}(\bv,\delta,\Delta)\in \RR_{>0}$ as in Theorem~\ref{theorem=fundamental_Omega} associated with $\delta$ and with $\Delta$ appearing in condition~\eqref{en:counting_ideal_infinite_DD} of Theorem~$\ref{theorem=package_infinite_DD}$. By passing to a subsequence of $(M_l)_{l\in \NN}$ if necessary, we may assume that $M_1\geq M_{\mathrm{red}}$ and that for every $l\in \NN$, the inequality
\begin{equation}\label{eq:C'Omega}
M_{l+1}\geq \left(\frac{C'}{\Omega'}\right)^{\frac{1}{n}}M_l
\end{equation}
holds.
Note that \eqref{eq:C'Omega} implies that 
\begin{equation}\label{eq:C'Omega_seq}
\Omega'M_1^n<C'M_1^n\leq\Omega'M_2^n\leq C'M_2^n\leq\Omega'M_3^n\leq C'M_3^n\leq\Omega'M_4^n \leq\cdots. 
\end{equation}
Let $l\in \NN$. Apply Theorem~\ref{theorem=fundamental_Omega} to $A\cap\ideala(\bv,M_l)$; then we can find $A_0^{(l)}\subseteq A\cap \ideala(\bv,M_l)$ which admits no associate pairs such that
\begin{equation}\label{eq:A_0^l}
\#A_0^{(l)}\geq \delta'\cdot \frac{M_l^n}{\log M_l}
\end{equation}
and that for every $\alpha\in A_0^{(l)}$, the inequality
\begin{equation}\label{eq:A_0^lnorm}
\Omega'M_l^n<\Nrm(\alpha)\leq C' M_l^n
\end{equation}
holds.
Thus, we obtain a family of sets $(A_0^{(l)})_{l\in \NN}$. Set
\begin{equation}\label{eq:A_0_infinite}
A_0\coloneqq \bigsqcup_{l\in \NN}A_0^{(l)}.
\end{equation}
Here, we can show that the union in the right-hand side of \eqref{eq:A_0_infinite} is indeed a disjoint union in the following manner: by \eqref{eq:A_0^lnorm} and \eqref{eq:C'Omega_seq}, for distinct $l_1,l_2\in \NN$, the intersection of $\Nrm(A_0^{(l_1)})$ and $\Nrm(A_0^{(l_2)})$ is empty. This argument, furthermore, implies that $A_0$ admits no associate pairs.
By \eqref{eq:A_0^lnorm}, $A_0$ is NL-compatible; recall the remark after the statement of Theorem~\ref{theorem=fundamental_Omega_infinite}.
Take an NL-compatible $\OKt$-fundamental domain $\DD_0\subseteq \OKnz$ with the aid of Proposition~\ref{proposition=normrespectingfundamentaldomain}. Set $\DD_1\coloneqq (\DD_0\cap\ideala)\setminus(\OKt\cdot A_0)$ and $\DD\coloneqq A_0\sqcup \DD_1$.
Then since $A_0$ does not admit an associate pair, by construction of $\DD$, this $\DD$ is a fundamental domain for the action $\OKt\curvearrowright \ideala\setminus \{0\}$. Moreover, $\DD$ is NL-compatible: indeed, it is the union of  two NL-compatible sets $A_0$ and $\DD_1$.
By \eqref{eq:A_0^l} and  \eqref{eq:A_0_infinite}, we have \eqref{eq:limsup_X}. Therefore, we have proved \eqref{en:limsup}.

Secondly, we will show \eqref{en:liminf}.
By condition~\eqref{en:counting_infinite_DD_above_cor} in Corollary~$\ref{corollary=package_infinite_DD}$, there exists $\delta>0$ such that for every sufficiently large $M$, the inequality
\begin{equation}\label{eq:liminf_delta}
\frac{\#(A\cap \ideala(\bv,M))}{M^n(\log M)^{-1}}\geq \delta
\end{equation}
holds.
Take constants $\delta'=\delta'_{\mathrm{red}}(\bv,\delta,\Delta)>0$, $\Omega'=\Omega'_{\mathrm{red}}(\bv,\delta,\Delta)>0$ and $M'_0=M'_{\mathrm{red}}(\bv,\delta,\Delta)\in \RR_{>0}$ as in Theorem~\ref{theorem=fundamental_Omega} associated with this $\delta$ and $\Delta$ appearing in condition~\eqref{en:counting_ideal_infinite_DD} of Theorem~$\ref{theorem=package_infinite_DD}$. By replacing $M'_0$ with a bigger number if necessary, we may assume that for every $M\geq M'_0$, \eqref{eq:liminf_delta} holds.
Now, define a sequence $(M_l)_{l\in \NN}$ inductively as follows: set  $M_1\coloneqq M'_0$ and for each $l\in \NN$, set
\begin{equation}\label{eq:M_l_liminf}
M_{l+1}\coloneqq\left(\frac{C'}{\Omega'}\right)^{\frac{1}{n}}M_l.
\end{equation}
Note that \eqref{eq:C'Omega_seq} holds by construction.

From this sequence $(M_l)_{l\in \NN}$, construct $(A_0^{(l)})_{l\in \NN}$ and an NL-compatible fundamental domain $\DD'$ for $\OKt\curvearrowright \ideala\setminus \{0\}$ in the same manner as in the proof of \eqref{en:limsup}. What remains to verify is that $\DD'$ fulfills \eqref{eq:liminf_X}.
Take an arbitrary $M\geq M'_0$.
Then by \eqref{eq:M_l_liminf}, there exists $l\in \NN$ such that
\begin{equation}\label{eq:MtoM_l}
M_l\leq M< \left(\frac{C'}{\Omega'}\right)^{\frac{1}{n}} M_{l}
\end{equation}
holds. For this $l\in \NN$, we note that $A\cap \DD'\cap \ideala(\bv,M)\supseteq A_0^{(l)}$. Hence by  \eqref{eq:liminf_delta}, the definition of $\delta'$ and \eqref{eq:MtoM_l}, we conclude that
\[
\frac{\#(A\cap \DD'\cap \ideala(\bv,M))}{M^n(\log M)^{-1}}\geq \left(\frac{M_l}{M}\right)^n\cdot \frac{\log M}{\log M_l} \cdot \delta' \geq \frac{\Omega'}{C'}\cdot \delta',
\]
thus proving \eqref{eq:liminf_X}. It completes our proof.
\end{proof}
Here we state the reduction theorem again, which was mentioned as Theorem~\ref{theorem=no_DD_to_with_DD} in Section~\ref{section=organization}.
\begin{corollary}[{Theorem~$\ref{theorem=no_DD_to_with_DD}$, restated}]\label{corollary=no_DD_to_with_DD_re}
Let $K$ be a number field and  $\omom$ an integral basis.
Assume that $A\subseteq \PP_K$ satisfies $\overline{d}_{\PP_K,\omom}(A)>0$. Then there exists  an NL-compatible $\OKt$-fundamental domain $\DD=\DD(A,\omom)$ such that
\[
\overline{d}_{\PP_K\cap \DD,\omom}(A\cap \DD)>0
\]
holds.
\end{corollary}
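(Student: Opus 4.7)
The corollary is essentially a repackaging of Theorem~\ref{theorem=fundamental_Omega_infinite}~\eqref{en:limsup} in the special case $\ideala = \OK$, $\bv = \omom$, combined with the two-sided prime element counts. My plan is therefore to verify the two hypotheses of that theorem for $A \subseteq \PP_K$, apply it to produce an NL-compatible fundamental domain $\DD$, and then convert the resulting absolute lower bound on $\#(A \cap \DD \cap \OK(\omom,M))$ into a relative density bound by bounding $\#(\PP_K \cap \DD \cap \OK(\omom,M))$ from above.

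First I would verify condition~\eqref{en:counting_ideal_infinite_DD} of Theorem~\ref{theorem=package_infinite_DD}. For $\alpha \in A \subseteq \PP_K$, the ideal $\alpha\OK$ is a principal prime ideal. Hence
\[
\#\{\alpha\OK \in \Ideals_K : \alpha \in A \cap \OK(L)\} \leq \#\{\idealp \in |\Spec \OK|^{\PI} : \Nrm(\idealp) \leq L\},
\]
and by Theorem~\ref{theorem=Chebotarev} (or already by Landau's theorem, Theorem~\ref{theorem=Landau}) this last quantity is $O_K(L/\log L)$, which supplies a constant $\Delta$. Next, to verify condition~\eqref{en:counting_infinite_DD} of the same theorem, I use the hypothesis $\overline{d}_{\PP_K,\omom}(A) > 0$: along some sequence $M_l \to \infty$ we have $\#(A \cap \OK(\omom,M_l)) \geq \tfrac{1}{2}\overline{d}_{\PP_K,\omom}(A) \cdot \#(\PP_K \cap \OK(\omom,M_l))$, and Proposition~\ref{proposition=PK_kouri} gives $\#(\PP_K \cap \OK(\omom,M_l)) \geq C_{\PP_K,\rmI}(\omom) \cdot M_l^n/\log M_l$ for $l$ large. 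Combining these yields a positive constant $\delta$ with $\#(A \cap \OK(\omom,M_l)) \geq \delta \cdot M_l^n/\log M_l$ eventually, giving the required positivity of the limit superior.

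With both hypotheses verified, Theorem~\ref{theorem=fundamental_Omega_infinite}~\eqref{en:limsup} applies and produces an NL-compatible fundamental domain $\DD = \DD(A,\omom)$ for the action $\OKt \curvearrowright (\OKnz)$ such that
\[
\limsup_{M \to \infty} \frac{\#(A \cap \DD \cap \OK(\omom,M))}{M^n(\log M)^{-1}} > 0.
\]
To pass from this absolute count to the relative density against $\PP_K \cap \DD$, I would now invoke Proposition~\ref{proposition=prime_counting_above}: for every sufficiently large $M$,
\[
\#(\PP_K \cap \DD \cap \OK(\omom,M)) \leq \#(\PP_K \cap \OK(\omom,M)) \leq C_{\PP_K,\rmII}(\omom) \cdot \frac{M^n}{\log M}.
\]
Dividing the two displayed inequalities and taking $\limsup$ gives
\[
\overline{d}_{\PP_K \cap \DD, \omom}(A \cap \DD) \geq \frac{1}{C_{\PP_K,\rmII}(\omom)} \cdot \limsup_{M \to \infty} \frac{\#(A \cap \DD \cap \OK(\omom,M))}{M^n(\log M)^{-1}} > 0,
\]
which is the desired conclusion.

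Since the heavy lifting (the geometry-of-numbers reduction encoded in Theorem~\ref{theorem=fundamental_Omega_infinite}) is already available, there is no serious technical obstacle; the only thing to keep an eye on is making sure that the counting bounds used are of matching shape $M^n/\log M$ on both sides, so that the one-sided upper bound from Proposition~\ref{proposition=prime_counting_above} exactly cancels the denominator produced by the theorem. That matching is precisely why the ``$M^n/\log M$'' normalization appears throughout Section~\ref{section=maintheoremfull}, and this is the step worth double-checking when writing out the argument formally.
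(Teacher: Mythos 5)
Your proposal is correct and follows essentially the same route as the paper: the paper's own (very terse) proof likewise verifies the two-sided prime counts (Propositions~\ref{proposition=counting_below_DD}/\ref{proposition=PK_kouri} from below, Theorem~\ref{theorem=Landau} and Proposition~\ref{proposition=prime_counting_above} from above) and then applies Theorem~\ref{theorem=fundamental_Omega_infinite}~\eqref{en:limsup}. The conversion back to relative density via the upper bound $\#(\PP_K\cap\DD\cap\OK(\omom,M))\leq C_{\PP_K,\rmII}(\omom)\cdot M^n/\log M$ is exactly the intended final step.
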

\begin{proof}
Recall that we have estimates of the number of prime elements, both from above and below, from Proposition~\ref{proposition=counting_below_DD} and Proposition~\ref{proposition=prime_counting_above}. Then, apply Theorem~\ref{theorem=fundamental_Omega_infinite}~\eqref{en:limsup} to $A$.
\end{proof}
\begin{remark}\label{remark=toruno_toranaino}
  By Propositions~\ref{proposition=counting_below_DD}, \ref{proposition=prime_counting_above} and
  Corollary~\ref{corollary=no_DD_to_with_DD_re},
  constellation theorems in a given NL-compatible fundamental domain
  and ones without mention of NL-compatible domains are in fact equivalent.
%

\begin{comment}
  In the present paper, we have established Theorem~\ref{theorem=primeconstellationsdensesemiprecise} and Corollary~\ref{corollary=primeconstellationsupperdense}. The former treats the setting without fixing a fundamental domain, and the latter does that of fixing a fundamental domain. In this remark, we explain the ways of reducing one setting to the other setting, in both directions. Let $\DD$ be an NL-compatible $\OKt$-fundamental domain. Then by Propositions~\ref{proposition=counting_below_DD} and \ref{proposition=prime_counting_above}, for every $A\subseteq \PP_K \cap \DD$ with $\overline{d}_{\PP_K \cap \DD ,\omom }(A)>0$, we have $\overline{d}_{\PP_K ,\omom }(A)>0$. Thus, the setting of Corollary~\ref{corollary=primeconstellationsupperdense} may be reduced to that of Theorem~\ref{theorem=primeconstellationsdensesemiprecise}. Conversely, Corollary~\ref{corollary=no_DD_to_with_DD_re} enables us to reduce the setting of  Theorem~\ref{theorem=primeconstellationsdensesemiprecise} to that of Corollary~\ref{corollary=primeconstellationsupperdense}. 

In an argument similar to the one above, by switching from infinite to finitary versions, we have the following: in Theorem~\ref{mtheorem=primeconstellationsfinite} and Theorem~\ref{theorem=primeconstellationsfinite}, we may reduce one setting to the other, in both directions.
\end{comment}
\end{remark}
\section{Szemer\'{e}di-type theorems for short intervals}
\label{section=slidetrick}

The main goal of this section is to prove
the finitary version of the
Szemer\'{e}di-type theorem for short intervals in prime elements of number fields
(Theorem~\ref{mtheorem=TaoZieglergeneral}). 
For the rational field $\QQ$, 
a stronger form of the Green--Tao theorem for short intervals
is proved; see Theorem~\ref{theorem=primenumbertheoremshort} and Theorem~\ref{theorem=BanachGreenTao0.525}.

In Subsection~\ref{subsection=BanachGreenTao},
we first prove
a stronger form of the Green--Tao theorem for short intervals,
and later in Subsection~\ref{subsection=package_shortinterval_proof},
Theorem~\ref{mtheorem=TaoZieglergeneral} will be established.
A notable difference between the proof of 
Theorem~\ref{mtheorem=TaoZieglergeneral} and that of
Theorem~\ref{theorem=primenumbertheoremshort}, which arises
when $[K:\QQ]\geq2$, is overcome by sophisticated use of
the pigeonhole principle. This technique is commonly used in
combinatorics; we call this the \emph{slide trick}, and present
it in Subsection~\ref{subsection=slidetrick}. In Subsection~\ref{subsection=close_norm}, we prove Theorem~\ref{theorem=package_infinite_a_close} as an application of our constellation theorems for short intervals; Theorem~\ref{theorem=package_infinite_a_close} plays a key role in Section~\ref{section=quadraticform}.

\subsection{Statements of the theorems for short intervals}
\label{subsection=statement}
\begin{setting}\label{setting:short_theorems}
Let $K$ be a number field of degree $n$.
Let $\omom$ be an integral basis of $K$.
\end{setting}
Recall from Definition~\ref{definition=shortinterval} that
the $\lmugen$-interval $\OO_K(\omom,x,M)$ for $x\in\OK$
and $M\in\RR_{\geq 0}$ is defined as
\[
\OO_K(\omom,x,M)\coloneqq\{\alpha\in\OK : \|\alpha-x\|_{\infty,\omom}\leq M\}.
\]
We say this $\OO_K(\omom,x,M)$ is a `short interval' if
$M$ is sufficiently small compared to $\|x\|_{\infty,\omom}$.
Our concern in this section is to prove the existence of a
constellation in short intervals in the sense above. 
Informally speaking, this amounts to proving 
the following statements \eqref{en:weak} and \eqref{en:strong}. 
Note that, it is possible to formulate Szemer\'{e}di-type theorems
after suitable modifications. We state, however, 
as constellation theorems for the set $\PP_K$ for brevity.
Let $f\colon \RR_{>0}\to \RR_{\geq 0}$ be a monotonically non-decreasing
function which diverges to infinity slower than the identity
function $t\mapsto t$.
\begin{enumerate}[(I)]
\item\label{en:weak}
For every finite set $S\subseteq\OK$, there exists a 
sequence $(y_l)_{l\in\NN}$ in $\OK$ with
$\|y_l\|_{\infty,\omom}\to \infty$ such that, for each $l\in\NN$,
there exists an $S$-constellation in 
$\PP_K\cap \OO_K(\omom,y_l,f(\|y_l\|_{\infty,\omom}))$.
\item\label{en:strong}
For every finite set $S\subseteq\OK$, there exists $M\in\RR_{>0}$ such that,
for each $x\in \OO_K$ with $\|x\|_{\infty,\omom}\geq M$, 
there exists an $S$-constellation in 
$\PP_K\cap \OO_K(\omom,x,f(\|x\|_{\infty,\omom}))$.
\end{enumerate}
We refer to \eqref{en:strong} the \emph{strong form} for short intervals,
because the assertion \eqref{en:strong} implies \eqref{en:weak}.

For the case $K=\QQ$, we prove the strong form \eqref{en:strong}, as follows.

\begin{theorem}[The Green--Tao theorem for short intervals: strong version]\label{theorem=BanachGreenTao}
Let $a\in (0,1)_{\RR}$, and assume that there exists
$\delta>0$ such that, for a sufficiently large $M>0$,
\begin{equation}\label{eq:primecounting}
\#(\PP\cap [M,M+M^a]_{\RR})\geq \delta \cdot \frac{M^a}{\log M}
\end{equation}
holds. Then we have the following, where $k$ is an integer 
with $k\geq3$.
\begin{enumerate}[$(1)$]
\item\label{en:BGT1}
There exists $M_{\mathrm{GTSI}}=M_{\mathrm{GTSI}}(a,\delta,k)\in\NN$ 
depending only on $a,\delta$ and $k$ such that, for every 
$M\in\RR$ with $M\geq M_{\mathrm{GTSI}}$,
$\PP\cap [M,M+M^{a}]_{\RR}$ contains an arithmetic progression of length $k$.
\item\label{en:BGT2}
Moreover, there exists $\gamma=\gamma_{\mathrm{GTSI}}(a,\delta,k)>0$, depending only on $a,\delta$ and $k$, such that for every 
$M\in\RR$ with $M\geq M_{\mathrm{GTSI}}$,
\[
\mathscr{N}_{k}(\PP\cap [M,M+M^{a}]_{\RR})\geq\gamma\cdot \frac{M^{2a}}{(\log M)^{k}}
\]
holds. Here, for a finite subset $X\subseteq \ZZ$, $\mathscr{N}_k(X)$
denotes the number of arithmetic progressions $($as sets$)$ of length $k$
in $X$.
\end{enumerate}
\end{theorem}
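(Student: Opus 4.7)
The proof is a short-interval adaptation of the strategy in Section~\ref{section=positiveweighteddensity}, specialized to $K=\QQ$. Since $\QQ$ has class number one and finite unit group, all NL-compatibility and fundamental-domain considerations disappear; the essential new point is that the Goldston--Y\i ld\i r\i m type asymptotic formula Theorem~\ref{Th:Goldston_Yildirim} holds on an arbitrary product of intervals of length $\ge R^{4m+1}$, so we may localize the $W$-trick to the short interval $[M,M+M^a]_{\RR}$ provided $R$ is chosen small enough compared to $M^a/W$.

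Fix the shape $S=\{0,1,\ldots,k-1\}\subseteq\ZZ$ (so $r=k-1$) and a smooth truncation $\chi$ as in Definition~\ref{def:chi}. Following Setting~\ref{setting=section7-2} with $M$ replaced throughout by $M^a$, choose $f\colon\RR_{>0}\to\RR_{\ge 0}$ with $\lim_{t\to\infty}f(t)=\infty$ and $f(t)\le\tfrac12\log t$, and put $w=f(M)$, $W=\prod_{p\le w}p$, $R=M^{a/(17k\cdot 2^{k-1})}$, and $N=\lceil M^a/W\rceil$. Define $\lambda$ by \eqref{eq=lambda_0} and $\tilde\lambda_b=\frac{\varphi(W)}{W}\,\lambda\circ\Aff_{W,b}$ by \eqref{eq=lambda} for any $b\in\ZZ$ coprime to $W$. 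The proof of Theorem~\ref{theorem=pseudorandommeasure} applies verbatim and shows that for every $\rho>0$ there exists $M_0=M_0(f,\chi,k,\rho)$ such that $\tilde\lambda_b$ is $(\rho,N,S)$-pseudorandom whenever $M\ge M_0$, uniformly in $b$. Because $w\le\tfrac12\log M$, every prime in $[M,M+M^a]_{\RR}$ is coprime to $W$ and exceeds $R$, so partitioning $\PP\cap[M,M+M^a]_{\RR}$ over the $\varphi(W)$ invertible residues modulo $W$ and applying pigeonhole to~\eqref{eq:primecounting} yields $b\in\{0,1,\ldots,W-1\}$ with $\gcd(b,W)=1$ such that
\[
\#A'\ge\frac{\delta}{\varphi(W)}\cdot\frac{M^a}{\log M},\qquad A'\coloneqq\PP\cap[M,M+M^a]_{\RR}\cap(W\ZZ+b).
\]

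Put $N_0=\lceil(M-b)/W\rceil$ and $B=\Aff_{W,b}^{-1}(A')\cap[N_0,N_0+N]$. The identity $\lambda(\alpha)=\kappa\log R/c_\chi$ for primes $\alpha$ exceeding $R$, combined with the lower bound on $\#A'$, gives a uniform lower bound $\EE(\ichi_B\cdot\tilde\lambda_b\mid[N_0,N_0+N])\ge\tilde\delta$ for an absolute $\tilde\delta=\tilde\delta(\delta,k,\chi)>0$, by the same computation as in Proposition~\ref{proposition=positiverelativeweighteddensity}; the smallness condition $\EE(\ichi_B\cdot\tilde\lambda_b^{\,k}\mid[N_0,N_0+N])=o(N)$ is immediate from $\tilde\lambda_b\ll\log M$ and $N\ge M^{a/2}$. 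Since $S$ is not symmetric, I invoke the unsymmetric variant Theorem~\ref{theorem=weighted-counting2} of the relative multidimensional Szemer\'edi theorem, which produces a $k$-AP in $B$ and hence in $A'\subseteq\PP\cap[M,M+M^a]_{\RR}$, yielding~\eqref{en:BGT1}. For~\eqref{en:BGT2}, Theorem~\ref{theorem=weighted-counting2} further delivers at least $\gamma N^2(\log M)^{-k}$ weighted $k$-APs; choosing $f=\tfrac12\log^{\circ 3}$ and applying~\eqref{eq=W_N_f} gives $W\le(\log^{\circ 2}M)^{\log 2}=(\log M)^{o(1)}$, whence $N^2=M^{2a}(\log M)^{-o(1)}$ and the claimed bound $\gamma M^{2a}(\log M)^{-(k+o(1))}$ follows.

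The main difficulty is essentially bookkeeping: one must choose $R$, $W$, $N$ so that simultaneously $N\ge R^{4m+1}$ for all relevant $m\le k\cdot 2^{k-1}$ (for Goldston--Y\i ld\i r\i m), $W\le M^{o(a)}$ (so that the $1/\varphi(W)$ factor in the pigeonhole density lift is absorbed into the $o(1)$ of the final count), and the weighted-density and smallness conditions of Theorem~\ref{theorem=weighted-counting2} are both verified. Once $N\asymp M^a$ is secured, each step is a direct transplant of the arguments developed in Sections~\ref{section=relativeSzemeredi}--\ref{section=positiveweighteddensity}, with no new combinatorial or analytic input required.
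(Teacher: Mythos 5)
Your proposal follows essentially the same route as the paper's own proof (Proposition~\ref{proposition=SPsilog_QQ} plus the argument in Subsection~\ref{subsection=BanachGreenTao}): localize the $W$-trick to $[M,M+M^a]_{\RR}$, exploit that Theorem~\ref{Th:Goldston_Yildirim} holds on any box of side $\ge R^{4m+1}$, pigeonhole over invertible residues mod $W$, and invoke the unsymmetric counting version Theorem~\ref{theorem=weighted-counting2} because $S_k\neq -S_k$. The paper merely packages the pseudorandomness and $\log$-comparability into a separate proposition first; your minor deviations (working on $[N_0,N_0+N]$ rather than recentering $B$ inside $[-N,N]$ by absorbing $N_0$ into $b$, and suppressing the identity $\scrN_{S_k}(B)+\scrN_{-S_k}(B)=2\scrN_k(B)$) are cosmetic.

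One parameter choice, as literally written, would break the argument: you take $w=f(M)$ with $f(t)\le\tfrac12\log t$, which via \eqref{eq:Chebyshev-ineq} only gives $W\le M^{\log 2}$. For $a<\log 2$ this can exceed $M^a$, so $N=\lceil M^a/W\rceil$ degenerates and the requirement $N\ge R^{4m+1}$ (and your later claim $N\ge M^{a/2}$) fails. The cap must be $f(t)\le\tfrac{a}{2}\log t$ (equivalently, evaluate your $f$ at $M^a$ rather than at $M$, consistently with your phrase ``with $M$ replaced throughout by $M^a$''), yielding $W\le M^{a\log 2}<M^{3a/4}$ as in the paper. With that repair, and noting that your $\tilde\delta$ also depends on $a$, the proof goes through.
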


We prove Theorem~\ref{theorem=BanachGreenTao} in  Subsection~\ref{subsection=BanachGreenTao}.
As for the possible values of $a$ in
\eqref{eq:primecounting}, that is, 
`prime number theorem in short intervals,' a number of results are known; see \cite{Baker-Harman-Pintz2001} and references therein.
Among those, we mention the celebrated result of Baker--Harman--Pintz.

\begin{theorem}[{\cite[p.562]{Baker-Harman-Pintz2001}}]\label{theorem=primenumbertheoremshort}
For a sufficiently large real number $M$,
we have
\[
\#(\PP \cap [M,M+M^{0.525}]_{\RR})\geq \frac{9}{100}\cdot \frac{M^{0.525}}{\log M}
.\]
\end{theorem}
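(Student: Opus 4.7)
The plan is to follow the Baker--Harman--Pintz strategy, which is a sophisticated variant of Harman's sieve method combined with sharp Dirichlet polynomial mean value estimates and zero-density bounds for $\zeta(s)$. The overall target is to produce an asymptotic lower bound for $\psi(M+M^{0.525})-\psi(M)$ of the correct order $M^{0.525}$, from which the bound on $\#(\PP\cap[M,M+M^{0.525}]_{\RR})$ with the explicit constant $9/100$ follows by partial summation and by passing from prime powers to primes (the contribution of higher prime powers being negligible in an interval of this length).

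The first step is to set up the counting problem sievewise. I would write $\psi_{}(x,y)\coloneqq \sum_{x<n\leq x+y}\Lambda(n)$ and reduce the theorem, via an application of the Heath--Brown identity, to bounding linear combinations of sums of the form $\sum a_m b_n$ where $m,n$ range over intervals and $mn\in (M,M+M^{0.525}]_{\RR}$. These split into \emph{type I} sums (with one smooth variable) and \emph{type II} sums (bilinear in character). Type I sums up to a certain threshold are handled by standard tools (the Dirichlet divisor problem, exponential sum estimates of van der Corput/Vinogradov type). Type II sums are controlled via fourth-power mean value theorems for Dirichlet polynomials, and, crucially, by the zero-density estimate of the form $N(\sigma,T)\ll T^{c(1-\sigma)}(\log T)^{O(1)}$ with $c$ sufficiently close to $12/5$ on the relevant strip. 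This is where the exponent $0.525$ comes from: one must balance the ranges of $m$ and $n$ so that both type I and type II estimates remain non-trivial over the whole decomposition.

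The hardest step is the \emph{combinatorial identity phase}, i.e.\ the multiple Buchstab iterations used to reduce $\psi(M+M^{0.525})-\psi(M)$ to sums whose either type I or type II bounds are available. In the Baker--Harman--Pintz argument, one iterates Buchstab's identity several times, each time discarding non-negative pieces (a so-called \emph{asymptotic sieve}) and estimating the remaining `bad regions' by the sharpest available bilinear estimates; the region one cannot deal with must have measure less than the main term's weight, and only a careful linear-programming-style optimisation over the resulting decomposition (carried out explicitly in \cite{Baker-Harman-Pintz2001}) produces the admissible exponent $0.525$ together with a positive absolute constant greater than $9/100$ for the main term.

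Finally, having established $\psi(M+M^{0.525})-\psi(M)\geq c_0 M^{0.525}$ with $c_0>9/100$ for all large $M$, partial summation yields
\[
\#(\PP\cap[M,M+M^{0.525}]_{\RR})\,\log M\;\geq\;(1+o(1))\bigl(\psi(M+M^{0.525})-\psi(M)\bigr)\;\geq\;\tfrac{9}{100}M^{0.525},
\]
since the contribution of prime powers $p^k$ with $k\geq 2$ in an interval of length $M^{0.525}$ is $O(M^{0.525/2}\log M)$, hence negligible. This gives the stated bound for all sufficiently large $M$. Since the entire argument is long, quantitative, and not the main concern of the present paper, I would simply invoke \cite{Baker-Harman-Pintz2001} for the proof proper, noting that the only role this theorem plays for us is to supply hypothesis \eqref{eq:primecounting} in Theorem~\ref{theorem=BanachGreenTao} with the concrete exponent $a=0.525$ and $\delta=9/100$.
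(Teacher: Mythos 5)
Your proposal is correct and matches the paper's approach: the paper does not prove this statement but simply cites \cite{Baker-Harman-Pintz2001}, exactly as you ultimately do after your (accurate, high-level) sketch of the sieve-theoretic argument. The only thing to note is that the transition from $\psi$ to $\pi$ and the absorption of prime-power contributions that you sketch is already carried out in the cited reference, so nothing further is needed here.
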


This theorem implies the following unconditional result:

\begin{theorem}\label{theorem=BanachGreenTao0.525}
Assertions \eqref{en:BGT1} and \eqref{en:BGT2} of
Theorem~$\ref{theorem=BanachGreenTao}$ hold for
$a=0.525$ and $\delta=0.09$.
\end{theorem}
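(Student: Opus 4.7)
My plan is to derive Theorem~\ref{theorem=BanachGreenTao0.525} as an immediate specialization of the conditional result Theorem~\ref{theorem=BanachGreenTao} to the parameter pair $(a,\delta) = (0.525, 0.09)$, using Theorem~\ref{theorem=primenumbertheoremshort} to supply the required hypothesis. Since Theorem~\ref{theorem=BanachGreenTao} is already stated in the form ``if the short-interval prime counting estimate \eqref{eq:primecounting} holds with parameters $a\in(0,1)_{\RR}$ and $\delta>0$, then both conclusions \eqref{en:BGT1} and \eqref{en:BGT2} hold,'' nothing is left but to verify the hypothesis for the declared values.

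Concretely, I would first fix $a = 0.525$ and $\delta = \frac{9}{100} = 0.09$. The set $\PP \cap [M,M+M^{0.525}]_{\RR}$ appearing in Theorem~\ref{theorem=primenumbertheoremshort} is literally the set whose cardinality is bounded below in \eqref{eq:primecounting} for this choice of $a$, and the Baker--Harman--Pintz estimate reads exactly
\[
\#(\PP \cap [M,M+M^{0.525}]_{\RR}) \geq \frac{9}{100} \cdot \frac{M^{0.525}}{\log M}
\]
for every sufficiently large $M$, which is \eqref{eq:primecounting} with $(a,\delta) = (0.525, 0.09)$. I would then invoke Theorem~\ref{theorem=BanachGreenTao} to obtain \eqref{en:BGT1} and \eqref{en:BGT2} for these parameters; the thresholds $M_{\mathrm{GTSI}}(0.525, 0.09, k)$, $M^{\mathscr{N}}_{\mathrm{GTSI}}(0.525, 0.09, k)$ and the constant $\gamma^{\mathscr{N}}_{\mathrm{GTSI}}(0.525, 0.09, k)$ produced in this way depend only on the length $k$ of the arithmetic progression.

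There is essentially no obstacle within this deduction itself, and the whole proof amounts to matching parameters between two previously established statements. All the mathematical substance is carried by the two quoted results: Theorem~\ref{theorem=BanachGreenTao}, where the combinatorial heavy lifting occurs via the axiomatic framework of Sections~\ref{section=maintheoremfull} and this section (in particular the slide trick together with the relative multidimensional Szemer\'{e}di theorem), and Theorem~\ref{theorem=primenumbertheoremshort}, which packages the deep analytic sieve work of Baker--Harman--Pintz on primes in short intervals.
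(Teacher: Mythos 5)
Your proposal is correct and coincides with the paper's own (implicit) argument: the paper states Theorem~\ref{theorem=BanachGreenTao0.525} immediately after quoting the Baker--Harman--Pintz bound, precisely because that bound is hypothesis \eqref{eq:primecounting} with $(a,\delta)=(0.525,0.09)$, so Theorem~\ref{theorem=BanachGreenTao} applies verbatim. Nothing further is needed.
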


The strong form for short version, that is,
the assertion of Theorem~\ref{theorem=BanachGreenTao} 
makes sense only for real numbers $a$
satisfying \eqref{eq:primecounting}. This requires
deep results in the theory of distribution of primes.
Under the Riemann hypothesis, we can ensure
\eqref{eq:primecounting} for the range $a>\frac12$.

For a general number field $K$, we are content with the
assertion \eqref{en:weak}, which is weaker than the strong form 
\eqref{en:strong}. While no results are needed from 
the theory of distribution of primes, we need an additional
argument beyond those done in Section~\ref{section=maintheoremfull},
namely, the slide trick.

\begin{theorem}[Theorem~\ref{mtheorem=TaoZieglergeneral}, restated]\label{theorem=shortinterval}
Let $K$ be a number field and $\omom$ an integral basis of $K$.
Let $\delta$ be a positive real number and $S$ a finite subset of $\OK$.
Take a real number $a$ with $0<a<1$.
Then the following hold.
\begin{enumerate}[$(1)$]
\item\label{en:tan_seiza} There exist a positive integer 
$M_{\mathrm{PESSI}}=M_{\mathrm{PESSI}}(\omom,\delta,S,a)
$, depending on $\omom,\delta,S$ and $a$, and a positive real number $\etaUpsilon = \etaUpsilon_{\mathrm{PESSI}}(\omom,\delta)>0$, depending only on $\omom$ and $\delta$, such that the following holds: if $M\geq M_{\mathrm{PESSI}}$ and a subset $A$ of $\PP_K\cap\OO_K(\omom,M)$ satisfies
\begin{equation}\label{eq:condition_for_A_thmB_again}
\#A\geq\delta\cdot\#(\PP_K \cap\OO_K(\omom,M)),
\end{equation}
then there exists $x\in A$ with

\begin{equation}\label{eq=condition_for_x_in_short_interval}
\etaUpsilon M\leq \|x\|_{\infty,\omom} \leq M
\end{equation}
such that $A\cap\OK(\omom,x,\|x\|_{\infty,\omom}^a)$
contains an $S$-constellation without associate pairs.
\item\label{en:tan_seiza_kosuu} If $S$ is a standard shape, then there exists a constant $\gamma
=\gamma_{\mathrm{PESSI}}(\omom,\delta,S,a)>0$,  
depending on $\omom,\delta,S$ and $a$, such that the following holds: if $M\geq M_{\mathrm{PESSI}}$ 
and a subset $A$ of $\PP_K\cap\OO_K(\omom,M)$ satisfies \eqref{eq:condition_for_A_thmB_again}, then there exists $x\in A$ with \eqref{eq=condition_for_x_in_short_interval} which satisfies
\[
\mathscr{N}_S^{\sharp}(A \cap \OO_K(\omom,x,\|x\|_{\infty,\omom}^a))\geq \gamma \cdot  \frac{M^{a(n+1)}}{ (\log M)^{\#S}}.
\]
\end{enumerate}
\end{theorem}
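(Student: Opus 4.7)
The plan is to combine the pseudorandom-measure machinery of Section~\ref{section=positiveweighteddensity} with a double pigeonhole argument (the \emph{slide trick}) that localises to a short interval centred at a well-chosen $x\in A$. Both items of the theorem will be produced by a single argument, with the choice of the parameter $w$ dictating whether one obtains mere existence (take $w=\tfrac12\log M$ for \eqref{en:tan_seiza}) or the quantitative count (take $w=\tfrac12\log^{\circ 3}M$ for \eqref{en:tan_seiza_kosuu}), exactly as in the dichotomy between Theorem~\ref{theorem=primeconstellationsfiniteagain} and its counting companion.

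The first step is to trim $A$ and perform the slide trick. Adopt Setting~\ref{setting=section7-2}, but rescale the truncation parameter to $R:=M^{a/(17(r+1)2^{r})}$ so that the Goldston--Y\i ld\i r\i m condition $R^{4m+1}\leq L$ remains valid once the ambient interval has $\lmugen$-length $L\sim M^{a}$ rather than $M$. Combining Propositions~\ref{proposition=PK_kouri} and \ref{proposition=prime_counting_above} with hypothesis \eqref{eq:condition_for_A_thmB_again}, discarding both the exceptional set $T$ of Lemma~\ref{lemma=jogai} and the inner ball $\OK(\omom,M/\log^{\circ 2}M)$, the trimmed set
\[
A':=A\setminus\bigl(T\cup\OK(\omom,M/\log^{\circ 2}M)\bigr)
\]
still satisfies $\#A'\geq\delta'\cdot M^{n}/\log M$ for some $\delta'=\delta'(\omom,\delta)>0$, and each of its elements is prime to $W$ with $\lmugen$-length in $[M/\log^{\circ 2}M,\,M]$. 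Put $L:=(M/\log^{\circ 2}M)^{a}$, cover $\OK(\omom,M)$ by $O((M/L)^{n})$ boxes of $\lmugen$-length $L/2$, and refine this partition by the $\vph_{K}(W)$ units of $\OK/W\OK$. The pigeonhole principle then yields some $y\in\OK$ and some $\overline{b}\in(\OK/W\OK)^{\times}$ with
\[
\#\bigl(A'\cap\OK(\omom,y,L/2)\cap\overline{b}\bigr)\ \gtrsim\ \frac{L^{n}}{\vph_{K}(W)\log M},
\]
the implicit constant depending only on $\omom,\delta,S$. Picking any element $x$ of this intersection as the centre of the short interval, one has $x\in A'$, $\|x\|_{\infty,\omom}\geq M/\log^{\circ 2}M$ (so that $\|x\|_{\infty,\omom}^{a}\geq L$), $x\equiv b\pmod{W}$, and $\OK(\omom,y,L/2)\subseteq\OK(\omom,x,L)\subseteq\OK(\omom,x,\|x\|_{\infty,\omom}^{a})$ by the triangle inequality. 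This is the slide trick in our setting: the two pigeonhole steps simultaneously fix both the centre $x$ of the short interval and the residue class $\overline{b}$ required for the $W$-trick.

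Now set $A'':=A'\cap\OK(\omom,x,L)\cap\overline{b}$ and $B:=\Aff_{W,x}^{-1}(A'')$, so that $B\subseteq\OK(\omom,N)$ with $N:=\lceil L/W\rceil\geq M^{a-o(1)}$ and $\#B=\#A''$. Define $\tilde\lambda(\beta):=\tfrac{\vph_{K}(W)}{W^{n}}(\lambda\circ\Aff_{W,x})(\beta)$ as in Definition~\ref{def=our-measure}. Since $x$ is coprime to $W$, Theorem~\ref{theorem=pseudorandommeasure} guarantees that $\tilde\lambda$ is a $(\rho,N,S)$-pseudorandom measure for any prescribed $\rho>0$ once $M$ is large enough. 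A direct computation as in Proposition~\ref{proposition=positiverelativeweighteddensity}, using Lemma~\ref{lemma=jogai}\eqref{en:log_compatible} to control $\lambda$ from below, then gives a weighted-density bound $\EE(\ichi_{B}\cdot\tilde\lambda\mid\OK(\omom,N))\geq\tide$ for some $\tide=\tide(\omom,\delta,S)>0$, while the smallness condition is verified as in Theorem~\ref{theorem=primeconstellationsfiniteagain} using $N\geq M^{a-o(1)}$. Theorem~\ref{thm:RMST} produces an $S$-constellation in $B$, which lifts through $\Aff_{W,x}$ to an $S$-constellation of $A''\subseteq A\cap\OK(\omom,x,\|x\|_{\infty,\omom}^{a})$; since $A''$ lies in a single residue class modulo the prime-to-$W$ unit $x$, no two of its elements are associate, proving \eqref{en:tan_seiza}. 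For \eqref{en:tan_seiza_kosuu}, take $w=\tfrac12\log^{\circ 3}M$ so that $W\leq(\log^{\circ 2}M)^{\log 2}$ by \eqref{eq=W_N_f}, and invoke Theorem~\ref{theorem=weighted-counting} in place of Theorem~\ref{thm:RMST}; the resulting bound $\scrN_{S}^{\sharp}(A'')\gtrsim N^{n+1}(\log M)^{-(r+1)}$ collapses to the claimed $\gamma\cdot M^{a(n+1)}(\log M)^{-(\#S+o_{M\to\infty;n}(1))}$.

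The principal obstacle is the coordination of the two pigeonhole steps in the slide trick: a single $x\in A$ has to play a triple role, namely as a prime element of large enough norm, as a member of a densely populated residue class $\overline{b}\in(\OK/W\OK)^{\times}$, and as the centre of the short interval $\OK(\omom,x,\|x\|_{\infty,\omom}^{a})$. The automatic coprimality of every element of $A'$ to $W$, guaranteed by the trimming of Lemma~\ref{lemma=jogai}, is what makes the two pigeonholes commute; without it, one would have to track the joint distribution of box position and residue class and would suffer a quantitative loss. A secondary bookkeeping point is the rescaling $R=M^{a/(17(r+1)2^{r})}$: this is the only modification to Section~\ref{section=GoldstonYildirim} that is required in order for Theorem~\ref{Th:Goldston_Yildirim} to continue to deliver pseudorandomness over intervals of $\lmugen$-length $L\sim M^{a}$ rather than $M$.
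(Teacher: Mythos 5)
Your overall architecture (trim, joint pigeonhole over short boxes and residue classes, $W$-trick, Goldston--Y\i ld\i r\i m with the rescaled truncation $R=M^{a/(17(r+1)2^{r})}$, then the relative multidimensional Szemer\'edi theorem) matches the paper's, and your single joint pigeonhole is a reasonable streamlining of the paper's two-step localisation (Lemma~\ref{lemma=countingmelonpan} followed by Lemma~\ref{lemma=slidetrick}). However, there are two genuine gaps.

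First, the ``without associate pairs'' part of the conclusion is not established, and your justification for it is false. Lying in a single residue class $\overline{b}\in(\OK/W\OK)^{\times}$ does not preclude associate pairs: if $u\in\OKt$ satisfies $u\equiv 1\pmod{W}$ (and such units form a finite-index, hence infinite, subgroup of $\OKt$ whenever $\OKt$ is infinite), then $\alpha$ and $u\alpha$ lie in the same class. Nor does the short interval itself exclude associates: your trimming only guarantees $\Nrm(\alpha)>R=M^{a/(17(r+1)2^{r})}$, and the best lower bound this gives on the $\lmugen$-distance between distinct associates is of order $\Nrm(\alpha)^{1/n}\geq R^{1/n}$, which is a far smaller power of $M$ than the interval radius $\|x\|_{\infty,\omom}^{a}\sim M^{a}$. (An element can have $\lmugen$-length $\sim M$ while having very small norm, so removing the inner ball in $\lmugen$-length does not help.) The paper closes exactly this gap by first invoking the reduction theorem (Theorem~\ref{theorem=fundamental_Omega}, resting on Proposition~\ref{proposition=get_Omega} and Lemma~\ref{lemma=Omega}), which uses the ideal-counting-from-above input from Landau's prime ideal theorem to discard the elements of norm at most $\Omega'M^{n}$ and extract a positive-proportion subset $A_{0}$ contained in a fundamental domain for $\OKt\curvearrowright\OKnz$; all subsequent constellations are found inside $A_{0}$ and hence automatically admit no associate pairs. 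Nothing in your argument plays this role, and it affects both items (recall $\scrN_{S}^{\sharp}$ counts only constellations without associate pairs).

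Second, your parameter choice $w=\tfrac12\log M$ for item \eqref{en:tan_seiza} is incompatible with the short-interval width: it forces $W\approx M^{\log 2}$, so for $a<\log 2$ the quantity $N=\lceil L/W\rceil$ with $L\sim M^{a}$ is $O(1)$ and the relative Szemer\'edi theorem cannot be applied. The paper's short-interval pseudorandomness (Theorem~\ref{theorem=logpseudorandom_a}) deliberately rescales to $f(t)\leq\tfrac{a}{2}\log t$ so that $W\leq M^{a\log 2}$ and $M^{a}/W\geq M^{a(1-\log 2)}$ still dominates $R^{4m+1}\leq M^{5a/17}$. Correspondingly, your claim $N\geq M^{a-o(1)}$ is false for item \eqref{en:tan_seiza} even after this fix (it only holds for the slowly growing $f$ used in the counting statement); what one actually gets is $N\geq M^{ca}$ for some $c\in(0,1)$, which suffices. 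This second issue is repairable by adjusting $w$, but the first one requires importing the fundamental-domain reduction, which is the substantive new ingredient of Sections~\ref{section=maintheoremfull} and \ref{section=slidetrick}.
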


The following corollary is an infinitary version of 
Theorem~\ref{mtheorem=TaoZieglergeneral} (=
Theorem~\ref{theorem=shortinterval}).

\begin{corollary}\label{corollary=shortinterval}
Let $K$ be a number field, and let 
$\omom$ be an integral basis of $K$. If a subset $A\subseteq \PP_K$ satisfies
\[
\overline{d}_{\PP_K,\omom}(A)>0,
\]
then there exists a sequence $(y_l)_{l\in \NN}$ in $A$ satisfying the
following:
for every $a\in (0,1)_{\RR}$ and every finite set $S\subseteq A$,
there exists a finite subset $L\subseteq\NN$ such that for all $l\in\NN\setminus L$, the set
$A\cap \OK(\omom,y_l,\|y_l\|_{\infty,\omom}^a)$
contains an $S$-constellation consisting of pairwise non-associate
elements.
\end{corollary}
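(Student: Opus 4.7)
The plan is to reduce the universal statement over $(a, S)$ to a countable family via monotonicity and enumeration, then diagonalize using Theorem~\ref{theorem=shortinterval}(1). The key insight is that this finitary theorem, although stated as an existence theorem, in fact guarantees a \emph{positive proportion} of good centers $x$ rather than merely one: if the set of bad centers had comparable density, applying the theorem to the bad set itself would contradict its badness.

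Set $\delta \coloneqq \overline{d}_{\PP_K,\omom}(A) > 0$. Since $\OK(\omom, y, \|y\|_{\infty,\omom}^{a'}) \subseteq \OK(\omom, y, \|y\|_{\infty,\omom}^{a})$ whenever $a' < a$ and $\|y\|_{\infty,\omom} \geq 1$, a constellation without associate pairs in the narrower interval persists in the wider one; hence by density of $\QQ$ in $(0,1)_{\RR}$ it suffices to handle only pairs $(b,T)$ with $b\in \QQ \cap (0,1)_{\RR}$ and $T$ a finite subset of the countable set $A$. Enumerate all such pairs as $(b_j, T_j)_{j \in \NN}$. The density lemma I need is this: for every $\eta >0$, every $a\in(0,1)_{\RR}$, and every finite $S\subseteq \OK$, whenever $M\geq M_{\mathrm{PESSI}}(\omom,\eta,S,a)$, the bad set
\[
B_{M,a,S} \coloneqq \bigl\{x \in A \cap \OK(\omom,M) : \tfrac{M}{\log^{\circ 2} M}\leq \|x\|_{\infty,\omom}\leq M,\ A\cap \OK(\omom,x,\|x\|_{\infty,\omom}^a) \text{ has no } S\text{-constellation without associate pairs}\bigr\}
\]
satisfies $\# B_{M,a,S} < \eta \cdot \#(\PP_K \cap \OK(\omom,M))$. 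Otherwise, applying Theorem~\ref{theorem=shortinterval}(1) to $B_{M,a,S}$ (viewed as a relatively $\eta$-dense subset of $\PP_K\cap\OK(\omom,M)$) would yield some $x\in B_{M,a,S}$ such that $B_{M,a,S} \cap \OK(\omom,x,\|x\|_{\infty,\omom}^a)$ contains an $S$-constellation without associate pairs; since $B_{M,a,S} \subseteq A$, the larger set $A\cap \OK(\omom,x,\|x\|_{\infty,\omom}^a)$ would contain it too, contradicting the badness of $x$.

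I then build $(y_l)_{l\in \NN}$ inductively. For each $l$, using $\overline{d}_{\PP_K,\omom}(A) = \delta$, choose $M_l \in \RR_{\geq 1}$ so large that $M_l/\log^{\circ 2} M_l > \|y_{l-1}\|_{\infty,\omom}$, that $M_l \geq M_{\mathrm{PESSI}}(\omom, \delta/(4l), T_j, b_j)$ for all $j\leq l$, and that $\#(A\cap \OK(\omom,M_l)) \geq (\delta/2)\cdot \#(\PP_K \cap \OK(\omom,M_l))$. The union $\bigcup_{j\leq l} B_{M_l,b_j,T_j}$ then has density less than $l\cdot \delta/(4l)=\delta/4$ in $\PP_K\cap\OK(\omom,M_l)$; moreover, the subset of $\PP_K\cap\OK(\omom,M_l)$ with $\|x\|_{\infty,\omom}<M_l/\log^{\circ 2} M_l$ has vanishing relative density as $M_l\to\infty$ by Landau's prime ideal theorem (Theorem~\ref{theorem=Landau}) combined with the NL-compatibility estimates of Section~\ref{section=NLC}. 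Therefore, for $M_l$ sufficiently large, the complement inside $A\cap \OK(\omom,M_l)$ retains density at least $\delta/8$ in $\PP_K\cap\OK(\omom,M_l)$, hence is nonempty; any element $y_l$ of this complement satisfies, for every $j\leq l$, that $A\cap \OK(\omom, y_l, \|y_l\|_{\infty,\omom}^{b_j})$ contains a $T_j$-constellation without associate pairs. For arbitrary $(a,S)$ in the statement of the corollary, choose rational $a'\in(0,a)$ and let $j_0$ be the index with $(b_{j_0},T_{j_0})=(a',S)$; then $L\coloneqq\{l\in\NN:l<j_0\}$ is the required finite exception set, by the monotonicity observation above.

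The main obstacle is the density lemma itself, which is the conceptual step upgrading the single-point existence assertion of Theorem~\ref{theorem=shortinterval}(1) to a density-of-centers statement. Without this step, the diagonalization fails, because the literal conclusion supplies only one good $x$ per scale, whereas we require simultaneous goodness of $y_l$ for all of the first $l$ enumerated pairs. Once the density lemma is in place, the remaining inductive construction is a routine calibration of thresholds using Theorem~\ref{theorem=Landau} and the NL-compatibility arguments of Section~\ref{section=NLC}.
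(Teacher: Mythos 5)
Your proof is correct, but it takes a genuinely different route from the paper's. The paper disposes of this corollary in one line: it follows from Corollary~\ref{corollary=package_infinite_a} together with Proposition~\ref{proposition=PK_kouri_a}, and the real content sits in the proof of Theorem~\ref{theorem=package_infinite_a}. There, one fixes a \emph{single nested chain} $(a_l,S_l)_{l\in\NN}$ with $a_l\downarrow 0$ and $S_1\subseteq S_2\subseteq\cdots$ exhausting the countable ambient module, applies the finitary theorem once per $l$ to obtain exactly one good center $y_l$ for the pair $(a_l,S_l)$, and then handles an arbitrary $(a,S)$ by monotonicity in \emph{both} parameters: for all but finitely many $l$ one has $a_l\le a$ and $S\subseteq S_l$, and an $S_l$-constellation $\alpha+kS_l$ in the narrower interval contains the $S$-constellation $\alpha+kS$ in the wider one (non-associateness passing to subsets). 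So the "main obstacle" you identify --- that one good $x$ per scale cannot serve several enumerated pairs simultaneously --- is in fact circumvented in the paper by the nesting of the $S_l$, and no density-of-centers upgrade is needed. Your alternative is nonetheless sound: the self-application of Theorem~\ref{theorem=shortinterval}~\eqref{en:tan_seiza_thmB} to the bad set $B_{M,a,S}$ correctly shows that bad centers have relative density below any prescribed $\eta$ once $M\geq M_{\mathrm{PESSI}}(\omom,\eta,S,a)$, the discard of the annulus $\|x\|_{\infty,\omom}<M/\log^{\circ 2}M$ is justified by Propositions~\ref{proposition=PK_kouri} and \ref{proposition=prime_counting_above}, and the budget $\delta/2-\delta/4-\delta/8>0$ closes the induction. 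What your route buys is a strictly stronger intermediate statement (a positive proportion, indeed proportion $1-o(1)$ relative to any fixed $\eta$, of centers at each admissible scale is good), at the cost of an extra lemma; the paper's route is shorter and needs nothing beyond the finitary theorem plus the two monotonicities.
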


The proofs of Theorem~\ref{theorem=shortinterval} and
of Corollary~\ref{corollary=shortinterval} will be presented in
Subsection~\ref{subsection=package_shortinterval_proof}.

\begin{remark}\label{remark=TaoZiegler}
The statement of the form \eqref{en:weak} for the case $K=\QQ$ in terms
of upper density can be found in \cite[Remark~2.4]{Tao-Ziegler08},
as follows. Let $k$ be an integer with $k\geq3$.
If $A\subseteq \PP$ satisfies $\overline{d}_{\PP}(A)>0$,
then for every $a>0$, there is a sequence of real numbers $(M_l)_{l\in\NN}$
tending to $\infty$ such that,
$A\cap[M_l,M_l+M_l^a]_{\RR}$ contains an arithmetic progression
of length $k$.
\end{remark}

\subsection{Strong form for the case $K=\QQ$}\label{subsection=BanachGreenTao}

In this subsection, we prove Theorem~\ref{theorem=BanachGreenTao},
which is the strong form for a short interval version for the case $K=\QQ$,
along the lines of the axiomatic framework given in 
Section~\ref{section=maintheoremfull}.

\begin{proposition}\label{proposition=SPsilog_QQ}
Let $a\in(0,1)_{\RR}$.
Let $k$ be an integer at least $3$, and set $S_k\coloneqq\{0,1,\ldots ,k-1\}$.
Then, for every $\rho>0$, there exist positive integers $W=W_{\PP,\mathrm{S}\Psi_{\log}^{\mathrm{SI}}}(\rho,k)$ and $M_{\PP,\mathrm{S}\Psi_{\log}^{\mathrm{SI}}}(\rho,k,a)$ such that for every $M\in \RR$ with $M\geq M_{\PP,\mathrm{S}\Psi_{\log}^{\mathrm{SI}}}(\rho,k,a)$, there exists a function $\lambda\colon \ZZ\to \RR_{\geq 0}$ which fulfills the following three conditions.
\begin{enumerate}[$(1)$]
  \item\label{en:GTshort_Psi} For every $b\in \ZZ$ coprime with $W$, $\tilde{\lambda}_b\coloneqq \frac{\vph(W)}{W}(\lambda\circ\Aff_{W,b})$ is a $(\rho,\frac{M^a}{W},S_k)$-pseudorandom measure. Here, $\vph=\vph_{\QQ}$ is the Euler totient function.
\item\label{en:GTshort_log} For every $q\in \PP\cap [M,M+M^a]_{\RR}$, 
\[
\lambda(q)=\frac{a}{17k\cdot 2^{k-1}c_{\chi}}\cdot \log M
\]
holds true.
\item\label{en:GTshort_coprime} For every $q\in \PP\cap [M,M+M^a]_{\RR}$, $q$ is coprime with $W$.
\end{enumerate}
\end{proposition}

Notice that conclusions 
\eqref{en:GTshort_Psi}, \eqref{en:GTshort_log} and \eqref{en:GTshort_coprime}
are slightly different from conditions
\eqref{en:pseudorandom}, \eqref{en:measurebelow} and \eqref{en:coprime}
of Definition~\ref{definition=logpseudorandom}.
First, the width of the interval for the pseudorandomness 
is $M^a/W$ instead of $M/W$; secondly, there is no exceptional set $T$
in \eqref{en:GTshort_log} or \eqref{en:GTshort_coprime}.
The first difference comes from the fact that the width of
the interval $[M,M+M^a]_{\RR}$ is $M^a$; the absence of $T$
means that we may take $T=\varnothing$.

\begin{proof}
Set $\chi$ in a similar manner to Setting~\ref{setting=section7-2}, and fix it. Apply Theorem~\ref{theorem=package_PR} for $K=\QQ$, $S=S_k$, and $\uvarsigma=1$; recall the definitions of $w_{\mathrm{PRSI}}$ and  $M_{\mathrm{PRSI}}$. Let 
$W=W_{\PP,\mathrm{S}\Psi_{\log}^{\mathrm{SI}}}(\rho,\chi,k)$ be the positive integer determined by \eqref{eq:RwW} with $w=w_{\mathrm{PRSI}}(\rho,\chi,S_k)$.
Set $M_{\PP,\mathrm{S}\Psi_{\log}^{\mathrm{SI}}}(\rho,\chi,k,a)\coloneqq M_{\mathrm{PRSI}}(w_{\mathrm{PRSI}}(\rho,\chi,S_k),\rho,1,\chi,S_k,a)$.
For $M\geq M_{\PP,\mathrm{S}\Psi_{\log}^{\mathrm{SI}}}(\rho,\chi,k,a)$, take $\lambda\colon\ZZ\to \RR_{\geq 0}$ as in Theorem~\ref{theorem=package_PR} in the current setting; note here that $\kappa=\kappa_{\QQ}=1$. Then, Theorem~\ref{theorem=package_PR} implies that $\tilde{\lambda}_b$ in \eqref{en:GTshort_Psi} is a $(\rho,\frac{M^a}{W},S_k)$-pseudorandom measure.
For \eqref{en:GTshort_log}, by the construction of $\lambda$ and by $R<M$, we have for all $q\in \PP\cap [M,M+M^a]_{\RR}$,
\[
\lambda(q)=\frac{1}{c_{\chi}}\log R=\frac{a}{17k\cdot 2^{k-1}c_{\chi}}\cdot \log M.
\]
Since $w_{\mathrm{PRSI}}(\rho,\chi,S_k)\leq \frac{a}{2}\log M< M$, we also obtain \eqref{en:GTshort_coprime}. 
Since we have fixed a function $\chi$, we omit to write dependences of 
$M_{\PP,\mathrm{S}\Psi_{\log}^{\mathrm{SI}}}(\rho,\chi,k,a)$
and
$W_{\PP,\mathrm{S}\Psi_{\log}^{\mathrm{SI}}}(\rho,\chi,k)$
on $\chi$.
Thus we write 
$M_{\PP,\mathrm{S}\Psi_{\log}^{\mathrm{SI}}}(\rho,k,a)$
and
$W_{\PP,\mathrm{S}\Psi_{\log}^{\mathrm{SI}}}(\rho,k)$
for short.
This completes the proof.
\end{proof}
\begin{proof}[Proof of Theorem~$\ref{theorem=BanachGreenTao}$]
Let $S_k\coloneqq \{0,1,\dots,k-1\}\subseteq\ZZ$. 
Despite that this set $S_k$ is not a standard shape,
it meets all the requirements in 
Definition~\ref{definition=standardshape}
except `$S=-S$.'
We will prove the theorem along the
same lines of Theorem~\ref{theorem=package}, using
Proposition~\ref{proposition=SPsilog_QQ}.
Let $a\in(0,1)_{\RR}$, and define 
$D\coloneqq a\cdot(17k\cdot 2^{k-1}c_{\chi})^{-1}$. 
We will take $\rho>0$, depending only on $a,\delta$ and $k$, in what follows; for this $\rho$, let $W\coloneqq W_{\PP,\mathrm{S}\Psi_{\log}^{\mathrm{SI}}}(\rho,k)$. Let $M$ be sufficiently large; we will specify later.
Let 
\[X_{M}\coloneqq \PP \cap [M,M+M^{a}]_{\RR}.\]
By assumption, there exists $\delta>0$ such that 
\eqref{eq:primecounting} holds.
If $M$ is taken to be sufficiently large depending on $a$ and $\delta$,
then
\[
\#X_M\geq \delta \cdot \frac{M^{a}}{\log M}
\]
holds. By the pigeonhole principle, Proposition~\ref{proposition=SPsilog_QQ}~\eqref{en:GTshort_coprime} implies that there exists $\overline{b}\in(\ZZ/W\ZZ)^{\times}$ with
\begin{equation}
\#(X_M\cap \overline{b})\geq \frac{1}{\vph(W)}\delta \cdot \frac{M^{a}}{\log M}.\label{eq:countingPPb}
\end{equation}
Set $N\coloneqq\left\lceil\frac{M^{a}}{W}\right\rceil$. Then,
\begin{equation}\label{eq:NtoMW_a}
\frac{M^a}{W}\leq N\leq\frac{2M^{a}}{W}
\end{equation}
holds true.
Let $b\in\overline{b}$ be the largest element of $\overline{b}$ with $b\leq M$. Then, by definition and by the triangle inequality, we have $\Aff_{W,b}([-N,N]) \supseteq X_M \cap \overline{b}$.
Hence for $B\coloneqq \Aff_{W,b}^{-1}(X_M \cap \overline{b})$, we obtain $B\subseteq [-N,N]$. By \eqref{eq:countingPPb},
\begin{equation}\label{eq:relativeooi_a}
\#B \geq \frac{1}{\vph(W)}\delta \cdot \frac{M^{a}}{\log M}
\end{equation}
holds.
Next, let $\lambda\colon\ZZ\to\RR_{\geq 0}$ be the function as in Proposition~\ref{proposition=SPsilog_QQ}, and set $\tilde{\lambda}\colon\ZZ\to \RR_{\geq0}$ as $\tilde{\lambda}\coloneqq \frac{\vph(W)}{W}(\lambda\circ\Aff_{W,b})$. Then, by Proposition~\ref{proposition=SPsilog_QQ}~\eqref{en:GTshort_Psi}, if $M\geq M_{\PP,S\Psi^{\mathrm{SI}}_{\log}}(\rho,k,a)$, then $\tilde{\lambda}$ is a $(\rho,N,S_k)$-pseudorandom measure.
By Proposition~\ref{proposition=SPsilog_QQ} \eqref{en:GTshort_log}, \eqref{eq:relativeooi_a} and \eqref{eq:NtoMW_a}, we have
\[
\EE(\ichi_B\cdot\tilde{\lambda}\mid[-N,N])\geq D\cdot\delta\cdot\frac{M^{a}}{W\cdot (2N+1)}\geq\frac{D}{6}\cdot\delta
\]
and
\[
\frac{1}{N}\cdot \EE(\ichi_B\cdot \tilde{\lambda}^{k}  \mid [-N,N] )\leq D^{k}W\cdot  \frac{(\log M)^{k}}{M^{a}}.
\]

Now we define 
$\rho\coloneqq \rho_{\mathrm{RMS}}(\omom,\frac{D}{6}\delta,S_k)$ and 
$\gamma'\coloneqq \gamma_{\mathrm{RMS}}(\omom,\frac{D}{6} \delta,S_k)$,
where $\omom=(\omega)$ with $\omega=1$.
Choose $M_{\mathrm{GTSI}}$ in such a way that $M\geq M_{\mathrm{GTSI}}$
satisfies all the previous arguments, and that 
$D^{k}W\cdot  (\log M)^{k}\leq\gamma'\cdot M^{a}$
holds.
Then for $M\geq M_{\mathrm{GTSI}}$, 
we can apply the relative 
Szemer\'{e}di theorem ($n=1$ in Theorem~\ref{thm:RMST})
to $B\subseteq [-N,N]$. It follows that $B$ contains an $S_k$-constellation.
Applying the transformation $\Aff_{W,b}$, we obtain an $S_k$-constellation in $X_M=\PP\cap [M,M+M^{a}]_{\RR}$.

Finally, we prove \eqref{en:BGT2}.
Theorem~\ref{theorem=weighted-counting2}
and Proposition~\ref{proposition=SPsilog_QQ}~\eqref{en:GTshort_log}
imply

\[
\frac{1}{4N(2N+1)} \cdot (\scrN_{S_k}(B)+\scrN_{-S_k}(B)) \cdot (D_2\log M)^{k}\geq\frac{\gamma'}{2}.
\]
Since $\scrN_{S_k}(B)+\scrN_{-S_k}(B)=2\scrN_k(B)$, we have

\[
\scrN_k(X_M)\geq\scrN_k(B)\geq \frac{2D^{-k}\gamma'}{W^2}\cdot \frac{M^{2a}}{(\log M)^{k}}.
\]
Therefore, by setting $\gamma=\gamma_{\mathrm{GTSI}}(a,\delta,k)\coloneqq\frac{2D^{-k}}{W^2}\cdot \gamma'$, we obtain the desired estimate of $\scrN_{k}(X_M)$.\end{proof}

\subsection{Slide trick}\label{subsection=slidetrick}

In this subsection, we present a technique necessary in
proving constellation theorems of the form \eqref{en:weak}.
More precisely, we first describe the difficulty which arises
when $[K:\QQ]\geq2$, and then present the technique called
a slide trick to overcome this difficulty. 

Note that the \naturaldensityversionofthe
Chebotarev density theorem (Theorem~\ref{theorem=Chebotarev}~\eqref{Chebotarev})
provides an estimate of the number of prime elements in the domain
of the form $\OO_K(\omom, [M_1,M_1'])$. Here, we define,
for a free $\ZZ$-module $\calZ$, its $\ZZ$-basis $\bv$,
and $M_1,M_1'\in \RR_{\geq 0}$ with $M_1\leq M_1'$,
\begin{equation}\label{eq=M1M1'}
\calZ(\bv, [M_1,M_1'])\coloneqq\{\alpha\in \calZ : \|\alpha\|_{\infty,\bv}\in [M_1,M_1']_{\RR}\}.
\end{equation}
We suppose that $M_1'$ is reasonably close to $M_1$. If $n=1$, that is,
if $K=\QQ$, then with respect to its basis $\omom=(\omega)$, where
$\omega=1$, we have
\[
\OO_{\QQ}(\omom, [M_1,M_1'])=([-M_1',-M_1]_{\RR}\cap \ZZ) \cup ([M_1,M_1']_{\RR}\cap \ZZ).
\]
Since $\PP_{\QQ}=\PP\sqcup (-\PP)$, this amounts to considering
the single interval $[M_1,M_1']_{\RR}\cap \ZZ$. If $n\geq2$, then
there is a significant difference between
$\OO_{K}(\omom, [M_1,M_1'])$ and a short interval
$\OO_K(\omom,x,M)$. The former is an $n$-dimensional cube with
a smaller $n$-dimensional cube removed, while the latter is just a small
$n$-dimensional cube. In order to apply the relative multidimensional
Szemer\'{e}di theorem (Theorem~\ref{thm:RMST}), 
it is necessary to transfer the estimate in the former to the latter.
This is an extra step which arises when $n\geq2$.
The transfer is possible if we can guarantee the density of prime elements
is large in the latter, provided that the density of prime elements is large
in the former. This assertion can be proved using the pigeonhole principle,
which we formulate explicitly as the slide trick. 
In the setting of Szemer\'{e}di-type theorems of finitary version, the slide trick
can be thought of taking a better representative $b\in\overline{b}$ of
$\overline{b}\in(\OK/W\OK)^{\times}$ in the proof; retaking a representative
is nothing but `sliding' it by an element of $W\OK$, hence the name.
We remark that this argument already appeared in 
the proof of Theorem~\ref{theorem=BanachGreenTao}
when a representative $b\in \overline{b}$ was chosen.
The reason why the slide trick is useful in our application is that 
the assertion of the 
Goldston--Y\i ld\i r\i m type asymptotic formula
(Theorem~\ref{Th:Goldston_Yildirim}) is strong enough that
$b\in\OK$ can be arbitrary as long as $b\OK +W\OK=\OK$.

We now describe the slide trick. This can be formulated in the
following general setting. Let $n\in\NN$. Suppose that the additive
group $\ZZ^n$ acts on a non-empty set $\LL$ and that this action
is simply transitive. This means that, for all $l,l'\in\LL$,
there exists a unique $z\in\ZZ^n$ such that $l'=z\cdot l$.
In this case, for a non-empty subset $P$ of $\LL$, the set of the
form $z\cdot P$, where $z\in\ZZ^n$, is called a \emph{translate}
of $P$. The following is the fundamental lemma for the slide trick.

\begin{lemma}\label{lemma=slidetrick}
Let $n\in\NN$, and let $\ZZ^n\curvearrowright\LL$ be a simply
transitive action.
Let $P,X\subseteq\LL$ be finite non-empty sets, and 
let $\mathcal{Q}$ be the family of all translates $Q$ of $P$ satisfying
$Q\cap X\neq\varnothing$.
Then there exists $Q_X\in\mathcal{Q}$ such that
\[\frac{\#(Q_X\cap X)}{\#Q_X}\geq \frac{\#X}{\#\mathcal{Q}}\]
holds.
\end{lemma}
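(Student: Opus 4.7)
The plan is to prove this by a double-counting (averaging) argument, which is the standard way that pigeonhole-type statements of this flavor are established. Since the action of $\ZZ^n$ on $\LL$ is simply transitive, the translation map $P \to z\cdot P$ is a bijection for every $z \in \ZZ^n$, so every translate $Q$ of $P$ satisfies $\#Q = \#P$. In particular $\#Q_X = \#P$ for any element $Q_X \in \mathcal{Q}$, so the desired inequality is equivalent to exhibiting some $Q_X \in \mathcal{Q}$ with
\[
\#(Q_X \cap X) \;\geq\; \frac{\#X \cdot \#P}{\#\mathcal{Q}}.
\]

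The first step is to evaluate the double sum $\sum_{Q \in \mathcal{Q}} \#(Q \cap X)$ in two ways. Swapping the order of summation gives
\[
\sum_{Q \in \mathcal{Q}} \#(Q \cap X) \;=\; \sum_{x \in X} \#\{Q \in \mathcal{Q} : x \in Q\}.
\]
For a fixed $x \in X$, I count the translates $Q = z \cdot P$ of $P$ which contain $x$: by simple transitivity, for each $p \in P$ there is a unique $z = z(p) \in \ZZ^n$ with $z \cdot p = x$, and these $z$'s are pairwise distinct because the action is free. Hence there are exactly $\#P$ translates of $P$ that contain $x$, and each of them automatically lies in $\mathcal{Q}$ because it meets $X$ at $x$. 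Therefore
\[
\sum_{Q \in \mathcal{Q}} \#(Q \cap X) \;=\; \#X \cdot \#P.
\]

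The second step is the averaging: the average value of $\#(Q \cap X)$ as $Q$ ranges over $\mathcal{Q}$ is $\#X \cdot \#P / \#\mathcal{Q}$, so some $Q_X \in \mathcal{Q}$ achieves at least this value, which gives the desired bound after dividing by $\#Q_X = \#P$.

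There is essentially no obstacle here; the only mild point to keep straight is that $\mathcal{Q}$ is defined as the set of translates meeting $X$, not all translates of $P$, so one must verify that every translate containing a fixed $x \in X$ is automatically counted in $\mathcal{Q}$ (which is immediate). Everything else is a direct consequence of simple transitivity and the pigeonhole principle.
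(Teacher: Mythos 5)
Your proposal is correct and follows essentially the same route as the paper: the same double counting of incidences $\{(x,Q): x\in Q\}$, the same use of simple transitivity to show each $x\in X$ lies in exactly $\#P$ translates (all of which automatically belong to $\mathcal{Q}$), and the same averaging/pigeonhole conclusion. No gaps.
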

\begin{proof}
Since the action is simply transitive, the number of $Q\in\mathcal{Q}$
containing a given $x\in X$ is exactly $\#P$. This implies
\[
\sum_{Q\in\mathcal{Q}}\#(Q\cap X)=\#\{(x,Q)\in X\times\mathcal{Q} \mid x\in Q\}=\sum_{x\in X}\#\{Q\in \mathcal{Q} \mid x\in Q\}=\#X\cdot\#P,
\]
and hence
\[
\EE\left(\frac{\#(Q\cap X)}{\#Q} \ \middle| \ Q\in \mathcal{Q}\right)=\frac{1}{\#\mathcal{Q}}\sum_{Q\in\mathcal{Q}} \frac{\#(Q\cap X)}{\#Q}=
\frac{1}{(\#\mathcal{Q})\cdot(\#P)}\sum_{Q\in\mathcal{Q}} \#(Q\cap X)=
\frac{\#X}{\#\mathcal{Q}}.
\]
The result then follows by the pigeonhole principle.
\end{proof}
\begin{remark}\label{remark=slidetrick}
The above lemma can be generalized as follows. Let 
$f\colon \LL\to \RR_{\geq 0}$ be a function which is $0$ on
$\LL\setminus X$. Then there exists a translate $Q_{f}$ of $P$
such that
\[
\EE\left(f\mid Q_f\right)\geq\frac{\#X}{\#\mathcal{Q}}\cdot\EE(f\mid X)
\]
holds. 
Lemma~\ref{lemma=slidetrick} is exactly the case
where $f=\ichi_X$.
\end{remark}

If the subset $P\subseteq\LL$ tiles $\LL$, that is, if there exists
a subset $Z\subseteq\ZZ^n$ such that $\LL=\bigsqcup_{z\in Z}z\cdot P$,
then an analogous statement as Lemma~\ref{lemma=slidetrick}
can be proved more directly. 
Lemma~\ref{lemma=slidetrick} is used in the proof of
Theorem~\ref{mtheorem=TaoZieglergeneral} with $P$ being an
$n$-dimensional cube, and this falls in the above situation.
We have, however, stated Lemma~\ref{lemma=slidetrick} as
a more general statement.

\subsection{Axiomatized constellation theorems for short intervals}\label{subsection=package_shortinterval}

In this subsection and the next, we use
Setting~\ref{setting=package}.
In a manner similar to the argument in Section~\ref{section=maintheoremfull},
we present axiomatized constellation theorems of both finitary and infinitary versions of the form \eqref{en:weak} for short intervals.
We present our axiomatized constellation theorems only for type~2, that is,
ones for constellations without associate pairs.
\begin{definition}[\compatiaW]\label{definition=logpseudorandom_a}
Assume Setting~\ref{setting=package}. 
Let $a\in (0,1)_{\RR}$, and let $S\subseteq\ideala$ be a standard
shape.
Let $\rho>0$, $\uvarsigma>0$, $D_1,D_2>0$, $\varepsilon\in (0,1)_{\RR}$
and let $M\in\RR_{\geq 1}$. Let $W\in \NN$ be a narutal number with $W\leq M^{\varepsilon a}$.
A subset $A\subseteq \ideala$ is said to satisfy the \emph{\compatiaW\
with parameters $(D_1,D_2,\varepsilon)$}, if
$A\subseteq \ideala(\bv,M)$, and the following holds: there exists 
$\lambda\colon \ideala\to \RR_{\geq 0}$ such that the following conditions
are fulfilled.
\begin{enumerate}[$(1)$]
  \item \label{en:pseudorandom_a} 
  For every $b\in \ideala$ satisfying $b\OK +W\ideala=\ideala$,
  the mapping $\beta\mapsto\frac{\vph_K(W)}{W^n}(\lambda\circ\Aff_{W,b})(\beta)$ on $\ideala$ is a $(\rho,\frac{\uvarsigma M^a}{W},S)$-pseudorandom measure.
  \item \label{en:measurebelow_a} 
 There exists $T\subseteq A$ with 
$ \#T\leq M^{\varepsilon an}$
such that,  for every $\alpha\in A\setminus T$,
\[
D_1 \cdot \log M \leq \lambda(\alpha)\leq D_2\cdot  \log M
\]
holds.
  \item \label{en:coprime_a}
  For the subset  $T$ in \eqref{en:measurebelow_a}
  and $\alpha\in A\setminus T$, 
$\alpha \OK +W\ideala =\ideala$
holds.
\end{enumerate}
\end{definition}
\begin{definition}[\compatia]\label{definition=logpseudorandom_aW}
Assume Setting~\ref{setting=package}.
Let $a\in (0,1)_{\RR}$, and $S\subseteq\ideala$ a standard shape.
Let $\rho>0$, $\uvarsigma>0$, $M\in\RR_{\geq 1}$, $D_1,D_2>0$, and $\varepsilon\in(0,1)_{\RR}$.
A subset $A\subseteq\ideala$ is said to satisfy the \emph{\compatia\ with parameters $(D_1,D_2,\varepsilon)$} if there exists $W \in\NN$ with $W\leq M^{\varepsilon a}$ such that $A$ satisfies the \compatiaW\ with parameters $(D_1,D_2,\varepsilon)$.
\end{definition}

The difference of Definition~\ref{definition=logpseudorandom_a}
from Definition~\ref{definition=logpseudorandom} is that
the appearance of $a$ in the exponent for the upper bound on $W$,
and the lower bound on the width of the interval for the
pseudorandom condition in \eqref{en:pseudorandom_a}
is changed from $\frac{M}{W}$ to $\frac{\uvarsigma M^a}{W}$.
The reason for the change 
is similar to the situation in Proposition~\ref{proposition=SPsilog_QQ};
the width of the interval is $M^a$ instead of $M$.
The factor $\uvarsigma$ is needed for the proof of Theorem~\ref{theorem=package_a}. The role of the
exponent $a$ in the upper bound
on the size of an exceptional set $T$ in 
\eqref{en:measurebelow_a} and \eqref{en:coprime_a} is a minor
issue. It is placed so as to force the containment of 
the set $\logpseua$ in $\logpseu$; the former will be defined in Definition~\ref{definition=logpseudorandom_infinite_a}.
The removal of an exceptional set $T$ is necessary even 
in the weak form of the short interval version. Indeed,
it cannot be avoided unless $\OKt$ is finite; see the proof
of Theorem~\ref{theorem=logpseudorandom_a}.
This is in contrast to Proposition~\ref{proposition=SPsilog_QQ},
where no exceptional set was needed.

\begin{definition}[The family $\logpseua$]\label{definition=logpseudorandom_infinite_a}
We define a family $\logpseua$ of subsets of 
$\ideala$ as follows: we declare
$A\in \logpseua$ if and only if,
for every $a\in (0,1)_{\RR}$ and for every standard shape
$S\subseteq \ideala$, there exist $D_1,D_2>0$ and
$\varepsilon\in (0,1)_{\RR}$ such that the following holds:
for every $\rho>0$ and for every $\uvarsigma>0$, there exists $M(\rho,\uvarsigma)=M(\rho,\uvarsigma,\bv,S,a)\in \RR_{\geq0}$
such that, for all $M\geq M(\rho,\uvarsigma)$, 
$A\cap \ideala(\bv,M)$ satisfies the \compatia\ with parameters
$(D_1,D_2,\varepsilon)$.
\end{definition}

We chose to attach symbols `SI' in $\logpseua$ to signify `short interval.'

\begin{lemma}\label{lemma=stronglog}
The following statements hold true.
\begin{enumerate}[$(1)$]
  \item\label{en:logsubset}
  If $A\subseteq\ideala$ satisfies the \compatiaW\ with parameters 
  $(D_1,D_2,\varepsilon)$, then so do its subsets.
  \item\label{en:strlogsubset}
If  $A\in \logpseua$, $A_1\subseteq A$, then $A_1\in \logpseua$.
  \item\label{en:stronglog} $\logpseua \subseteq \logpseu$.
\end{enumerate}
\end{lemma}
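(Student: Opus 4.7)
The plan is to mimic the very short argument that proved Lemma~\ref{lemma=logpseudorandom_subset}, treating \eqref{en:logsubset} and \eqref{en:strlogsubset} as bookkeeping and focusing the real effort on \eqref{en:stronglog}. For \eqref{en:logsubset}, given that $A\subseteq\ideala(\bv,M)$ satisfies the \compatia\ with parameters $(D_1,D_2,\varepsilon)$ via data $(W,\lambda,T)$, I will use the same $W$ and $\lambda$ for $A_1\subseteq A$ and take as the new exceptional set $T_1\coloneqq T\cap A_1$. The size bound $\#T_1\leq\#T\leq M^{\varepsilon an}$ is automatic, and conditions~\eqref{en:measurebelow_a} and \eqref{en:coprime_a} restricted to $A_1\setminus T_1\subseteq A\setminus T$ are inherited from those for $A$. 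Condition~\eqref{en:pseudorandom_a} is independent of $A$, so there is nothing to check. Then \eqref{en:strlogsubset} follows from \eqref{en:logsubset} by specialising to $M\geq M(\rho,\bv,S,a)$.

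The content is \eqref{en:stronglog}. Given $A\in\logpseua$ and a standard shape $S\subseteq\ideala$, I will fix once and for all one value $a_0\in(0,1)_{\RR}$, say $a_0=1/2$, and obtain from the definition of $\logpseua$ parameters $(D_1,D_2,\varepsilon)$ together with a threshold $M(\rho)=M(\rho,\bv,S,a_0)$ for every $\rho>0$. The aim is to verify that $A\cap\ideala(\bv,M)$ satisfies the \compati\ with parameters $(D_1,D_2,\varepsilon a_0)$ once $M$ is large enough; this will force $A\in\logpseuOK$ (with the same $(D_1,D_2)$ and with $\varepsilon$ replaced by $\varepsilon a_0$).

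The three bookkeeping conditions needed for the \compati\ are checked as follows. The integer $W$ furnished by \compatia\ satisfies $W\leq M^{\varepsilon a_0}$, which is precisely the bound required by the \compati\ with parameter $\varepsilon a_0$. The exceptional set satisfies $\#T\leq M^{\varepsilon a_0 n}$, again matching the \compati\ bound with parameter $\varepsilon a_0$. The subtler condition is pseudorandomness: \compatia\ supplies $\lambda$ such that $\frac{\vph_K(W)}{W^n}(\lambda\circ\Aff_{W,b})$ is $(\rho,\frac{M^{a_0}}{W\log^{\circ 2}M},S)$-pseudorandom for every $b\in\ideala$ with $b\OK+W\ideala=\ideala$, whereas \compati\ asks for $(\rho,\frac{M}{W},S)$-pseudorandomness. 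Here I will invoke the only conceptual point in the proof, namely the monotonicity built into Definition~\ref{definition=S-linearform}: since the linear forms condition is imposed over all products of intervals of lengths at least $N$, shrinking $N$ yields a strictly stronger condition. Thus whenever $\frac{M^{a_0}}{W\log^{\circ 2}M}\leq\frac{M}{W}$, the \compatia\ pseudorandomness automatically upgrades to the \compati\ pseudorandomness. This length inequality is equivalent to $M^{a_0-1}\leq\log^{\circ 2}M$, which is immediate for all sufficiently large $M$ because $a_0<1$; absorbing this threshold into an enlargement of $M(\rho)$ completes the proof.

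I do not anticipate a real obstacle. The one place where some thought is genuinely required is the monotonicity observation on the parameter $N$ in the linear forms condition; once that is spelled out, the rest is just collecting inequalities. It is also worth pointing out in the write-up why the factor $\log^{\circ 2}M$ appearing in \eqref{en:pseudorandom_a} causes no trouble here: in the direction $\logpseua\to\logpseu$ it only makes the source hypothesis stronger.
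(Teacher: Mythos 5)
Your proposal is correct and takes essentially the same route as the paper: items \eqref{en:logsubset} and \eqref{en:strlogsubset} are the same bookkeeping as in Lemma~\ref{lemma=logpseudorandom_subset}, and item \eqref{en:stronglog} rests on the same key observation, namely that the $(\rho,N,S)$-linear forms condition of Definition~\ref{definition=S-linearform} is monotone in $N$ (smaller $N$ quantifies over more boxes, hence is stronger), so pseudorandomness at width $\frac{M^{a_0}}{W\log^{\circ 2}M}$ upgrades to width $\frac{M}{W}$ for large $M$. The only difference is where the discrepancy between the exponents $a$ and $1$ is absorbed: the paper applies the \compatia\ at the rescaled scale $M^{1/a}$, keeping $\varepsilon$ but changing the constants to $(D_1/a,D_2/a)$, whereas you keep the scale $M$ and instead shrink the exponent to $\varepsilon a_0$, keeping $(D_1,D_2)$ — both are valid.
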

\begin{proof}
Items \eqref{en:logsubset} and \eqref{en:strlogsubset} 
can be proved in a manner similar to that of Lemma~\ref{lemma=logpseudorandom_subset}.

We prove \eqref{en:stronglog}. Let $A\in\logpseua$. 
Let $S\subseteq\ideala$ be a standard shape, and fix
$a\in(0,1)_{\RR}$ arbitrarily. Then 
there exist $D_1,D_2>0$ and $\varepsilon\in (0,1)_{\RR}$ 
as described in Definition~\ref{definition=logpseudorandom_infinite_a}.
Take $\rho>0$ and let $\uvarsigma=1$. Let $M>0$ be a real number satisfying
$M^{1/a}\geq M(\rho,1,\bv,S,a)$. Since $A\in\logpseua$,
the set $A\cap \ideala(\bv,M^{1/a})$ satisfies the 
$(\rho,1, M^{1/a},\bv,S,a)$-condition with parameters
$(D_1,D_2,\varepsilon)$. It follows from \eqref{en:logsubset} that
$A\cap \ideala(\bv,M)$ satisfies the 
$(\rho,1, M^{1/a},\bv,S,a)$-condition with parameters
$(D_1,D_2,\varepsilon)$.
By Definition~\ref{definition=logpseudorandom_a},
this means that there exist 
$W\in \NN$ with $W\leq M^\varepsilon$,
$\lambda\colon\ideala\to\RR_{\geq0}$
and an exceptional set $T\subseteq \ideala(\bv ,M)$ with $\#T\leq M^{\varepsilon n}$ such that conditions~\eqref{en:pseudorandom_a}--\eqref{en:coprime_a} of 
Definition~\ref{definition=logpseudorandom_a} hold.
Then we see that
$A\cap\ideala(\bv,M)$ satisfies the \compati\ with parameters
$(D_1/a,D_2/a,\varepsilon)$. Therefore,
$A\in\logpseu$.
\end{proof}
\begin{theorem}\label{theorem=logpseudorandom_a}
For a number field $K$, we have $\PP_K\in \logpseuaOK$. Furthermore, for every  $a\in(0,1)_{\RR}$ and every integer $r$ at least $[K:\QQ]$, there exist $D_1,D_2>0$ and $\varepsilon\in (0,1)_{\RR}$ such that the following holds: 
let $S\subseteq \OK$ be a standard shape with $\#S=r+1$, and $\omom$ be an integral basis of $K$. Let $\rho>0$ and $\uvarsigma>0$.
Then, there exist integers $W=W_{\PP_K,\mathrm{S}\Psi_{\log}^{\mathrm{SI}}}(\rho,S)$ and $M_{\PP_K,\mathrm{S}\Psi_{\log}^{\mathrm{SI}}}(\rho,\uvarsigma,\omom,S,a)$ such that if $M\geq M_{\PP_K,\mathrm{S}\Psi_{\log}^{\mathrm{SI}}}(\rho,\uvarsigma,\omom,S,a)$, then $\PP_K\cap \OK(\omom,M)$ satisfies the $(\rho,\uvarsigma,W,M,\omom,S,a)$-condition with parameters $(D_1,D_2,\varepsilon)$.
\end{theorem}
\begin{proof}
The proof proceeds along the same lines of that of
Proposition~\ref{proposition=SPsilog_QQ} except the treatment of
an exceptional set $T$.
Let $a\in(0,1)_{\RR}$ be arbitrary. Recall $w_{\mathrm{PRSI}}$ and $M_{\mathrm{PRSI}}$ from Theorem~\ref{theorem=package_PR}. 
Define the positive integer $W=W_{\PP_K,\mathrm{S}\Psi_{\log}^{\mathrm{SI}}}(\rho,\chi,S)$ by \eqref{eq:RwW} with $w= w_{\mathrm{PRSI}}(\rho,\chi,S)$. 
Let $M$ be a parameter with $M\geq M_{\mathrm{PRSI}}(w_{\mathrm{PRSI}}(\rho,\chi,S),\rho,u,\chi,S,a)$, and take the function $\lambda=\lambda_{M;\chi,r,a,K}\colon \OK\to\RR_{\geq 0}$ as in Theorem~\ref{theorem=package_PR}. Then, by Theorem~\ref{theorem=package_PR}, the condition of Definition~\ref{definition=logpseudorandom_a} \eqref{en:pseudorandom_a} is satisfied for $\ideala=\OK$. Moreover, by \eqref{eq:WaM}, we have $W\leq M^{(\log 2) a}\leq M^{\frac{3}{4}a}$.

Define an exceptional set
$T\subseteq\PP_K\cap\OK(\omom,M)$
by $T\coloneqq \PP_K\cap\OK(\omom,M)\cap\OK(R)$.
As in the proof of 
Lemma~\ref{lemma=jogai}, if $M$ is sufficiently large depending on 
$r,K$ and $a$, then every $\alpha\in \PP_K\setminus T$ is prime to
$W$, and $\lambda(\alpha)=\frac{\kappa a}{17(r+1)2^r \cdot c_{\chi}}\cdot \log M$
holds. Moreover, if $M$ is sufficiently large depending on 
$\omom$ and $a$, then by \eqref{eq:boundT}, we obtain $\#T\leq M^{\frac{a}{16}}$.
Therefore, for every $\rho>0$ and for every $\uvarsigma>0$, if $M$ is sufficiently large depending on $\rho$,$\uvarsigma$,$\omom$,$S$ and $a$, the $\PP_K\cap \OK(\omom,M)$ satisfies the $(\rho,\uvarsigma,W,M,\omom,S,a)$-condition with parameters
\[
(D_1,D_2,\varepsilon)=\left(\frac{\kappa a}{17(r+1)2^r \cdot c_{\chi}},\frac{\kappa a}{17(r+1)2^r \cdot c_{\chi}},\frac{3}{4}\right).
\]
Define $M_{\PP_K,\mathrm{S}\Psi_{\log}^{\mathrm{SI}}}(\rho,\uvarsigma,\omom,\chi,S,a)$ as the smallest integer for which all of the arguments above work. 
This ends the proof of the latter assertion. 
Here, since we have fixed a function $\chi$, we omit to write dependences of 
$W_{\PP_K,\mathrm{S}\Psi_{\log}^{\mathrm{SI}}}(\rho,\chi,S)$
and
$M_{\PP_K,\mathrm{S}\Psi_{\log}^{\mathrm{SI}}}(\rho,\uvarsigma,\omom,\chi,S,a)$
on $\chi$.
Thus we write 
$W_{\PP_K,\mathrm{S}\Psi_{\log}^{\mathrm{SI}}}(\rho,S)$
and
$M_{\PP_K,\mathrm{S}\Psi_{\log}^{\mathrm{SI}}}(\rho,\uvarsigma,\omom,S,a)$
for short.
In particular, we conclude that $\PP_K\in \logpseuaOK$.
This completes the proof.
\end{proof}

The following two theorems are the short interval versions of
Theorem~\ref{theorem=package_DD} and
Theorem~\ref{theorem=package_infinite_DD}, respectively.
Recall Definition~\ref{definition=shortinterval}.
\begin{theorem}\label{theorem=package_a}
We use Setting~$\ref{setting=package}$. 
Let $a\in (0,1)_{\RR}$, and let $S\subseteq\ideala$ be a standard shape.
Let $\delta, \Delta,D_1,D_2>0$ and $\varepsilon\in (0,1)_{\RR}$.
Then there exist 
$\rho=\rho_{\mathrm{SI}}(D_1,\bv,\delta,\Delta,S)>0$,
$\uvarsigma=\uvarsigma_{\mathrm{SI}}(\bv,\delta,\Delta,a)>0$, 
$\etaUpsilon=\etaUpsilon_{\mathrm{SI}}(\bv,\delta,\Delta)>0$
and 
$M_{\mathrm{SI}}=M_{\mathrm{SI}}(D_1,D_2,\varepsilon,\bv,\delta,\Delta,S,a)\in\NN$
such that the following holds.
Assume an integer $M\geq M_{\mathrm{SI}}$ and a set
$A\subseteq\ideala(\bv,M)\setminus\{0\}$ satisfy the following three conditions:
\begin{enumerate}[$(i)$]
\item\label{en:counting_a}
the inequality
\[
\#A\geq \delta \cdot \frac{M^n}{\log M}
\]
holds, 
\item\label{en:counting_above}
for every $L\in \RR_{\geq 2}$, 
\[
\#\{\alpha\OK\in \Ideals_K\colon \alpha \in A\cap \OK(L)\}\leq \Delta\cdot \frac{L}{\log L}
\] 
holds,
\item\label{en:logpseudorandom_a} the set $A$ satisfies the \compatia\ with parameters
$(D_1,D_2,\varepsilon)$.
\end{enumerate}
Then there exists $x\in A$ satisfying

\begin{equation}\label{eq:log^2_bv}
\etaUpsilon M\leq \|x\|_{\infty,\bv}\leq M
\end{equation}
such that
$A\cap \ideala(\bv,x,\|x\|_{\infty,\bv}^a)$
contains an $S$-constellation without associate pairs.
Moreover, there exists $\gamma=\gamma_{\mathrm{SI}}(D_1,D_2,\bv,\delta,\Delta,S,a)>0$
such that the above $x\in A$ can be taken in such a way that
\[
\scrN_S^{\sharp}(A\cap \ideala(\bv,x,\|x\|_{\infty,\bv}^a))\geq \gamma W^{-(n+1)}\cdot \frac{M^{a(n+1)}}{(\log M)^{\#S}}
\]
holds.
Here, $W$ is an integer appearing in the \compatia, which comes from condition~\eqref{en:logpseudorandom_a}.
\end{theorem}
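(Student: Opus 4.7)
The plan is to combine the slide trick (Lemma~\ref{lemma=slidetrick}) with the fundamental-domain reduction used in the proof of Theorem~\ref{theorem=package_DD}. First I would invoke Theorem~\ref{theorem=fundamental_Omega} with the hypotheses~\eqref{en:counting_a} and \eqref{en:counting_above}, producing a subset $A_0\subseteq A$ without associate pairs, of cardinality at least $\delta'M^n/\log M$, and satisfying the crucial by-product $\Nrm(\alpha)>\Omega' M^n$ for every $\alpha\in A_0$. Via the standard inequality $\Nrm(\alpha)\leq C'_\bv\|\alpha\|_{\infty,\bv}^n$ (the analogue of Lemma~\ref{lemma=NLCreversed} for $\bv$), this forces $\|\alpha\|_{\infty,\bv}\geq(\Omega'/C'_\bv)^{1/n}M$, so the range \eqref{eq:log^2_bv} on $\|x\|_{\infty,\bv}$ will come for free. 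The \compatia\ passes to subsets by Lemma~\ref{lemma=stronglog}~\eqref{en:logsubset}, so $A_0$ inherits it; writing $W$, $\lambda$, $T$ for the corresponding data, the trim $A_0'\coloneqq A_0\setminus T$ still satisfies $\#A_0'\geq(\delta'/2)M^n/\log M$ since $\#T\leq M^{\varepsilon an}$ is negligible against $M^n/\log M$.

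Next I would carry out the $W$-trick layered with a slide trick. By Lemma~\ref{lemma=coprime} and pigeonholing, choose $\bar b\in\ideala/W\ideala$ with $b\OK+W\ideala=\ideala$ and $\#(A_0'\cap\bar b)\geq\#A_0'/\vph_K(W)$; fix $b\in\bar b$ of $\bv$-length below $W$, pull back via $\Aff_{W,b}^{-1}$ to $A''\subseteq\ideala(\bv,2M/W)$. With $N\coloneqq\lceil M^a/(W\log^{\circ 2}M)\rceil$ and $P\coloneqq\ideala(\bv,N)$, I would apply Lemma~\ref{lemma=slidetrick} on $\ideala$ (with its free $\ZZ^n$-action via $\bv$), taking $X=A''$ and this $P$. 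Translates meeting $X$ lie within $\ideala(\bv,3M/W)$, so $\#\mathcal{Q}\leq(7M/W)^n$, yielding $Q_X=z_X+P$ with
\[
\#(Q_X\cap A'')\geq(2N+1)^n\cdot\frac{\delta' W^n}{2\cdot 7^n\,\vph_K(W)\log M}.
\]
Set $b'\coloneqq Wz_X+b\in\bar b$, which still satisfies $b'\OK+W\ideala=\ideala$ by Lemma~\ref{lemma=coprime}, and $B\coloneqq\Aff_{W,b'}^{-1}(A_0'\cap\Aff_{W,b'}(P))\subseteq\ideala(\bv,N)$; a short computation gives $\Aff_{W,b'}(P)=\Aff_{W,b}(Q_X)$ and hence $\#B=\#(Q_X\cap A'')$.

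Now I apply Theorem~\ref{thm:RMST}, exactly as in the proof of Theorem~\ref{theorem=package_DD}, to the measure $\tilde\lambda_{b'}\coloneqq(\vph_K(W)/W^n)(\lambda\circ\Aff_{W,b'})$, which is $(\rho,N,S)$-pseudorandom by \eqref{en:pseudorandom_a}. The weighted density estimate
\[
\EE(\ichi_B\cdot\tilde\lambda_{b'}\mid\ideala(\bv,N))\geq\frac{\vph_K(W)D_1\log M}{W^n(2N+1)^n}\cdot\#B\geq\frac{D_1\delta'}{2\cdot 7^n}\eqqcolon\tilde\delta
\]
results from the cancellation of $W^n$ and $\vph_K(W)$; the smallness $\EE(\ichi_B\tilde\lambda_{b'}^{r+1})\leq D_2^{r+1}(\log M)^{r+1}\leq\gamma N$ holds for $M$ large because $W\leq M^{\varepsilon a}$ gives $N\geq M^{a(1-\varepsilon)}/\log^{\circ 2}M$, dwarfing $(\log M)^{r+1}$. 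Setting $\rho\coloneqq\rho_{\mathrm{RMS}}(\bv,\tilde\delta,S)$ at the outset, Theorem~\ref{thm:RMST} delivers an $S$-constellation in $B$, whose image under $\Aff_{W,b'}$ is an associate-pair-free $S$-constellation in $A_0'\cap\Aff_{W,b'}(P)\subseteq A$; any $x$ in this image has $\|x\|_{\infty,\bv}^a\gtrsim M^a$, which dominates the diameter $2WN\leq 4M^a/\log^{\circ 2}M$ of $\Aff_{W,b'}(P)$, giving both the containment in $\ideala(\bv,x,\|x\|_{\infty,\bv}^a)$ and the range \eqref{eq:log^2_bv}. The quantitative lower bound on $\scrN_S^\sharp$ then follows from Theorem~\ref{theorem=weighted-counting} applied to $B$ combined with the pointwise bound $\tilde\lambda_{b'}\leq D_2\log M$, as at the end of the proof of Theorem~\ref{theorem=package_DD}, giving $\scrN_S^\sharp\geq\gamma\cdot N^{n+1}(\log M)^{-(r+1)}\geq\gamma(M^a/(W\log^{\circ 2}M))^{n+1}(\log M)^{-(r+1)}$.

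The hard part, and the reason the short-interval version requires more than Theorem~\ref{theorem=package_DD}, is the simultaneous calibration of the three scales $M$, $M^a$, and $W$: the box $P$ must be small enough that $\Aff_{W,b'}(P)$ fits inside a short interval of radius $\|x\|_{\infty,\bv}^a$, large enough that after renormalisation by $\vph_K(W)/W^n$ the weighted density is bounded below independently of $M$, and large enough for $N$ to beat the smallness threshold $(\log M)^{r+1}$. The extra $\log^{\circ 2}M$ factor hidden in $N$, the exponent $\varepsilon a<a$ in the bound on $W$ in the \compatia, and the norm lower bound $\Nrm(\alpha)>\Omega' M^n$ extracted from Theorem~\ref{theorem=fundamental_Omega} are precisely what make this three-way balance hold.
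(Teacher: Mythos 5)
Your argument is correct, and it reaches the same conclusion by a genuinely leaner route than the paper's. The paper's proof of this theorem proceeds in two localization stages: it first uses the norm lower bound $\Nrm(\alpha)>\Omega'M^n$ from Theorem~\ref{theorem=fundamental_Omega} to place $A_0$ in $\ideala(\bv,[M_\flat,M])$ with $M_\flat=2M/\log^{\circ 2}M$, then invokes a dyadic-type decomposition of that region into thin annuli (Lemma~\ref{lemma=countingmelonpan}) to find a single annulus $\ideala(\bv,[M_\natural,M_\natural+M_\natural^a])$ carrying its proportional share of $A_0'$, and only then pigeonholes over residues and slides a cube of side $\sim M_\natural^a/W$ \emph{within that annulus}. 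The point of the annulus is to guarantee $\|x\|_{\infty,\bv}\geq M_\natural$ so that the slid cube, of diameter $\lesssim M_\natural^a$, fits inside $\ideala(\bv,x,\|x\|_{\infty,\bv}^a)$. You dispense with the annulus entirely: you slide a cube that is smaller by the factor $\log^{\circ 2}M$ (which is exactly the margin the \compatia\ builds into its pseudorandomness scale $\frac{M^a}{W\log^{\circ 2}M}$, so pseudorandomness at your $N$ is available) over the whole box, and you secure the containment and the range \eqref{eq:log^2_bv} from the uniform bound $\|x\|_{\infty,\bv}\geq(\Omega'/C'_{\bv})^{1/n}M$ alone, absorbing the resulting constant $c^a$ into the threshold $M_{\mathrm{SI}}$. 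The density bookkeeping survives because, as you note, the factors $M^n$, $W^n$, $\vph_K(W)$ and $\log M$ all cancel in $\frac{\#X}{\#\mathcal{Q}}\cdot\frac{\vph_K(W)}{W^n}D_1\log M$, just as the annulus volume cancels in the paper's version; and the final count $\scrN_S^{\sharp}\geq\gamma(M^a/(W\log^{\circ 2}M))^{n+1}(\log M)^{-(r+1)}$ is identical to the paper's stated bound, since the paper also only claims $N\geq M^a/(W\log^{\circ 2}M)$ in the end. What the paper's extra annulus step buys is a slid cube of the larger side $\sim M_\natural^a/W$, which could in principle yield a count without the $(\log^{\circ 2}M)^{-(n+1)}$ loss, but since the theorem as stated does not claim that, your shorter argument proves the same statement. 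All the remaining components — heredity of the \compatia\ to $A_0$, coprimality of the slid representative $b'=Wz_X+b$, the identification $\Aff_{W,b'}(P)=\Aff_{W,b}(Q_X)$, absence of associate pairs via $A_0\subseteq\DD$, and the smallness condition from $N\geq M^{a(1-\varepsilon)}/\log^{\circ 2}M$ — are handled correctly.
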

\begin{theorem}\label{theorem=package_infinite_a}
We use Setting~$\ref{setting=package}$.
Assume that a subset $A\subseteq \ideala\setminus\{0\}$
satisfies the following three conditions:
\begin{enumerate}[$(i)$]
\item\label{en:counting_infinite_a}
the inequality
\[
\limsup_{M\to \infty}\frac{\#(A\cap \ideala(\bv,M))}{M^n(\log M)^{-1}}>0
\]
holds, 
\item\label{en:counting_above_a}
there exists $\Delta>0$ such that for every $L\in \RR_{\geq 2}$,
\begin{equation}\label{eq:ideal_Delta_S9}
\#\{\alpha\OK\in \Ideals_K\colon \alpha \in A\cap \OK(L)\}\leq \Delta\cdot \frac{L}{\log L}
\end{equation}
holds,
\item\label{en:logpseudo_infinite_a}
$A \in \logpseua$.
\end{enumerate}
Then, there exists a sequence 
$(y_l)_{l\in \NN}$ in $A$ satisfying the following: for every $a\in (0,1)_{\RR}$ and every finite set
$S\subseteq \ideala$, 
there exists a finite subset $L\subseteq\NN$ such that for all $l\in\NN\setminus L$,
$A\cap \ideala(\bv,y_l,\|y_l\|_{\infty,\bv}^a)$
contains an $S$-constellation consisting of pairwise non-associate
elements.
\end{theorem}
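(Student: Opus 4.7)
The plan is to derive the statement from the finitary analogue Theorem~\ref{theorem=package_a} by a diagonal extraction, producing one sequence $(y_l)$ that handles a countable exhaustion of pairs $(a,S)$. The key monotonicity is this: if $S \subseteq S'$ with $S'$ a standard shape of $\ideala$ and $a' \leq a$, then any $S'$-constellation $\alpha + k S'$ without associate pairs contains the $S$-constellation $\alpha + kS$ without associate pairs, and $\ideala(\bv, x, \|x\|_{\infty,\bv}^{a'}) \subseteq \ideala(\bv, x, \|x\|_{\infty,\bv}^{a})$. Combined with the inflation trick recorded after Theorem~\ref{mtheorem=primeconstellationsfinite} and the density of $\QQ$ in $(0,1)$, this reduces matters to pairs $(a,S)$ with $a \in (0,1)\cap\QQ$ and $S$ a standard shape. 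Since $\ideala$ is countable, such pairs form a countable family, which I enumerate as $(a_j,S_j)_{j\in\NN}$.

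For each $l\in\NN$, set $\tilde{a}_l \coloneqq \min_{j\leq l} a_j$ and choose a standard shape $\tilde{S}_l$ containing $\bigcup_{j\leq l} S_j$ (inflation again). Assumption (iii), $A \in \logpseua$, supplies for each $l$ parameters $(D_1^{(l)}, D_2^{(l)}, \varepsilon^{(l)})$ attached to $(\tilde{a}_l, \tilde{S}_l)$ and a threshold $M^{(l)}(\rho)$ beyond which $A \cap \ideala(\bv, M)$ satisfies the \compatia. Assumption (i) furnishes $\delta>0$ and an unbounded sequence $\tilde{M}_l$ with $\#(A\cap\ideala(\bv,\tilde{M}_l)) \geq \delta \tilde{M}_l^n/\log\tilde{M}_l$, while assumption (ii) with its constant $\Delta$ is preserved by restriction to $A\cap\ideala(\bv,M)$. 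Setting $\rho^{(l)} \coloneqq \rho_{\mathrm{SI}}(D_1^{(l)},\bv,\delta,\Delta,\tilde{S}_l)$ and $N^{(l)} \coloneqq M_{\mathrm{SI}}(D_1^{(l)},D_2^{(l)},\varepsilon^{(l)},\bv,\delta,\Delta,\tilde{S}_l,\tilde{a}_l)$ from Theorem~\ref{theorem=package_a}, I pass to a subsequence, still denoted $(M_l)$, with $M_l \geq \max(N^{(l)}, M^{(l)}(\rho^{(l)}))$ for every $l$.

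For each $l$, Theorem~\ref{theorem=package_a} applied to $A\cap\ideala(\bv,M_l)$ with pair $(\tilde{a}_l, \tilde{S}_l)$ produces $y_l \in A$ with $M_l/\log^{\circ 2} M_l \leq \|y_l\|_{\infty,\bv} \leq M_l$ such that $A \cap \ideala(\bv, y_l, \|y_l\|_{\infty,\bv}^{\tilde{a}_l})$ contains a $\tilde{S}_l$-constellation $\alpha_l + k_l \tilde{S}_l$ without associate pairs. For every $j \leq l$ the subset $\alpha_l + k_l S_j$ is then an $S_j$-constellation without associate pairs inside $A \cap \ideala(\bv, y_l, \|y_l\|_{\infty,\bv}^{a_j})$, by the monotonicity above. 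Given an arbitrary pair $(a, S)$ with $a \in (0,1)$ and $S$ finite, choose $j_0$ in the enumeration with $a_{j_0} \leq a$ and $S \subseteq S_{j_0}$; then every $l \geq j_0$ witnesses the desired short-interval constellation for $(a,S)$, so the finite set $L \coloneqq \{1, \ldots, j_0-1\}$ serves as the exceptional index set.

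The main obstacle is organizational rather than analytic: one must confirm that each aggregated pair $(\tilde{a}_l, \tilde{S}_l)$ still lies within the scope of $\logpseua$ (which it does, $\tilde{S}_l$ being a standard shape in $\ideala$ and $\tilde{a}_l \in (0,1)$) and that the diverging sequence $M_l$ can be arranged to satisfy simultaneously the finitary Szemer\'edi threshold $N^{(l)}$ and the \compatia threshold $M^{(l)}(\rho^{(l)})$, both of which depend on $l$. No new pseudorandom-measure construction beyond what already feeds Theorem~\ref{theorem=package_a} is required; the entire contribution of the present theorem is this diagonal extraction combined with the monotonicity of constellation containment.
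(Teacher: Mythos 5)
Your proposal is correct and follows essentially the same route as the paper: a diagonal extraction over a countable exhaustion of pairs $(a,S)$ (the paper uses a decreasing sequence $a_l\to 0$ and an increasing filtration of standard shapes with union $\ideala$, which is the same aggregation you perform), combined with the monotonicity of constellation containment and repeated application of the finitary Theorem~\ref{theorem=package_a} along a sufficiently fast-growing subsequence of the $M_l$ supplied by condition (i).
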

We prove Theorem~\ref{theorem=package_a} in 
Subsection~\ref{subsection=package_shortinterval_proof},
and then deduce Theorem~\ref{theorem=package_infinite_a}
from Theorem~\ref{theorem=package_a}.

We obtain the following corollary from
Theorem~\ref{theorem=package_infinite_a}
and Lemma~\ref{lemma=stronglog}.
\begin{corollary}\label{corollary=package_infinite_a}
We use Setting~$\ref{setting=package}$.
Assume that a subset $A\subseteq \ideala\setminus\{0\}$ satisfies the following three conditions:

\begin{enumerate}[$(i)$]
\item\label{en:counting_infinite_X_a} 
the inequality
\[
\liminf_{M\to \infty}\frac{\#(A\cap \ideala(\bv,M))}{M^n(\log M)^{-1}}>0
\]
holds, 
\item\label{en:counting_above_X_a}
condition~\eqref{en:counting_above_a} of Theorem~$\ref{theorem=package_infinite_a}$ is satisfied,
\item\label{en:logpseudo_infinite_X_a}
$A\in \logpseua$.
\end{enumerate}
Then for every $A'\subseteq A$ satisfying
$\overline{d}_{A,\bv}(A')>0$, 
there exists a sequence 
$(y_l)_{l\in \NN}$ in $A'$ satisfying the following: for every $a\in (0,1)_{\RR}$ and every finite set
$S\subseteq \ideala$, 
there exists a finite subset $L\subseteq\NN$ such that for all $l\in\NN\setminus L$,
$A'\cap \ideala(\bv,y_l,\|y_l\|_{\infty,\bv}^a)$
contains an $S$-constellation consisting of pairwise non-associate
elements.
\end{corollary}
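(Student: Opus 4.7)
My plan is to reduce Corollary~\ref{corollary=package_infinite_a} directly to Theorem~\ref{theorem=package_infinite_a} applied with $A$ replaced by $A'$; that is, I will verify that $A'$ itself satisfies the three hypotheses (i)--(iii) of Theorem~\ref{theorem=package_infinite_a} and then invoke that theorem to produce the sequence $(y_l)_{l\in\NN}$ inside $A'$. The heredity built into our framework makes two of the three conditions essentially automatic.

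For condition (ii) of Theorem~\ref{theorem=package_infinite_a} on the counting of principal-ideal images, the inclusion $A'\subseteq A$ yields
\[
\{\alpha\OK : \alpha\in A'\cap\OK(L)\}\ \subseteq\ \{\alpha\OK : \alpha\in A\cap\OK(L)\},
\]
so the bound $\Delta\cdot L/\log L$ guaranteed by hypothesis~\eqref{en:counting_above_X_a} of the corollary descends to $A'$ with the very same $\Delta$. For condition (iii), the set $A$ lies in $\logpseua$ by hypothesis~\eqref{en:logpseudo_infinite_X_a}, and since $A'\subseteq A$, Lemma~\ref{lemma=stronglog}~\eqref{en:strlogsubset} gives $A'\in\logpseua$ at once.

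The real content is verifying condition~\eqref{en:counting_infinite_a} of Theorem~\ref{theorem=package_infinite_a} for $A'$, namely
\[
\limsup_{M\to\infty}\frac{\#(A'\cap\ideala(\bv,M))}{M^n(\log M)^{-1}}>0.
\]
I would write the ratio as a product
\[
\frac{\#(A'\cap\ideala(\bv,M))}{M^n(\log M)^{-1}}=\frac{\#(A'\cap\ideala(\bv,M))}{\#(A\cap\ideala(\bv,M))}\cdot\frac{\#(A\cap\ideala(\bv,M))}{M^n(\log M)^{-1}}.
\]
The limsup of the first factor equals $\overline{d}_{A,\bv}(A')>0$ by the hypothesis on $A'$, and the liminf of the second factor is strictly positive by condition~\eqref{en:counting_infinite_X_a} of the corollary; note that \eqref{en:counting_infinite_X_a} implicitly forces $\#(A\cap\ideala(\bv,M))>0$ for all sufficiently large $M$, so the ratio defining $\overline{d}_{A,\bv}(A')$ is well-defined eventually. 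Using the elementary inequality $\limsup(x_My_M)\ge(\limsup x_M)(\liminf y_M)$ for nonnegative sequences with strictly positive, finite liminf, the desired positivity of the limsup follows.

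Having established all three conditions for $A'$, I invoke Theorem~\ref{theorem=package_infinite_a} with $A$ replaced by $A'$, yielding a sequence $(y_l)_{l\in\NN}$ in $A'$ such that, for every $a\in(0,1)_\RR$ and every finite set $S\subseteq\ideala$, all but finitely many short intervals $A'\cap\ideala(\bv,y_l,\|y_l\|_{\infty,\bv}^a)$ contain an $S$-constellation consisting of pairwise non-associate elements; this is the conclusion claimed. No genuine obstacle arises: the argument is a clean reduction exploiting the subset-heredity of $\logpseua$ encoded in Lemma~\ref{lemma=stronglog} together with the mild interplay between limsup and liminf.
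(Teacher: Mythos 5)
Your proposal is correct and follows essentially the same route as the paper, which proves this corollary by the same reduction it uses for Corollary~\ref{corollary=package_infinite_DD}: pass the three hypotheses down to $A'$ (heredity of the ideal-counting bound and of $\logpseua$ via Lemma~\ref{lemma=stronglog}, plus the $\limsup$--$\liminf$ product argument converting $\overline{d}_{A,\bv}(A')>0$ and condition~(i) into the required $\limsup$ for $A'$) and then apply Theorem~\ref{theorem=package_infinite_a} to $A'$. Your write-up merely makes explicit the elementary inequality the paper leaves implicit.
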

\begin{proof}[Proof of
Theorem~$\ref{theorem=package_infinite_a}$ $\Longrightarrow$ Corollary~$\ref{corollary=package_infinite_a}$]
This is analogous to the deduction of 
Corollary~\ref{corollary=package_infinite_DD}
from Theorem~\ref{theorem=package_infinite_DD}.
\end{proof}

We note that the sequence $(y_l)_{l\in \NN}$ can be found independent
of the choice of $a$ and $S$ in 
Theorem~\ref{theorem=package_infinite_a} and Corollary~\ref{corollary=package_infinite_a}.

\subsection{Proofs of Theorems~\ref{theorem=package_a} and \ref{theorem=package_infinite_a}}\label{subsection=package_shortinterval_proof}

In this subsection, we prove
Theorems~\ref{theorem=package_a} and~\ref{theorem=package_infinite_a},
and using these theorems, we prove
Theorem~\ref{theorem=shortinterval}
and Corollary~\ref{corollary=shortinterval}.

Let $n\in \NN$. 
Define the $\lmugen$-length 
$\|\cdot\|^{}_{\infty}$
on $\ZZ^n$ with respect to the standard basis.
For $M_1,M_2\in\RR$ with
$0\leq M_1\leq M_2$, let
$\ZZ^n([M_1,M_2])\coloneqq\{x\in \ZZ^n: 
\|x\|_{\infty}\in [M_1,M_2]_{\RR}\}$.

\begin{lemma}\label{lemma=countingmelonpan}
Let $M_0,M$ and $a\in(0,1)_{\RR}$ be real numbers
with $0<M_0\leq M$. Then,
for every $A\subseteq \ZZ^n([M_0,M])$, 
there exists $M_1\in [M_0,M]_{\RR}$ such that
\[
\#(A\cap \ZZ^n([M_1,M_1+M_1^a]))\geq \frac{1}{2^{n}}\cdot \frac{(M_1+M_1^a)^n-M_1^n}{M^n} \cdot \#A
\]
\end{lemma}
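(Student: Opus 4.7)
The approach will be a weighted pigeonhole argument over a finite family of candidate shells of the prescribed shape. First I would construct a finite sequence $N_0 < N_1 < \cdots < N_k$ in $[M_0, M]_\RR$ by setting $N_0 \coloneqq M_0$ and iterating $N_{i+1} \coloneqq N_i + N_i^a$, stopping at the largest $k$ for which $N_k \leq M$. In the degenerate case $M_0 + M_0^a > M$ the sequence collapses to $\{N_0\}$ and the conclusion is immediate with $M_1 \coloneqq M_0$, since $(M_0 + M_0^a)^n \leq (2M)^n = 2^n M^n$ forces the right-hand side of the asserted inequality to be at most $\#A$.

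For each $i \in \{0, 1, \ldots, k\}$ set $\mathcal{S}_i \coloneqq \ZZ^n([N_i, N_i + N_i^a])$, which is precisely the shape appearing in the conclusion of the lemma. The key covering property is that $\bigcup_{i=0}^k \mathcal{S}_i \supseteq \ZZ^n([M_0, M]) \supseteq A$: by construction, for $i < k$ the shell $\mathcal{S}_i$ coincides with $\ZZ^n([N_i, N_{i+1}])$, while the top shell $\mathcal{S}_k$ contains $\ZZ^n([N_k, M])$ (and possibly extends past the $\lmugen$-radius $M$, which is harmless). Consequently
\[
\sum_{i=0}^{k}\#(A \cap \mathcal{S}_i) \geq \#A.
\]

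To finish, I would argue by contradiction: if every shell strictly violated the bound asserted by the lemma, then summing those strict inequalities over $i$ and using the telescoping identity
\[
\sum_{i=0}^{k}\bigl((N_i + N_i^a)^n - N_i^n\bigr) = (N_k + N_k^a)^n - M_0^n,
\]
together with the crude estimate $N_k + N_k^a \leq 2M$ (valid because $N_k \leq M$ and $N_k^a \leq \max(1, N_k) \leq M$, after noting that $M \geq 1$ is automatic once $A \neq \varnothing$), would yield $\#A < \frac{\#A}{2^n M^n} \cdot 2^n M^n = \#A$, a contradiction. Hence at least one $N_i$ serves as the required $M_1$.

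The main subtlety is the behaviour of the top shell $\mathcal{S}_k$, which may protrude beyond $\lmugen$-radius $M$; this overshoot is precisely what forces the factor of $2$ in the bound $N_k + N_k^a \leq 2M$, and is responsible for the coefficient $1/2^n$ in the conclusion.
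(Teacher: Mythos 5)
Your proposal is correct and follows essentially the same route as the paper's proof: the same iterated sequence $N_{i+1}=N_i+N_i^a$, the same covering of $A$ by the shells, and the same telescoping-plus-contradiction (pigeonhole) argument with the crude bound $N_k+N_k^a\leq 2M$ producing the factor $2^{-n}$. Your explicit handling of the degenerate case and of the edge case $M\geq 1$ is slightly more careful than the paper's write-up, but the argument is the same.
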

\begin{proof}
We define a finite sequence
$m_1,m_2,\ldots$ of real numbers as follows. We first set
$m_0\coloneqq M_0$. Assuming $m_i$ has been defined,
we stop constructing the sequence if $m_i\geq M$, and otherwise
define $m_{i+1}\coloneqq m_{i}+m_{i}^a$.
This process terminates because we continue to have
$m_{i+1}\geq m_i+M_0^a$. Let $m_l$ be the last term of this
sequence. Then there exists $l_A\in[0,l-1]$ such that
\[
\#(A\cap \ZZ^n([m_{l_A},m_{l_A+1}])\geq \frac{1}{2^n }\cdot  \frac{m_{l_A+1}^n-m_{l_A}^n}{M^n} \cdot \#A
\]
holds. Indeed, otherwise, taking the summation from $i=0$ to $l-1$
gives
\[
\#A< \frac{1}{2^n}\cdot \frac{m_l^n-m_0^n}{M^n}\cdot\#A\leq\frac{1}{2^n}\cdot \frac{(M+M^a)^n-M_{0}^n}{M^n}\cdot \#A \leq \#A,
\]
which is a contradiction.
The desired inequality holds by setting $M_1\coloneqq m_{l_A}$.
\end{proof}

\begin{lemma}\label{lemma=girigiri_chop}\normalfont
Assume Setting~$\ref{setting=package}$. Let $\sigma_1,\ldots ,\sigma_{n}$ be the embeddings of $K$ into $\CC$. Let $\Omega>0$. Then, there exist $D(\bv,\Omega),D'(\bv,\Omega)>0$, depending on $\bv$ and $\Omega$, such that the following holds true: for $M\in \RR_{\geq 1}$, if $\alpha\in \ideala(\bv,M)$ satisfies $\Nrm(\alpha)\geq \Omega M^n$, then
\[
\min_{i\in [n]}|\sigma_i(\alpha)|\geq D(\bv,\Omega)\cdot M \quad \textrm{and} \quad \|\alpha\|_{\infty,\bv}\geq D'(\bv,\Omega)\cdot M
\]
holds.
\end{lemma}

\begin{proof}
For $\bv=(v_1,\ldots ,v_n)$, set $C'(\bv)\coloneqq \max_{i\in [n]}\sum_{j\in[n]}|\sigma_i(v_j)|$. Then for all $k\in [n]$, we have $|\sigma_k(\alpha)|\leq C'(\bv)M$.
Since $\Nrm(\alpha)=|\sigma_i(\alpha)|\cdot \prod_{k \in [n]\setminus \{i\}}|\sigma_k(\alpha)|$ by Lemma~\ref{lemma=idealnorm}, we have
\[
\min_{i\in [n]}|\sigma_i(\alpha)|\geq \frac{\Omega}{(C'(\bv))^{n-1}}\cdot  M.
\]
Similarly, since for every $k\in[n]$, $|\sigma_k(\alpha)|\leq C'(\bv)\|\alpha\|_{\infty,\bv}$, we obtain 
\[
\|\alpha\|_{\infty,\bv}\geq \frac{\sqrt[n]{\Omega}}{C'(\bv)}\cdot M.
\]
Therefore, $D(\bv,\Omega)\coloneqq \frac{\Omega}{(C'(\bv))^{n-1}}$ and $D'(\bv,\Omega)\coloneqq \frac{\sqrt[n]{\Omega}}{C'(\bv)}$ work.
\end{proof}

\begin{proof}[Proof of Theorem~$\ref{theorem=package_a}$]
Let $a,\delta,\Delta,D_1,D_2$ and $\varepsilon$ be as in the
statement of the theorem.
We let $M$ be a sufficiently large real number, to be determined
exactly later. We also take $\rho>0$ and $\uvarsigma>0$ arbitrarily at this point.
Let $A\subseteq\ideala(\bv,M)\setminus\{0\}$ be a subset satisfying
conditions \eqref{en:pseudorandom_a}, \eqref{en:measurebelow_a}
and \eqref{en:coprime_a} of the theorem. Thus we can take an integer $W\leq M^{\varepsilon a}$, 
a function $\lambda\colon \ideala\to \RR_{\geq 0}$
and an exceptional set $T\subseteq A$ as in these conditions.
For real numbers $M_1$ and $M_2$ with $0\leq M_1\leq M_2$, define $\ideala(\bv,[M_1,M_2])$ by setting $\calZ=\ideala$ in \eqref{eq=M1M1'}.

Take $M'_{\mathrm{red}}(\bv,\delta,\Delta)\in\NN$, $\delta'=\delta'_{\mathrm{red}}(\bv,\delta,\Delta)>0$ and $\Omega'=\Omega'_{\mathrm{red}}(\bv,\delta,\Delta)>0$ as in Theorem~\ref{theorem=fundamental_Omega}.
Now assume that $M\geq M'_{\mathrm{red}}(\bv,\delta,\Delta)$.
Then we can apply Theorem~\ref{theorem=fundamental_Omega} to the set $A$. 
This implies that there exist a fundamental domain $\DD$ for the action 
$\OKt\curvearrowright \ideala\setminus \{0\}$ and a subset
$A_0\subseteq A\cap \DD$ such that
\begin{equation}\label{eq:A0_delta'}
\#A_0\geq \delta'\cdot \frac{M^n}{\log M}
\end{equation}
holds. In addition \eqref{eq:Omega_norm} holds, that is,
for $\alpha\in A_0$, we have $\Nrm(\alpha)>\Omega' M^n$.
Let $D'(\bv,\Omega')>0$ be the constant in Lemma~\ref{lemma=girigiri_chop}. Set
\begin{equation}\label{eq:Mflat}
M_{\flat}\coloneqq D'(\bv,\Omega')M.
\end{equation}
Then, we have
\begin{equation}\label{eq:suppori_hairu}
A_0\subseteq \ideala(\bv,[M_{\flat},M]).
\end{equation}

Let $A_0'\coloneqq A_0\setminus T$.
If $M$ is sufficiently large depending on 
$\varepsilon,\bv,\delta,\Delta$ and $a$, it follows from
the upper bound $\#T\leq M^{\varepsilon an}$ and \eqref{eq:A0_delta'} that
\begin{equation}\label{eq:A'below}
\#A_0'\geq  \frac{1}{2}\delta' \cdot \frac{M^n}{\log M}
\end{equation}
holds. Under the isometry 
$(\ideala,\|\cdot\|_{\infty,\bv})\simeq (\ZZ^n,\|\cdot\|_{\infty})$,
we may regard $A_0'$ as a subset of $\ZZ^n([M_{\flat},M])$ by \eqref{eq:suppori_hairu}. We can thus apply 
Lemma~\ref{lemma=countingmelonpan} by setting
$M_0=M_{\flat}$, $A=A_0'$. This, together with
\eqref{eq:A'below}, implies that there exists
$M_{\natural}\in[M_{\flat},M]_{\RR}$ such that
\begin{equation}\label{eq:A''below}
\#A_0''\geq \frac{1}{2^{n+1}}\delta' \cdot \frac{(M_{\natural}+M_{\natural}^a)^n-M_{\natural}^n}{\log M}
\end{equation}
holds, where
$A_0''\coloneqq A_0'\cap \ideala(\bv,[M_{\natural},M_{\natural}+M_{\natural}^a])$.

Next we apply Lemma~\ref{lemma=coprime} to $A_0''$, noting that
condition \eqref{en:coprime_a} holds by the 
\compatiaW. By \eqref{eq:A''below}, we see that there exists
$\overline{b}\in\ideala/W\ideala$ such that
\begin{equation}\label{eq:countingmelonpan}
\#(A_0''\cap \overline{b})\geq \frac{1}{2^{n+1}\vph_K(W)}\delta' \cdot \frac{(M_{\natural}+M_{\natural}^a)^n-M_{\natural}^n}{\log M}
\end{equation}
holds. Note that the set $A_0''\cap \overline{b}$ is 
not located inside an $n$-dimensional cube with a small diameter for which the relative multidimensional
Szemer\'edi theorem applies. 

We will determine the center $x\in A$ of a short interval by
suitably choosing a representative $b\in\overline{b}$ using the
slide trick, in such a way that we can apply the relative multidimensional
Szemer\'edi theorem.
Let $N\coloneqq 2\left\lceil\frac{M_{\natural}^a}{8W}\right\rceil$.
Since $W\leq M^{\varepsilon a}$ and $M_{\flat}\leq M_{\natural}$, for a sufficiently large $M$ depending on $\varepsilon$, $\bv,\delta,\Delta$ and $a$, we have
\begin{equation}\label{eq:NtoMtoMnatural}
\frac{M_{\natural}^a}{4W}\leq N\leq\frac{M_{\natural}^a}{3W}.
\end{equation}
Observe that the additive group 
$\ideala\simeq \ZZ^n$ acts simply transitively on $\overline{b}$ by
$\beta\cdot x\coloneqq x+W\beta$. Fix an arbitrary $b_0\in \overline{b}$.
We aim to apply Lemma~\ref{lemma=slidetrick} by setting
$P=W\ideala(\bv,N/2)+b_0$ and $X=A_0''\cap \overline{b}$.
The size of the set $\calQ$ in Lemma~\ref{lemma=slidetrick} for this setting
can be estimated by considering the location of the `upper left' corner of
$W\ideala(\bv,N/2)+b$, a translate of $P$, as follows.
\begin{align*}
\#\calQ&\leq \left(\frac{2M_{\natural}+2M_{\natural}^a+WN+1}{W}+1\right)^n-\left(\frac{2M_{\natural}-WN}{W}-1\right)^n \\
&\leq \frac{(2M_{\natural}+3M_{\natural}^a)^n-(2M_{\natural}-M_{\natural}^a)^n}{W^n}.
\end{align*}
We use the following general inequalities: for $t,t'\geq0$ with
$t'/t$ small enough depending on $n$,
\[
 t^n+nt^{n-1}t' \leq (t+t')^n\leq t^n+2nt^{n-1}t'
\]
holds.
In addition, Bernoulli's inequality implies $(t-t')^n\geq t^n-nt^{n-1}t'$.
Since $a\in (0,1)_{\RR}$, for a sufficiently large $M$ depending on $a$ and $n$,
we have
\[
(2M_{\natural}+3M_{\natural}^a)^n-(2M_{\natural}-M_{\natural}^a)^n\leq 2^{n+2}((M_{\natural}+M_{\natural}^a)^n-M_{\natural}^n).
\]
Therefore,
\begin{equation}\label{eq:calQ}
\#\calQ\leq \frac{2^{n+2}}{W^n}\cdot ((M_{\natural}+M_{\natural}^a)^n-M_{\natural}^n).
\end{equation}
It follows from 
Lemma~\ref{lemma=slidetrick}, \eqref{eq:countingmelonpan} and \eqref{eq:calQ}
that, there exists  a representative $b\in \overline{b}$ such that the following holds:
\begin{align*}
&\#(A_0''\cap \Aff_{W,b}(\ideala(\bv,N/2)))\\
&\geq\frac{\#X}{\#\calQ}\cdot \#P\\
&\geq\left(\frac{1}{2^{n+1}\vph_K(W)}\delta' \cdot \frac{(M_{\natural}+M_{\natural}^a)^n-M_{\natural}^n}{\log M}\right) \cdot \left(\frac{2^{n+2}}{W^n}\cdot ((M_{\natural}+M_{\natural}^a)^n-M_{\natural}^n)\right)^{-1} \cdot (N+1)^n.
\end{align*}
In other words,
\begin{equation}\label{eq:densityshou}
\#(A_0''\cap \Aff_{W,b}(\ideala(\bv,N/2)))\geq \frac{W^n}{\vph_K(W)} \cdot \frac{1}{2^{2n+3}}\delta' \cdot \frac{(N+1)^n}{\log M}.
\end{equation}
We fix such a representative $b$. Since
$A_0''\cap \Aff_{W,b}(\ideala(\bv,N/2))\ne \varnothing$, we can choose
$x\in A_0''\cap \Aff_{W,b}(\ideala(\bv,N/2))$, which will also be fixed for the rest of the proof.
Note, in particular, that $x\in A$.
By \eqref{eq:suppori_hairu}, we have
$\|b\|_{\infty,\bv}- WN/2\leq \|x\|_{\infty,\bv}\leq M$
and $\|b\|_{\infty,\bv}\geq M_{\flat}-WN/2$.
Thus, if $M$ is sufficiently large depending on 
$a$, we see that \eqref{eq:log^2_bv} holds.

We now define
$\tilde{\lambda}\coloneqq\frac{\vph_K(W)}{W^n}(\lambda\circ\Aff_{W,x})$
and $B\coloneqq\Aff_{W,x}^{-1}(A_0'')\cap\ideala(\bv,N)$.
By \eqref{eq:NtoMtoMnatural} and the assumption on $\lambda$,
the function $\tilde{\lambda}$  is a $(\rho,\frac{\uvarsigma M^a}{W},S)$-pseudorandom measure.
By \eqref{eq:NtoMtoMnatural}, we have
$WN\leq \frac{M_{\natural}^a}{3}$. Since
$x\in A''_0$, we have
$\|x\|_{\infty,\bv}\geq M_{\natural}$. Thus
$3WN\leq \|x\|_{\infty,\bv}^a$ holds.
In particular,
\begin{equation}
A_0'' \cap \Aff_{W,b}(\ideala(\bv,N/2))\subseteq A_0''\cap \Aff_{W,x}(\ideala(\bv,N)) \subseteq A_0''\cap \ideala(\bv,x,\|x\|_{\infty,\bv}^a)\label{eq:dobleinclusions}
\end{equation}
holds. It follows from the first containment in 
\eqref{eq:dobleinclusions} and \eqref{eq:densityshou} that
\begin{equation}
\#B\geq \frac{W^n}{\vph_K(W)} \cdot \frac{1}{2^{2n+3}}\delta' \cdot \frac{(N+1)^n}{\log M}\label{eq:densityshouB}
\end{equation}
holds. We now treat the 
weighted density and smallness conditions.
By \eqref{eq:densityshouB} and the lower bound in \eqref{en:measurebelow_a}
in the \compatiaW, we obtain
\begin{equation}\label{eq:deltaD_1}
\EE(\ichi_B \cdot \tilde{\lambda}\mid \ideala(\bv,N))\geq \frac{D_1}{2^{3n+3}}\cdot \delta'.
\end{equation}
By \eqref{eq:NtoMtoMnatural}, 
$W\leq M^{\varepsilon a}$ and $M_{\flat}\leq M_{\natural}$, if
$M$ is sufficiently large depending on $\varepsilon$, $\bv,\delta,\Delta$ and $a$, then 
$N\geq M^{\frac{1-\varepsilon}{2}a}$ holds. This, together with the upper bound in
\eqref{en:measurebelow_a} in the \compatiaW, we obtain
\begin{equation}
\frac{1}{N}\cdot \EE(\ichi_B\cdot \tilde{\lambda}^{r+1}\mid \ideala(\bv,N))\leq D_2^{r+1}\cdot  \frac{(\log M)^{r+1}}{M^{\frac{1-\varepsilon}{2}a}},
\label{eq:gammahyouka}
\end{equation}
where $r\coloneqq\#S-1$.

In what follows, we will specify $\rho$,$\uvarsigma$,$\etaUpsilon$ and $M_{\mathrm{SI}}$ according to the arguments above.
First, we define
$\rho=\rho_{\mathrm{SI}}(D_1,\bv,\delta,\Delta,S) \coloneqq\rho_{\mathrm{RMS}}(\bv,\frac{D_1}{2^{3n+3}}\delta',S)$. Secondly, we define $\uvarsigma = \uvarsigma_{\mathrm{SI}}(\bv,\delta,\Delta,a)\coloneqq (D'(\bv,\Omega'))^a/4$ and  $\etaUpsilon = \etaUpsilon_{\mathrm{SI}}(\bv,\delta,\Delta)\coloneqq D'(\bv,\Omega')$.
Before defining $M_{\mathrm{SI}}$, we need some preparations. Let
$\gamma'$$=\gamma'_{\mathrm{SI}}(D_1,\bv,\delta,\Delta,S)>0$ be $\gamma' \coloneqq\gamma_{\mathrm{RMS}}(\bv,\frac{D_1}{2^{3n+3}}\delta',S)$.
Note that $M$ has been assumed to be sufficiently large up to this point. We further
make $M$ large enough in such a way that
$D_2^{r+1}\cdot  (\log M)^{r+1}\leq \gamma' \cdot M^{\frac{1-\varepsilon}{2}a}$
holds, and we set $M_{\mathrm{SI}}$ to be the smallest positive integer 
such that all these requirements hold for as long as $M\geq M_{\mathrm{SI}}$.
Note that $\|x\|_{\infty,\bv}\geq \etaUpsilon M$ and $N\geq \frac{\uvarsigma M^a}{W}$ by \eqref{eq:Mflat} and \eqref{eq:NtoMtoMnatural}. Hence by  \eqref{eq:deltaD_1} and \eqref{eq:gammahyouka},
Theorem~\ref{thm:RMST} can be applied to $B\subseteq \ideala(\bv,N)$ under
the hypothesis $M\geq M_{\mathrm{SI}}$.
It follows that there exists an $S$-constellation in $B$.
Applying $\Aff_{W,x}$, this leads to the existence of an $S$-constellation
in $A_0''\cap \Aff_{W,x}(\ideala(\bv,N))$.
Since $A_0''\subseteq \DD$, such an $S$-constellation contains no associate pairs.
By the second containment in \eqref{eq:dobleinclusions}, this implies the
existence of an  $S$-constellation in 
$A\cap \ideala(\bv,x,\|x\|_{\infty,\bv}^a)$
without associate pairs.

Finally, we estimate the number of $S$-constellations.
Since $\scrN_{S}(B)\leq \scrN_{S}^{\sharp}(A\cap \ideala(\bv,x,\|x\|_{\infty,\bv}^a))$,
it suffices to give a lower bound on $\scrN_{S}(B)$. By
Theorem~\ref{theorem=weighted-counting} and condition
\eqref{en:measurebelow_a} in the \compatiaW, we obtain
\[
\frac{1}{N(2N+1)^n} \cdot \scrN_{S}(B) \cdot (D_2\log M)^{r+1}\geq \gamma'.
\]
Thus, by setting
$\gamma=\gamma_{\mathrm{SI}}(D_1,D_2,\bv,\delta,\Delta,S,a)$ as $\gamma\coloneqq 2^{n} D_2^{-(r+1)}u^{n+1}\gamma' $, we deduce from \eqref{eq:NtoMtoMnatural} that 
\[
\scrN_{S}(B)\geq \gamma W^{-(n+1)}\cdot \frac{M^{a(n+1)}}{(\log M)^{r+1}}.
\]
This provides the desired estimate.
\end{proof}

\begin{proof}[Proof of Theorem~$\ref{theorem=package_infinite_a}$]
By \eqref{en:counting_infinite_a}, 
there exist $\delta>0$ and an increasing sequence 
$(M_l)_{l\in \NN}$ of real numbers with
$\lim\limits_{l\to \infty}M_l=\infty$ such that, 
for all $l\in \NN$,
\begin{equation}\label{eq:Ml}
\#(A\cap \ideala(\bv,M_l))\geq \delta \cdot \frac{M_l^n}{\log M_l}
\end{equation}
holds.
By \eqref{en:counting_above_a}, 
there exists $\Delta>0$ such that 
\eqref{eq:ideal_Delta_S9} holds.

Fix $a\in (0,1)_{\RR}$ and a standard shape
$S\subseteq \ideala$.
By \eqref{en:logpseudo_infinite_a}, we can choose 
$D_1,D_2>0$ and
$\varepsilon\in (0,1)_{\RR}$ such that 
the property described in 
Definition~\ref{definition=logpseudorandom_infinite_a} holds.
Then take $\rho>0$ and $M_{\mathrm{SI}}\in\NN$
such that the conclusion of Theorem~\ref{theorem=package_a}
holds. In order to apply this conclusion, 
choose $l\in\NN$ in such a way that 
$M_l\geq \max\{M_{\mathrm{SI}},
M_{A,\mathrm{S}\Psi^{\mathrm{SI}}_{\log}}(\rho,\bv,S,a)
\}$
and set
$A'\coloneqq A\cap\ideala(\bv,M_l)$. Then 
by \eqref{eq:Ml},
\[\#A'\geq \delta \cdot \frac{M_l^n}{\log M_l}\]
holds.
This implies that 
condition \eqref{en:counting_a} of Theorem~\ref{theorem=package_a}
is fulfilled with $(A,M)$ replaced by $(A',M_l)$.
Since \eqref{eq:ideal_Delta_S9} is valid even if $A$ is replaced by $A'$,
condition \eqref{en:counting_above} of Theorem~\ref{theorem=package_a}
is fulfilled with $A$ replaced by $A'$.
Finally, since 
$M_l\geq M_{A,\mathrm{S}\Psi^{\mathrm{SI}}_{\log}}(\rho,\bv,S,a)$,
$A'$ satisfies the $(\rho,M_l,\bv,S,a)$-condition with parameters
$(D_1,D_2,\varepsilon)$.
This implies that 
condition \eqref{en:logpseudorandom_a} of Theorem~\ref{theorem=package_a}
is fulfilled with $(A,M)$ replaced by $(A',M_l)$.
Thus, by Theorem~\ref{theorem=package_a}, there exists 
$x_{(a,S)}\in A'$ 
such that
$A'\cap \ideala(\bv,x_{(a,S)},\|x_{(a,S)}\|_{\infty,\bv}^a)$
contains an $S$-constellation without associate pairs.

In order to complete the proof of the theorem, we employ the
diagonal argument.
Fix a decreasing sequence $(a_l)_{l\in\NN}$ in $(0,1)_{\RR}$
with $\lim\limits_{l\to \infty}a_l=0$.
Since $\ideala$ is countable, we can take a sequence
$S_1\subseteq S_2\subseteq\cdots$ of standard shapes in $\ideala$ such that
$\bigcup_{l\in\NN}S_l=\ideala$. 
Applying the above argument to
$(a_l,S_l)$ for each $l\in\NN$, we find
a sequence $(y_l)_{l\in\NN}$
of elements in $A'$ 
such that
$A'\cap \ideala(\bv,y_l,\|y_l\|_{\infty,\bv}^{a_l})$
contains an $S_l$-constellation without associate pairs.
It remains to show that the sequence
$(y_l)_{l\in\NN}$ satisfies the desired property.
Indeed, let
$a\in (0,1)_{\RR}$ and a standard shape $S\subseteq \ideala$
be arbitrary. Then there 
the set $L\coloneqq\{l\in\NN: a<a_l\text{ or }S\not\subseteq S_l\}$
is finite. Then for all $l\in\NN\setminus L$, 
we have $a\geq a_l$ and $S\subseteq S_l$.
Since 
$A'\cap \ideala(\bv,y_l,\|y_l\|_{\infty,\bv}^{a_l})$
contains an $S_l$-constellation without associate pairs,
we see that
$A\cap \ideala(\bv,y_l,\|y_l\|_{\infty,\bv}^{a})$
contains an $S$-constellation without associate pairs.
\end{proof}

We have completed the necessary axiomatization.
The proofs of Theorem~\ref{theorem=shortinterval} 
and Corollary~\ref{corollary=shortinterval} are now within reach.

\begin{proposition}\label{proposition=PK_kouri_a}
For a number field $K$, the set
$\PP_K$ satisfies all the assumptions of Corollary~$\ref{corollary=package_infinite_a}$.
\end{proposition}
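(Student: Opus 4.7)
The plan is to verify each of the three hypotheses of Corollary~\ref{corollary=package_infinite_a} for the triple $(\ideala,\bv,A)=(\OK,\omom,\PP_K)$ by citing results already established earlier in the paper, in complete parallel with the proof of Proposition~\ref{proposition=Masani_Landau}.

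First, for the counting hypothesis \eqref{en:counting_infinite_X_a}, I would invoke Proposition~\ref{proposition=PK_kouri}, which yields the strict positivity of
\[
\liminf_{M\to\infty}\frac{\#(\PP_K\cap\OK(\omom,M))}{M^n(\log M)^{-1}}.
\]
For the ideal-counting hypothesis \eqref{en:counting_above_X_a}, note that for every $\alpha\in\PP_K$ the principal ideal $\alpha\OK$ is a (non-zero) prime ideal, so
\[
\#\{\alpha\OK\in\Ideals_K:\alpha\in\PP_K\cap\OK(L)\}\leq\#\{\idealp\in|\Spec(\OK)|:\Nrm(\idealp)\leq L\},
\]
and the latter is bounded above by a constant multiple of $L/\log L$ for large $L$ by Landau's prime ideal theorem (Theorem~\ref{theorem=Landau}); a small adjustment on $[2,L_0]_{\RR}$ for some fixed $L_0$ absorbs the transient behaviour, so a single global constant $\Delta>0$ can be chosen uniformly in $L\in\RR_{\geq 2}$.

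Finally, the pseudorandomness hypothesis \eqref{en:logpseudo_infinite_X_a}, namely $\PP_K\in\logpseuaOK$, is exactly the content of Theorem~\ref{theorem=logpseudorandom_a}. Assembling these three citations yields the proposition; no extra argument is required and there is no genuine obstacle, since the short-interval refinement of the pseudorandomness of $\PP_K$ has already been absorbed into Theorem~\ref{theorem=logpseudorandom_a} via the choice $R=M^{a/(17(r+1)2^r)}$ and the attendant treatment of the exceptional set $T=\PP_K\cap\OK(\omom,M)\cap\OK(R)$.
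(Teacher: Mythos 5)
Your proof is correct and matches the paper's own argument exactly: the paper verifies the three hypotheses by citing Proposition~\ref{proposition=PK_kouri}, Theorem~\ref{theorem=Landau}, and Theorem~\ref{theorem=logpseudorandom_a}, just as you do. Your extra remark on absorbing the transient range $[2,L_0]_{\RR}$ into a single constant $\Delta$ is a harmless elaboration of a detail the paper leaves implicit.
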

\begin{proof}
By Proposition~\ref{proposition=PK_kouri},
condition \eqref{en:counting_infinite_X_a} holds.
By Theorem~\ref{theorem=Chebotarev}~\eqref{Landau},
condition \eqref{en:counting_above_X_a} holds.
Finally, 
by Theorem~\ref{theorem=logpseudorandom_a}, 
condition \eqref{en:logpseudo_infinite_X_a} holds.
\end{proof}

\begin{proof}[Proof of Theorem~$\ref{theorem=shortinterval}$]
We may assume without loss of generality that
$S$ is a standard shape.
Recall that we have fixed $\chi$ to obtain
$W_{\PP_K,S\Psi_{\log}^{\mathrm{SI}}}(\rho,S)=W_{\PP_K,S\Psi_{\log}^{\mathrm{SI}}}(\rho,\chi,S)$
and
$M_{\PP_K,\mathrm{S}\Psi^{\mathrm{SI}}_{\log}}(\rho, \uvarsigma, \omom, S, a) = M_{\PP_K,\mathrm{S}\Psi^{\mathrm{SI}}_{\log}}(\rho, \uvarsigma, \omom, \chi, S, a)$ in the proof of Theorem~\ref{theorem=logpseudorandom_a}.
Recall $\kappa$ from Theorem~\ref{theorem=zeta_K}.
Let 
$r\coloneqq\#S-1$.
Set $D=D_{\PP_K}(K,r,a)\coloneqq\kappa a\cdot(17(r+1)2^r\cdot c_{\chi})^{-1}$. Set $\rho\coloneqq\rho_{\mathrm{SI}}(D,\omom,\delta\cdot C_{\PP_K,\rmI}(\omom),C_{\mathrm{Lan}},S)$, $\uvarsigma\coloneqq \uvarsigma_{\mathrm{SI}}(\omom,\delta\cdot C_{\PP_K,\rmI}(\omom),C_{\mathrm{Lan}},a)$, and $\etaUpsilon\coloneqq\etaUpsilon_{\mathrm{SI}}(\omom,\delta\cdot C_{\PP_K,\rmI}(\omom),C_{\mathrm{Lan}})$. Define $M_{\mathrm{PESSI}}(\omom,\delta,S,a)$ by
\begin{align*}
&M_{\mathrm{PESSI}}(\omom,\delta,S,a)\\
&\coloneqq\max\{M_{\PP_K,\rmI}(\omom),M_{\PP_K,\mathrm{S}\Psi^{\mathrm{SI}}_{\log}}(\rho,\uvarsigma,\omom,S,a),M_{\mathrm{SI}}(D,D,3/4,\omom,\delta\cdot C_{\PP_K,\rmI}(\omom),C_{\mathrm{Lan}},S,a)\};
\end{align*}
by Proposition~\ref{proposition=PK_kouri}, for $M\geq M_{\PP_K,\rmI}(\omom)$ and for $A$ with $\#A\geq\delta\cdot\#(\PP_K\cap\OK(\omom,M))$, 
\[
\#A\geq\delta\cdot C_{\PP_K,\rmI}(\omom)\cdot \frac{M^n}{\log M}
\]
holds true.
Here, $C_{\mathrm{Lan}}$ is as in \eqref{eq:Landau}.
Suppose that $M\geq M_{\mathrm{PESSI}}$.
It then follows from the proof of 
Theorem~\ref{theorem=logpseudorandom_a} and Lemma~\ref{lemma=stronglog}~\eqref{en:logsubset}
that $A$ satisfies 
the $(\rho,\uvarsigma, M,\omom,S,a)$-condition with parameters $(D,D,3/4)$. 
Thus, we can apply
Theorem~\ref{theorem=package_a} 
with $\Delta=C_{\mathrm{Lan}}$, $D_1=D_2=D$ and 
$\varepsilon=3/4$ to conclude that 
there exists $x\in A$ 
such that $A\cap\OK(\omom,x,\|x\|_{\infty,\omom}^a)$
contains an $S$-constellation without associate pairs.
This proves \eqref{en:tan_seiza}.

As for \eqref{en:tan_seiza_kosuu}, 
employ $W=W_{\PP_K,S\Psi_{\log}^{\mathrm{SI}}}(\rho,S)$ as in Theorem~\ref{theorem=logpseudorandom_a}. Let
\[
\gamma_{\mathrm{PESSI}}(\omom,\delta,S,a)\coloneqq \gamma_{\mathrm{SI}}(D,D,\omom,\delta\cdot C_{\PP_K,\rmI}(\omom),C_{\mathrm{Lan}},S,a)\cdot W^{-(n+1)}.
\]
Then, Theorem~\ref{theorem=package_a} provides the desired estimate of  $\scrN_S^{\sharp}(A\cap \OK(\bv,x,\|x\|_{\infty,\bv}^a))$.
\end{proof}

\begin{proof}[Proof of Corollary~$\ref{corollary=shortinterval}$]
Immediate from
Corollary~\ref{corollary=package_infinite_a} and 
Proposition~\ref{proposition=PK_kouri_a}.
\end{proof}

\subsection{Constellations consisting of elements whose norms are close}\label{subsection=close_norm}

In this subsection, we exhibit one application of our constellation theorems for short intervals. For simplicity, we only state the infinitary version.

\begin{theorem}\label{theorem=package_infinite_a_close}
We use Setting~$\ref{setting=package}$.
Assume that a subset $A\subseteq \ideala\setminus\{0\}$
satisfies the three conditions in Theorem~$\ref{theorem=package_infinite_a}$. 
Then, there exists a sequence 
$(\eT_l)_{l\in \NN}$ of pairwise disjoint finite subsets in $A$ with no associate pairs satisfying the following. For every $a\in (0,1)_{\RR}$, every finite set
$S\subseteq \ideala$ and every $\eta>0$, 
there exists a finite subset $L\subseteq\NN$ such that for all $l\in\NN\setminus L$, the following hold true.
\begin{enumerate}[$(1)$]
  \item\label{en:seiza_aa}
The set $\eT_l$ contains an $S$-constellation.
  \item\label{en:close_aa}
For every $\alpha_1,\alpha_2\in \eT_l$,
\[
\frac{\Nrm(\alpha_2)}{\Nrm(\alpha_1)}\leq 1+\eta\cdot (\min\{\Nrm(\alpha)\colon \alpha\in \eT_l\})^{\frac{a-1}{n}}
\]
holds.
\end{enumerate}
\end{theorem}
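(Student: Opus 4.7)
The plan is a diagonal construction using Theorem~\ref{theorem=package_a}. Fix a decreasing sequence $(a_l)_{l\in\NN}\subseteq(0,1)$ with $a_l\to 0$ and an increasing sequence $(S_l)_{l\in\NN}$ of standard shapes in $\ideala$ with $\bigcup_l S_l=\ideala$. Using assumption~\eqref{en:counting_infinite_a} of Theorem~\ref{theorem=package_infinite_a}, choose $\delta>0$ and an increasing unbounded sequence $(M_l)_{l\in\NN}$ of real numbers with $\#(A\cap\ideala(\bv,M_l))\geq\delta M_l^n/\log M_l$. Enlarging $M_l$ as needed, we may assume that each $M_l$ exceeds the threshold of Theorem~\ref{theorem=package_a} for the parameters $(a_l,S_l)$, and grows fast enough that the short interval $\ideala(\bv,y,\|y\|_{\infty,\bv}^{a_l})$ attached to any candidate center $y$ with $\|y\|_{\infty,\bv}\in[M_l/\log^{\circ 2}M_l,M_l]$ is disjoint from the previously constructed $\eT_j$ for $j<l$. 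The conclusion of Theorem~\ref{theorem=package_a} then produces $y_l\in A$ and an $S_l$-constellation $\eT_l\subseteq A\cap\ideala(\bv,y_l,\|y_l\|_{\infty,\bv}^{a_l})$ without associate pairs; by design $\|y_l\|_{\infty,\bv}\to\infty$ and the $\eT_l$'s are pairwise disjoint finite subsets of $A$.

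The crux is a multiplicative norm estimate on $\eT_l$ exploiting the NL-compatibility extracted from inside the proof of Theorem~\ref{theorem=package_a}: there $y_l$ is selected from the trimmed set $A_0''\subseteq A\cap\DD$ for an $\OKt$-fundamental domain $\DD$, with $\Nrm(y_l)>\Omega' M_l^n\geq\Omega'\|y_l\|_{\infty,\bv}^n$ for a constant $\Omega'>0$ coming from Theorem~\ref{theorem=fundamental_Omega}. Combined with the analog $|\sigma_i(y_l)|\leq\Theta_{\bv}\|y_l\|_{\infty,\bv}$ of Lemma~\ref{lemma=NLCreversed} for $\bv$, this forces an individual lower bound $|\sigma_i(y_l)|\geq c\|y_l\|_{\infty,\bv}$ for every embedding $\sigma_i\colon K\hookrightarrow\CC$, with $c>0$ depending only on $\bv$ and $\Omega'$. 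Hence, for any $\alpha\in\eT_l$, writing $\alpha=y_l+\delta$ with $\|\delta\|_{\infty,\bv}\leq\|y_l\|_{\infty,\bv}^{a_l}$, we get $|\sigma_i(\alpha)/\sigma_i(y_l)-1|\leq|\sigma_i(\delta)|/|\sigma_i(y_l)|=O(\|y_l\|_{\infty,\bv}^{a_l-1})$, and multiplying over all embeddings with the multiplicities of Lemma~\ref{lemma=idealnorm} yields
\[
\frac{\Nrm(\alpha_2)}{\Nrm(\alpha_1)}=1+O_{\bv,\Omega'}\bigl(\|y_l\|_{\infty,\bv}^{a_l-1}\bigr)\qquad\text{for all }\alpha_1,\alpha_2\in\eT_l.
\]

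To verify the conclusion, fix $a\in(0,1)$, $\eta>0$ and a finite $S\subseteq\ideala$. For $l$ large enough $S\subseteq S_l$, so $\eT_l$ contains an $S$-subconstellation, which proves~\eqref{en:seiza_aa}. For~\eqref{en:close_aa}, since $\Nrm(\alpha)\leq C_1\|y_l\|_{\infty,\bv}^n$ on $\eT_l$ by the global upper bound on $|\sigma_i(\alpha)|$, the minimum norm $\mu\coloneqq\min_{\alpha\in\eT_l}\Nrm(\alpha)$ satisfies $\mu\leq C_1\|y_l\|_{\infty,\bv}^n$, and since $(a-1)/n<0$ this gives $\mu^{(a-1)/n}\geq C_1^{(a-1)/n}\|y_l\|_{\infty,\bv}^{a-1}$. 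The desired bound thus reduces to $\|y_l\|_{\infty,\bv}^{a_l-a}\leq\eta\cdot C_1^{(a-1)/n}/C_2$ for a constant $C_2$, which holds for all sufficiently large $l$ because $a_l<a$ eventually and $\|y_l\|_{\infty,\bv}\to\infty$. The delicate step will be the extraction of the NL-compatibility $\Nrm(y_l)\gtrsim\|y_l\|_{\infty,\bv}^n$ from the interior of the proof of Theorem~\ref{theorem=package_a}; without the individual lower bounds on $|\sigma_i(y_l)|$ that this provides, the $\lmugen$-length control on $\eT_l$ alone would not translate into any multiplicative control on norms, since some embedding could a priori send $y_l$ arbitrarily close to zero.
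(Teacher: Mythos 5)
Your proposal is correct and follows essentially the same route as the paper: a diagonal construction over pairs $(a_l,S_l)$, an application of the proof (not merely the statement) of Theorem~\ref{theorem=package_a} to extract the lower bound $\Nrm \geq \Omega' M_l^n$ coming from Theorem~\ref{theorem=fundamental_Omega}, individual lower bounds $|\sigma_i(\cdot)|\gtrsim \|\cdot\|_{\infty,\bv}$ on all embeddings, and the resulting multiplicative estimate on norm ratios, with the final inequality absorbed using $a_l<a$ eventually and $\|y_l\|_{\infty,\bv}\to\infty$. The only cosmetic difference is that the paper compares $\sigma_i(\alpha_1)$ to $\sigma_i(\alpha_2)$ directly (using the norm lower bound for every element of $\eT_l$), whereas you compare each element to the center $y_l$; both yield the same bound.
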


\begin{proof}
Let $\sigma_1,\dots,\sigma_n$
be the embeddings of $K$ into $\CC$. 
Recall that there exists a constant $D=D_{\bv}>0$, depending only on $\bv$, such that for every $\alpha\in \ideala$ and for each $i\in [n]$, we have $|\sigma_i(\alpha)|\leq D \|\alpha\|_{\infty,\bv}$; see Lemma~\ref{lemma=NLCreversed}.
Take an arbitrary $\delta>0$ which is strictly smaller than the limit supremum appearing in condition~\eqref{en:counting_infinite_a} of Theorem~\ref{theorem=package_infinite_a}.
Take $\Delta>0$ as in condition~\eqref{en:counting_above_a} of Theorem~\ref{theorem=package_infinite_a}.
From $\delta$ and $\Delta$, set $\Omega'=\Omega'_{\mathrm{red}}(\bv,\delta,\Delta)>0$ as in Theorem~\ref{theorem=fundamental_Omega}. Take the constant $D(\bv,\Omega')>0$ as in Lemma~\ref{lemma=girigiri_chop}  and $C'(\bv)>0$ as in its proof.

Consider a sequence $(a_l,S_l,\eta_l)_{l\in \NN}$, where $(a_l)_{l\in \NN}$ is a decreasing sequence in $(0,1)_{\RR}$ with $\lim\limits_{l\to \infty}a_l=0$ and $(\eta_l)_{l\in \NN}$ is a decreasing sequence of positive real numbers with $\lim\limits_{l\to \infty}\eta_l=0$. The sequence $(S_n)_{n\in \NN}$ comes from a filtration $S_1\subseteq S_2\subseteq \cdots$ of standard shapes of $\ideala$ with $\bigcup_{l\in\NN}S_l=\ideala$. 
 
Apply the proof of Theorem~\ref{theorem=package_a} to this setting. Then, we obtain a strictly increasing sequence $(M_l)_{l\in \NN}$ in $\RR_{\geq 1}$ with $\lim\limits_{l\to \infty}M_l=\infty$, a sequence $(y_l)_{l\in \NN}$ in $A$, and a sequence of sets $(A''_0(M_l))_{l\in \NN}$ such that for every $l\in \NN$, the following conditions are fulfilled:
\begin{enumerate}[(a)]
  \item\label{en:shokukan} $A''_0(M_l)\subseteq A\cap \ideala(\bv,M_l)\cap \ideala(\bv,y_l,\frac{\eta_l}{2}\|y_l\|_{\infty,\bv}^{a_l})$,
  \item\label{en:normchikai} for every $\alpha\in A''_0(M_l)$, $\Nrm(\alpha)\geq \Omega' M_l^n$,
  \item\label{en:seizaaruyo} $A''_0(M_l)$ admits no associate pairs, and it contains an $S_l$-constellation.
\end{enumerate}
Indeed, for \eqref{en:shokukan}, consider $(a_l/2)_{l\in \NN}$ instead of $(a_l)_{l\in \NN}$, and take sufficiently large $M_l$ to obtain the $\eta_l$-factor. Moreover, we can take in such a way that $(A''_0(M_l))_{l\in \NN}$ is pairwise disjoint. 

Now, for each $l\in\NN$, we set $\eT_l\coloneqq A''_0(M_l)$. What remains is to verify \eqref{en:close_aa}. Fix $l\in \NN$, and take $\alpha\in \eT_l$.  By Lemma~\ref{lemma=girigiri_chop}, we have for each $i\in [n]$
\begin{equation}\label{eq:sigma_dekai}
|\sigma_i(\alpha)|\geq D(\bv,\Omega')\cdot M_l.
\end{equation}
Now take $\alpha_1,\alpha_2\in \eT_l$. Then, since $\|\alpha_1-\alpha_2\|_{\infty,\bv}\leq \eta_lM_l^{a_l}$ by \eqref{en:shokukan}, we have for each $i\in [n]$, 
\begin{equation}\label{eq:sigma_sachiisai}
|\sigma_i(\alpha_1)-\sigma_i(\alpha_2)|\leq \eta_lC'(\bv) M_l^{a_l}.
\end{equation}
By combining Lemma~\ref{lemma=idealnorm}, \eqref{eq:sigma_dekai} and \eqref{eq:sigma_sachiisai}, we have
\[
\frac{\Nrm(\alpha_2)}{\Nrm(\alpha_1)}\leq \prod_{i\in [n]}\left(1+\frac{|\sigma_i(\alpha_1)-\sigma_i(\alpha_2)|}{|\sigma_i(\alpha_1)|}\right)\leq \left(1+\frac{\eta_lC'(\bv)}{D(\bv,\Omega')}\cdot M_l^{a_l-1} \right)^n.
\]
By \eqref{en:normchikai}, for every $a\in (0,1)_{\RR}$ and $\eta>0$, if $l$ is sufficiently large depending on $\bv,A,a$ and $\eta$, then we have for all $\alpha_1,\alpha_2\in \eT_l$,
\[
\frac{\Nrm(\alpha_2)}{\Nrm(\alpha_1)}\leq 1+\eta\cdot \left(\min_{\alpha\in \eT_l}\Nrm(\alpha)\right)^{\frac{a-1}{n}}.
\]
Hence we confirm \eqref{en:close_aa}, and the proof is completed.
\end{proof}

\section{Constellations in ideals and quadratic forms}\label{section=quadraticform}

The goal of the present section is to establish Theorem~\ref{mtheorem=quadraticform};
we in fact prove its refinement, Theorem~\ref{theorem=quadraticformcloseprimes}.
For a binary quadratic form $F\colon \ZZ^2\to \ZZ; F(x,y)\coloneqq ax^2+bxy+cy^2$ with integral coefficients, recall that $F$ is said to be \emph{primitive} if $\mathrm{gcd}(a,b,c)=1$, and that $D_F$ denotes the discriminant $b^2-4ac$. We say that $F$ is \emph{non-degenerate} if $D_F$ is not a perfect square. If $D_F$ is a perfect square, then $F$ decomposes into the product of two linear polynomials in the polynomial ring $\ZZ[x,y]$; such a form can take prime values only if one of the two factors is equal to $\pm 1$, and hence it is not of our interest. Recall also the definition of the relative upper asymptotic density
$\overline{d}_{X,\bv}(A)$
from Definition~\ref{definition=reldens_c}. 

\begin{theorem}[Refinement of Theorem~\ref{mtheorem=quadraticform}]\label{theorem=quadraticformcloseprimes}
Let $F(x,y)\coloneqq ax^2+bxy+cy^2 \in \ZZ[x,y]$ be a non-degenerate primitive quadratic form.
Assume that $a>0$.
Let $\boldsymbol{u}$ be the standard basis of $\ZZ^2$.
\begin{enumerate}[$(1)$]
\item\label{en:positiveprime_again} Let $A\subseteq F^{-1}(\PP)$ be a set with $\overline{d}_{F^{-1}(\PP),\boldsymbol{u}}(A)>0$. 
Then, for a finite set $S\subseteq \ZZ^2$, there exists an $S$-constellation in $A$.
\item\label{en:negativeprime_again}
Assume that $D_F>0$.
Let $A\subseteq F^{-1}(-\PP)$ be a set with $\overline{d}_{F^{-1}(-\PP),\boldsymbol{u}}(A)>0$. 
Then, for a finite set $S\subseteq \ZZ^2$, there exists an $S$-constellation in $A$.
\item\label{en:close_again} In both \eqref{en:positiveprime_again} and  \eqref{en:negativeprime_again}, the following furthermore holds true for $A$: there exists a sequence of pairwise disjoint finite subsets $(\eT_l)_{l\in \NN}$ of $A$ which fulfills the following two conditions.

\begin{enumerate}[$(a)$]
  \item\label{en:kotonaru_prime_again} For every $l\in \NN$, $F\mid_{\eT_l}\colon \eT_l\to \PP_{\QQ}$ is injective. %
  \item\label{en:chikai_prime_again} For every $\theta\in (0,1)_{\RR}$, for every finite subset $S \subseteq \ZZ^2$ and for every $\eta>0$, there exists a finite subset $L\subseteq \NN$ such that for every $l\in \NN\setminus L$, the following hold:
\begin{enumerate}[$(b1)$]
  \item\label{en:chikai1} $\eT_l$ contains an $S$-constellation,
  \item for every $p_1,p_2\in F(\eT_l)$,
\[
\frac{|p_2|}{|p_1|}\leq 1+\eta\cdot (\min\{|p|\colon p\in F(\eT_l)\})^{\frac{\theta-1}{2}}
\]
holds. \label{en:chikai2}
\end{enumerate}
\end{enumerate}
\end{enumerate}
\end{theorem}
\begin{remark}\label{remark=exponent} %
The exponent $\frac{\theta-1}{2}$ in condition~($b2$) in Theorem~\ref{theorem=quadraticformcloseprimes}~\eqref{en:close_again} can \emph{not} be improved to any constant less than $-\frac{1}{2}$ in general. Indeed, 
consider the case where $F(x,y)=x^2+y^2$ and $S=\{(0,0),(1,0),(0,1)\}$. Note that if $|x|\geq |y|$ and if $0<d<|x|$, then
\[
\left|\frac{F(x+d,y)}{F(x,y)}-1\right|\geq \frac{|d|}{2}\cdot (F(x,y))^{-\frac{1}{2}}.
\]
\end{remark}

We exhibit key theorems to the proof of Theorem~\ref{theorem=quadraticformcloseprimes} in Subsection~\ref{subsection=outline10}, where we describe the organization of the present section.

\subsection{Strategy for the proof of Theorem~\ref{theorem=quadraticformcloseprimes}}\label{subsection=outline10}

In this subsection, we describe the strategy for the proof of Theorem~\ref{theorem=quadraticformcloseprimes}. 
The first key to the proof is the following correspondence between primitive quadratic forms and invertible fractional ideals of orders. 
This is a classical result from the times of Gauss, Dirichlet and Dedekind. In the present paper, we exhibit a proof of Theorem~\ref{th:quadratic-forms-ideals} in the appendix for the convenience of the reader; see Theorem~\ref{th:quad-classical}. See also \cite[Subsections~3.2 and 3.3]{Bhargava04}.

\begin{theorem}\label{th:quadratic-forms-ideals}
Let $F(x,y)=ax^2+bxy+cy^2\in \ZZ[x,y]$ be a primitive and non-degenerate binary quadratic form. Then there exist an order $\Or$ in $K\coloneqq \QQ (\sqrt{D_F})$, 
an invertible fractional ideal $\idealc $ of $\Or$,
    a basis $(\gamma_1 ,\gamma_2)$ of $\idealc $ as a $\ZZ $-module,
    and a sign $\signF \in \{ \pm 1 \} $
    such that the following identity holds:
    \begin{equation}\label{eq:quadratic-forms-ideals}
        F(x,y) = \signF\cdot \frac{\Nelm (\gamma_1 x + \gamma_2 y )}{\Nrm (\idealc ) } \quad \text{ for all $(x,y)\in \ZZ ^2$}.
    \end{equation}
\end{theorem}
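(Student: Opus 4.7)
The plan is to construct the order $\Or$, the invertible fractional ideal $\idealc$, the basis $(\gamma_1,\gamma_2)$, and the sign $\signF$ explicitly, and then verify \eqref{eq:quadratic-forms-ideals} by a direct computation. Set $K\coloneqq\QQ(\sqrt{D_F})$, which is a quadratic field since $D_F$ is not a perfect square, and put $\theta\coloneqq(-b+\sqrt{D_F})/2$. From $(2\theta+b)^2=D_F=b^2-4ac$ one sees that $\theta$ is a root of $X^2+bX+ac\in\ZZ[X]$, so $\theta\in\OK$. Define $\Or\coloneqq\ZZ[\theta]=\ZZ+\ZZ\theta$: this is a rank-$2$ $\ZZ$-submodule of $\OK$ closed under multiplication, hence an order of $K$. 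Next set $\idealc\coloneqq\ZZ a+\ZZ\theta$, take $(\gamma_1,\gamma_2)=(a,-\theta)$ as the $\ZZ$-basis, and declare $\signF\coloneqq\sgn(a)$; note that $a\neq0$, since otherwise $D_F=b^2$ would be a perfect square.

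I first check that $\idealc$ is a fractional ideal of $\Or$ by verifying $\theta\cdot\idealc\subseteq\idealc$: the identity $\theta^2=-b\theta-ac$ gives $\theta\cdot a\in\ZZ\theta$ and $\theta\cdot\theta\in\ZZ a+\ZZ\theta$. For the invertibility of $\idealc$, I will use the classical fact that, in a quadratic order, a fractional ideal is invertible if and only if its multiplier ring $(\idealc:\idealc)\coloneqq\{x\in K:x\idealc\subseteq\idealc\}$ coincides with $\Or$. Writing $x=u+v\theta$ with $u,v\in\QQ$ and imposing $xa,x\theta\in\idealc$, one finds after applying $\theta^2=-b\theta-ac$ the conditions $u\in\ZZ$, $va\in\ZZ$, $vc\in\ZZ$, and $u-vb\in\ZZ$; the last combined with $u\in\ZZ$ yields $vb\in\ZZ$. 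A Bezout relation $ma+nb+pc=1$ arising from the primitivity of $F$ then forces $v=m(va)+n(vb)+p(vc)\in\ZZ$, establishing $(\idealc:\idealc)=\Or$.

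For the norm of $\idealc$, the inclusion $\idealc\subseteq\Or$ and the chosen bases give a $\ZZ$-module isomorphism $\Or/\idealc=(\ZZ+\ZZ\theta)/(\ZZ a+\ZZ\theta)\simeq\ZZ/a\ZZ$ via $u+v\theta\mapsto u\bmod a$, whence $\Nrm(\idealc)=|a|$. Using $\theta+\bar\theta=-b$ and $\theta\bar\theta=(b^2-D_F)/4=ac$, a direct expansion then yields
\[
\Nelm(ax-\theta y)=(ax-\theta y)(ax-\bar\theta y)=a^2x^2-a(\theta+\bar\theta)xy+\theta\bar\theta\,y^2=a^2x^2+abxy+acy^2=a\cdot F(x,y).
\]
Dividing by $\Nrm(\idealc)=|a|$ and multiplying by $\sgn(a)$ produces exactly $F(x,y)$, which proves \eqref{eq:quadratic-forms-ideals}.

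The main subtlety will lie in the invertibility step: one must show not merely that $\Or$ acts on $\idealc$, but that the multiplier ring is no larger than $\Or$, which is precisely where the primitivity hypothesis $\gcd(a,b,c)=1$ enters and rules out the possibility that $(\idealc:\idealc)$ is a strictly larger order sandwiched between $\Or$ and $\OK$. The appendix (Theorem~\ref{th:quad-classical}) will in addition refine the present existence statement into the classical dictionary between equivalence classes of primitive non-degenerate quadratic forms of discriminant $D_F$ and equivalence classes of pairs $(\idealc,\signF)$ with $\idealc$ an invertible fractional ideal of the order of discriminant $D_F$; that stronger formulation will ultimately be what is needed to deduce Theorem~\ref{theorem=quadraticformcloseprimes} from the constellation theorem for ideals (Theorem~\ref{th:constellations-in-ideals}).
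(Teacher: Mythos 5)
Your construction is correct and, up to replacing the root $\tau=\frac{-b-\sqrt{D_F}}{2}$ by its conjugate $\theta=\frac{-b+\sqrt{D_F}}{2}$ (immaterial for a pure existence statement), it is exactly the paper's: the paper realizes Theorem~\ref{th:quadratic-forms-ideals} as the ``from quadratic forms to ideals'' direction of Theorem~\ref{th:quad-classical}, taking $\Or=\ZZ\oplus\tau\ZZ$, $\idealc=a\ZZ\oplus\tau\ZZ$, basis $(a,-\tau)$, $\signF=\sgn(a)$, with $\Nrm(\idealc)=|a|$ and the identity $\Nelm(ax-\tau y)=a\cdot F(x,y)$ verified by the same expansion you give.

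The one place where you genuinely diverge is the invertibility of $\idealc$. The paper proves it by directly computing $\idealc\,\overline{\idealc}=(a\Or+\tau\Or)(a\Or+\overline{\tau}\Or)=a^2\Or+a\tau\Or+a\overline{\tau}\Or+ac\,\Or=a\Or$, where the last equality uses $\tau+\overline{\tau}=-b$ and a Bezout relation from $\gcd(a,b,c)=1$; this exhibits the inverse $a^{-1}\overline{\idealc}$ explicitly. You instead compute the multiplier ring $(\idealc:\idealc)$ and invoke the classical theorem that a fractional ideal of a quadratic order is invertible if and only if it is proper. Your multiplier-ring computation is correct and uses primitivity in essentially the same Bezout way, but the ``proper implies invertible'' direction of the theorem you cite is nontrivial and is neither proved nor stated anywhere in the paper (Remark~\ref{remark=absence} gives only the easy converse); moreover its standard proof is precisely the computation $\idealc\,\overline{\idealc}=a\Or$ that the paper carries out. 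So your route is legitimate if you are willing to import that external fact, but the paper's direct computation is shorter and self-contained, and I would recommend replacing the appeal to the classical theorem by it. A small side remark on your closing paragraph: the deduction of Theorem~\ref{theorem=quadraticformcloseprimes} from Theorem~\ref{th:constellations-in-ideals} (via Theorem~\ref{theorem=normform}) only uses the existence statement of Theorem~\ref{th:quadratic-forms-ideals}, not the full bijection $\ClOr\simeq Q(D)$ of Theorem~\ref{th:quad-classical}.
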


We will present the definitions of \emph{orders} and \emph{invertible fractional ideals} in Subsection~\ref{subsection=invideal}; for an invertible ideal $\idealc$ of an order $\Or$, the definition of the \emph{ideal norm} $\Nrm(\idealc)$ of $\idealc$ will be provided in Subsection~\ref{subsection=idealnorm}. Throughout this section, we use the symbols $\idealc$ and $\ideald$ for invertible fractional ideals of an order, and $\ideala$ and $\idealb$ for non-zero fractional ideals of $\OK $.

Inspired by Theorem~\ref{th:quadratic-forms-ideals}, we define the following subsets of fractional ideals of $\OK$ and $\Or$, which may be regarded as  counterparts of the set $\PP_K$ of prime elements. 
Indeed, if $\ideala =\OK $, then the set $\PP _{\OK }$ coincides with $\PP_K$.

\begin{definition}\label{def:of-set-P}
Let $K$ be a number field, and $\OK$ the ring of integers of $K$. Let $\Or$ be an order in $K$.
\begin{enumerate}[(1)]
\item\label{en:PPa}
    Let $\ideala$ be a non-zero fractional ideal of $\OK $.
        Define the set $\PP_{\ideala} \subseteq \ideala $ by 
        \[
            \PP_{\ideala }\coloneqq\{ \alpha \in \ideala : \text{the ideal $\alpha \ideala ^{-1}\subseteq\OK $ is a prime ideal}  \} ,
        \]
        where $\ideala ^{-1}$ is the inverse fractional ideal of $\ideala $ (see the discussion before Theorem~\ref{theorem=primeideals_frac}
        for this concept).
\item\label{en:PPc}
     Let $\idealc $ be an invertible fractional ideal of $\Or$ (Definition~\ref{def:invertible-fractional-ideal} below). Define the set $\PP_{\idealc} \subseteq \idealc $ by 
     \[ 
         \PP_{\idealc } \coloneqq \PP_{\idealc \OK} \cap \idealc ,
     \]
     where $\idealc \OK \subseteq K $ denotes the (non-zero) fractional ideal of $\OK $ generated by $\idealc $.
\end{enumerate}
\end{definition}

With these definitions, we will establish the following theorem. 
As it turns out, this will immediately imply Theorem~\ref{theorem=quadraticformcloseprimes}.

\begin{theorem}\label{th:constellations-in-ideals}
    Let $K$ be a number field of degree $n$ and $\Or$ an order in $K$.
    Let $\idealc$ be an invertible fractional ideal of $\Or $ and $\bw$ be a $\ZZ$-basis of $\idealc$.
    Let $\PP_{\idealc} \subseteq \idealc $ 
    be the set 
    defined in Definition~$\ref{def:of-set-P}$~\eqref{en:PPc}. 
    Assume that $A\subseteq \PP_{\idealc }$ satisfies that $\overline{d}_{\PP_{\idealc },\bw}(A)>0$. 
Then there exists a sequence of pairwise disjoint finite subsets $(\eT_l)_{l\in \NN}$ of $A$ such that the following holds: for every $\theta\in (0,1)_{\RR}$, for every finite subset $S \subseteq \idealc$, and for every $\eta>0$, there exists a finite subset $L\subseteq \NN$ such that for every $l\in \NN\setminus L$, the following hold.
\begin{enumerate}[$(1)$]
  \item\label{en:seiza_aruyo} $\eT_l$ contains an $S$-constellation,
  \item\label{en:saga_sukunaiyo} for every $\alpha_1,\alpha_2\in \eT_l$,
\[
\frac{|\Nelm(\alpha_2)|}{|\Nelm(\alpha_1)|}\leq 1+\eta\cdot (\min\{|\Nelm(\alpha)| : \alpha\in \eT_l\})^{\frac{\theta-1}{n}}
\]
holds.
\end{enumerate}
\end{theorem}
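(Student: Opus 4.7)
The strategy for proving Theorem~\ref{th:constellations-in-ideals} is to reduce it to the axiomatization Theorem~\ref{theorem=package_infinite_a_close}, which concerns subsets of honest non-zero ideals of $\OK$. Since $\idealc$ is only an invertible fractional $\Or$-ideal, I first choose a positive integer $d$ such that $\ideala\coloneqq d\idealc\OK$ is a non-zero ideal of $\OK$. Multiplication by $d$ embeds $\idealc$ as the finite-index $\ZZ$-sublattice $d\idealc\subseteq\ideala$ and sends $\PP_{\idealc}$ into $\PP_{\ideala}\cap d\idealc$: indeed, for $\alpha\in\PP_{\idealc}$ one has $(d\alpha)\ideala^{-1}=\alpha(\idealc\OK)^{-1}$, a prime ideal of $\OK$. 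The map transports $A$ to $A'\coloneqq dA\subseteq\PP_{\ideala}$; since it is affine, an $S$-constellation in a finite subset $F\subseteq\idealc$ corresponds to a $(dS)$-constellation in $dF$, and from $|\Nelm(d\alpha)|=d^{n}|\Nelm(\alpha)|$ the norm-ratio statement~\eqref{en:saga_sukunaiyo} transports with the same $\theta$ and a rescaled $\eta$. Hence it suffices to produce a sequence $(\eT'_{l})$ in $A'$ satisfying the conclusion of Theorem~\ref{theorem=package_infinite_a_close} in the ambient ideal $\ideala$ and set $\eT_{l}\coloneqq d^{-1}\eT'_{l}\subseteq A$.

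The bulk of the work is to verify the three hypotheses of Theorem~\ref{theorem=package_infinite_a_close} for $A'\subseteq\ideala\setminus\{0\}$, with $\bv$ a fixed $\ZZ$-basis of $\ideala$. For condition~\eqref{en:counting_infinite_a} (positive upper $\log$-density), the bi-Lipschitz equivalence between $\|\cdot\|_{\infty,\bv}$ on $\ideala$ and $d\|\cdot\|_{\infty,\bw}$ on $\idealc$ reduces the task to the lower bound
\[
\#(\PP_{\idealc}\cap\idealc(\bw,M))\gtrsim M^{n}/\log M,
\]
which, combined with the hypothesis $\overline{d}_{\PP_{\idealc},\bw}(A)>0$, yields the required $\limsup$. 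This lower bound is the invertible-ideal analog of Proposition~\ref{proposition=PK_kouri}; its proof proceeds along the same lines, since the principal prime ideals attached to elements of $\PP_{\idealc}$ lie in the ideal class of $(\idealc\OK)^{-1}$ and are coprime to the conductor of $\Or$, and one invokes a version of Theorem~\ref{theorem=Chebotarev} for such a fixed class, together with the $\OKt$-orbit bound of Corollary~\ref{corollary=OKt_orbit_ideal}, after restriction to an NL-compatible $\OKt$-fundamental domain. Condition~\eqref{en:counting_above_a} is immediate from Theorem~\ref{theorem=Landau}: every $\alpha\in A'\subseteq\PP_{\ideala}$ yields a prime ideal $\alpha\ideala^{-1}$ of norm at most $L/\Nrm(\ideala)$, whence the number of such $\alpha\OK$ is $O(L/\log L)$.

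Condition~\eqref{en:logpseudo_infinite_a}, namely $A'\in\logpseua$, will follow by heredity to subsets (Lemma~\ref{lemma=stronglog}~\eqref{en:strlogsubset}) once I establish $\PP_{\ideala}\in\logpseua$ for every non-zero ideal $\ideala$ of $\OK$. This is the natural generalization of Theorem~\ref{theorem=logpseudorandom_a} from $\ideala=\OK$, and its proof follows exactly the same template: construct a pseudorandom measure on $\ideala$ from the $(R,\chi)$-von Mangoldt function $\Lambda_{R,\chi}^{\ideala}$ introduced in Subsection~\ref{subsection=GYforideals}, verify its linear-forms condition via the ideal-version Goldston--Y{\i}ld{\i}r{\i}m asymptotic formula (Theorem~\ref{theorem=GYfordieals}), and control the exceptional set $T$ of elements of small norm using Corollary~\ref{corollary=OKt_orbit_ideal}~\eqref{en:roughcounting_ideal} in place of Lemma~\ref{lemma=OKt_orbit}~\eqref{en:roughcounting}. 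With all three conditions established, Theorem~\ref{theorem=package_infinite_a_close} applied to $A'\subseteq\ideala$ produces the required sequence $(\eT'_{l})$, and pulling it back by $d^{-1}$ yields the sequence $(\eT_{l})\subseteq A$ of Theorem~\ref{th:constellations-in-ideals}.

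The main obstacle I anticipate lies in the lower-bound counting $\#(\PP_{\idealc}\cap\idealc(\bw,M))\gtrsim M^{n}/\log M$. In the case $\idealc=\OK$ of Proposition~\ref{proposition=PK_kouri}, this follows immediately from Theorem~\ref{theorem=Chebotarev} together with an NL-compatible fundamental domain. In the present situation, $\PP_{\idealc}$ consists only of those $\alpha\in\PP_{\idealc\OK}$ that happen to lie in the (possibly strict) $\Or$-sublattice $\idealc\subseteq\idealc\OK$, which amounts to restricting the principal prime ideal $\alpha(\idealc\OK)^{-1}$ to be coprime to the conductor of $\Or$ and to lie in a fixed class modulo that conductor. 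Navigating this conductor/class refinement and invoking the corresponding Chebotarev-type density theorem over the narrow ray class group modulo the conductor of $\Or$ is the principal technical hurdle. Once it is handled, the rest of the proof amounts to a rather direct transplant of the arguments of Sections~\ref{section=positiveweighteddensity}--\ref{section=slidetrick} from $\ideala=\OK$ to a general non-zero ideal of $\OK$, using Theorem~\ref{theorem=GYfordieals} throughout in place of Theorem~\ref{Th:Goldston_Yildirim}.
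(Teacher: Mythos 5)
Your proposal is correct and follows essentially the same route as the paper: transport $A$ into a genuine ideal $\ideala\in\Ideals_K$ via a scaling of $\idealc$, verify the three hypotheses of Theorem~\ref{theorem=package_infinite_a_close} (positive upper $\log$-density via a Chebotarev-type count over the ray class group modulo the conductor, the upper ideal-count via Landau, and $\PP_{\ideala}\in\logpseua$ via Theorem~\ref{theorem=GYfordieals}), and pull the resulting sequence back. The only cosmetic difference is that you scale by a positive integer $d$ and defer the coprimality-to-conductor issue to the counting step, whereas the paper scales by the $\xi\in K^{\times}$ of Theorem~\ref{theorem=reduction_order} so that $\xi\idealc$ is coprime to $f$ up front, which is exactly what makes the Cartesian-diagram argument behind Theorem~\ref{theorem=counting_PPc} (your ``principal technical hurdle'') go through.
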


Theorem~\ref{th:constellations-in-ideals}, in fact, can derive a result for the norm forms associated with the triple $(\Or,\idealc,\bw)$; %
see Setting~\ref{setting=normform} for the definition of the associated norm form.
We will state the precise statement in Theorem~\ref{theorem=normform}.
Elsholtz and Frei \cite{Elsholtz-Frei19} have studied prime numbers represented by norm forms. 
See also the work of Maynard \cite{maynard_2020}.
The novel point of our work is that we study combinatorics for the \emph{set of tuples} $(x_1,x_2,\ldots ,x_n)$ 
at which the norm form represents primes. %

In Subsections~\ref{subsection=invideal} and \ref{subsection=idealnorm}, we recall basics %
of the theory of invertible fractional ideals
over orders.
Proposition~\ref{theorem=reduction_order} is the goal in these two subsections. 
In Subsection~\ref{subsection=SPsiSI}, we prove that $\PP_{\ideala}\in \logpseua$ for every  $\ideala\in \Ideals_K$ with the aid of Theorem~\ref{theorem=GYfordieals}. In Subsection~\ref{subsection=counting_Cheb}, we estimate the sizes of certain subsets of $\PP_{\idealc}$ by employing a version of the Chebotarev density theorem (Theorem~\ref{theorem=Chebotarev_narrow}). We prove Theorem~\ref{th:constellations-in-ideals} in Subsection~\ref{subsection=proof_orders}. Finally, in Subsection~\ref{subsec:proof-constellations-quadratic-forms}, we establish Theorem~\ref{theorem=quadraticformcloseprimes}, 
which in turn implies Theorem~\ref{mtheorem=quadraticform}.

\subsection{Preliminaries on invertible ideals of orders}\label{subsection=invideal}
Here we review generalities of invertible fractional ideals of orders.
The reader with an algebraic background may skip this and the next subsections.

For a number field $K$ of degree $n$, an \emph{order} $\Or$ in $K$ is a  subring of $\OK$ 
which
is isomorphic to $\ZZ^n$ as a $\ZZ$-module. In particular, $\OK$ itself is an example of an order in $K$; this is why it is also called the \emph{maximal order}. One example of a non-maximal order is $\ZZ[\sqrt{5}]$, where  $K=\QQ(\sqrt{5})$. For this $K$, the maximal order $\OK$ is $\ZZ[\frac{1+\sqrt{5}}{2}]$. Orders in number fields are always %
$1$-dimensional Noetherian integral domains; see for instance \cite[Chapter~I, Proposition 12.2]{Neukirch}.
Recall that an (always associative commutative unital) ring is said to be {\em Noetherian} if every non-empty set of its ideals has a maximal element with respect to the inclusion relation.
For a ring, the condition that it is a $1$-dimensional integral domain is equivalent to saying that it is not a field and its only prime ideals are the zero ideal and the non-zero maximal ideals. %

\begin{lemma}\label{lem:the-only-property-we-need}
    Let $\thering $ be a $1$-dimensional Noetherian integral domain.
    For every non-zero ideal $\ideald \subseteq \thering $,
    there exist finitely many maximal ideals
    $\idealp_1,\dots,\idealp_s$, not necessarily distinct,
    such that $\idealp _1\cdots \idealp _s \subseteq \ideald$ holds.
\end{lemma}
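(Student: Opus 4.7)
The plan is to prove the lemma by Noetherian induction on the ideal $\ideald$. Let $\Sigma$ denote the collection of all non-zero ideals of $\thering$ for which the conclusion fails. The goal is to show $\Sigma = \varnothing$; suppose for contradiction that $\Sigma$ is non-empty. Since $\thering$ is Noetherian, $\Sigma$ admits a maximal element $\ideald_0$ with respect to inclusion.

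First I would rule out the two easy cases. If $\ideald_0 = \thering$, then picking any single maximal ideal $\idealp$ of $\thering$ (one exists because $\thering$ is $1$-dimensional) gives $\idealp \subseteq \thering = \ideald_0$, contradicting $\ideald_0 \in \Sigma$. If $\ideald_0$ is prime, then since $\ideald_0$ is non-zero and $\thering$ is $1$-dimensional, $\ideald_0$ is already maximal, so the conclusion holds with $s=1$. Hence $\ideald_0$ must be a proper non-prime ideal, and one can choose $a, b \in \thering \setminus \ideald_0$ with $ab \in \ideald_0$.

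The main step is to consider the two strictly larger ideals $\ideald_0 + (a)$ and $\ideald_0 + (b)$. Both contain the non-zero ideal $\ideald_0$ and strictly properly so, so by maximality of $\ideald_0$ in $\Sigma$ the conclusion applies to both: there exist maximal ideals $\idealp_1,\dots,\idealp_r$ and $\idealq_1,\dots,\idealq_t$ such that
\[
\idealp_1\cdots\idealp_r \subseteq \ideald_0 + (a),\qquad \idealq_1\cdots\idealq_t \subseteq \ideald_0 + (b).
\]
Concatenating these two lists and multiplying out yields
\[
\idealp_1\cdots\idealp_r\cdot \idealq_1\cdots\idealq_t \subseteq \bigl(\ideald_0 + (a)\bigr)\bigl(\ideald_0 + (b)\bigr) \subseteq \ideald_0 + (ab) \subseteq \ideald_0,
\]
contradicting $\ideald_0 \in \Sigma$ and closing the argument.

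The argument uses the $1$-dimensional hypothesis only at the mild point of identifying non-zero primes with maximal ideals, and the Noetherian hypothesis only to extract $\ideald_0$ from $\Sigma$; no use of the Krull intersection theorem or of any factorization structure is needed. The only substantive point to verify carefully is that $\ideald_0+(a)$ and $\ideald_0+(b)$ are non-zero so that the inductive hypothesis legitimately applies to them, and this is immediate because they both contain the non-zero ideal $\ideald_0$.
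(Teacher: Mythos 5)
Your proof is correct and follows essentially the same route as the paper's: Noetherian induction via a maximal counterexample, ruling out the cases $\ideald_0=\thering$ and $\ideald_0$ prime, then using $a,b\notin\ideald_0$ with $ab\in\ideald_0$ and multiplying the inclusions for $\ideald_0+(a)$ and $\ideald_0+(b)$. The only (harmless) difference is that you spell out the intermediate containment $(\ideald_0+(a))(\ideald_0+(b))\subseteq\ideald_0+(ab)$, which the paper leaves implicit.
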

\begin{proof}
    Let $\Phi $ be the set of non-zero ideals which do not satisfy the claimed property.
    By way of  contradiction, suppose $\Phi $ is non-empty.
    Then by the Noetherian assumption, it has a maximal element. We write $\ideald $ for it. 
   Note that $\ideald $ is not equal to $\thering $
   or a maximal ideal because they trivially satisfy the claimed condition.
        Since $\thering $ is a $1$-dimensional integral domain,
    it follows that $\ideald $ is not a prime ideal.
        Hence there exist elements $a,b\in \thering \setminus \ideald $ with $ab\in \ideald $.
    The two ideals $a\Or+\ideald $ and $b\Or+\ideald $ are strictly larger than $\ideald $. By the maximality of $\ideald $, they do not belong to $\Phi $.
    By the definition of $\Phi $, there exist maximal ideals $\idealp _1,\dots ,\idealp _r , \idealp _{r+1},\dots, \idealp _s$ such that $\idealp _1\cdots \idealp _r \subseteq a\Or+\ideald $ and that $\idealp _{r+1}\cdots \idealp _s \subseteq b\Or+\ideald$. Take the product of the two inclusions above.
The right-hand side of the product is contained in $\ideald$ by the relation $ab\in \ideald $. Hence we obtain $\idealp _1\cdots \idealp _s \subseteq \ideald$. This contradicts $\ideald \not\in \Phi $, and we conclude that $\Phi $ is empty.
\end{proof}

Also recall the following general fact.
\begin{lemma}\label{lem:power-maximal-ideal}
    Let $\idealp $ and $\idealq $ be maximal ideals in a given ring.
    If there exists a positive integer $e>0$ such that $\idealp ^e \subseteq \idealq $ holds,
    then we have $\idealp =\idealq $.
\end{lemma}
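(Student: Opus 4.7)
The plan is to exploit two basic facts: every maximal ideal is prime, and a prime ideal containing a finite product of ideals must contain one of the factors. First I would note that $\idealq$ is a prime ideal, since it is maximal by assumption. Then the inclusion $\idealp \cdot \idealp \cdots \idealp \subseteq \idealq$ ($e$ factors) together with the primality of $\idealq$ forces $\idealp \subseteq \idealq$; this is the standard induction on $e$ using the defining property that if $IJ \subseteq \idealq$ for ideals $I, J$ with $\idealq$ prime, then $I \subseteq \idealq$ or $J \subseteq \idealq$. The base case $e = 1$ is immediate.

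Once $\idealp \subseteq \idealq$ is established, the conclusion $\idealp = \idealq$ follows from the maximality of $\idealp$: a maximal ideal is by definition a proper ideal that is not properly contained in any other proper ideal, and $\idealq$ is proper (again by maximality), so the chain $\idealp \subseteq \idealq \subsetneq \text{ring}$ forces equality at the first inclusion.

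The proof is essentially a two-line argument and I do not anticipate any obstacle; the only mild care needed is to ensure that the ambient ring need not be Noetherian or a domain, so the argument must rely purely on the prime/maximal ideal definitions and not invoke anything like unique factorization. In the LaTeX write-up I would state the induction explicitly for clarity, of the shape: ``suppose inductively $\idealp^{e-1} \not\subseteq \idealq$ implies something, etc.,'' but in fact the simpler phrasing is to argue directly that, since $\idealq$ is prime and contains the product $\idealp \cdot \idealp^{e-1}$, either $\idealp \subseteq \idealq$ (and we are done) or $\idealp^{e-1} \subseteq \idealq$, whence we repeat; after at most $e$ steps we arrive at $\idealp \subseteq \idealq$.
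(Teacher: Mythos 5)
Your proof is correct and follows essentially the same route as the paper's: use that the maximal ideal $\idealq$ is prime to deduce $\idealp \subseteq \idealq$ from $\idealp^e \subseteq \idealq$, then invoke maximality of $\idealp$ to upgrade the inclusion to an equality. The only difference is that you spell out the induction on $e$ that the paper leaves implicit.
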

\begin{proof}
    Since $\idealq $ is in particular a prime ideal,
    from the given inclusion we have $\idealp \subseteq \idealq $.
    Then by the fact that $\idealp $ is maximal,
    we obtain the equality $\idealp = \idealq $.
\end{proof}

Now we recall the definition of invertible fractional ideals of an order.
\begin{definition}\label{def:invertible-fractional-ideal}
   Let $K$ be a number field, and $\Or$ an order in $K$. A non-zero $\thering $-submodule $\idealc$ of  $K$ is called an {\em invertible fractional ideal} of $\thering $ if there exists a  non-zero $\thering $-submodule $\ideald$ of $K$ such that $\idealc \ideald =\thering$.
    Here, the left-hand side is defined to be the $\thering $-submodule of $K$ generated by the set $\{ cd : c\in \idealc ,\ d\in \ideald \}$.
    Such a $\ideald $ is uniquely determined by $\idealc$, and is called the {\em inverse fractional ideal} of $\idealc $. It is written as $\idealc ^{-1}$.
    If moreover $\idealc \subseteq \Or$ holds, then we say that $\idealc$ is an \emph{invertible ideal} of $\Or$.
\end{definition}

In view of Lemma~\ref{lem:monogenic-modulo-p^e} \eqref{item:finitely-generated} below, the use of the adjective `fractional' in the term `invertible fractional ideal' is consistent with Section~\ref{section=preliminarynumbertheory}, where fractional ideals of $\OK $ meant finitely generated $\OK $-submodules of $K$.
By \cite[Chapter~I, Proposition~3.8]{Neukirch}, every non-zero fractional ideal $\ideala $ of $\OK $ is invertible as was pointed out in the discussion before Theorem~\ref{theorem=primeideals_frac}. 

\begin{lemma}\label{lem:monogenic-modulo-p^e}
Let $K$ be a number field, $\Or$ an order in $K$. 
Let $\idealc $ be an invertible fractional ideal of $\thering $. 
    \begin{enumerate}[$(1)$]
        \item\label{item:finitely-generated} The $\Or$-module $\idealc$ is finitely generated. %
        \item\label{item:p^e}
        Let $\idealp $ be a maximal ideal. 
        Then there exists an element $c\in \idealc $ such that 
        for every ideal $\ideald $ containing a power $\idealp ^e$ of $\idealp$, where $e\in \NN$,
        the multiplication map
        $\thering \ni x \mapsto cx \in \idealc $ induces a bijection 
        \begin{equation}\label{eq:monogenic-modulo-p^e}
            \thering / \ideald  \xrightarrow \simeq \idealc / \idealc \ideald .
        \end{equation}
    \end{enumerate}
\end{lemma}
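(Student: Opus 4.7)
The plan is to treat parts (1) and (2) in turn, with the finite generating set produced in part (1) serving as the pool from which the element $c$ in part (2) will be extracted.

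For part (1), I would exploit invertibility directly: since $\idealc\idealc^{-1}=\thering$, in particular $1\in\idealc\idealc^{-1}$, so one can write $1=\sum_{i=1}^{s}c_id_i$ for finitely many $c_i\in\idealc$ and $d_i\in\idealc^{-1}$. For any $c\in\idealc$, multiplying this identity by $c$ yields $c=\sum_ic_i(d_ic)$, where $d_ic\in\idealc^{-1}\idealc=\thering$. Hence $c_1,\dots,c_s$ generate $\idealc$ as an $\thering$-module.

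For part (2), the first step is to locate among the generators $c_1,\dots,c_s$ of part~(1) an element $c=c_i$ for which the ideal $c\idealc^{-1}\subseteq\thering$ is coprime to $\idealp$. The key point is that $\sum_ic_i\idealc^{-1}=\bigl(\sum_ic_i\thering\bigr)\idealc^{-1}=\idealc\idealc^{-1}=\thering$, so on reduction modulo the maximal ideal $\idealp$ at least one summand must surject onto the field $\thering/\idealp$; any such index works. An elementary expansion of $(c\idealc^{-1}+\idealp)^{e}$ then upgrades the coprimality to $c\idealc^{-1}+\idealp^{e}=\thering$ for every $e\geq 1$.

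The second step is to verify that for such $c$ the multiplication map $\thering/\idealp^{e}\to\idealc/\idealc\idealp^{e}$ is a bijection. Surjectivity follows from the identity
\[
c\thering+\idealc\idealp^{e}=(c\idealc^{-1})\idealc+\idealp^{e}\idealc=(c\idealc^{-1}+\idealp^{e})\idealc=\idealc,
\]
where I use the distributive law $(I+J)K=IK+JK$ for fractional ideals. For injectivity, suppose $x\in\thering$ satisfies $cx\in\idealc\idealp^{e}$; multiplying through by $\idealc^{-1}$ yields $(c\idealc^{-1})(x\thering)\subseteq\idealp^{e}$. Writing $1=i+p$ with $i\in c\idealc^{-1}$ and $p\in\idealp^{e}$, one then has $x=ix+px\in\idealp^{e}$.

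The final step is to pass from $\ideald=\idealp^{e}$ to an arbitrary ideal $\ideald\supseteq\idealp^{e}$. Since the isomorphism $\thering/\idealp^{e}\xrightarrow{\simeq}\idealc/\idealc\idealp^{e}$ is $\thering$-linear, quotienting source and target by $\ideald/\idealp^{e}$ and its image $\idealc\ideald/\idealc\idealp^{e}$ respectively induces the desired isomorphism $\thering/\ideald\xrightarrow{\simeq}\idealc/\idealc\ideald$. I do not anticipate a serious obstacle; the only point requiring care is the bookkeeping of fractional-ideal manipulations, chiefly the distributive law and the fact that $c\idealc^{-1}\subseteq\thering$ follows from $c\in\idealc$.
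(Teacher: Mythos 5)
Your argument is correct, and part (1) together with the selection of $c=c_i$ coincides with the paper's proof. For part (2), however, you take a noticeably different route. The paper proves the coprimality $c\idealc^{-1}+\ideald=\thering$ for an \emph{arbitrary} ideal $\ideald\supseteq\idealp^{e}$ in one stroke, by observing that any maximal ideal containing $\ideald$ must contain $\idealp^{e}$ and hence equal $\idealp$ (Lemma~\ref{lem:power-maximal-ideal}), which is impossible since $cd_i\notin\idealp$; it then gets injectivity from the identity $I\cap J=IJ$ for coprime ideals (the Chinese-remainder ingredient \eqref{eq:Chinese-ideals}). You instead first upgrade $c\idealc^{-1}+\idealp=\thering$ to $c\idealc^{-1}+\idealp^{e}=\thering$ by expanding $(c\idealc^{-1}+\idealp)^{e}$, prove the bijection only for $\ideald=\idealp^{e}$ with an explicit partition of unity $1=i+p$ for injectivity, and finally pass to general $\ideald$ by quotienting the isomorphism $\thering/\idealp^{e}\simeq\idealc/\idealc\idealp^{e}$. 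This is more elementary and self-contained (no appeal to the maximal-ideal lemma or to the coprime intersection-equals-product identity), at the cost of one extra verification you gloss over: that the image of $\ideald/\idealp^{e}$ under multiplication by $c$ is exactly $\idealc\ideald/\idealc\idealp^{e}$. That does hold --- multiply the surjectivity identity $c\thering+\idealc\idealp^{e}=\idealc$ by $\ideald$ to get $c\ideald+\idealc\idealp^{e}=\idealc\ideald$ --- but it should be stated, since without it the induced map on quotients is not visibly the one in \eqref{eq:monogenic-modulo-p^e}.
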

\begin{proof}
    By  $\idealc \cdot \idealc ^{-1}=\thering $,
there exist elements $c_1,\dots ,c_r \in \idealc $ and
    $d_1,\dots ,d_r\in \idealc ^{-1}$ such that we have
    \begin{equation}\label{eq:sum-ab-1}
        c_1d_1+\dots +c_rd_r =1 . 
    \end{equation}
    Multiply this by an arbitrary $c\in \idealc$ to get $c_1(d_1c) + \dots + c_r(d_rc) = c $.
    Since 
    we have $d_ic\in \Or $ for all $i\in [r]$,
    this shows that $\idealc $ is generated by the elements $c_1,\dots ,c_r$. This proves \eqref{item:finitely-generated}.

    To show \eqref{item:p^e}, in \eqref{eq:sum-ab-1} note that for all $i\in [r]$, $c_i d_i \in\thering $ holds. It follows that there exists $i\in [r]$ such that $c_id_i \in \thering \setminus \idealp $. Fix such $i$.
    We claim that $c\coloneqq c_i $ is an element with the desired property. To prove this claim, first observe  the following equality of 
    ideals
    of $\thering $: $c \idealc ^{-1} + \ideald  = \thering$.
    Indeed, suppose that it is not the case. Then, there must exist a maximal ideal $\idealq $ of $\thering $ containing the left-hand side.
    Since $c d_i \in c\idealc ^{-1}$ is not contained in $\idealp $, we \havethat\   $\idealq \neq \idealp $.
    However, by Lemma~\ref{lem:power-maximal-ideal}, 
    $\idealp $ is the only maximal ideal that can contain $\ideald $, a contradiction.
    Thus we obtain $c \idealc ^{-1} + \ideald  = \thering$.
    By multiplying this by $\idealc $, we have $c \thering +\idealc \ideald = \idealc$ as an equality of submodules of $\idealc $.
    This is equivalent to saying that 
    the map \eqref{eq:monogenic-modulo-p^e} is surjective.

    For the injectivity,
    apply \eqref{eq:Chinese-ideals} to $c \idealc ^{-1} + \ideald  = \thering$, and  \obtainthat\  $c \idealc ^{-1} \cap \ideald = c \idealc ^{-1}\cdot \ideald$. 
Since $\idealc$ is an invertible fractional ideal, 
we \havethat\  $c \thering \cap \idealc \ideald  = c \ideald $.
    This implies that if an element of the form $c d$ with $d\in \thering $ belongs to $\idealc\ideald $, then $d$ is necessarily in $\ideald $.
    It is equivalent to the injectivity of the map \eqref{eq:monogenic-modulo-p^e}.
\end{proof}

\begin{proposition}\label{prop:monogenic-modulo-b}
Let $K$ be a number field, and $\Or$ an order in $K$.
Let $\idealc $ be an invertible fractional ideal of $\thering $
    and $\ideald \subseteq \thering $ a non-zero ideal.
    Then the $\thering $-module $\idealc /\idealc \ideald $ is 
    isomorphic to $\thering / \ideald $.
    In particular, it is 
    generated by a single element.
\end{proposition}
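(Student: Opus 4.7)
The plan is to reduce the statement to the ``local'' case already settled in Lemma~\ref{lem:monogenic-modulo-p^e}~\eqref{item:p^e}, using the Chinese Remainder Theorem. Concretely, I would first use Lemma~\ref{lem:the-only-property-we-need} to find maximal ideals (not necessarily distinct) whose product lies inside $\ideald$. Collecting repetitions, this yields distinct maximal ideals $\idealq_1,\dots,\idealq_t$ and positive integers $e_1,\dots,e_t$ such that
\[
\idealq_1^{e_1}\cdots\idealq_t^{e_t}\subseteq \ideald .
\]
Set $\ideald_i \coloneqq \ideald + \idealq_i^{e_i}$ for each $i\in[t]$. Because the $\idealq_i$ are pairwise distinct maximal ideals, $\idealq_i^{e_i}+\idealq_j^{e_j}=\Or$ whenever $i\neq j$, so the ideals $\ideald_1,\dots,\ideald_t$ are pairwise coprime.

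Next I would verify the key identity $\ideald=\prod_{i\in[t]}\ideald_i$. One inclusion is formal: expanding the product $\prod_i(\ideald+\idealq_i^{e_i})$, every resulting term either contains a factor $\ideald$ or equals $\prod_i\idealq_i^{e_i}\subseteq\ideald$, so $\prod_i\ideald_i\subseteq\ideald$. For the reverse inclusion, observe that $\ideald\subseteq\bigcap_i\ideald_i$ trivially, and by the Chinese Remainder Theorem (Lemma~\ref{lem:Chinese}) applied to the pairwise coprime ideals $\ideald_i$, we have $\bigcap_i\ideald_i=\prod_i\ideald_i$; chaining these gives $\ideald=\bigcap_i\ideald_i=\prod_i\ideald_i$.

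With this identity in hand, I would apply Lemma~\ref{lem:Chinese} twice. First, to $\Or$ as a module over itself, to obtain a ring isomorphism
\[
\Or/\ideald \;\xrightarrow{\simeq}\; \prod_{i\in[t]}\Or/\ideald_i .
\]
Second, to the $\Or$-module $M=\idealc$, to obtain an $\Or$-module isomorphism
\[
\idealc/\idealc\ideald \;\xrightarrow{\simeq}\; \prod_{i\in[t]}\idealc/\idealc\ideald_i .
\]
Since each $\ideald_i$ contains the power $\idealq_i^{e_i}$ of the maximal ideal $\idealq_i$, Lemma~\ref{lem:monogenic-modulo-p^e}~\eqref{item:p^e} supplies, for each $i$, an element $c_i\in\idealc$ whose multiplication map induces an $\Or$-linear bijection $\Or/\ideald_i\xrightarrow{\simeq}\idealc/\idealc\ideald_i$. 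Taking the product of these isomorphisms and composing with the two CRT identifications yields the desired $\Or$-module isomorphism $\Or/\ideald\simeq\idealc/\idealc\ideald$. The ``in particular'' assertion is then immediate, since $\Or/\ideald$ is generated by the class of $1$, so its image under any $\Or$-linear isomorphism is generated by a single element.

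The only delicate step is verifying the equality $\ideald=\prod_i(\ideald+\idealq_i^{e_i})$; the two-sided inclusion above is the main content of the argument, and it critically uses that $\prod_i\idealq_i^{e_i}\subseteq\ideald$ together with the coprimality afforded by the $\idealq_i$ being distinct maximal ideals. Everything else is a formal packaging of the Chinese Remainder Theorem with the local result of Lemma~\ref{lem:monogenic-modulo-p^e}.
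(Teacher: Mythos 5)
Your proof is correct and follows essentially the same route as the paper's: reduce to the case where $\ideald$ contains a power of a single maximal ideal via Lemma~\ref{lem:the-only-property-we-need}, decompose $\ideald$ as a product of pairwise coprime ideals $\ideald_i=\ideald+\idealq_i^{e_i}$, apply the Chinese Remainder Theorem (Lemma~\ref{lem:Chinese}) to both $\Or$ and $\idealc$, and invoke Lemma~\ref{lem:monogenic-modulo-p^e}~\eqref{item:p^e} on each factor. The only cosmetic difference is that the paper uses a single uniform exponent $e$ for all the maximal ideals rather than individual exponents $e_i$, and your verification of $\prod_i\ideald_i\subseteq\ideald$ is spelled out slightly more explicitly than in the paper.
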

\begin{proof}
Recall that $\Or$ is a $1$-dimensional Noetherian integral domain.    By Lemma \ref{lem:the-only-property-we-need}, there exist distinct maximal ideals
    $\idealp _1,\dots ,\idealp _s$ of $\thering $ and an exponent $e\ge 0$ such that $(\idealp _1\cdots \idealp _s )^e\subseteq \ideald$ holds. 
    For each $i\in [s]$, set $\ideald _i \coloneqq \ideald + \idealp _i^e $.
    By Lemma \ref{lem:power-maximal-ideal}, the ideals $\ideald _i$, $i\in [s]$, are mutually coprime.
    Then by \eqref{eq:Chinese-ideals} we have $\bigcap_{i\in[s]} \ideald _i = \prod_{i\in[s]} \ideald _i $. Since $(\idealp _1\cdots \idealp _s )^e\subseteq \ideald$, 
    we conclude 
    \[
        \bigcap_{i\in[s]} \ideald _i
        = \prod _{i\in [s]} (\ideald +\idealp _i^e)
        \subseteq \ideald .
    \]
    On the other hand, we trivially have $\ideald \subseteq \bigcap_{i\in [s]} \ideald _i$. Therefore, $\ideald = \bigcap _{i \in [s]} \ideald _i$ holds.
    Hence by the Chinese remainder theorem (formula~\eqref{eq:Chinese-modules}),
    we have an isomorphism of $\thering$-modules $\idealc/\idealc\ideald \simeq \prod_{i \in [s]} \idealc / \idealc \ideald_i.$
    By Lemma \ref{lem:monogenic-modulo-p^e}, for each $i\in [s]$, the $i$-th factor on the right-hand side is 
    isomorphic to $\thering / \ideald _i $.
    Finally, the product $\prod_{i\in [s]} \thering / \ideald _i$ is isomorphic to $\thering /\ideald $ 
    again by the Chinese remainder theorem (formula~\eqref{eq:Chinese-rings}).
\end{proof}

Proposition~\ref{prop:monogenic-modulo-b} in particular implies Lemma~\ref{lemma=a/Wa}, which has been employed in Section~\ref{section=maintheoremfull} in axiomatized constellation theorems. 

\begin{corollary}\label{cor:coprime}
Let $K$ be a number field, and $\Or$ an order in $K$.
Let $\idealc $ be an invertible fractional ideal of $\Or$, and $\cond \in \thering \setminus \{ 0 \} $.
    Then there exists an element $x\in K^{\times} $ such that 
$x \idealc + \cond \thering = \thering$ holds as an equality of $\Or$-submodules of $K$. That means, the submodule $x\idealc $ is contained in $\thering $ and coprime with the given element $\cond $.
\end{corollary}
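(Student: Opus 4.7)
My plan is to apply Proposition~\ref{prop:monogenic-modulo-b} to the inverse fractional ideal $\idealc^{-1}$ (rather than to $\idealc$ itself) and then transport the resulting cyclic generator back to $\idealc$ by multiplying the relevant equality of submodules of $K$ by $\idealc$.

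First I would dispose of the trivial case where $\conductor$ is a unit in $\thering$: there $\conductor\thering = \thering$, and any nonzero $x \in \idealc^{-1}$ automatically satisfies $x\idealc \subseteq \thering$ and $x\idealc + \conductor\thering = \thering$.

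In the main case, assume $\conductor$ is not a unit, so that $\thering/\conductor\thering$ is a nonzero $\thering$-module. Since $\idealc^{-1}$ is an invertible fractional ideal of $\thering$, Proposition~\ref{prop:monogenic-modulo-b} applied with $\idealc$ replaced by $\idealc^{-1}$ and $\ideald = \conductor\thering$ furnishes an $\thering$-module isomorphism $\idealc^{-1}/\conductor\idealc^{-1} \simeq \thering/\conductor\thering$. In particular, the left-hand side is cyclic, so there exists $c \in \idealc^{-1}$ whose image generates it, equivalently $c\thering + \conductor\idealc^{-1} = \idealc^{-1}$ as $\thering$-submodules of $K$. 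Note that $c \neq 0$, because otherwise the equality would force $\conductor\idealc^{-1} = \idealc^{-1}$, whereas the isomorphism above shows $\idealc^{-1}/\conductor\idealc^{-1} \neq 0$. Multiplying the equality by $\idealc$ and using $\idealc\cdot\idealc^{-1} = \thering$ yields $c\idealc + \conductor\thering = \thering$, which is exactly the desired identity with $x = c \in K^{\times}$.

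I do not anticipate any genuine obstacle; the only point to verify is that scaling the equality $c\thering + \conductor\idealc^{-1} = \idealc^{-1}$ by $\idealc$ preserves the equality of $\thering$-submodules of $K$, and this follows at once from the invertibility relation $\idealc\cdot\idealc^{-1} = \thering$ together with the distributivity of the product of $\thering$-submodules of $K$ over the sum.
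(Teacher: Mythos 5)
Your proposal is correct and is essentially the paper's own proof: apply Proposition~\ref{prop:monogenic-modulo-b} to $\idealc^{-1}$ with $\ideald=\cond\thering$ to obtain $c\in\idealc^{-1}\setminus\{0\}$ with $c\thering+\cond\idealc^{-1}=\idealc^{-1}$, then multiply by $\idealc$. The extra care you take (the unit case for $\cond$ and the verification that $c\neq 0$) is a minor tidying of details the paper leaves implicit.
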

\begin{proof}
    Apply Proposition \ref{prop:monogenic-modulo-b} to $\idealc ^{-1} $ 
    and $\ideald = \cond \thering $ to find an element $x\in \idealc ^{-1}\setminus \{ 0\} $ such that $x\thering + \cond \idealc ^{-1} = \idealc ^{-1} $.
    By multiplying this by $\idealc $, we obtain the desired equality.
\end{proof}

\subsection{Ideal norms of invertible fractional ideals}\label{subsection=idealnorm}
The goal of this subsection is to prove Proposition~\ref{theorem=reduction_order}, which enables us to reduce the setting of an invertible fractional ideal of an order to a non-zero (integral) ideal of $\OK $. %
First we recall the notion of the norm of an invertible fractional ideal in the context of number fields.

Throughout this subsection, we assume the following setting:
\begin{setting} \label{setting:ideal_norm}
Let $K$ be a number field of degree $n$,
and $\Or $ an 
order in $K$.
Let $\cond$ be a positive integer satisfying $\cond \OK \subseteq \Or $ (for instance, $\cond \coloneqq \# (\OK / \Or )$, which is finite because $\OK $ and $\Or $ are both isomorphic to $\ZZ ^n$ as $\ZZ $-modules).
\end{setting}

\begin{definition}\label{def:of-norm}
Let $\idealc$ be an invertible fractional ideal of $\Or$.
    \begin{enumerate}[(1)]
        \item\label{en:Nrm} 
        If $\idealc \subseteq \Or$,
        we define its \emph{ideal norm} by $\Nrm (\idealc ) \coloneqq \# (\Or / \idealc)$.        
        \item\label{en:Nrmfrac}
        In general, we define the {\em ideal norm} of $\idealc $ in the following manner: choose an element $d\in \Or \setminus \{ 0\}$ such that $d\idealc \subseteq \Or$ (which exists by Lemma \ref{lem:monogenic-modulo-p^e} \eqref{item:finitely-generated}), and set 
        \begin{equation}\label{eq:def-of-norm}
            \Nrm (\idealc ) \coloneqq \frac{\Nrm (d\idealc )}{\Nrm (d\Or )} .
        \end{equation}
    \end{enumerate}
\end{definition}
Note that in \eqref{en:Nrm}, the quotient group  $\Or /\idealc $ is finite because both $\Or $ and $\idealc $ are free abelian groups of rank $n$. 
Also, when $\Or =\OK$, \eqref{en:Nrm} is 
consistent with the definition of the ideal norm in Section~\ref{section=preliminarynumbertheory}.
In Proposition~\ref{lem:multiplicative-norm}, we will verify that the right-hand side of~\eqref{eq:def-of-norm} is independent of the choice of $d$;
in particular, \eqref{eq:def-of-norm} is consistent with~\eqref{en:Nrm}.

\begin{remark}\label{remark=absence}
    An invertible fractional ideal $\idealc$ of $\Or$ %
    can be an $\Or' $-module for several different orders $\Or'$. 
    The absence of the ring $\Or $ from the symbol $\Nrm (\idealc )$ is nonetheless justified by the fact %
    that
under  the assumption that $\idealc $ is an {\em invertible} fractional ideal of $\Or $, the ring $\Or $ can be recovered from $\idealc \subseteq K$ as $\Or = \{ x\in K \mid x\idealc \subseteq \idealc\}$.
\end{remark}

\begin{lemma}\label{lem:norm-principal-case}
    Let $\idealc \subseteq K$ be a subgroup isomorphic to $\ZZ ^n$ and $x\in \OK \setminus \{ 0\}$.
    Assume that $x\idealc \subseteq \idealc $ as subsets of $K$.
    Then we have $\# (\idealc / x\idealc ) = | \Nelm (x) | $.
\end{lemma}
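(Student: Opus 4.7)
The statement is a standard application of Smith normal form / the theory of determinants of lattice endomorphisms; I will work with a $\ZZ$-basis of $\idealc$ and identify multiplication by $x$ first as a $\ZZ$-linear endomorphism of $\idealc$ and then as the $\QQ$-linear endomorphism of $K$ whose determinant gives $\Nelm(x)$.

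First I would observe that a $\ZZ$-basis $(v_1,\dots,v_n)$ of $\idealc$ is automatically a $\QQ$-basis of $K$. Indeed, since $K\subseteq \CC$ is torsion-free over $\QQ$, any $\ZZ$-linear relation among elements of $\idealc\subseteq K$ can be cleared of denominators from any purported $\QQ$-linear relation, so $\ZZ$-linear independence in $\idealc$ implies $\QQ$-linear independence in $K$. Hence the $\QQ$-span of $(v_1,\dots,v_n)$ is an $n$-dimensional subspace of the $n$-dimensional $\QQ$-vector space $K$, and therefore equals $K$.

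Next, since $x\idealc\subseteq \idealc$ by hypothesis, multiplication by $x$ defines a $\ZZ$-linear endomorphism $\phi_x\colon \idealc\to \idealc$, whose matrix $M$ with respect to the basis $(v_1,\dots,v_n)$ is exactly the matrix of the $\QQ$-linear map ``multiplication by $x$'' on $K$ with respect to that same basis. By definition (see Remark~\ref{remark=idealnorm}) the element norm $\Nelm(x)$ equals $\det M$, as the determinant is independent of the choice of $\QQ$-basis of $K$. Because $x\ne 0$, $\phi_x$ is injective, so $\det M\ne 0$ and the map $\phi_x\colon \idealc\to \idealc$ has finite cokernel.

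Finally I would invoke the standard fact that for an injective endomorphism of a free abelian group of finite rank, the index of the image equals the absolute value of the determinant of the associated matrix: bringing $M$ into Smith normal form $\mathrm{diag}(d_1,\dots,d_n)$ via unimodular matrices, one reads off
\[
\#(\idealc/x\idealc) \;=\; \prod_{i\in[n]} |d_i| \;=\; |\det M| \;=\; |\Nelm(x)|,
\]
which is the claim. There is no real obstacle; the only point to be careful about is the passage between the $\ZZ$-basis of $\idealc$ and a $\QQ$-basis of $K$, which is handled in the first step above.
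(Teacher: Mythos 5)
Your proof is correct and follows essentially the same route as the paper's: represent multiplication by $x$ as a matrix with respect to a $\ZZ$-basis of $\idealc$, identify that basis with a $\QQ$-basis of $K$ so the determinant equals $\Nelm(x)$, and use the index-equals-determinant fact for the quotient. You simply spell out the two ingredients (the basis identification and the Smith normal form computation) that the paper takes for granted.
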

\begin{proof}
    Choose an  arbitrary $\ZZ $-basis $\bw $ for $\idealc $ and let $X$ be the matrix representing the map $ \idealc \ni c \mapsto xc\in \idealc $ with respect to $\bw $.
    Then by the theory of finitely generated abelian groups, we have $\# (\idealc / x\idealc )=|\det (X) | $.
Recall from Remark~\ref{remark=idealnorm} that the norm $\Nelm (x)$ equals the determinant of the $\QQ $-linear endomorphism $K\ni y \mapsto xy \in  K$.
    Since $\bw$ may be seen as a $\QQ$-basis of $K$, the result follows.
\end{proof}

\begin{proposition}\label{lem:multiplicative-norm}
    Let $\idealc $ be an invertible fractional ideal of $\Or $. 
    \begin{enumerate}[$(1)$]
        \item\label{item:multiplicative-special-case} Assume that $\idealc \subseteq \Or $, and let $d\in \Or \setminus \{ 0\}$. Then we have
        \begin{equation}\label{eq:norm-is-multiplicative}
            \Nrm (d\idealc ) = |\Nelm (d) | \cdot \Nrm (\idealc ).
        \end{equation}
        \item\label{item:norm-well-defined} The ideal norm of $\idealc $ in \eqref{eq:def-of-norm} is independent of the choice of $d$.
        Moreover \eqref{eq:norm-is-multiplicative} holds for all invertible fractional ideals $\idealc $ and all $d\in K^{\times}$.
    \end{enumerate}
\end{proposition}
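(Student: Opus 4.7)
The plan is to establish \eqref{item:multiplicative-special-case} first by a short exact sequence argument; then deduce the well-definedness in \eqref{item:norm-well-defined} from it; and finally bootstrap to the general multiplicativity via a common-denominator trick. For \eqref{item:multiplicative-special-case}, given $\idealc \subseteq \Or $ and $d\in \Or \setminus \{ 0\}$, the chain of subgroups $d\idealc \subseteq \idealc \subseteq \Or $ yields a short exact sequence of finite abelian groups
\[
0 \longrightarrow \idealc / d\idealc \longrightarrow \Or / d\idealc \longrightarrow \Or /\idealc \longrightarrow 0 .
\]
Taking cardinalities gives $\Nrm (d\idealc )= \#(\idealc / d\idealc )\cdot \Nrm (\idealc )$. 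The remaining factor equals $|\Nelm (d)|$ by Lemma~\ref{lem:norm-principal-case}, whose hypothesis $d\idealc \subseteq \idealc $ holds because $d\in \Or $ and $\idealc $ is an $\Or $-module, while $d\in \OK $ follows from $\Or \subseteq \OK $.

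For the well-definedness assertion in \eqref{item:norm-well-defined}, I would compare two admissible choices $d_1,d_2\in \Or \setminus \{ 0\}$ by computing $\Nrm (d_1d_2\idealc )$ in two ways: applying the already-proved \eqref{item:multiplicative-special-case} to $d_{3-i}\in \Or $ acting on the ideal $d_i\idealc \subseteq \Or $ for $i=1,2$ yields
\[
|\Nelm (d_2)|\cdot \Nrm (d_1\idealc ) = \Nrm (d_1d_2\idealc )= |\Nelm (d_1)|\cdot \Nrm (d_2\idealc ),
\]
and the special case $\idealc =\Or $ of \eqref{item:multiplicative-special-case} gives $\Nrm (d_i\Or )=|\Nelm (d_i)|$, so the ratios $\Nrm (d_i\idealc )/\Nrm (d_i\Or )$ for $i=1,2$ agree.

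To extend \eqref{eq:norm-is-multiplicative} to arbitrary invertible $\idealc $ and $d\in K^{\times }$, I would pick $e\in \Or \setminus \{ 0\}$ with $e\idealc \subseteq \Or $, which exists by Lemma~\ref{lem:monogenic-modulo-p^e}~\eqref{item:finitely-generated}, together with $f\in \Or \setminus \{ 0\}$ satisfying $fd\in \Or $. Then $fe\cdot d\idealc = (fd)(e\idealc )\subseteq \Or $, so $fe$ is admissible in the definition of $\Nrm (d\idealc )$. Applying \eqref{item:multiplicative-special-case} to $fd\in \Or $ acting on $e\idealc \subseteq \Or $ and to $fe\in \Or $ acting on $\Or $ yields
\[
\Nrm (d\idealc ) = \frac{\Nrm ((fd)\cdot e\idealc )}{\Nrm (fe\Or )}= \frac{|\Nelm (fd)|\cdot \Nrm (e\idealc )}{|\Nelm (fe)|} = \frac{|\Nelm (d)|\cdot \Nrm (e\idealc )}{|\Nelm (e)|}= |\Nelm (d)|\cdot \Nrm (\idealc ),
\]
where multiplicativity of $\Nelm $ gives the third equality and the identity $\Nrm (\idealc )=\Nrm (e\idealc )/|\Nelm (e)|$, together with $\Nrm (e\Or )=|\Nelm (e)|$, gives the fourth. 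The main obstacle is purely bookkeeping: at each step one must verify that the chosen auxiliary element is admissible, meaning that it both lies in $\Or $ and shrinks the relevant fractional ideal into $\Or $, so that \eqref{item:multiplicative-special-case} can be invoked. The only non-formal ingredient is the identity $\#(\idealc /d\idealc )=|\Nelm (d)|$ for a rank-$n$ lattice $\idealc \subseteq K$, which is already packaged in Lemma~\ref{lem:norm-principal-case}.
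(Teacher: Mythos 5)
Your proposal is correct and follows essentially the same route as the paper: the filtration $d\idealc\subseteq\idealc\subseteq\Or$ plus Lemma~\ref{lem:norm-principal-case} for \eqref{item:multiplicative-special-case}, and computing $\Nrm(d_1d_2\idealc)$ in two ways for well-definedness. The only difference is that you spell out the final common-denominator bookkeeping that the paper compresses into ``follows in a similar manner by repeated application of \eqref{item:multiplicative-special-case},'' and your admissibility checks there are all valid.
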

\begin{proof}
    First, we prove \eqref{item:multiplicative-special-case}. By the filtration $d\idealc \subseteq \idealc \subseteq \Or $, we have 
\[
        \# (\Or / d \idealc ) = \# (\Or / \idealc ) \cdot \# (\idealc /d \idealc ) .
\]
    By Lemma \ref{lem:norm-principal-case}, we have $\# (\idealc / d\idealc ) = |\Nelm (d) |$. Therefore, we obtain the desired formula \eqref{eq:norm-is-multiplicative}.

    For \eqref{item:norm-well-defined}, let $c,d\in \Or \setminus \{ 0 \}$ be two elements with $c\idealc \subseteq \Or$  and  $d\idealc \subseteq \Or $. By \eqref{item:multiplicative-special-case}, we \havethat\  $\Nrm (cd \idealc )=|\Nelm (c)| \cdot \Nrm (d\idealc)= |\Nelm (d)| \cdot \Nrm (c\idealc)$.
    From this, we \obtainthat\  $\Nrm (d\idealc )/|\Nelm (d)|=\Nrm (c\idealc )/ |\Nelm (c)|$. It ensures that Definition~\ref{def:of-norm}~\eqref{en:Nrmfrac} is well-defined.
    Equality \eqref{eq:norm-is-multiplicative} for the general case follows in a similar manner by repeated application of \eqref{item:multiplicative-special-case}.
\end{proof}

The following lemma describes a relationship between the ideal norm of an ideal of $\Or$ and that of an ideal of $\OK$.

\begin{lemma}\label{lem:preservation-special-case}
    Let $\idealc \subseteq \Or $ be an invertible ideal satisfying $\idealc + \cond \Or = \Or $. 
    Then we have $\Nrm (\idealc ) = \Nrm (\idealc \OK )$.
\end{lemma}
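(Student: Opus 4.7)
The plan is to construct a natural isomorphism of abelian groups
\[
\Or/\idealc \xrightarrow{\;\simeq\;} \OK/\idealc\OK
\]
induced by the inclusion $\Or \hookrightarrow \OK$. Once this is established, the conclusion follows from Definition~\ref{def:of-norm}~\eqref{en:Nrm}: taking cardinalities of both sides gives $\Nrm(\idealc) = \#(\Or/\idealc) = \#(\OK/\idealc\OK) = \Nrm(\idealc\OK)$. The map is obviously well-defined since $\idealc \subseteq \idealc\OK$, so the task is to check bijectivity using the two key ingredients at our disposal: the coprimality hypothesis $\idealc + \cond\Or = \Or$, and the defining relation $\cond\OK \subseteq \Or$ of the conductor.

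For injectivity, suppose $x \in \Or \cap \idealc\OK$; we want $x \in \idealc$. Using coprimality, write $1 = c + \cond r$ for some $c \in \idealc$ and $r \in \Or$, so that $x = xc + x\cdot \cond r$. The term $xc$ lies in $\idealc$, and for the second term observe that
\[
\cond \cdot \idealc\OK = \idealc\cdot (\cond\OK) \subseteq \idealc\cdot \Or = \idealc,
\]
so $x\cdot \cond \in \idealc$ and thus $x\cdot\cond r \in \idealc$. Hence $x\in\idealc$, as desired.

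For surjectivity, first note that multiplying $\idealc + \cond\Or = \Or$ by $\OK$ (as $\OK$-modules inside $K$) yields $\idealc\OK + \cond\OK = \OK$. Given $y \in \OK$, write $1 = c' + \cond r'$ with $c' \in \idealc\OK$ and $r' \in \OK$, so that $y = yc' + y\cdot \cond r'$. Then $y c' \in \idealc\OK$, and the element $x \coloneqq y\cdot \cond r' = \cond\cdot (y r')$ belongs to $\cond \OK \subseteq \Or$ because $y r' \in \OK$. Thus $x \in \Or$ and $x \equiv y \pmod{\idealc\OK}$, showing that the image of the map contains the class of every $y\in\OK$.

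The only real technical point is verifying that the coprimality hypothesis in $\Or$ extends to coprimality in $\OK$ (used in surjectivity) and that the conductor relation absorbs the difference between $\idealc\OK$ and $\idealc$ (used in injectivity); both are one-line manipulations given Setting~\ref{setting:ideal_norm}, so I do not expect any serious obstacle. Invertibility of $\idealc$ itself is not used directly in this argument; it only enters implicitly through the fact that $\Nrm(\idealc)$ is defined in the first place.
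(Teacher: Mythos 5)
Your proof is correct. The underlying mechanism is exactly the one the paper exploits: the coprimality $\idealc+\cond\Or=\Or$ makes $\cond$ invertible modulo $\idealc$ (and modulo $\idealc\OK$), while the conductor relation $\cond\OK\subseteq\Or$ forces $\cond$ to annihilate the "difference" between the two settings, namely $\idealc\OK/\idealc$ and $\OK/\Or$. The packaging differs: the paper places the inclusion of short exact sequences $0\to\idealc\to\Or\to\Or/\idealc\to0$ into $0\to\idealc\OK\to\OK\to\OK/\idealc\OK\to0$ and applies the snake lemma, concluding that $\ker(v)$ and $\coker(v)$ are simultaneously killed by $\cond$ and acted on invertibly by $\cond$, hence zero. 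You instead unwind this into an explicit element chase with the partition of unity $1=c+\cond r$, which is more elementary and self-contained (no homological machinery), at the cost of two separate computations where the paper gets both injectivity and surjectivity in one stroke. Your observations that $\cond\cdot\idealc\OK\subseteq\idealc$ and that $\idealc\OK+\cond\OK=\OK$ are precisely the statements "$\cond$ kills $\idealc\OK/\idealc$" and "$\cond$ is invertible in $\OK/\idealc\OK$" in disguise, and your remark that invertibility of $\idealc$ is not needed here is accurate — the paper's proof does not use it either.
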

\begin{proof}
    Consider the following commutative diagram 
\[
    \xymatrix{
        0 \ar[r]& \idealc \ar@{_{(}->}[d]\ar[r] & \Or \ar@{_{(}->}[d]\ar[r] & \Or /\idealc \ar[d]^v\ar[r] & 0
        \\
        0 \ar[r]& \idealae \ar[r]& \OK \ar[r]& \OK /\idealae \ar[r]& 0
    }
\]
with exact rows.
        By the assumption $\idealc + \cond \Or = \Or$, the integer $\cond $ is invertible in the rings $\Or /\idealc $ and $\OK /\idealc \OK $.
        By the assumption $\cond \OK \subseteq \Or $ from Setting~\ref{setting:ideal_norm}, $\cond$ acts as zero on the quotient groups
        $\idealc \OK / \idealc $ and $\OK /\Or $.
        Hence, by the snake lemma, we conclude that on the kernel and cokernel of the vertical map $v$, the integer $\cond $ acts as zero and invertibly at the same time.
        Therefore, $\ker (v)$ and $\coker (v)$ are both zero; in other words, $v$ is bijective.
        In particular, we \obtainthat\ 
\[
\Nrm (\idealc )=\# (\Or /\idealc ) = \# (\OK /\idealc \OK ) = \Nrm (\idealc \OK ),
\]
as desired.
\end{proof}

\begin{proposition}\label{prop:preservation-norms}
    Let $\idealc $ be an arbitrary invertible fractional ideal of $\Or $.
    Then we have $\Nrm (\idealc ) = \Nrm (\idealc \OK)$.
\end{proposition}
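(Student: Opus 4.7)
The plan is to reduce the general case to the special case already handled in Lemma~\ref{lem:preservation-special-case} by replacing $\idealc$ with a suitable scalar multiple $x\idealc$ that lies inside $\Or$ and is coprime to the conductor $\cond$, and then to use the multiplicativity of the ideal norm on both the $\Or$-side and the $\OK$-side to cancel the contribution of $x$.

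More concretely, I would first invoke Corollary~\ref{cor:coprime} applied to the invertible fractional ideal $\idealc$ and to the conductor $\cond \in \Or \setminus \{0\}$ to produce an element $x \in K^{\times}$ such that $x\idealc \subseteq \Or$ and $x\idealc + \cond \Or = \Or$. Since $\idealc$ is an invertible fractional ideal of $\Or$ with inverse $\idealc^{-1}$, the submodule $x\idealc \subseteq \Or$ is again invertible, with inverse $x^{-1}\idealc^{-1}$. Thus $x\idealc$ is an invertible \emph{ideal} of $\Or$ satisfying the coprimality hypothesis of Lemma~\ref{lem:preservation-special-case}, whence
\[
\Nrm(x\idealc) = \Nrm\bigl((x\idealc)\OK\bigr).
\]

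Next, I would observe that $(x\idealc)\OK = x \cdot (\idealc\OK)$ as $\OK$-submodules of $K$, and that $\idealc\OK$ is a non-zero finitely generated $\OK$-submodule of $K$ — hence a non-zero fractional ideal of $\OK$, which is automatically invertible by \cite[Chapter~I, Proposition~3.8]{Neukirch}. I can therefore apply Proposition~\ref{lem:multiplicative-norm}~\eqref{item:norm-well-defined} once on the $\Or$-side and once on the $\OK$-side to rewrite the previous identity as
\[
|\Nelm(x)| \cdot \Nrm(\idealc) = \Nrm(x\idealc) = \Nrm\bigl(x \cdot (\idealc\OK)\bigr) = |\Nelm(x)| \cdot \Nrm(\idealc\OK).
\]
Since $x \in K^{\times}$, we have $\Nelm(x) \neq 0$, and dividing by $|\Nelm(x)|$ yields $\Nrm(\idealc) = \Nrm(\idealc\OK)$.

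There is no real obstacle here: everything has been prepared. The only point that deserves a sentence of care is the verification that Proposition~\ref{lem:multiplicative-norm}~\eqref{item:norm-well-defined} is genuinely applicable on the $\OK$-side as well, which follows at once from the fact that every non-zero fractional ideal of the Dedekind domain $\OK$ is invertible, so that the same multiplicative formula is valid there. Given this, the reduction via Corollary~\ref{cor:coprime} is exactly the right device to bring the general invertible fractional ideal into the range of Lemma~\ref{lem:preservation-special-case}, and the proof closes in essentially one line.
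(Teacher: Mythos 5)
Your proof is correct and follows essentially the same route as the paper: reduce to the coprime case via Corollary~\ref{cor:coprime}, apply Lemma~\ref{lem:preservation-special-case} to $x\idealc$, and cancel $|\Nelm(x)|$ using the multiplicativity in Proposition~\ref{lem:multiplicative-norm}. The extra remarks on why $x\idealc$ is invertible and why multiplicativity also holds on the $\OK$-side are fine but not needed beyond what the paper already records.
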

\begin{proof}
    By Corollary \ref{cor:coprime}, there exists an element $x\in K^{\times}$ such that $x\idealc + \cond \Or = \Or $ holds.
    By Lemma \ref{lem:preservation-special-case}, we have $\Nrm (x\idealc ) = \Nrm (x\idealc \OK ).$ By Proposition \ref{lem:multiplicative-norm}, this then implies that $|\Nelm (x) | \cdot \Nrm (\idealc )=|\Nelm (x)| \cdot \Nrm (\idealc \OK ) $. Hence, $\Nrm (\idealc ) = \Nrm (\idealc \OK )$.
\end{proof}

The following reduction result (Proposition~\ref{theorem=reduction_order}) plays a key role in the proof of Theorem~\ref{th:constellations-in-ideals}, as well as in the 
deduction
of Theorem~\ref{theorem=quadraticformcloseprimes} from Theorem~\ref{th:constellations-in-ideals}. 
To prove the reduction result, we employ the following concept of sign. %

Recall that $r_1$ is the number of real embeddings of $K$.
\begin{definition}\label{def:sign-of-algebraic-numbers}
    Let $\xi \in K^\times$. 
    We define its {\em sign} $\sgn (\xi)$
    to be the tuple 
     \[ \sgn (\xi) \coloneqq \big(\sgn(\sigma _{i } (\xi))\big )_{i\in [r_1] } \in \{  \pm 1\} ^{r_1}\]
of signs $\pm 1$.
\end{definition}

\begin{lemma}\label{lem:existence-sign-modulo}
    For every sign $s \in \{ \pm 1\} ^{r_1}$ and for every class $\tau \in \mathcal O_K/\cond\mathcal O_K $, there exists an element of $\OK $ whose sign is $s$ and whose residue class is $\tau$.
\end{lemma}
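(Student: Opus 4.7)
The plan is to lift $\tau$ to some element $\alpha_{0}\in\OK$ and then correct its signature by adding an element $\beta\in \cond\OK$ whose real-embedding coordinates dominate those of $\alpha_{0}$ and carry the prescribed signs. Adding anything from $\cond\OK$ preserves the residue class $\tau$ automatically, so all the work is in constructing a single $\beta$ whose image under each real embedding $\sigma_i$ is both of the right sign and of sufficiently large absolute value.

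The key sub-step is exhibiting, for any target signature $s\in\{\pm 1\}^{\Hom(K,\RR)}$, some element $\beta_{s}\in\cond\OK$ with $\sgn(\sigma_i(\beta_{s}))=s_i$ and $\sigma_i(\beta_{s})\neq 0$ for every real embedding. I would do this by the primitive element theorem: write $K=\QQ(\theta)$ and let $\theta_{1},\ldots,\theta_{r_{1}}$ be the images of $\theta$ under the distinct real embeddings. Lagrange interpolation over $\QQ$ produces a polynomial $g\in\QQ[x]$ with $g(\theta_{i})=s_{i}$ for each $i\in[r_{1}]$; then $\sigma_i(g(\theta))=s_{i}$, so $g(\theta)\in K^{\times}$ already realizes the signature $s$. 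One then clears denominators by a positive integer $d$ so that $dg(\theta)\in\OK$ (using that $\OK$ is a $\ZZ$-lattice of full rank in $K$) and multiplies by the positive integer $\cond$ to land in $\cond\OK$; multiplying by the positive integers $d$ and $\cond$ does not alter signs, so $\beta_{s}\coloneqq \cond d\,g(\theta)\in\cond\OK$ has signature $s$. (Equivalently, one could invoke weak approximation at the archimedean places, i.e.\ density of $K$ in $K\otimes_{\QQ}\RR$, to produce $x\in K$ with $|\sigma_i(x)-s_i|<1/2$, and then clear denominators.)

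Once $\beta_{s}$ is in hand, choose a positive integer $N$ with
\[
N\cdot\min_{i\in[r_{1}]}|\sigma_i(\beta_{s})|\;>\;\max_{i\in[r_{1}]}|\sigma_i(\alpha_{0})|,
\]
which is possible since the minimum on the left is strictly positive. Set $\beta\coloneqq N\beta_{s}\in\cond\OK$ and $\alpha\coloneqq\alpha_{0}+\beta\in\OK$. Then $\alpha$ has residue class $\tau$ modulo $\cond\OK$ since $\beta\in\cond\OK$; and for each real embedding, $\sigma_i(\alpha)=\sigma_i(\alpha_{0})+N\sigma_i(\beta_{s})$ has the same sign as $N\sigma_i(\beta_{s})$, namely $s_{i}$. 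Therefore $\sgn(\alpha)=s$, as required.

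There is no genuine obstacle: the only delicate point is that the sign-correcting element must belong to $\cond\OK$ rather than merely to $K$, and this is resolved cleanly by the clearing-denominators and multiplication-by-$\cond$ steps followed by the dominance argument via the positive integer $N$.
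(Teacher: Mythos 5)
Your proof is correct, but it takes a genuinely different route from the paper's. The paper argues via the geometry of numbers: it views $\cond\OK$ as a full-rank lattice in $K\otimes_{\QQ}\RR$, observes that the set $V$ of points with prescribed signature is a non-empty open cone, and dilates a ball inside $V$ until it is large enough to contain a lattice translate of a complete set of representatives of $\OK/\cond\OK$ — so all residue classes acquire representatives of signature $s$ simultaneously from a single translation. You instead fix one lift $\alpha_0$ of the given class and correct its signature additively, by adding a large integer multiple $N\beta_s$ of a single sign-realizing element $\beta_s\in\cond\OK$ so that the perturbation dominates at every real embedding. Your argument is more elementary and pointwise (no lattice covering radius, no fundamental domain); the paper's buys uniformity over all classes at once, which it does not actually need. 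One small imprecision in your first construction of $\beta_s$: the Lagrange interpolation polynomial through the points $(\theta_i,s_i)$ has coefficients in $\QQ(\theta_1,\dots,\theta_{r_1})\subseteq\RR$, not in $\QQ$, so $g(\theta)$ need not lie in $K$ as written. This is harmless — since you only need the signs of $g(\theta_i)$, you can approximate the coefficients by rationals (evaluation at the fixed $\theta_i$ is continuous in the coefficients), or simply use the weak-approximation alternative you state in the parenthetical, which is exactly right and closer in spirit to the density fact the paper's proof also relies on.
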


\begin{proof}
	By the approximation theorem (\cite[Chapter~II, Theorem~3.4]{Neukirch}), there exists an element $\xi$ of $K^\times$ such that $\sgn(\xi)=s$.
	By multiplying by an appropriate positive integer if necessary, we may assume that $\xi$ is an element of $\OK$.
	Let $\alpha_0$ be a representative of $\tau$ and $t$ a positive integer. 
	Set $\alpha\coloneqq\alpha_0+ft\xi$.
	Then $\alpha$ is an element of $\OK$, and the residue class of $\alpha$ is $\tau$.
	Furthermore, since $\sigma_i (\alpha)=\sigma_i(\alpha_0)+ft\sigma_i(\xi)$ for every $i \in[r_1]$, 
	the signs of $\sigma_i(\alpha)$ and $\sigma_i(\xi)$ coincide if $t$ is sufficiently large.
\end{proof}

For the definitions of the sets $\PP_{\ideala}$ and $\PP_{\idealc}$ in the following statement, see Definition~\ref{def:of-set-P}.

\begin{proposition}[Reduction to ideals in maximal orders]\label{theorem=reduction_order}
Assume Setting~$\ref{setting:ideal_norm}$.
Let $\idealc$ be an invertible fractional ideal of $\Or$. Then there exists $\xi \in K^{\times}$ such that the following three hold true. Below, we write $\ideala\coloneqq \xi \idealc \OK$.
\begin{enumerate}[$(1)$]
 \item\label{en:ideal_desuyo} The set $\xi\idealc$ is an ideal of $\Or$, and it is coprime with $f$. 
 \item\label{en:coprime_f} 
    The ideal $\ideala$ of $\OK $ is coprime with $\cond $.
 \item\label{en:inj_incl} The multiplication map $\alpha\mapsto \xi \alpha$ in $K$ induces injective maps from $\idealc$ to $\ideala$ and from $\PP_{\idealc}$ to $\PP_{\ideala}$.
 \item\label{en:norm_huhen} For every $\alpha\in \idealc$, we have
\[
\frac{\Nelm(\alpha)}{\Nrm(\idealc)}=\frac{\Nelm(\xi \alpha)}{\Nrm(\ideala)}.
\]
\end{enumerate}
\end{proposition}
Note that \eqref{en:coprime_f} is implied by \eqref{en:ideal_desuyo}. We have explicitly stated \eqref{en:coprime_f} for later reference.

\begin{proof}
Apply Corollary~\ref{cor:coprime} to $\idealc$ and $f$. Then, we obtain $\xi \in K^{\times}$ such that $\xi \idealc+f\Or=\Or$. 
We claim that $\xi $ can be taken to satisfy $\Nelm (\xi )>0$.
To show this claim, suppose that the originally taken $\xi $ satisfies $\Nelm (\xi )<0$; in this case we necessarily have $r_1\geq 1$.
By Lemma~\ref{lem:existence-sign-modulo} with $f$ replaced by $f^2$, we can find an element $\alpha \in 1+f^2\OK $ ($\subseteq 1+f\Or $) such that $\Nelm (\alpha )<0$.
Write $\xi $ afresh for $\alpha \xi $.
This verifies our claim.

Our task is to verify \eqref{en:inj_incl} and \eqref{en:norm_huhen} for this $\xi $.
First, by definition, we have $\PP_{\idealc}\subseteq \PP_{\idealc\OK}$. 
Observe that the multiplication map $\alpha\mapsto \xi \alpha$ induces a bijection from $\PP_{\idealc\OK}$ to $\PP_{\ideala}$. Indeed, for every $\alpha \in \idealc$, we have the equality of fractional ideals $\alpha (\idealc\OK)^{-1}=(\xi \alpha)(\xi\idealc\OK)^{-1}$. Hence, \eqref{en:inj_incl} holds. 

Next, by  
Proposition~\ref{lem:multiplicative-norm}~\eqref
{item:norm-well-defined} 
and 
Proposition~\ref{prop:preservation-norms} 
we have (recalling the definition 
$\ideala =\xi \idealc \OK $)
\[
 \Nrm (\ideala )= |\Nelm (\xi )|\cdot \Nrm (\idealc \OK )
 = \Nelm (\xi ) \cdot \Nrm (\idealc ).
\]
Here we used the condition $\Nelm(\xi)>0$ as well.
Assertion \eqref{en:norm_huhen} now follows from the multiplicativity of the norm: $\Nelm (\xi \alpha )=\Nelm (\xi )\cdot \Nelm (\alpha )$.
\end{proof}

Proposition~\ref{theorem=reduction_order} explains partly why Theorem~\ref{theorem=GYfordieals} and our axiomatized constellation theorems in Sections~\ref{section=maintheoremfull} and \ref{section=slidetrick}
are formulated for non-zero ideals $\ideala$ of $\OK$ instead of invertible fractional ideals.

\subsection{`Prime elements' in ideals and subpseudorandom condition}\label{subsection=SPsiSI}

The goal of this subsection is the following.

\begin{theorem}\label{theorem=subpseudorandom_PPa}
Let $K$ be a number field of degree $n$ and $\ideala\in \Ideals_K$. Then $\PP_{\ideala}$ is a member of $\logpseua$. Moreover, for every $\theta\in(0,1)_{\RR}$ and for every integer $r$ at least $[K:\QQ]$, there exist $D_1,D_2>0$ and $\varepsilon\in (0,1)_{\RR}$ such that the following holds true. Let $S\subseteq \ideala$ be a standard shape with $\#S=r+1$, and $\bv$ a $\ZZ$-basis of $\ideala$. Then, for every $\rho >0$ and every $\uvarsigma>0$, there exist natural numbers $W=W_{\PP_{\ideala},\mathrm{S}\Psi_{\log}^{\mathrm{SI}}}(\rho,S)$ and $M_{\PP_{\ideala},\mathrm{S}\Psi_{\log}^{\mathrm{SI}}}(\rho,\uvarsigma,\bv,S,\theta)$ such that if $M\geq M_{\PP_{\ideala},\mathrm{S}\Psi_{\log}^{\mathrm{SI}}}(\rho,\uvarsigma,\bv,S,\theta)$, then $\PP_{\ideala}\cap\ideala(\bv,M)$ satisfies the $(\rho,\uvarsigma,W,M,\bv,S,\theta)$-condition with parameters $(D_1,D_2,\varepsilon)$.
\end{theorem}

The key to the proof is the Goldston--Y\i ld\i r\i m type asymptotic formula for ideals, stated in Theorem~\ref{theorem=GYfordieals}. Since we use the symbol $a$ for the coefficient of $F(x,y)=ax^2+bxy+cy^2$, we use the symbol $\theta$ for the `parameter $a\in (0,1)_{\RR}$' in the proof.

\begin{proof}
We will first prove the latter assertion.
Fix a $C^{\infty}$-function $\chi\colon \RR\to [0,1]_{\RR}$ with $\chi(0)=1$ and $\mathrm{supp}(\chi)\subseteq [-1,1]_{\RR}$.
Using Theorem~\ref{theorem=GYfordieals}, we can go along a similar line to the proof of Theorem~\ref{theorem=package_PR} so that we obtain the following: 
for every $\rho>0$ and for every $\uvarsigma>0$, 
there  exist $W=W_{\PP_{\ideala},S\Psi_{\log}^{\mathrm{SI}}}(\rho,\chi,S)$ and $M'_{\PP_{\ideala},S\Psi_{\log}^{\mathrm{SI}}} = M'_{\PP_{\ideala},S\Psi_{\log}^{\mathrm{SI}}}(\rho,\uvarsigma,\chi,S,\theta)\in \NN$ with $W\leq (M'_{\PP_{\ideala},S\Psi_{\log}^{\mathrm{SI}}})^{(\log 2)\theta}$ such that the following holds. 
For $M\geq M'_{\PP_{\ideala},\mathrm{S}\Psi_{\log}^{\mathrm{SI}}}$, let $R\coloneqq M^{\frac{\theta}{17(r+1)2^r}}$ and define $\lambda\colon \ideala\to \RR_{\geq 0}$ by
\[
\lambda(\alpha)\coloneqq \frac{\kappa \cdot \Lambda_{R,\chi}^{\ideala}(\alpha)^2}{c_{\chi}\log R}.
\]
Here $\kappa$ is the constant appearing in Theorem~\ref{theorem=zeta_K}, and $c_{\chi}\coloneqq \int_{0}^{\infty}{\chi'}^2(x)\rd x$; 
recall the definition of $\Lambda_{R,\chi}^{\ideala}$ from \eqref{eq:Lambda^a}. Then, for every $b\in \ideala$ with $b\OK+W\ideala=\ideala$, 
the function $\frac{\varphi_K(W)}{W^n}(\lambda\circ \Aff_{W,b})$ is a $(\rho,\frac{\uvarsigma M^{\theta}}{W},S)$-pseudorandom measure.

Let $\bv$ be a $\ZZ$-basis of $\ideala$.
Define the exceptional set
$T\subseteq\PP_{\ideala}\cap\ideala(\bv,M)$
by $T\coloneqq \PP_{\ideala}\cap\ideala(\bv,M)\cap\OK(\Nrm(\ideala)\cdot R)$.
We claim that if $M$ is sufficiently large depending on 
$\rho,S$ and $\theta$, then every $\alpha\in \PP_{\ideala}\setminus T$ satisfies that $\alpha \OK +W\ideala=\ideala$. Indeed, since $\alpha \in \PP_{\ideala}$, there exists a prime ideal $\idealp_{\alpha}$ such that $\alpha \OK=\idealp_{\alpha} \ideala$. If $M$ is sufficiently large depending on $\rho,S$ and $\theta$, then the inequality $\Nrm(\idealp_{\alpha})> R$ implies that $\idealp_{\alpha}$ is prime to $W$. Hence, $\alpha \OK +W\ideala=\ideala$ in this case. This argument in addition shows that for every $\alpha\in \PP_{\ideala}\setminus T$, we have 
$\lambda(\alpha)=\frac{\kappa \theta}{17(r+1)2^r \cdot c_{\chi}}\cdot \log M$.
Moreover, if $M$ is sufficiently large depending on 
$\bv$ and $\theta$, then by \eqref{eq:boundT}, we obtain $\#T\leq M^{\frac{\theta}{16}}$.

Define $M_{\PP_{\ideala},\mathrm{S}\Psi_{\log}^{\mathrm{SI}}}(\rho,\uvarsigma,\bv,\chi,S,\theta)$ as the minimal integer such that all of the arguments above hold true. Then, for
\[
(D_1,D_2,\varepsilon)=\left(\frac{\kappa \theta}{17(r+1)2^r \cdot c_{\chi}},\frac{\kappa \theta}{17(r+1)2^r \cdot c_{\chi}},\frac{3}{4}\right),
\]
we obtain the latter assertion of the theorem. 
Here, since we have fixed a function $\chi$, we omit to write dependences of 
$W_{\PP_{\ideala},S\Psi_{\log}^{\mathrm{SI}}}(\rho,\chi,S)$
and
$M_{\PP_{\ideala},\mathrm{S}\Psi_{\log}^{\mathrm{SI}}}(\rho,\uvarsigma,\bv,\chi,S,\theta)$
on $\chi$.
Thus we write
$W_{\PP_{\ideala},S\Psi_{\log}^{\mathrm{SI}}}(\rho,S)$
and
$M_{\PP_{\ideala},\mathrm{S}\Psi_{\log}^{\mathrm{SI}}}(\rho,\uvarsigma,\bv,S,\theta)$
for short.
In particular, we have the former assertion, $\PP_{\ideala}\in \logpseua$. 
\end{proof}

\subsection{Counting elements by the Chebotarev density theorem}\label{subsection=counting_Cheb}
In order to establish Theorem~\ref{th:constellations-in-ideals}, we aim to apply Theorem~\ref{theorem=package_infinite_a_close} to the given set $A\subseteq \PP _{\idealc }$,
which translates to a subset of $\PP _{\ideala }$ for an ideal $\ideala $ of $\OK $ via Proposition~\ref{theorem=reduction_order}. 
Since we have proved $\PP _{\ideala} \in \logpseua $ in Theorem~\ref{theorem=subpseudorandom_PPa}, 
what remains to be verified is conditions~\eqref{en:counting_infinite_DD} and \eqref{en:counting_ideal_infinite_DD} in Theorem~\ref{theorem=package_infinite_DD}.
As we will explain in Subsection~\ref{subsection=proof_orders}, condition~\eqref{en:counting_ideal_infinite_DD} can be confirmed by Landau's prime ideal theorem.
Thus, the main task %
is the verification of the counting condition~\eqref{en:counting_infinite_DD}.

In this subsection, we will perform this counting argument, with the aid of a finer version of \chebden \ (Theorem~\ref{theorem=Chebotarev_narrow}) than we have been using.
See Theorems~\ref{theorem=counting_PPa} and \ref{theorem=counting_PPc} for the final statements of our \counting s.

Two main differences between the counting argument in this section and that in previous sections are the following: first, we have additional data, such as $\ideala$, $\OO$ and $\idealc$. 
Secondly, since the sign of the norm $\Nelm (\alpha )$ matters in Theorem~\ref{theorem=quadraticformcloseprimes}
and Theorem~\ref{th:quadratic-forms-ideals},
we need to count elements \emph{having a prescribed sign}; %
to count elements with signs, the mere \counting\ of ideals does not suffice
because as the identity $\Nrm(\alpha\OK)=|\Nelm(\alpha)|$ suggests, the ideal generated by a given element does not remember its sign.

\begin{setting}	\label{setting:conductor-and-ideala}
Let $K$ be a number field of degree $n$. 
Let $f\in \NN $.
Let $r_1$ be the number of real embeddings of $K$,
and fix real embeddings $\sigma_1,\ldots,\sigma_{r_1}$ to define the function $\sgn$ (Definition~\ref{def:sign-of-algebraic-numbers}).
Let $\vph_K$ be the totient function of $K$ (Definition~\ref{def=totient}).
\end{setting}

First, we consider the set $\PP_{\ideala}$ of `prime elements' of an ideal $\ideala $ of $\OK $.
Let us collect some pieces of terminology to state %
the \Cheb\ density theorem~\ref{theorem=Chebotarev_narrow}. 

\begin{definition}
\begin{enumerate}[$(1)$]
    \item 
    For $\xi \in K^{\times}$ and $\alpha \in \OK$, we write $\xi \equiv \alpha \pmod \cond $, or say $\xi $ is congruent to $\alpha $ modulo $\cond $, if there exist $x\in \cond\OK$ and $y\in \OK\setminus \{0\}$ with $y$ prime to $\cond$ such that $\xi = \alpha+\frac{x}{y}$.
    \item\label{en:If} 
    We denote by $\If $ the commutative group of fractional ideals of $\OK $ coprime with $\cond $.
    (A fractional ideal is said to be \emph{coprime} with $\cond $ means if it does not share prime factors with $\cond $, cf.\ Theorem~\ref{theorem=primeideals_frac}.
    We have a canonical isomorphism $\If \cong \bigoplus _{\idealp \nmid f} \ZZ $.)
    \item\label{en:Kfp} Let $\Kfp $ be the subgroup of elements of $K^{\times}$ which are congruent to $1$ modulo $\cond $ and totally positive. Namely, set
    \[
    \Kfp \coloneqq \left\{ \xi=1+\frac{x}{y} \in K : \begin{array}{l} x\in \cond\OK ,\ y\in \OK \setminus \{0\}\text{ prime to $\cond $},\\ \sgn (\xi)=(+1,\dots ,+1) \end{array}      \right\}.
    \]
    Then the map $\xi\mapsto \xi\OK $ gives rise to a homomorphism $\Kfp \to \If$.
    \item\label{en:hf} We define the {\em ideal class group $\Clf $ with modulus $\cond $} as the cokernel of the homomorphism $\Kfp \to \If$ above. We define $h_\cond$ to be the order of $\Clf$.
    \item
    Let $\OKf $ be the subgroup of $\OKt $ consisting of the elements 
    which are congruent to $1$ modulo $\cond$ and which are totally positive.
	In other words, $\OKf$ is the kernel of the map $\OKt  \to (\OK /\cond \OK )^{\times} \times \{ \pm 1 \} ^{r_1}; \xi \mapsto (\xi + \cond \OK, \sgn (\xi))$.
    (Since the target is a finite group, we know $\rank (\OKf) =\rank (\OKt)$.)
\end{enumerate}
\label{def:Cl-with-modulus}
\end{definition}
In \eqref{en:hf}, 
we followed the sign convention of \cite{Neukirch} and \cite{Cassels-Froehlich}.
It is well known that $h_\cond $ is finite; for instance, see \cite[Chapter~VI, Proposition~1.8 and Proposition~1.9]{Neukirch}. 
Recall that as usual we regard a residue class $\tau \in \OK/\cond\OK$ %
as a subset of $\OK$. %
From Definition~\ref{def:subset-of-Spec} to Theorem~\ref{theorem=counting_PPa}, we also use the setting below.
\begin{setting} \label{setting:conductor-and-ideala:2}
	Let $\ideala$ be a non-zero ideal of $\OK$.
	Assume that $\ideala$ is prime to $\cond$. 
\end{setting}

\begin{definition}\label{def:subset-of-Spec}
    Given $\tau\in (\OK / \cond \mathcal O_K )^{\times}$ and a sign $s\in \{ \pm 1 \} ^{r_1}$, we define a set $|\Spec(\OK)|_{\ideala, \tau,s}$ of prime ideals by
\[
|\Spec(\OK)|_{\ideala,\tau,s}\coloneqq 
\left\{ \idealp \in |\Spec(\OK)|  :  \idealp\ideala =\alpha \OK \text{ for some }\alpha \in \tau 
\text{ with } \sgn (\alpha )=s
\right\} .
\] 
\end{definition}

Note that $|\Spec(\OK)|_{\ideala,\tau,s}$'s are not necessarily disjoint for different $(\tau ,s)$'s but this does not cause a problem.
In the following lemma, we remark that we can to consider the class $[\ideala ]$ of $\ideala $ to be in the ideal class group $\Clf$ under Setting~\ref{setting:conductor-and-ideala:2}.
\begin{lemma}\label{lem:hoka-naranai}
    Let $\tau\in (\OK/\cond \OK)^{\times}$ and $s\in \{ \pm 1 \} ^{r_1}$. 
    Fix an element $\xi _0\in \OK $ with sign $s$ such that $\xi _0\in \tau$, which exists by Lemma~$\ref{lem:existence-sign-modulo}$. 
Then we have
\[
|\Spec(\OK)|_{\ideala,\tau,s}= \{\idealp\in|\Spec(\OK)| : [\idealp]= -[\ideala]+[\xi_0\OK] \text{ in }\Clf\} .
\] 

\end{lemma}

We write down the proof of this standard fact for the convenience of the reader.
\begin{proof}
    Let $\idealp \in |\Spec(\OK)|_{\ideala ,\tau,s}$. This means that there exists $\alpha \in \tau$ with $\sgn (\alpha )=s $ such that $\idealp \ideala  = \alpha \OK $.
    Then the element $\xi\coloneqq\alpha /\xi_0 \in K^{\times}$ satisfies that $\xi\equiv 1 \pmod \cond $ and     $\sgn(\xi) = (+1,\dots ,+1)$. 
    Therefore we conclude that $[\alpha \OK ]= [\xi _0\OK ]$ in $\Clf $, thus proving that $\idealp $ satisfies $[\idealp]= -[\ideala ]+[\xi_0\OK] $. 

    Conversely, suppose that $\idealp $ satisfies $[\idealp ]+[\ideala ]=[\xi_0\OK ]$.
    Then by the definition of $\Clf $, there exists a totally positive $\eta \in K^{\times}$ with  $\eta\equiv 1\pmod \cond $ 
    such that $\idealp \ideala  = \xi _0\eta \OK$. 
    Now $\alpha\coloneqq\xi _0\eta $ satisfies that $\alpha \in \tau$ and that $\sgn(\alpha) =s$.
    Moreover, since $\idealp$ and $\ideala$ are subsets of $\OK$, we have $\alpha\OK\subseteq\OK$. Therefore, we conclude that $\alpha\in\OKnz$, as desired. 
\end{proof}

The Chebotarev density theorem
 \cite[Theorem 4]{Cassels-Froehlich} 
 asserts that for every class $\mu\in \Clf$, the set $\{ \idealp \in |\Spec (\OK) | : [\idealp ]=\mu \text{ in }\Clf  \}$ has natural asymptotic density $\frac 1 {\hcond }$ in $|\Spec (\OK)|$. 
This is translated via Lemma~\ref{lem:hoka-naranai} as follows.

\begin{theorem}[{Chebotarev density theorem, see \cite[Theorem~4]{Cassels-Froehlich}}]\label{theorem=Chebotarev_narrow}
Let $\tau\in (\OK/\cond\OK)^{\times}$ and $s\in \{ \pm 1 \} ^{r_1}$. Then, we \havethat\ 
\[
\# \{ \idealp \in |\Spec(\OK)|_{\ideala ,\tau,s} : \Nrm(\idealp) \le L \} 
= (1+o_{L\to\infty;f,\ideala}(1))\cdot\frac{1}{\hcond}\cdot\frac{L}{\log L}.
\]
\end{theorem}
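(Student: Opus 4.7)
The plan is to deduce the theorem as an immediate consequence of the two ingredients the authors assemble just before the statement, namely Lemma~\ref{lem:hoka-naranai} (which rewrites $|\Spec(\OK)|_{\ideala,\tau,s}$ as a class in $\Clf$) and the form of the Chebotarev density theorem cited from \cite[Theorem~4]{Cassels-Froehlich} (which gives the natural density of prime ideals in any fixed class of $\Clf$). No new analytic input is required; the work is bookkeeping.

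First, I would fix $\tau \in (\OK/\cond\OK)^{\times}$ and a signature $s \in \{\pm 1\}^{\Hom(K,\RR)}$. By Lemma~\ref{lem:existence-sign-modulo}, there exists $\xi_0 \in \OK$ with $\xi_0 \in \tau$ and $\sgn(\xi_0) = s$; in particular $\xi_0\OK$ represents a well-defined class $[\xi_0\OK] \in \Clf$ since $\xi_0$, lying in the unit class of $(\OK/\cond\OK)^{\times}$, is coprime with $\cond$. Set $\mu \coloneqq -[\ideala] + [\xi_0\OK] \in \Clf$. Then Lemma~\ref{lem:hoka-naranai} gives the equality
\[
|\Spec(\OK)|_{\ideala,\tau,s} = \{\idealp \in |\Spec(\OK)| : [\idealp] = \mu \text{ in } \Clf\}.
\]

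Second, I would invoke the version of the Chebotarev density theorem cited in \cite[Theorem~4]{Cassels-Froehlich}, which asserts that for every class $\mu \in \Clf$,
\[
\#\{\idealp \in |\Spec(\OK)| : [\idealp] = \mu, \ \Nrm(\idealp) \leq L\} = (1 + o_{L\to\infty;\cond}(1)) \cdot \frac{1}{\hcond} \cdot \frac{L}{\log L}.
\]
Substituting the specific $\mu$ produced above into this estimate and using the set equality from the first step yields the claimed asymptotic. The implicit dependence of the error term on $\ideala$ enters only through the choice of $\xi_0$, which depends on $\tau, s$ and (implicitly) $\cond$; since $\ideala$ determines $[\ideala] \in \Clf$, absorbing this dependence into the subscript $\ideala$ of the $o(1)$ is immediate.

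There is no serious obstacle: the only minor point requiring care is confirming that $\xi_0$ may be chosen so that $\xi_0\OK$ is a genuine fractional ideal coprime with $\cond$ (so that its class in $\Clf$ is defined), which is automatic once $\xi_0 \in \tau \in (\OK/\cond\OK)^{\times}$. The dependence of the constants on the parameters is cosmetic, and no nontrivial analysis beyond the cited result is needed.
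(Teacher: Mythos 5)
Your proposal is correct and follows the paper's own deduction essentially verbatim: the paper also fixes $\xi_0$ via Lemma~\ref{lem:existence-sign-modulo}, applies Lemma~\ref{lem:hoka-naranai} to identify $|\Spec(\OK)|_{\ideala,\tau,s}$ with the fibre of the class $-[\ideala]+[\xi_0\OK]$ in $\Clf$, and then quotes \cite[Theorem~4]{Cassels-Froehlich} for the density of that fibre. No issues.
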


Let us explain how Theorem~\ref{theorem=Chebotarev}~\eqref{Chebotarev} is implied by Theorem~\ref{theorem=Chebotarev_narrow} (or \cite[Theorem~4]{Cassels-Froehlich}).
Set $\cond = 1$ for instance. 
By \cite[Theorem~4]{Cassels-Froehlich}, for each class $\mu \in \Cl ^1_K $ we have the estimate 
\begin{equation}\label{eq:Cheb-1}
    \#\{ 
        \idealp \in \SpecOK  : 
        [\idealp]=\mu \text{ in }\Cl ^1_K \text{ and }\Nrm(\idealp )\le L     
        \} 
        =
        (1+o_{L\to\infty}(1))\cdot\frac{1}{h_1}\cdot\frac{L}{\log L}.
\end{equation}
By \cite[Chapter~VI, Proposition 1.11]{Neukirch}
there exists a surjective homomorphism
$\Cl ^1_K \twoheadrightarrow \Cl _K $
(though the surjectivity is not needed for the purpose achieved by Theorem~\ref{theorem=Chebotarev}~\eqref{Chebotarev}).
Summing up \eqref{eq:Cheb-1} over all $\mu \in \ker (\Cl ^1_K \to \Cl _K)$ proves Theorem~\ref{theorem=Chebotarev}~\eqref{Chebotarev}.

Now we turn this \counting\ of prime ideals into counts of elements. 

\begin{definition}\label{definition=Paus}
    For every $\tau\in \OK /\cond \OK $ and every $s\in \{ \pm 1\} ^{r_1}$, 
    define a subset   $\Pavs$ of $\PP_{\ideala }$ as the set of elements $\alpha \in \PP_{\ideala }\cap \tau $ with $\sgn (\alpha ) = s$. %
\end{definition}

\begin{lemma}\label{lem:up-to-OKm}
    \begin{enumerate}[$(1)$]
        \item\label{item:P-is-disjoint-union} The set $\PP_{\ideala }$ can be partitioned as $\PP_{\ideala } = \bigsqcup_{(\tau,s)}\Pavs$. Here $(\tau,s)$ runs over $(\OK/\cond\OK)\times \{\pm1\}^{r_1}$. 
        \item\label{item:when-a-is-not-invertible-element} For each $(\tau,s)\in(\OK/\cond\OK)\times \{\pm1\}^{r_1}$, the $\OKf$-action by multiplication leaves $\Pavs$ invariant.
        If $\tau\in \OK /\cond \OK $ is not invertible, then
         $\#\left(\Pavs/\OKf\right)<\infty$. Here $\Pavs/\OKf$ means the quotient set by the $\OKf$-action.
        \item\label{item:bij-Spec-and-P} 
        Let $\DD' \subseteq \ideala\setminus \{0\} $ be a fundamental domain
        for the action $\OKf \curvearrowright \ideala\setminus \{0\}$. %
        If $\tau\in (\OK /\cond \OK )^{\times}$, then we have a bijection %
    \begin{equation}\label{eq:natural-bij}
        |\Spec(\OK)|_{\ideala,\tau,s} \simeq \Pavs\cap \DD' .
    \end{equation}
    Here, a prime ideal $\idealp\in|\Spec(\OK)|_{\ideala,\tau,s}$ is sent to a unique element 
    $\alpha \in \DD'$ which satisfies the following three conditions: 
    $\idealp \ideala  =\alpha \OK $,
        $\alpha \in \tau$, and $\sgn (\alpha )=s$.
    \end{enumerate}
\end{lemma}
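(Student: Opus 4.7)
The plan is to verify each of the three items by unwinding the definitions, with part~(3) being essentially a rigidification of the well-known bijection between principal ideals and $\OKt$-orbits of generators, refined using residue class and signature data.

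For \eqref{item:P-is-disjoint-union}, I would note that every nonzero $\alpha\in\ideala$ has a well-defined residue class in $\OK/\cond\OK$ (since $\ideala\subseteq\OK$) and a well-defined signature in $\{\pm1\}^{\Hom(K,\RR)}$, and these two data put $\alpha$ in exactly one subset $\Pavs$; the disjointness and exhaustion are then immediate from the definition of $\Pavs$. For the first half of \eqref{item:when-a-is-not-invertible-element}, I would check directly that multiplication by $\eta\in\OKf$ preserves both the residue $\tau$ (since $\eta\equiv1\pmod\cond$) and the signature $s$ (since $\sgn(\eta)=(+1,\dots,+1)$), and that it preserves membership in $\PP_{\ideala}$ because $\eta\alpha\cdot\ideala^{-1}=\alpha\cdot\ideala^{-1}$ as ideals.

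The substantive point in \eqref{item:when-a-is-not-invertible-element} is the finiteness claim when $\tau$ is not invertible. Here the plan is as follows: if $\alpha\in\Pavs$ and $\tau$ is non-invertible, then $\alpha$ is not coprime with $\cond$, so some prime $\idealq$ of $\OK$ divides both $\alpha\OK$ and $\cond\OK$. Since $\alpha\OK=\idealp\ideala$ for some prime ideal $\idealp$, and since $\ideala$ is coprime with $\cond$ by Setting~\ref{setting:conductor-and-ideala}, $\idealq$ cannot divide $\ideala$, forcing $\idealq=\idealp$. Thus $\idealp$ lies in the finite set of prime divisors of $\cond\OK$, so the principal ideal $\alpha\OK=\idealp\ideala$ takes only finitely many values; each such value gives a single $\OKt$-orbit, hence only finitely many $\OKf$-orbits because $\OKt/\OKf$ is finite (it injects into the finite group $(\OK/\cond\OK)^{\times}\times\{\pm1\}^{\Hom(K,\RR)}$).

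For \eqref{item:bij-Spec-and-P}, I would argue in both directions. Given $\idealp\in|\Spec(\OK)|_{\ideala,\tau,s}$ with $\idealp\ideala=\alpha'\OK$ for some $\alpha'\in\tau$ with $\sgn(\alpha')=s$, the other generators of $\alpha'\OK$ form the orbit $\OKt\cdot\alpha'$, and a short computation shows that the generators which additionally lie in $\tau$ and have signature $s$ form precisely the single $\OKf$-coset $\OKf\cdot\alpha'$; since $\DD'$ is a fundamental domain for $\OKf\curvearrowright\ideala\setminus\{0\}$, this coset meets $\DD'$ in exactly one point. Conversely, given $\alpha\in\Pavs\cap\DD'$, the ideal $\alpha\ideala^{-1}$ is prime by the definition of $\PP_{\ideala}$ and lies in $|\Spec(\OK)|_{\ideala,\tau,s}$ by construction; and this assignment is injective because $\alpha\ideala^{-1}$ determines $\alpha\OK$, and $\alpha$ is pinned down inside $\DD'$. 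I do not anticipate any serious obstacle; the only subtlety is keeping straight that $\sgn$ and the residue class interact multiplicatively with the $\OKt$-action so as to carve out exactly $\OKf$-cosets, which is the whole reason the group $\OKf$ was introduced in Definition~\ref{def:Cl-with-modulus}.
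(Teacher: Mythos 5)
Your proposal is correct and follows essentially the same route as the paper's proof: part (2) reduces to the observation that $\idealp$ must divide $\cond\OK$ (so $\alpha\OK$ takes finitely many values, each contributing at most one $\OKt$-orbit and hence finitely many $\OKf$-orbits), and part (3) rests on the computation that two generators of $\idealp\ideala$ sharing the residue class $\tau\in(\OK/\cond\OK)^{\times}$ and the signature $s$ differ by an element of $\OKf$ — exactly the "short computation" you flag, which is where the invertibility of $\tau$ is used. No gaps.
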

\begin{proof}
Item \eqref{item:P-is-disjoint-union} is trivial.
Next we prove \eqref{item:when-a-is-not-invertible-element}. 
For the former assertion, observe that  the $\OKf $-action changes neither the modulo $\cond $ residue class nor the sign of elements of $\OK$. 
For the latter assertion, suppose that $\tau \not\in (\OK /\cond \OK)^{\times}$.

Take an element $\alpha$ of $\Pavs $.
This in particular means $\alpha \in \tau $ and hence $\alpha $ is not prime to $\cond $. 
    Since  $\ideala$ is prime to $f$, the prime ideal $\idealp =\alpha \ideala ^{-1}$ is not prime to $\cond $;
    in other words $\idealp $ divides $\cond $. 
    We have established an injection 
    \[
        \Pavs 
        \hookrightarrow
        \bigsqcup _{\idealp \vert \cond }  
        \left\{\alpha \in \OK : \alpha \OK = \ideala \idealp \right\} ,
    \]
    where there are only finitely many summands on the right-hand side.
    Observe that each summand is an $\OKt $-orbit.
    Since $\OKf$ is a finite index subgroup of $\OKt $,
    each summand splits into finitely many $\OKf $-orbits.
    This proves that $\Pavs / \OKf $ is finite.

Finally, we prove \eqref{item:bij-Spec-and-P}.
    For a given $\idealp \in  |\Spec(\OK)|_{\ideala,\tau,s}$, by the definition of $|\Spec(\OK)|_{\ideala ,\tau,s}$, there exists $\alpha \in \OKnz$ which fulfills the three conditions in the statement. Such an element $\alpha $ is automatically in $\PP_{\ideala;\tau,s}$ by its definition.
    We will show that this  $\alpha $ is unique modulo the $\OKf $-action. Take two elements $\alpha _1$ and $\alpha _2$ satisfying the three conditions.
    Since they define the same ideal, we \havethat\  $\xi\coloneqq \alpha _1/\alpha _2 \in \OKt $. 
    Since
    $\alpha _1, \alpha _2 \in \tau$ (which is now assumed to be prime to $\cond $) and they both have sign $s$, 
    we in addition know $\xi\equiv 1 \pmod \cond $ and that $\xi$ is totally positive. %
    Therefore $\xi \in \OKf$ as desired. 
    This shows that the map $\idealp \mapsto \alpha$ is well-defined.

    Conversely, suppose that we are given $\alpha \in \Pavs$.
    By Definition~\ref{def:of-set-P}, 
    $\idealp\coloneqq \alpha \ideala ^{-1}$ is a prime ideal.
    By Definition~\ref{def:subset-of-Spec}, $\idealp $ belongs to $|\Spec (\OK ) |_{\ideala , \tau, s}$. Thus, we obtain a map $\alpha \mapsto\idealp $.
    
    Since both maps are constructed from the relation $\ideala \idealp = \alpha \OK $, it is clear that they are inverse to each other.
\end{proof}

\begin{corollary}\label{cor:density-of-P-a-s}
Assume Settings~$\ref{setting:conductor-and-ideala}$ and~$\ref{setting:conductor-and-ideala:2}$. 
Let $\DD'$ be a fundamental domain for the action $\OKf\curvearrowright \ideala\setminus \{0\}$. 
    Let $\tau \in \OK/f\OK$ and $s\in \{\pm1\}^{r_1}$.
\begin{enumerate}[$(1)$]
\item\label{en:usui}
If $\tau\not \in (\OK/f\OK)^{\times}$, then the relative asymptotic density of $\Pavs\cap \DD' $ 
in 
$\PP_{\ideala }\cap \DD' $ measured by the ideal norm equals $0$. That means,
\[
\lim_{L\to \infty}\frac{\#(\Pavs\cap \DD'\cap \OK(L))}{\#(\PP_{\ideala }\cap \DD' \cap \OK(L))}=0.
\]
  \item\label{en:koi}  If $\tau \in (\OK/f\OK)^{\times}$, then 
\[
\lim_{L\to \infty}\frac{\#(\Pavs\cap \DD'\cap \OK(L))}{\#(\PP_{\ideala }\cap \DD' \cap \OK(L))}=\frac{1}{2^{r_1}\varphi _K(\cond )}
\]
holds.
\end{enumerate}
\end{corollary}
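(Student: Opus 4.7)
The plan is to convert the counts of elements in $\Pavs\cap\DD'$ into counts of prime ideals via the bijection \eqref{eq:natural-bij}, then invoke Theorem~\ref{theorem=Chebotarev_narrow}, and finally sum over $(\tau,s)$ to evaluate the denominator. The key observation throughout is the norm relation $\Nrm(\alpha)=\Nrm(\idealp)\Nrm(\ideala)$ for $\alpha\in\ideala\setminus\{0\}$ with $\alpha\OK=\idealp\ideala$; hence $\alpha\in\OK(L)$ corresponds exactly to $\Nrm(\idealp)\leq L/\Nrm(\ideala)$.

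For part \eqref{en:usui}, when $\tau\notin(\OK/f\OK)^{\times}$, Lemma~\ref{lem:up-to-OKm}~\eqref{item:when-a-is-not-invertible-element} gives $\#(\Pavs/\OKf)<\infty$. Since $\DD'$ is a fundamental domain for the $\OKf$-action on $\ideala\setminus\{0\}$ and $\Pavs$ is $\OKf$-invariant, each orbit contributes at most one element to $\Pavs\cap\DD'$, so $\#(\Pavs\cap\DD')<\infty$. Consequently, the numerator $\#(\Pavs\cap\DD'\cap\OK(L))$ stays bounded as $L\to\infty$, while the denominator diverges (as shown below), yielding the limit $0$.

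For part \eqref{en:koi}, fix $\tau\in(\OK/f\OK)^{\times}$ and $s\in\{\pm1\}^{\Hom(K,\RR)}$. By Lemma~\ref{lem:up-to-OKm}~\eqref{item:bij-Spec-and-P}, the bijection \eqref{eq:natural-bij} combined with the norm identity gives
\[
\#(\Pavs\cap\DD'\cap\OK(L))=\#\{\idealp\in|\Spec(\OK)|_{\ideala,\tau,s}:\Nrm(\idealp)\leq L/\Nrm(\ideala)\}.
\]
Applying Theorem~\ref{theorem=Chebotarev_narrow} and using $\log(L/\Nrm(\ideala))=(1+o_{L\to\infty;\ideala}(1))\log L$, we obtain
\[
\#(\Pavs\cap\DD'\cap\OK(L))=(1+o_{L\to\infty;\cond,\ideala}(1))\cdot\frac{1}{\hcond\Nrm(\ideala)}\cdot\frac{L}{\log L}.
\]
By Lemma~\ref{lem:up-to-OKm}~\eqref{item:P-is-disjoint-union}, summing this equality over the $2^{r_1}\vph_K(\cond)$ pairs $(\tau,s)$ with $\tau$ invertible, and adding the $O(1)$ contribution from non-invertible classes of part~\eqref{en:usui}, we get the asymptotic
\[
\#(\PP_{\ideala}\cap\DD'\cap\OK(L))=(1+o_{L\to\infty;\cond,\ideala}(1))\cdot\frac{2^{r_1}\vph_K(\cond)}{\hcond\Nrm(\ideala)}\cdot\frac{L}{\log L}.
\]
Dividing the two asymptotics yields the claimed limit $1/(2^{r_1}\vph_K(\cond))$. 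The main subtlety—essentially the only one—is to ensure that the bijection \eqref{eq:natural-bij} preserves the scale we count with; once the norm identity $\Nrm(\alpha)=\Nrm(\idealp)\Nrm(\ideala)$ is invoked, the rest is a routine application of Theorem~\ref{theorem=Chebotarev_narrow} and the finiteness in Lemma~\ref{lem:up-to-OKm}~\eqref{item:when-a-is-not-invertible-element}.
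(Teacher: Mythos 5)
Your proposal is correct and follows exactly the route the paper intends: the paper's own proof is a one-line reference to Theorem~\ref{theorem=Chebotarev_narrow} and Lemma~\ref{lem:up-to-OKm}, with the factors $2^{r_1}$ and $\vph_K(\cond)$ coming from the choices of $s$ and $\tau$, and your write-up is a faithful expansion of that argument, including the correct handling of the norm shift $\Nrm(\alpha)=\Nrm(\idealp)\Nrm(\ideala)$ and the bounded contribution of non-invertible classes.
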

\begin{proof}
    Both items follow from Theorem~\ref{theorem=Chebotarev_narrow} and Lemma~\ref{lem:up-to-OKm}: the factors $2^{r_1}$ and $\varphi _K(\cond )$ come from the numbers of possible choices of $s$ and $\tau$, respectively.
\end{proof}

Finally, we have the following estimate from below of $\PP_{\ideala}$.

\begin{theorem}\label{theorem=counting_PPa}
Assume Settings~$\ref{setting:conductor-and-ideala}$ and~$\ref{setting:conductor-and-ideala:2}$. 
Let $\tau \in (\OK/\cond \OK)^{\times}$ and $s\in \{\pm1\}^{r_1}$. Then, for every  $\ZZ$-basis $\bv$ of $\ideala$, we \havethat\ 
\[
\liminf_{M\to\infty}\frac{\#(\PP_{\ideala;\tau,s}\cap \ideala(\bv,M))}{M^n(\log M)^{-1}}>0.
\]
\end{theorem}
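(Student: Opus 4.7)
The strategy is to combine Theorem~\ref{theorem=Chebotarev_narrow} with an NL-compatibility argument transferring countings by ideal norm into countings by $\lmugen$-length, in the spirit of Proposition~\ref{proposition=counting_below_DD}.

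The first step is to construct an NL-compatible fundamental domain $\DD'$ for the action of $\OKf$ on $\ideala\setminus\{0\}$. Starting from the NL-compatible $\OKt$-fundamental domain $\DD_K(\ee,\sigma)\subseteq\OKnz$ of Proposition~\ref{proposition=normrespectingfundamentaldomain}, its intersection $\DD_0\coloneqq\DD_K(\ee,\sigma)\cap\ideala$ is an $\OKt$-fundamental domain on $\ideala\setminus\{0\}$ (since $\ideala$ is $\OKt$-invariant), and it remains NL-compatible as a subset of $\OKnz$. Because $[\OKt:\OKf]$ is finite ($\OKt$ and $\OKf$ have the same rank $r_1+r_2-1$), picking representatives $\eta_1,\dots,\eta_k$ of $\OKt/\OKf$ and setting $\DD'\coloneqq\bigsqcup_{i\in[k]}\eta_i\cdot\DD_0$ gives an $\OKf$-fundamental domain on $\ideala\setminus\{0\}$; each translate $\eta_i\cdot\DD_0$ remains NL-compatible because multiplication by a fixed unit preserves ideal norms while altering $\lmugen$-lengths by a bounded factor, and a finite union of NL-compatible sets is NL-compatible.

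The second step counts $\Pavs\cap\DD'$ by ideal norm. The bijection of Lemma~\ref{lem:up-to-OKm}~\eqref{item:bij-Spec-and-P} sends $\idealp\in|\Spec(\OK)|_{\ideala,\tau,s}$ to the unique $\alpha\in\Pavs\cap\DD'$ with $\idealp\ideala=\alpha\OK$, so $\Nrm(\alpha)=\Nrm(\idealp)\cdot\Nrm(\ideala)$. Substituting into Theorem~\ref{theorem=Chebotarev_narrow} yields, as $L\to\infty$,
\[
\#(\Pavs\cap\DD'\cap\OK(L))=(1+o_{L\to\infty;\cond,\ideala}(1))\cdot\frac{L}{\hcond\cdot\Nrm(\ideala)\cdot\log(L/\Nrm(\ideala))}.
\]

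For the final step, NL-compatibility of $\DD'$ provides a constant $C>0$, depending on $\bv$ and $\DD'$, such that $C\|\alpha\|_{\infty,\bv}^n\leq\Nrm(\alpha)$ for every $\alpha\in\DD'$; equivalently $\DD'\cap\OK(CM^n)\subseteq\DD'\cap\ideala(\bv,M)$ for all $M\geq 0$. Taking $L=CM^n$ in the previous display and using $\log(CM^n/\Nrm(\ideala))\sim n\log M$ yields
\[
\#(\Pavs\cap\ideala(\bv,M))\geq\#(\Pavs\cap\DD'\cap\OK(CM^n))=(1+o_{M\to\infty;\cond,\ideala,\bv}(1))\cdot\frac{C}{n\cdot\hcond\cdot\Nrm(\ideala)}\cdot\frac{M^n}{\log M},
\]
which gives the positive lower bound for the liminf claimed in the theorem. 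The only non-routine point is constructing the NL-compatible $\OKf$-fundamental domain in the first step; everything else is dictated by Chebotarev together with a change of scale.
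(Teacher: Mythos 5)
Your proof is correct and follows essentially the same route as the paper's: construct an NL-compatible $\OKf$-fundamental domain on $\ideala\setminus\{0\}$ from finitely many unit translates of the $\OKt$-fundamental domain of Proposition~\ref{proposition=normrespectingfundamentaldomain} (using $[\OKt:\OKf]<\infty$), count $\Pavs\cap\DD'$ by norm via Theorem~\ref{theorem=Chebotarev_narrow} together with the bijection of Lemma~\ref{lem:up-to-OKm}~\eqref{item:bij-Spec-and-P} (the paper packages this as Corollary~\ref{cor:density-of-P-a-s}~\eqref{en:koi}), and transfer to the $\lmugen$-length scale via the NL-compatibility inequality as in Proposition~\ref{proposition=counting_below_DD}. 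The only differences are cosmetic, e.g.\ intersecting with $\ideala$ before rather than after taking the unit translates.
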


\begin{proof}
    Recall that the NL-compatibility is defined for a subset of $\OKnz$, in particular, for a subset of $\ideala\setminus \{0\}$. 
    Recall also for an integral basis $\omom$ of $K$, the restriction of $\|\cdot\|_{\infty,\omom}$ on $\ideala$ is bi-Lipschitz equivalent to $\|\cdot\|_{\infty,\bv}$.

    Fix a $\ZZ$-basis $\bv$ of $\ideala$.
First, we claim that there exists an NL-compatible fundamental domain for the action $\OKf\curvearrowright \ideala \setminus \{0\}$. Indeed, by Proposition~\ref{proposition=normrespectingfundamentaldomain}, we can take an NL-compatible $\OKt$-fundamental domain $\DD\subseteq \OKnz$. Since $\OKf$ has a finite index in $\OKt$, by considering the union of finitely many translates of $\DD$, we have an NL-compatible fundamental domain $\tilde{\DD}\subseteq \OKnz$ for the action $\OKf\curvearrowright \OKnz$. Finally, set $\DD'\coloneqq \ideala\cap \tilde{\DD}$; then this $\DD'$ is an NL-compatible fundamental domain for $\OKf\curvearrowright \ideala \setminus \{0\}$. 

Now the assertion follows from Corollary~\ref{cor:density-of-P-a-s}~\eqref{en:koi} and the NL-compatibility of $\DD'$; the deduction goes along the same lines as the proof of Proposition~\ref{proposition=PK_kouri}.
\end{proof}

In the rest of the current subsection, we treat the case of $\PP_{\idealc}$ for an invertible fractional ideal $\idealc $ of an order $\Or $.
We no longer assume Setting~\ref{setting:conductor-and-ideala:2}, but we continue to use Setting~\ref{setting:conductor-and-ideala}.

\begin{lemma}\label{lem:105a}

Let $\cond$ be a positive integer.
Let $\Or \subseteq \OK $ be an order
and 
$\ideald\subseteq\Or$ be an ideal coprime with $f$. Assume that $f\OK\subseteq\Or$. Then, we \havethat\  $\ideald\OK\cap\Or=\ideald$.
\end{lemma}
\begin{proof}
Since $\ideald$ is coprime with $f$, we have $\ideald+f\Or=\Or$. This implies $\ideald\OK+f\OK=\OK$. %
It in turn implies that
\[
\ideald\OK\cap f\OK=f\ideald\OK\subseteq\ideald\Or=\ideald.
\]
Hence we conclude that 
\[
    \ideald\OK\cap\Or=\ideald\OK\cap(\ideald+f\Or)
    \subseteq \ideald .
\]
Since the inclusion $\ideald\OK\cap\Or \supseteq \ideald$ trivially holds, the proof is completed.
\end{proof}

Now we are ready to present our counting result for $\PP_{\idealc}$. %

\begin{theorem}\label{theorem=counting_PPc}
Assume Setting~$\ref{setting:conductor-and-ideala}$.
Let $\Or$ be an order in $K$, and $\idealc$ be an invertible fractional ideal of $\Or$. 
Let $f\in \NN$ 
be 
such that $f\OK\subseteq \Or$. Take  $\xi\in K^{\times}$ and $\ideala=\xi \idealc \OK \in \Ideals_K$ as in Proposition~$\ref{theorem=reduction_order}$. Let $\tau \in (\OO/\cond \OK)^{\times}$ and $s\in \{\pm1\}^{r_1}$. 
Then we have 
\begin{equation}\label{eq:intersection-is-c'}
    \Pavs \subseteq \xi\PP _{\idealc }
\end{equation}
and for every $\ZZ$-basis $\bv$ of $\ideala$, we \havethat\ 
\begin{equation}\label{eq:theorem=counting_PPc}
    \liminf_{M\to\infty}\frac{\#(\xi\PP_{\idealc}\cap \PP_{\ideala;\tau,s}\cap \ideala(\bv,M))}{M^n(\log M)^{-1}}>0.
\end{equation}
\end{theorem}
\begin{proof}
Set $\ideald \coloneqq \xi\idealc$ so that $\ideala = \ideald \OK $.
By the choice of $\xi $, %
Lemma~\ref{lem:105a} %
applies to give 
$\ideala\cap \OO=\ideald $.
It follows that when $\tau \in (\Or /\cond \OK )$
we have %
$\Pavs \subseteq \ideala \cap \tau \subseteq \ideala \cap \Or = \ideald $.
This together with the definition 
$\PP _{\ideald }= \PP _{\ideala }\cap \ideald  $
implies $\Pavs \subseteq \PP _{\ideald }$, 
which is equivalent to \eqref{eq:intersection-is-c'}
since $\PP_{\ideald }=\xi \PP_{\idealc }$.
Now \eqref{eq:theorem=counting_PPc} follows from \eqref{eq:intersection-is-c'} and Theorem~\ref{theorem=counting_PPa}.
\end{proof}
Note that we have $(\OO/\cond\OK)^{\times} \neq \varnothing$ because $1 \in \OO$.

\subsection{Proof of Theorem~\ref{th:constellations-in-ideals}}\label{subsection=proof_orders}

Now we are ready to establish Theorem~\ref{th:constellations-in-ideals}.

\begin{proof}[Proof of Theorem~$\ref{th:constellations-in-ideals}$]
Fix $f\in \NN$ with $f\OK\subseteq \Or$.
First, apply Proposition~\ref{theorem=reduction_order} to $(\Or,f,\idealc)$
to obtain $\xi\in K^{\times}$ and $\ideala\in\Ideals_K$
satisfying the conditions there. In particular $\ideala $ is coprime with $f$.
Fix a $\ZZ$-basis $\bv$ of $\ideala$. 
Set
$ 
    \ideald \coloneqq \xi \idealc
$,$
    \bz \coloneqq \xi \bw
$ and $
    A' \coloneqq \xi A
$.
    Then, $\ideald \subseteq \ideala$, and the restriction of $\|\cdot\|_{\infty,\bv}$ on $\ideald $ is bi-Lipschitz equivalent to $\|\cdot\|_{\infty,\bz }$. Apply Theorem~\ref{theorem=counting_PPc} with $\tau=1 \bmod  \cond \in (\OO/\cond \OK)^{\times}$. The assumption $\overline{d}_{\PP_{\idealc },\bw}(A)>0$ then implies that
\[
\limsup_{M\to\infty}\frac{\#(A' \cap \ideala(\bv,M))}{M^n(\log M)^{-1}}>0.
\]
By Theorem~\ref{theorem=subpseudorandom_PPa} and Lemma~\ref{lemma=stronglog}~\eqref{en:strlogsubset}, this set $A'$ is a member of $\logpseua$. 
We will verify that $A'$ fulfills condition~\eqref{en:counting_ideal_infinite_DD} of Theorem~\ref{theorem=package_infinite_DD}.
It suffices to prove that the larger set $\PP_{\ideald }$ satisfies this. 

Let $L\in \RR_{\geq 2}$
and 
$\beta\in \PP_{\ideald }\cap \OK(L)$.
Then, by definition, 
$\idealp_{\beta}\coloneqq\beta\ideala^{-1}$ is a prime ideal of $\OK $ 
having norm 
$\Nrm(\idealp_{\beta})\leq L/\Nrm(\ideala)\leq L$. 
Hence the map $\beta \mapsto \idealp _{\beta }$ gives an injection 
$\{\beta \OK : \beta\in \PP_{\ideald }\cap \OK(L)\}
\hookrightarrow 
\{ \idealp \in \SpecOK : \Nrm (\idealp )\le L \}$.
Theorem~\ref{theorem=Chebotarev}~\eqref{Landau} implies that there exists $\Delta>0$ such that for every $L\in \RR_{\geq 2}$, 
\[
\#\{\beta \OK : \beta\in \PP_{\ideald }\cap \OK(L)\}\leq \Delta\cdot \frac{L}{\log L}.
\]
This verifies condition~\eqref{en:counting_ideal_infinite_DD} of Theorem~\ref{theorem=package_infinite_DD}. 

Now Theorem~\ref{theorem=package_infinite_a_close} applies to $A'$, and we have a sequence $(\eT_l')_{l\in \NN}$ of finite subsets in $A'$ satisfying the conclusion of Theorem~\ref{theorem=package_infinite_a_close}.
Finally, via the bijection 
$A' \ni \beta\mapsto \xi^{-1}\beta \in A$, we transfer
$(\eT_l')_{l\in \NN}$ to a sequence 
$(\eT_l)_{l\in \NN}$
of subsets in $A$.
This sequence $(\eT_l)_{l\in \NN}$ fulfills all the conditions of Theorem~\ref{th:constellations-in-ideals}. 
Indeed, to verify~\eqref{en:saga_sukunaiyo}, note that by Proposition~\ref{theorem=reduction_order} for all $\alpha \in A$ we have
\[
	\Nelm(\alpha) = \frac{\Nrm(\idealc)}{\Nrm(\ideala)} \cdot \Nelm(\xi \alpha);
\]
the factor $\frac{\Nrm(\idealc)}{\Nrm(\ideala)}$ does not depend on $\alpha$.
Hence, given $\theta \in (0,1)_{\RR}$, $S\subseteq \idealc$ and $\eta>0$, we can replace $L\subseteq \NN$ with another finite subset of $\NN$ in an appropriate manner.
\end{proof}

\subsection{Proof of Theorem~\ref{theorem=quadraticformcloseprimes}}\label{subsec:proof-constellations-quadratic-forms}

In this subsection, we will establish Theorem~\ref{theorem=quadraticformcloseprimes}. %
Our arguments will actually show %
the following theorem on \emph{norm forms} in general. %

\begin{setting}\label{setting=normform}
Let $K$ be a number field of degree $n$, and  $\Or$ an order in $K$. 
Let $\idealc$ be an invertible fractional ideal. 
Let $\boldsymbol{u}$ be the standard basis of $\ZZ^n$,
$\bw=(\gamma_1,\gamma_2,\ldots,\gamma_n)$ be a $\ZZ$-basis of $\idealc$, 
and $\iota \colon \ZZ ^n \to  \idealc $ be the isomorphism of $\ZZ $-modules which sends $\boldsymbol{u}$ to $\bw$. 
Let $F=F_{(\Or,\idealc,\bw)}\colon \ZZ^n\to \ZZ$ be the \emph{norm form} associated with $(\Or,\idealc,\bw)$, meaning that
for all $(x_1,x_2,\ldots ,x_n)\in \ZZ^n$,
\[
F_{(\Or,\idealc,\bw)}(x_1,x_2,\ldots ,x_n)\coloneqq \frac{\Nelm(\gamma_1x_1+\gamma_2x_2+\cdots +\gamma_nx_n)}{\Nrm(\idealc)}.
\]
In other words, $F_{(\Or,\idealc,\bw)}=(\Nrm(\idealc))^{-1}(\Nelm\circ\iota)$.
Let $r_1$ be the number of real embeddings of $K$.%
\end{setting}

\begin{theorem}[Szemer\'edi-type  theorem on prime representations of norm forms]\label{theorem=normform}
Assume Setting~$\ref{setting=normform}$. If $r_1>0$, then take an arbitrary $\epsilon \in \{\pm 1\}$; if $r_1=0$, then set $\epsilon=+1$. Let $A\subseteq F^{-1}(\epsilon\PP)$
be a set with $\overline{d}_{F^{-1}(\epsilon\PP),\boldsymbol{u}}(A)>0$. Then, there exists a sequence of pairwise disjoint finite subsets $(\eT_l)_{l\in \NN}$ of $A$ which fulfills the following two conditions.
\begin{enumerate}[$(a)$]
  \item\label{en:kotonaru_prime_again_nf} For every $l\in \NN$, $F\mid_{\eT_l}\colon \eT_l\to \PP_{\QQ}$ is injective.
  \item\label{en:chikai_prime_again_nf} For every $\theta\in (0,1)_{\RR}$, for every finite subset $S \subseteq \ZZ^n$ and for every $\eta>0$, there exists a finite subset $L\subseteq \NN$ such that for every $l\in \NN\setminus L$, the following hold:
\begin{enumerate}
  \item[$(b1)$]\label{en:chikai1_nf} $\eT_l$ contains an $S$-constellation,
  \item[$(b2)$]\label{en:chikai2_nf} for every $p_1,p_2\in F(\eT_l)$,
\[
\frac{|p_2|}{|p_1|}\leq 1+\eta\cdot (\min\{|p| : p\in F(\eT_l)\})^{\frac{\theta-1}{n}}
\]
holds.
\end{enumerate}
\end{enumerate}
\end{theorem}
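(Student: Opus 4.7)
My plan is to transfer the problem into an instance of Theorem~\ref{theorem=package_infinite_a_close} in a ring of integers $\OK$, by composing the $\ZZ$-module isomorphism $\iota\colon\ZZ^n\to\idealc$ (which is an $\ell_\infty$-isometry between $(\ZZ^n,\|\cdot\|_{\infty,\boldsymbol{u}})$ and $(\idealc,\|\cdot\|_{\infty,\bw})$) with the reduction provided by Theorem~\ref{theorem=reduction_order}. The identity $F(x)=\Nelm(\iota(x))/\Nrm(\idealc)$ shows that $\iota(F^{-1}(\epsilon\PP))$ equals the set of $\alpha\in\idealc$ such that $\alpha(\idealc\OK)^{-1}$ is a prime ideal of $\OK$ of residue degree $1$ with $\sgn(\Nelm(\alpha))=\epsilon$; in particular $\iota(A)\subseteq\PP_\idealc$. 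Fixing $\cond\in\NN$ with $\cond\OK\subseteq\Or$ and invoking Theorem~\ref{theorem=reduction_order}, I get $\xi\in K^\times$ with $\Nelm(\xi)>0$ and an ideal $\ideala\coloneqq\xi\idealc\OK\subseteq\OK$ coprime with $\cond$, so that $A^{**}\coloneqq\xi\iota(A)\subseteq\PP_\ideala$.

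The key technical preparation, needed to secure condition~\eqref{en:kotonaru_prime_again_nf}, rests on the elementary observation that at most $n$ prime ideals of $\OK$ share a given norm. Fixing an arbitrary total order on $|\Spec(\OK)|$, I define $A^{**,\mathrm{inj}}\subseteq A^{**}$ as those $\alpha\in A^{**}$ whose prime $\alpha\ideala^{-1}$ is the smallest, in the chosen order, among all primes of norm $\Nrm(\alpha\ideala^{-1})$ arising as $\beta\ideala^{-1}$ for some $\beta\in A^{**}$. Then $\#(A^{**,\mathrm{inj}}\cap X)\geq\frac{1}{n}\#(A^{**}\cap X)$ for every $X\subseteq\OK$, and any two elements of $A^{**,\mathrm{inj}}$ with equal $F$-value share the same prime ideal and are therefore associate.

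Next I verify the three hypotheses of Theorem~\ref{theorem=package_infinite_a_close} for $A^{**,\mathrm{inj}}$. The subpseudorandom condition $A^{**,\mathrm{inj}}\in\logpseua$ is inherited from $\PP_\ideala\in\logpseua$ (Theorem~\ref{theorem=subpseudorandom_PPa}) via Lemma~\ref{lemma=stronglog}~\eqref{en:strlogsubset}. The Landau-type bound on principal ideals is immediate from Theorem~\ref{theorem=Landau}, since for $\alpha\in A^{**,\mathrm{inj}}$ the ideal $\alpha\ideala^{-1}$ is prime. For the positive upper absolute density, I transfer the hypothesis $\overline{d}_{F^{-1}(\epsilon\PP),\boldsymbol{u}}(A)>0$ through the isometry $\iota$ and the Lipschitz equivalence of norms under $\xi$-multiplication, obtaining along some subsequence $\#(A^{**}\cap\ideala(\bv,M_k))\geq\delta'\cdot\#(\xi\iota(F^{-1}(\epsilon\PP))\cap\ideala(\bv,M_k))$; the right-hand side is bounded below by $c\cdot M_k^n/\log M_k$ by applying Theorem~\ref{theorem=counting_PPc} with $\tau=1$ and summing over signatures $s$ with $\prod_i s_i=\epsilon$, absorbing the negligible $O(\sqrt{M})$ contribution of degree-$\geq 2$ primes. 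The factor $1/n$ lost in passing to $A^{**,\mathrm{inj}}$ is absorbed into the constants.

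Applying Theorem~\ref{theorem=package_infinite_a_close} to $A^{**,\mathrm{inj}}$ produces $(\eT_l^{**})_l$; setting $\eT_l\coloneqq\iota^{-1}(\xi^{-1}\eT_l^{**})\subseteq A$ gives the desired sequence. Condition~\eqref{en:chikai1_nf} corresponds to feeding the shape $\xi\iota(S)\subseteq\ideala$ into Theorem~\ref{theorem=package_infinite_a_close}, and~\eqref{en:chikai2_nf} follows from the identity $|p_2|/|p_1|=|\Nelm(\alpha_2)|/|\Nelm(\alpha_1)|$ (for $\alpha_i\in\eT_l^{**}$ corresponding to $x_i\in\eT_l$) combined with the Lipschitz rescaling of $\min|\Nelm|$ under $\xi$. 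Condition~\eqref{en:kotonaru_prime_again_nf} follows from the defining property of $A^{**,\mathrm{inj}}$ together with the no-associate-pairs guarantee on $\eT_l^{**}$. The step I expect to be the main obstacle is precisely~\eqref{en:kotonaru_prime_again_nf}: Theorem~\ref{theorem=package_infinite_a_close} does not by itself prevent two elements of $\eT_l^{**}$ from yielding the same prime value under $F$, since distinct degree-$1$ primes may share a norm; the ``one prime per norm class'' construction of $A^{**,\mathrm{inj}}$ is the device that resolves this at a controlled density cost.
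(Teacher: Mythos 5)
Your overall route matches the paper's (reduction via Theorem~\ref{theorem=reduction_order}, then the machinery of Theorem~\ref{theorem=package_infinite_a_close}; the paper packages the last step as Theorem~\ref{th:constellations-in-ideals}), but the device you use to force injectivity of $F$ on the $\eT_l$ has a genuine gap. Your claim that $\#(A^{**,\mathrm{inj}}\cap X)\geq\frac{1}{n}\#(A^{**}\cap X)$ for every $X$ is false: the fibers of the map $\alpha\mapsto\Nrm(\alpha\ideala^{-1})$ on $A^{**}$ need not have size at most $n$, because $A^{**}$ has not been cut down to a fundamental domain and a single prime ideal $\idealp$ can be realized by many $\OKt$-associates in $A^{**}$ (by Corollary~\ref{corollary=OKt_orbit_ideal}, up to order $(\log M)^{r_1+r_2-1}$ of them inside $\ideala(\bv,M)$). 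The bound ``at most $n$ primes share a given norm'' controls the number of \emph{prime ideals} per fiber, not the number of \emph{elements}. Concretely, when $\OKt$ is infinite one can take $A$ of positive relative upper density in which, for most split rational primes $p$, the $F$-preimages realizing the smallest prime over $p$ consist of a single far-out associate while those realizing the other prime over $p$ are all kept; then $A^{**,\mathrm{inj}}\cap\ideala(\bv,M)$ has far fewer than $M^n/\log M$ elements, and hypothesis~(i) of Theorem~\ref{theorem=package_infinite_a_close} fails for $A^{**,\mathrm{inj}}$. So the density cost of your selection is not controlled, and the application of Theorem~\ref{theorem=package_infinite_a_close} is not justified.

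The paper avoids this by performing the selection only \emph{after} passing to an NL-compatible fundamental domain: using Theorem~\ref{theorem=counting_PPc} and Theorem~\ref{theorem=fundamental_Omega_infinite}~\eqref{en:limsup} one finds a fundamental domain $\DD$ for $\OKt\curvearrowright\ideala\setminus\{0\}$ with $\overline{d}_{\PP_3',\bv}(\DD\cap\xi\iota(A))>0$; inside $\DD$ each prime ideal has at most one realizer, so each rational prime value is realized by at most $n$ elements, and choosing for each such value a realizer of \emph{minimal} $\lmugen$-length yields a subset of relative lower density at least $1/n$ on which $|\Nelm(\cdot)|$ is injective. You need this (or an equivalent associate-collapsing step) before your norm-class selection; without it the construction of $A^{**,\mathrm{inj}}$ does not preserve the counting hypothesis.
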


\begin{remark}\label{remark=normform}
If $r_1=0$, or in other words $K$ is totally imaginary,
the norm form $F$ is positive definite. 
Theorem~\ref{theorem=normform} asserts that this is the \emph{only} obstruction to obtaining our constellation theorem with sign $\epsilon =-1$. 
\end{remark}

To prove Theorem~\ref{theorem=normform}, take $\cond \in \NN $ with $\cond \OK \subseteq \Or $. Apply Proposition~\ref{theorem=reduction_order} to $(\Or,f,\idealc)$, and obtain $\xi\in K^{\times}$ and an ideal $\ideala\in \Ideals_K$ coprime with $f$. Recall the definition of the degree of a prime ideal from Subsection~\ref{subsection=pideal}.

\begin{definition}\label{definition=deg1}
Assume Setting~\ref{setting=normform}. 
Let $\epsilon\in \{\pm1\}$. 
Let $\xi \in K^\times $ and $\ideala = \xi \idealc \OK $  be as in Proposition~\ref{theorem=reduction_order}.
\begin{enumerate}[(1)]
\item\label{en:deg1_prime}
Define $|\Spec(\OK)|^1$ as the set of prime ideals of $\OK$ of degree $1$.
\item\label{en:sign}
Define $\PP_{\idealc}^{\epsilon}$ as the set of $\alpha\in \PP_{\idealc}$ such that $\Nelm (\alpha )$ has sign $\epsilon $.
\item\label{en:deg1_sign}
Define $\PP_{\idealc}^{1,\epsilon }$ by
\[
        \PP _{\idealc} ^{1,\epsilon } \coloneqq 
        \{ \alpha  \in \idealc : (\xi\alpha) \ideala^{-1} \in |\Spec(\OK)|^1\textrm{ and $\Nelm (\alpha )$ has sign $\epsilon $}          
        \} .
\]
\end{enumerate}
\end{definition}

\begin{lemma}\label{lemma=degree1sig}
 The isomorphism $\iota \colon \ZZ^n\to \idealc $ from Setting~$\ref{setting=normform}$
 sends the set $F^{-1}(\epsilon\PP)$ bijectively to $\PP _{\idealc} ^{1,\epsilon }$.
\end{lemma}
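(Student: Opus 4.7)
The plan is to verify the equivalence pointwise: for an arbitrary $(x_1,\dots,x_n)\in \ZZ^n$, set $\alpha\coloneqq\iota(x_1,\dots,x_n)\in\idealc$ and $\beta\coloneqq\xi\alpha$, and show $F(x_1,\dots,x_n)\in\epsilon\PP$ iff $\alpha\in\PP_{\idealc}^{1,\epsilon}$. Since $\iota$ is a $\ZZ$-linear bijection onto $\idealc$, bijectivity of the restriction will then follow automatically.

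First I would translate $F$ into a statement about $\beta$. By the very definition of $F_{(\Or,\idealc,\bw)}$ we have $F(x_1,\dots,x_n)=\Nelm(\alpha)/\Nrm(\idealc)$; and by Theorem~\ref{theorem=reduction_order}~\eqref{en:norm_huhen} this equals $\Nelm(\beta)/\Nrm(\ideala)$. Note that $\beta\in\xi\idealc\subseteq\ideala$ (by Theorem~\ref{theorem=reduction_order}~\eqref{en:ideal_desuyo}), so the fractional ideal $\beta\ideala^{-1}$ is an integral ideal of $\OK$, whose ideal norm is $|\Nelm(\beta)|/\Nrm(\ideala)=|F(x_1,\dots,x_n)|$ by Proposition~\ref{lem:multiplicative-norm}~\eqref{item:norm-well-defined}. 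Moreover, since $\Nrm(\idealc)>0$, the signature of $F(x_1,\dots,x_n)$ coincides with the signature of $\Nelm(\alpha)$.

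Next I would extract the two independent conditions. The statement $F(x_1,\dots,x_n)\in\epsilon\PP$ unpacks as $(i)$ $|F(x_1,\dots,x_n)|$ is a rational prime, and $(ii)$ $\Nelm(\alpha)$ has signature $\epsilon$; condition $(ii)$ matches directly with the signature condition built into $\PP_{\idealc}^{1,\epsilon}$. So it remains to prove that $(i)$ is equivalent to $\beta\ideala^{-1}\in|\Spec(\OK)|^1$. This is the only genuinely nontrivial step, but it is the standard fact that an integral ideal of $\OK$ has prime ideal norm iff it is itself a prime ideal of degree one: multiplicativity of the ideal norm (Lemma~\ref{lemma=completemultiplicativity}) together with the fact that every prime ideal has norm at least $2$ forces the prime factorization to consist of a single factor, and its norm being a rational prime $q$ forces its residue degree to equal one (since $\Nrm(\idealp)=p^{f_{\idealp}}$).

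Combining these two steps yields $F(x_1,\dots,x_n)\in\epsilon\PP$ iff $\alpha\in\PP_{\idealc}^{1,\epsilon}$, and the bijectivity of $\iota|_{F^{-1}(\epsilon\PP)}$ follows from the bijectivity of $\iota$. I do not anticipate any real obstacle here; the main point to keep track of is separating the signature from the primality, both of which are preserved by the ``multiplicative transport'' $\alpha\mapsto\beta=\xi\alpha$ provided by Theorem~\ref{theorem=reduction_order}.
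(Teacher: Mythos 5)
Your proposal is correct and follows essentially the same route as the paper: compute $F\circ\iota^{-1}$ as $\alpha\mapsto \Nelm(\alpha)\Nrm(\idealc)^{-1}$, match the signature with $\sgn(\Nelm(\alpha))$, identify the absolute value with $\Nrm((\xi\alpha)\ideala^{-1})$ via Theorem~\ref{theorem=reduction_order}~\eqref{en:norm_huhen}, and use that an integral ideal lies in $|\Spec(\OK)|^1$ iff its norm is a rational prime. The only difference is that you spell out the last equivalence, which the paper states without proof.
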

\begin{proof}
The composite map $F\circ \iota^{-1}$ is computed
    as
    \begin{equation}\label{eq:composite-map-F}
        \idealc\ni \alpha\ \mapsto  {\Nelm (\alpha )\Nrm (\idealc )^{-1}}  .
    \end{equation}
    This last value has sign $\mathrm{sgn} (\Nelm (\alpha ))$.
    Also, the right-hand side of \eqref{eq:composite-map-F}
    has absolute value $\Nrm (\alpha \idealc ^{-1}) $,
    which equals $\Nrm((\xi\alpha)\ideala^{-1})$ by Proposition~\ref{theorem=reduction_order}~\eqref{en:norm_huhen}.
    Note that for $\idealb\in \Ideals_K$, $\idealb\in|\Spec(\OK)|^1$ if and only if $\Nrm(\idealb)\in \PP$. This ends our proof.
\end{proof}

We consider 
the inclusions $\PP _{\idealc }^{1,\epsilon}\subseteq \PP _{\idealc }^{\epsilon}\subseteq \PP _{\idealc }$
multiplied by $\xi $.
The conclusions of the next lemma are equivalent to saying that $\underline{d}_{\PP _{\idealc },\bw}(\PP _{\idealc }^{\epsilon})>0$ and $\underline{d}_{\PP _{\idealc }^{\epsilon},\bw}(\PP _{\idealc }^{1,\epsilon})=1$.

\begin{lemma}\label{lemma=density_degree1}
Assume Setting~$\ref{setting=normform}$. Let $\xi$ and $\ideala$ be as in Proposition~$\ref{theorem=reduction_order}$. Let $\bv$ be a $\ZZ$-basis of $\ideala$. Let $\epsilon$ be an arbitrary sign $\epsilon \in \{\pm 1\}$ if $r_1>0$; otherwise, let $\epsilon=+1$. Set $\PP_1'\coloneqq\xi \PP _{\idealc }^{1,\epsilon }$, $\PP_2'\coloneqq\xi \PP _{\idealc }^{\epsilon }$ and $\PP_3'\coloneqq \xi \PP_{\idealc}$. 
Then, 
we have $\underline{d}_{\PP_3',\bv}(\PP_2')>0$ and $\underline{d}_{\PP_2',\bv}(\PP_1')=1$.
\end{lemma}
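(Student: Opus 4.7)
The plan is to decompose $\PP_3' = \xi\PP_{\idealc}$ via the Cartesian diagram \eqref{eq:cartesian-involving-two-P's} into disjoint pieces of the form $\Pavs$, and then combine a lower bound on $\PP_2'$ coming from a single invertible piece (Theorem~\ref{theorem=counting_PPa}) with upper bounds on $\PP_3'$ and on its degree-$\geq 2$ subset, the latter obtained by adapting the Abel-summation argument of Proposition~\ref{proposition=prime_counting_above}. Following verbatim the proof of Theorem~\ref{theorem=counting_PPc}, the identifications of Lemmas~\ref{lem:cartesian} and \ref{lem:tomoni-bijective} yield
$$\PP_3' \;=\; \bigsqcup_{\tau\in\Or/f\OK}\;\bigsqcup_{s\in\{\pm 1\}^{\Hom(K,\RR)}} \Pavs,$$
where $\Or/f\OK$ is viewed inside $\OK/f\OK$. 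Because $\Nelm(\alpha)$ is the product of all embeddings of $\alpha$ and the non-real embeddings pair in complex conjugates, for $\alpha\in\Pavs$ the sign of $\Nelm(\alpha)$ equals $\prod_{i\in\Hom(K,\RR)} s_i$. Writing $\mathcal{S}_\epsilon = \{s\in\{\pm1\}^{\Hom(K,\RR)} : \prod_i s_i = \epsilon\}$, which is always nonempty (a singleton when $r_1=0$, of size $2^{r_1-1}$ otherwise), the same decomposition restricts to $\PP_2' = \bigsqcup_{\tau}\bigsqcup_{s\in\mathcal{S}_\epsilon} \Pavs$.

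For $\underline{d}_{\PP_3',\bv}(\PP_2')>0$ I would supply two matching bounds. A lower bound on $\PP_2'$ is immediate: picking any $s_0\in\mathcal{S}_\epsilon$ and the invertible class $\tau_0=1\in(\Or/f\OK)^\times$, the decomposition gives $\PP_2'\supseteq\PP_{\ideala;\tau_0,s_0}$, and Theorem~\ref{theorem=counting_PPa} (which applies because $\ideala$ is coprime with $f$ by Theorem~\ref{theorem=reduction_order}\eqref{en:coprime_f}) yields $\#(\PP_2'\cap\ideala(\bv,M))\gg M^n/\log M$. For the matching upper bound on $\PP_3'\subseteq\PP_{\ideala}$ I would rerun the Abel-summation argument of Proposition~\ref{proposition=prime_counting_above}, replacing the map $\pi\mapsto\pi\OK$ by $\beta\mapsto\beta\ideala^{-1}\in|\Spec(\OK)|$ and $\Xi$ by the constant $\Xi(\bv)$ from Corollary~\ref{corollary=OKt_orbit_ideal}\eqref{en:precisecounting_ideal}; combined with Landau's estimate (Theorem~\ref{theorem=Landau}) this gives $\#(\PP_3'\cap\ideala(\bv,M))=O(M^n/\log M)$, hence a strictly positive lower bound on the ratio.

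For $\underline{d}_{\PP_2',\bv}(\PP_1')=1$ it suffices to show that $\PP_2'\setminus\PP_1'$ has relative density zero in $\PP_2'$. By Definition~\ref{definition=deg1}, $\PP_2'\setminus\PP_1'$ consists of those $\beta$ for which $\beta\ideala^{-1}$ is a prime ideal of residue degree at least $2$; such an ideal lies over a rational prime $p\leq\sqrt{\Nrm(\beta\ideala^{-1})}$, so Lemma~\ref{lemma=squarefree} shows there are at most $n\cdot\pi(\sqrt L)=O(\sqrt L/\log L)$ such prime ideals of norm at most $L$. Feeding this sparser bound into the same Abel-summation estimate yields $\#((\PP_2'\setminus\PP_1')\cap\ideala(\bv,M))=O(M^{n/2}(\log M)^{r_1+r_2-1})$, which is $o(M^n/\log M)$; divided by the lower bound $\#(\PP_2'\cap\ideala(\bv,M))\gg M^n/\log M$ from the previous step, this gives the claimed equality.

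The main technical obstacle I anticipate is the faithful adaptation of the Abel-summation bookkeeping in Proposition~\ref{proposition=prime_counting_above} from the setting of $\PP_K\subseteq\OK$ with the full $\OKt$-action to the present setting of $\PP_{\ideala}\subseteq\ideala$ with a nontrivial conductor $f$, so that the upper bounds on $\PP_3'$ and on its degree-$\geq 2$ subset differ precisely by a factor of $\sqrt L$ in the right place.
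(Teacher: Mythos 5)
Your proposal is correct and follows essentially the same route as the paper: decompose $\PP_3'$ into the pieces $\Pavs$ via the Cartesian diagram \eqref{eq:cartesian-involving-two-P's}, lower-bound $\PP_2'$ by a single invertible piece using Theorem~\ref{theorem=counting_PPa}, upper-bound $\PP_3'$ via Landau's theorem together with the orbit count of Corollary~\ref{corollary=OKt_orbit_ideal}, and kill $\PP_2'\setminus\PP_1'$ by noting that degree-$\geq 2$ primes of norm $\leq L$ number only $O(\sqrt{L})$. The only (harmless) differences are that the paper dispatches $r_1=0$ separately via $\PP_2'=\PP_3'$ and uses a cruder count for the degree-$\geq 2$ part (at most $nM^{(2n+1)/4}$ ideals times $\Xi'(\bv)(\log M)^{r_1+r_2-1}$ elements each, no Abel summation needed), so the refinement you worry about in your last paragraph is unnecessary.
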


\begin{proof}
First, we prove $\underline{d}_{\PP_3',\bv}(\PP_2')>0$. If $r_1=0$, then the norm form $F$ is positive definite. Hence we have $\PP_2'=\PP_3'$; recall we have taken $\epsilon=+1$ in this case. Now we treat the remaining case of $r_1>0$. 

By \eqref{eq:intersection-is-c'} we know
$\PP_2'\supseteq \bigsqcup_{(\tau,s)}\PP_{\ideala;\tau,s}$, 
where $(\tau,s)$ runs over the set of pairs $(\tau,s)$ with $\tau\in (\Or /\cond \OK )^{\times}$ 
and 
$s=(s_1,s_2,\ldots, s_{r_1}) \in 
\{\pm 1\}^{r_1}$ 
satisfying $s_1s_2\cdots s_{r_1} = \epsilon$. Note that this set of pairs is non-empty because $1\in \OO$ and $r_1>0$. Corollary~\ref{cor:density-of-P-a-s} and Theorem~\ref{theorem=counting_PPa} apply. Hence, in this case, we also conclude that $\underline{d}_{\PP_3',\bv}(\PP_2')>0$. See also Remark~\ref{remark=Delta_detekuru}.

Secondly, we prove $\underline{d}_{\PP_2',\bv}(\PP_1')=1$. By Theorem~\ref{theorem=counting_PPa} and the argument above, we \havethat\ 
\[
\liminf_{M\to\infty}\frac{\#(\PP_2'\cap \ideala(\bv,M))}{M^n(\log M)^{-1}}>0.
\]
We claim that
\begin{equation}\label{eq:zerodens}
\limsup_{M\to\infty}\frac{\#((\PP_2'\setminus \PP_1')\cap \ideala(\bv,M))}{M^n(\log M)^{-1}}=0.
\end{equation}
To prove this, take $\beta \in (\PP_2'\setminus \PP_1')\cap \ideala(\bv,M)$. Then, there exists $\idealp_{\beta}\in |\Spec(\OK)|\setminus |\Spec(\OK)|^1$ such that $\beta\ideala^{-1}=\idealp_{\beta}$. Since $\idealp_{\beta}\in |\Spec(\OK)|\setminus |\Spec(\OK)|^1$, there exist $p_{\beta}\in \PP$ and $f_{\beta}\in \NN_{\geq 2}$ such that $\Nrm(\idealp_{\beta})=p_{\beta}^{f_{\beta}}$. Then we have
\[
p_{\beta}\leq \left(\frac{|\Nelm(\beta)|}{\Nrm(\ideala)}\right)^{\frac{1}{2}}.
\]
Here, if $M$ is sufficiently large depending on $\bv$, then the right-hand side of the inequality above does not exceed $M^{(2n+1)/4}$. By Lemma~\ref{lemma=squarefree}, there exist at most $nM^{(2n+1)/4}$ possibilities of $\idealp_{\beta}$. By Corollary~\ref{corollary=OKt_orbit_ideal}~\eqref{en:roughcounting_ideal}, we conclude that 
\[
\limsup_{M\to\infty}\frac{\#((\PP_2'\setminus \PP_1')\cap \ideala(\bv,M))}{M^\frac{2n+1}{4}(\log M)^{n}}<\infty;
\]
note that $k \leq n$, where $k$ is as in Corollary~\ref{corollary=OKt_orbit_ideal}.
Therefore, \eqref{eq:zerodens} holds. It follows that $\underline{d}_{\PP_2',\bv}(\PP_1')=1$. 
\end{proof}

\begin{proof}[Proof of Theorem~$\ref{theorem=normform}$]
We continue to use the same notation as in
Lemma~\ref{lemma=density_degree1}.
By Lemma~\ref{lemma=density_degree1}, we have $\underline{d}_{\PP_3',\bv}(\PP_1')>0$. By Lemma~\ref{lemma=degree1sig}, this implies that $\overline{d}_{\PP_3',\bv}(\xi\cdot \iota(A))>0$.  By Theorem~\ref{theorem=counting_PPc}, Theorem~\ref{theorem=fundamental_Omega_infinite}~\eqref{en:limsup} applies, and there exists an NL-compatible fundamental domain $\DD$ for the action $\OKt\curvearrowright \ideala\setminus \{0\}$ such that $\overline{d}_{\PP_3',\bv}(\DD\cap (\xi\cdot \iota(A)))>0$.  Define 
\[
\PP(A,\DD)\coloneqq \left\{\frac{|\Nelm(\beta)|}{\Nrm(\ideala)}: \beta\in \DD\cap(\xi\cdot\iota(A))\right\}.
\]
By Lemma~\ref{lemma=degree1sig}, this is  a subset of $\PP$. %
Next we will construct a new set $\tilde{A}'$ from $\DD\cap(\xi\cdot\iota(A))$. For each $p\in \PP(A,\DD)$, choose an arbitrary $\tilde{\beta}_p\in \DD\cap(\xi\cdot \iota(A))$ satisfying
\begin{equation}\label{eq:mijika}
\|\tilde{\beta}_{p}\|_{\infty,\bv}=\min \left\{\|\beta\|_{\infty,\bv}: \beta \in \DD\cap(\xi\cdot \iota(A)),\ \frac{|\Nelm(\beta)|}{\Nrm(\ideala)}=p\right\}.
\end{equation}
Then, define $\tilde{A}'$ by $\tilde{A}'\coloneqq \{\tilde{\beta}_p:p\in \PP(A,\DD)\}$. We claim that the map $\tilde{A}'\ni \tilde{\beta} \mapsto |\Nelm(\tilde{\beta})|\in \NN$ is injective, and that 
\begin{equation}\label{eq:A'_ooi}
\underline{d}_{\DD\cap (\xi\cdot \iota(A)),\bv}(\tilde{A}')\geq \frac{1}{n}
\end{equation}
holds true. Indeed, the former assertion holds by construction. To see the latter assertion, recall from Lemma~\ref{lemma=squarefree} that for every $p\in \PP$, the number of prime $p$-ideals in $\OK$ does not exceed $n$. Hence,  for every $p\in \PP(A,\DD)$, we have
\[
\#\left\{\beta \in \DD\cap(\xi\cdot\iota(A)) : \frac{|\Nelm(\beta)|}{\Nrm(\ideala)}=p \right\}\leq n.
\]
Therefore, by \eqref{eq:mijika}, we conclude \eqref{eq:A'_ooi}.

Set $A'\coloneqq \xi^{-1}\tilde{A}'$; this is a subset of $\iota(A)$.
We have $\overline{d}_{\PP_{\idealc},\bw}(A')>0$ by 
$\overline{d}_{\PP_3',\bv}(\DD\cap (\xi\cdot \iota(A)))>0$ and \eqref{eq:A'_ooi}.
Therefore, we can apply Theorem~\ref{th:constellations-in-ideals} to this $A'$
and obtain a sequence $(\eT_l')_{l\in \NN}$ of subsets in $A'$ satisfying the conditions there. Again by Lemma~\ref{lemma=degree1sig}, the sequence $(\eT_l)_{l\in \NN}\coloneqq (\iota^{-1}(\eT_l'))_{l\in \NN}$ of finite subsets in $\ZZ^n$ fulfills all the conditions of Theorem~\ref{theorem=normform}. 
\end{proof}

Finally, we establish Theorem~\ref{theorem=quadraticformcloseprimes}. In the proof below, we do not assume Setting~\ref{setting=normform}.

\begin{proof}[Proof of Theorem~$\ref{theorem=quadraticformcloseprimes}$]
Let $F(x,y)=ax^2+bxy+cy^2\in \ZZ[x,y]$ be a non-degenerate and primitive binary quadratic form with $a>0$. 
By Theorem~\ref{th:quadratic-forms-ideals}, there exist an order $\Or $ in a quadratic field 
$K$, an invertible fractional ideal $\idealc $ of  $\Or $,
a $\ZZ $-basis $\bw=(\gamma_1 ,\gamma_2)$ of $\idealc $ 
and a sign $\signF $
such that \eqref{eq:quadratic-forms-ideals} holds.

If $D_F>0$, then for each $\epsilon_0\in \{\pm 1\}$, we can apply Theorem~\ref{theorem=normform} with sign $\epsilon=\epsilon_0\signF$. This immediately proves the assertion. If $D_F<0$, then $a>0$ implies that $\signF=+1$. 
Apply Theorem~\ref{theorem=normform} with sign $\epsilon=+1$, and obtain the conclusion.
\end{proof}
\appendix

\section{Binary quadratic forms and quadratic fields}

The goal of this appendix is to provide a standard proof of the classical fact, Theorem~\ref{th:quad-classical}, 
on the correspondence of binary quadratic forms with integral coefficients 
and
ideals in orders of quadratic fields. 
A part of Theorem \ref{th:quad-classical}, in the form of Theorem~\ref{th:quadratic-forms-ideals},  plays a  key role in the proof of Theorem~\ref{mtheorem=quadraticform}. 
We will need no more algebraic backgrounds from the main body of the paper than 
Proposition~\ref{lem:multiplicative-norm}, or more precisely \eqref{eq:norm-is-multiplicative}.
%For the proof, we employ Proposition~\ref{lem:multiplicative-norm}: more precisely, equality \eqref{eq:norm-is-multiplicative}.

\subsection{Definitions}
Recall that by a \emph{binary quadratic form with integral coefficients}, we mean 
    a map $F\colon \mathbb Z^2 \to \mathbb Z $ which can be written as 
    $F(x,y)= ax^2+bxy+cy^2$ with $a,b,c\in \ZZ$ 
    with respect to the standard basis of $\ZZ^2$. In this appendix henceforth, we omit the modifier `with integral coefficients.'
Note that the property that $F$ is written in the form $ax^2+bxy+cy^2$ is preserved under a $\ZZ$-linear isomorphism $\ZZ^2\stackrel{\simeq}{\to} \ZZ^2$, in other words, a change of basis. Hence, the following concept of equivalence is natural.
Here we consider the so-called {\em proper} equivalence which respects the orientation, but we drop the adjective `proper' because we will never consider the improper one in this paper.

\begin{definition}
    Two binary quadratic forms $F,G\colon \mathbb Z^2\to\mathbb Z$ are said to be
     {\em equivalent}
    if there exists a $\ZZ $-linear isomorphism 
    $\iota \colon \mathbb Z^2 \xrightarrow{\simeq} \mathbb Z^2$ 
    preserving the orientation such that 
    $F=G\circ \iota $, namely such that the following diagram 
\[  
        \xymatrix{
            \mathbb Z^2 \ar[r]^F\ar[d]_{\iota }^{\simeq} & \ZZ  \\
            \mathbb Z^2 \ar[ru]_G
        }
        \]
commutes.
\end{definition}

Recall that the \emph{discriminant} of a binary quadratic form $F(x,y)=ax^2+bxy+cy^2$ is the integer $D_F\coloneqq b^2-4ac$. Note that $D_F\equiv 0\text{ or }1 \pmod 4$ always holds. The discriminant is preserved by equivalence of binary quadratic forms. Recall also that $F$ is said to be \emph{non-degenerate} if $D_F$ is not a perfect square, and that $F$ is said to be \emph{primitive} if $\mathrm{gcd}(a,b,c)=1$ holds.

\begin{definition}
    For an integer $D\in \ZZ $,
    we denote by $Q(D)$ the set of equivalence classes of non-degenerate primitive quadratic forms with discriminant $D$.
\end{definition}
The set $Q(D)$ is empty unless $D\equiv 0\text{ or }1 \pmod 4.$

For a square-free integer $d\in \mathbb Z\setminus\{1\}$, let us consider the quadratic field $\mathbb Q(\sqrt d)$ as a subfield of $\mathbb C$.
We choose a square root $\sqrt{d}\in\CC$ of $d$ in the following manner: if $d>0 $, we take $\sqrt{d}$ to be the positive real one, and if $d<0$, the one with positive imaginary part.  
\begin{definition}\label{definition=disc}
Let $d$ be a square-free integer not equal to $1$ and $K$ the quadratic field $K=\QQ(\sqrt{d})$.
\begin{enumerate}[(1)]
\item\label{en:dK} Define the discriminant $d_K$ of $K$ by
\[
d_K\coloneqq \begin{cases}d & \text{if} \ d\equiv 1\pmod{4}, \\ 4d& \text{if} \ d\equiv 2,3\pmod{4}.\end{cases}
\]
\item\label{en:Dorder} Define the discriminant $D$ of an order $\Or$ in $K$ by $D\coloneqq \# (\OK /\Or )^2\cdot d_K$. 
\end{enumerate}
\end{definition}
There exists a bijective correspondence between the pairs of a quadratic field $K=\mathbb Q(\sqrt d)$ and an order $\Or$ in $K$, and the discriminants $D$:
\begin{equation}\label{eq:orders-and-D}
\left\{(K,\Or) : \begin{array}{l} K \text{ a quadratic field}, \\ \Or \text{ an order in } K\end{array}  \right\}
\simeq
\left\{ D\in\ZZ : \begin{array}{l} D\equiv 0,1 \pmod{4},\\ \text{not a square}\end{array}\right\} .   
\end{equation} 
As it is fundamental in the proof of Theorem~\ref{th:quad-classical},
we present the explicit form of correspondence~\eqref{eq:orders-and-D}.
The one from the left-hand side to the right-hand side is given in Definition~\ref{definition=disc}~\eqref{en:Dorder}.
To explain the reverse correspondence, 
set $d$ to be the square-free part of $D$ (i.e., the square-free integer $d$ such that $D/d$ is a square) and define 
\begin{equation}\label{eq:KandO}
K\coloneqq\QQ (\sqrt{d}),\quad \textrm{and}\quad \Or\coloneqq \mathbb Z \oplus  \sqrt{\frac{D}{d_K}}\omega\mathbb Z,
\end{equation}
where $\omega$ is defined by 
\begin{equation}\label{eq:def-of-omega}
\omega \coloneqq\begin{cases}
\frac{1+\sqrt{d}}{2} & \text{if} \ d\equiv 1\pmod{4}, \\
\sqrt{d} & \text{if} \ d\equiv 2,3\pmod{4}.
\end{cases}
\end{equation}
These two maps are inverse to each other by the fact that every order of $K=\QQ (\sqrt{d})$ %with a square-free integer not equal to $1$ 
has the form $\Or =\mathbb Z \oplus \cond \omega\mathbb{Z}$
for some positive integer $\cond$; an easy consequence of the fact $1\in \Or $, see \cite[Lemma~7.2]{Cox89} or the discussion preceding \eqref{eq:Or-tau}. With this $f$, we have $\# (\OK /\Or ) = f$ and hence $D=f^2 d_K$.
\begin{definition}\label{def:signed-ideal}
Let $d$ be a square-free integer not equal to $1$.
Let $\Or $ be an order of the quadratic field $K=\QQ(\sqrt{d})$.
\begin{enumerate}[(1)]
\item\label{en:Cl+}  Define the commutative group ${\Cl }^+(\Or )$, called the \emph{narrow class group}, as follows.
Let $I_{\Or}$ be the group of invertible fractional ideals of $\Or$, where the group law is given by multiplication.
Then, we define ${\Cl }^+(\Or )$ to be the cokernel of the homomorphism from $K^{\times}$ to $I_{\Or} \times \{\pm1\}$ defined by $x\mapsto (x\Or, \sgn(N_{K/\mathbb{Q}}(x)))$.
\item\label{en:orient} Endow the $2$-dimensional $\QQ$-vector space $K$ with the orientation given by the basis $(1,\sqrt{d})$.
Let $\idealc \in I_\Or$ and $\epsilon\in\signs$.
We say that a $\ZZ$-basis $(\gamma_1,\gamma_2)$ of $\idealc$ \emph{has sign $\epsilon$} if the representing matrix of the inclusion map $\idealc \hookrightarrow K$ with respect to the bases $(\gamma_1,\gamma_2)$ and $(1,\sqrt{d})$ has determinant with sign $\epsilon$.
\end{enumerate}
\end{definition}

\subsection{The correspondence}
The following theorem is essentially due to Gauss, Dirichlet and Dedekind, and the goal of this appendix.
This specific statement 
is taken from \cite[Theorem 10]{Bhargava04}. 
%In Theorem~\ref{th:quad-classical}, 
For a binary quadratic form $F$ with discriminant $D_F=D$, write $[F]$ for its equivalence class in $Q(D)$. Similarly, for $(\idealc,\epsilon ) \in I_\Or \times \signs $, write $[(\idealc,\epsilon)]$ for its equivalence class in ${\Cl }^+(\OO )$.
\begin{theorem}\label{th:quad-classical}
Let $\Or$ be an order of a quadratic field $K$ and let $D$ be its discriminant.
Then the following maps are well-defined, and one is the inverse map to the other.
In particular, they provide a bijective correspondence
\[
\Cl^{+}(\Or)\simeq Q(D).
\]
\begin{enumerate}[$(1)$]
   \item\label{en:CltoQ}$($From ideals to quadratic forms$)$ For $[(\idealc,\epsilon)]\in {\Cl }^+(\OO )$ with $(\idealc,\epsilon ) \in I_\Or \times \signs $, choose a $\ZZ $-basis $(\gamma_1 ,\gamma_2)$ of $\idealc$ which has sign $\epsilon$.
   Then, define the corresponding class $[F]$ of quadratic forms  by setting
\begin{equation}\label{eq:from-ideal-to-quad}
F(x,y)\coloneqq\frac{N_{K/\QQ}(\gamma_1x+\gamma_2y)}{\epsilon\Nrm(\idealc)}\quad\text{ for}\quad (x,y)\in\ZZ^2.
\end{equation} 
\item\label{en:QtoCl}$($From quadratic forms to ideals$)$ For $[F]\in Q(D)$, consider a representative $F$ of the form $F(x,y)=ax^2+bxy+cy^2$.
Let $d$ be the square-free part of $D$ so that $K=\QQ(\sqrt{d})$.
Set $f\coloneqq \sqrt{D/d_K}$ and $\tau  \coloneqq \frac{-b-f\sqrt{d_K}}{2} \in \OK $. Then define the corresponding element $[(\idealc,\epsilon)]\in {\Cl }^+(\OO )$ as follows: set
 \begin{equation}\label{eq:from-quad-to-ideal}
     \idealc \coloneqq a \mathbb Z \oplus \tau  \mathbb Z \quad \textrm{and} \quad        \epsilon \coloneqq \mathrm{sgn}(a )  .  
 \end{equation}
\end{enumerate}
\end{theorem}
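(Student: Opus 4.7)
The plan is to check that both maps are well-defined on equivalence classes and that they are mutually inverse; throughout, I would rely crucially on Proposition~\ref{lem:multiplicative-norm}.

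For map \eqref{en:CltoQ}, I would first show that $F(x,y) \coloneqq N_{K/\QQ}(\gamma_1 x + \gamma_2 y)/(\epsilon\Nrm(\idealc))$ is integer-valued: for $\alpha = \gamma_1 x + \gamma_2 y \in \idealc$, the product $\alpha\idealc^{-1}$ is an integral ideal of $\Or$ whose norm equals $|N_{K/\QQ}(\alpha)|/\Nrm(\idealc)$ by the multiplicativity of the ideal norm. Expanding $N_{K/\QQ}(\gamma_1 x + \gamma_2 y) = N(\gamma_1)x^2 + (\gamma_1\overline{\gamma_2} + \overline{\gamma_1}\gamma_2)xy + N(\gamma_2)y^2$ then exhibits $F$ as a binary quadratic form. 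Its discriminant computes as $D_F = (\gamma_1\overline{\gamma_2} - \overline{\gamma_1}\gamma_2)^2/\Nrm(\idealc)^2$, and the general lattice-discriminant identity $(\gamma_1\overline{\gamma_2}-\overline{\gamma_1}\gamma_2)^2 = \Nrm(\idealc)^2 \cdot D$, obtained by comparing the basis $(\gamma_1,\gamma_2)$ to a basis of $\Or$, yields $D_F = D$. Primitivity of $F$ follows by contradiction: a common prime divisor $p$ of the coefficients would force $p\Nrm(\idealc) \mid N_{K/\QQ}(\alpha)$ for every $\alpha \in \idealc$, which after multiplication by $\idealc^{-1}\overline{\idealc}^{-1}$ would give $\idealc \subseteq p\idealc$ and contradict invertibility. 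For well-definedness of $[F]$: a change of basis of $\idealc$ preserving the signature $\epsilon$ lies in $\SL_2(\ZZ)$ and induces a proper equivalence of forms; replacing $(\idealc,\epsilon)$ by $(x\idealc, \sgn(N_{K/\QQ}(x))\cdot\epsilon)$ with basis $(x\gamma_1, x\gamma_2)$ leaves $F$ literally unchanged, as the identities $\Nrm(x\idealc) = |N(x)|\Nrm(\idealc)$ and $N_{K/\QQ}(x\alpha) = N(x)N(\alpha)$ conspire to cancel, while the new basis has signature $\sgn(N(x))\epsilon$ because multiplication by $x$ on $K$ has determinant $N_{K/\QQ}(x)$.

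For map \eqref{en:QtoCl}, the central algebraic fact is that $\tau$ satisfies the polynomial identity $\tau^2 + b\tau + ac = 0$, read off from $\tau = (-b-f\sqrt{d_K})/2$ combined with $D = b^2 - 4ac = f^2 d_K$. Together with the description $\Or = \ZZ \oplus f\omega\ZZ$ in \eqref{eq:KandO}, this identity lets me verify that $\idealc = a\ZZ \oplus \tau\ZZ$ is closed under multiplication by $f\omega$, hence is an $\Or$-submodule of $K$. Invertibility as a fractional ideal of $\Or$ then follows from the standard characterization in quadratic orders: a fractional ideal is invertible iff it is \emph{proper} (its ring of multipliers $\{x \in K : x\idealc \subseteq \idealc\}$ equals $\Or$), which one verifies directly from the explicit basis. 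For well-definedness on $Q(D)$: given $G = F \circ \iota$ with $\iota \in \SL_2(\ZZ)$, I would exhibit $\lambda \in K^\times$ such that $\lambda\idealc_G = \idealc_F$ and $\sgn(N_{K/\QQ}(\lambda)) = \sgn(a_F/a_G)$, by pushing $\iota$ through the basis $(a,\tau)$ and taking $\lambda$ to be an explicit ratio of elements of $\idealc_F$ whose norm one computes from the entries of $\iota$. This establishes the equality of the two pairs in the narrow class group.

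Finally, compatibility of the compositions is checked by direct computation. Starting from $F$, one obtains $\idealc_F = a\ZZ \oplus \tau\ZZ$ with $\Nrm(\idealc_F) = |a|$ (computed by comparing the bases $(a,\tau)$ and $(1, f\omega)$), so that
\[
\frac{N_{K/\QQ}(aX + \tau Y)}{\sgn(a)\cdot|a|} \;=\; \frac{a^2X^2 \;-\; abXY \;+\; acY^2}{a} \;=\; aX^2 - bXY + cY^2,
\]
recovering $F$ up to the proper equivalence induced by the choice of an ordered, signed basis of $\idealc_F$. The composition in the other direction is analogous, starting from a basis of $\idealc$ and checking that the prescription \eqref{eq:from-quad-to-ideal} returns the original class.

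I expect the main technical obstacle to be the verification in the third paragraph: constructing the precise $\lambda \in K^\times$ that identifies $\idealc_F$ with $\idealc_G$ and tracking the signatures throughout. The sign bookkeeping is exactly what forces the appearance of the \emph{narrow} class group $\Cl^+(\Or)$ on the algebraic side, rather than the ordinary class group; matching this with the fact that a proper equivalence of forms can reverse the sign of the leading coefficient is the combinatorial heart of the correspondence.
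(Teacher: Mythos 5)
The central difficulty of this theorem is the well-definedness of the map~\eqref{en:QtoCl} on equivalence classes of forms (equivalently, the injectivity of $\ClOr\to Q(D)$), and this is precisely the step your proposal does not carry out: you say you ``would exhibit $\lambda\in K^{\times}$ with $\lambda\idealc_G=\idealc_F$ \ldots by pushing $\iota$ through the basis $(a,\tau)$,'' and you yourself flag this as the main obstacle. The paper circumvents the direct construction entirely: it shows that the composite form~$\to$~ideal~$\to$~form returns $F$ \emph{on the nose} (using the basis $(a,-\tau)$, which is the one of signature $\sgn(a)$), and separately proves injectivity of $\ClOr\to Q(D)$ by comparing the zeros of $N_{K/\QQ}(\gamma_1x+\gamma_2y)$ over $K$: literal equality of the two forms forces $(\gamma_1,\gamma_2)=\xi(\gamma_1',\gamma_2')$ or $\xi(\overline{\gamma_1'},\overline{\gamma_2'})$, and the signature of $\Nelm(\xi)$ rules out the conjugate case. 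Well-definedness of~\eqref{en:QtoCl} on classes and the mutual-inverse property then follow formally from the commutative triangle, so no explicit $\lambda$ is ever needed. Without either that injectivity argument or the explicit $\lambda$, your proof is incomplete at its core. Relatedly, your composition check has a sign slip: with the basis $(a,\tau)$ you obtain $aX^2-bXY+cY^2$, which is only \emph{improperly} equivalent to $F$ (the substitution $(X,Y)\mapsto(X,-Y)$ has determinant $-1$); the basis of signature $\sgn(a)$ is $(a,-\tau)$, and with it one recovers $F$ exactly.

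There is a second gap in the primitivity argument for map~\eqref{en:CltoQ}, which as written is circular. From ``$p\Nrm(\idealc)\mid\Nelm(\alpha)$ for all $\alpha\in\idealc$'' one cannot ``multiply by $\idealc^{-1}\overline{\idealc}^{-1}$'' to get $\idealc\subseteq p\idealc$: the $\Or$-ideal generated by the norms $\{\alpha\overline{\alpha}:\alpha\in\idealc\}$ equals $e\Nrm(\idealc)\Or$, where $e$ is the content of $F$, and the assertion that this equals $\idealc\overline{\idealc}=\Nrm(\idealc)\Or$ is exactly the primitivity you are trying to prove. The paper's argument is genuinely different: after replacing $\idealc$ by a representative coprime to $f$, it exhibits $(x,y)$ with $F(x,y)=\epsilon\Nrm(\idealc)$, so any common prime divisor $p$ of the coefficients divides $\Nrm(\idealc)$ and hence is prime to $f$; since $p^2\mid D=f^2d_K$, this contradicts $d$ being square-free (with a separate mod-$16$ check for $p=2$). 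Your route to invertibility in~\eqref{en:QtoCl} via properness of $a\ZZ\oplus\tau\ZZ$ is a legitimate alternative to the paper's direct computation $\idealc\overline{\idealc}=a\Or$, but it too is asserted rather than verified.
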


We will prove Theorem~\ref{th:quad-classical} in Subsections~\ref{subsection=correspondence1}, \ref{subsection=correspondence2} and \ref{subsection=endoftheproof}. 
In what follows, we use the following setting. 
\begin{setting}\label{setting=A}
Let $D$ be an integer not a square with $D \equiv0,1\pmod{4}$.
Let $d$ be the square-free part of $D$ and $K\coloneqq\QQ(\sqrt{d})$.
Set $f\coloneqq\sqrt{D/d_K}$ and $\Or\coloneqq \ZZ \oplus f\omega \ZZ $ as in \eqref{eq:KandO}. 
\end{setting}

\subsection{Well-definedness of correspondence~\eqref{en:CltoQ}}\label{subsection=correspondence1}
In this subsection, we prove that correspondence~\eqref{en:CltoQ} in Theorem~\ref{th:quad-classical} is well-defined. 
By \eqref{eq:norm-is-multiplicative}, the absolute value of the right-hand side of \eqref{eq:from-ideal-to-quad} equals $\Nrm((\gamma_1x+\gamma_2y)\idealc^{-1})$.
Since $\gamma_1x+\gamma_2y\in\idealc$, we have $(\gamma_1x+\gamma_2y)\idealc^{-1}\subseteq \Or$.
Hence, for every $(x,y)\in\ZZ^2$, the value $F(x,y)$ is an integer.
This implies that $F$ has integer coefficients.

The equivalence class of $F(x,y)$ clearly does not depend on the auxiliary choice of the basis $(\gamma_1 ,\gamma_2)$. 
We claim that it depends only on the class of 
$(\idealc ,\epsilon )$ in $\ClOr$.
To see this, note that for every element $\xi\in K^{\times}$, the following commutative diagram
\[
\xymatrix@C=2cm{
    &\idealc \ar[dd]_{\xi \times } \ar[dr]^{\frac{N_{K/\mathbb Q}(-) }{\epsilon \Nrm ( \idealc)}} & \\
   \ZZ ^2 \ar[ur]^{(\gamma_1 ,\gamma_2 )} \ar[dr]_{(\xi \gamma_1 ,\ \xi \gamma_2 )}&& \ZZ \\
    &\xi \idealc \ar[ur]_{\frac{N_{K/\mathbb Q}(-) }{\epsilon\cdot \mathrm{sgn}(\Nelm(\xi))\cdot  \Nrm (\xi \idealc)} }&
}
\]
commutes. Indeed, the commutativity of the left-hand triangle is obvious.
The commutativity of the right-hand triangle follows from the fact that the norm map
$N_{K/\mathbb Q}(-)$ is multiplicative and from \eqref{eq:norm-is-multiplicative}.
Note that the change of the basis from $(\gamma_1,\gamma_2 )$ to $(\xi\gamma_1,\xi\gamma_2)$ changes the sign by the factor $\sgn(\Nelm(\xi))$,
so that the lower composition map in the diagram is a quadratic form obtained from the pair $(\xi \idealc,\sgn(\Nelm(\xi))\epsilon)$.

It remains to check that the discriminant $D_F$ of the quadratic form $F(x,y)$ is equal to the discriminant $D$ of $\Or $, and that $F$ is primitive. First, we will prove that $D_F=D$. For this, we may assume $\idealc \subseteq \Or $. Indeed,  replace $\idealc $ with an appropriate $\xi \idealc $; the argument above justifies this process.
Let us denote 
the unique non-trivial automorphism of $K=\QQ(\sqrt{d})$ over $\QQ$
by $\alpha \mapsto \overline{\alpha} $.
Then for every $(x,y)\in \ZZ^2$, we have 
$N_{K/\mathbb Q} (\gamma_1 x+\gamma_2 y)=(\gamma_1 x +\gamma_2 y)(\overline{\gamma_1} x +\overline{\gamma_2} y) $. Hence, 
\[
F(x,y)= \frac{\gamma_1\overline{\gamma_1} x^2+(\gamma_1\overline{\gamma_2} +\overline{\gamma_1}\gamma_2)xy+\gamma_2\overline{\gamma_2} y^2}{\epsilon \Nrm (\idealc)} .
\] 
It then follows that $D_F=\frac{ (\gamma_1\overline{\gamma_2} -\overline{\gamma_1}\gamma_2 )^2 }{\Nrm (\idealc)^2 }$. The numerator is the square of
$\mathrm{det}\begin{pmatrix}\gamma_1 & \gamma_2 \\ \overline{\gamma_1} & \overline{\gamma_2} \end{pmatrix} $. 
By the assumption $\idealc \subseteq \Or $, there exists a unique $2$-by-$2$ integer matrix $T$ satisfying 
$( \gamma_1\ \gamma_2 )= (1\ f\omega ) T$.
It follows $\begin{pmatrix} \gamma_1 & \gamma_2 \\ \overline{\gamma_1} &\overline{\gamma_2}  
\end{pmatrix}= \begin{pmatrix} 1 & f\omega  \\ 1 & f\overline{\omega}  
\end{pmatrix} T $.
By the definition of $T$, we have $|\det (T)| = \Nrm (\idealc )$. Hence, we obtain that
\[ \mathrm{det}\begin{pmatrix}\gamma_1 & \gamma_2 \\ \overline{\gamma_1} & \overline{\gamma_2} \end{pmatrix} ^2 
= 
\mathrm{det}\begin{pmatrix}1 & f\omega \\ 1& f \overline\omega \end{pmatrix} ^2
\Nrm (\idealc ) ^2. \] 
A direct calculation shows $\det \begin{pmatrix}1 & f\omega \\ 1& f \overline\omega \end{pmatrix} ^2 = D$. Thus, we conclude that $D_F=D$.

Secondly, we will prove that $F(x,y)$ is primitive.
By Corollary \ref{cor:coprime}, we may assume that $\idealc $ satisfies
$\idealc +f \Or = \Or $; in this case $\Nrm (\idealc )$ is 
prime to $f$ because $f$ is invertible in the ring $\Or / \idealc$.
The integer $\Nrm (\idealc )$ annihilates the abelian group $\Or /\idealc $.
It follows that $\Nrm (\idealc )=\Nrm (\idealc ) \cdot 1 \in\idealc $.
Take the unique $(x,y)\in \ZZ ^2$ with $\gamma_1 x+\gamma_2 y = \Nrm (\idealc ) $.
For this $(x,y)$, we have $F(x,y) = \frac{\Nelm ( \Nrm (\idealc ))}{\epsilon \Nrm (\idealc )} = \epsilon \Nrm (\idealc )$. Now suppose that $\mathrm{gcd}(a,b,c)\ne 1$. Take a rational prime number $p$ dividing $\mathrm{gcd}(a,b,c)$. 
Then, $p$ divides every absolute value of $F$; in particular, it divides $\Nrm (\idealc )$. Since $\Nrm (\idealc )$ is now 
prime to $f$, it follows that $p$ is 
prime to $f$. 
Also $p^2$ divides $b^2-4ac = D = f^2 d_K$.
It follows that $p^2$ divides $d_K$ ($= d $ or $4d$).
If $p$ is odd, this contradicts the condition that $d$ is square-free. 
%Indeed, recall the definition of $d_K$ and that $d$ is square-free. 
We can also deduce a contradiction for the case of $p=2$ by considering the reduction modulo $16$. 
More precisely, since $p^2=4$ divides $d_K$, it must be the case that $d_K=4d$ and $d\equiv 2,3 \pmod 4$.
It follows that $d_K\equiv 8,12 \pmod{16}$.
Since $f$ is now odd we have $f^2\equiv 1,9 \pmod {16}$.
It follows that $f^2d_K\equiv 8,12 \pmod {16}$.
On the one hand, since $a,b,c$ are even, we have 
$b^2-4ac \equiv 0,4 \pmod{16}$.
This contradicts $b^2-4ac = f^2 d_K$.
Therefore, $\mathrm{gcd}(a,b,c)=1$, as desired.

This ends the proof of well-definedness of correspondence~\eqref{en:CltoQ}.
\subsection{On well-definedness of correspondence~\eqref{en:QtoCl}}\label{subsection=correspondence2}

This subsection  is devoted to the proof of well-definedness of correspondence~\eqref{en:QtoCl} in Theorem~\ref{th:quad-classical}. 
Strictly speaking, we will prove that the map $F\mapsto [(\idealc,\epsilon)]$ from the set of all primitive binary quadratic forms $\ZZ^2\to \ZZ$ with discriminant $D$ to $\ClOr$ is well-defined, where $(\idealc,\epsilon)$ is defined in \eqref{eq:from-quad-to-ideal}. First, note that $a\neq 0$, since $D=b^2-4ac $ is not a perfect square. Hence $\mathrm{sgn}(a)$ in \eqref{eq:from-quad-to-ideal} does not cause a problem. 
We  claim that $\Or = \ZZ \oplus \tau \ZZ$
as an abelian group. Indeed, observe that $f\omega + \tau \equiv \frac{fd_K -b}{2}\pmod{\ZZ}$.
From $f^2d_K =D= b^2-4ac $, observe also that $fd_K $ and $b$ must have the same parity. Hence, $f\omega =  -\tau  $ in the quotient group $\OK /\ZZ $, and we conclude that
\begin{equation}\label{eq:Or-tau}
    \Or = \ZZ \oplus f\omega \ZZ = \ZZ \oplus \tau \ZZ .
\end{equation}
Since $\tau  ^2 +b \tau  + ac =0 $, the subgroup $\idealc = a\ZZ \oplus \tau \ZZ \subseteq \Or$ is in fact an ideal of $\Or$.
We will moreover check that $\idealc$ is an invertible ideal.
Consider the conjugate $\overline\idealc = a\ZZ \oplus \overline\tau \ZZ \subseteq \Or $
and take the product,
\[
\idealc \overline\idealc = (a\Or+\tau\Or)\cdot(a\Or+\overline{\tau}\Or)=a^2\Or+a\tau\Or+a\overline{\tau}\Or+\tau\overline{\tau}\Or.
\]
Since $\tau ^2+b\tau +ac =0$, we have $\tau\overline\tau =ac $. 
Since $\tau +\overline\tau = -b$ and $F$ is primitive, we moreover \obtainthat\ 
$\idealc \overline\idealc= a\Or$.
This shows that $\idealc $ is invertible with the inverse $a^{-1}\overline\idealc$.

The arguments above in this subsection show that the map $F\mapsto [(\idealc,\epsilon)]$ is well-defined. To verify that correspondence~\eqref{en:QtoCl} is well-defined, it remains to check that the class of $(\idealc ,\epsilon )$ in $\ClOr $ is invariant under changes of  the representative $F$ of the equivalence class $[F]\in Q(D)$. 
This will be done in %However, we postpone this proof to 
Subsection~\ref{subsection=endoftheproof}.

\subsection{End of the proof of Theorem~\ref{th:quad-classical}}\label{subsection=endoftheproof}

In Subsections~\ref{subsection=correspondence1} and \ref{subsection=correspondence2}, we have checked that the maps in the following diagram 
\begin{equation}\label{eq:commutative-triangle}
        \raisebox{1cm}{$\xymatrix{  & \{\text{primitive quadratic forms $F\colon \ZZ ^2\to \ZZ $ with }D_F=D \} 
        \ar[dl]_{\eqref{eq:from-quad-to-ideal}}
        \ar@{->>}[d]^{\text{quotient map} } \\
        \ClOr \ar[r]_{\eqref{eq:from-ideal-to-quad}} & Q(D)
        }$}
\end{equation}
are all well-defined.
To establish Theorem \ref{th:quad-classical}, it suffices to show that the diagram is commutative,
and that the horizontal map is injective.
That will also establish that the map $Q(D)\overset{\eqref{eq:from-quad-to-ideal}}{\longrightarrow}\ClOr $ is well defined, thus completing the arguments in Subsection~\ref{subsection=correspondence2}. In this subsection, we will prove the two assertions above.

First, we will show that the diagram \eqref{eq:commutative-triangle} is commutative. Take a primitive binary quadratic form $F(x,y)=ax^2+bxy+cy^2$ with discriminant $D$.
Let $ \tau$ and $\idealc$ as in Theorem \ref{th:quad-classical}~\eqref{en:QtoCl}. The basis $(a,-\tau  )$ of $\idealc$ has sign $\epsilon \coloneqq\mathrm{sgn}(a) $.
Therefore the quadratic form associated with this pair $(\idealc ,\epsilon )$ and the basis is the map
\[
(x,y)\mapsto\frac{N_{K/\QQ}(ax-\tau y)}{\epsilon\Nrm(\idealc)}.
\] 
By \eqref{eq:Or-tau}, the denominator is $\epsilon \cdot |a| = a$. 
The numerator is 
\[
N_{K/\mathbb Q} (a x -\tau  y) = (ax -\tau  y) (ax -\overline \tau  y) = a^2x^2 -a(\tau  +\overline \tau  ) xy +\tau \overline{\tau  } y^2 . 
\]
Since $\tau  +\overline \tau  = -b $ and $\tau \overline \tau  = ac $, we conclude that 
\[ 
N_{K/\mathbb Q} (a x -\tau  y)= a^2x^2 +abxy +ac y^2 = a\cdot F(x,y).
\] 
This proves the commutativity of \eqref{eq:commutative-triangle}.

In the final part of the proof of Theorem~\ref{th:quad-classical}, we will show that the map $\ClOr \overset{\eqref{eq:from-ideal-to-quad}}{\longrightarrow}Q(D)$ is injective. Suppose that two pairs $(\mathfrak{c_1},\epsilon _1)$ and $(\mathfrak{c_2},\epsilon _2)$ of invertible fractional ideals and signs give equivalent quadratic forms.
Choose appropriate $\mathbb Z$-bases $(\gamma_1,\gamma _2)$ and $(\gamma_1',\gamma_2')$ respectively so that we obtain the following identity for two quadratic forms: 
\begin{equation}\label{eq:we-get-the-same-quadratic-form}
    \frac{N_{K/\mathbb Q} (\gamma_1x + \gamma_2 y ) }{\epsilon _1 \Nrm (\mathfrak{c_1})}
    =
    \frac{N_{K/\mathbb Q} (\gamma_1'x + \gamma _2' y ) }{\epsilon _2 \Nrm (\mathfrak{c_2})} .
\end{equation} 
Recall that for all $(x,y)\in \ZZ^2$, we have $N_{K/\mathbb Q}(\gamma_1 x+\gamma_2 y)=(\gamma_1 x+\gamma_2 y )(\overline{\gamma_1} x+\overline{\gamma_2} y)$. Similarly, $N_{K/\mathbb Q}(\gamma_1' x+\gamma_2' y)=(\gamma_1' x+\gamma_2' y )(\overline{\gamma_1'} x+\overline{\gamma_2'} y)$ holds.
In each of the two equalities above, the right-hand side makes sense even for $x,y \in K=\QQ (\sqrt d)$.
Set $y=1$. The values of $x\in K$ satisfying $(\gamma_1 x+\gamma_2)(\overline{\gamma_1} x+\overline{\gamma_2} )= 0$ are $-\gamma_2 / \gamma_1 $ and $-\overline{\gamma_1}/ \overline{\gamma_2}$. A similar fact holds for $x\mapsto (\gamma_1' x+\gamma_2')(\overline{\gamma_1'} x+\overline{\gamma_2'} )$. 
From \eqref{eq:we-get-the-same-quadratic-form}, either $\gamma_2/\gamma_1 = \gamma_2'/\gamma_1'$ or $\gamma_2/\gamma_1 = \overline{\gamma_2'}/\overline{\gamma_1'}$  holds true.
In other words, there exists  $\xi \in K^{\times}$ such that the following equality holds in $K^2$: 
\begin{equation}\label{eq:equality-in-K^2}
 (\gamma _1,\gamma _2)=\xi (\gamma _1',\gamma _2')\quad  \text{ or } \quad (\gamma _1,\gamma _2)=\xi (\overline{\gamma _1'},\overline{\gamma _2'}). 
\end{equation}
In either case,
if we substitute it into \eqref{eq:we-get-the-same-quadratic-form}, we \obtainthat\ 
\[ 
\frac{\xi \overline\xi (\gamma _1'x+\gamma _2' y)(\overline{\gamma _1'}x+\overline{\gamma _2'}y)}{\epsilon _1 \Nrm (\mathfrak{c_1})} 
=
\frac{ (\gamma _1'x+\gamma _2' y)(\overline{\gamma _1'}x+\overline{\gamma _2'}y)}{\epsilon _2 \Nrm (\mathfrak{c_2})}
\]
as binary quadratic forms.
Therefore, we have  $\xi\overline\xi = \epsilon _1 \Nrm (\mathfrak{c_1})/(\epsilon _2 \Nrm (\mathfrak{c_2}))$.
In particular, $\Nelm (\xi )= \xi \overline\xi $ has sign $\epsilon _1 / \epsilon _2$. 
It then follows that the basis $\xi(\gamma _1',\gamma _2')$ of $\xi \idealc_2$ has sign
$\epsilon_1$ and $\xi(\overline{\gamma _1'},\overline{\gamma _2'})$ of $\xi \overline{\idealc_2}$ has sign $-\epsilon _1$.
Since the basis $ (\gamma _1,\gamma _2)$ has sign $\epsilon _1$,
we conclude that in \eqref{eq:equality-in-K^2}, only the first case can hold. This also implies that $\idealc_1=\xi \idealc_2$ as ideals. Since $\mathrm{sgn}(\Nelm(\xi))=\epsilon_1/\epsilon_2 $, we obtain the equality $(\idealc_1,\epsilon_1)=(\xi \Or, \sgn(\Nelm(\xi)))\cdot(\idealc_2,\epsilon _2)$ in $I_\Or \times \signs $. This proves the desired injectivity.

This completes the proof of Theorem \ref{th:quad-classical}.

\bibliographystyle{amsalpha}
\bibliography{PECNF_E.bib}
\bigskip

{\sc Mathematical Institute, Tohoku University, Sendai, 980-8578, Japan}

{\it E-mail address}: {\tt kaiw@tohoku.ac.jp}

\bigskip

{\sc Mathematical Institute, Tohoku University, Sendai, 980-8578, Japan}

{\it E-mail address}: {\tt m.masato.mimura.m@tohoku.ac.jp}

\bigskip

{\sc Graduate School of Information Sciences, Tohoku University, Sendai, 980-8579, Japan}

{\it E-mail address}: {\tt munemasa@math.is.tohoku.ac.jp}

\bigskip

{\sc Department of Mathematical Sciences, Aoyama Gakuin University, Sagamihara, 252-5258, Japan}

{\it E-mail address}: {\tt seki@math.aoyama.ac.jp}

\bigskip

{\sc Graduate School of Information Sciences, Tohoku University, Sendai, 980-8579, Japan}

{\it E-mail address}: {\tt kiyoto.yosino.r2@dc.tohoku.ac.jp}

%%%%%%%%%%%%%%%%%%%%%%%%%%%%%%%%%%%%%%%%%%%%%%%%%%
\end{document}